\DeclareMathAlphabet{\mathpzc}{OT1}{pzc}{m}{it}
\numberwithin{equation}{section}
\newtheorem{Proposition}[equation]{Proposition}
\newtheorem{Lemma}[equation]{Lemma}
\newtheorem{Theorem}[equation]{Theorem}
\newtheorem{Corollary}[equation]{Corollary}
\theoremstyle{definition}  %% makes all of the theorem environments which follow appear in \rm
\newtheorem{Definition}[equation]{Definition}
\newtheorem{Remark}[equation]{Remark}
\newtheorem{Example}[equation]{Example}
\newtheorem{Hypothesis}[equation]{Hypothesis}
\newtheorem{Question}[equation]{Question}
\newcommand\Comment[2][\relax]{\space\par\medskip\noindent%
   \fbox{\begin{minipage}{\textwidth}\textbf{Comment\ifx\relax#1\else---#1\fi}\newline%
        #2\end{minipage}}\medskip
}
\def\bi{\text{\boldmath$i$}}
\def\b1{\text{\boldmath$1$}}
\newcommand{\swr}{\wr_{\super}}
\newcommand{\super}{\mathop{\tt s}\nolimits}
\newcommand{\nth}{\mathop{\rm th}\nolimits}
\newcommand{\nst}{\mathop{\rm st}\nolimits}
\newcommand{\sM}{\mathop{\rm sMor}\nolimits}
\newcommand{\prj}{\mathop{\rm Prj}\nolimits}
\newcommand{\sh}{\mathop{\rm sh}\nolimits}
\newcommand{\Id}{\mathop{\rm Id}\nolimits}
\newcommand{\Br}{\operatorname{Br}}
\newcommand{\cC} {\mathcal{C}}
\newcommand{\cT} {\mathcal{T}}
\newcommand{\ct} {\mathpzc{t}}
\newcommand{\Blo} {B}
\def\pmod#1{\text{ }(\text{\rm mod } #1)\,}
\newcommand{\Hom}{\operatorname{Hom}}
\newcommand{\End}{\operatorname{End}}
\def\core{{\operatorname{core}}}
\newcommand{\Z}{\mathbb{Z}}
\newcommand{\K}{\mathbb{K}}
\newcommand{\F}{\mathbb{F}}
\newcommand{\N}{\mathbb{N}}
\newcommand{\NN}{\mathbb{Z}_{> 0}}
\newcommand{\0}{{\bar 0}}
\renewcommand{\1}{{\bar 1}}
\def\eps{{\varepsilon}}
\def\phi{{\varphi}}
\newcommand{\cO}{\mathcal O}
\newcommand{\cR}{\mathcal R}
\newcommand{\Grot}{{\mathcal G}_0}
\newcommand{\cM}{\mathcal M}
\newcommand{\cQ}{\mathcal Q}
\newcommand{\cY}{\mathcal Y}
\newcommand{\la}{\lambda}
\newcommand{\La}{\Lambda}
\newcommand{\al}{\alpha}
\def\Si{\mathsf{S}}
\def\Ai{\mathsf{A}}
\def\Gi{\mathsf{G}}
\def\Hi{\mathsf{H}}
\def\Li{\mathsf{L}}
\def\Ni{\mathsf{N}}
\def\Di{\mathsf{D}}
\def\tSi{\tilde{\Si}}
\def\tAi{\tilde{\Ai}}
\def\ei{\mathsf{e}}
\def\bi{\mathsf{b}}
\def\ci{\mathsf{c}}
\def\fid{\mathsf{f}}
\newcommand{\si}{\sigma}
\newcommand{\Om}{\Omega}
\newcommand{\de}{\delta}
\newcommand{\ka}{\kappa}
\def\Mtype{\mathtt{M}}
\def\Qtype{\mathtt{Q}}
\newcommand{\Irr}{{\mathrm {Irr}}}
\newcommand{\Ind}{{\mathrm {Ind}}}
\newcommand{\Mor}{{\mathrm {Mor}}}
\newcommand{\Tr}{{\mathrm {Tr}}}
\newcommand{\tr}{{\mathrm {tr}}}
\newcommand{\rad}{{\mathrm {rad}\,}}
\newcommand{\Res}{{\mathrm {Res}}}
\newcommand{\C}{{\mathbb C}}
\newcommand{\R}{{\mathbb R}}
\renewcommand{\mod}{\bmod \,}
\def\K{\mathbb K}
\newcommand{\Zig}{{\mathtt{A}}}
\newcommand{\Zag}{{\mathtt{B}}}
\newcommand{\sR}{{\mathtt{R}}}
\newcommand{\sJ}{{\mathtt{P}}}
\newcommand{\sX}{{\mathtt{X}}}
\newcommand{\sY}{{\mathtt{Y}}}
\newcommand{\gP}{P}
\newcommand{\ttP}{\mathtt{P}}
\newcommand{\gQ}{Q}
\newcommand{\gR}{R}
\newcommand{\bM}{\mathbf{M}}
\newcommand{\gM}{M}
\newcommand{\ttM}{\mathtt{M}}
\newcommand{\gN}{N}
\newcommand{\bU}{\mathbf{U}}
\newcommand{\gU}{U}
\newcommand{\gV}{V}
\newcommand{\gW}{W}
\newcommand{\bX}{\mathbf{X}}
\newcommand{\bY}{\mathbf{Y}}
\newcommand{\gZ}{Z}
\def\col{{\operatorname{col}}}
\newcommand{\IBr}{{\mathrm {IBr}}}
\def\Par{{\mathscr P}}
\def\ud{{\underline{d}}}
\def\ude{{\underline{\delta}}}
\def\b{\mathfrak{b}}
\def\k{\Bbbk}
\def\wt{{\operatorname{wt}}}
\def\op{{\mathrm{op}}}
\def\sop{{\mathrm{sop}}}
\def\into{{\hookrightarrow}}
\def\mod#1{#1\!\operatorname{-mod}}
\def\underlinesmod#1{#1\!\operatorname{-\underline{smod}}}
\def\iso{\stackrel{\sim}{\longrightarrow}}
\def\sh{{\mathsf{sh}}}
\def\Ab{{\mathtt{Ab}}}
\def\T{{\mathtt T}}
\def\sfs{{\mathfrak s}}
\newcommand{\cc} {\mathpzc{c}}
\newcommand*{\rom}[1]{\expandafter\@slowromancap\romannumeral #1@}
  \gdef\set#1{\mathinner{\lbrace\,{\mathcode`\|"8000%
  \let|\midvert #1}\,\rbrace}}
\def\midvert{\egroup\mid\bgroup}
  \gdef\set#1{\mathinner{\lbrace\,{\mathcode`\|"8000%
  \let|\midvert #1}\,\rbrace}}
\def\midvert{\egroup\mid\bgroup}
\colorlet{darkgreen}{green!50!black}
\tikzset{dots/.style={very thick,loosely dotted},
         greendot/.style={fill,circle,color=darkgreen,inner sep=1.5pt,outer sep=0},
         blackdot/.style={fill,circle,color=black,inner sep=1.1pt,outer sep=0},
         graydot/.style={fill,circle,color=gray,inner sep=1.1pt,outer sep=0},
         reddot/.style={fill,circle,color=red,inner sep=1.1pt,outer sep=0},
         bluedot/.style={fill,circle,color=blue,inner sep=1.1pt,outer sep=0}
}
\def\greendot(#1,#2){\node[greendot] at(#1,#2){}}
\def\blackdot(#1,#2){\node[blackdot] at(#1,#2){}}
\def\graydot(#1,#2){\node[graydot] at(#1,#2){}}
\def\reddot(#1,#2){\node[reddot] at(#1,#2){}}
\def\bluedot(#1,#2){\node[bluedot] at(#1,#2){}}
\newenvironment{braid}{% sets defaults for the braid diagrams
  \begin{tikzpicture}[baseline=6mm,black,line width=.7pt, scale=0.32,
                      draw/.append style={rounded corners},
                      every node/.append style={font=\fontsize{5}{5}\selectfont}]%
  }{\end{tikzpicture}
}
\def\Grid(#1,#2){%  draws a coordinate grid inside a braid diagram
  \draw[very thin,gray,step=2mm] (0,0)grid(#1,#2);
  \draw[very thin,darkgreen,step=10mm] (0,0)grid(#1,#2);
}
\newcommand\Tableau[2][\relax]{
  \begin{tikzpicture}[scale=0.5,draw/.append style={thick,black}]
    \ifx\relax#1\relax%
    \else % shade the boxes in #1
      \foreach\box in {#1} { \filldraw[blue!30]\box+(-.5,-.5)rectangle++(.5,.5); }
    \fi
    \newcount\row\newcount\col
    \row=0
    \foreach \Row in {#2} {
       \col=1
       \foreach\k in \Row {
          \draw(\the\col,\the\row)+(-.5,-.5)rectangle++(.5,.5);
          \draw(\the\col,\the\row)node{\k};
          \global\advance\col by 1
       }
       \global\advance\row by -1
    }
  \end{tikzpicture}
}
\newcommand\YoungDiagram[2][\relax]{
  \begin{tikzpicture}[scale=0.5,draw/.append style={thick,black}]
    \ifx\relax#1\relax%
    \else % shade the boxes in #1
    \foreach\box in {#1} {
      \filldraw[blue!30]\box rectangle ++(1,1);
    }
    \fi
    \newcount\row
    \row=0
    \foreach \col in {#2} {
       \draw(1,\the\row)grid ++(\col,1);
       \global\advance\row by -1
    }
  \end{tikzpicture}
}
\newenvironment{Young}{\begingroup
       \def\vr{\vrule height0.89\hoogte width\dikte depth 0.2\hoogte}
       \def\fbox##1{\vbox{\offinterlineskip
                    \hrule height\dikte
                    \hbox to \breedte{\vr\hfill##1\hfill\vr}
                    \hrule height\dikte}}
       \vbox\bgroup \offinterlineskip \tabskip=-\dikte \lineskip=-\dikte
            \halign\bgroup &\fbox{##\unskip}\unskip  \crcr }
       {\egroup\egroup\endgroup}
\def\diagram#1{\relax\ifmmode\vcenter{\,\begin{Young}#1\end{Young}\,}\else%
              $\vcenter{\,\begin{Young}#1\end{Young}\,}$\fi}
\begin{document}

%\frontmatter

\title[RoCK blocks for double covers of symmetric groups over a  DVR]{{\bf RoCK blocks for double covers of symmetric groups over a complete discrete valuation ring}}

\author{\sc Alexander Kleshchev}
\address{Department of Mathematics\\ University of Oregon\\ Eugene\\ OR 97403, USA}
\email{klesh@uoregon.edu}

\author{\sc Michael Livesey}
\address{School of Mathematics, University of Manchester, Manchester, M139PL, U.K.}
\email{michael.livesey@manchester.ac.uk}

\subjclass[2020]{20C20, 20C25, 20C30}

\thanks{The first author was supported by the NSF grant DMS-2101791 and 
Charles Simonyi Endowment at the Institute for Advanced Study. 
The second author was supported by the EPSRC (grant no EP/T004606/1). 
Both authors would like to thank the Isaac Newton Institute for Mathematical Sciences, Cambridge, for support and hospitality during the programme `Groups, representations and applications: new perspectives' where part of the work on this paper was undertaken. 
}

\begin{abstract}
Recently the authors proved the existence of RoCK blocks for double covers of symmetric groups over an algebraically closed field of odd characteristic. In this paper we prove that these blocks lift to RoCK blocks over a suitably defined discrete valuation ring. Such a lift is even splendidly derived equivalent to its Brauer correspondent. We note that the techniques used in the current article are almost completely independent from those previously used by the authors. In particular, we do not make use of quiver Hecke superalgebras and the main result is proved using methods solely from the theory of representations of finite groups. Therefore, this paper much more resembles the work of Chuang and Kessar, where RoCK blocks for symmetric groups were constructed.
\end{abstract}

\maketitle

\tableofcontents

\section{Introduction}

Throughout this article $p$ will denote an odd prime and $(\K,\cO,\F)$ a $p$-modular system. So $\K$ is a field of characteristic zero, $\cO$ a complete discrete valuation ring with field of fractions $\K$ and residue field $\F$, which, in turn, is an algebraically closed field of characteristic $p$. We denote by $\overline{\phantom{x}}:\cO \to \F$ the natural projection.

For the remainder of the introduction, when dealing with a finite group $G$, we will assume that $\K$ (and consequently $\cO$) contains a primitive $|G|^{\nth}$ root of unity. This ensures that $\K G$ is split semisimple. For example, in Brou\'e's conjecture we assume $\K$ contains a primitive $|G|^{\nth}$ root of unity. Similarly, in Theorems A and B we assume $\K$ contains a primitive $(2n!)^{\nth}$ root of unity, where $\Blo^{\rho,d}$ is a block of $\cO \tSi_n$. Elsewhere in the article we will make it explicit whenever we are making such an assumption on $\K$.

%\textcolor{blue}{Is this better now? I emphasise, when RoCK blocks are introduced in $\S$5.2, that, since we only ever deal with double covers of $\Si_4$ and bigger, it is actually enough to assume $\K$ contains a primitive $(2n!)^{\nth}$ root of unity. (Having square roots of $-1$ and $2$ is then automatic.) I've also added a remark right at the end of Section 8 you might want to take a look at.}

This article is primarily concerned with Brou\'e's long-standing {\em abelian defect group conjecture}:

\vspace{2mm}
\noindent
{\bf Brou\'e's abelian defect group conjecture. }\index{Brou\'e's abelian defect group conjecture}
{\em 
For any finite group $G$, a block $B$ of $G$ with abelian defect group $D$ is derived equivalent to its Brauer correspondent~$b$.}

\vspace{2mm}

One can state the conjecture for blocks defined over $\F$ or $\cO$. Our focus is blocks defined over $\cO$.

For $n \in \N$, $\Si_n$ will denote the symmetric group on $n$ letters and $\tSi_n$ one of the two double covers of $\Si_n$. In addition, $\Ai_n$ will denote the corresponding alternating group and $\tAi_n$ its double cover. There is a one-to-one, isomorphic correspondence between the blocks of $\cO \Si_n$ and those of $\cO \tSi_n (1 + z)/2$. In particular, Brou\'e's conjecture is known for the blocks of $\cO \tSi_n (1 + z)/2$, as it is known to hold for blocks of symmetric groups (see \cite{CK} and \cite{CR}). We, therefore, restrict our attention in this article to blocks of $\cT_n := \cO \tSi_n (1 - z)/2$, also known as the {\em spin blocks of the symmetric group}.

Throughout the article it will be important to consider $\cO \tSi_n$ as a superalgebra via:
$$
(\cO \tSi_n)_{\0}:=\cO \tAi_n, \qquad (\cO \tSi_n)_{\1}:=\cO (\tSi_n\setminus\tAi_n).
$$

We recall that the {\em spin superblocks of $\cO \tSi_n$} are labeled by pairs $(\rho,d)$, where $\rho$ is a ${\bar p}$-core and $d \in \N$ (referred to as the {\em weight of the block}) such that $n = |\rho|+dp$. We denote such a superblock by $\Blo^{\rho,d}$. 
We note that the case $d=0$ corresponds to the defect zero situations, so we will often assume that $d>0$. In that case 
 $\Blo^{\rho,d}$ is in fact a spin block (and not just a spin superblock) of $\cO \tSi_n$. In this case the defect group $D$ of $\Blo^{\rho,d}$ is abelian if and only if $d<p$. If it is abelian, then $D \cong (C_p)^{\times d}$.

In \cite{KL} the authors defined the notion of a {\em $d$-Rouquier ${\bar p}$-core}, for $d \in \N$. For the remainder of the introduction we assume $0 < d < p$ and that $\rho$ is a $d$-Rouquier ${\bar p}$-core.

Throughout the article `$\otimes$' will denote a {\em tensor product of superalgebras}. For a superalgebra $A$, a {\em twisted wreath superproduct} $A \swr \cT_d$ is defined in $\S$\ref{sec:wreath}. For the definition of a {\em Morita superequivalence}, see $\S$\ref{SMoritaSuper}. Our main result (see Theorem \ref{thm:main}) is as follows:

\vspace{2mm}
\noindent
{\bf Theorem A.}
{\em
Let $0<d<p$ and $\rho$ be a $d$-Rouquier $\bar p$-core. Then 
$\Blo^{\rho,d}$ is Morita superequivalent to $\Blo^{\rho,0} \otimes (\Blo^{\varnothing,1} \swr \cT_d)$.
}

\vspace{2mm}

The analogous result for blocks defined over $\F$ was proved in \cite{KL}. Indeed, that paper proved that we have Morita superequivalences in a more general setting. Namely, from specific (RoCK) blocks of quiver Hecke superalgebras to `local' objects. We also use the term {\em RoCK block} to describe the block $\Blo^{\rho,d}$ in Theorem A. 
Moreover, the corresponding purely even subalgebra $\Blo^{\rho,d}_\0$ is a spin block of $\cO\tAi_{n}$, which is also referred to as a {\em RoCK block}.

Using Theorem A, we then go on to prove that $\Blo^{\rho,d}$ is not just derived equivalent but splendidly derived equivalent to its Brauer correspondent (see Corollary \ref{cor:spld}):

\vspace{2mm}
\noindent
{\bf Theorem B.}
{\em 
Let $0<d<p$ and $\rho$ be a $d$-Rouquier $\bar p$-core. Then 
$\Blo^{\rho,d}$ and $\Blo^{\rho,d}_{\0}$ are splendidly derived equivalent to their respective Brauer correspondents. In particular, Brou\'e's abelian defect group conjecture holds for RoCK blocks $\Blo^{\rho,d}$ and $\Blo^{\rho,d}_{\0}$ of $\cO\tSi_n$ and $\cO\tAi_n$ respectively.
}

\vspace{2mm}

This is an improvement over the derived equivalence constructed for the corresponding blocks defined over $\F$ in \cite{KL}.

Ebert, Lauda and Vera in \cite{ELV} as well as Brundan and the first author in \cite{BKBroue} 
independently proved that any spin block of an $\F \tSi_n$ with weight $d$ is derived equivalent to some RoCK block $\Blo^{\rho,d}$. Together with \cite{KL}, this completed the proof of Brou\'e's conjecture for the spin blocks of $\F \tSi_n$. Currently, the conjecture remains open for blocks defined over $\cO$.

The article is organized as follows: Section \ref{sec:prelim} consists of various preliminaries concerning combinatorics and general algebras. Section \ref{sec:grdd_supalg} states all the relevant results on superalgebras and supermodules. Section \ref{sec:spin_blocks} introduces the double covers of the symmetric group and their block theory. Section \ref{sec:RoCK} is where we first introduce RoCK blocks and in Section \ref{sec:weight_one} we analyse weight one RoCK blocks in detail. In Section \ref{sec:X_Y} the bisupermodule $\bX$, which will ultimately induce our desired Morita superequivalence, and the related bisupermodule $\bY$ are defined. Theorem A is proved in Section \ref{sec:main}, while Theorem B is proved in Section \ref{sec:vert_source}.

%\mainmatter

\section{Preliminaries}\label{sec:prelim}

\subsection{Generalities}
We denote $\N:=\Z_{\geq 0}$. Throughout this article $p$ is an odd prime, 
\begin{equation}\label{EEll}
\ell := (p-1)/2\quad\text{and}\quad I:=\{0,1,\dots,\ell\},
\end{equation}
and $(\K,\cO,\F)$ is a $p$-modular system. So $\K$ is a field of characteristic zero, $\cO$ a complete discrete valuation ring with field of fractions $\K$ and residue field $\F$, which is an algebraically closed field of characteristic $p$. We denote by $\overline{\phantom{x}}:\cO \to \F$ the natural projection.

%For the entire article we set .

Throughout the paper, we denote by $\cR$ either $\K$ or $\cO$ (we will rarely work over $\F$). 
We will at several points need the fact that $2$ is invertible is $\cR$ (certainly it is invertible in $\F$ and hence in $\cO$.) 
For $\cR$-modules $M$ and $\gN$, we set $\gM \otimes \gN:=\gM \otimes_{\cR} \gN$. (Starting with \S\ref{SSbM} we will use bold letters to distinguish specific fixed bi(super)modules from general ones.)

All $\cR$-algebras are assumed to be finitely generated as $\cR$-modules. A subalgebra of an $\cR$-algebra $A$ is always assumed to be unital.
 Since by assumption $A$ is finitely generated as an $\cR$-module, so is a subalgebra of $A$ (as $\cR$ is noetherian). 
Most algebras in this article will be $\cO$-algebras. Therefore, when we do not specify the underlying ring, it will be assumed that {\em algebra} refers to an $\cO$-algebra. If $A$ and $B$ are algebras, then we write $A \sim_{\Mor} B$ if $A$ and $B$ are Morita equivalent.

If $A$ is an $\cR$-algebra, then an $A$-module will always refer to a finitely generated, left $A$-module. Such modules satisfy the Krull-Schmidt property, see \cite[Corollary 4.6.7]{Lin5}. 
Since all $A$-modules are assumed to be finitely generated they are also finitely generated as $\cR$-modules and so a submodule of an $A$-module is finitely generated as an $\cR$-module. 
For $\cR$-algebras $A$ and $B$, we identify $(A,B)$-bimodules and $(A\otimes B^\op)$-modules; in particular, all $(A,B)$-bimodules are  assumed to be finitely generated. 
If $\gM$ is isomorphic to a direct summand of $\gN$, for (bi)modules $\gM$ and $\gN$, we write $\gM \mid \gN$.

A {\em projective cover} $(\gP,\varphi)$ of an $A$-module $\gM$ is a projective $A$-module $\gP$ and a surjective $A$-module homomorphism $\varphi:\gP \to \gM$ such that no proper direct summand of $\gP$ surjects onto $\gM$. Our conditions on $A$ ensure that it is a semi-perfect ring, hence projective covers of (finitely generated) $A$-modules always exist and are unique up to isomorphism. We set $\Omega_A(\gM)$ to be the {\em Heller translate} of $\gM$. That is, the kernel of $\varphi$. We inductively define powers of the Heller translate $\Omega_A^n$, for $n \in \NN$. Finally, $\Omega_A^0(\gM)$ is defined as the direct sum of all the non-projective, indecomposable summands of $\gM$.

For finite groups $G$ and $H$, any $(\cR G,\cR H)$-bimodule $\gM$ can be thought of as an $\cR(G \times H)$-module via $(g,h).m = g.m.h^{-1}$, for all $g\in G$, $h\in H$ and $m\in \gM$. Similarly, any $\cR(G \times H)$-module $\gM$ can be viewed as an $(\cR G,\cR H)$-bimodule via $g.m.h = (g,h^{-1}).m$, for all $g\in G$, $h\in H$ and $m\in \gM$. We will use these two concepts completely interchangeably. For example, a {\em projective} $(\cR G,\cR H)$-bimodule means that it is projective as  an $\cR(G \times H)$-module, a vertex of an indecomposable $(\cR G,\cR H)$-bimodule means its vertex as an $\cR(G \times H)$-module, etc.

\subsection{Split semisimple algebras}\label{sec:SSA}

Let $A$ be a (finite dimensional) semisimple $\K$-algebra such that every irreducible $A$-module is absolutely irreducible. We say $A$ is {\em split semisimple}. In other words, $A$ is isomorphic to a finite direct sum of matrix algebras defined over $\K$. Throughout this article we are primarily concerned with blocks of finite groups and Brauer tree algebras, both of which satisfy the above property, provided $\K$ contains enough roots of unity.

We set $\Grot(A)$ to be the Grothendieck group of (finite dimensional) $A$-modules and $\Grot^{+}(A)$ the subset of classes in $\Grot(A)$ that represent actual $A$-modules.
So
\begin{align*}
\Grot(A) = \Big\{\sum_{\gM} a_{\gM}[\gM]\mid a_{\gM} \in \Z \Big\} \quad \text{and} \quad \Grot^{+}(A) = \Big\{\sum_{\gM} a_{\gM}[\gM]\mid a_{\gM} \in \N \Big\},
\end{align*}
where $\gM$ runs over the isomorphism classes of irreducible $A$-modules. We refer to the elements of $\Grot^{+}(A)$ as the {\em characters of $A$} and call $[\gM]$ an {\em irreducible character} if $\gM$ is an irreducible $A$-module. We denote by $\Irr(A)$ the set of irreducible characters of $A$. We will also often use $\Z\Irr(A)$ and $\N\Irr(A)$ to mean $\Grot(A)$ and $\Grot^{+}(A)$ respectively.

Let $B$ be an $\cO$-free $\cO$-algebra such that $\K B := \K \otimes_{\cO} B$ is split semisimple. We say $B$ is {\em $\K$-split semisimple}. Set $\Irr(B) := \Irr(\K B)$ and
\begin{align*}
\prj(B) &:= \{[\K \otimes_{\cO} \gM]\in \Grot^+(\K B)\mid  \gM \text{ is an indecomposable projective }B\text{-module}\},
\\
\N\prj(B) &:= \{[\K \otimes_{\cO} \gM]\in \Grot^+(\K B)\mid  \gM \text{ is a projective }B\text{-module}\}.
\end{align*}
%Note that $\prj(B)$ and $\N\prj(B)$ lie in $\Grot^{+}(\K B)$.
For $\chi,\psi \in \N\Irr(B)$, we write 
$$\chi \geq_{{}_{\Irr(B)}} \psi\quad\text{and}\quad \chi \geq_{{}_{\prj(A)}} \psi
$$ 
to mean $\chi - \psi \in \N\Irr(B)$ and $\chi - \psi \in \N\prj(B)$, respectively.

Let $B,C$ be  $\K$-split semisimple algebras and $\gN$ a $(B,C)$-bimodule. Then the functor $\gN \otimes_C ?$ induces a ($\Z$-)linear function $\Grot(\K C) \to \Grot(\K B)$ that, by an abuse of notation, we also denote $\gN \otimes_C ?$. 
If $C$ is a subalgebra of $B$, %that is also $\K$-split semisimple,
then we denote by
\begin{align*}
\downarrow^B_C: \Grot(\K B) \to \Grot(\K C) \qquad \text{and} \qquad \uparrow_C^B: \Grot(\K C) \to \Grot(\K B)
\end{align*}
the linear functions induced by the functors $\Res^B_C$ and $\Ind_C^B$ respectively. Similar notation applies for algebras over $\K$---for example if $B$ and $C$ are split semisimple $\K$-algebras and $\gN$ a $(B,C)$-bimodule, then $\gN \otimes_C ?$ induces a ($\Z$-)linear function $\Grot(C) \to \Grot(B)$ that we also denote $\gN \otimes_C ?$.

If $G$ is a finite group and $H$ a subgroup, we write
\begin{align*}
\downarrow^G_H: \Grot(\K G) \to \Grot(\K H) \qquad \text{and} \qquad \uparrow_H^G: \Grot(\K H) \to \Grot(\K G)
\end{align*}
for the functions induced by the functors $\Res^G_H$ and $\Ind_H^G$ respectively. Furthermore, if $b$ is a block idempotent of $\cO G$ and $c$ of $\cO H$, then
\begin{align*}
\downarrow^{G,b}_{H,c}: \Grot(\K Gb) \to \Grot(\K Hc) \qquad \text{and} \qquad \uparrow_{H,c}^{G,b}: \Grot(\K Hc) \to \Grot(\K Gb)
\end{align*}
will denote the functions induced by truncated restriction $\Res^{G,b}_{H,c}$ and truncated induction $\Ind_{H,c}^{G,b}$ respectively. That is, the functions induced by $c\cO Gb \otimes_{\cO Gb} ?$ and $b\cO Gc \otimes_{\cO Hc} ?$.

\subsection{Group algebras}\label{sec:grp_alg}
Throughout the article we assume standard facts about vertices and sources, see, for example, \cite[Chapter 5]{Lin5}. 

At several stages %throughout this article 
we will use the Brauer homomorphism, see \cite[\S5.4]{Lin5}. Let $G$ be a finite group and $P$ a $p$-subgroup. The {\em Brauer homomorphism} $\Br_P$ is defined by
\begin{align*}
\Br_P: Z(\cO G) \to Z(\F C_G(P)), \ 
\sum_{g \in G}\al_g g  \mapsto \sum_{g \in C_G(P)}{\bar \al_g} g.
\end{align*}

For any finite group $G$, when we refer to a {\em block} $\cO Gb$, we implicitly mean that $b$ is a block idempotent of $\cO G$. We will use Alperin's definition of {\em defect group}, see \cite[Chapter IV]{Alp}. Namely, $D$ is the defect group of a block $\cO Gb$ if the block has vertex $\Delta D$ when considered as an $\cO (G \times G)$-module.

Let $D$ be a fixed finite $p$-group. For any subgroup $P \leq D$ and group monomorphism $\varphi:P \hookrightarrow D$, we set 
\begin{equation}\label{EDePhi}
\Delta_\varphi P := \{(x,\varphi(x))\mid x\in P\} \leq D \times D.
\end{equation}
 If $\varphi$ is the identity on $P$, we just write $\Delta P$.

\begin{Lemma}\label{lem:bimod_vert}
Let $G$, $H$ and $J$ be finite groups with a common $p$-subgroup $D$, and $\gM$ be an indecomposable $(\cO G,\cO H)$-bimodule with vertex $\Delta_\varphi P$ for some $P \leq D$ and $\phi:P \hookrightarrow D$. 
\begin{enumerate}
%\item Let $M$ be an $\cO G$-$\cO H$-bimodule with vertex $\Delta_\varphi Q$, for some $Q \leq D$ and $\phi:Q \to D$. Then $M \otimes_{\cO H}V$ is a direct sum of $\cO G$-modules with vertices contained in $Q$, for all $\cO H$-modules $V$.
%\item
%\begin{enumerate}
\item If\, $\gV$ is an indecomposable $\cO H$-module with vertex $Q$, then $\gM \otimes_{\cO H}\gV$ is a direct sum of indecomposable $\cO G$-modules each with vertex contained in $P \cap \varphi^{-1}({}^h Q \cap D)$, for some $h\in H$. 
%(Here, $\varphi^{-1}({}^h Q)$ means $\varphi^{-1}({}^h Q \cap D)$.)

\item If\, $\gN$ is an indecomposable $(\cO H,\cO J)$-bimodule with vertex $\Delta_\psi Q$ for some $Q \leq D$ and $\psi:Q \hookrightarrow D$, then $\gM \otimes_{\cO H}\gN$ is a direct sum of indecomposable $(\cO G,\cO J)$-bimodules each with vertex of the form $\Delta_\vartheta R$, for some $R \leq P$ and $\vartheta: R \hookrightarrow D$, with $\vartheta(R) \leq \psi(Q)$.
\end{enumerate}
\end{Lemma}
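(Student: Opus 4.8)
The plan is to prove both parts by the standard Mackey-style analysis of vertices under tensor product of bimodules, working throughout with $\cO(G\times H)$-modules. First I would recall the basic fact (from \cite[Chapter 5]{Lin5}) that if $L$ is an indecomposable $\cO(G\times H)$-module with vertex $\Delta_\varphi P$, then $L$ is a direct summand of $\Ind_{\Delta_\varphi P}^{G\times H} W$ for some $\cO(\Delta_\varphi P)$-source $W$, and conversely every indecomposable summand of such an induced module has vertex subconjugate to $\Delta_\varphi P$. The key computational device is that $\gM\otimes_{\cO H}\gV$ (resp.\ $\gM\otimes_{\cO H}\gN$) can be rewritten as a composite of restriction, tensor-over-$\cO$, and induction functors between the relevant product groups, and then Mackey's formula controls which double cosets — hence which twisted diagonal subgroups — can appear.

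**Part (i).**

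For (i), view $\gV$ as an $\cO H$-module with vertex $Q$, so $\gV \mid \Ind_Q^H U$ for an indecomposable source $U$, and view $\gM$ as an $\cO(G\times H)$-module with vertex $\Delta_\varphi P$, so $\gM \mid \Ind_{\Delta_\varphi P}^{G\times H} S$. The point is the identification
$\gM\otimes_{\cO H}\gV \cong (\gM \otimes_{\cO} \gV)^{H}$ (coinvariants), or more usefully: $\gM\otimes_{\cO H}\gV$ is obtained by inflating $\gV$ along $G\times H \to H$, tensoring with $\gM$ over $\cO$, and taking $H$-fixed points in a suitable sense — in practice one uses that $- \otimes_{\cO H} \gV$ is, on summands, $\Res$ from $G\times H$ to $G$ of $\gM \otimes_\cO (\text{something supported on }Q)$. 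Applying Mackey to $\Ind_{\Delta_\varphi P}^{G\times H}S \otimes_{\cO H} \Ind_Q^H U$, the summands of the result are induced from subgroups of the form obtained by intersecting $\Delta_\varphi P$ with $G\times {}^hQ$ for $h\in H$, then projecting to the first coordinate; this intersection is $\{(x, \varphi(x)) : x\in P,\ \varphi(x)\in {}^hQ\cap D\}$, whose image in $G$ is exactly $P\cap \varphi^{-1}({}^hQ\cap D)$. Hence every indecomposable $\cO G$-summand of $\gM\otimes_{\cO H}\gV$ has vertex contained in a conjugate of some $P\cap\varphi^{-1}({}^hQ\cap D)$, as claimed. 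I would be slightly careful to phrase the source/vertex bookkeeping so that it is the \emph{vertex} that is bounded, not merely a subgroup containing it; this follows because a vertex of a summand of $\Ind_K^G N$ is always subconjugate to $K$.

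**Part (ii) and the main obstacle.**

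Part (ii) is the technically heavier one: $\gM\otimes_{\cO H}\gN$ is an $\cO(G\times J)$-module, and I want to show its indecomposable summands have vertices of the \emph{twisted diagonal form} $\Delta_\vartheta R$ with $R\le P$ and $\vartheta(R)\le \psi(Q)$. The strategy is again to write $\gM \mid \Ind_{\Delta_\varphi P}^{G\times H}S$ and $\gN \mid \Ind_{\Delta_\psi Q}^{H\times J}T$, and to compute $\Ind_{\Delta_\varphi P}^{G\times H} S \otimes_{\cO H} \Ind_{\Delta_\psi Q}^{H\times J} T$ via the Mackey formula for the "middle" group $H$: the summands are induced from subgroups of $G\times J$ of the form $\Delta_\varphi P *_h \Delta_\psi Q := \{(g,j) : \exists\, y\in H \text{ with } (g,y)\in \Delta_\varphi P,\ ({}^hy, j)\in \Delta_\psi Q\}$ running over $h\in H$. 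Unwinding: $(g,y)\in\Delta_\varphi P$ forces $g\in P$, $y=\varphi(g)$; then ${}^h\varphi(g)\in Q$ is needed, and $j=\psi({}^h\varphi(g))$. So the subgroup is $\{(g,\psi({}^h\varphi(g))) : g\in P,\ {}^h\varphi(g)\in Q\}$, which is precisely $\Delta_\vartheta R$ with $R = \{g\in P : {}^h\varphi(g)\in Q\}\le P$ and $\vartheta = \psi\circ c_h\circ\varphi|_R$, whence $\vartheta(R)\le\psi(Q)$. Since a vertex of any summand of a module induced from $\Delta_\vartheta R$ is subconjugate (inside $G\times J$) to $\Delta_\vartheta R$ — and a $(G\times J)$-conjugate of a twisted diagonal subgroup of this shape is again of this shape with the parameters only shrinking — the claim follows.

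The main obstacle, and the step I would spend the most care on, is making the Mackey/double-coset bookkeeping over the middle group $H$ genuinely rigorous for tensor products of bimodules (as opposed to ordinary induced modules), i.e.\ justifying cleanly that $\Ind_{\Delta_\varphi P}^{G\times H}S \otimes_{\cO H}\Ind_{\Delta_\psi Q}^{H\times J}T$ decomposes with summands induced from exactly the subgroups $\Delta_\varphi P *_h \Delta_\psi Q$ described above; this is a known fact but writing it down correctly requires tracking how the diagonal $H$-action threads through the tensor product. Everything downstream — that the resulting subgroups have the required form and that passing to indecomposable summands and to $(G\times J)$-conjugates only improves the bound — is then routine group theory. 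I would also note that $D$ plays no essential role beyond housing all the relevant $p$-subgroups and the monomorphisms $\varphi,\psi,\vartheta$ as maps into it; the containments ${}^hQ\cap D$ in (i) and $\vartheta(R)\le\psi(Q)$ in (ii) are exactly what the Mackey computation delivers.
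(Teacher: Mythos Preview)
Your approach to (i) is essentially the same as the paper's: both reduce to a Mackey computation after writing $\gM$ and $\gV$ as summands of induced modules. The paper makes the key step explicit by proving the isomorphism $(\Ind_{\Delta_\varphi P}^{G \times \varphi(P)}\gU) \otimes_{\cO \varphi(R)} \gZ \cong \Ind_R^G(\gU \otimes \gZ)$ for $R\le P$ and $\gZ$ an $\cO\varphi(R)$-module; your remarks about coinvariants and inflation are not quite the right language, but the Mackey computation you land on is the same as theirs.

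For (ii) you take a genuinely different route. You argue directly with the Mackey formula for the tensor product of two induced bimodules over the middle group $H$, computing the composite subgroups $\Delta_\varphi P *_h \Delta_\psi Q$ explicitly and reading off their twisted-diagonal shape. The paper instead \emph{reduces (ii) to (i)}: it lets $b$ be the block idempotent of $\cO J$ through which $\gN$ factors as a right module, with defect group $S$; forms the $(\cO(G\times J),\cO(H\times J))$-bimodule $\gM \otimes \cO Jb$, which has vertex $\Delta_{\varphi\times\Id_S}(P\times S)$; observes that $(\gM \otimes \cO Jb)\otimes_{\cO(H\times J)}\gN \cong \gM\otimes_{\cO H}\gN$ as $\cO(G\times J)$-modules; and then applies part (i) to this enlarged setup, followed by a short group-theoretic unpacking inside $(P\times S)\cap(\varphi\times\Id_S)^{-1}({}^{(h,j)}\Delta_\psi Q)$. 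Your approach is more direct and conceptual but, as you rightly flag, hinges on supplying (or citing) the full bimodule Mackey formula cleanly; the paper's trick bypasses that by bootstrapping from the special case already established in (i), at the mild cost of introducing the auxiliary block $\cO Jb$ and its defect group.
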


\begin{proof}
(i) We have $\gM\mid \Ind_{\Delta_\varphi P}^{G \times H}\gU$ for some indecomposable $\cO\Delta_\varphi P$-module $\gU$, and $\gV\mid \Ind_Q^H\gW$ for some indecomposable $\cO Q$-module $\gW$. By Mackey's Theorem, we have that $\Res^H_{\varphi(P)}\Ind_Q^H\gW$ is a direct sum of indecomposable $\cO \varphi(P)$-modules each with vertex contained in $\varphi(P) \cap {}^h Q$ for some $h\in H$. 

We now prove that we have $\cO G$-module isomorphisms 
\begin{align}
\begin{split}\label{algn:mod_isom}
(\Ind_{\Delta_\varphi P}^{G \times \varphi(P)}\gU) \otimes_{\cO \varphi(P)} \Ind_{\varphi(R)}^{\varphi(P)}\gZ & \cong (\Ind_{\Delta_\varphi P}^{G \times \varphi(P)}\gU) \otimes_{\cO \varphi(R)} \gZ \cong \Ind_R^G(\gU \otimes \gZ),
\end{split}
\end{align}
for any $R \leq P$ and $\cO \varphi(R)$-module $\gZ$, where $\gU \otimes \gZ$ is given the structure of an $\cO R$-module via
\begin{align*}
r\cdot(u \otimes z) := ru\varphi(r)^{-1} \otimes \varphi(r) z
\qquad(\text{for $r\in R$, $u\in \gU$, $z\in \gZ$}).
\end{align*}
%for all $r\in R$, $u\in \gU$ and $z\in \gZ$. 
The first isomorphism in (\ref{algn:mod_isom}) is immediate. For the second, we need only observe that we have an $\cO R$-module isomorphism given by
\begin{align*}
(\Ind_{\Delta_\varphi R}^{R \times \varphi(R)}\Res^{\Delta_\varphi P}_{\Delta_\varphi R}\gU) \otimes_{\cO \varphi(R)} \gZ & \to \gU \otimes \gZ\\
(r_1 \otimes u \otimes \varphi(r_2)) \otimes z & \mapsto r_1 u \varphi(r_1)^{-1} \otimes \varphi(r_1 r_2)z \\
u \otimes z & \mapsfrom u \otimes z,
%\qquad\qquad\qquad\qquad\qquad
%(\text{for $r_1,r_2\in R$, $u\in \gU$, $z\in \gZ$}),
\end{align*}
%for all $r_1,r_2\in R$, $u\in \gU$ and $z\in \gZ$ 
and an $\cO (G \times \varphi(R))$-module isomorphism
\begin{align*}
\Ind_{\Delta_\varphi R}^{G \times \varphi(R)}\Res^{\Delta_\varphi P}_{\Delta_\varphi R}\gU \cong \Res^{G \times \varphi(P)}_{G \times \varphi(R)}\Ind_{\Delta_\varphi P}^{G \times \varphi(P)}\gU,
\end{align*}
given by the Mackey decomposition formula.

We have that
\begin{align*}
\gM \otimes_{\cO H} \gV\mid \ & (\Ind_{\Delta_\varphi P}^{G \times H}\gU) \otimes_{\cO H} \Ind_Q^H\gW \\
\cong\ & (\Ind_{\Delta_\varphi P}^{G \times \varphi(P)}\gU) \otimes_{\cO \varphi(P)} \Res^H_{\varphi(P)}\Ind_Q^H\gW,
\end{align*}
which, by (\ref{algn:mod_isom}) and the comments preceding it, is a direct sum of modules each with vertex contained in
\begin{align*}
\varphi^{-1}(\varphi(P) \cap {}^h Q) = P \cap \varphi^{-1}({}^h Q),
\end{align*}
for some $h\in H$.

(ii) Suppose $\gN$ lies in the block $\cO Jb$ when considered as a right $\cO J$ module. Certainly $\cO Jb$ is naturally an $(\cO J,\cO J)$-bimodule. We consider the $(\cO(G \times J),\cO(H \times J))$-bimodule $\gM \otimes \cO Jb$. Say $\cO Jb$ has defect group $S \leq J$. %(Certainly $N$ is relatively $(H \times S)$-projective and so we may choose $S$ such that $\psi(Q) \leq S$.)
Then $\cO Jb$ has vertex $\Delta S$ and $\gM \otimes \cO Jb$ has vertex $\Delta_{\varphi \times \Id_S}(P \times S) \leq (D \times S) \times (D \times S)$.

We have the following isomorphism of $\cO(G \times J)$-modules
\begin{align*}
(\gM \otimes \cO Jb) \otimes_{\cO(H \times J)} \gN &\cong \gM \otimes_{\cO H} \gN\\
(m \otimes jb) \otimes n &\mapsto m \otimes nbj^{-1}\\
(m \otimes b) \otimes n &\mapsfrom m \otimes n.
\end{align*}
%for all $m \in \gM$, $n \in \gN$ and $j \in J$. 
Therefore, by part (i), $\gM \otimes_{\cO H} \gN$ is a direct sum of indecomposable $(\cO G,\cO J)$-bimodules each with vertex contained in
\begin{align*}
(P \times S) \cap (\varphi \times \Id_S)^{-1}({}^{(h,j)} \Delta_\psi Q),
\end{align*}
for some $h\in H$ and $j \in J$. Since $\gM \otimes_{\cO H} \gN$ is a right $\cO J$-module each vertex is contained in
\begin{align*}
&{}^{(1,j^{-1})}[(P \times S) \cap (\varphi \times \Id_S)^{-1}({}^{(h,j)} \Delta_\psi Q)] \\
= & (P \times {}^{j^{-1}}S) \cap (\varphi \times \Id_{{}^{j^{-1}}S})^{-1}({}^{(h,1)} \Delta_\psi Q),
\end{align*}
for some $h\in H$ and $j \in J$. Setting $R'=P \cap \varphi^{-1}({}^hQ)$ and $\vartheta: R' \hookrightarrow D$, $x \mapsto \psi({}^{h^{-1}}\phi(x))$ we now have
\begin{align*}
(P \times {}^{j^{-1}}S) \cap (\varphi \times \Id_{{}^{j^{-1}}S})^{-1}({}^{(h,1)} \Delta_\psi Q) \leq \Delta_{\vartheta'}R'.
\end{align*}
Since every subgroup of $\Delta_{\vartheta'}R'$ is of the form $\Delta_\vartheta R$, for suitably defined $R$ and $\vartheta$, the claim follows.
\end{proof}

In the next lemma $\Tr^G_H:Z(\cO H)\to Z(\cO G)$ is the relative trace, see e.g. \cite[\S2.5]{Lin5}.

\begin{Lemma}\label{lem:unstable_block}
Let $G$ be a finite group, $H$ a normal subgroup and $\cO He$ a block with defect group $D$. If $C_G(e)=H$, then $\cO Gf$ is Morita equivalent to $\cO He$ with defect group $D$, where $f := \Tr_H^G(e)$. In particular, $f$ is a block idempotent of $\cO G$.
\end{Lemma}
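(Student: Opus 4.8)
The plan is to show that $f=\Tr_H^G(e)$ is an idempotent, that it is central in $\cO G$, that $\cO Gf\cong \cO He$ as algebras (which in particular makes them Morita equivalent and forces $f$ to be primitive central, i.e.\ a block idempotent), and finally to identify the defect group. First I would record the standing hypotheses: $e$ is a block idempotent of $\cO H$, $H\trianglelefteq G$, and $C_G(e)=H$, where $C_G(e):=\{g\in G\mid {}^g e=e\}$ is the stabiliser of $e$ under the conjugation action of $G$ on the (finite, orthogonal) set of block idempotents of $\cO H$. Since $G$ permutes the block idempotents of $\cO H$ and the $G$-orbit of $e$ is $\{{}^g e\mid g\in G\}$, the hypothesis $C_G(e)=H$ says this orbit has size $[G:H]$ and consists of pairwise orthogonal idempotents; hence $f=\Tr_H^G(e)=\sum_{gH\in G/H}{}^g e$ is a sum of $[G:H]$ pairwise orthogonal idempotents of $\cO H$, so $f^2=f$, and $f$ is visibly $G$-fixed hence central in $\cO G$ (it is $H$-central because each ${}^g e$ is, and $G$-invariant by construction).

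Next I would produce the algebra isomorphism $\cO Gf\xrightarrow{\sim}\cO He$. The idea is the standard "Clifford-theoretic" / Fell-absorption argument: because the $G$-conjugates of $e$ are orthogonal and sum to $f$, the two-sided ideal $\cO Gf$ decomposes, and one checks that multiplication by $e$ gives a map $\cO Gf\to \cO He$, $x\mapsto exe$, whose inverse is induced by $H$-conjugation spreading $\cO He$ across the orbit. More precisely, using $H\trianglelefteq G$ one shows $e\cdot\cO G\cdot e = \cO H e$ (any $g\in G$ with $e\,g\,e\ne 0$ must satisfy ${}^{g^{-1}}e\cdot e\ne 0$, forcing ${}^{g^{-1}}e=e$, i.e.\ $g\in C_G(e)=H$), and that $\cO Gf=\bigoplus_{gH}\, g e\,\cO G\,e$ is generated as a left $\cO Gf$-module by $e$ with endomorphism ring $e\cO Ge=\cO He$; equivalently $\cO Gf$ and $\cO He$ are linked by the $(\cO Gf,\cO He)$-bimodule $\cO Gfe=\cO Ge$, which one checks is a Morita bimodule (it is projective on both sides and $\End$ on each side is the opposite algebra). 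This simultaneously gives $\cO Gf\sim_{\Mor}\cO He$ and, since $\cO He$ is indecomposable as an algebra, that $\cO Gf$ is indecomposable, i.e.\ $f$ is a block idempotent of $\cO G$.

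Finally, for the defect group: the Morita bimodule $\cO Ge$ between $\cO Gf$ and $\cO He$ is a direct summand of $\Ind^{G\times H}_{\Delta H}\cO He$ (indeed $\cO Ge\cong \cO G\otimes_{\cO H}\cO He$ as $(\cO G,\cO H)$-bimodules, restricting along $H\le G$), so every indecomposable summand has a vertex contained in a $G\times H$-conjugate of $\Delta H$, and in fact one sees the vertex of $\cO Ge$ as an $\cO(G\times H)$-bimodule is $\Delta_\iota D$, where $D\le H$ is the defect group of $\cO He$ and $\iota$ the inclusion; this is exactly the situation covered by Lemma~\ref{lem:bimod_vert}, which transports vertices of bimodules under the tensor product $\cO Ge\otimes_{\cO H}(-)$ and back, showing the block $\cO Gf$ has the same defect group $D$ (one uses that $\cO He$, having defect group $D$, has $\Delta D$ as vertex, and that the splendid/trivial-source Morita equivalence preserves defect groups). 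I expect the main obstacle to be the bookkeeping in the isomorphism $\cO Gf\cong\cO He$ — getting the orthogonality of the orbit and the identity $e\cO Ge=\cO He$ set up cleanly so that the bimodule $\cO Ge$ is manifestly a Morita bimodule — rather than anything in the idempotent or defect-group steps, which are routine once that structural picture is in place.
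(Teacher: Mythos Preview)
Your Morita equivalence argument is essentially correct and is precisely what lies behind K\"ulshammer's \cite[Theorem C]{Ku1}, which the paper simply cites. One slip: you state that the map $x\mapsto exe$ gives an algebra isomorphism $\cO Gf\xrightarrow{\sim}\cO He$, but this is false --- comparing $\cO$-ranks, $\cO Gf=\bigoplus_{g_1H,g_2H}g_1\cO He\,g_2^{-1}$ has rank $[G:H]^2\cdot\operatorname{rank}(\cO He)$, so the truncation $x\mapsto exe$ is far from injective. What you actually establish (correctly) is that $e\cO Ge=\cO He$ and that $\cO Ge$ is a progenerator, hence a Morita bimodule; equivalently $\cO Gf\cong M_{[G:H]}(\cO He)$. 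This is fine, just drop the isomorphism claim.

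The defect-group step is where your approach genuinely diverges from the paper's, and where your argument becomes vague. You want to deduce that $\cO Gf$ has defect group $D$ from the vertex of the Morita bimodule $\cO Ge$ together with Lemma~\ref{lem:bimod_vert} and the slogan ``trivial-source Morita equivalences preserve defect groups''. The slogan is true, but Lemma~\ref{lem:bimod_vert} alone only gives \emph{upper} bounds on vertices of tensor products; you never explain how to obtain the lower bound $\Delta D\le$ vertex of $\cO Gf$. (Also, $\cO Ge$ is not merely a summand of $\Ind^{G\times H}_{\Delta H}\cO He$ --- it is isomorphic to $\Ind^{G\times H}_{H\times H}\cO He$.) The paper instead computes the vertex of $\cO Gf$ directly and very cheaply as an $\cO(G\times G)$-module: the isomorphism $\cO Gf\cong\Ind_{H\times H}^{G\times G}(\cO He)$ shows it is relatively $\Delta D$-projective, while the decomposition $\Res^{G\times G}_{H\times H}(\cO Gf)\cong\bigoplus_{g_1,g_2}g_1\cO He\,g_2$, each summand having vertex a conjugate of $\Delta D$, forces the vertex to be no smaller. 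This two-line induction/restriction argument avoids any appeal to general transfer of defect groups under Morita equivalence.
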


\begin{proof}
The Morita equivalence is just a special case of \cite[Theorem C]{Ku1}. That $\cO Gf$ has defect group $D$ follows from the following isomorphisms of $\cO(G \times G)$-modules and $\cO(H \times H)$-modules respectively
$$
\Ind_{H \times H}^{G \times G}(\cO He) \cong \cO Gf, \qquad \Res^{G \times G}_{H \times H}(\cO Gf) \cong \bigoplus_{g_1,g_2 \in G/H} g_1\cO He g_2.
$$
The first isomorphism implies that $\cO Gf$ is relatively $\Delta D$-projective, while the second dictates that the vertex of $\cO Gf$ cannot be any smaller that $\Delta D$.
\end{proof}

\begin{Lemma}\label{lem:ind_M_H^G}
Let $G$ be a finite group, $H$ a normal subgroup and $\gM$ an indecomposable $\cO H$-module with vertex $D$. If $N_G(D) \leq H$, then $\Ind_H^G \gM$ is indecomposable.
\end{Lemma}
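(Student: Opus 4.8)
The plan is to prove indecomposability of $\Ind_H^G\gM$ by analysing its restriction back to $H$. Since $H$ is normal in $G$, Mackey's theorem collapses to an $\cO H$-module isomorphism $\Res^G_H\Ind_H^G\gM\cong\bigoplus_{gH\in G/H}{}^g\gM$, where ${}^g\gM$ denotes $\gM$ with its $H$-action twisted through conjugation by $g$; this is well defined up to isomorphism on cosets of $H$ because conjugation by an element of $H$ is an inner automorphism. Each ${}^g\gM$ is indecomposable and has vertex ${}^gD$, since twisting by $g$ is a self-equivalence of the category of $\cO H$-modules sending relatively $Q$-projective modules to relatively ${}^gQ$-projective ones.

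The crucial point is that these conjugates are pairwise non-isomorphic across distinct cosets, so that the above decomposition is multiplicity-free and indexed by $G/H$. Indeed, if ${}^g\gM\cong{}^{g'}\gM$ then twisting by $g^{-1}$ gives $\gM\cong{}^{g^{-1}g'}\gM$; since isomorphic $\cO H$-modules have $H$-conjugate vertices, $D$ and ${}^{g^{-1}g'}D$ are $H$-conjugate, whence $g^{-1}g'\in H\,N_G(D)=H$ using the hypothesis $N_G(D)\leq H$. This is the only place the hypothesis enters, and it is really the whole content of the lemma.

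It then remains to show any indecomposable summand $\gN\mid\Ind_H^G\gM$ equals $\Ind_H^G\gM$. As $\gN\neq0$, Krull--Schmidt gives $\Res^G_H\gN\cong\bigoplus_{gH\in S}{}^g\gM$ for some nonempty $S\subseteq G/H$. Now $\gN$ is an $\cO G$-module, so for each $x\in G$ the map $n\mapsto xn$ is an $\cO H$-module isomorphism ${}^x(\Res^G_H\gN)\cong\Res^G_H\gN$; combining this with ${}^x({}^g\gM)\cong{}^{xg}\gM$ and the multiplicity-freeness of the previous paragraph, Krull--Schmidt forces $xS=S$ for every $x\in G$. Since $G$ permutes $G/H$ transitively by left translation and $S\neq\varnothing$, we get $S=G/H$, so $\Res^G_H\gN\cong\Res^G_H\Ind_H^G\gM$; writing $\Ind_H^G\gM\cong\gN\oplus\gN'$ and restricting, Krull--Schmidt yields $\Res^G_H\gN'=0$, hence $\gN'=0$ and $\gN\cong\Ind_H^G\gM$.

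I do not anticipate a genuine obstacle: once the multiplicity-free statement of the second paragraph is established the rest is formal bookkeeping with Mackey and Krull--Schmidt, and that statement is immediate from the definition of vertices up to $H$-conjugacy together with $N_G(D)\leq H$. (One could instead argue that the $G/H$-graded algebra $\End_{\cO G}(\Ind_H^G\gM)\cong\bigoplus_{gH\in G/H}\Hom_{\cO H}(\gM,{}^g\gM)$ is local---by noting that for $g\notin H$ any composite $\gM\to{}^g\gM\to\gM$ is non-invertible, again by the vertex argument---but the restriction argument above is shorter.)
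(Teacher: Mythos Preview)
Your proof is correct and follows the same strategy as the paper: use Mackey to write $\Res^G_H\Ind_H^G\gM\cong\bigoplus_{gH}{}^g\gM$, then show via the vertex condition and $N_G(D)\leq H$ that the summands are pairwise non-isomorphic, and conclude indecomposability from this multiplicity-free restriction. The only difference is in the final step: the paper simply cites \cite[\S5, Proposition~2]{Ward} for the implication ``multiplicity-free restriction indexed by $G/H$ $\Rightarrow$ induced module indecomposable'', whereas you supply a direct argument (the $G$-stability of the index set $S$ forces $S=G/H$). Your argument is a clean unpacking of exactly what Ward's proposition provides, so the two proofs are essentially the same; yours has the minor advantage of being self-contained.
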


\begin{proof}
Consider the decomposition
\begin{align}\label{algn:decomp_M}
\Res^G_H\Ind_H^G \gM \cong \bigoplus_{g \in G/H}g\gM
\end{align}
of $\Res^G_H\Ind_H^G\gM$ into indecomposable $\cO H$-modules. Since $M$ has vertex $D$, $g\gM$ has vertex ${}^g D$. Let $g_1,g_2 \in G$ and suppose ${}^{g_1} D$ is conjuagte to ${}^{g_2} D$ in $H$. In other words ${}^{g_1} D = {}^{h g_1} D$, for some $h \in H$. Then $g_1^{-1}h g_2 \in N_G(D)\leq H$ and so $g_1^{-1} g_2 \in H$. We have now proved that all summands in (\ref{algn:decomp_M}) are pairwise non-isomorphic. The claim now follows from \cite[\S5, Propositon 2]{Ward}.
\end{proof}

We now examine a specific application of Lemma \ref{lem:ind_M_H^G}. If $G$ is a finite group and $H$ a normal subgroup, we define the subgroup
\begin{align}\label{algn:marcus_not}
(G \times G)_{G/H} := \{(g_1,g_2) \in G \times G\mid g_1 H = g_2 H\} \leq G \times G.
\end{align}

\begin{Lemma}\label{lem:vert_blocks}
Let $G$ be a finite group, $H$ a normal subgroup and $\gM$ an indecomposable $\cO(H \times H)$-module, with vertex $\Delta D$. 
\begin{enumerate}
\item If $C_G(D) \leq H$, then $\Ind_{H \times H}^{G \times H}\gM$ is an indecomposable $\cO(G \times H)$-module.
\item If $\gM$ extends to an $\cO (G \times G)_{G/H}$-module, then
\begin{align*}
\Res^{G \times G}_{G \times H} \Ind_{(G \times G)_{G/H}}^{G \times G} \gM \cong \Ind_{H \times H}^{G \times H}\gM.
\end{align*}
\end{enumerate}
In particular, if the hypotheses of both\, {\rm (i)} and\, {\rm (ii)} hold, then $\Ind_{(G \times G)_{G/H}}^{G \times G}\gM$ is an indecomposable $\cO(G \times G)$-module.

\begin{enumerate}
\item[(iii)] If $e \in Z(\cO G)$ is a block idempotent of $\cO H$ such that $\cO He$ has defect group $D$ and $C_G(D) \leq H$, then $e$ is a block idempotent of $\cO G$.
\end{enumerate}
\end{Lemma}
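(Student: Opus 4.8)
The plan is to prove the three parts in sequence, deducing the ``in particular'' assertion from (i) and (ii) and deducing (iii) from (i). \emph{Part (i).} I would apply Lemma \ref{lem:ind_M_H^G} with the ambient group $G \times H$, the normal subgroup $H \times H$, and the $p$-subgroup $\Delta D \leq H \times H$; the module in question is $\gM$ itself, which by hypothesis is an indecomposable $\cO(H \times H)$-module with vertex $\Delta D$. The only hypothesis to verify is $N_{G \times H}(\Delta D) \leq H \times H$. A direct computation shows that $(g,h) \in G \times H$ normalizes $\Delta D$ if and only if $gxg^{-1} = hxh^{-1}$ for every $x \in D$, i.e. $h^{-1}g \in C_G(D)$; thus $N_{G \times H}(\Delta D) = \{(g,h) \in G \times H : g \in h\,C_G(D)\}$, and the hypothesis $C_G(D) \leq H$ forces $g \in H$. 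Hence $N_{G \times H}(\Delta D) \leq H \times H$ and Lemma \ref{lem:ind_M_H^G} yields that $\Ind_{H \times H}^{G \times H}\gM$ is indecomposable.

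\emph{Part (ii) and the ``in particular''.} Here I would invoke the Mackey formula for $\Res^{G \times G}_{G \times H}\Ind_{(G \times G)_{G/H}}^{G \times G}\gM$. The crucial point is that $G \times G$ is a single $(G \times H,\,(G \times G)_{G/H})$-double coset: for $(a,b) \in G \times G$ one has $(a,b) = (ab^{-1},1)\cdot(b,b)$ with $(ab^{-1},1) \in G \times H$ and $(b,b) \in (G \times G)_{G/H}$. Moreover $(G \times H) \cap (G \times G)_{G/H} = H \times H$. Hence Mackey gives $\Res^{G \times G}_{G \times H}\Ind_{(G \times G)_{G/H}}^{G \times G}\gM \cong \Ind_{H \times H}^{G \times H}\bigl(\Res^{(G \times G)_{G/H}}_{H \times H}\gM\bigr)$, and by hypothesis the restriction to $H \times H$ of the chosen $\cO(G\times G)_{G/H}$-extension of $\gM$ is again $\gM$. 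For the final clause, combining (ii) and (i) shows that $\Res^{G \times G}_{G \times H}\Ind_{(G \times G)_{G/H}}^{G \times G}\gM$ is indecomposable, whence so is $\Ind_{(G \times G)_{G/H}}^{G \times G}\gM$.

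\emph{Part (iii).} Since $e \in Z(\cO G)$, the ideal $\cO Ge$ is an $(\cO G,\cO G)$-bimodule. Using that $\cO G$ is free as a right $\cO H$-module on a transversal of $G/H$ (and that $e$ is central in $\cO H$, so $\cO Ge$ is also a right $\cO H$-submodule), the map $g \otimes x \mapsto gx$ gives an isomorphism $\cO G \otimes_{\cO H}\cO He \xrightarrow{\sim} \cO Ge$ of $(\cO G,\cO H)$-bimodules, i.e. $\cO Ge \cong \Ind_{H \times H}^{G \times H}(\cO He)$. As $\cO He$ has defect group $D$ it is indecomposable as an $\cO(H\times H)$-module with vertex $\Delta D$, and $C_G(D) \leq H$, so by part (i) the $(\cO G,\cO H)$-bimodule $\cO Ge$ is indecomposable. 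If $e = e_1 + e_2$ with $e_1,e_2$ orthogonal nonzero idempotents in $Z(\cO G)$, then $\cO Ge = \cO Ge_1 \oplus \cO Ge_2$ would be a nontrivial decomposition of $(\cO G,\cO H)$-bimodules, a contradiction. Hence $e$ is a primitive idempotent of $Z(\cO G)$, i.e. a block idempotent of $\cO G$.

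\emph{Main obstacle.} The only delicate point is the bookkeeping in (i): choosing the correct ambient group $G \times H$ and $p$-subgroup $\Delta D$ so that Lemma \ref{lem:ind_M_H^G} applies verbatim, and computing $N_{G \times H}(\Delta D)$ correctly. Everything else is routine Mackey theory together with the standard principle that a nontrivial decomposition of a central idempotent would split the corresponding ideal as a bimodule.
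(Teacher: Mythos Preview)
Your proof of parts (i) and (ii), together with the ``in particular'' clause, is essentially identical to the paper's: the same normalizer computation reducing (i) to Lemma~\ref{lem:ind_M_H^G}, and the same single-double-coset Mackey argument for (ii).

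For (iii) you take a slightly different tack from the paper. The paper observes that $\cO He$ extends naturally to an $\cO(G\times G)_{G/H}$-module (because $e\in Z(\cO G)$), identifies $\cO Ge \cong \Ind_{(G\times G)_{G/H}}^{G\times G}(\cO He)$, and then invokes the ``in particular'' clause (hence both (i) and (ii)) to conclude that $\cO Ge$ is indecomposable as an $(\cO G,\cO G)$-bimodule, which is exactly the statement that $e$ is a block idempotent. Your argument instead identifies $\cO Ge \cong \Ind_{H\times H}^{G\times H}(\cO He)$, applies (i) alone to get indecomposability as an $(\cO G,\cO H)$-bimodule, and then notes that a nontrivial decomposition of $e$ in $Z(\cO G)$ would already split $\cO Ge$ as an $(\cO G,\cO H)$-bimodule. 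Both are correct; your version is marginally more economical in that it avoids invoking (ii), while the paper's version has the advantage of reading off the block-idempotent conclusion directly from indecomposability of $\cO Ge$ as a $(G,G)$-bimodule without the extra central-idempotent argument.
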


\begin{proof}
(i) Suppose $(g,h) \in G \times H$ normalizes $\Delta D$. Then
\begin{align*}
\Delta D = {}^{(g,h)}\Delta D = \{({}^g x,{}^h x)\mid x \in D\} = {}^{(g,g)}\{(x,{}^{g^{-1}h} x)\mid x \in D\}.
\end{align*}
In particular, $g^{-1}h \in C_G(D) \leq H$ and so $N_{G \times H}(\Delta D) \leq H \times H$. The claim now follows from Lemma \ref{lem:ind_M_H^G}.

(ii) This is just the Mackey decomposition formula once we note that
\begin{align*}
(G \times H) \cap (G \times G)_{G/H} = H \times H
\end{align*}
and
\begin{align*}
|(G \times H)\backslash(G \times G)/(G \times G)_{G/H}|=1.
\end{align*}

(iii) This just follows from (i) and (ii), as $\cO He$ is certainly an $\cO(G \times G)_{G/H}$-module such that
$
\cO Ge \cong \Ind_{(G \times G)_{G/H}}^{G \times G}(\cO He).
$
\end{proof}

\subsection{Brauer trees}\label{sec:Brauer_trees}

We assume the reader is familiar with Brauer trees, in particular the fact that the basic algebra of a block of a finite group with cyclic defect is isomorphic to an appropriately constructed Brauer tree algebra (see \cite[Proposition 3.10]{Lin}). We refer the reader to the same result for the definition of the Brauer tree algebra defined over $\cO$. There, it is stated only for basic algebras of blocks with cyclic defect but the definition works for arbitrary Brauer trees. We briefly state some key facts used in this article. A good reference is \cite[$\S$11.1]{Lin6}.

Let $\mathscr{T}$ be a Brauer tree with associated Brauer tree algebra $A$ defined over $\cO$. It is implicitly assumed that the nodes of $\mathscr{T}$ are labeled by $\Irr(A)$. Similarly, the edges of $\mathscr{T}$ are labeled by the isomorphism classes of irreducible $\F A$-modules. A node with multiplicity greater than one (of which there will be at most one) will label a number of irreducible characters equal to said multiplicity. Finally, the projective cover (as an $A$-module) of an irreducible $\F A$-module has character equal to the sum of characters associated to the nodes at either end of the appropriate edge of $\mathscr{T}$.

It is our convention with Brauer trees that a node has multiplicity one unless it has some number of rings around it. In this case the multiplicity is always one more than the number of rings. In this article we will only consider Brauer trees with multiplicities at most two.

Recall the notation (\ref{EEll}). 
Consider the following Brauer tree:

\begin{align}\label{Zig_Brauer_tree}
\begin{braid}\tikzset{baseline=0mm}
\coordinate (1) at (0,0);
\coordinate (2) at (3,0);
\coordinate (3) at (6,0);
\coordinate (4) at (9,0);
\coordinate (5) at (12,0);
\coordinate (6) at (15,0);
\coordinate (7) at (18,0);
\coordinate (8) at (21,0);
\coordinate (9) at (24,0);
\coordinate (10) at (27,0);
\coordinate (11) at (30,0);
\draw[thin] (1) -- (2);
\draw[thin] (2) -- (3);
\draw(7.5,0) node {$\cdots$};
\draw[thin] (4) -- (5);
\draw[thin] (5) -- (6);
\draw[thin] (6) -- (7);
\draw[thin] (7) -- (8);
\draw(22.5,0) node {$\cdots$};
\draw[thin] (9) -- (10);
\draw[thin] (10) -- (11);
\node at (1)[circle,fill,inner sep=1.5pt]{};
\node at (1) [above] {$\ell^+$};
\node at (2)[circle,fill,inner sep=1.5pt]{};
\node at (2) [above] {$(\ell-1)^+$};
\node at (5)[circle,fill,inner sep=1.5pt]{};
\node at (5) [above] {$1^+$};
\node at (6)[circle,fill,inner sep=1.5pt]{};
\node at (6) [above] {$0$};
\node at (7)[circle,fill,inner sep=1.5pt]{};
\node at (7) [above] {$1^-$};
\node at (10)[circle,fill,inner sep=1.5pt]{};
\node at (10) [above] {$(\ell-1)^-$};
\node at (11)[circle,fill,inner sep=1.5pt]{};
\node at (11) [above] {$\ell^-$};
\end{braid}
\end{align}

\vspace{5mm}
\noindent
We use $\Zag_\ell$ to signify the corresponding Brauer tree algebra defined over $\cO$. Note that $\F \otimes_{\cO} \Zag_{\ell}$ is the algebra denoted $\Zag_\ell$ in \cite{KL}. We denote by $\chi_{i^{(\pm)}} \in \Irr(\Zag_\ell)$ the element corresponding to the node $i^{(\pm)}$, for $i\in I$.
% We denote by $\psi_{i^{\pm}} \in \Grot_{\F}(\Zag_\ell)$ the element corresponding to the edge $(i^{(\pm)},(i+1)^{(\pm)})$.

Next, consider the following Brauer tree:
\vspace{2mm}
\begin{align}\label{Zag_Brauer_tree}
\begin{braid}\tikzset{baseline=0mm}
\coordinate (1) at (0,0);
\coordinate (A) at (0,0.4);
\coordinate (2) at (3,0);
\coordinate (3) at (6,0);
\coordinate (4) at (9,0);
\coordinate (5) at (12,0);
\coordinate (6) at (15,0);
\draw[thin] (1) -- (2);
\draw[thin] (2) -- (3);
\draw(7.5,0) node {$\cdots$};
\draw[thin] (4) -- (5);
\draw[thin] (5) -- (6);
\node at (1)[circle,fill,inner sep=1.5pt]{};
\node at (A) [above] {$0$};
\node at (2)[circle,fill,inner sep=1.5pt]{};
\node at (2) [above] {$1$};
\node at (5)[circle,fill,inner sep=1.5pt]{};
\node at (5) [above] {$\ell-1$};
\node at (6)[circle,fill,inner sep=1.5pt]{};
\node at (6) [above] {$\ell$};
\draw (1) circle (6mm);
\end{braid}
\end{align}

\vspace{5mm}
\noindent
We use $\Zig_\ell$ to signify the corresponding Brauer tree algebra defined over $\cO$. Note that $\F \otimes_{\cO} \Zig_{\ell}$ is isomorphic to the algebra denoted $\Zig_\ell$ in \cite{KL}. We denote by $\chi_i\in \Irr(\Zig_\ell)$ the element corresponding to the node $i$, for $1\leq i\leq \ell$ and by $\chi_{0^+},\chi_{0^-}\in \Irr(\Zig_\ell)$ the pair corresponding to the node $0$. %We denote by $\psi_i \in \Grot_{\F}(\Zig_\ell)$ the element corresponding to the edge $(i,(i+1))$.

Recall from $\S$\ref{sec:SSA} that, if $\gN$ is an $(A,B)$-bimodule, then we use $\gN \otimes_B ?$ to denote the corresponding function $\Grot(\K B) \to \Grot(\K A)$.

%\textcolor{green}{Changed the lemma below slightly. We actually need $3\ell$ rather than $\ell$.}

\begin{Lemma}\label{lem:Zig_Zag}We have the following relationships:
\begin{enumerate}
\item If  $0\leq n\leq 4 \ell-1$ then  
\begin{align*}
\Omega_{\Zag_{\ell} \otimes \Zag_{\ell}^{\op}}^n(\Zag_{\ell}) \otimes_{\Zag_{\ell}} \chi_{\ell^+} \geq_{{}_{\Irr(\Zag_{\ell})}}
\begin{cases}
\chi_{(\ell-n)^+} & \text{if } 0\leq n \leq \ell-1,\\
\chi_0 & \text{if } n = \ell,\\
\chi_{(n-\ell)^-} & \text{if } \ell+1\leq n \leq 2\ell,\\
\chi_{(3\ell-n)^-} & \text{if } 2\ell+1\leq n \leq 3\ell-1,\\
\chi_0 & \text{if } n = 3\ell,\\
\chi_{(n-3\ell)^+} & \text{if } 3\ell+1 \leq n \leq 4\ell-1.
\end{cases}
\end{align*}

Moreover, if $1 \leq i\leq \ell$ then
\begin{align*}
\Omega_{\Zag_{\ell} \otimes \Zag_{\ell}^{\op}}^{3\ell}(\Zag_{\ell}) \otimes_{\Zag_{\ell}} \chi_{i^+} \geq_{{}_{\prj(\Zag_{\ell})}}
\begin{cases}
\chi_{(\ell-i)^+} \text{ and } \chi_{(\ell-i)^-} & \text{if } 1\leq i \leq \ell-1,\\
\chi_0 & \text{if } i = \ell,
\end{cases}
\end{align*}
\begin{align*}
\Omega_{\Zag_{\ell} \otimes \Zag_{\ell}^{\op}}^{3\ell}(\Zag_{\ell}) \otimes_{\Zag_{\ell}} \chi_{i^-} \geq_{{}_{\prj(\Zag_{\ell})}}
\begin{cases}
\chi_{(\ell-i)^+} \text{ and } \chi_{(\ell-i)^-} & \text{if } 1\leq i \leq \ell-1,\\
\chi_0 & \text{if } i = \ell,
\end{cases}
\end{align*}
and
\begin{align*}
\Omega_{\Zag_{\ell} \otimes \Zag_{\ell}^{\op}}^{3\ell}(\Zag_{\ell}) \otimes_{\Zag_{\ell}} \chi_0 \geq_{{}_{\prj(\Zag_{\ell})}} \chi_{\ell^+} \text{ and }\chi_{\ell^-}.
\end{align*}

\item If $0\leq n\leq 2 \ell-1$ then
\begin{align*}
\Omega_{\Zig_{\ell} \otimes \Zig_{\ell}^{\op}}^n(\Zig_{\ell}) \otimes_{\Zig_{\ell}} \chi_\ell \geq_{{}_{\Irr(\Zig_{\ell})}}
\begin{cases}
\chi_{\ell-n} & \text{if } 0\leq n \leq \ell-1,\\
\chi_{0^+} + \chi_{0^-} & \text{if } n = \ell,\\
\chi_{n-\ell} & \text{if } \ell+1\leq n \leq 2\ell-1.
\end{cases}
\end{align*}
Moreover, if $1 \leq i\leq \ell$ then
\begin{align*}
\Omega_{\Zig_{\ell} \otimes \Zig_{\ell}^{\op}}^{\ell}(\Zig_{\ell}) \otimes_{\Zig_{\ell}} \chi_i \geq_{{}_{\prj(\Zig_{\ell})}}
\begin{cases}
\chi_{\ell-i} & \text{if } 1\leq i \leq \ell-1,\\
\chi_{0^+} + \chi_{0^-} & \text{if } i = \ell,
\end{cases}
\end{align*}
and
\begin{align*}
\Omega_{\Zig_{\ell} \otimes \Zig_{\ell}^{\op}}^{\ell}(\Zig_{\ell}) \otimes_{\Zig_{\ell}} (\chi_{0^+} + \chi_{0^-}) \geq_{{}_{\prj(\Zig_{\ell})}} \chi_{\ell}.
\end{align*}
\end{enumerate}
\end{Lemma}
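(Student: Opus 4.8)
The plan is to compute the Heller translates of $\Zag_\ell$ (resp.\ $\Zig_\ell$) as a bimodule over itself very explicitly, using the standard structure theory of Brauer tree algebras. The key observation is that for a Brauer tree algebra $A$ with tree $\mathscr T$, the enveloping algebra $A\otimes A^{\op}$ is again (essentially) governed by the tree combinatorics, and the syzygies $\Omega^n(A)$ of the regular bimodule are controlled by the Green walk around $\mathscr T$. In fact, it is cleaner to reduce everything to the level of Grothendieck groups: applying the additive map $\Omega^n_{A\otimes A^{\op}}(A)\otimes_A ?$ to an irreducible character $\chi_j$ is the same as computing $[\,\Omega^n_A(S_j)\,]$ in $\Grot(\K A)$, where $S_j$ is a suitable lift of an irreducible $\F A$-module attached to the edge(s) at the node $j$ — more precisely, since $\Omega_{A\otimes A^{\op}}(A)$ is the bimodule inducing the syzygy operator on left modules, one has $\Omega^n_{A\otimes A^{\op}}(A)\otimes_A L \cong \Omega^n_A(L)$ for any left module $L$, and we only need the image in the Grothendieck group, so it suffices to track how characters evolve under taking projective covers and kernels.

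First I would set up the combinatorial bookkeeping: label the edges of the line/star $\mathscr T$ and record, for each irreducible $\F A$-module $D_e$, the characters of its projective cover $P_e$ (the sum of the two node-characters at the ends of $e$, by the stated convention) and hence the character of $\Omega_A(D_e)=\rad P_e$ and of $\Omega^{-1}_A(D_e)=P_e/\soc P_e$. For a line with $2\ell$ edges (the tree $\Zag_\ell$, read off from \eqref{Zig_Brauer_tree}) the exceptional vertex is absent / has multiplicity one, and for the star $\Zig_\ell$ with $\ell$ edges the exceptional vertex sits at node $0$ with multiplicity $2$; in both cases the period of $\Omega_{A\otimes A^{\op}}$ on the regular bimodule equals $2e\cdot m$ where $e$ is the number of edges and $m$ the multiplicity — this is why $4\ell$ appears for $\Zag_\ell$ ($e=2\ell$, $m=1$, but the relevant half-period phenomenon with the two "branches" $\pm$ gives the stated $4\ell$) and $2\ell$ for $\Zig_\ell$ ($e=\ell$, $m=2$). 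Next I would verify the base case $n=0$ (the identity bimodule acts as the identity, so $\Omega^0$ gives back $\chi_{\ell^+}$, resp.\ $\chi_\ell$, as claimed up to the non-projective summand convention) and then induct: each application of $\Omega_A$ replaces the current character $\chi$ by (character of the projective cover of the head) $-\,\chi$, and one reads off from the tree that this walks the index down by one along the appropriate branch, crosses the central node $0$ (picking up $\chi_0$ for $\Zag_\ell$, resp.\ $\chi_{0^+}+\chi_{0^-}$ for $\Zig_\ell$), continues down the other branch, and returns. The inequalities $\geq_{{}_{\Irr}}$ rather than equalities are exactly because at each step the character of the projective cover contributes an extra node-character which one discards (it lies in $\N\Irr$), and at the half-period ($3\ell$ for $\Zag_\ell$, $\ell$ for $\Zig_\ell$) the "moreover" statements in terms of $\geq_{{}_{\prj}}$ record that the relevant syzygy, as an honest module, surjects onto a module whose character is a sum of projective characters — these follow because after traversing exactly half the tree the syzygy of a simple is a direct sum (or an extension with) the projective covers indexed by the mirror-image edges, which is visible from the self-duality of the line/star under the flip fixing $0$.

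The main obstacle I anticipate is bookkeeping the $\pm$ branches of $\Zag_\ell$ correctly: the tree \eqref{Zig_Brauer_tree} is a line of length $2\ell$ with the node $0$ in the middle and mirrored sequences $\ell^+,(\ell-1)^+,\dots,1^+$ on one side and $1^-,\dots,\ell^-$ on the other, and one must be careful that the Green walk passes through $0$ once on the way out and once on the way back, so that the full period is $4\ell$ and the value at $n=2\ell$ (which is $\chi_{\ell^-}$, the far end) is the "antipode" of the starting edge; checking the exact index ranges in the six-case formula, and that the discarded summands really are characters (elements of $\N\Irr$) at every step, is the delicate part. For the star $\Zig_\ell$ the multiplicity-two exceptional vertex at $0$ means the character attached there is the pair $\chi_{0^+}+\chi_{0^-}$ and the projective cover $P_\ell$ of the edge $\ell$ has character $\chi_\ell+(\chi_{0^+}+\chi_{0^-})$ only in the relevant walk; verifying the period is $2\ell$ (not $\ell$) and that $\Omega^\ell$ sends $\chi_0=\chi_{0^+}+\chi_{0^-}$ to something dominating $\chi_\ell$ in $\N\prj$ is the analogous delicate point there. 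Once the walk is pinned down, everything else is a finite check for each residue class of $n$, so I would organize the write-up as: (a) recall $P_e$ from the tree; (b) state the syzygy-walk lemma for Brauer tree algebras; (c) run the walk for $\Zag_\ell$ through all of $0\le n\le 4\ell-1$ and extract the six cases plus the $n=3\ell$ "moreover"; (d) repeat (shorter) for $\Zig_\ell$.
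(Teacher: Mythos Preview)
Your overall approach---the Green walk around the Brauer tree together with the identity $\Omega^n_{A\otimes A^{\op}}(A)\otimes_A L\cong \Omega^n_A(L)\oplus P$ for some projective $P$---is precisely what the paper does, invoking \cite[Theorem~2]{Green} to produce $\cO$-free $\Zag_\ell$-modules $\ttM_0,\dots,\ttM_{4\ell-1}$ with $[\ttM_i]=\chi_{v_i}$ and $\Omega_{\Zag_\ell}\ttM_i\cong\ttM_{i+1}$, where the $v_i$ trace the walk $\ell^+\to\cdots\to 0\to\cdots\to\ell^-\to\cdots\to 0\to\cdots\to(\ell-1)^+$. The first assertions in (i) and (ii) then drop out of the case $i=0$.

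Where your proposal has a genuine gap is the mechanism for the ``moreover'' statements. You attribute these to ``self-duality of the line under the flip fixing $0$'', but that flip exchanges $\chi_{i^+}\leftrightarrow\chi_{i^-}$ and at best tells you that the $\chi_{i^+}$ and $\chi_{i^-}$ rows of the table mirror one another; it does \emph{not} explain why, for a \emph{single} input $\chi_{i^+}$, the output $\Omega^{3\ell}_{\Zag_\ell\otimes\Zag_\ell^{\op}}(\Zag_\ell)\otimes_{\Zag_\ell}\chi_{i^+}$ simultaneously dominates \emph{both} $\chi_{(\ell-i)^+}$ and $\chi_{(\ell-i)^-}$ modulo $\N\prj(\Zag_\ell)$. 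The paper's argument is different: every interior node $i^+$ (with $1\le i\le\ell-1$) occurs \emph{twice} in the Green walk, at positions $\ell-i$ and $3\ell+i$, so there are two non-isomorphic $\cO$-free modules $\ttM_{\ell-i}$ and $\ttM_{3\ell+i}$ with the \emph{same} character $\chi_{i^+}$. Tensoring with the $\Omega^{3\ell}$-bimodule sends these to $\ttM_{4\ell-i}\oplus(\text{proj})$ and $\ttM_{2\ell+i}\oplus(\text{proj})$, which have characters $\chi_{(\ell-i)^+}$ and $\chi_{(\ell-i)^-}$ respectively. Running the character computation with each of the two lifts yields the two separate $\geq_{\prj}$ inequalities. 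The same double-occurrence trick handles $\chi_0$, $\chi_{i^-}$, and the $\Zig_\ell$ case. Your self-duality idea does not supply this, and without it the ``moreover'' parts are unproved.

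Two smaller corrections. First, the $\ttM_i$ are $\cO$-free lifts with $\K\otimes\ttM_i$ irreducible (so attached to \emph{nodes}); they are not lifts of irreducible $\F A$-modules attached to edges, and your recipe ``(character of the projective cover of the head)$-\chi$'' is not well-defined at the level of characters alone, precisely because two non-isomorphic $\cO$-modules can share a character but have different heads. Second, the tree for $\Zig_\ell$ is a path with the exceptional vertex at one end, not a star, and $3\ell$ is three-quarters of the period $4\ell$, not half.
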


\begin{proof}
%We focus on part (i). Part (ii) is proved similarly.
(i) By \cite[Theorem 2]{Green}, there exists a sequence $v_0, \dots, v_{4\ell-1}$ of vertices of the Brauer tree of $\Zag_{\ell}$ (possibly with repeats) and $\cO$-free $\Zag_{\ell}$-modules $\ttM_0, \dots, \ttM_{4\ell-1}$ such that $[\ttM_i] = \chi_{v_i}$, for all $0\leq i\leq 4\ell - 1$, with the following properties: $(v_i,v_{i+1})$ is an edge of the Brauer tree, for all $i \in \Z$ (where the subscripts are considered modulo $4\ell$), each edge occurs exactly twice in the sequence
\begin{align*}
(v_0,v_1),(v_1,v_2),\dots,(v_{4\ell-2},v_{4\ell-1}), (v_{4\ell-1},v_0)
\end{align*}
and $\Omega_{\Zag_{\ell}} \ttM_i \cong \ttM_{i+1}$ for all $i \in \Z$ (where again the subscripts are considered modulo $4\ell$).

Since each edge must occur exactly twice, it is clear that our sequence must be some cyclic permutation of
\begin{align*}
\ell^+,(\ell-1)^+, \dots ,1^+,0,1^-, \dots ,(\ell-1)^-, \ell^-, (\ell-1)^-, \dots, 1^-,0,1^+, \dots,(\ell-1)^+.
\end{align*}
In fact, by shifting, we can assume our sequence is exactly as above. In other words, the sequence walks from one end of the Brauer tree to the other and then back again.

Next note that, for all $\Zag_{\ell}$-modules $\gM$ and $n \in \N$,
\begin{align*}
\Omega^n_{\Zag_{\ell} \otimes \Zag_{\ell}^{\op}}(\Zag_{\ell})\otimes_{\Zag_{\ell}} \gM \cong \Omega^n_{\Zag_{\ell}} \gM \oplus \gP,
\end{align*}
for some projective $\Zag_{\ell}$-module $\gP$. This can be seen easily by taking a projective resolution of $\Zag_{\ell}$ as a $\Zag_{\ell} \otimes \Zag_{\ell}^{\op}$-module and then applying $? \otimes_{\Zag_{\ell}} \gM$. Therefore, for all $i,n \in \N$,
\begin{align}\label{algn:Brauer_walk}
\Omega^n_{\Zag_{\ell} \otimes \Zag_{\ell}^{\op}}(\Zag_{\ell})\otimes_{\Zag_{\ell}} \ttM_i \cong \ttM_{i+n} \oplus \ttP_{i,n},
\end{align}
for some projective $\Zag_{\ell}$-module $\ttP_{i,n}$. In particular, for all $n \in \N$,
\begin{align*}
\Omega^n_{\Zag_{\ell} \otimes \Zag_{\ell}^{\op}}(\Zag_{\ell})\otimes_{\Zag_{\ell}} \ttM_0 \cong \ttM_n \oplus \ttP_{0,n}.
\end{align*}
The first claim now follows. The second claim also follows immediately from (\ref{algn:Brauer_walk}) once we have noted that all the nodes that are not $\ell^{\pm}$ occur twice in our sequence. For example, for all $1\leq i \leq \ell-1$, since $v_{\ell-i} = v_{3\ell + i} = i^+$ and $v_{4\ell-i} = (\ell - i)^+, v_{6\ell+i} = v_i = (\ell - i)^-$, we have
\begin{align*}
\Omega_{\Zag_{\ell} \otimes \Zag_{\ell}^{\op}}^{3\ell}(\Zag_{\ell}) \otimes_{\Zag_{\ell}} \chi_{i^+} \geq_{{}_{\prj(\Zag_{\ell})}} \chi_{(\ell-i)^+} \quad \text{and} \quad \chi_{(\ell-i)^-}.
\end{align*}

(ii) The application of \cite[Theorem 2]{Green} gives a sequence $\ttM_0, \dots, \ttM_{2\ell-1}$ of $\cO$-free $\Zig_{\ell}$-modules such that
\begin{align*}
[\ttM_i] =
\begin{cases}
\chi_{\ell-i}&\text{if }0\leq i\leq \ell-1\\
\chi_{0^+} + \chi_{0^-} &\text{if }i = \ell\\
\chi_{i-\ell}&\text{if }\ell+1\leq i\leq 2\ell-1
\end{cases}
\end{align*}
and $\Omega_{\Zig_{\ell}} \ttM_i \cong \ttM_{i+1}$ for all $i \in \Z$ (where the subscripts are considered modulo $2\ell$). The proof now proceeds as in part (i).
\end{proof}

\subsection{Combinatorics}\label{sec:comb}

Let $n \in \N$. We use $[n]$ to signify the set $\{1,\dots,n\}$. We denote by $\Par(n)$ the set of partitions of $n$ and by $\Par_0(n)$ the set of strict partitions of $n$, i.e. partitions of $n$ without repeated parts. In addition, we set $\Par := \bigsqcup_{n\in \N}\Par(n)$ and $\Par_0 := \bigsqcup_{n\in \N}\Par_0(n)$. For any partition $\la$, we set $h(\la):=\max\{k\mid \la_k>0\}$ to be the {\em length} of $\la$. If, in addition, $\mu \in \Par$, we write $\mu \subseteq \la$ if $h(\mu) \leq h(\la)$ and $\mu_i \leq \la_i$, for all $1\leq i \leq h(\mu)$.

For a partition $\la$, we denote by $[\la]$ the {\em Young diagram} of $\la$, which consists of the {\em boxes} $(i,j)\in \Z_{>0}\times \Z_{>0}$ satisfying the following conditions:
$$
[\la]:=\{(i,j) \in \Z_{>0}\times \Z_{>0}\mid i \leq h(\la) \text{ and }1 \leq j \leq \la_i\}.
$$
For a {\em strict}\, partition $\la$ it is often more natural to work with its {\em shifted diagram}, which consists of the boxes satisfying the following conditions: 
\begin{align*}
\sh[\la] := \{(i,j) \in \Z_{>0}\times \Z_{>0}\mid i \leq h(\la)
\ \text{and}\ 
i\leq j \leq \la_i + i -1 
\}.
\end{align*}
For example, if $\la = (6,4,2,1) \in \Par_0(13)$, then the shifted diagram of $\la$ is
\vspace{2mm}
\begin{align*}
\sh[\la]=\begin{ytableau}
\, & & & & & \cr 
\none & & & & \cr
\none & \none & & \cr
\none & \none & \none & \cr
\end{ytableau}
\end{align*}

\vspace{2mm}
Let $\la\in\Par(n)$. 
We define $K_\la$ to be the number of bijections $T:[n] \to [\la]$ such that $T([m])$, for $1\leq m \leq n$, is always the Young diagram of a partition. 
We also denote by $\Par(\la)^{+1}$ the set of all $\mu \in \Par(n+1)$ such that $[\mu]$ is obtained by adding a box to $[\la]$. Similarly, we denote by $\Par(\la)^{-1}$ the set of all $\mu \in \Par(n-1)$ such that $[\mu]$ is obtained by removing a box from $[\la]$.

Let $\la \in \Par_0(n)$. We define $K_\la'$ to be the number of bijections $T:[n] \to \sh[\la]$ such that $T([m])$, for $1 \leq m \leq n$, is always the shifted diagram of a strict partition. 
We also denote by $\Par_0(\la)^{+1}$ the set of $\mu \in \Par_0(n+1)$ such that $\sh[\mu]$ is obtained by adding a box to $\sh[\la]$. Similarly, $\Par_0(\la)^{-1}$ will denote the set of $\mu \in \Par_0(n-1)$ such that $\sh[\mu]$ is obtained by removing a box from $\sh[\la]$.

The following lemma is well known and is easy to see:

\begin{Lemma}\label{lem:count_tab}
Let $n\in\NN$. We have:
\begin{enumerate}
\item if $\la \in \Par(n)$ then
$
\sum_{\mu \in \Par(\la)^{-1}}K_\mu = K_\la;
$
\item if $\la \in \Par_0(n)$ then
$
\sum_{\mu \in \Par_0(\la)^{-1}}K_\mu' = K_\la'.
$
\end{enumerate}
\end{Lemma}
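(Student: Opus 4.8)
The plan is to prove both parts by the same elementary device: deleting the box containing the largest entry of a (shifted) standard tableau. Concretely, I would first note that a bijection $T:[n]\to[\la]$ counted by $K_\la$ is nothing but a standard Young tableau of shape $\la$, and that the condition "$T([m])$ is a Young diagram for all $m$" applied to $m=n-1$ forces the box $T^{-1}(n)$ to be one whose removal from $[\la]$ leaves a Young diagram; that is, $[\la]\setminus\{T^{-1}(n)\}=[\mu]$ for a unique $\mu\in\Par(\la)^{-1}$. Restricting $T$ to $[n-1]$ then yields a bijection counted by $K_\mu$, so we obtain a well-defined map from the set counted by $K_\la$ to $\bigsqcup_{\mu\in\Par(\la)^{-1}}(\text{set counted by }K_\mu)$.

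Next I would write down the inverse. Given $\mu\in\Par(\la)^{-1}$ and a bijection $S:[n-1]\to[\mu]$ counted by $K_\mu$, there is a unique box $b$ with $[\la]=[\mu]\sqcup\{b\}$, and extending $S$ by $S(n):=b$ produces a bijection $[n]\to[\la]$; the partial-diagram condition holds because it holds for $S$ on $[m]$ with $m\le n-1$ and trivially for $m=n$. These two constructions are visibly mutually inverse, so summing cardinalities over $\mu\in\Par(\la)^{-1}$ gives $\sum_{\mu\in\Par(\la)^{-1}}K_\mu=K_\la$, which is (i). Part (ii) is proved by repeating this argument verbatim with the Young diagram $[\cdot]$ replaced by the shifted diagram $\sh[\cdot]$, "partition" replaced by "strict partition", $K$ by $K'$, and $\Par(\la)^{\pm 1}$ by $\Par_0(\la)^{\pm 1}$.

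I do not expect a genuine obstacle here — the paper itself flags the statement as well known — but the one point deserving an explicit sentence is, in the shifted case, that removing the box $\sh[\la]\setminus\{T^{-1}(n)\}$ again produces the shifted diagram of a \emph{strict} partition (and conversely that every $\mu\in\Par_0(\la)^{-1}$ arises this way from some box addition). This is exactly the content built into the definition of $\Par_0(\la)^{-1}$ given above, so it requires only a remark rather than an argument; the rest is the routine check that the deletion and insertion maps are inverse to each other.
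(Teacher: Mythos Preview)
Your proposal is correct: the standard ``remove the box labelled $n$'' bijection is exactly the argument one has in mind, and your remark about the shifted case is the right thing to check. The paper itself omits the proof entirely, declaring the lemma ``well known and easy to see'', so your argument simply supplies the routine details the authors left implicit.
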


The next lemma is proved in \cite[Lemma 4.4.13]{KL}.

\begin{Lemma}\label{lem:dim_sqd}
Let $n\in\N$. Then
$$
\sum_{\lambda\in\Par(n)}K_\lambda^2=n!=\sum_{\lambda\in\Par_0(n)}2^{n-h(\lambda)}(K_\lambda')^2.
$$
\end{Lemma}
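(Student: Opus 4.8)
The plan is to prove the two equalities separately, each by recognizing the combinatorial quantities as tableaux counts: $K_\la$ is the number of standard Young tableaux of shape $\la$, and $K'_\la$ is the number of standard shifted tableaux of shape $\la$ (recording which cells are occupied by $1,2,\dots,n$ converts a standard, resp. standard shifted, tableau into exactly a bijection $T$ of the type appearing in the definitions). Both identities then assert that $n!=|\Si_n|$ is counted in two ways.

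For $\sum_{\la\in\Par(n)}K_\la^2=n!$ I would invoke the Robinson--Schensted correspondence, an explicit bijection between $\Si_n$ and the set of pairs $(P,Q)$ of standard Young tableaux of a common shape with $n$ boxes; summing over the shape gives the claim. Equivalently one can quote the Wedderburn decomposition $\C\Si_n\cong\bigoplus_{\la\in\Par(n)}\mathrm{Mat}_{K_\la}(\C)$ and compare dimensions, or extract the coefficient of $x_1\cdots x_n\,y_1\cdots y_n$ from the Cauchy identity $\sum_\la s_\la(x)s_\la(y)=\prod_{i,j}(1-x_iy_j)^{-1}$: on the right only permutation matrices contribute, giving $n!$, while on the left the coefficient of $x_1\cdots x_n$ in $s_\la$ is $K_\la$.

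For the second identity I would pass to the spin side. Over $\C$ the algebra $\C\tSi_n$ has dimension $2\cdot n!$ and decomposes as $\C\tSi_n(1+z)/2\oplus\C\tSi_n(1-z)/2$, with the first summand isomorphic to $\C\Si_n$; hence $\C\tSi_n(1-z)/2$ has dimension $n!$ and, being semisimple, is a direct sum of simple algebras. By Schur's classification of the spin representations these simple summands are indexed by $\Par_0(n)$: for $\la$ with $n-h(\la)$ even one gets a matrix algebra of degree $d_\la=2^{(n-h(\la))/2}K'_\la$, and for $\la$ with $n-h(\la)$ odd a "type $\Qtype$" algebra isomorphic to $\mathrm{Mat}_{d_\la}(\C)^{\oplus 2}$ with $d_\la=2^{(n-h(\la)-1)/2}K'_\la$ (here $K'_\la$, the number of standard shifted tableaux, is the reduced spin degree via the shifted hook length formula). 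In the even case the summand has dimension $d_\la^2=2^{n-h(\la)}(K'_\la)^2$ and in the odd case $2d_\la^2=2^{n-h(\la)}(K'_\la)^2$, so the parity drops out, and summing over $\Par_0(n)$ gives $\sum_{\la\in\Par_0(n)}2^{n-h(\la)}(K'_\la)^2=\dim_\C\C\tSi_n(1-z)/2=n!$. Alternatively one extracts the coefficient of $x_1\cdots x_n\,y_1\cdots y_n$ from the dual Cauchy identity $\sum_{\la\in\Par_0}Q_\la(x)P_\la(y)=\prod_{i,j}\frac{1+x_iy_j}{1-x_iy_j}$: the right side is $2^n n!$ since only permutation matrices contribute and each of the $n$ cells yields a factor $2$ from $\frac{1+t}{1-t}=1+2t+\cdots$; on the left the coefficient of $x_1\cdots x_n$ in $P_\la$ is $2^{n-h(\la)}K'_\la$ (each of the $n-h(\la)$ off-diagonal cells may be primed or not) and $Q_\la=2^{h(\la)}P_\la$, so the left side equals $2^n\sum_{\la\in\Par_0(n)}2^{n-h(\la)}(K'_\la)^2$, and the identity follows after cancelling $2^n$.

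The main obstacle is precisely this second identity, and within it the bookkeeping that makes the power of $2$ emerge uniformly as $2^{n-h(\la)}$: one needs both that the full spin degree attached to $\la$ is $2^{\lfloor(n-h(\la))/2\rfloor}K'_\la$ and that the associated module is self-associate exactly when $n-h(\la)$ is even. If one prefers to avoid quoting Schur's theory, there is an induction on $n$ built from Lemma~\ref{lem:count_tab}: writing $K_\la^2=K_\la\sum_{\mu\in\Par(\la)^{-1}}K_\mu$ and interchanging the order of summation reduces the first identity to $\sum_{\la\in\Par(\mu)^{+1}}K_\la=nK_\mu$ (the degree form of the induction branching rule for $\Si_n$), and an entirely parallel manipulation reduces the shifted identity to $\sum_{\la\in\Par_0(\mu)^{+1}}2^{h(\mu)+1-h(\la)}K'_\la=nK'_\mu$. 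Establishing this last relation is the delicate step, since adding a box to a shifted diagram may or may not increase the number of rows, so the powers of $2$ must be tracked alongside the (more intricate) spin branching rule.
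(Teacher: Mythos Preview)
Your proposal is correct. Both the Robinson--Schensted argument for the first identity and your two arguments for the second (via the Wedderburn decomposition of $\K\tSi_n e_z$ using Schur's dimension formula for spin characters, or via the Cauchy identity for Schur $P$/$Q$-functions) are standard and go through; the parity bookkeeping you flag as the main obstacle is handled correctly in both versions.

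The paper itself gives no proof here: it simply cites \cite[Lemma~4.4.13]{KL}. So your self-contained arguments supply more than the present paper does. Of your approaches, the symmetric-function route is the cleanest self-contained one, while the representation-theoretic argument connects directly to the superalgebra framework used elsewhere in the paper. Your inductive sketch is also sound in outline, but as you correctly note, the reduction step $\sum_{\la\in\Par_0(\mu)^{+1}}2^{h(\mu)+1-h(\la)}K'_\la=nK'_\mu$ is itself the spin branching rule in degree form and would require its own justification, so this route is not more elementary than the other two.
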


We will use the $\bar p$-abacus notation for strict partitions introduced in~\cite{MorYas}. For $\la \in \Par_0(n)$, $\Ab_\la$ will signify its $\bar p$-abacus display, see \cite[\S2.3b]{KL} for more details on this. Our convention is that the $0^{\nth}$ position on the abacus display is always unoccupied, i.e. we always use $h(\la)$ beads in $\Ab_\la$. 
%\color{red}{\bf Question from Sasha:} Are you sure we want this requirement? In \cite{KL} it was convenient to allow $N$ to be bigger that $h(\la)$. Now you always want to use exactly $h(\la)$ beads? Is there a good reason for this? I am worried that this is almost certainly going to be too restrictive. For example this condition might get violated after slides up. \color{black}
An {\em elementary slide} up on some abacus display means simply moving a bead from position $r$ to position $r-p$, for some $r \geq p$. For $r=p$, this means removing the bead in position $p$ entirely. 
There is, of course, the analogous concept of an elementary slide down on an abacus display. In particular, inserting a bead in the empty $p^{\nth}$ position in an abacus display is considered to be an elementary slide down.

Following \cite{Morris}, we can associate to every $\la\in\Par_0$ its {\em $\bar p$-core} $\core(\la)\in\Par_0$. We can also define a non-negative integer called the {\em $\bar p$-weight} of $\la$:
\begin{align*}
\wt(\la):=(|\la|-|\core(\la)|)/p\in\N.
\end{align*}
If $\rho$ is a ${\bar p}$-core and $d \in \N$, we define
\begin{align*}
\Par_0(\rho,d):=\{\la\in\Par_0 \mid \core(\la)=\rho,\ \wt(\la)=d\}.
\end{align*}

If $\la \in \Par_0(\rho,d)$, then, for $i\in I$, we set $\la^{(i)}$ to be the {\em $i^{\nth}$ quotient of $\la$}, as defined in \cite[p.27]{MorYas}. So the $\la^{(i)}$'s are partitions, $\la^{(0)}$ being strict, with $\sum_{i=0}^\ell |\la^{(i)}| = d$. We set
\begin{equation}\label{EK(la)}
K(\la):=K_{\la^{(0)}}' K_{\la^{(1)}}\dots K_{\la^{(\ell)}}.
\end{equation}

We denote by $\La(I,d)$ the set of all tuples $\ud=(d_1,\dots,d_\ell)$ of non-negative integers such that  $d_0+\dots+d_\ell=d$. 
Given a tuple $\ud=(d_1,\dots,d_\ell)\in\La(I,d)$, we set
\begin{equation}\label{EDUnderline}
\Par_0(\rho,\ud)=\{\la\in \Par_0(\rho,d)\mid |\la^{(i)}|=d_i\ \text{for all $i\in I$}\}. 
\end{equation}
We also define the multinomial coefficient
\begin{equation}\label{EBinom}
\binom{d}{\ud}:=\binom{d}{d_0,\dots,d_\ell}.
\end{equation}

For $\la \in \Par_0(n-p)$ and $j \in I$, $\Par_0^j(\la)^+$ will signify the set of $\mu \in \Par_0(n)$ such that $\Ab_\mu$ is obtained from $\Ab_\la$ by an elementary slide down on the runner $j$ or $p-j$ or by inserting beads at the top of the runners $j$ and $p-j$. It is well known that this is equivalent to $\la$ and $\mu$ having the same $\bar p$-core, $\la^{(i)} = \mu^{(i)}$, for all $i \neq j$, and $\mu^{(0)} \in \Par_0(\la^{(0)})^{+1}$, if $j=0$ or $\mu^{(j)} \in \Par(\la^{(j)})^{+1}$, if $j\neq0$. We set $\Par_0^{\leq j}(\la)^+:=\bigsqcup_{i=0}^j \Par_0^i(\la)^+$. Conversely, for $\la \in \Par_0(n)$, $\Par_0^j(\la)^-$ is the set of $\mu \in \Par_0(n-p)$ such that $\la \in \Par_0^j(\mu)^+$. Again, we set $\Par_0^{\leq j}(\la)^-:=\bigsqcup_{i=0}^j \Par_0^i(\la)^-$.

\section{Superalgebras and supermodules}\label{sec:grdd_supalg}

\subsection{Superspaces and superalgebras}
Most of the definitions and results in the first part of this section come directly from \cite[$\S$2.2]{KL}. However, we restate a lot of the content of that article as all the superalgebras in \cite{KL} are defined over fields and we are primarily concerned with $\cO$-superalgebras.

Recal that $\cR$ denotes $\K$ or $\cO$. By an {\em $\cR$-superspace} we mean a finitely generated $\cR$-module $V$ with decomposition $V = V_{\0} \oplus V_{\1}$. For $\eps\in\Z/2$, we refer to the elements of $V_\eps$ as the homogeneous elements of {\em parity} $\eps$, and write $|v|=\eps$ for $v\in V_\eps$. In fact, whenever we write $|v|$, for some $v \in V$, it is always assumed that $v$ is homogeneous.

An $\cR$-subsuperspace $W\subseteq V$ is an $\cR$-submodule of $V$ such that $W = (W \cap V_{\0}) \oplus (W \cap V_{\1})$. We can, of course, treat $W$ as an $\cR$-subsuperspace in its own right.

Let $V,W$ be $\cR$-superspaces. The direct sum $V \oplus W$ and tensor product $V\otimes W$ are both considered as $\cR$-superspaces in the obvious way. A {\em homomorphism} $f:V\to W$ of $\cR$-superspaces is an $\cR$-linear map satisfying $f(V_\eps)\subseteq W_\eps$, for $\eps \in \Z/2$. 
%If, in addition, $f$ is bijective, then we say it is an {\em isomorphism}.

An {\em $\cR$-superalgebra} is an $\cR$-superspace $A$ that is also an $\cR$-algebra such that $A_{\eps}A_{\de}\subseteq A_{\eps+\de}$, for all $\eps,\de \in \Z/2$. 
Recalling our conventions that all $\cR$-algebras are assumed to be finitely generated as $\cR$-modules and {\em algebra} refers to an $\cO$-algebra, the same thus applies to superalgebras: they are all assumed to be  finitely generated as $\cR$-modules and 
{\em superalgebra} refers to an $\cO$-superalgebra. 

A {\em subsuperalgebra} of an $\cR$-superalgebra $A$ is a (unital) subalgebra that is also a subsuperspace.
A {\em homomorphism} $f:A\to B$ of $\cR$-superalgebras is an algebra homomorphism that is also a homomorphism of $\cR$-superspaces.

We define the $\cR$-superalgebra homomorphism
\begin{equation}\label{ESigma}
\si=\si_A:A\to A,\ a\mapsto (-1)^{|a|}a.
\end{equation}
Note that the superstructure (i.e. $\Z/2$-grading) on $A$ is completely determined by $\si_A$.

The tensor product $A\otimes B$ of $\cR$-superalgebras $A$ and $B$ is considered to be an $\cR$-superalgebra via
$$
(a\otimes b)(a'\otimes b') = (-1)^{|b| |a'|}aa'\otimes bb'.
$$
For $a \in A$ and $1\leq r\leq n$, we set
\begin{align}\label{algn:vr_def}
a_r := 1^{\otimes (r-1)}\otimes a\otimes 1^{\otimes(n-r)} \in A^{\otimes n}.
\end{align}

We define the superalgebra $A^{\sop}$ to be equal to $A$ as an $\cR$-superspace but with multiplication given by $a.b=(-1)^{|a||b|}ba$, for all $a,b\in A$. We define the superalgebra $A^{\op}$ to be equal to $A$ as an $\cR$-superspace but with multiplication given by $a.b=ba$, for all $a,b\in A$.

A superalgebra $A$ is called {\em supersymmetric} if we have an $\cR$-linear symmetrizing form $\tr: A \to \cR$ such that $\tr(A_{\1}) = \{0\}$. In this case we refer to $\tr$ as a {\em supersymmetrizing form}.

Let $A$ be a superalgebra. 
We denote by $A^\times$ the set of units in $A$. 
If $A_{\1}\cap A^\times \neq \varnothing$, we call $A$ a {\em superalgebra with superunit}, and any $u\in A_{\1}\cap A^\times$ is called a {\em superunit}.

\begin{Example}\label{ExCl}
We define the rank $n$ {\em Clifford superalgebra $\cC_n$} to be the $\cR$-superalgebra given by odd generators $\cc_1,\dots,\cc_n$ subject to the relations $\cc_r^2=1$ for $r=1,\dots,n$ and $\cc_r\cc_s=-\cc_s\cc_r$ for all $1\leq r\neq s\leq n$. We will use the well-known fact that $\cC_n\cong \cC_1^{\otimes n}$.  
\end{Example}

\begin{Example}
We denote by $\cM_{m \times n}(\cR)$ the set of $m \times n$ matrices with entries in $\cR$. 
%In particular, if $m=n$, we have the $\cR$-algebra $\cM_{n \times n}(\cR)$. % will denote the relevant $\cR$-algebra.
Denote by $\cM_{m|n}(\cR)$ the superalgebra $\cM_{(m+n) \times (m+n)}(\cR)$ with the usual matrix multiplication and %the superstructure given by
\begin{align*}
\cM_{m|n}(\cR)_{\0} & = \bigg\{\begin{pmatrix} W & 0 \\ 0 & Z \end{pmatrix}\mid W \in \cM_{m \times m}(\cR), Z \in \cM_{n \times n}(\cR)\bigg\}, \\
\cM_{m|n}(\cR)_{\1} & = \bigg\{\begin{pmatrix} 0 & X \\ Y & 0 \end{pmatrix}\mid X \in \cM_{m \times n}(\cR), Y \in \cM_{n \times m}(\cR)\bigg\}.
\end{align*}
\end{Example}

\begin{Example}
The superalgebra $\cQ_n(\cR)$ equals $\cM_{n \times n}(\cR) \oplus \cM_{n \times n}(\cR)$ as an algebra, with $\si_{\cQ_n(\cR)}:(x,y) \mapsto (y,x)$, for all $x,y \in \cM_{n \times n}(\cR)$, cf. (\ref{ESigma}). 
\end{Example}

\begin{Example}\label{ExTn}
The {\em twisted group superalgebra} $\cT_n$ over $\cR$ of the symmetric group $\Si_n$ is the $\cR$-superalgebra given by odd generators $\ct_1,\dots,\ct_{n-1}$ subject to the relations
\begin{align*}
\ct_r^2=1,\quad \ct_r\ct_s = -\ct_s\ct_r\text{ if }|r-s|>1,\quad (\ct_r\ct_{r+1})^3=1.
\end{align*}
Choosing, for each $w\in \Si_n$, a reduced expression $w=s_{r_1}\cdots s_{r_k}$ in terms of simple transpositions, we define $\ct_w:=\ct_{r_1}\cdots \ct_{r_k}$ (which depends up to a sign on the choice of a reduced expression). It is well-known that   $\{\ct_w\mid w\in \Si_n\}$ is an $\cR$-basis of $\cT_n$ (one way to see that is to use one of the isomorphisms (\ref{algn:Sn_Tn_isom}) below).
\end{Example}

\subsection{Supermodules and bisupermodules}
\label{SSMod}

Let $A$ be an $\cR$-superalgebra. By an {\em $A$-supermodule} $\gM$ we mean an $A$-module which is also an $\cR$-superspace such that $A_{\eps}\gM_{\de}\subseteq \gM_{\eps+\de}$ for all $\eps,\de \in \Z/2$. A {\em subsupermodule} of $\gM$ is a submodule that is also a subsuperspace. The $A$-supermodule $\gM$ is called {\em irreducible} if it has exactly two  subsupermodules.

If $B$ is another $\cR$-superalgebra, an {\em $(A,B)$-bisupermodule} $\gM$ is an $(A,B)$-bimodule which is also an $\cR$-superspace such that $A_{\eps}\gM_{\de}\subseteq \gM_{\eps+\de}$ and $\gM_{\de}B_{\eps}\subseteq \gM_{\eps+\de}$ for all $\eps,\de \in \Z/2$. We note that we can also view the $(A,B)$-bisupermodule $\gM$ as an $(A \otimes B^{\sop})$-supermodule via
\begin{align}\label{bisupmodAB}
(a\otimes b).m=(-1)^{|b||m|}amb,
\end{align}
for all $a\in A$, $b\in B$ and $m\in \gM$. In this way, we identify the notions of an $(A,B)$-bisupermodule and an $(A \otimes B^{\sop})$-supermodule.

If $\gM$ is an $A$-supermodule, we denote by $\Pi M$ the module $\gM$ with parity swapped, i.e. $(\Pi M)_\eps=M_{\eps+1}$ for $\eps\in\Z/2\Z$, and the $A$-supermodule structure on $\Pi M$ is given by $a. m=am$.

A {\em homomorphism} $f:\gM\to \gN$ of $A$-supermodules is an $A$-module homomorphism that is also a homomorphism of $\cR$-superspaces, i.e. we are working in the ($\Pi$-)category $\underlinesmod{A}$ of finitely generated $A$-supermodules and even $A$-supermodule homomorphisms, cf. \cite[Definition 1.6]{BE}. So an isomorphism of $A$-supermodules is also always assumed even, and we use the notation $\gM\simeq \gN$ to signify that the $A$-supermodules $\gM$ and $\gN$ are (evenly) isomorphic.  
Note, for example, the regular supermodule for a superalgebra $A$ with superunit $u$ satisfies $A\simeq\Pi A$ via the isomorphism $a\mapsto au$. 
We write $\hom_A(\gM, \gN)$ for the $\cR$-space of all homomorphisms between $A$-supermodules $\gM$ and $\gN$. We also need the $\cR$-superspace 
$$
\Hom_A(\gM,\gN)=\Hom_A(\gM,\gN)_\0\oplus \Hom_A(\gM,\gN)_\1,
$$
where
$$
\Hom_A(\gM,\gN)_\0:=\hom_A(\gM, \gN)\quad\text{and}\quad  \Hom_A(\gM,\gN)_\1:=\hom_A(\gM, \Pi\gN).
$$

If $V$ is an $\cR$-superspace we denote by $|V|$ the $\cR$-space which is $V$ with grading forgotten. In particular, if  
$A$ is an $\cR$-superalgebra and $\gM$ is an $A$-supermodule, we have the $\cR$-algebra $|A|$ and  an 
 $|A|$-module $|\gM|$. Similarly, 
if $B$ is another superalgebra and $\gM$ is an $(A,B)$-bisupermodule, we have an $(|A|,|B|)$-bisupermodule $|\gM|$. 
%Now, $|A|\otimes |B|$ is the usual tensor product of the algebras $|A|$ and $|B|$ (i.e. no sign) but considered as a superalgebra in the obvious way. 
Note that 
$$|\Hom_A(\gM,\gN)|=\Hom_{|A|}(|\gM|,|\gN|)
$$
for all $\gM,\gN\in\underlinesmod{A}$. For the purely even sub(super)algebra $A_\0$ we do not distinguish between $|A_\0|$ and $A_\0$.

%When $\cR = \K$, we denote by $\Irr_{\super}(A)$ the set of isomorphism classes of irreducible $A$-supermodules. If $\cR = \cO$, we set $\Irr_{\super}(A) := \Irr_{\super}(\K A)$.

The category $\underlinesmod{A}$ is isomorphic to the category $\mod{\hat A}$, where
$$
\hat A := \langle A, e_{\0}, e_{\1}\mid  e_\eps e_\de = \de_{\eps,\de}e_\eps,\ e_{\0} + e_{\1} = 1,\ a e_\eps = e_{\eps+|a|} a,\  \text{ for all }\, \eps,\de \in \Z/2,\, a \in A,\rangle_{\cR}.
$$
Here, $e_\eps$ is understood to correspond to the projection $\gM \twoheadrightarrow \gM_\eps$, for an $A$-supermodule $\gM$. In particular, we have the Krull-Schmidt property for $\underlinesmod{A}$, see also \cite[Corollary~3.7.5]{NO}.

Using the above identification of $(A,B)$-bisupermodules with $(A \otimes B^{\sop})$-supermodules we can also define $(A,B)$-bisupermodule homomorphisms and isomorphisms. 
In particular for $(A,B)$-bisupermodules $\gM$ and $\gN$ we have the $\cR$-space $\hom_{A \otimes B^{\sop}}(\gM,\gN)$ and the $\cR$-superspace $\Hom_{A \otimes B^{\sop}}(\gM,\gN)$. 
As with ordinary (bi)modules, we again use the notation $\gM \mid \gN$ to signify that $\gM$ is (evenly) isomorphic to a direct summand of $\gN$, as (bi)supermodules.

An $A$-supermodule $\gM$ is called {\em absolutely indecomposable} if $|\gM|$ is indecomposable as an $|A|$-module.
An $(A,B)$-bisupermodule $\gM$ is called {\em absolutely indecomposable} if $|\gM|$ indecomposable as an $(|A|,|B|)$-bimodule.

For an $|A|$-module $\gM$, we define $\gM^{\si}$ to be the $|A|$-module equal to $\gM$ as an $\cR$-module but with the action given via $a.m := (-1)^{|a|}am$, for all $a \in A$ and $m \in \gM$. We call $\gM$ {\em self-associate} when $\gM \cong \gM^{\si}$. Otherwise, we say $\gM$ is {\em non-self-associate}.

Let $A,B,C$ and $D$ be $\cR$-superalgebras, $\gM$ an $(A,C)$-bisupermodule and $\gN$ a $(B,D)$-bisupermodule. We define the $(A\otimes B,C\otimes D)$-bisupermodule $\gM\boxtimes \gN$ to be the $\cR$-superspace $\gM\otimes \gN$ with the action 
$$
(a\otimes b)(m\otimes n)=(-1)^{|b||m|}(am\otimes bn),\qquad
(m\otimes n)(c\otimes d)=(-1)^{|c||n|}(mc\otimes nd),
$$
for all $a \in A$, $b \in B$, $c \in C$, $d \in D$, $m \in \gM$ and $n \in \gN$. In particular, given an $A$-supermodule $\gM$ and a $B$-supermodule $\gN$, we have the $(A\otimes B)$-supermodule $\gM\boxtimes \gN$ with action 
$
(a\otimes b)(m\otimes n)=(-1)^{|b||m|}(am\otimes bn).
$

The following Remark, taken from \cite[Remark 2.2.10]{KL}, allows us to apply results concerning tensor products of supermodules to tensor products of bisupermodules.

\begin{Remark}\label{rem:bisupmod}
Using the above notation, the $(A \otimes C^{\sop}) \otimes (B \otimes D^{\sop})$-supermodule $\gM\boxtimes \gN$ can be identified with the $(A\otimes B, C\otimes D)$-bisupermodule $\gM\boxtimes \gN$ using (\ref{bisupmodAB}) and the superalgebra isomorphism
\begin{align*}
(A \otimes C^{\sop}) \otimes (B \otimes D^{\sop}) &\to (A \otimes B) \otimes (C \otimes D)^{\sop}\\
(a \otimes c) \otimes (b \otimes d) &\mapsto (-1)^{|b||c|}(a \otimes b) \otimes (c \otimes d).
\end{align*}
\end{Remark}

\begin{Lemma}\label{lem:tensor_bisupmod}
Let $A_1$, $A_2$, $B_1$, $B_2$, $C_1$ and $C_2$ be $\cR$-superalgebras, $\gM_1$ an $(A_1,B_1)$-bisupermodule, $\gM_2$ an $(A_2,B_2)$-bisupermodule, $\gN_1$ a $(B_1,C_1)$-bisupermodule and $\gN_2$ a $(B_2,C_2)$-bisupermodule. Then
\begin{align*}
(\gM_1 \boxtimes \gM_2) \otimes_{B_1 \otimes B_2} (\gN_1 \boxtimes \gN_2) & \to (\gM_1 \otimes_{B_1} \gN_1) \boxtimes (\gM_2 \otimes_{B_2} \gN_2)\\
(m_1 \otimes m_2) \otimes (n_1 \otimes n_2) & \mapsto (-1)^{|m_2||n_1|} (m_1 \otimes n_1) \otimes (m_2 \otimes n_2),
\end{align*}
for all $m_1 \in \gM_1$, $m_2 \in \gM_2$, $n_1 \in \gN_1$ and $n_2 \in \gN_2$, is an isomorphism of $(A_1 \otimes A_2,C_1 \otimes C_2)$-bisupermodules.
\end{Lemma}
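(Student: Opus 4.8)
The plan is to verify that the proposed map is a well-defined homomorphism of $\cR$-modules, that it respects the parity (superspace) decomposition, that it is bilinear-balanced over $B_1 \otimes B_2$ so that it descends to the tensor product, that it intertwines the left $A_1 \otimes A_2$-action and the right $C_1 \otimes C_2$-action with the correct Koszul signs, and finally that it is bijective. Most of these checks are routine sign-bookkeeping, so I would only dwell on the points where the sign $(-1)^{|m_2||n_1|}$ is forced.

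\textbf{Step 1: well-definedness over $B_1 \otimes B_2$.} First I would observe that the $\cR$-linear map $\gM_1 \otimes \gM_2 \otimes \gN_1 \otimes \gN_2 \to (\gM_1 \otimes_{B_1} \gN_1) \otimes (\gM_2 \otimes_{B_2} \gN_2)$ sending $m_1 \otimes m_2 \otimes n_1 \otimes n_2 \mapsto (-1)^{|m_2||n_1|}(m_1 \otimes n_1) \otimes (m_2 \otimes n_2)$ is balanced with respect to the right $B_1 \otimes B_2$-action on $\gM_1 \boxtimes \gM_2$ and the left $B_1 \otimes B_2$-action on $\gN_1 \boxtimes \gN_2$. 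Using the definition of $\boxtimes$, the right action of $b_1 \otimes b_2$ on $m_1 \otimes m_2$ is $(-1)^{|b_1||m_2|} m_1 b_1 \otimes m_2 b_2$, while the left action on $n_1 \otimes n_2$ is $(-1)^{|b_2||n_1|} b_1 n_1 \otimes b_2 n_2$; chasing both sides through the map and comparing the accumulated signs (the exponents $|b_1||m_2| + |m_2||b_1 n_1|$ on one side versus $|b_2||n_1| + |m_2 b_1||n_1|$ on the other) one sees they agree modulo $2$ precisely because of the inserted sign. Hence the map factors through $(\gM_1 \boxtimes \gM_2) \otimes_{B_1 \otimes B_2} (\gN_1 \boxtimes \gN_2)$.

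\textbf{Step 2: bisupermodule map, and bijectivity.} Next I would check the map is even (it sends $m_1\otimes m_2 \otimes n_1 \otimes n_2$ of parity $|m_1|+|m_2|+|n_1|+|n_2|$ to an element of the same parity, since the sign does not change parity) and that it commutes with the left $A_1 \otimes A_2$- and right $C_1 \otimes C_2$-actions; these are the same sign computations as in Step 1 with $A_i$, $C_i$ in place of $B_i$, again using the Koszul-sign rule built into $\boxtimes$. For bijectivity, I would write down the inverse explicitly, $(m_1 \otimes n_1) \otimes (m_2 \otimes n_2) \mapsto (-1)^{|m_2||n_1|}(m_1 \otimes m_2)\otimes(n_1 \otimes n_2)$, and note that on the underlying $\cR$-modules both composites are the identity since $(-1)^{2|m_2||n_1|}=1$; alternatively one can invoke the fact that after forgetting the grading this is the standard associativity/interchange isomorphism $|\gM_1| \otimes_{|B_1|}|\gN_1| \otimes |\gM_2| \otimes_{|B_2|} |\gN_2| \cong (\gM_1 \otimes_{B_1} \gN_1)\otimes(\gM_2\otimes_{B_2}\gN_2)$ twisted by an automorphism of each tensor factor, hence an isomorphism of $\cR$-modules, and the grading and module structures were matched in the previous steps.

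\textbf{Main obstacle.} There is no conceptual difficulty here; the only thing one must get right is the combinatorics of the Koszul signs, i.e.\ tracking that the single factor $(-1)^{|m_2||n_1|}$ is exactly the sign needed to make all four compatibilities (balancedness over $B_1 \otimes B_2$, left $A_1 \otimes A_2$-linearity, right $C_1 \otimes C_2$-linearity, and compatibility with the $\boxtimes$-signs on both tensor factors) hold simultaneously. A cleaner route that avoids ad hoc sign-chasing is to deduce the lemma from Remark~\ref{rem:bisupmod}: identify each bisupermodule $\gM_i \boxtimes \gM_j$, $\gN_i \boxtimes \gN_j$, and the target, with honest left supermodules over the appropriate tensor-product superalgebras via (\ref{bisupmodAB}) and the superalgebra isomorphism of Remark~\ref{rem:bisupmod}, at which point the statement reduces to the standard associativity isomorphism $(\gM_1 \boxtimes \gM_2)\otimes_{B_1\otimes B_2}(\gN_1\boxtimes\gN_2)\cong(\gM_1\otimes_{B_1}\gN_1)\boxtimes(\gM_2\otimes_{B_2}\gN_2)$ for one-sided supermodules together with the known behaviour of $\boxtimes$ under the algebra isomorphism; the sign $(-1)^{|m_2||n_1|}$ then emerges automatically from composing the identifications. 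I would present the direct verification but flag this second route as the reason the signs are not arbitrary.
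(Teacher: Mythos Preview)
Your proposal is correct and is essentially the same approach as the paper's, which simply records the lemma as ``a fairly quick checking exercise'' without giving details. Your explicit sign computations (modulo a harmless typo: in Step~1 the second exponent should read $|b_1||m_2|+|m_2 b_2||n_1|$ rather than $|m_2 b_1||n_1|$) are exactly the routine verification the paper has in mind, and your alternative route via Remark~\ref{rem:bisupmod} is a legitimate way to package the same computation.
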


\begin{proof}
This is just a fairly quick checking exercise.
\end{proof}

\subsection{Morita superequivalences}\label{SMoritaSuper}

Let $A$ and $B$ be superalgebras. A {\em Morita superequivalence} between $A$ and $B$ is a Morita equivalence between $A$ and $B$ induced by an $(A,B)$-bisupermodule $\gM$ and a $(B,A)$-bisupermodule $\gN$, i.e. $\gM\otimes_B \gN\simeq A$ as $(A,A)$-bisupermodules and $\gN\otimes_A \gM\simeq B$ as $(B,B)$-bisupermodules. 
We write $A\sim_{\sM}B$.

A {\em stable superequivalence of Morita type} between $A$ and $B$ is a stable equivalence of Morita type between $A$ and $B$ induced by an $(A,B)$-bisupermodule $\gM$ and a $(B,A)$-bisupermodule $\gN$. That is, there exist bisupermodule isomorphisms $\gM\otimes_B \gN\simeq A \oplus \gP$ and $\gN\otimes_A \gM\simeq B \oplus \gQ$, where $\gP$ (resp. $\gQ$) is an $(A,A)$-bisupermodule (resp. $(B,B)$-bisupermodule) such that  $|\gP|$ (resp. $|\gQ|$) is projective as an $(|A|,|A|)$-bimodule (resp. $(|B|,|B|)$-bimodule).

Many of the following results are taken from \cite[$\S\S$2.2c,d]{KL}, where it is assumed that the superalgebras are all defined over a field. However, all the proofs run through for $\cO$-superalgebras in exactly the same way.

\begin{Lemma}\label{lem:idmpt_Mor}
Let $A$ be a superalgebra and $e\in A_{\0}$ an idempotent such that $AeA=A$. Then 
$A$ and $eAe$ are Morita superequivalent via 
the $(eAe,A)$-bisupermodule 
$eA$ and 
the $(A,eAe)$-bisupermodule
$Ae$.
\end{Lemma}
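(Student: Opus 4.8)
The plan is to run the classical idempotent-Morita-equivalence argument (the nonsuper version is standard, e.g. \cite[Chapter~2]{Lin5}) while checking at each step that all the bimodule isomorphisms involved are even, so that they upgrade to bisupermodule isomorphisms. First I would record the two candidate bisupermodules: $eA$ is naturally an $(eAe,A)$-bisupermodule and $Ae$ an $(A,eAe)$-bisupermodule, the grading being inherited from that of $A$ (this uses that $e\in A_\0$, so left or right multiplication by $e$ preserves parity, and hence $eA=e_\0A_\0\oplus e_\0A_\1$ etc.\ is a genuine subsuperspace of $A$, and likewise $eAe$ is a subsuperalgebra). Then I would exhibit the two multiplication maps
\[
\mu:\ Ae\otimes_{eAe} eA\ \longrightarrow\ A,\qquad a e\otimes e a'\mapsto aea',
\]
\[
\nu:\ eA\otimes_{A} Ae\ \longrightarrow\ eAe,\qquad ea\otimes a'e\mapsto eaa'e,
\]
and verify they are homomorphisms of $(A,A)$-bisupermodules and of $(eAe,eAe)$-bisupermodules respectively. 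Since $\mu$ and $\nu$ are built purely from the multiplication of $A$ with no Koszul signs inserted, they are automatically parity-preserving, i.e.\ even; this is the one place where the super-refinement needs a remark, and it is immediate.

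Next I would check the two maps are isomorphisms. For $\nu$: since $AeA=A$ we may write $1=\sum_i a_i e a_i'$ for some $a_i,a_i'\in A$, and then $x\mapsto \sum_i xa_ie\otimes ea_i'$ (for $x\in eAe$) gives a two-sided inverse of $\nu$ by a direct computation (this is the usual argument that $eA\otimes_A Ae\cong eAe$ whenever $AeA=A$; it does not even need $e$ idempotent beyond the statement, and it respects parity because the elements $a_ie$, $ea_i'$ are fixed). For $\mu$: surjectivity is exactly the hypothesis $AeA=A$; injectivity follows from the standard fact that $Ae$ is a projective right $eAe$-module and $eA$ a projective left $eAe$-module with $Ae$, $eA$ generating $A$, or more concretely from the same partition of unity $1=\sum_i a_iea_i'$, which lets one write down the inverse $a\mapsto \sum_i a a_i e\otimes e a_i'$ and check both composites are the identity. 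Again the formulas are sign-free, hence even.

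Having both $\mu$ and $\nu$ as even isomorphisms of bisupermodules, the definition of Morita superequivalence in \S\ref{SMoritaSuper} is met with $\gM=Ae$ (the $(A,eAe)$-bisupermodule) and $\gN=eA$ (the $(eAe,A)$-bisupermodule) — matching the statement, which names $eA$ as the $(eAe,A)$-bisupermodule and $Ae$ as the $(A,eAe)$-bisupermodule — so $A\sim_{\sM}eAe$. I do not expect any real obstacle here: the entire content is the classical Morita theory of idempotents, and the only super-theoretic input is the trivial observation that every bimodule map used is induced by multiplication in $A$ and therefore preserves the $\Z/2$-grading. The one point deserving a sentence in the write-up is why $eAe$, $eA$, $Ae$ are legitimate (sub)super objects, which is precisely where $e\in A_\0$ is used.
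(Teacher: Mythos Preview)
Your argument is correct and is exactly the standard idempotent--Morita proof, with the only super-specific point (that $e\in A_{\0}$ makes $eAe$, $eA$, $Ae$ genuine sub-superobjects and that the multiplication maps $\mu,\nu$ are parity-preserving) properly identified. The paper itself does not reprove this lemma: it simply cites \cite[Lemma~2.2.14]{KL}, having noted just before the lemma that the proofs in \cite{KL} (stated over a field) go through verbatim over $\cO$; your write-up is essentially what that cited proof amounts to.
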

\begin{proof}
\cite[Lemma 2.2.14]{KL}.
\end{proof}

\begin{Lemma}\label{lem:MSE}
Let $A_1,A_2,B_1,B_2$ be superalgebras. If 
$A_i$ and $B_i$ are Morita superequivalent via the $(A_i,B_i)$-bisupermodule $\gM_i$ and the $(B_i,A_i)$-bisupermodule $\gN_i$, for $i=1,2$, then $A_1\otimes A_2$ and $B_1\otimes B_2$ are Morita superequivalent via the $(A_1\otimes A_2,B_1\otimes B_2)$-bisupermodule $\gM_1\boxtimes \gM_2$ and the $(B_1\otimes B_2,A_1\otimes A_2)$-bisupermodule $\gN_1\boxtimes \gN_2$.
\end{Lemma}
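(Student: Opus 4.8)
\textbf{Proof plan for Lemma \ref{lem:MSE}.}

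The plan is to build the candidate bisupermodules $\gM_1 \boxtimes \gM_2$ and $\gN_1 \boxtimes \gN_2$ out of the given data and verify the two defining isomorphisms for a Morita superequivalence, namely
\[
(\gM_1 \boxtimes \gM_2) \otimes_{B_1 \otimes B_2} (\gN_1 \boxtimes \gN_2) \simeq A_1 \otimes A_2
\]
as $(A_1 \otimes A_2, A_1 \otimes A_2)$-bisupermodules, and symmetrically with the roles of $A_i$ and $B_i$ swapped. First I would apply Lemma \ref{lem:tensor_bisupmod} with $C_1 = A_1$, $C_2 = A_2$, which gives an isomorphism of $(A_1 \otimes A_2, A_1 \otimes A_2)$-bisupermodules
\[
(\gM_1 \boxtimes \gM_2) \otimes_{B_1 \otimes B_2} (\gN_1 \boxtimes \gN_2) \;\simeq\; (\gM_1 \otimes_{B_1} \gN_1) \boxtimes (\gM_2 \otimes_{B_2} \gN_2).
\]
By hypothesis $\gM_i \otimes_{B_i} \gN_i \simeq A_i$ as $(A_i,A_i)$-bisupermodules, so the right-hand side is isomorphic to $A_1 \boxtimes A_2$. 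It then only remains to observe that $A_1 \boxtimes A_2 \simeq A_1 \otimes A_2$ as $(A_1 \otimes A_2, A_1 \otimes A_2)$-bisupermodules; this is the content of Remark \ref{rem:bisupmod} applied with $\gM = A_1$, $\gN = A_2$ (taking $C = A_1$, $D = A_2$), identifying the external tensor product of the regular bisupermodules with the regular bisupermodule of the tensor-product superalgebra. Running the same argument with $\gN_i \boxtimes \gN_i$ on the left and the roles of $A_i, B_i$ interchanged yields $(\gN_1 \boxtimes \gN_2) \otimes_{A_1 \otimes A_2} (\gM_1 \boxtimes \gM_2) \simeq B_1 \otimes B_2$.

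One point that needs a word of care is the compatibility of the sign conventions: Lemma \ref{lem:tensor_bisupmod} produces its isomorphism with a Koszul sign $(-1)^{|m_2||n_1|}$, and Remark \ref{rem:bisupmod} supplies the superalgebra isomorphism $(A \otimes C^{\sop}) \otimes (B \otimes D^{\sop}) \to (A \otimes B) \otimes (C \otimes D)^{\sop}$ with its own sign $(-1)^{|b||c|}$; I would simply note that both maps are already verified to be (bi)superalgebra, respectively (bi)supermodule, isomorphisms in the cited statements, so composing them is automatic and no new sign bookkeeping is required here. The only genuinely substantive input is Lemma \ref{lem:tensor_bisupmod}, which has already been established.

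The main obstacle, such as it is, is purely bookkeeping: making sure the functors $(\gM_1 \boxtimes \gM_2) \otimes_{B_1 \otimes B_2} -$ and $(\gN_1 \boxtimes \gN_2) \otimes_{A_1 \otimes A_2} -$ are genuinely quasi-inverse (not merely that the two composite bisupermodules are the regular ones), but this is immediate from the associativity of $\otimes$ and the bisupermodule isomorphisms just established, exactly as in the non-super setting. Accordingly the proof reduces to citing Lemma \ref{lem:tensor_bisupmod} and Remark \ref{rem:bisupmod} and assembling the two displayed isomorphisms, which is why I expect the write-up to be short.
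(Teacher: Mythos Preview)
Your proposal is correct. The paper itself does not give a proof of this lemma but simply cites \cite[Lemma 2.2.17]{KL}; your argument via Lemma~\ref{lem:tensor_bisupmod} is precisely the expected one and is almost certainly what appears in the cited reference. One small simplification: you do not actually need Remark~\ref{rem:bisupmod} for the last step, since comparing the defining formulas for $\boxtimes$ with the multiplication in $A_1\otimes A_2$ shows that $A_1\boxtimes A_2$ is \emph{equal} to the regular $(A_1\otimes A_2,A_1\otimes A_2)$-bisupermodule, not merely isomorphic to it. (Also, in your penultimate sentence ``$\gN_i\boxtimes\gN_i$'' is a typo for $\gN_1\boxtimes\gN_2$.)
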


\begin{proof}
\cite[Lemma 2.2.17]{KL}.
\end{proof}

%The following is proved in \cite[Lemma 2.2.19]{KL}.

\begin{Lemma}\label{lem:Mor_A0}
Let $A$ and $B$ be superalgebras with superunit. If the $(A,B)$-bisupermodule $\gM$ induces a Morita superequiavlence between $A$ and $B$, then $\gM_{\0}$ induces a Morita equiavlence between $A_{\0}$ and $B_{\0}$.
\end{Lemma}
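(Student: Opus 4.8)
The plan is to work directly with the bisupermodules inducing the Morita superequivalence and extract the even parts, showing these induce a Morita equivalence on purely even subalgebras. Suppose the $(A,B)$-bisupermodule $\gM$ and the $(B,A)$-bisupermodule $\gN$ together induce a Morita superequivalence, so $\gM \otimes_B \gN \simeq A$ as $(A,A)$-bisupermodules and $\gN \otimes_A \gM \simeq B$ as $(B,B)$-bisupermodules. Since $A$ has a superunit $u$ and $B$ a superunit $v$, we have $A \simeq \Pi A$ and $B \simeq \Pi B$; more importantly, the existence of superunits means the even parts $A_\0$ and $B_\0$ are exactly the images of the idempotents $\tfrac{1}{2}(1 + \si_A)$ — though since we are not using eigenspace decompositions directly, it is cleaner to note that right multiplication by $u$ gives an isomorphism $\gM_\0 \to \gM_\1$ of $(A_\0, B_\0)$-bimodules (using that $u \in B_\1^\times$) — wait, I need to be careful: $u$ is a superunit of $A$, and $\gM u$ only makes sense if $u$ acts, which it does not on the right. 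Let me instead use $v \in B_\1^\times$: the map $\gM_\0 \to \gM_\1$, $m \mapsto mv$ is an $(A_\0,B_\0)$-bimodule isomorphism, and similarly for $\gN$ with $u \in A_\1^\times$.

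The key computation is then: $\gM_\0 \otimes_{B_\0} \gN_\0$ should be isomorphic to $A_\0$ as $(A_\0,A_\0)$-bimodules. The first step is to relate $\gM_\0 \otimes_{B_\0} \gN_\0$ to the even part of $\gM \otimes_B \gN$. We have $(\gM \otimes_B \gN)_\0 = (\gM_\0 \otimes_B \gN_\0 \text{ part}) \oplus (\gM_\1 \otimes_B \gN_\1 \text{ part})$, and inside $\gM \otimes_B \gN$ the relation $mb \otimes n = m \otimes bn$ for $b \in B$ lets one move the odd superunit $v$ across: $m_0 \otimes n_1 = m_0 \otimes v(v^{-1}n_1) = \pm (m_0 v) \otimes (v^{-1} n_1)$ with $m_0 v \in \gM_\1$ and $v^{-1}n_1 \in \gN_\0$. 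So every element of $(\gM \otimes_B \gN)_\0$ can be written using tensors of the form $\gM_\1 \otimes \gN_\1$ alone, and one checks the natural map $\gM_\0 \otimes_{B_\0} \gN_\0 \to (\gM \otimes_B \gN)_\0$, together with the shifted map $\gM_\1 \otimes_{B_\0} \gN_\1 \to (\gM\otimes_B \gN)_\0$, assembles into an isomorphism — more precisely, I expect $(\gM \otimes_B \gN)_\0 \cong (\gM_\0 \otimes_{B_\0} \gN_\0) \oplus (\gM_\1 \otimes_{B_\0} \gN_\1)$ and that using $v$ the second summand is isomorphic to the first, so $(\gM \otimes_B \gN)_\0 \cong (\gM_\0 \otimes_{B_\0} \gN_\0)^{\oplus 2}$. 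Meanwhile $A_\0 = (\gM \otimes_B \gN)_\0$ under the given iso. Since $A \simeq \Pi A$ via $a \mapsto au$, one also gets $A_\0 \cong A_\1$ as $(A_\0,A_\0)$-bimodules, hence $|A| \cong A_\0^{\oplus 2}$ as bimodules, and matching the two "doublings" one concludes $\gM_\0 \otimes_{B_\0} \gN_\0 \simeq A_\0$. The symmetric argument gives $\gN_\0 \otimes_{A_\0} \gM_\0 \simeq B_\0$.

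The main obstacle I anticipate is bookkeeping the Koszul signs and the superunit shifts correctly so that the various identifications are genuine bimodule isomorphisms and not merely isomorphisms of underlying modules — in particular, verifying that the isomorphism $\gM \otimes_B \gN \simeq A$ restricts compatibly with the decomposition into even/odd parts and that the superunit translation is $(A_\0,A_\0)$-bilinear on the nose. A clean way to avoid the sign gymnastics is to invoke that $A_\0 \simeq eAe$ where $e = \tfrac12(1+\si_A)$ viewed appropriately — but $\si_A$ is an algebra map, not an element, so that does not literally apply; instead the honest route is the direct computation sketched above, perhaps organized by first establishing a general lemma that for a superalgebra $B$ with superunit and any $(A,B)$- and $(B,C)$-bisupermodules $\gM, \gN$ one has a natural $(A_\0,C_\0)$-bimodule isomorphism $(\gM\otimes_B \gN)_\0 \simeq \gM_\0 \otimes_{B_\0} \gN_\0 \oplus \gM_\1\otimes_{B_\0}\gN_\1$ together with $\gM_\1 \otimes_{B_\0} \gN_\1 \simeq \gM_\0 \otimes_{B_\0} \gN_\0$. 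Applying this with the Morita data and using $A\simeq \Pi A$ then yields the claim. Since the excerpt remarks that all proofs from \cite{KL} run through over $\cO$ verbatim, I would expect the cleanest writeup to simply cite the field-case argument (the analogue of \cite[Lemma 2.2.18]{KL} or similar) and note it applies unchanged.
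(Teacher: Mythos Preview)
Your final remark is exactly what the paper does: it simply cites \cite[Lemma 2.2.19]{KL} (your guess of 2.2.18 is off by one) and relies on the earlier observation that the field-case proofs carry over verbatim to $\cO$.

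However, the explicit argument you sketch before that contains a genuine error. You claim
\[
(\gM\otimes_B \gN)_{\0}\ \cong\ (\gM_{\0}\otimes_{B_{\0}}\gN_{\0})\ \oplus\ (\gM_{\1}\otimes_{B_{\0}}\gN_{\1}),
\]
but this is false: the tensor product is over all of $B$, so for the odd superunit $v\in B_{\1}^\times$ the relation $m_0 v\otimes n_1=m_0\otimes v n_1$ \emph{identifies} the two pieces rather than keeping them as a direct sum. The correct statement is that the natural map
\[
\gM_{\0}\otimes_{B_{\0}}\gN_{\0}\ \longrightarrow\ (\gM\otimes_B \gN)_{\0}
\]
is already an isomorphism. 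One sees this cleanly by using $\gM\simeq \gM_{\0}\otimes_{B_{\0}}B$ as right $B$-supermodules (the right-module analogue of Lemma~\ref{lem_red_to_A0}(i)), whence $\gM\otimes_B\gN\simeq \gM_{\0}\otimes_{B_{\0}}\gN$ and taking even parts gives the claim. Combined with $(\gM\otimes_B\gN)_{\0}\cong A_{\0}$, this yields $\gM_{\0}\otimes_{B_{\0}}\gN_{\0}\cong A_{\0}$ directly, with no ``matching of doublings'' needed. Your attempted cancellation of the spurious factor of $2$ against $|A|\cong A_{\0}^{\oplus 2}$ does not line up: you had $(\gM\otimes_B\gN)_{\0}\cong A_{\0}$, not $|A|$, so the two doublings live on different sides of the equation and do not cancel.
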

\begin{proof}
\cite[Lemma 2.2.19]{KL}.
\end{proof}

Let $A$ be a superalgebra, $\gM$ be an $A_{\0}$-module and 
$\gN$ be an $|A|$-module. 
We give $\Ind_{A_{\0}}^A(\gM)$ the structure of an $A$-supermodule via $|a \otimes m| := |a|$, for all $a \in A$ and $m \in \gM$. 
If, in addition, $A$ has a superunit $u$, we denote by ${}^u\gM$ the $A_\0$-module which is $\gM$ as a $\cO$-module but the $A_\0$-action is given by $a.m=(u^{-1}au)m$.
If $\gM\subseteq \Res^{|A|}_{A_\0}\gN$, we have the $A_{\0}$-submodule $u\gM$ of $\Res^{|A|}_{A_{\0}}\gN$, and $u\gM\cong {}^u\gM$.

\begin{Lemma}\label{lem_red_to_A0}
Let $A$ be a superalgebra with superunit $u$.
\begin{enumerate}
\item If\, $\gM$ is an $A$-supermodule, then $\gM \simeq \Ind_{A_{\0}}^A\gM_{\0}$.

\item If\, $\gN$ is an indecomposable $|A|$-module, 
then $\Ind_{A_{\0}}^{|A|}\Res^{|A|}_{A_{\0}}\gN \cong \gN \oplus \gN^{\si}$. Moreover, either $\Res^{|A|}_{A_{\0}}\gN$ is indecomposable or $\Res^{|A|}_{A_{\0}}\gN = \gM \oplus u\gM$ for indecomposable $A_{\0}$-modules $\gM \ncong u\gM$.

\end{enumerate}
\end{Lemma}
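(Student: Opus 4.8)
The plan rests on the elementary observation that, as $u\in A_{\1}\cap A^{\times}$, left multiplication by $u$ is an $\cR$-module isomorphism $A_{\0}\xrightarrow{\sim}A_{\1}$; hence $A_{\1}=uA_{\0}=A_{\0}u$ and $|A|=A_{\0}\oplus uA_{\0}$ as $(A_{\0},A_{\0})$-bimodules. For (i) I would consider the multiplication map $\mu\colon\Ind_{A_{\0}}^{A}\gM_{\0}=A\otimes_{A_{\0}}\gM_{\0}\to\gM$, $a\otimes m\mapsto am$, which is visibly an even homomorphism of $A$-supermodules. Using $A=A_{\0}\oplus uA_{\0}$ gives $A\otimes_{A_{\0}}\gM_{\0}=(A_{\0}\otimes_{A_{\0}}\gM_{\0})\oplus(uA_{\0}\otimes_{A_{\0}}\gM_{\0})$; the map $\mu$ identifies the first summand with $\gM_{\0}$, while the second summand is spanned by the elements $u\otimes m$ and $\mu$ carries it bijectively onto $u\gM_{\0}=\gM_{\1}$ since $u$ is a unit. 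Thus $\mu$ is bijective, which gives (i).

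For the first assertion of (ii), write $\Ind_{A_{\0}}^{|A|}\Res^{|A|}_{A_{\0}}\gN=|A|\otimes_{A_{\0}}\gN$ and define $f\colon|A|\otimes_{A_{\0}}\gN\to\gN\oplus\gN^{\si}$ on homogeneous tensors by $a\otimes n\mapsto\bigl(an,\si_{A}(a)n\bigr)$, extended $\cR$-linearly. Since $\si_{A}$ fixes $A_{\0}$ pointwise, $f$ is well defined over $A_{\0}$, and one checks directly that it is a homomorphism of $|A|$-modules (to $\gN$ in the first coordinate and to $\gN^{\si}$ in the second). As $2\in\cR^{\times}$, a short computation using $\si_{A}(u)=-u$ shows that $(n_{1},n_{2})\mapsto\tfrac12\bigl(1\otimes(n_{1}+n_{2})\bigr)+\tfrac12\bigl(u\otimes u^{-1}(n_{1}-n_{2})\bigr)$ is a two-sided inverse of $f$, so $f$ is an isomorphism.

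For the second assertion I would first record two routine facts: for any $|A|$-module $W$, left multiplication by $u$ is an $A_{\0}$-isomorphism ${}^{u}\bigl(\Res^{|A|}_{A_{\0}}W\bigr)\xrightarrow{\sim}\Res^{|A|}_{A_{\0}}W$; and, from $|A|=A_{\0}\oplus uA_{\0}$, for any $A_{\0}$-module $\gU$ one has $\Res^{|A|}_{A_{\0}}\Ind_{A_{\0}}^{|A|}\gU\cong\gU\oplus{}^{u}\gU$. The crux is the claim $(\star)$: for indecomposable $\gU$, the $|A|$-module $\Ind_{A_{\0}}^{|A|}\gU$ is indecomposable if and only if $\gU\not\cong{}^{u}\gU$. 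The ``if'' direction is quick: a nontrivial decomposition $\Ind_{A_{\0}}^{|A|}\gU=W_{1}\oplus W_{2}$ restricts to a decomposition of $\gU\oplus{}^{u}\gU$ into two nonzero summands, so $\{\Res^{|A|}_{A_{\0}}W_{1},\Res^{|A|}_{A_{\0}}W_{2}\}=\{\gU,{}^{u}\gU\}$ by Krull--Schmidt; but $\Res^{|A|}_{A_{\0}}W_{1}\cong{}^{u}(\Res^{|A|}_{A_{\0}}W_{1})$ by the first fact, whence $\gU\cong{}^{u}\gU$. The ``only if'' direction is the standard Clifford theory of crossed products by $C_{2}$, and is the one place where the hypothesis $2\in\cR^{\times}$ is essential: given an $A_{\0}$-isomorphism $\theta\colon\gU\xrightarrow{\sim}{}^{u}\gU$, the endomorphism $a\otimes m\mapsto au\otimes\theta(m)$ of $\Ind_{A_{\0}}^{|A|}\gU$ has square equal to an automorphism lying in the image of $\End_{A_{\0}}(\gU)$; since $\End_{A_{\0}}(\gU)$ is a complete local ring with algebraically closed residue field of odd characteristic, that unit admits a commuting square root, so the endomorphism can be renormalised to an involution $\Phi$, and then $\tfrac12(\id+\Phi)$ is a nontrivial idempotent in $\End_{|A|}\bigl(\Ind_{A_{\0}}^{|A|}\gU\bigr)$. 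I expect this half of $(\star)$ to be the main obstacle.

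Granting $(\star)$, let $\gN$ be an indecomposable $|A|$-module and write $\Res^{|A|}_{A_{\0}}\gN=\gU_{1}\oplus\dots\oplus\gU_{k}$ with the $\gU_{i}$ indecomposable. By the first assertion $\gN$ is a direct summand of $\Ind_{A_{\0}}^{|A|}\Res^{|A|}_{A_{\0}}\gN=\bigoplus_{i}\Ind_{A_{\0}}^{|A|}\gU_{i}$, so after renumbering $\gN\mid\Ind_{A_{\0}}^{|A|}\gU_{1}$, and hence $\Res^{|A|}_{A_{\0}}\gN\mid\Res^{|A|}_{A_{\0}}\Ind_{A_{\0}}^{|A|}\gU_{1}\cong\gU_{1}\oplus{}^{u}\gU_{1}$. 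Since $\Res^{|A|}_{A_{\0}}\gN\neq0$, either it is indecomposable — the first alternative — or it is isomorphic to $\gU_{1}\oplus{}^{u}\gU_{1}\cong\Res^{|A|}_{A_{\0}}\Ind_{A_{\0}}^{|A|}\gU_{1}$. In the latter case, writing $\Ind_{A_{\0}}^{|A|}\gU_{1}\cong\gN\oplus W$ and restricting forces $W=0$ by Krull--Schmidt, so $\gN\cong\Ind_{A_{\0}}^{|A|}\gU_{1}$; as $\gN$ is indecomposable, $(\star)$ yields $\gU_{1}\not\cong{}^{u}\gU_{1}$. Finally I would transport the internal direct sum $\Ind_{A_{\0}}^{|A|}\gU_{1}=(1\otimes\gU_{1})\oplus(u\otimes\gU_{1})$ along an isomorphism onto $\gN$; being $|A|$-linear, it intertwines left multiplication by $u$, so the two images are $\gM$ and $u\gM$, where $\gM$ is the image of $1\otimes\gU_{1}$, an indecomposable $A_{\0}$-module with $\gM\cong\gU_{1}\not\cong{}^{u}\gU_{1}\cong u\gM$. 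This is the second alternative, completing the proof.
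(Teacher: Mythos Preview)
Your approach mirrors the paper's: part (i) is identical, and for the first assertion of (ii) you give a direct isomorphism where the paper instead equips $\gN\oplus\gN^{\si}$ with a superstructure and invokes (i).

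The second assertion follows the same Clifford-theoretic outline, but your ``only if'' direction of $(\star)$ has a gap. You assert that $\Phi_0^2$, lying in the image of $\End_{A_{\0}}(\gU)$, admits a ``commuting square root'' --- but $\End_{A_{\0}}(\gU)$ need not be commutative, and a square root $\beta$ there need not commute with $\Phi_0$ (the condition is $\theta\beta=\beta\theta$ as $\cO$-linear maps, which is not automatic). Even granting a renormalised involution $\Phi$, you do not argue that $\Phi\neq\pm\Id$, without which $\tfrac12(\Id+\Phi)$ may be trivial. The paper sidesteps both issues by working modulo the radical of $\End_{|A|}(\gN)$: this quotient is $\F$ (as $\gN$ is indecomposable and $\F$ algebraically closed), so the analogous endomorphism $\psi$ has $c\cdot\Id_{\gN}-\psi^2\in\rad(\End_{|A|}(\gN))$ for some $c\in\cO^\times$; choosing $k\in\cO^\times$ with $\bar k^2=\bar c^{-1}$ makes $(\Id_{\gN}\pm k\psi)/2$ orthogonal idempotents in the quotient. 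Nontriviality is handled by observing that conjugation by the $\cO$-linear reflection $m+um'\mapsto m-um'$ is an automorphism of $\End_{|A|}(\gN)$ sending $\psi\mapsto-\psi$, hence swapping the two idempotents; since they sum to $\Id_{\gN}$, neither can vanish.
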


\begin{proof}
(i) The $A$-supermodule homomorphism
$
\Ind_{A_{\0}}^A \gM_{\0} \to \gM,\ 
a \otimes m \mapsto am
$
is an isomorphism, as $\gM = \gM_{\0} \oplus u \gM_{\0}$.

(ii) We can give $\gN \oplus \gN^{\si}$ the structure of an $A$-supermodule via
\begin{align*}
(\gN \oplus \gN^{\si})_{\0} := \{(n,n)\mid n \in \gN\}\qquad\text{and}\qquad (\gN \oplus \gN^{\si})_{\1} := \{(n,-n)\mid n \in \gN\}.
\end{align*}
Since $2$ is invertible in $\cO$, $\gN \oplus \gN^{\si}$ is indeed a direct sum of $(\gN \oplus \gN^{\si})_{\0}$ and $(\gN \oplus \gN^{\si})_{\1}$. Moreover, $\Res^{|A|}_{A_{\0}}\gN \cong (\gN \oplus \gN^{\si})_{\0}$, as $A_{\0}$-modules, and so, by part (i), we have that 
\begin{equation}\label{E071222}
\Ind_{A_{\0}}^A\Res^{|A|}_{A_{\0}}\gN \simeq \gN \oplus \gN^{\si}
\end{equation}
as $A$-supermodules. The fact that $\Res^{|A|}_{A_{\0}}(\gN)$ is the direct sum of at most two indecomposable $A_{\0}$-modules follows immediately.

Suppose $\Res^{|A|}_{A_{\0}} \gN = \gM \oplus \gM'$, for indecomposable $A_{\0}$-modules $\gM,\gM'$. By (\ref{E071222}),  
$$\gN \oplus \gN^{\si}\cong \Ind_{A_{\0}}^{|A|}\gM\,\oplus\, 
\Ind_{A_{\0}}^{|A|} \gM',$$
as $|A|$-modules. Since $\gN$ is indecomposable, we may assume that 
$\gN\cong \Ind_{A_{\0}}^{|A|}\gM$ and $\gN^\si\cong  \Ind_{A_{\0}}^{|A|}\gM'$. 
Now,
$$
\gM\oplus \gM' \cong \Res^{|A|}_{A_{\0}}\gN\cong \Res^{|A|}_{A_{\0}}\Ind_{A_{\0}}^{|A|} \gM \cong \gM \oplus u \gM
$$
and so $\gM' \cong u \gM$. In particular,
$$
\Ind_{A_{\0}}^A \gM' \cong \Ind_{A_{\0}}^A u\gM \cong \Ind_{A_{\0}}^A \gM \cong \gN,
$$
as $|A|$-modules, proving all the claims except that $\gM \ncong u \gM$.

Suppose $\varphi:\gM \to u \gM$ is an $A_{\0}$-module isomorphism. Since, $\gN \cong \Ind_{A_{0}}^{|A|} \gM$, we may identify $\gN$ with $\gM \oplus u \gM$ and consider the $|A|$-module automorphism 
$
\psi: \gN \to \gN,\ 
m + u m' \mapsto \varphi(m) + u\varphi(m'),
$ 
for all $m,m' \in \gM$.

Certainly $\psi^2$ is an $|A|$-module automorphism of $\gN$. Since $\gN$ is indecomposable, there is some $c \in \cO^\times$ such that $c.\Id_{\gN} - \psi^2 \in \rad(\End_{|A|}(\gN))$. Let $k \in \cO^\times$ such that ${\bar k}^2 = {\bar c}^{-1}$ in $\F$. Now, the images of $(\Id_{\gN} + k.\psi)/2$ and $(\Id_{\gN} - k.\psi)/2$ in $\End_{|A|}(\gN)/\rad(\End_{|A|}(\gN))$ are orthogonal idempotents. Once we have shown that they are also non-zero we will have contradicted the indecomposability of $\gN$, proving the non-existence of $\varphi$.

One can quickly check that $\End_{|A|}(\gN)$ has an automorphism given by conjugation by the $\cO$-linear map
\begin{align*}
\gN  \to \gN, \ 
m + u m'  \mapsto m - u m',
\end{align*}
for all $m,m' \in \gM$. Note that, under this automorphism, $\psi$ and $-\psi$ get swapped and hence $(\Id_{\gN} + k.\psi)/2$ and $(\Id_{\gN} - k.\psi)/2$ get swapped. Therefore, $(\Id_{\gN} + k.\psi)/2$ is zero in $\End_{|A|}(\gN)/\rad(\End_{|A|}(\gN))$ if and only if $(\Id_{\gN} - k.\psi)/2$ is. Since they sum to $\Id_N$, we have shown that they are both non-zero and the proof is complete.
%(iii) The first part just follows from part (i) once we've identified $\gM$ with its corresponding $(A \otimes B^{\op})$-supermodule. For the second part we apply part (ii), where we give $(A,B)_{C_2}$ the structure of a superalgebra by setting $((A,B)_{C_2})_{\0} = A_{\0} \otimes B_{\0}$ and demanding that $u_A \otimes u_B^{-1}$ is odd.
%(iv) This is just part (iii) applied to situation $A=B=\gM$.
\end{proof}

\subsection{Projective supermodules}\label{sec:proj_sup_mod}
Let $A$ be a superalgebra with a superunit. In particular, for the left regular supermodule $A$ we have $A\simeq\Pi A$ as noted in \S\ref{SSMod}. A standard argument then shows that a  supermodule $\gP\in\underlinesmod{A}$ is projective if and only if $\gP\mid A^{\oplus n}$ for some $n$. 
% (this comes by a standard argument using the isomorphism $A\simeq \Pi A$).

\begin{Lemma} \label{LNewProj}
Let $A$ be a superalgebra with a superunit $u$ and 
$\gP\in\underlinesmod{A}$. 
%$\gP$ be a (finitely generated) $A$-supermodule. 
Then the following are equivalent:
\begin{enumerate}
\item[{\rm (i)}] $\gP$ is a projective $A$-supermodule;
\item[{\rm (ii)}] $|\gP|$ is a projective $|A|$-module;
\item[{\rm (iii)}] $\gP_\0$ is a projective $A_\0$-module;
\item[{\rm (iv)}] $\gP\simeq\Ind^A_{A_\0}\gQ$ for a projective $A_\0$-module $\gQ$. 
\end{enumerate}
\end{Lemma}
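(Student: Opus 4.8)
The strategy is to prove the chain of implications (iv) $\Rightarrow$ (i) $\Rightarrow$ (ii) $\Rightarrow$ (iii) $\Rightarrow$ (iv), using the superunit $u$ at each step to pass between $A$, $|A|$ and $A_\0$. First, for (iv) $\Rightarrow$ (i): if $\gQ$ is a projective $A_\0$-module then $\gQ \mid A_\0^{\oplus n}$ for some $n$, and applying the exact functor $\Ind^A_{A_\0}$ gives $\Ind^A_{A_\0}\gQ \mid \Ind^A_{A_\0}(A_\0^{\oplus n}) = (\Ind^A_{A_\0}A_\0)^{\oplus n} = A^{\oplus n}$ as $A$-supermodules, since $\Ind^A_{A_\0}A_\0 \simeq A$. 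By the remark opening \S\ref{sec:proj_sup_mod}, a summand of $A^{\oplus n}$ is a projective $A$-supermodule, so $\gP\simeq\Ind^A_{A_\0}\gQ$ is projective, giving (i).

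Next, (i) $\Rightarrow$ (ii) is the observation that if $\gP \mid A^{\oplus n}$ as $A$-supermodules then $|\gP| \mid |A|^{\oplus n}$ as $|A|$-modules, so $|\gP|$ is projective. For (ii) $\Rightarrow$ (iii): by Lemma \ref{LNewProj}'s predecessor Lemma \ref{lem_red_to_A0}(ii) (or directly), $|\gP| = \Res^{|A|}_{A_\0}\gP$ decomposes over $A_\0$, and since $A_\0$ is a subalgebra of $|A|$ over which $|A|$ is free (indeed $|A| = A_\0 \oplus uA_\0$ as $A_\0$-bimodules, using that $u$ is a unit of odd parity), the restriction $\Res^{|A|}_{A_\0}|A|$ is free of rank $2$ as an $A_\0$-module; hence if $|\gP|$ is projective over $|A|$ then $\Res^{|A|}_{A_\0}|\gP| = \gP_\0 \oplus u\gP_\0$ is projective over $A_\0$, and therefore its summand $\gP_\0$ is a projective $A_\0$-module. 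Finally, (iii) $\Rightarrow$ (iv): by Lemma \ref{lem_red_to_A0}(i) we always have $\gP \simeq \Ind^A_{A_\0}\gP_\0$, so taking $\gQ := \gP_\0$ does the job once we know $\gP_\0$ is projective, which is exactly (iii).

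The only mildly delicate point — and the step I would expect to write out most carefully — is (ii) $\Rightarrow$ (iii): one must verify that $\Res^{|A|}_{A_\0}$ sends projectives to projectives, which rests on $|A|$ being free (equivalently projective) as a left $A_\0$-module, and this in turn uses that the superunit $u$ furnishes the decomposition $|A| = A_\0 \oplus uA_\0$ with $uA_\0 \cong {}^u\!A_\0 \cong A_\0$ as left $A_\0$-modules. Everything else is a formal manipulation of the relation $\gP \mid A^{\oplus n}$ together with the isomorphisms $A \simeq \Pi A \simeq \Ind^A_{A_\0}A_\0$ already recorded in \S\ref{SSMod} and the opening of \S\ref{sec:proj_sup_mod}. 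No genuinely new idea beyond the standard "superunit trick" is needed.
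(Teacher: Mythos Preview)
Your proof is correct and uses essentially the same ingredients as the paper's: the superunit to identify $A_\1 \cong A_\0$ as left $A_\0$-modules, together with Lemma \ref{lem_red_to_A0}(i) to write $\gP \simeq \Ind^A_{A_\0}\gP_\0$. The only difference is organizational: you run a clean cycle (iv)$\Rightarrow$(i)$\Rightarrow$(ii)$\Rightarrow$(iii)$\Rightarrow$(iv), whereas the paper proves (iv)$\Rightarrow$(i), then (i)$\Rightarrow$(iii),(iv) directly, then handles (ii) via (i)$\Leftrightarrow$(ii); but the restriction argument in your (ii)$\Rightarrow$(iii) is exactly the paper's (ii)$\Rightarrow$(i) step. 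One small cosmetic point: for the left $A_\0$-module isomorphism $A_\1 \cong A_\0$ the paper uses the map $a \mapsto au$ directly rather than passing through the twisted module ${}^u\!A_\0$, which is slightly cleaner than your detour (and your notation ${}^u\!A_\0$ should strictly be ${}^{u^{-1}}\!A_\0$), but the content is identical.
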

\begin{proof}
(iv)$\implies$(i) If $\gQ$ is a projective $A_{\0}$-module, then $\gQ\mid A_{\0}^{\oplus n}$, for some $n$, and so $\Ind_{A_{\0}}^A \gQ\mid \Ind_{A_{\0}}^A A_{\0}^{\oplus n} \simeq A^{\oplus n}$, hence $\Ind_{A_{\0}}^A \gQ$ is projective. 

(i)$\implies$(iii),(iv) Let $\gP$ be a projective $A$-supermodule. Then $\gP \simeq \Ind_{A_{\0}}^A\gP_{\0}$, by Lemma \ref{lem_red_to_A0}(i). Moreover, $\gP\mid A^{\oplus n}$, for some $n$, and so
$$
\gP_{\0}\mid \Res^{A}_{A_{\0}}\gP\mid \Res^{A}_{A_{\0}} A^{\oplus n} \cong A_{\0}^{\oplus n} \oplus A_{\1}^{\oplus n},
$$
as $A_{\0}$-modules. It remains to note that $A_{\0} \cong A_{\1}$, as $A_{\0}$-modules, via $a \mapsto au$, and so $\gP_{\0}$ is a projective $A_{\0}$-module, as desired.

(iii)$\implies$(i)  By Lemma \ref{lem_red_to_A0}(i), we have $\gP \simeq \Ind_{A_{\0}}^A\gP_{\0}$, so (i) comes from the implication (iv)$\implies$(i).

(i)$\implies$(ii) is clear since $\gP\mid A^{\oplus n}$ implies $|\gP| \,\, \big{|} \,\, |A|^{\oplus n}$. 

(ii)$\implies$(i) Let $\gP$ be an $A$-supermodule with $|\gP|$ being projective as an $|A|$-module. Then 
$\gP \simeq \Ind_{A_{\0}}^A\gP_{\0}$, by Lemma \ref{lem_red_to_A0}(i). Moreover, $|\gP| \,\, \big{|} \,\,  |A|^{\oplus n}$,   for some $n$, and so
$$
\gP_{\0} \,\, \big{|} \,\, \Res^{|A|}_{A_{\0}}|\gP| \,\, \big{|} \,\, \Res^{|A|}_{A_{\0}} |A|^{\oplus n} \cong A_{\0}^{\oplus n} \oplus A_{\1}^{\oplus n},
$$
as $A_{\0}$-modules. But the $A_\0$-modules $A_{\0}$ and $ A_{\1}$ are isomorphic, as noticed above. So $\gP_\0$ is projective, hence $\gP$ is projective by the implication (iii)$\implies$(i). 
\end{proof}

A {\em projective cover} of an $A$-supermodule is defined exactly like for modules. Our conditions on the superalgebra $A$ ensure that a projective cover of a (finitely generated) $A$-supermodule $\gM$  exists and is unique up to isomorphism. So we can define {\em Heller translate} $\Omega_A(\gM)$ of $\gM$ just like for modules. 

%We set $\Omega_A(\gM)$ to be the {\em Heller translate} of $\gM$. 

\begin{Lemma} \label{LProjCov}
Let $A$ be a superalgebra with a superunit and 
$\gM\in\underlinesmod{A}$. If $(\gP_{\gM_\0},\varphi_{\gM_\0})$ is a projective cover of\, $\gM_{\0}$ as an $A_{\0}$-module, 
then $(\Ind_{A_{\0}}^A\gP_{\gM_\0}, \Ind_{A_{\0}}^A\varphi_{\gM_\0})$ is a projective cover of\, $\gM$ as an $A$-supermodule and 
%$(|\Ind_{A_{\0}}^A\gP_{\gM_\0}|, \Ind_{A_{\0}}^A\varphi_{\gM_\0})$ is a projective cover 
of $|\gM|$ as an $|A|$-module. 
\end{Lemma}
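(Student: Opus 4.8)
The plan is to reduce the statement to the single algebraic fact that $\rad A_{\0}\subseteq\rad A$ and then to feed in what has already been established about induction from $A_{\0}$.

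First I would set up the map. By Lemma~\ref{lem_red_to_A0}(i) we have $\gM\simeq\Ind_{A_{\0}}^A\gM_{\0}$, and under this identification $\Ind_{A_{\0}}^A\varphi_{\gM_\0}$ becomes the $A$-supermodule homomorphism $\Ind_{A_{\0}}^A\gP_{\gM_\0}\to\gM$, $a\otimes x\mapsto a\varphi_{\gM_\0}(x)$. Its source is a projective $A$-supermodule, and also a projective $|A|$-module, by the equivalences (iv)$\Leftrightarrow$(i)$\Leftrightarrow$(ii) of Lemma~\ref{LNewProj}, since $\gP_{\gM_\0}$ is a projective $A_{\0}$-module. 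Fix a superunit $u$ of $A$. Then $A=A_{\0}\oplus A_{\1}$ with $A_{\1}=uA_{\0}\cong A_{\0}$ as right $A_{\0}$-modules via $a\mapsto ua$, so $A$ is free of rank two as a right $A_{\0}$-module; hence $\Ind_{A_{\0}}^A=A\otimes_{A_{\0}}-$ is exact. Consequently $\Ind_{A_{\0}}^A\varphi_{\gM_\0}$ is surjective (as $\varphi_{\gM_\0}$ is) and $\ker\bigl(\Ind_{A_{\0}}^A\varphi_{\gM_\0}\bigr)=A\otimes_{A_{\0}}\ker\varphi_{\gM_\0}$.

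Next, since $(\gP_{\gM_\0},\varphi_{\gM_\0})$ is a projective cover over the semiperfect algebra $A_{\0}$, its kernel is superfluous, i.e.\ $\ker\varphi_{\gM_\0}\subseteq\rad_{A_{\0}}\gP_{\gM_\0}=(\rad A_{\0})\gP_{\gM_\0}$, so $\ker\bigl(\Ind_{A_{\0}}^A\varphi_{\gM_\0}\bigr)\subseteq A\otimes_{A_{\0}}\bigl((\rad A_{\0})\gP_{\gM_\0}\bigr)$. A typical generator $a\otimes rx$ (with $a\in A$, $r\in\rad A_{\0}$, $x\in\gP_{\gM_\0}$) equals $(ar)\otimes x$; granting the fact $\rad A_{\0}\subseteq\rad A$ we get $ar\in A\cdot\rad A\subseteq\rad A$, whence this element lies in $(\rad A)\bigl(\Ind_{A_{\0}}^A\gP_{\gM_\0}\bigr)$. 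Because $A$ is module-finite over the complete local ring $\cO$, one has $(\rad A)N=\rad N$ for every finitely generated (super)module $N$, and $(\rad A)N$ is superfluous both as a subsupermodule and as an $|A|$-submodule. Therefore $\ker\bigl(\Ind_{A_{\0}}^A\varphi_{\gM_\0}\bigr)$ is superfluous, so no proper direct summand of $\Ind_{A_{\0}}^A\gP_{\gM_\0}$ surjects onto $\gM$, and $\bigl(\Ind_{A_{\0}}^A\gP_{\gM_\0},\ \Ind_{A_{\0}}^A\varphi_{\gM_\0}\bigr)$ is a projective cover of $\gM$ as an $A$-supermodule and of $|\gM|$ as an $|A|$-module.

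The only real content is thus the claim $\rad A_{\0}\subseteq\rad A$, which is the step I expect to cost the most work and is where the superunit $u$ is genuinely used. Conjugation $c\colon a\mapsto uau^{-1}$ is an automorphism of $A$ which, $u$ being odd, preserves $A_{\0}$ and hence preserves $\rad A_{\0}$; moreover $u^{2}\in A_{\0}^{\times}$ and $A_{\1}=A_{\0}u$. Given $x\in\rad A_{\0}$ and $a=b+b'u\in A$ with $b,b'\in A_{\0}$, one has $ax=bx+b'c(x)u=:r+su$ with $r,s\in\rad A_{\0}$, and then $1-ax=(1-r)\bigl(1-(1-r)^{-1}su\bigr)$; writing $t:=(1-r)^{-1}s\in\rad A_{\0}$, the identity $(1+tu)(1-tu)=1-t\,c(t)\,u^{2}$ has right-hand side in $1+\rad A_{\0}\subseteq A_{\0}^{\times}$, so $1-tu$, and hence $1-ax$, is left invertible. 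Since $a\in A$ was arbitrary, $x\in\rad A$. The delicate part of this computation — essentially the only place care is needed in the whole proof — is keeping track of which products remain inside $\rad A_{\0}$ as one moves $u$ past elements of $A_{\0}$; everything else is formal.
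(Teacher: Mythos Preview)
Your argument is correct, but it takes a different route from the paper. You establish the superfluousness of the kernel by proving the ring-theoretic inclusion $\rad A_{\0}\subseteq\rad A$ (via an explicit left-invertibility computation with the superunit) and then invoking Nakayama. The paper instead argues by restriction: since $\Res^{A}_{A_{\0}}\Ind_{A_{\0}}^{A}\gP_{\gM_{\0}}\cong \gP_{\gM_{\0}}\oplus {}^{u}\gP_{\gM_{\0}}$ is a projective cover of $\Res^{A}_{A_{\0}}\gM\cong \gM_{\0}\oplus u\gM_{\0}$, any proper summand of $\Ind_{A_{\0}}^{A}\gP_{\gM_{\0}}$ surjecting onto $\gM$ (or $|\gM|$) would, upon restriction to $A_{\0}$, contradict minimality downstairs. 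The paper's approach is shorter and avoids the radical computation entirely; your approach has the virtue of isolating a reusable fact ($\rad A_{\0}\subseteq\rad A$) and making the mechanism behind superfluousness explicit, at the cost of the somewhat delicate unit calculation you flag.
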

\begin{proof}
By Lemma~\ref{LNewProj}, $\Ind_{A_{\0}}^A\gP_{\gM_\0}$ is projective as an $A$-supermodule and as an $|A|$-module. Moreover, no proper summand of $\Ind_{A_{\0}}^A\gP_{\gM_{\0}}$ will suffice as a projective cover of $\gM$ or $|\gM|$, since $\Res^A_{A_{\0}}\Ind_{A_{\0}}^A\gP_{\gM_{\0}} \cong \gP_{\gM_{\0}} \oplus {}^u\gP_{\gM_{\0}}$ is a projective cover of $\Res^A_{A_{\0}}\gM \cong \gM_{\0} \oplus u\gM_{\0}$.
\end{proof}

\begin{Corollary} \label{COm0} %{\rm \cite{}}%{\bf ()}
Let $A$ be a superalgebra with a superunit and $\gM\in\underlinesmod{A}$. Then $\Om_A(\gM)\simeq \Ind_{A_{\0}}^A\Om_{A_\0}(\gM_\0)$ and $|\Om_A(\gM)|\cong \Om_{|A|}(|\gM|)$. 
\end{Corollary}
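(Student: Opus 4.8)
The plan is to deduce both isomorphisms of Corollary \ref{COm0} directly from Lemma \ref{LProjCov}, since that lemma already packages exactly the facts about projective covers that we need. Recall that $\Om_A(\gM)$ is by definition the kernel of a projective cover $\gP\to\gM$, and similarly for $|A|$-modules; so the strategy is to identify compatible projective covers for $\gM$, $|\gM|$ and $\gM_\0$, and then argue that taking kernels commutes with $\Ind^A_{A_\0}$ and with the forgetful functor $|\cdot|$.

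\begin{proof}
Let $(\gP_{\gM_\0},\varphi_{\gM_\0})$ be a projective cover of $\gM_\0$ as an $A_\0$-module, so that by definition $\Om_{A_\0}(\gM_\0)=\Ker\varphi_{\gM_\0}$. By Lemma~\ref{LProjCov}, the pair $(\Ind_{A_{\0}}^A\gP_{\gM_\0}, \Ind_{A_{\0}}^A\varphi_{\gM_\0})$ is a projective cover of $\gM$ as an $A$-supermodule; hence $\Om_A(\gM)\simeq\Ker(\Ind_{A_{\0}}^A\varphi_{\gM_\0})$. Since $A$ is free, hence flat, as a right $A_\0$-module, the induction functor $\Ind^A_{A_\0}=A\otimes_{A_\0}(-)$ is exact, so
\begin{align*}
\Ker(\Ind_{A_{\0}}^A\varphi_{\gM_\0})\simeq\Ind_{A_{\0}}^A\Ker(\varphi_{\gM_\0})=\Ind_{A_{\0}}^A\Om_{A_\0}(\gM_\0),
\end{align*}
which gives the first claimed isomorphism $\Om_A(\gM)\simeq\Ind_{A_{\0}}^A\Om_{A_\0}(\gM_\0)$.

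For the second isomorphism, Lemma~\ref{LProjCov} also tells us that $(\Ind_{A_{\0}}^A\gP_{\gM_\0}, \Ind_{A_{\0}}^A\varphi_{\gM_\0})$ is a projective cover of $|\gM|$ as an $|A|$-module (after applying the forgetful functor, which does not change underlying maps). Therefore $\Om_{|A|}(|\gM|)=\Ker\big(|\Ind_{A_{\0}}^A\varphi_{\gM_\0}|\big)=\big|\Ker(\Ind_{A_{\0}}^A\varphi_{\gM_\0})\big|\cong|\Om_A(\gM)|$, since the forgetful functor is exact and takes kernels to kernels. Combining with the first part, $|\Om_A(\gM)|\cong\Om_{|A|}(|\gM|)$, as required.
\end{proof}

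\textbf{The only mild subtlety} is the exactness of $\Ind^A_{A_\0}$, which rests on $A$ being a free right $A_\0$-module; this is immediate from the existence of a superunit $u$, since $A=A_\0\oplus A_\0 u$ as right $A_\0$-modules (equivalently $A_\0\simeq A_\1$ via $a\mapsto au$, as already used repeatedly in \S\ref{sec:proj_sup_mod}). Everything else is a formal consequence of Lemma~\ref{LProjCov} together with the fact that kernels are preserved by exact functors, so there is no real obstacle here; the corollary is essentially a restatement of Lemma~\ref{LProjCov} at the level of Heller translates.
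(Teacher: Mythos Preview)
Your proof is correct and follows exactly the same approach as the paper, which simply says the result follows from Lemma~\ref{LProjCov} and the exactness of $\Ind_{A_{\0}}^A$. You have spelled out the details (including the justification of exactness via freeness of $A$ over $A_\0$) that the paper leaves implicit.
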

\begin{proof}
Follows from Lemma~\ref{LProjCov} and the exactness of the functor $\Ind_{A_{\0}}^A$. 
\end{proof}

\begin{Corollary} \label{C?}
Let $A,B$ be superalgebras with superunits, 
$\gM$ an $A$-supermodule and $\gN$ a $B$-supermodule.  
If $(\gP_{\gM_{\0}},\varphi_{\gM_{\0}})$ is a projective cover of $\gM_{\0}$ as an $A_{\0}$-module and 
$(\gP_{\gN_{\0}},\varphi_{\gN_{\0}})$ is a projective cover of $\gN_{\0}$ as a $B_{\0}$-module then 
\begin{align*}
\left(\Ind_{A_{\0} \otimes B_{\0}}^{A \otimes B} (\gP_{\gM_{\0}} \boxtimes \gP_{\gN_{\0}}),\,\Ind_{A_{\0} \otimes B_{\0}}^{A \otimes B} (\varphi_{\gM_{\0}} \otimes \varphi_{\gN_{\0}}) \right)
\end{align*}
is a projective cover of $\gM \boxtimes \gN$ as an $(A \otimes B)$-supermodule.
\end{Corollary}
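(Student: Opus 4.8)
The plan is to obtain the projective cover of $\gM\boxtimes\gN$ over $A\otimes B$ by applying Lemma~\ref{LProjCov} twice: once for the superalgebra $A\otimes B$ relative to its purely even part $(A\otimes B)_\0$, and once for $(A\otimes B)_\0$ itself, viewed as a superalgebra, relative to the smaller subalgebra $A_\0\otimes B_\0$. These two inductions compose to $\Ind_{A_\0\otimes B_\0}^{A\otimes B}$ by transitivity, which is why this produces exactly the asserted cover.

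First fix superunits $u\in A_\1\cap A^\times$ and $v\in B_\1\cap B^\times$, so that $u\otimes1$ is a superunit of $A\otimes B$. The observation that makes the two-step approach work is that $(A\otimes B)_\0$ is itself naturally a superalgebra with a superunit, with purely even part exactly $A_\0\otimes B_\0$: since $A_\1=uA_\0$ and $B_\1=vB_\0$ we have $(A\otimes B)_\0=(A_\0\otimes B_\0)\oplus(A_\1\otimes B_\1)$ with $A_\1\otimes B_\1=(u\otimes v)(A_\0\otimes B_\0)$, and declaring $A_\0\otimes B_\0$ to have parity $\0$ and $A_\1\otimes B_\1$ to have parity $\1$ makes $(A\otimes B)_\0$ a superalgebra whose superunit is $u\otimes v$. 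In the same way $(\gM\boxtimes\gN)_\0=(\gM_\0\otimes\gN_\0)\oplus(\gM_\1\otimes\gN_\1)$ becomes a supermodule over this superalgebra with purely even part $\gM_\0\otimes\gN_\0$.

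The one step that requires genuine work is to check that $(\gP_{\gM_\0}\boxtimes\gP_{\gN_\0},\ \varphi_{\gM_\0}\otimes\varphi_{\gN_\0})$ is a projective cover of $\gM_\0\otimes\gN_\0$ as an $(A_\0\otimes B_\0)$-module. Projectivity and surjectivity are immediate; for the essentiality of the kernel I would reduce modulo $\m:=\rad\cO$. For an $\cO$-algebra $C$, a surjection $P\onto L$ with $P$ projective is a projective cover precisely when $\F\otimes_\cO P\onto\F\otimes_\cO L$ is one over $\F C:=\F\otimes_\cO C$ (because $\m C\subseteq\rad C$, so the kernel lies in $\rad(C)P$ iff its image in $\F\otimes_\cO P$ lies in $\rad(\F C)(\F\otimes_\cO P)$). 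Reducing, $\F\otimes_\cO(\gP_{\gM_\0}\otimes_\cO\gP_{\gN_\0})\cong(\F\otimes_\cO\gP_{\gM_\0})\otimes_\F(\F\otimes_\cO\gP_{\gN_\0})$, where each factor is a projective cover over $\F A_\0$, resp.\ $\F B_\0$; since $\F$ is algebraically closed, $\F A_\0/\rad$ and $\F B_\0/\rad$ are products of matrix algebras, so $\rad(\F A_\0\otimes_\F\F B_\0)=\rad(\F A_\0)\otimes\F B_\0+\F A_\0\otimes\rad(\F B_\0)$, and it follows that the tensor product of the two reduced covers is a projective cover over $\F A_\0\otimes_\F\F B_\0=\F\otimes_\cO(A_\0\otimes B_\0)$. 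This is the only place where the $\cO$-setting, rather than a field, costs anything, and it is the main obstacle.

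Finally I would assemble the pieces. Applying Lemma~\ref{LProjCov} to the superalgebra $(A\otimes B)_\0$, the supermodule $(\gM\boxtimes\gN)_\0$, and the projective cover just produced shows that $\Ind_{A_\0\otimes B_\0}^{(A\otimes B)_\0}(\gP_{\gM_\0}\boxtimes\gP_{\gN_\0})$, together with $\Ind_{A_\0\otimes B_\0}^{(A\otimes B)_\0}(\varphi_{\gM_\0}\otimes\varphi_{\gN_\0})$, is a projective cover of $(\gM\boxtimes\gN)_\0$ both as a supermodule over $(A\otimes B)_\0$ and as an ordinary $(A\otimes B)_\0$-module. Feeding the latter into Lemma~\ref{LProjCov} for the superalgebra $A\otimes B$ (superunit $u\otimes1$) with purely even part $(A\otimes B)_\0$ and supermodule $\gM\boxtimes\gN$ shows that applying $\Ind_{(A\otimes B)_\0}^{A\otimes B}$ gives a projective cover of $\gM\boxtimes\gN$ as an $(A\otimes B)$-supermodule. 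By transitivity of induction, $\Ind_{(A\otimes B)_\0}^{A\otimes B}\Ind_{A_\0\otimes B_\0}^{(A\otimes B)_\0}=\Ind_{A_\0\otimes B_\0}^{A\otimes B}$, and likewise for the induced maps by functoriality, which is exactly the claimed statement.
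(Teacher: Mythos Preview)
Your proof is correct and shares with the paper the same key observation: that $(\gP_{\gM_\0}\boxtimes\gP_{\gN_\0},\varphi_{\gM_\0}\otimes\varphi_{\gN_\0})$ is a projective cover of $\gM_\0\otimes\gN_\0$ over $A_\0\otimes B_\0$. The paper simply asserts this fact and then says the result follows ``similarly to Lemma~\ref{LProjCov}'', meaning one reruns that argument directly for the inclusion $A_\0\otimes B_\0\subseteq A\otimes B$ (with the restriction decomposing into four conjugate summands rather than two). Your organization is a bit different: by putting a superalgebra structure on $(A\otimes B)_\0$ with superunit $u\otimes v$ and purely even part $A_\0\otimes B_\0$, you factor the index-$4$ induction into two index-$2$ steps and thereby apply Lemma~\ref{LProjCov} literally rather than analogously. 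This is a pleasant packaging that avoids reproving anything, and your detailed justification of the tensor-of-covers step (reducing modulo $\rad\cO$ and using that $\F$ is algebraically closed) fills in what the paper leaves implicit.
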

\begin{proof}
This follows similarly to Lemma \ref{LProjCov} once we have observed that $(\gP_{\gM_{\0}}\boxtimes \gP_{\gN_{\0}},\varphi_{\gM_{\0}}\otimes \varphi_{\gN_{\0}})$ is a projective cover of $\gM_{\0}\boxtimes \gN_{\0}$ as an $A_\0\otimes B_{\0}$-module.
\end{proof}

\begin{Lemma}\label{lem:super_Heller}
Let $A$ and $B$ be two superalgebras with superunits, $\gM\in\underlinesmod{A}$ and $\gN\in\underlinesmod{B}$. Then there exists a canonically defined monomorphism
$$
\Omega_A(\gM) \boxtimes \Omega_B(\gN) \hookrightarrow \Omega_{A \otimes B}(\gM \boxtimes \gN)
$$
of $(A \otimes B)$-supermodules. Furthermore, through this monomorphism,
\begin{align*}
\Omega_{A \otimes B}(\gM \boxtimes \gN)/(\Omega_A(\gM) \boxtimes \Omega_B(\gN)) \simeq (\Omega_A(\gM) \boxtimes \gN) \oplus (\gM \boxtimes \Omega_B(\gN)),
\end{align*}
as $(A \otimes B)$-supermodules.
\end{Lemma}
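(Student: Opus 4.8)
The plan is to realise $\Omega_{A\otimes B}(\gM\boxtimes\gN)$ concretely as the kernel of a projective cover built from the projective covers of $\gM$ and of $\gN$, and then to dissect that kernel by hand. First I would fix projective covers $f\colon\gP_{\gM}\onto\gM$ and $g\colon\gP_{\gN}\onto\gN$, so that $\Omega_A(\gM)=\Ker f$ and $\Omega_B(\gN)=\Ker g$, and I would note once and for all that $\gP_{\gM}$, $\gP_{\gN}$, $\Omega_A(\gM)$ and $\Omega_B(\gN)$ are $\cR$-free: over $\cR=\cO$ because projective modules over an $\cO$-algebra are $\cO$-free (as $\cO$ is a discrete valuation ring) and submodules of $\cO$-free modules over the principal ideal domain $\cO$ are again $\cO$-free, and over $\cR=\K$ automatically; under the running hypothesis that all supermodules are $\cR$-lattices, $\gM$ and $\gN$ are likewise $\cR$-free, hence $\cR$-flat. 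Using Lemma~\ref{LProjCov} together with Corollary~\ref{C?} and Lemma~\ref{lem:tensor_bisupmod} one identifies $(\gP_{\gM}\boxtimes\gP_{\gN},\,f\boxtimes g)$ with a projective cover of $\gM\boxtimes\gN$, so $\Omega_{A\otimes B}(\gM\boxtimes\gN)\simeq\Ker(f\boxtimes g)$; from now on I would work inside $\gP_{\gM}\boxtimes\gP_{\gN}$.

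The monomorphism is then immediate. Since $\gP_{\gM}$ and $\Omega_B(\gN)$ are $\cR$-flat, the natural map $\Omega_A(\gM)\boxtimes\Omega_B(\gN)\to\gP_{\gM}\boxtimes\gP_{\gN}$ is injective, and its image lies in $\Ker(f\boxtimes g)$ because $f$ annihilates $\Omega_A(\gM)$; this is the canonical monomorphism asserted. For the same flatness reason $\Omega_A(\gM)\boxtimes\gP_{\gN}$ and $\gP_{\gM}\boxtimes\Omega_B(\gN)$ are subsupermodules of $\gP_{\gM}\boxtimes\gP_{\gN}$, both contained in $\Ker(f\boxtimes g)$ (as $f$, resp.\ $g$, kills the first, resp.\ second, factor); these are the workhorses.

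Next I would build the short exact sequence governing the quotient. Factoring $f\boxtimes g=(f\boxtimes\id_{\gN})\circ(\id_{\gP_{\gM}}\boxtimes g)$, the restriction $\phi$ of $\id_{\gP_{\gM}}\boxtimes g$ to $\Ker(f\boxtimes g)$ takes values in $\Ker(f\boxtimes\id_{\gN})=\Omega_A(\gM)\boxtimes\gN$ (this identification uses $\cR$-flatness of $\gN$), is surjective (lift representatives along $g$), and has kernel $\gP_{\gM}\boxtimes\Omega_B(\gN)$. Since $\Omega_B(\gN)$ is $\cR$-flat, $\Omega_A(\gM)\boxtimes\Omega_B(\gN)$ sits inside $\gP_{\gM}\boxtimes\Omega_B(\gN)$ with quotient $\gM\boxtimes\Omega_B(\gN)$; dividing the exact sequence $0\to\gP_{\gM}\boxtimes\Omega_B(\gN)\to\Ker(f\boxtimes g)\xrightarrow{\phi}\Omega_A(\gM)\boxtimes\gN\to 0$ through by $\Omega_A(\gM)\boxtimes\Omega_B(\gN)$ then yields
\[
0\longrightarrow\gM\boxtimes\Omega_B(\gN)\longrightarrow Z\xrightarrow{\ \bar\phi\ }\Omega_A(\gM)\boxtimes\gN\longrightarrow 0 ,
\]
where $Z:=\Ker(f\boxtimes g)/(\Omega_A(\gM)\boxtimes\Omega_B(\gN))$ is the quotient to be identified.

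Finally I would split this sequence using the subsupermodule $\Omega_A(\gM)\boxtimes\gP_{\gN}$. Applying the exact functor $\Omega_A(\gM)\boxtimes(-)$ to $0\to\Omega_B(\gN)\to\gP_{\gN}\to\gN\to 0$ shows that $\phi$ carries $\Omega_A(\gM)\boxtimes\gP_{\gN}$ onto $\Omega_A(\gM)\boxtimes\gN$ with kernel precisely $\Omega_A(\gM)\boxtimes\Omega_B(\gN)$. Hence the image of $\Omega_A(\gM)\boxtimes\gP_{\gN}$ in $Z$ is mapped isomorphically onto $\Omega_A(\gM)\boxtimes\gN$ by $\bar\phi$; in particular it meets $\Ker\bar\phi=\gM\boxtimes\Omega_B(\gN)$ trivially, and since $\phi$ is already surjective on $\Omega_A(\gM)\boxtimes\gP_{\gN}$ this image together with $\Ker\bar\phi$ spans $Z$. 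Therefore $Z=(\gM\boxtimes\Omega_B(\gN))\oplus(\Omega_A(\gM)\boxtimes\gN)$, which is the desired isomorphism. The step I expect to require genuine care, rather than routine diagram chasing, is the bookkeeping of $\cR$-flatness throughout: over $\cR=\cO$ the modules $\gM$ and $\gN$ may carry $\cO$-torsion, so $\operatorname{Tor}_1^{\cO}(\gM,\gN)$ need not vanish and the naive "tensor product of two short exact sequences" computation of $\Ker(f\boxtimes g)$ fails; the saving point is that every module actually tensored above ($\gP_{\gM}$, $\gP_{\gN}$, $\Omega_A(\gM)$, $\Omega_B(\gN)$, and, under the lattice hypothesis, $\gM$ and $\gN$) is $\cR$-flat, which is exactly what is needed. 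One should also note that all the maps and submodules above are homogeneous for the $\Z/2$-grading, but this is automatic as everything is built from even homomorphisms.
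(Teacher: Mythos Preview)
Your proposal is correct and follows the same route as the paper: both arguments realise $\Omega_{A\otimes B}(\gM\boxtimes\gN)$ as $\ker(\phi_\gM\otimes\phi_\gN)$ inside $\gP_\gM\boxtimes\gP_\gN$ via Lemma~\ref{LProjCov} and Corollary~\ref{C?}, and then decompose that kernel using the subsupermodules $\Omega_A(\gM)\boxtimes\gP_\gN$ and $\gP_\gM\boxtimes\Omega_B(\gN)$; the paper simply writes $\ker(\phi_\gM\otimes\phi_\gN)=(\Omega_A(\gM)\boxtimes\gP_\gN)+(\gP_\gM\boxtimes\Omega_B(\gN))$ and passes directly to the quotient, whereas you package the same computation as a short exact sequence and an explicit splitting. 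Your caution about $\cR$-flatness is well placed and in fact applies equally to the paper's terser argument---the identification $(\Omega_A(\gM)\boxtimes\gP_\gN)\cap(\gP_\gM\boxtimes\Omega_B(\gN))=\Omega_A(\gM)\boxtimes\Omega_B(\gN)$ genuinely needs $\operatorname{Tor}_1^{\cO}(\gM,\gN)=0$, so the ``lattice hypothesis'' you invoke is not an artefact of your particular organisation but a real requirement; since the only application in the paper (Proposition~\ref{prop:define_M}) is to the $\cO$-free bisupermodules $\Blo^{\rho,0}$ and $\Blo^{\varnothing,1}$, this causes no trouble there.
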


\begin{proof}
If $(\gP_\gM,\phi_\gM)$ is a projective cover of $\gM$ and $(\gP_\gN,\phi_\gN)$ is a projective cover of  $\gN$ then, using for example Lemma~\ref{LProjCov} and Corollary~\ref{C?}, %\textcolor{green}{why do you say `for example'?} 
%\textcolor{blue}{1. I have added reference to Lemma~\ref{LProjCov}, which I think is also needed; 2. I said `for example' because this was not the only way of seeing this, another is a direct argument as in the classical situation, but I do not mind to remove the words `for example'.}
we can see that  $(\gP_\gM\boxtimes \gP_\gN,\phi_\gM\otimes \phi_\gN)$ is a projective cover of $\gM\boxtimes \gN$. 
Now, 
$$\Om_{A\otimes B}(\gM\boxtimes\gN)=\ker (\phi_\gM\otimes \phi_\gN)=(\Om_A(\gM)\boxtimes \gP_\gN) + (\gP_\gM\boxtimes \Om_B(\gN)) \subseteq \gP_\gM \boxtimes \gP_\gN$$
and 
$$
\Big((\Om_A(\gM)\boxtimes \gP_\gN) + (\gP_\gM\boxtimes \Om_B(\gN))\Big)/\Omega_A(\gM) \boxtimes \Omega_B(\gN)\simeq
(\Omega_A(\gM) \boxtimes \gN) \oplus (\gM \boxtimes \Omega_B(\gN)),
$$
which implies the result.
\end{proof}

\subsection{Crossed superproducts}\label{sec:cross_super}

Let $G$ be a finite group. A {\em $G$-graded crossed superproduct} will refer to a superalgebra $A$ with a decomposition $\bigoplus_{g\in G} A_g$ into subsuperspaces such that $A_gA_h \subseteq A_{gh}$, for all $g,h\in G$, and such that, for all $g\in G$, we have $A_g\cap A^{\times} \neq \varnothing$. % either $A_g\cap A_{\0}\neq \varnothing$ or $A_g\cap A_{\1}\neq \varnothing$.
%If, in addition, $A$ is a superalgebra and each $A_g$ is a subsuperspace, then we refer to $A$ as a {\em $G$-graded crossed superproduct}.
If $A$ is a $G$-graded crossed superproduct, then so is $A^{\sop}$ by defining $(A^{\sop})_g=A_{g^{-1}}$, for all $g\in G$.
Note that $A_{1_G}$ is always a subsuperalgebra of $A$, and $(A_{1_G})^\sop=(A^\sop)_{1_G}$.

A $G$-graded crossed superproduct $A$ is called {\em supersymmetric} if it has a supersymmetrizing form $\tr: A \to \cO$ that turns $A$ into a supersymmetric algebra and $\tr(A_g)=0$, for all $g \in G\setminus\{1_G\}$. In particular, $\tr$ gives $A_{1_G}$ the structure of a supersymmetric algebra.

If $A$ and $B$ are $G$-graded crossed superproducts, we define
\begin{equation}\label{EDiag}
(A,B)_{G}:= \sum_{g\in G}A_g\otimes B_{g^{-1}}
=
\sum_{g\in G}A_g\otimes (B^{\sop})_g  \subseteq A \otimes B^{\sop}.
\end{equation}
The definition of $G$-graded crossed superproduct ensures that $$A_{1_G}\otimes B_{1_G}^\sop\subseteq (A,B)_{G}\subseteq A \otimes B^{\sop}$$ 
are subsuperalgebras. In particular, if $A$ and $B$ are both superalgebras with super units, we consider $A$ and $B$  as $C_2$-graded crossed superproducts in the natural way, and
\begin{align*}
(A,B)_{C_2}= (A_{\0} \otimes B_{\0}) \oplus (A_{\1} \otimes B_{\1}) \subseteq A \otimes B^{\sop}.
\end{align*}

The following Proposition is a super version of \cite[Theorem 3.4(a)]{Mar}, proved in \cite[Proposition 2.2.22]{KL} for superalgebras defined over a field. However, the proof is no more complicated for superalgebras defined over $\cO$. As long as the superalgebras are supersymmetric the proof of \cite[Proposition 2.2.22]{KL} runs through unaltered (see \cite[Remark 3.2(e)]{Mar}). Recall that one can view an $(A,B)$-bisupermodule as an $A\otimes B^{\sop}$-supermodule via (\ref{bisupmodAB}).

\begin{Proposition}\label{prop:ext_marcus}
Let $G$ be a finite group, and $A,\, B$ be supersymmetric $G$-graded crossed superproducts. Suppose $\gM$ is an $(A_{1_G}\otimes B_{1_G}^\sop)$-supermodule inducing a Morita superequivalence between $A_{1_G}$ and $B_{1_G}$. If $\gM$ extends to an $(A,B)_{G}$-supermodule, then $\Ind_{(A,B)_{G}}^{A\otimes B^{\sop}}(\gM)$ induces a Morita superequivalence between $A$ and $B$.
\end{Proposition}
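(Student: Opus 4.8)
The plan is to reduce everything to the non-super ``ordinary'' statement of Marcus, namely \cite[Theorem 3.4(a)]{Mar}, together with the translation dictionary between super and ordinary objects provided by the $\hat{\,\cdot\,}$-construction and by Remark~\ref{rem:bisupmod}. Concretely, let $R := (A,B)_G \subseteq A \otimes B^{\sop}$, and note that the $G$-grading on $A\otimes B^{\sop}$ restricts to $R$, with $R_{1_G} = A_{1_G}\otimes B_{1_G}^{\sop}$; similarly $S := (A^{\sop},B^{\sop})_G \subseteq A^{\sop}\otimes B$, so that $S$ is the appropriate ``crossed product on the other side''. The hypotheses say $\gM$ is an $R_{1_G}$-supermodule inducing a Morita superequivalence between $A_{1_G}$ and $B_{1_G}$, and that $\gM$ extends to an $R$-supermodule. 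First I would record that the inverse Morita superequivalence is given by $\gN := \gM^{\vee}$ (the appropriate super-dual), which is a $B_{1_G}\otimes A_{1_G}^{\sop}$-supermodule, and that $\gN$ likewise extends to an $S_{1_G} = B_{1_G}\otimes A_{1_G}^{\sop}$-bisupermodule over the crossed product $S$; the extension of $\gN$ is dual to the extension of $\gM$, so this is essentially formal once one fixes the super-duality conventions.

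Next I would pass to the ordinary (non-super) picture. Each supersymmetric $G$-graded crossed superproduct $A$ gives rise, via the enveloping algebra $\hat A = \langle A, e_\0, e_\1\rangle$, to an ordinary $G$-graded crossed product $\hat A$; the $\Z/2$-grading is absorbed into an extra $C_2$ of grading, so $\hat A$ is $(G\times C_2)$-graded (or one simply regards $e_\0,e_\1$ as central idempotents recording parity). Under the equivalence $\underlinesmod{A}\cong \mod{\hat A}$ the bisupermodule $\gM$ becomes an ordinary $\widehat{R_{1_G}}$-module inducing an ordinary Morita equivalence between $\widehat{A_{1_G}}$ and $\widehat{B_{1_G}}$, and the extension of $\gM$ to $R$ becomes an extension to $\hat R$. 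Now \cite[Theorem 3.4(a)]{Mar} applies verbatim in the ordinary setting: $\Ind_{\hat R}^{\hat A\otimes \hat B^{\op}}(\gM)$ induces a Morita equivalence between $\hat A$ and $\hat B$. Tracing this back through $\underlinesmod{(\cdot)}\cong \mod{\widehat{(\cdot)}}$, and using that induction commutes with the forgetful functors and with the $\hat{\,\cdot\,}$-construction, yields that $\Ind_{(A,B)_G}^{A\otimes B^{\sop}}(\gM)$ induces a Morita superequivalence between $A$ and $B$.

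The only genuine content beyond this dictionary-chase is to check that the two composite bisupermodules are actually the regular bisupermodules $A$ and $B$ on the nose (up to even isomorphism), not merely Morita-equivalence data. Here the computation is: $\bigl(\Ind_{(A,B)_G}^{A\otimes B^{\sop}}\gM\bigr) \otimes_{B}\bigl(\Ind_{(B,A)_G}^{B\otimes A^{\sop}}\gN\bigr) \simeq A$ as $(A,A)$-bisupermodules, and symmetrically for $B$. One establishes this exactly as in \cite[Proposition 2.2.22]{KL}: by a Mackey-type / double-coset argument for induction along the inclusions $(A,B)_G \subseteq A\otimes B^{\sop}$ one reduces the tensor product of the two induced modules to $\Ind$ of $\gM\otimes_{B_{1_G}}\gN \simeq A_{1_G}$, and then uses that $\Ind_{A_{1_G}\otimes A_{1_G}^{\sop}}^{(A,A)_G} (A_{1_G}) \simeq A$ as $(A,A)$-bisupermodules (which is where supersymmetry of $A$ enters, guaranteeing that $A$ is, as a bisupermodule, induced from its degree-$1_G$ component $A_{1_G}$ — see \cite[Remark 3.2(e)]{Mar}). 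The parity bookkeeping in these isomorphisms is governed by Remark~\ref{rem:bisupmod} and Lemma~\ref{lem:tensor_bisupmod}.

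I expect the main obstacle to be purely technical rather than conceptual: keeping the sign conventions consistent when one simultaneously (i) identifies $(A,B)$-bisupermodules with $A\otimes B^{\sop}$-supermodules via \eqref{bisupmodAB}, (ii) identifies $(A,B)_G$ as a subsuperalgebra of $A\otimes B^{\sop}$ via \eqref{EDiag}, and (iii) passes to the $\hat{\,\cdot\,}$-picture to invoke Marcus. All of this is already set up correctly in \cite{KL} over a field, and as noted in the excerpt the arguments ``run through unaltered'' over $\cO$ because the only extra hypotheses used there — finite generation over the base ring, invertibility of $2$, and supersymmetry — all hold in our setting; so in the write-up I would simply cite \cite[Proposition 2.2.22]{KL} and \cite[Theorem 3.4(a), Remark 3.2(e)]{Mar} and remark that the proof is identical, rather than reproducing the sign-chase.
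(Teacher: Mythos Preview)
Your final recommendation—cite \cite[Proposition 2.2.22]{KL} together with \cite[Theorem 3.4(a), Remark 3.2(e)]{Mar} and observe that supersymmetry is all that is needed for the argument to carry over from a field to $\cO$—is precisely what the paper does.

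However, the route you sketch via the $\hat{\,\cdot\,}$-construction does not work as stated, and is not what \cite{KL} does. The problem is that an $(A,B)$-bisupermodule is not the same thing as a $(\hat A,\hat B)$-bimodule: the former carries a single $\Z/2$-grading compatible with both actions, while the latter carries two independent parities, one coming from each of $\hat A$ and $\hat B$. Concretely, $\hat A\otimes \hat B^{\op}$ and $\widehat{A\otimes B^{\sop}}$ are different algebras (their $\cO$-ranks differ by a factor of $2$), so there is no straightforward ``tracing back''. Applying Marcus to $\hat A$ and $\hat B$ would at best yield an equivalence of abelian categories $\underlinesmod{A}\simeq\underlinesmod{B}$, not the specific bisupermodule isomorphisms $\gM\otimes_B\gN\simeq A$ and $\gN\otimes_A\gM\simeq B$ that define Morita \emph{super}equivalence in \S\ref{SMoritaSuper}. (Incidentally, $e_{\0},e_{\1}\in\hat A$ are not central: the relation $ae_\eps=e_{\eps+|a|}a$ shows they commute only with $A_{\0}$.)

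What \cite[Proposition 2.2.22]{KL} actually does—and what your third paragraph correctly sketches—is rerun Marcus's Mackey-type computation directly in the super setting, tracking the signs throughout. That direct computation is the entire proof, not a supplementary check on top of a $\hat{\,\cdot\,}$-reduction; the paper's only point is that this argument, already written out over a field in \cite{KL}, uses nothing about the ground ring beyond supersymmetry.
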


\subsection{Dual supermodules}\label{sec:dual}

Let $A$ be a superalgebra and $\gM\in\underlinesmod{A}$. We define the {\em dual} $\gM^*$ of $\gM$ to be $\Hom_{\cO}(\gM, \cO)$ that we give the structure of a superspace through
\begin{align*}
\Hom_{\cO}(\gM, \cO)_{\eps} &:= \{f \in \Hom_{\cO}(\gM, \cO)\mid f(\gM_{\eps+\1})=0\}\qquad(\eps\in\Z/2).
%, \\\Hom_{\cO}(\gM, \cO)_{\1} &:= \{f \in \Hom_{\cO}(\gM, \cO)\mid f(\gM_{\0})=0\}.
\end{align*}
We treat $\gM^*$ as a right $A$-supermodule via $(f.a)(m):=f(am)$, 
%an $A^{\sop}$-module via $(a.f)(m):=(-1)^{|a||f|}f(am)$, 
for all $a \in A$, $f \in \gM^*$ and $m \in \gM$, hence also as an $A^{\sop}$-module via $(a.f)(m):=(-1)^{|a||f|}f(am)$.
% or as a right $A$-supermodule via $(f.a)(m):=f(am)$.

If $B$ is a another superalgebra and $\gM$ an $(A,B)$-bisupermodule, we can view $\gM^*$ as a $(B,A)$-bisupermodule via $(b.f.a)(m) := f(amb)$. Note that this is not always isomorphic to the bimodule obtained by considering $\gM$ as an $(A \otimes B^{\sop})$-supermodule and hence $\gM^*$ as an $(A \otimes B^{\sop})^{\sop}$-supermodule (which can also be thought of as a $(B,A)$-bisupermodule). The reason we have chosen to define $\gM^*$ using the former definition is that our main reason for introducing $\gM^*$ is Lemma \ref{lem:HomAM_M*}, which does not hold using this alternative definition of $\gM^*$.

\begin{Lemma}\label{lem:dual_comp}
Let $A$, $B$ and $C$ be superalgebras, $\gM$ an $(A,B)$-bisupermodule and $\gN$ a $(B,C)$-bisupermodule. Then there is an isomorphism of $(C,A)$-bisupermodules
\begin{align*}
\gN^* \otimes_B \gM^*  \iso (\gM \otimes_B \gN)^*, \ 
g \otimes f  \mapsto (m \otimes n \mapsto f(m)g(n)),
\end{align*}
%for all $f \in \gM^*$, $g \in \gN^*$, $m \in \gM$ and $n \in \gN$, 
\end{Lemma}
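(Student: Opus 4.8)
The plan is to verify that the given map $\Phi: \gN^* \otimes_B \gM^* \to (\gM \otimes_B \gN)^*$, sending $g \otimes f$ to the functional $m \otimes n \mapsto f(m)g(n)$, is a well-defined, bijective homomorphism of $(C,A)$-bisupermodules. I would proceed in four short steps: (1) check $\Phi$ is well-defined on the tensor product over $B$ (i.e.\ $B$-balanced) and respects the $\Z/2$-grading; (2) check $\Phi$ is left $\cO$-linear and a $(C,A)$-bisupermodule map with the bisupermodule structures on duals as defined just above the statement; (3) check bijectivity by writing down the inverse, which is routine over $\cO$ since all modules are finitely generated and $\cO$-free in the relevant cases, or more carefully by reducing to the analogous statement for the underlying $(|C|,|A|)$-bimodules via $|\gN^* \otimes_B \gM^*| = \Hom_{|B|}\!\big(\cdots\big)$ and the classical ungraded adjunction; (4) track signs throughout, which is the only genuinely delicate point.

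\emph{Step 1 (well-definedness).} For $g \otimes f$ with $g \in \gN^*$, $f \in \gM^*$, the assignment $(m,n) \mapsto f(m)g(n)$ is $\cO$-bilinear, so descends to $\gM \otimes_{\cO} \gN$; I must check it kills the submodule generated by $mb \otimes n - m \otimes bn$. Here I use the right $B$-action on $\gN^*$ and $\gM^*$ from the bisupermodule conventions: $(g.b)(n) = g(bn)$-type formulas with the appropriate signs. Evaluating $\Phi(g.b \otimes f)$ against $m \otimes n$ versus $\Phi(g \otimes b.f)$ against $m \otimes n$ and using $(\gM \otimes_B \gN)$ being a quotient over $B$ shows the two agree, with signs $(-1)^{|b||\cdot|}$ matching by the parity bookkeeping. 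For the grading: if $f$ has parity $\eps_f$ and $g$ has parity $\eps_g$ then $\Phi(g \otimes f)$ vanishes on $(\gM \otimes_B \gN)_{\eps_f + \eps_g + \1}$, so $\Phi$ sends the degree-$(\eps_f + \eps_g)$ part of the source to the degree-$(\eps_f+\eps_g)$ part of the target, hence is even.

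\emph{Steps 2--3 (equivariance and bijectivity).} The $(C,A)$-bisupermodule structure on $(\gM \otimes_B \gN)^*$ is $(c.h.a)(x) = h(axc)$ for $x \in \gM \otimes_B \gN$, while on $\gN^* \otimes_B \gM^*$ it comes from $\gN^*$ being a left $C$-(super)module and $\gM^*$ a right $A$-(super)module; unwinding both sides against $m \otimes n$ and comparing gives equality, again up to the Koszul signs which cancel because the map itself carries the sign $f(m)g(n)$ in the ``crossed'' order. For bijectivity, since everything is finitely generated over $\cO$ and duals of the relevant (bi)supermodules behave as in the ungraded case, I would exhibit the inverse: given $h \in (\gM\otimes_B\gN)^*$, one uses the ungraded isomorphism $\Hom_{\cO}(\gM \otimes_B \gN, \cO) \cong \Hom_B(\gN, \Hom_{\cO}(\gM,\cO)) \cong \gN^* \otimes_B \gM^*$ (the last step valid because the relevant modules are $\cO$-finitely generated and one reduces to the free/split case), then check this classical iso matches $\Phi^{-1}$ on the nose. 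Alternatively, after Step 1 one observes $|\Phi| = \Hom_{|A|}$-adjunction at the level of $|\gN^* \otimes_B \gM^*| \to |(\gM\otimes_B\gN)^*|$, which is a standard isomorphism, so $\Phi$ is an isomorphism of superspaces, and Step 2 upgrades this to an isomorphism of bisupermodules.

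\emph{Main obstacle.} The only real work is the sign tracking in Steps 1--2: the dual $\gM^*$ is defined with the rule $(a.f)(m) = (-1)^{|a||f|}f(am)$, the bisupermodule action on $(\gM \otimes_B \gN)^*$ uses the opposite-order convention $f(amb)$, and the map $\Phi$ itself introduces a transposition of $\gM$ and $\gN$ (note it sends $g \otimes f$, with $g \in \gN^*$ first, to something evaluated on $m \otimes n$). Making sure all these Koszul signs conspire to give exact equivariance — rather than equivariance up to a global sign — is where one must be careful; the paper's deliberate choice (flagged in the remark before the lemma) of the ``wrong'' dual bisupermodule convention is precisely what makes the signs work out here. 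Everything else is a bookkeeping exercise and I would simply say ``this is a direct computation'' in the final write-up.
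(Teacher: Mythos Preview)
The paper's own proof is literally ``This is a standard check,'' so your detailed verification follows exactly the intended approach. However, your Step~1 contains a genuine gap (one the displayed formula invites): for a fixed simple tensor $g\otimes f$, the assignment $m\otimes n\mapsto f(m)g(n)$ does \emph{not} in general descend from $\gM\otimes_{\cO}\gN$ to $\gM\otimes_B\gN$. Take $\cO=k$ a field, $A=C=k$, $B=k[x]/(x^2)$ (purely even), $\gM=\gN=B$, and $f=g=x^*$ (dual basis element to $x$); then $x\otimes x=0$ in $B\otimes_B B$ but the formula returns $x^*(x)\,x^*(x)=1$. Your assertion that the $B$-balancing in the domain ``matches'' the descent condition in the codomain is backwards: both unravel to the \emph{same} identity $f(mb)g(n)=f(m)g(bn)$, which fails for generic $f,g$, so imposing one cannot force the other. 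The same issue undermines your Step~3, where you invoke $\Hom_B(\gN,\gM^*)\cong\gN^*\otimes_B\gM^*$ without justification.

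In every place the paper actually applies this lemma, $B$ is a symmetric algebra (a block of a group algebra or a tensor product thereof) and the bimodules are projective over $B$ on the relevant side. Under those hypotheses one builds the isomorphism via adjunction $(\gM\otimes_B\gN)^*\cong\Hom_B(\gN,\gM^*)$, then projectivity of $\gN$ gives $\Hom_B(\gN,\gM^*)\cong\Hom_B(\gN,B)\otimes_B\gM^*$, and finally the symmetrizing form identifies the $B$-dual $\Hom_B(\gN,B)$ with the $\cO$-dual $\gN^*$. Your write-up should flag these hypotheses and follow that route rather than the direct descent you sketch.
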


\begin{proof}
This is a standard check.
\end{proof}

\begin{Lemma}\label{lem:ind_dual}
Let $G,H$ be finite groups, 
$A$ a $G$-graded crossed superproduct with $a_g \in A_g \cap A^\times$ for all $g \in G$, and 
$B$ an $H$-graded crossed superproduct with $b_h \in B_h \cap B^\times$ for all $h \in H$. 
If $\gM$ is an $(A_{1_G},B_{1_H})$-bisupermodule then there is an  isomorphism of $(B,A)$-bisupermodules
\begin{align*}
B \otimes_{B_{1_H}} \gM^* \otimes_{A_{1_G}} A & \iso (A \otimes_{A_{1_G}} \gM \otimes_{B_{1_H}} B)^*, \\
b_h \otimes f \otimes a_g & \mapsto \bigg(a_{g'} \otimes m \otimes b_{h'} \mapsto
\begin{cases}
f(a_ga_{g'}mb_{h'}b_h) & \text{if }g'=g^{-1}, h'=h^{-1}, \\
0 & \text{otherwise.}
\end{cases}\bigg).
\end{align*}
%for all $g,g' \in G$, $h,h' \in H$, $f \in \gM^*$ and $m \in \gM$.
\end{Lemma}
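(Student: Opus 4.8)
The plan is to verify that the proposed map is a well-defined isomorphism of $(B,A)$-bisupermodules by a direct, if careful, computation. First I would fix the $\cO$-bases: by the definition of a $G$-graded crossed superproduct, $A = \bigoplus_{g\in G} A_g$ with each $A_g = A_{1_G} a_g = a_g A_{1_G}$, and similarly $B = \bigoplus_{h\in H} B_{1_H} b_h$. Hence $A \otimes_{A_{1_G}} \gM \otimes_{B_{1_H}} B$ decomposes as an $\cO$-module into the direct sum over $(g,h)\in G\times H$ of the subspaces $a_g \otimes \gM \otimes b_h$, and each such summand is isomorphic to $\gM$ as an $\cO$-module (via $a_g\otimes m\otimes b_h \mapsto m$, using that $a_g, b_h$ are units). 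Dually, $\left(A \otimes_{A_{1_G}} \gM \otimes_{B_{1_H}} B\right)^*$ decomposes as the direct sum over $(g,h)$ of the dual pieces $(a_g\otimes\gM\otimes b_h)^*$. On the other side, $B \otimes_{B_{1_H}} \gM^* \otimes_{A_{1_G}} A$ similarly decomposes as the direct sum over $(h,g)$ of $b_h\otimes\gM^*\otimes a_g$. The proposed map sends the $(h,g)$-summand of the source isomorphically onto the $(g^{-1},h^{-1})$-summand of the target, and on that summand it is essentially the identity on $\gM^*$ after twisting by the (invertible) left and right actions of $a_g$ and $b_h$; so bijectivity on $\cO$-modules is clear once well-definedness is checked.

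The main points to check are: (1) well-definedness over the tensor relations, i.e.\ that replacing $b_h\otimes f\otimes a_g$ by $b_h x\otimes f\otimes a_g$ versus $b_h\otimes xf\otimes a_g$ (for $x\in B_{1_H}$), and similarly on the $A_{1_G}$ side, yields the same functional — this reduces to the fact that $\gM^*$ is an $(B_{1_H},A_{1_G})$-bisupermodule via $(b.f.a)(m)=f(amb)$ as set up in \S\ref{sec:dual}, combined with the crossed-product relations $a_g x' = x'' a_g$ etc.; (2) $(B,A)$-bisuperlinearity of the map, which is where the grading bookkeeping matters: multiplying $b_h\otimes f\otimes a_g$ on the left by $b_{h'}\in B_{h'}$ should land in the $(h'h, g)$-summand of the source, which maps to the $(g^{-1},(h'h)^{-1})$-summand of the target, and one checks this agrees with left-multiplying the functional by $b_{h'}$ in the $(B,A)$-bisupermodule structure on $\left(A\otimes_{A_{1_G}}\gM\otimes_{B_{1_H}}B\right)^*$, namely $(b_{h'}.\phi)(x) = \phi(x b_{h'})$; (3) the parity: the functional $\phi$ defined by the case split has the same parity as $f$, because the nonzero-case formula is $\phi(a_{g^{-1}}\otimes m\otimes b_{h^{-1}}) = f(a_g a_{g^{-1}} m b_{h^{-1}} b_h)$ and $a_g a_{g^{-1}}\in A_{1_G}^\times$, $b_{h^{-1}} b_h\in B_{1_H}^\times$ are even units, so this is even-linear in the appropriate graded sense.

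I expect the main obstacle to be purely notational rather than conceptual: keeping track of the Koszul signs coming from the $\sop$-structures on $A$ and $B$ (recall $(f.a)(m) = f(am)$ but $(a.f)(m) = (-1)^{|a||f|}f(am)$ from \S\ref{sec:dual}), and making sure the case-split formula is compatible with all four module actions simultaneously. A clean way to organize this, which I would follow, is to first prove the statement when $G = H = 1$ — in which case it is exactly the elementary fact that $\gM^* \cong \gM^*$ with the identity map, or rather it is trivial — and then observe that the general case is obtained by applying $\Ind_{A_{1_G}\otimes B_{1_H}^{\sop}}^{A\otimes B^{\sop}}$ and $\Ind$ on the other side together with Lemma~\ref{lem:dual_comp} applied to the bimodules $A$, $\gM$, $B$; in other words, $\left(A\otimes_{A_{1_G}}\gM\otimes_{B_{1_H}}B\right)^* \cong B^*\otimes_{B_{1_H}}\gM^*\otimes_{A_{1_G}}A^*$ by iterating Lemma~\ref{lem:dual_comp}, and then one uses that $A^*\cong A$ and $B^*\cong B$ as the appropriate bisupermodules, since $A$ and $B$ are supersymmetric $G$- (resp.\ $H$-)graded crossed superproducts — the supersymmetrizing form with $\tr(A_g) = 0$ for $g\neq 1_G$ identifies $A^*$ with $A$ in a way that matches the stated case split. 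This reduces the whole lemma to Lemma~\ref{lem:dual_comp} plus the self-duality of supersymmetric crossed superproducts, which sidesteps most of the explicit sign chase; the remaining work is just checking that the composite isomorphism is the one written down in the statement.
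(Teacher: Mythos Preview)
Your direct-computation approach in the first two paragraphs is essentially the paper's: the paper simply observes that the image of $b_h\otimes f\otimes a_g$ (as $f$ ranges over $\gM^*$) is exactly the set of functionals vanishing on all $a_{g'}\otimes\gM\otimes b_{h'}$ with $g'\neq g^{-1}$ or $h'\neq h^{-1}$, whence bijectivity, and then declares the bisupermodule compatibility a routine check. Your outline of points (1)--(3) to verify is the same check, just spelt out.

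Your proposed ``clean'' alternative at the end, however, has a gap. You want to iterate Lemma~\ref{lem:dual_comp} to get $(A\otimes_{A_{1_G}}\gM\otimes_{B_{1_H}}B)^*\simeq B^*\otimes_{B_{1_H}}\gM^*\otimes_{A_{1_G}}A^*$ and then identify $A^*\simeq A$, $B^*\simeq B$ via supersymmetrizing forms. But the lemma does \emph{not} assume $A$ and $B$ are supersymmetric --- only that they are graded crossed superproducts with chosen homogeneous units $a_g,b_h$. There is no trace form available in general, so $A^*\simeq A$ as $(A,A)$-bisupermodules need not hold, and this route does not go through. (In the paper's later applications the algebras in question \emph{are} supersymmetric, so your instinct is understandable, but the lemma is stated and proved in the generality written.) Stick with the direct verification.

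One minor correction to your parity argument in (3): it is not true that $a_ga_{g^{-1}}$ and $b_{h^{-1}}b_h$ are \emph{even} units; they lie in $A_{1_G}^\times$ and $B_{1_H}^\times$ respectively, but $A_{1_G}$ is a superalgebra and these products have parity $|a_g|+|a_{g^{-1}}|$ and $|b_{h^{-1}}|+|b_h|$. The parity still matches, but for the right reason: the functional is nonzero only on elements $a_{g^{-1}}\otimes m\otimes b_{h^{-1}}$ of parity $|a_{g^{-1}}|+|m|+|b_{h^{-1}}|$, and $f$ picks out those $m$ with $|a_g|+|a_{g^{-1}}|+|m|+|b_{h^{-1}}|+|b_h|=|f|$, giving $\phi$ parity $|f|+|a_g|+|b_h|=|b_h\otimes f\otimes a_g|$ as required.
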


\begin{proof}
The map is certainly a bijection as the set of $b_h \otimes f \otimes a_g$, as $f$ runs over $\gM^*$, is precisely the set of $(A \otimes_{A_{1_G}}\gM \otimes_{B_{1_H}} B)^*$ that is zero on all $a_{g'} \otimes \gM \otimes b_{h'}$, for all $g',h'$ with $g' \neq g^{-1}$ or $h' \neq h^{-1}$. One can readily check that this bijection is a homomorphism of $(B,A)$-bisupermodules.
\end{proof}

\begin{Lemma}\label{lem:dual_prod}
Let $A_1,A_2,B_1,B_2$ be superalgebras, $\gM$ an $(A_1,B_1)$-bisupermodule and $\gN$ an $(A_2,B_2)$-bisupermodule. Then there  is an isomorphism of $(B_1 \otimes B_2,A_1 \otimes A_2)$-bisupermodules
\begin{align*}
\gM^* \boxtimes \gN^* \iso (\gM \boxtimes \gN)^*, \ 
f \otimes g & \mapsto \big(m \otimes n \mapsto (-1)^{|g||m|} f(m)g(n)\big).
\end{align*}
%for all $f \in \gM^*$, $g \in \gN^*$, $m \in \gM$ and $n \in \gN$ is an isomorphism of $(B_1 \otimes B_2,A_1 \otimes A_2)$-bisupermodules.
\end{Lemma}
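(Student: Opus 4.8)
The plan is to verify directly that the stated assignment
$$\Phi\colon \gM^*\boxtimes\gN^*\longrightarrow(\gM\boxtimes\gN)^*,\qquad f\otimes g\longmapsto\bigl(m\otimes n\mapsto(-1)^{|g||m|}f(m)g(n)\bigr)$$
is a well-defined, even, bijective homomorphism of $(B_1\otimes B_2,A_1\otimes A_2)$-bisupermodules. Well-definedness and bijectivity come essentially for free: the formula is $\cO$-bilinear in $(f,g)$, so $\Phi$ does factor through $\gM^*\otimes_\cO\gN^*$; and after forgetting the $\Z/2$-grading $|\Phi|$ agrees, up to the scalar $(-1)^{|g||m|}$ on each homogeneous component (which does not affect bijectivity), with the standard natural map $\Hom_\cO(|\gM|,\cO)\otimes_\cO\Hom_\cO(|\gN|,\cO)\to\Hom_\cO(|\gM|\otimes_\cO|\gN|,\cO)$, which is an isomorphism since $\gM$ and $\gN$ are finitely generated over $\cO$.

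Next I would check that $\Phi$ is even, i.e. that $\Phi(f\otimes g)$ has parity $|f|+|g|=|f\otimes g|$ for homogeneous $f,g$. By the definition of the dual superspace, $f\in\gM^*_{|f|}$ annihilates $\gM_{|f|+\1}$, so $f(m)\neq0$ forces $|m|=|f|$, and likewise $g(n)\neq0$ forces $|n|=|g|$. Hence $\Phi(f\otimes g)(m\otimes n)\neq0$ only when $|m\otimes n|=|f|+|g|$, which is exactly the assertion $\Phi(f\otimes g)\in\bigl((\gM\boxtimes\gN)^*\bigr)_{|f|+|g|}$.

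Finally, the substantive point is to check that $\Phi$ intertwines the left $B_1\otimes B_2$- and right $A_1\otimes A_2$-actions. On $\gM^*\boxtimes\gN^*$ these are the $\boxtimes$-actions built from the dual bisupermodules $\gM^*$ (a $(B_1,A_1)$-bisupermodule via $(b.f.a)(m)=f(amb)$) and $\gN^*$, so they carry the Koszul signs $(-1)^{|b_2||f|}$ and $(-1)^{|a_2||g|}$ dictated by the definition of $\boxtimes$; on $(\gM\boxtimes\gN)^*$ they are the dual-bisupermodule actions $(\beta.\phi.\alpha)(x)=\phi(\alpha x\beta)$, which one must expand on $x=m\otimes n$ using the Koszul signs of $\gM\boxtimes\gN$. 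Applying $\Phi$ and evaluating on $m\otimes n$ for pure tensors $\beta=b_1\otimes b_2$, $\alpha=a_1\otimes a_2$, $f\otimes g$, the desired equality collapses to a single Koszul-sign identity in $|f|,|g|,|m|,|n|,|a_1|,|a_2|,|b_1|,|b_2|$, and the factor $(-1)^{|g||m|}$ built into $\Phi$ is precisely what makes it hold — indeed it is forced by it. Tracking these signs correctly is the only real content of the argument; everything else is formal, so I anticipate no genuine obstacle beyond careful sign bookkeeping.
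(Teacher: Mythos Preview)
Your proposal is correct and is exactly the kind of direct verification the paper has in mind: the paper's own proof is simply ``Again, this is a standard check.'' You have fleshed out that standard check (well-definedness, bijectivity, evenness, and the sign bookkeeping for the bimodule actions), which is precisely what is needed.
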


\begin{proof}
Again, this is a standard check.
\end{proof}

\begin{Lemma}\label{lem:HomAM_M*}
Let $A$ and $B$ be superalgebras, with $A$ being supersymmetric. If $\gM$ is an $(A,B)$-bisupermodule such that $|\gM| \otimes_{|B|} ?$ induces a Morita equivalence between $|B|$ and $|A|$, then $\gM$ and $\gM^*$ induce a Morita superequivalence between $B$ and $A$.
\end{Lemma}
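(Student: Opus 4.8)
The plan is to reduce the problem to standard Morita theory for the symmetric algebra $|A|$ and then to upgrade the resulting ungraded bimodule isomorphisms to even bisupermodule isomorphisms by writing down explicit evaluation maps, the only subtle point being a parity count. Since $A$ is supersymmetric, $|A|$ is a symmetric $\cO$-algebra with symmetrizing form $\tr$, the relevant extra feature being that $\tr$ kills $A_\1$. For a symmetric $\cO$-algebra $R$ with symmetrizing form $\lambda$ and a finitely generated left $R$-module $N$, the map $\phi\mapsto\lambda\circ\phi$ is an isomorphism $\Hom_R(N,R)\iso\Hom_\cO(N,\cO)$ of right $R$-modules (use $R\cong\Hom_\cO(R,\cO)$ as $(R,R)$-bimodules together with tensor--hom adjunction). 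Taking $R=|A|$ and $N=|\gM|$ gives an isomorphism $\Hom_{|A|}(|\gM|,|A|)\iso|\gM^*|$, $\phi\mapsto\tr\circ\phi$, and one checks it respects the $(|B|,|A|)$-bimodule structures: the left $|B|$-action on $\Hom_{|A|}(|\gM|,|A|)$ induced by the right $B$-action on $\gM$ matches the left $B$-action $(b.f)(m)=f(mb)$ built into the definition of $\gM^*$ (this matching is precisely what the particular choice of dual structure, flagged in the discussion preceding the lemma, makes true). Since $|\gM|\otimes_{|B|}?$ is a Morita equivalence, $|\gM|$ is a left $|A|$-progenerator, $|B|\cong\End_{|A|}(|\gM|)^{\op}$ via $b\mapsto(m\mapsto mb)$, and the associated Morita context gives bimodule isomorphisms $\Hom_{|A|}(|\gM|,|A|)\otimes_{|A|}|\gM|\cong|B|$ and $|\gM|\otimes_{|B|}\Hom_{|A|}(|\gM|,|A|)\cong|A|$; transporting through the isomorphism above yields $\cO$-module isomorphisms $|\gM^*\otimes_A\gM|\cong|B|$ and $|\gM\otimes_B\gM^*|\cong|A|$.

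Next I would write down the natural maps explicitly. Using non-degeneracy of $\tr$, define $\beta\colon\gM\otimes_B\gM^*\to A$ by letting $\beta(m\otimes f)$ be the unique $a_0\in A$ with $\tr(aa_0)=f(am)$ for all $a\in A$; and define $\gamma\colon\gM^*\otimes_A\gM\to B$ by letting $\gamma(f\otimes m)$ be the unique $b\in B$ with $m'b=\phi_f(m')\,m$ for all $m'\in\gM$, where $\phi_f\in\Hom_{|A|}(|\gM|,|A|)$ is the preimage of $f$ from the first paragraph (so $\tr(a\phi_f(m'))=f(am')$ for all $a$); such a $b$ exists and is unique because $m'\mapsto\phi_f(m')m$ is left $|A|$-linear and $|B|\cong\End_{|A|}(|\gM|)^{\op}$. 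One then checks directly, using the trace identity $\tr(xy)=\tr(yx)$ and the definitions of the actions, that $\beta$ is $B$-balanced and a homomorphism of $(A,A)$-bisupermodules and that $\gamma$ is $A$-balanced and a homomorphism of $(B,B)$-bisupermodules.

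The point where the hypotheses enter is the evenness of $\beta$ and $\gamma$. If $f$ has parity $\eps$, then $f(am)\ne0$ forces $|am|=\eps$, hence $|a|=\eps+|m|$; since $\tr$ vanishes on $A_\1$, $\tr(aa_0)\ne0$ then forces $|a_0|=|a|=\eps+|m|=|m\otimes f|$, so $\beta$ is even. The same count shows $\phi_f$ is homogeneous of parity $\eps$, so $m'b=\phi_f(m')m$ forces $|b|=\eps+|m|=|f\otimes m|$, and $\gamma$ is even. Forgetting the $\Z/2$-grading, $\beta$ and $\gamma$ become exactly the structure maps of the Morita context transported through $\Hom_{|A|}(|\gM|,|A|)\cong|\gM^*|$, so by the first paragraph they are bijective. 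A bijective even homomorphism of bisupermodules is an isomorphism of bisupermodules (its set-theoretic inverse is automatically $\cO$-linear, even, and bisupermodule-linear). Hence $\gM\otimes_B\gM^*\simeq A$ as $(A,A)$-bisupermodules and $\gM^*\otimes_A\gM\simeq B$ as $(B,B)$-bisupermodules, so the $(B,A)$-bisupermodule $\gM^*$ and the $(A,B)$-bisupermodule $\gM$ induce a Morita superequivalence between $B$ and $A$.

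The main obstacle is not conceptual but a matter of sign bookkeeping: one must make the sign conventions in the definition of $\gM^*$, in the super tensor product $\otimes_B$, and in the $(A,A)$- and $(B,B)$-bisupermodule structures all mutually consistent, so that $\beta$ and $\gamma$ really are even homomorphisms of bisupermodules --- this is exactly where "supersymmetric" (rather than merely symmetric) and the specific dual structure on $\gM^*$ are used. Everything else is a routine combination of the symmetric-algebra identity $\Hom_R(-,R)\cong\Hom_\cO(-,\cO)$ with the classical Morita context.
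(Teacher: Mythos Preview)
Your proof is correct and follows essentially the same route as the paper: reduce to the ungraded Morita context for the symmetric algebra $|A|$, use the symmetrizing-form isomorphism $\Hom_{|A|}(|\gM|,|A|)\cong|\gM^*|$, and then check that the evaluation maps are even bisupermodule homomorphisms. The paper glosses over the parity verifications with ``easily checked'', whereas you spell out exactly where $\tr(A_\1)=0$ enters, which is helpful.
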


\begin{proof}
By assumption, $\End_{A}(\gM) \cong B^{\op}$ as superalgebras.
By Morita theory, $|\gM|$ and $\Hom_{|A|}(|\gM|,|A|)$ induce a Morita equivalence between $|B|$ and $|A|$. More precisely,
\begin{align*}
|\gM \otimes_{B} \Hom_{A}(\gM,A)|=
|\gM| \otimes_{|B|} \Hom_{|A|}(|\gM|,|A|) \to |A|, \ 
m \otimes \varphi \mapsto \varphi(m)
\end{align*}
is an isomorphism of $(|A|,|A|)$-bimodules and
\begin{align*}
|\Hom_{A}(\gM,A) \otimes_{A} \gM|=
\Hom_{|A|}(|\gM|,|A|) \otimes_{|A|} |\gM| &\to \End_{|A|}(|\gM|) \cong |B|, \\ 
\varphi \otimes m &\mapsto (m' \mapsto \varphi(m')m)
\end{align*}
is an isomorphism of $(|B|,|B|)$-bimodules. Now, the first  isomorphism is easily checked to be an isomorphism of $(A,A)$-bisupermodules $\gM \otimes_{B} \Hom_{A}(\gM,A)$ and $A$, and the second  isomorphism is easily checked to be an isomorphism 
of $(B,B)$-bisupermodules $\Hom_{A}(\gM,A) \otimes_{A} \gM$ and $B$. In other words, $\gM$ and $\Hom_{A}(\gM,A)$ induce a Morita superequivalence between $B$ and $A$.

Let $\tr:A \to \cO$ be a supersymmetrizing form on $A$. Then, by \cite[Corollary 2.12.2]{Lin5},
\begin{align*}
|\Hom_{A}(\gM,A)|=
\Hom_{|A|}(|\gM|,|A|) \to |\gM^*|, \ 
f \mapsto \tr \circ f
\end{align*}
is an isomorphism of $(|B|,|A|)$-bimodules. Once again, this is easily checked to be an isomorphism of $(B,A)$-bisupermodules $\Hom_{A}(\gM,A)$ and $\gM^*$. 
\end{proof}

\begin{Lemma}\label{lem:sup_alg_idempt_dual}
Let $A$ be a supersymmetric superalgebra 
with supersymmtrizing form $\tr:A \to \cO$. If 
$e_1,e_2 \in A_{\0}$ are non-zero idempotents then, considering $e_1 A e_2$ as a $(e_1 A e_1,e_2 A e_2)$-bisupermodule, we have the isomorphism of $(e_2 A e_2,e_1 A e_1)$-bisupermodules
\begin{align*}
e_2 A e_1  \iso (e_1 A e_2)^*,\ 
x  \mapsto (y \mapsto \tr(xy)).
\end{align*}
\end{Lemma}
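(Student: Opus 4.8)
The plan is to check directly that the stated assignment $\Phi\colon e_2Ae_1\to(e_1Ae_2)^*$, $x\mapsto(y\mapsto\tr(xy))$, is a well-defined even homomorphism of $(e_2Ae_2,e_1Ae_1)$-bisupermodules, and then to deduce that it is bijective from the non-degeneracy of the supersymmetrizing form $\tr$.

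First I would dispose of well-definedness and evenness. For $x\in e_2Ae_1$ and $y\in e_1Ae_2$ the product $xy$ lies in $e_2Ae_2\subseteq A$, so $\tr(xy)\in\cO$ makes sense and $y\mapsto\tr(xy)$ is $\cO$-linear; thus $\Phi(x)\in\Hom_\cO(e_1Ae_2,\cO)$. Moreover, if $x$ is homogeneous then $\tr(xy)=0$ for $y$ of parity $|x|+\1$ (since then $xy$ is odd and $\tr$ kills $A_\1$), so by the definition of the grading on $(e_1Ae_2)^*$ one has $\Phi(x)\in(e_1Ae_2)^*_{|x|}$; hence $\Phi$ is parity-preserving. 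Next, recalling that $(e_1Ae_2)^*$ carries the $(e_2Ae_2,e_1Ae_1)$-bisupermodule structure with $(b.f.a)(m)=f(amb)$, a short computation using only the cyclicity $\tr(uv)=\tr(vu)$ of the symmetrizing form gives, for $b\in e_2Ae_2$, $a\in e_1Ae_1$, $x\in e_2Ae_1$ and $y\in e_1Ae_2$, the chain $\Phi(bxa)(y)=\tr(bxay)=\tr(xayb)=\Phi(x)(ayb)=(b.\Phi(x).a)(y)$, so $\Phi(bxa)=b.\Phi(x).a$. Thus $\Phi$ is a homomorphism of $(e_2Ae_2,e_1Ae_1)$-bisupermodules, and none of this uses non-degeneracy.

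The only step with genuine content is bijectivity. Here I would use that, because $A$ is supersymmetric, the $\cO$-linear map $\lambda\colon A\to\Hom_\cO(A,\cO)$, $w\mapsto(z\mapsto\tr(wz))$, is an isomorphism --- this is exactly the defining property of a symmetrizing form. For injectivity: if $\Phi(x)=0$ with $x=e_2xe_1\in e_2Ae_1$, then for every $z\in A$ cyclicity gives $\tr(xz)=\tr(x\cdot e_1ze_2)=\Phi(x)(e_1ze_2)=0$ (as $e_1ze_2\in e_1Ae_2$), so $\lambda(x)=0$ and hence $x=0$. For surjectivity: given $g\in(e_1Ae_2)^*$, extend it to $\tilde g\in\Hom_\cO(A,\cO)$ by $\tilde g(z):=g(e_1ze_2)$; pick $w\in A$ with $\lambda(w)=\tilde g$ and set $x:=e_2we_1\in e_2Ae_1$. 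Then for $y\in e_1Ae_2$ we have $e_1ye_2=y$, so $\Phi(x)(y)=\tr(e_2we_1y)=\tr(w\cdot e_1ye_2)=\tr(wy)=\tilde g(y)=g(y)$, i.e. $\Phi(x)=g$. Hence $\Phi$ is an isomorphism of $(e_2Ae_2,e_1Ae_1)$-bisupermodules.

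The main obstacle is really only this surjectivity step: triviality of the left radical of the restricted pairing $e_2Ae_1\times e_1Ae_2\to\cO$ is not by itself enough to force it to be perfect over the discrete valuation ring $\cO$, so one genuinely needs the extend-then-pull-back argument together with surjectivity of $\lambda$. Everything else is routine bookkeeping with idempotents, parities and the trace identity. (Conceptually, $\Phi$ is nothing but the composite of $\lambda$ with the canonical identification $e_2\,\Hom_\cO(A,\cO)\,e_1\cong(e_1Ae_2)^*$, which is simply an alternative way of organizing the same computation.)
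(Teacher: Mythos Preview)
Your proof is correct and essentially matches the paper's approach. The paper also first checks the bisupermodule homomorphism property, then deduces bijectivity from the non-degeneracy of $\tr$; the only organizational difference is that the paper phrases the second step as identifying the kernel of the induced epimorphism $A\to(e_1Ae_2)^*$ as $A(1-e_1)+(1-e_2)A$ and then using $A/[A(1-e_1)+(1-e_2)A]\simeq e_2Ae_1$, which amounts to the same extend-lift-cut computation you carry out explicitly (and which you yourself note in your final parenthetical remark).
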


\begin{proof}
One can quickly check this is a homomorphism of $(e_2 A e_2,e_1 A e_1)$-bisupermodules. That it is an isomorphism in the case $e_1=e_2=1_A$ follows immediately from the fact $\tr$ is non-degenerate. Now, the kernel of the induced epimorphism $A \to (e_1 A e_2)^*$ is $A(1-e_1)+(1-e_2)A$ and $A/[A(1-e_1)+(1-e_2)A] \simeq e_2 A e_1$, as desired.
\end{proof}

\subsection{Wreath superproducts}\label{sec:wreath}

Throughout this subsection it is assumed that both $-1$ and $2$ have square roots in $\cO$ and that $A$ is a superalgebra.

Let $V$ be a superspace and $d\in\NN$. The symmetric group $\Si_d$ acts on $V^{\otimes d}$ via
\begin{align*}
{}^{w}(v_1\otimes\dots\otimes v_d):=(-1)^{[w;v_1,\dots,v_d]}v_{w^{-1}(1)}\otimes\dots\otimes v_{w^{-1}(d)},
\end{align*}
where
$$
[w;v_1,\dots,v_d]:=\sum_{1\leq a<c\leq d,\, w(a)>w(c)}|v_a||v_c|.
$$
\iffalse{
Equivalently,
\begin{align*}
{}^{s_r}(v_1\otimes\dots\otimes v_d):=(-1)^{|v_r||v_{r+1}|}v_{s_r(1)}\otimes\dots\otimes v_{s_r(d)},
\end{align*}
for all $1 \leq r \leq d-1$, where $s_r$ denotes the simple transposition $(r,r+1) \in \Si_d$.
}\fi

Following \cite[\S2.2a]{KL} (where we have worked over $\F$), given a superalgebra $A$, we consider the {\em wreath superproduct} $A\swr \Si_d$ to be $A^{\otimes d}\otimes \cO\Si_d$ as superspaces, with $\cO \Si_d$ concentrated in parity $\0$. To describe the algebra structure we identify $A^{\otimes d}$ and $\cO \Si_d$ as subalgebras in $A^{\otimes d}\otimes \cO \Si_d$ in the obvious way and define
\begin{align*}
w\,(a_1\otimes\dots\otimes a_d)={}^w(a_1\otimes\dots\otimes a_d)\,w\qquad(w\in\Si_d,\ a_1,\dots, a_d\in A).
\end{align*}
Note that, $A\swr \Si_d$ is an $\Si_d$-graded crossed superproduct via
\begin{align}\label{algn:AwrSd_graded}
A\swr \Si_d = \bigoplus_{w \in \Si_d} A^{\otimes d}w.
\end{align}

Recall from Example~\ref{ExTn} the twisted group superalgebra $\cT_d$ with basis $\{\ct_w\mid w\in\Si_d\}$. Following \cite[$\S$5.1a]{KL},  
we define the {\em twisted wreath superproduct} $A\swr \cT_d$  as the free product $A^{\otimes d}\star \cT_d$ of superalgebras subject to the relations
\begin{align}\label{algn:A_swr_Td}
\ct_r\,(a_1\otimes\dots\otimes a_d) = (-1)^{\sum_{u\neq r,r+1}|a_u|}\left({}^{s_r}(a_1\otimes\dots\otimes a_d)\,\ct_r\right)
\end{align}
for all $a_1,\dots,a_d\in A$ and $1\leq r \leq d-1$.

Recall Example~\ref{ExCl} and the notation from (\ref{algn:vr_def}).

\begin{Proposition}\label{prop:Sergeev}
We have an isomorphism of superalgebras
\begin{align*}
(A\otimes \cC_1)\swr {\Si}_d & \iso (A\swr \cT_d)\otimes \cC_d \\
(a \otimes x)_u \mapsto(-1)^{u|a|}a_u \otimes x_u, & \quad s_r\mapsto\frac{1}{\sqrt{-2}}\ct_r\otimes(\cc_r-\cc_{r+1})
%\\a_r \otimes x_r \mapsto(-1)^{r|a|}(a \otimes x)_r, & \quad \ct_r \otimes 1 \mapsto-\frac{1}{\sqrt{-2}}s_r(\cc_r-\cc_{r+1})
\end{align*}
for all $a\in A$, $x\in\cC_1$, $1\leq u\leq d$ and $1\leq r< d$.
\end{Proposition}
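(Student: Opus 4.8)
The plan is to write down the candidate isomorphism $\Phi\colon (A\otimes\cC_1)\swr\Si_d\to(A\swr\cT_d)\otimes\cC_d$ on generators, verify it is a well-defined homomorphism of superalgebras, and then produce an explicit two-sided inverse. By definition of the wreath superproduct, $(A\otimes\cC_1)\swr\Si_d$ is generated as a superalgebra by the elements $(a\otimes 1)_u$ and $(1\otimes\cc_1)_u$ (for $a\in A$, $1\leq u\leq d$) together with the Coxeter generators $s_1,\dots,s_{d-1}$ of $\Si_d$, subject to: (a) the relations of the $u$-th copy of $A\otimes\cC_1$ and the super-commutation of elements lying in different copies; (b) the Coxeter relations $s_r^2=1$, $s_rs_{r'}=s_{r'}s_r$ for $|r-r'|>1$, and $s_rs_{r+1}s_r=s_{r+1}s_rs_{r+1}$; (c) the crossed-product relations $s_r\,c_u={}^{s_r}(c_u)\,s_r$ for $c\in\{a\otimes 1,\ 1\otimes\cc_1\}$. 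So the first half of the proof is to check that the prescribed images — $(a\otimes 1)_u\mapsto(-1)^{u|a|}a_u\otimes 1$, $(1\otimes\cc_1)_u\mapsto 1\otimes\cc_u$, and $s_r\mapsto\tfrac{1}{\sqrt{-2}}\,\ct_r\otimes(\cc_r-\cc_{r+1})$ — satisfy (a), (b), (c) inside $(A\swr\cT_d)\otimes\cC_d$; one works throughout via $\cC_d\cong\cC_1^{\otimes d}$ from Example~\ref{ExCl} and keeps careful track of the Koszul signs from tensor products of superalgebras. That these formulas are parity-preserving is automatic.

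The only non-formal inputs are three short identities. In $\cC_d$ one has $(\cc_r-\cc_{r+1})^2=2$ and $(\cc_r-\cc_{r+1})(\cc_{r+1}-\cc_{r+2})(\cc_r-\cc_{r+1})=(\cc_{r+1}-\cc_{r+2})(\cc_r-\cc_{r+1})(\cc_{r+1}-\cc_{r+2})$, both immediate from $\cc_r^2=1$ and $\cc_r\cc_s=-\cc_s\cc_r$; and in $\cT_d$ the ordinary braid relation $\ct_r\ct_{r+1}\ct_r=\ct_{r+1}\ct_r\ct_{r+1}$ holds, which follows from $\ct_r^2=1$ and $(\ct_r\ct_{r+1})^3=1$. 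Granting these: the relation $\Phi(s_r)^2=1$ becomes $\tfrac{1}{-2}\cdot(-1)\cdot\ct_r^2\otimes(\cc_r-\cc_{r+1})^2=1$, where the extra $-1$ is from squaring the odd-by-odd element $\ct_r\otimes(\cc_r-\cc_{r+1})$; the far-commutation of $\Phi(s_r)$ and $\Phi(s_{r'})$ holds because $\ct_r\ct_{r'}=-\ct_{r'}\ct_r$ and the disjoint Clifford factors anticommute, so the two signs cancel; the braid relation for the $\Phi(s_r)$ reduces to the braid relation in $\cT_d$ together with the middle Clifford identity. For the crossed-product relations one splits into $u\notin\{r,r+1\}$ and $u\in\{r,r+1\}$ and uses the straightening relation (\ref{algn:A_swr_Td}) to move $\ct_r$ past $a_u$; here the sign $(-1)^{u|a|}$ in the formula for the image of $(a\otimes x)_u$ is exactly what makes the case $u\in\{r,r+1\}$ balance. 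The relations of type (a) are a direct sign check of $(-1)^{u|a|}a_u\otimes\cc_u$-type monomials against the multiplication of $A\otimes\cC_1$ and its tensor powers.

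For bijectivity I would avoid any rank count (which would require $A$ to be $\cO$-free) and instead exhibit the inverse. Since $(\cc_r-\cc_{r+1})^2=2$ is a unit, multiplying $\Phi(s_r)$ by $\Phi\big((1\otimes\cc_1)_r-(1\otimes\cc_1)_{r+1}\big)$ shows $\ct_r\otimes 1$ lies in the image of $\Phi$, and $1\otimes\cc_r$, $a_u\otimes 1$ obviously do, so $\Phi$ is onto. The same formulas, read backwards, define a superalgebra homomorphism
\begin{align*}
\Psi\colon (A\swr\cT_d)\otimes\cC_d\to(A\otimes\cC_1)\swr\Si_d,\qquad \cc_r\mapsto(1\otimes\cc_1)_r,\quad a_u\mapsto(-1)^{u|a|}(a\otimes 1)_u,\quad \ct_r\mapsto\tfrac{1}{\sqrt{-2}}\,s_r\big((1\otimes\cc_1)_{r+1}-(1\otimes\cc_1)_r\big);
\end{align*}
one checks $\Psi$ respects the defining relations of $(A\swr\cT_d)\otimes\cC_d$ (the relations of $A^{\otimes d}$, the relations $\ct_r^2=1$, $\ct_r\ct_s=-\ct_s\ct_r$ for $|r-s|>1$, $(\ct_r\ct_{r+1})^3=1$, the straightening relation (\ref{algn:A_swr_Td}), $\cc_r^2=1$, $\cc_r\cc_s=-\cc_s\cc_r$, and the super-commutation of the $A\swr\cT_d$-part with $\cC_d$) by the same kind of computation. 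Finally $\Phi\circ\Psi$ and $\Psi\circ\Phi$ are the identity on generators — for instance $\Psi\Phi(s_r)=\tfrac12\,s_r\big((1\otimes\cc_1)_r-(1\otimes\cc_1)_{r+1}\big)^2=s_r$ using $\big((1\otimes\cc_1)_r-(1\otimes\cc_1)_{r+1}\big)^2=2$ — so both composites are the identity and $\Phi$ is an isomorphism.

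The main obstacle is organizational rather than conceptual: there are many relations to verify and every sign must be followed through the Koszul rule and the identifications $(A\otimes\cC_1)^{\otimes d}\cong A^{\otimes d}\otimes\cC_1^{\otimes d}$, $\cC_1^{\otimes d}\cong\cC_d$, and $A\swr\cT_d\cong A^{\otimes d}\otimes\cT_d$ (as superspaces). The only place where something genuine happens is the braid relation, where one really uses both that $\cT_d$ satisfies the ordinary braid relation and that the Clifford ``reflection'' elements $\cc_r-\cc_{r+1}$ satisfy the braid-type identity above; the rest is bookkeeping. Since none of this uses that $\cO$ is a field, the argument is verbatim the one over $\F$ in \cite[$\S$5.1a]{KL}, the only hypothesis needed being that $-1$ and $2$ have square roots in $\cO$, which is assumed throughout $\S$\ref{sec:wreath}.
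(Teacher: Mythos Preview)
Your proposal is correct and is precisely the argument the paper invokes: the paper's proof consists entirely of citing \cite[Proposition 5.1.3]{KL} (where the result is proved over a field) and observing that the same computation goes through over $\cO$ since $2$ is invertible, which is exactly what you spell out in detail and conclude in your final paragraph.
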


\begin{proof}
This was proved in \cite[Proposition 5.1.3]{KL} when everything is defined over a field. However, the proof over $\cO$ is identical, since $2$ is still invertible in $\cO$.
\end{proof}

Since $(\cc_r - \cc_{r+1})/\sqrt{-2}$ is invertible in $\cC_d$, an immediate consequence of Proposition~\ref{prop:Sergeev}  is that $A\swr \cT_d$ is an $\Si_d$-graded crossed superproduct via
\begin{align}\label{AswrT_graded}
A\swr \cT_d = \bigoplus_{w \in \Si_d} A^{\otimes d}\ct_w.
\end{align}
Given the $\Si_d$-grading in (\ref{AswrT_graded}), recall the definition of $(A \swr \cT_d,A \swr \cT_d)_{\Si_d}$ from (\ref{EDiag}).

\begin{Lemma}\label{lem:M_swr_Td}
Let $\gM$ be an $(A,A)$-bisupermodule. We can extend $\gM^{\boxtimes d}$ to an $(A \swr \cT_d,A \swr \cT_d)_{\Si_d}$-supermodule, that we denote $\gM^{\boxtimes d}_{\Si_d}$, via
\begin{align}\label{algn:ext_M^n}
(\ct_r\otimes \ct_r^{-1}).(m_1\otimes\dots\otimes m_d)
:=(-1)^{|m_r|+|m_{r+1}|}\left({}^{s_r}\,(m_1\otimes \dots \otimes m_d)\right),
\end{align}
for all $m_1,\dots,m_d\in \gM$ and $1\leq r\leq d-1$.
\end{Lemma}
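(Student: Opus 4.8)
The plan is to verify directly that the prescription~\eqref{algn:ext_M^n}, combined with the $(A^{\otimes d},A^{\otimes d})$-bisupermodule structure already carried by $\gM^{\boxtimes d}$, respects the defining relations of $(A\swr\cT_d,A\swr\cT_d)_{\Si_d}$. To make this a finite check, first note that iterating the definition of $\boxtimes$ for bisupermodules (equivalently, Lemma~\ref{lem:tensor_bisupmod}) makes $\gM^{\boxtimes d}$ an $(A^{\otimes d},A^{\otimes d})$-bisupermodule, hence via~\eqref{bisupmodAB} a supermodule over $A^{\otimes d}\otimes(A^{\otimes d})^{\sop}$, which is precisely the degree-$1_{\Si_d}$ component of $(A\swr\cT_d,A\swr\cT_d)_{\Si_d}$ for the crossed-superproduct $\Si_d$-grading inherited from~\eqref{AswrT_graded}. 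As $(A\swr\cT_d,A\swr\cT_d)_{\Si_d}$ is an $\Si_d$-graded crossed superproduct with this identity component and with homogeneous units $\tau_r:=\ct_r\otimes\ct_r^{-1}=\ct_r\otimes\ct_r$ in degree $s_r$ (for $1\le r\le d-1$), it is generated by $A^{\otimes d}\otimes(A^{\otimes d})^{\sop}$ together with $\tau_1,\dots,\tau_{d-1}$; so it suffices to check that the $\cO$-linear operators $T_r$ on $\gM^{\boxtimes d}$ defined by the right-hand side of~\eqref{algn:ext_M^n} are even and satisfy the relations the $\tau_r$ obey in $(A\swr\cT_d,A\swr\cT_d)_{\Si_d}$.

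A short computation in $(A\swr\cT_d)\otimes(A\swr\cT_d)^{\sop}$, using only $\ct_r^2=1$, $\ct_r\ct_s=-\ct_s\ct_r$ for $|r-s|>1$ and $(\ct_r\ct_{r+1})^3=1$, shows that the $\tau_r$ satisfy exactly the Coxeter relations $\tau_r^2=1$, $\tau_r\tau_s=\tau_s\tau_r$ for $|r-s|>1$, and $(\tau_r\tau_{r+1})^3=1$, together with the twisted commutation relations with $A^{\otimes d}\otimes(A^{\otimes d})^{\sop}$ read off from~\eqref{algn:A_swr_Td}. On the module, $T_r$ is even because ${}^{s_r}$ preserves total parity; $T_r^2=\id$ follows from ${}^{s_r}{}^{s_r}=\id$ together with the cancellation of the scalars $(-1)^{|m_r|+|m_{r+1}|}$; and $T_rT_s=T_sT_r$ for $|r-s|>1$ is clear since ${}^{s_r}$ and ${}^{s_s}$ act on disjoint tensor legs.

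The two sign-sensitive points are the braid relation $(T_rT_{r+1})^3=\id$ and the compatibility of $T_r$ with the $A^{\otimes d}$-action. For the former I would move all parity scalars to the front, reduce the leg-permuting part to $({}^{s_r}{}^{s_{r+1}})^3={}^{(s_rs_{r+1})^3}=\id$ (the $\Si_d$-action $w\mapsto{}^w$ on $\gM^{\otimes d}$ being an honest group action), and check that the accumulated scalars over the six applications telescope to $1$. For the latter, writing $a=a_1\otimes\cdots\otimes a_d$ and $m=m_1\otimes\cdots\otimes m_d$, the identity to verify is
\[
T_r\big((a\otimes 1)\cdot m\big)=(-1)^{|a_r|+|a_{r+1}|}\big(({}^{s_r}a)\otimes 1\big)\cdot T_r(m);
\]
expanding both sides using~\eqref{algn:ext_M^n} and the defining sign $(-1)^{\sum_{i>j}|a_i||m_j|}$ of the left $\boxtimes$-action, both sides become $a_1m_1\otimes\cdots\otimes a_{r+1}m_{r+1}\otimes a_rm_r\otimes\cdots\otimes a_dm_d$ up to sign, and the equality then reduces to the elementary fact that, upon transposing legs $r$ and $r+1$ in both $a$ and $m$, the exponent $\sum_{i>j}|a_i||m_j|$ changes precisely by $|a_r||m_{r+1}|+|a_{r+1}||m_r|$. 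Compatibility with the right $(A^{\otimes d})^{\sop}$-action is checked identically. Putting these together, the operators $T_r$ and the original action assemble into the asserted $(A\swr\cT_d,A\swr\cT_d)_{\Si_d}$-supermodule structure on $\gM^{\boxtimes d}_{\Si_d}$.

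The only real difficulty is the sign bookkeeping: simultaneously tracking the Koszul signs in $(A\swr\cT_d)\otimes(A\swr\cT_d)^{\sop}$, in $\boxtimes$, and in the action ${}^{w}$, and in particular confirming that the scalars in the braid relation cancel. This is routine but error-prone and is where I would take the most care. (Alternatively one could transport the problem along Proposition~\ref{prop:Sergeev} to the ordinary wreath product $(A\otimes\cC_1)\swr\Si_d$, whose grading group is a genuine group so that no Coxeter relations need be verified among the grading units; but identifying $\gM^{\boxtimes d}$ inside $(\gM\boxtimes\cC_1)^{\boxtimes d}$ trades that for Clifford-algebra bookkeeping, so the direct check seems no harder.)
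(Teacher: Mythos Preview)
Your approach is correct and is precisely the direct relation-by-relation verification one has to carry out; the paper does not repeat this computation here but simply cites \cite[Proposition~5.1.5]{KL}, where the same check is performed (over a field and with $\gM$ assumed to induce a Morita superequivalence, but, as the paper notes, neither hypothesis is used in the verification). So your proof and the paper's are the same in substance, with the paper outsourcing the sign bookkeeping to the earlier reference.
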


\begin{proof}
This is essentially proved in the proof of \cite[Proposition 5.1.5]{KL}. In that proof the bisupermodule $\gM$ is assumed to induce a Morita superequivalence and our superalgebras are defined over a field. However, neither of these two details alter the proof at all.
%\textcolor{red}{Do you think this is sufficient, Sasha?}
\end{proof}

For an $(A,A)$-bisupermodule $\gM$, we now define
\begin{align}\label{algn:sup_wreath_bimod}
\gM \swr \cT_d := \Ind_{(A \swr \cT_d,A \swr \cT_d)_{\Si_d}}^{A \swr \cT_d \otimes (A \swr \cT_d)^{\sop}}(\gM^{\boxtimes d}_{\Si_d}).
\end{align}
Note that
$$
A \swr \cT_d \otimes (A \swr \cT_d)^{\sop} = \bigoplus_{w \in \Si_d} (\ct_w \otimes 1)(A \swr \cT_d,A \swr \cT_d)_{\Si_d}.
$$
Therefore, using the bimodule notation, we can write
\begin{align}\label{algn:M_wr_Td_sum}
\gM \swr \cT_d = \bigoplus_{w \in \Si_d}\ct_w \gM^{\boxtimes d}.
\end{align}
In particular,
\begin{align}\label{M_wr_Td_left}
\gM \swr \cT_d \simeq (A \swr \cT_d) \otimes_{A^{\otimes d}} \gM^{\boxtimes d},
\end{align}
as $(A \swr \cT_d,A^{\otimes d})$-bisupermodules.

\begin{Lemma}\label{lem:wreath_tensor}
Let $\gM$ and $\gN$ be $(A,A)$-bisupermodules. We have:
\begin{enumerate}
\item $\gM \swr \cT_d\mid(\gM \oplus \gN) \swr \cT_d$.
\item
$
(\gM \swr \cT_d) \otimes_{A \swr \cT_d} (\gN \swr \cT_d) \simeq (\gM \otimes_A \gN) \swr \cT_d.
$
\end{enumerate}
\end{Lemma}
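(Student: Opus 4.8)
The plan is to exhibit $\gM^{\boxtimes d}_{\Si_d}$ as a direct summand of $(\gM\oplus\gN)^{\boxtimes d}_{\Si_d}$ already at the level of $(A\swr\cT_d,A\swr\cT_d)_{\Si_d}$-supermodules, and then apply the additive induction functor of (\ref{algn:sup_wreath_bimod}) (which, being additive, preserves direct summands). As an $(A^{\otimes d},A^{\otimes d})$-bisupermodule,
$$
(\gM\oplus\gN)^{\boxtimes d}=\bigoplus_{\bee\in\{0,1\}^d}\gL_\bee,
$$
where $\gL_\bee$ is the box tensor product whose $i$-th factor is $\gM$ if $\bee_i=0$ and $\gN$ if $\bee_i=1$. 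Inspecting the extension formula (\ref{algn:ext_M^n}), the operator $\ct_r\otimes\ct_r^{-1}$ carries $\gL_\bee$ onto $\gL_{s_r\cdot\bee}$ up to a sign, so the $\Si_d$-extension merely permutes the $\gL_\bee$ according to the natural action of $\Si_d$ on $\{0,1\}^d$. Hence for every $\Si_d$-orbit $O$ the subspace $\bigoplus_{\bee\in O}\gL_\bee$ is an $(A\swr\cT_d,A\swr\cT_d)_{\Si_d}$-subsupermodule, and these pieces decompose $(\gM\oplus\gN)^{\boxtimes d}_{\Si_d}$. The tuple $(0,\dots,0)$ is an orbit by itself whose summand is exactly $\gM^{\boxtimes d}_{\Si_d}$; thus $\gM^{\boxtimes d}_{\Si_d}\mid(\gM\oplus\gN)^{\boxtimes d}_{\Si_d}$, and applying $\Ind_{(A\swr\cT_d,A\swr\cT_d)_{\Si_d}}^{A\swr\cT_d\otimes(A\swr\cT_d)^{\sop}}$ yields (i).

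\textbf{Part (ii).} Here I would first use (\ref{M_wr_Td_left}) to replace $\gN\swr\cT_d$ by $(A\swr\cT_d)\otimes_{A^{\otimes d}}\gN^{\boxtimes d}$ and contract:
$$
(\gM\swr\cT_d)\otimes_{A\swr\cT_d}(\gN\swr\cT_d)\simeq(\gM\swr\cT_d)\otimes_{A\swr\cT_d}(A\swr\cT_d)\otimes_{A^{\otimes d}}\gN^{\boxtimes d}\simeq(\gM\swr\cT_d)\otimes_{A^{\otimes d}}\gN^{\boxtimes d}.
$$
Since (\ref{M_wr_Td_left}) is only an isomorphism of $(A\swr\cT_d,A^{\otimes d})$-bisupermodules, one must keep track of the surviving right $A\swr\cT_d$-action: pushing it through the crossed superproduct relations of (\ref{AswrT_graded}) and the extension formula (\ref{algn:ext_M^n}), it becomes the action on $(\gM\swr\cT_d)\otimes_{A^{\otimes d}}\gN^{\boxtimes d}$ under which $\ct_r$ simultaneously multiplies into the left tensorand $\gM\swr\cT_d$ and acts on $\gN^{\boxtimes d}$ via the $\Si_d$-extension. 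Next, expanding the left tensorand by (\ref{algn:M_wr_Td_sum}) and applying the iterated form of Lemma~\ref{lem:tensor_bisupmod} (with all algebras equal to $A$),
$$
(\gM\swr\cT_d)\otimes_{A^{\otimes d}}\gN^{\boxtimes d}\simeq\bigoplus_{w\in\Si_d}\ct_w\bigl(\gM^{\boxtimes d}\otimes_{A^{\otimes d}}\gN^{\boxtimes d}\bigr)\simeq\bigoplus_{w\in\Si_d}\ct_w(\gM\otimes_A\gN)^{\boxtimes d},
$$
and the right-hand side is $(\gM\otimes_A\gN)\swr\cT_d$ by (\ref{algn:M_wr_Td_sum}) applied to $\gM\otimes_A\gN$.

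\textbf{Where the work is.} What remains — and is the only real content — is to check that all the above identifications respect the full $(A\swr\cT_d,A\swr\cT_d)$-bisupermodule structure. The left action and the left/right $A^{\otimes d}$-actions match at once; the delicate point is the right $\cT_d$-action, which after the reductions above reduces to verifying that the isomorphism of Lemma~\ref{lem:tensor_bisupmod} intertwines the $\Si_d$-extension of (\ref{algn:ext_M^n}) on $(\gM\otimes_A\gN)^{\boxtimes d}$ with the assignment $m\otimes n\mapsto{}^{s_r}m\otimes{}^{s_r}n$ on $\gM^{\boxtimes d}\otimes_{A^{\otimes d}}\gN^{\boxtimes d}$, up to the appropriate Koszul signs. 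This is a finite sign computation of exactly the flavour of Remark~\ref{rem:bisupmod} and Lemma~\ref{lem:tensor_bisupmod}: both sides reorder the same tensor factors and accumulate the sign $(-1)^{|v_a||v_c|}$ over the same inversions of $s_r$, so they agree. Granting this compatibility, the displayed chains in Part (ii) are isomorphisms of $(A\swr\cT_d,A\swr\cT_d)$-bisupermodules, which is the assertion. I expect this sign-bookkeeping — and the careful identification of the surviving right action in the first contraction — to be the main obstacle; everything else is formal.
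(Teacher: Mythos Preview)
Your outline is correct and matches the paper's proof almost step for step: part (i) is identical, and for part (ii) the paper likewise contracts via (\ref{M_wr_Td_left}), identifies $\gM^{\boxtimes d}\otimes_{A^{\otimes d}}\gN^{\boxtimes d}$ as an $(A\swr\cT_d,A\swr\cT_d)_{\Si_d}$-subsupermodule, and then checks that the reshuffling isomorphism $\varphi$ of Lemma~\ref{lem:tensor_bisupmod} intertwines the two $\Si_d$-extensions. One caution: your heuristic ``both sides accumulate the same inversion signs'' is not literally true --- the paper's explicit computation shows the two actions of $\ct_r\otimes\ct_r^{-1}$ differ by $(-1)^{|m_r||n_{r+1}|+|m_{r+1}||n_r|}$, and it is precisely the built-in sign $(-1)^{\sum_{i>j}|m_i||n_j|}$ in $\varphi$ that supplies the compensating term, so when you carry out the bookkeeping you should track that contribution separately rather than expect it to cancel for free.
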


\begin{proof}
(i) Certainly $(\gM \oplus \gN)^{\boxtimes d} = \gM^{\boxtimes d} \oplus \gM'$, where
\begin{align*}
\gM':=\bigoplus_{\substack{\gM_i \in \{\gM,\gN\} \\ \text{at least one }\gM_i = \gN}} \gM_1 \boxtimes \dots \boxtimes \gM_d.
\end{align*}
Moreover, $\gM^{\boxtimes d}$ and $\gM'$ are both $(A \swr \cT_d,A \swr \cT_d)_{\Si_d}$-subsupermodules of $(\gM \oplus \gN)^{\boxtimes d}$. The claim now follows by inducing up to $A \swr \cT_d \otimes (A \swr \cT_d)^{\sop}$.

(ii) \iffalse{We first note using (\ref{AswrT_graded}) that 
$
\gN \swr \cT_d = \bigoplus_{w \in \Si_d} \ct_w \gN^{\boxtimes d}.
$
\textcolor{blue}{This might be too slick and requires some explanations. By definition, 
$\gN \swr \cT_d$ is $\Ind_{(A \swr \cT_d,A \swr \cT_d)_{\Si_d}}^{A \swr \cT_d \otimes (A \swr \cT_d)^{\sop}}(\gN^{\boxtimes d}_{\Si_d})$, and it does not have elements of the form $\ct_w(n_1\otimes\dots\otimes n_d)$. Rather it has elements of the form $x\otimes (n_1\otimes\dots\otimes n_d)$ where $x\in A \swr \cT_d \otimes (A \swr \cT_d)^{\sop}$!
}
In particular,
\begin{align*}
\gN \swr \cT_d \simeq (A \swr \cT_d) \otimes_{A^{\otimes d}} \gN^{\boxtimes d}
\end{align*}
as $(A \swr \cT_d,A^{\otimes d})$-bisupermodules. 
\textcolor{blue}{I would say the above isomorphism needs to be stated just after (3.29) either as a named formula or as a lemma.}
}\fi
We first note that
\begin{align}
%\begin{split}
\label{algn:ten_sup_wreath}
(\gM \swr \cT_d) \otimes_{A \swr \cT_d} (\gN \swr \cT_d) \simeq  (\gM \swr \cT_d) \otimes_{A^{\otimes d}} \gN^{\boxtimes d} 
=  \bigoplus_{w \in \Si_d} \ct_w \big(\gM^{\boxtimes d} \otimes_{A^{\otimes d}} \gN^{\boxtimes d}\big)
%\end{split}
\end{align}
as $(A^{\otimes d},A^{\otimes d})$-bisupermodules, where the first isomorphism follows from (\ref{M_wr_Td_left}) and the second from (\ref{algn:M_wr_Td_sum}). In particular, the canonical homomorphism of $(A^{\otimes d},A^{\otimes d})$-bisupermodules
\begin{align*}
\gM^{\boxtimes d} \otimes_{A^{\otimes d}} \gN^{\boxtimes d} & \to (\gM \swr \cT_d) \otimes_{A \swr \cT_d} (\gN \swr \cT_d)
\end{align*}
is injective. Moreover, if we identify $\gM^{\boxtimes d} \otimes_{A^{\otimes d}} \gN^{\boxtimes d}$ with its image under the above map, it also follows from (\ref{algn:ten_sup_wreath}) that $(\gM \swr \cT_d) \otimes_{A \swr \cT_d} (\gN \swr \cT_d)$ is $\Si_d$-graded. Note that $\gM^{\boxtimes d} \otimes_{A^{\otimes d}} \gN^{\boxtimes d}$ is even an $(A \swr \cT_d,A \swr \cT_d)_{\Si_d}$-subsupermodule of $(\gM \swr \cT_d) \otimes_{A \swr \cT_d} (\gN \swr \cT_d)$ with
\begin{align}
\begin{split}\label{algn:tr_action}
&(\ct_r \otimes \ct_r^{-1}).[(m_1 \otimes \dots \otimes m_d) \otimes (n_1 \otimes \dots \otimes n_d)]\\
= & [(\ct_r \otimes \ct_r^{-1}).(m_1 \otimes \dots \otimes m_d)] \otimes [(\ct_r \otimes \ct_r^{-1}).(n_1 \otimes \dots \otimes n_d)],
\end{split}
\end{align}
for all $1\leq r\leq d-1$, $m_1,\dots,m_d \in \gM$ and $n_1,\dots,n_d \in \gN$. Therefore, by (\ref{algn:ten_sup_wreath}),
\begin{align*}
(\gM \swr \cT_d) \otimes_{A \swr \cT_d} (\gN \swr \cT_d) \simeq \Ind_{(A \swr \cT_d,A \swr \cT_d)_{\Si_d}}^{A \swr \cT_d \otimes (A \swr \cT_d)^{\sop}} \big(\gM^{\boxtimes d} \otimes_{A^{\otimes d}} \gN^{\boxtimes d}\big),
\end{align*}
as $(A \swr \cT_d,A \swr \cT_d)$-bisupermodules. The claim will be complete once we have shown that
\begin{align*}
\varphi:\gM^{\boxtimes d} \otimes_{A^{\otimes d}} \gN^{\boxtimes d} &\to (\gM \otimes_A \gN)_{\Si_d}^{\boxtimes d}\\
(m_1 \otimes \dots \otimes m_d) \otimes (n_1 \otimes \dots \otimes n_d) & \mapsto (-1)^{\sum_{i>j}|m_i||n_j|} (m_1 \otimes n_1) \otimes \dots \otimes (m_d \otimes n_d)
\end{align*}
is an isomorphism of $(A \swr \cT_d,A \swr \cT_d)_{\Si_d}$-supermodules, where $(\gM \otimes_A \gN)_{\Si_d}^{\boxtimes d}$ is defined via Lemma \ref{lem:M_swr_Td}. That it is an isomorphism of $(A^{\otimes d},A^{\otimes d})$-bisupermodules follows immediately from Lemma \ref{lem:tensor_bisupmod} and induction on $d$. Now, for all $1\leq r\leq d-1$, $m_1,\dots,m_d \in \gM$ and $n_1,\dots,n_d \in \gN$,
\begin{align}
\begin{split}\label{algn:first_tr_action}
&(\ct_r \otimes \ct_r^{-1}).[(m_1 \otimes \dots \otimes m_d) \otimes (n_1 \otimes \dots \otimes n_d)]\\
= & [(\ct_r \otimes \ct_r^{-1}).(m_1 \otimes \dots \otimes m_d)] \otimes [(\ct_r \otimes \ct_r^{-1}).(n_1 \otimes \dots \otimes n_d)]\\
= & (-1)^{|m_r|+|m_{r+1}|+|n_r|+|n_{r+1}|} [{}^{s_r}(m_1\otimes \dots \otimes m_d)] \otimes [{}^{s_r}(n_1\otimes \dots \otimes n_d)] \\
= & (-1)^{C_1} (m_{s_r(1)}\otimes \dots \otimes m_{s_r(d)}) \otimes (n_{s_r(1)}\otimes \dots \otimes n_{s_r(d)}),
\end{split}
\end{align}
where
\begin{align*}
C_1 = |m_r|+|m_{r+1}|+|n_r|+|n_{r+1}|+|m_r||m_{r+1}|+|n_r||n_{r+1}|,
\end{align*}
while
\begin{align}
\begin{split}\label{algn:second_tr_action}
&(\ct_r \otimes \ct_r^{-1}).[(m_1 \otimes n_1) \otimes \dots \otimes (m_d \otimes n_d)]\\
= & (-1)^{|m_r \otimes n_r|+|m_{r+1} \otimes n_{r+1}|}\left({}^{s_r}[(m_1 \otimes n_1) \otimes \dots \otimes (m_d \otimes n_d)]\right)\\
= & (-1)^{|m_r|+|n_r|+|m_{r+1}|+|n_{r+1}|}\left({}^{s_r}[(m_1 \otimes n_1) \otimes \dots \otimes (m_d \otimes n_d)]\right) \\
= & (-1)^{C_2} (m_{s_r(1)} \otimes n_{s_r(1)}) \otimes \dots \otimes (m_{s_r(d)} \otimes n_{s_r(d)}),
\end{split}
\end{align}
where
\begin{align*}
C_2 = |m_r|+|n_r|+|m_{r+1}|+|n_{r+1}| + (|m_r|+|m_{r+1}|)(|n_r|+|n_{r+1}|).
\end{align*}
Note that
\begin{align*}
C_2 = C_1 + |m_r||n_{r+1}| + |m_{r+1}||n_r|.
\end{align*}
Finally, a direct calculation shows that
and also that
\begin{align}
\begin{split}\label{algn:C3_equ}
&\varphi\big((m_{s_r(1)} \otimes \dots \otimes m_{s_r(d)}) \otimes (n_{s_r(1)} \otimes \dots \otimes n_{s_r(d)})\big) \\
= & (-1)^{C_3 + \sum_{i>j}|m_i||n_j|} (m_{s_r(1)} \otimes n_{s_r(1)}) \otimes \dots \otimes (m_{s_r(d)} \otimes n_{s_r(d)}),
\end{split}
\end{align}
where $C_3 = |m_r||n_{r+1}| + |m_{r+1}||n_r|$. Putting together (\ref{algn:first_tr_action}), (\ref{algn:second_tr_action}) and (\ref{algn:C3_equ}), we have now shown that the action of $\ct_r \otimes \ct_r$ commutes with $\varphi$. The claim follows.
\end{proof}

\begin{Lemma}\label{lem:dual_sup_wreath}
Let $A$ be a superalgebra with superunit and $\gM$ an $(A,A)$-bisupermodule. Then $(\gM \swr \cT_d)^* \simeq \gM^* \swr \cT_d$ as $(A \swr \cT_d,A \swr \cT_d)$-bisupermodules.
\end{Lemma}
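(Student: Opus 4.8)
The plan is to construct an explicit perfect pairing between $\gM^{*}\swr\cT_{d}$ and $\gM\swr\cT_{d}$, in the spirit of the map exhibited in the proof of Lemma~\ref{lem:ind_dual}. Recall from the discussion following Proposition~\ref{prop:Sergeev} that $A\swr\cT_{d}$ is an $\Si_{d}$-graded crossed superproduct via (\ref{AswrT_graded}), with identity component $A^{\otimes d}$ and homogeneous units $\ct_{w}$ ($w\in\Si_{d}$); correspondingly, by (\ref{algn:M_wr_Td_sum}) we have $\gM\swr\cT_{d}=\bigoplus_{w\in\Si_{d}}\ct_{w}\gM^{\boxtimes d}$ and $\gM^{*}\swr\cT_{d}=\bigoplus_{w\in\Si_{d}}\ct_{w}(\gM^{*})^{\boxtimes d}$ as $\cO$-modules, where left multiplication by $\ct_{s}$ carries the $\ct_{w}$-component onto the $\ct_{sw}$-component and right multiplication by $\ct_{v}$ carries it onto the $\ct_{wv}$-component, in both cases via the twisted permutation action of Lemma~\ref{lem:M_swr_Td}. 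Iterating Lemma~\ref{lem:dual_prod} gives an isomorphism $(\gM^{\boxtimes d})^{*}\simeq(\gM^{*})^{\boxtimes d}$ of $(A^{\otimes d},A^{\otimes d})$-bisupermodules; write $\langle f,m\rangle\in\cO$ for the resulting pairing of $f\in(\gM^{*})^{\boxtimes d}$ with $m\in\gM^{\boxtimes d}$.

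First I would define $\Phi\colon\gM^{*}\swr\cT_{d}\to(\gM\swr\cT_{d})^{*}$ on components by $\Phi(\ct_{w}f)(\ct_{v}m)=0$ whenever $v\neq w^{-1}$, and $\Phi(\ct_{w}f)(\ct_{w^{-1}}m)=\vare(w)\,\langle f,\,{}^{w}m\rangle$, where $\vare(w)=\pm1$ is the sign forced by the cocycle of $\cT_{d}$ and by (\ref{algn:ext_M^n}), extended $\cO$-linearly. Then the argument falls into three steps. (1) $\Phi$ is a well-defined $\cO$-linear bijection: it identifies the $\ct_{w}$-component of $\gM^{*}\swr\cT_{d}$ with the space of functionals on $\gM\swr\cT_{d}$ vanishing off $\ct_{w^{-1}}\gM^{\boxtimes d}$, and there it is the perfect pairing of Lemma~\ref{lem:dual_prod} precomposed with the invertible operator ${}^{w}$, so it is nondegenerate. (2) $\Phi$ is even, since the identification of Lemma~\ref{lem:dual_prod}, the permutation action, and the superspace grading on a dual all preserve parity. (3) $\Phi$ is a homomorphism of $(A\swr\cT_{d},A\swr\cT_{d})$-bisupermodules; writing a homogeneous element of $A\swr\cT_{d}$ as $a\ct_{u}$ with $a\in A^{\otimes d}$, this is checked by evaluating $\Phi\bigl((a\ct_{u})(\ct_{w}f)(a'\ct_{u'})\bigr)$ and $(a\ct_{u})\,\Phi(\ct_{w}f)\,(a'\ct_{u'})$ on an arbitrary $\ct_{v}m$ and comparing, using the wreath relations (\ref{algn:A_swr_Td}) to commute the $\ct$'s past the $A^{\otimes d}$-factors, the formula (\ref{algn:ext_M^n}) to evaluate the $\ct$-actions inside components, the defining formula $(b.f.a)(m)=f(amb)$ for the dual bisupermodule, and the $A^{\otimes d}$-bilinearity and $\Si_{d}$-equivariance of $\langle\cdot\,,\cdot\rangle$.

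The main obstacle is the sign bookkeeping in step~(3): one has to track the Koszul signs produced by $\boxtimes$ and by the dual-of-$\boxtimes$ map of Lemma~\ref{lem:dual_prod}, the signs $(-1)^{[w;\dots]}$ occurring in the $\Si_{d}$-action, and the signs $(-1)^{\sum_{u\neq r,r+1}|a_{u}|}$ and $(-1)^{|m_{r}|+|m_{r+1}|}$ in (\ref{algn:A_swr_Td}) and (\ref{algn:ext_M^n}), and to verify that they cancel; this is also what pins down $\vare(w)$. I would organise it by first treating $u=u'=1_{\Si_{d}}$, where, after restricting to a fixed $\ct_{w}$-component, the statement is Lemma~\ref{lem:dual_prod} with the left $A^{\otimes d}$-action twisted by ${}^{w^{-1}}$, handled by the $\Si_{d}$-equivariance of $\langle\cdot\,,\cdot\rangle$; and then reducing the general case to $u\in\{1_{\Si_{d}},s_{1},\dots,s_{d-1}\}$ with $u'=1_{\Si_{d}}$ (and its mirror image) by writing $\ct_{u}=\pm\ct_{s_{r_{1}}}\cdots\ct_{s_{r_{k}}}$ and inducting on $k$. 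An alternative, less computational route applies Lemma~\ref{lem:ind_dual} to the crossed superproduct $A\swr\cT_{d}$ (with the second factor taken to be $A^{\otimes d}$ with trivial grading) together with (\ref{M_wr_Td_left}) and its right-handed analogue; this already yields the isomorphism of the statement, but only as $(A\swr\cT_{d},A^{\otimes d})$-bisupermodules and, separately, as $(A^{\otimes d},A\swr\cT_{d})$-bisupermodules, and promoting these to an isomorphism over $(A\swr\cT_{d},A\swr\cT_{d})$ reduces to the same compatibility check, so the direct construction above is the one I would carry through.
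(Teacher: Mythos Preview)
Your outline is sound and would lead to a correct proof once the sign bookkeeping is carried through; it is, however, a genuinely different route from the paper's. The paper exploits the superunit $u\in A_{\1}\cap A^{\times}$ to refine the $\Si_{d}$-grading on $A\swr\cT_{d}$ to a $C_{2}\wr\Si_{d}$-grading, with base component $A_{\0}^{\otimes d}$ and graded units $u_{1}^{\eps_{1}}\cdots u_{d}^{\eps_{d}}\ct_{w}$. It then shows that both $\gM\swr\cT_{d}$ and $\gM^{*}\swr\cT_{d}$ are induced from this finer diagonal subalgebra $(A\swr\cT_{d},A\swr\cT_{d})_{C_{2}\wr\Si_{d}}$, with base modules $(\gM_{\0}^{\boxtimes d})_{C_{2}\wr\Si_{d}}$ and $((\gM_{\0}^{*})^{\boxtimes d})_{C_{2}\wr\Si_{d}}$. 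Because $\gM_{\0}$ is purely even, every Koszul sign---in the permutation action, in (\ref{algn:ext_M^n}), in the iterated Lemma~\ref{lem:dual_prod}---vanishes on these base modules; the diagonal actions of $u_{i}\otimes u_{i}^{-1}$ and $\ct_{w}\otimes\ct_{w}^{-1}$ are literally signless there, and the pairing $(f_{1}\otimes\cdots\otimes f_{d})(m_{1}\otimes\cdots\otimes m_{d})=f_{1}(m_{1})\cdots f_{d}(m_{d})$ is trivially equivariant. A Lemma~\ref{lem:ind_dual}-style dual-of-induced map for the finer grading then finishes the proof with essentially no sign analysis.

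What your approach gains is that it never invokes the superunit, so if the signs close up it would establish the statement without that hypothesis; the price is exactly the verification you have identified as the main obstacle. One small caution: when you pass between the left decomposition $\bigoplus_{w}\ct_{w}(\gM^{*})^{\boxtimes d}$ and the right $A\swr\cT_{d}$-action needed to match $(\gM\swr\cT_{d})^{*}$, the diagonal action (\ref{algn:ext_M^n}) intervenes with its parity-dependent factor $(-1)^{|m_{r}|+|m_{r+1}|}$, so the scalar $\vare(w)$ you posit may end up absorbing degree-dependent signs (equivalently, the formula may be cleaner written with $f\ct_{w}$ on the right rather than $\ct_{w}f$ on the left). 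This is not a gap in your plan---it is part of the bookkeeping you anticipated---but it is precisely what the paper's passage to $\gM_{\0}$ is designed to eliminate.
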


\begin{proof}
Let $u \in A_{\1} \cap A^\times$. We consider the wreath product
$$C_2\wr {\Si}_d=\{g_1^{\eps_1}\cdots g_d^{\eps_d}w\mid \eps_1,\dots,\eps_d\in\{0,1\},\, w\in \Si_d\}$$
where $g_r$ is the generator of $r^{\nth}$ $C_2$ factor in the base group $C_2^{\times d}$. With (\ref{AswrT_graded}) in mind we can give $A\swr \cT_d$ the structure of a $C_2\wr {\Si}_d$-graded crossed superproduct with base component $A_{\0}^{\otimes d}$ and graded components
$$
(A\swr \cT_d)_{g_1^{\eps_1}\cdots g_d^{\eps_d}w}=
A_{\0}^{\otimes d}u_1^{\eps_1}\dots u_d^{\eps_d} \ct_w,
$$
for all $\eps_1,\dots,\eps_d\in\{0,1\}$ and $w\in \Si_d$, where we are utilising the notation from (\ref{algn:vr_def}). We can now define $(A\swr \cT_d,A\swr \cT_d)_{C_2\wr\Si_d}$ as in (\ref{EDiag}). Note that $(A\swr \cT_d,A\swr \cT_d)_{C_2\wr\Si_d}$ is generated by $A_{\0}^{\otimes d} \otimes (A_{\0}^{\otimes d})^{\op}$ together with $u_i \otimes u_i^{-1}$, for $1\leq i\leq d$ and $\ct_w \otimes \ct_w^{-1}$, for all $w \in \Si_d$. We now extend the $A_{\0}^{\otimes d} \otimes (A_{\0}^{\otimes d})^{\op}$-module $\gM_{\0}^{\boxtimes d}$ to an $(A\swr \cT_d,A\swr \cT_d)_{C_2\wr\Si_d}$-module, that we denote $(\gM_{\0}^{\boxtimes d})_{C_2 \wr \Si_d}$, via
\begin{align}
\begin{split}\label{algn:def_Ui_Tw}
(u_i \otimes u_i^{-1})(m_1 \otimes \dots \otimes m_d) & = u_i(m_1 \otimes \dots \otimes m_d)u_i^{-1} \\
& = m_1 \otimes \dots \otimes u m_i u^{-1} \otimes \dots \otimes m_d \\
(\ct_w \otimes \ct_w^{-1})(m_1 \otimes \dots \otimes m_d) & = \ct_w (m_1 \otimes \dots \otimes m_d)\ct_w^{-1} \\
& = m_{w^{-1}(1)} \otimes \dots \otimes m_{w^{-1}(d)},
\end{split}
\end{align}
for all $1\leq i \leq d$, $w \in \Si_d$ and $m_1 \otimes \dots \otimes m_d \in \gM_{\0}^{\boxtimes d}$. Note that no relations need to be checked here as this is just $\gM_{\0}^{\boxtimes d}$ viewed as an $(A\swr \cT_d,A\swr \cT_d)_{C_2\wr\Si_d}$-submodule of $M^{\boxtimes d}_{\Si_d}$, as defined in (\ref{algn:ext_M^n}). In particular,
\begin{align*}
\gM^{\boxtimes d}_{\Si_d} = \bigoplus_{\eps_1,\dots,\eps_d\in\{0,1\}} u_1^{\eps_1}\dots u_d^{\eps_d} (\gM_{\0}^{\boxtimes d})_{C_2 \wr \Si_d}
\end{align*}
and so
\begin{align*}
\gM^{\boxtimes d}_{\Si_d} \simeq \Ind_{(A\swr \cT_d,A\swr \cT_d)_{C_2\wr\Si_d}}^{(A\swr \cT_d,A\swr \cT_d)_{\Si_d}}(\gM_{\0}^{\boxtimes d})_{C_2 \wr \Si_d}.
\end{align*}
Therefore, by (\ref{algn:sup_wreath_bimod}),
\begin{align*}
\gM \swr \cT_d \simeq \Ind_{(A\swr \cT_d,A\swr \cT_d)_{C_2\wr\Si_d}}^{A\swr \cT_d \otimes (A\swr \cT_d)^{\sop}}(\gM_{\0}^{\boxtimes d})_{C_2 \wr \Si_d}.
\end{align*}
In exactly the same way we can extend $(\gM_{\0}^*)^{\boxtimes d}$ to the $(A\swr \cT_d,A\swr \cT_d)_{C_2\wr\Si_d}$-module $((\gM_{\0}^*)^{\boxtimes d})_{C_2 \wr \Si_d}$ that satisfies
\begin{align}\label{algn:dual_isom}
\gM^* \swr \cT_d \simeq \Ind_{(A\swr \cT_d,A\swr \cT_d)_{C_2\wr\Si_d}}^{(A\swr \cT_d \otimes (A\swr \cT_d)^{\sop}}\big((\gM_{\0}^*)^{\boxtimes d}\big)_{C_2 \wr \Si_d}.
\end{align}
To complete the proof we construct an isomorphism
\begin{align*}
\Ind_{(A\swr \cT_d,A\swr \cT_d)_{C_2\wr\Si_d}}^{(A\swr \cT_d \otimes (A\swr \cT_d)^{\sop}}\big((\gM_{\0}^*)^{\boxtimes d}\big)_{C_2 \wr \Si_d} & \to \Big(\Ind_{(A\swr \cT_d,A\swr \cT_d)_{C_2\wr\Si_d}}^{(A\swr \cT_d \otimes (A\swr \cT_d)^{\sop}}(\gM_{\0}^{\boxtimes d})_{C_2 \wr \Si_d} \Big)^*.
\end{align*}
To do this we first set
\begin{align*}
a_g := u_1^{\eps_1} \dots u_d^{\eps_d}\ct_w \in (A \swr \cT_d)_g,
\end{align*}
for each $g = g_1^{\eps_1} \dots g_d^{\eps_d} w \in C_2 \wr \Si_d$. Next we identify $(\gM_{\0}^*)^{\boxtimes d}$ and $(\gM_{\0}^{\boxtimes d})^*$ via
\begin{align*}
(f_1 \otimes \dots \otimes f_d)(m_1 \otimes \dots \otimes m_d) := f_1(m_1)\cdot\ldots\cdot f_d(m_d),
\end{align*}
for all $f_1 \otimes \dots \otimes f_d \in (\gM_{\0}^*)^{\boxtimes d}$ and $m_1 \otimes \dots \otimes m_d \in \gM_{\0}^{\boxtimes d}$. It is trivial to check that this is an isomorphism of $(A \swr \cT_d,A \swr \cT_d)_{C_2 \wr \Si_d}$-supermodules, as there are no signs to check in (\ref{algn:def_Ui_Tw}).

%(This is a point where it is particularly convenient to think of bisupermodules in the manner described in the comments preceding the lemma.)

We now construct the isomorphism in (\ref{algn:dual_isom}) via
\begin{align*}
f \otimes a_g & \mapsto \bigg(a_h \otimes m \mapsto
\begin{cases}
f(a_ga_hm) & \text{if }h=g^{-1} \\
0 & \text{otherwise.}
\end{cases}\bigg),
\end{align*}
for all $g,h \in C_2 \wr \Si_d$, $f \in (\gM_{\0}^*)^{\boxtimes d}$ and $m \in \gM_{\0}^{\boxtimes d}$. Much like the proof of Lemma \ref{lem:ind_dual}, one can now readily check that this does indeed define an isomorphism.% Indeed this really is just a particular application of Lemma \ref{lem:ind_dual}. However, it is rather tricky to pick apart exactly what that lemma is saying in the case of bisupermodules, hence the more direct proof of the current lemma.
\end{proof}

\subsection{Split semisimple \texorpdfstring{$\K$}{}-superalgebras}\label{sec:KSS}

For much of this subsection we mirror the results of \cite[$\S$12.2]{Kbook}. There it is assumed that $\K$ is algebraically closed. We do not make that assumption here meaning we cannot directly refer to the results from \cite{Kbook}.

Let $A$ be a $\K$-superalgebra. An irreducible $A$-supermodule $\gM$ is of type $\Mtype$ if $|\gM|$ is an irreducible $|A|$-module, and type $\Qtype$ otherwise. $A$ is called {\em split}\, if $\dim_\K\End_{|A|}(|\gM|)=1$ for all irreducible $\gM$ of type $\Mtype$ and $\dim_\K\End_{|A|}(|\gQ|)=2$ for all irreducible $\gQ$ of type $\Qtype$. An $A$-supermodule $\gM$ is called {\em semisimple} if it is isomorphic to a direct sum of irreducible $A$-supermodules. The superalgebra $A$ is called {\em semisimple} if every $A$-supermodule is semisimple.

Recall the algebra $\hat A$ from $\S$\ref{SSMod}. Using the isomorphism of the categories $\underlinesmod{A}$ and $\mod{\hat A}$, $A$ being semisimple is equivalent to $\hat A$ being semisimple as an algebra. This, in turn, is equivalent to $\hat A = \hat A e_{\0} \oplus \hat A e_{\1}$ being semisimple as an $\hat A$-module. Now, $\hat A e_{\0}$ and $\hat A e_{\1}$ correspond to the $A$-supermodules $A$ and $\Pi A$ respectively. Therefore, $\hat A$ being semisimple corresponds to $A$ and $\Pi A$ being semisimple $A$-supermodules. Certainly $\Pi A$ is semisimple if and only if $A$ is. We have, therefore, shown that $A$ is semismple if and only if $A$ is semisimple as an $A$-supermodule.

For the next lemma we introduce the $\K$-linear map $\si_{\gM}:\gM \to \gM$, $m \mapsto (-1)^{|m|}m$, for an $A$-supermodule $\gM$. Suppose that $\gN$ is an $|A|$-submodule of $|\gM|$. Then $\si_{\gM}(\gN)$ is a submodule of $|\gM|$ isomorphic to $\gN^{\si}$. Moreover, $\gN$ is a subsupermodule of $\gM$ if and only if $\si_{\gM}(\gN) = \gN$.

\begin{Lemma}\label{lem:irred_sup_mod}
Let $A$ be a split $\K$-superalgebra.
\begin{enumerate}
\item Let $\gM$ be an irreducible $A$-supermodule of type $\Mtype$.
\begin{enumerate}
\item $|\gM|$ is an irreducible $|A|$-module. 
\item $\End_{A}(\gM) \cong \K$ as superalgebras.
\item $|\gM|\cong |\Pi\gM|$ but\, $\gM \not\simeq \Pi \gM$.
\end{enumerate}
\item Let $\gQ$ be an irreducible $A$-supermodule of type $\Qtype$.
\begin{enumerate}
\item $|\gQ| \cong \gN \oplus \gN^{\si}$, for some irreducible $|A|$-module $\gN$, with $\gN \ncong \gN^{\si}$.
\item $\End_{A}(\gQ) \cong \cQ_1(\K)$ as superalgebras.
\item $\gQ \simeq \Pi \gQ$.
\end{enumerate}
\item If $\gM$ and $\gN$ are irreducible $A$-supermodule, then precisely one of the following occurs:
\begin{enumerate}
\item $\gM \simeq \gN$,
\item $\gM$ and $\gN$ are both of type $\Mtype$ and $\gM \simeq \Pi\gN$,
\item $\Hom_{|A|}(|\gM|,|\gN|) = \{0\}$.
\end{enumerate}
\end{enumerate}
\end{Lemma}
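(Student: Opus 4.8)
The plan is to establish parts (i) and (ii) first, and then deduce (iii) from them together with a short argument using the functor $\Res^{|A|}_{A_\0}$ and the structure of $\hat A$. For part (i), suppose $\gM$ is of type $\Mtype$. By definition $|\gM|$ is an irreducible $|A|$-module, which is (i)(a). For (i)(b), since $A$ is split, $\dim_\K \End_{|A|}(|\gM|) = 1$; but $\End_A(\gM)$ sits inside $\End_{|A|}(|\gM|)$ as the even part (recall $\Hom_A(\gM,\gM)_\0 = \hom_A(\gM,\gM)$ and $\Hom_A(\gM,\gM)_\1 = \hom_A(\gM,\Pi\gM)$, and $|\Hom_A(\gM,\gM)| = \End_{|A|}(|\gM|)$). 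A nonzero identity map is even, so $\End_A(\gM)_\0$ is one-dimensional and $\End_A(\gM)_\1 = 0$; hence $\End_A(\gM) \cong \K$ concentrated in even parity. For (i)(c), $|\Pi\gM| = |\gM|$ as $|A|$-modules, so the first assertion is immediate; and if $\gM \simeq \Pi\gM$ as $A$-supermodules there would be a nonzero odd endomorphism of $\gM$, contradicting $\End_A(\gM)_\1 = 0$.

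For part (ii), suppose $\gQ$ is of type $\Qtype$, so $|\gQ|$ is not irreducible. Consider $\si_{\gQ}:\gQ\to\gQ$ defined before the lemma; since $\gQ$ is irreducible as a supermodule, $|\gQ|$ has no proper nonzero $\si_{\gQ}$-stable submodule. Pick an irreducible $|A|$-submodule $\gN \subseteq |\gQ|$; then $\si_{\gQ}(\gN)\cong \gN^\si$ is another irreducible submodule, and $\gN + \si_{\gQ}(\gN)$ is $\si_{\gQ}$-stable, hence equals $|\gQ|$. If $\gN = \si_{\gQ}(\gN)$ then $\gN$ is a proper subsupermodule unless $\gN = |\gQ|$, which is excluded, so $\gN \cap \si_{\gQ}(\gN) = 0$ and $|\gQ| \cong \gN \oplus \gN^\si$. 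Moreover $\gN \not\cong \gN^\si$: otherwise an isomorphism would let one build a $\si_{\gQ}$-stable diagonal submodule, again contradicting irreducibility; this gives (ii)(a). For (ii)(b), $\End_{|A|}(|\gQ|) = \End_{|A|}(\gN \oplus \gN^\si)$ has dimension $2$ by splitness (it is $\mathrm{Hom}(\gN,\gN)\oplus\mathrm{Hom}(\gN,\gN^\si)\oplus\dots$, but the cross terms vanish since $\gN\not\cong\gN^\si$, and each diagonal term is $1$-dimensional because $A$ is split of type $\Qtype$). The even part is the scalars on the diagonal summands that respect $\si_{\gQ}$, i.e. $\K\cdot\id_{\gQ}$, while the odd part is $1$-dimensional, spanned by a map swapping the two summands up to sign; squaring this map gives a scalar, and after rescaling one gets the relation $\cc_1^2 = 1$, identifying $\End_A(\gQ) \cong \cQ_1(\K)$. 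Part (ii)(c) follows since $\cQ_1(\K)$ has an odd unit, giving an odd isomorphism $\gQ \simeq \Pi\gQ$.

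For part (iii), let $\gM,\gN$ be irreducible $A$-supermodules with $\Hom_{|A|}(|\gM|,|\gN|)\neq 0$; I must show (a) or (b) holds. Since $|\gM|$ and $|\gN|$ are semisimple, a nonzero $|A|$-map forces a common irreducible $|A|$-constituent $\gL$. Using the description of $|\gM|$ and $|\gN|$ from (i)(a) and (ii)(a): if both are of type $\Mtype$, then $|\gM| \cong \gL \cong |\gN|$, so $\gM$ and $\gN$ are two supermodule structures on the same irreducible $|A|$-module, hence $\gM \simeq \gN$ or $\gM \simeq \Pi\gN$ (these are the only two, since a supermodule structure on an irreducible $|A|$-module is determined by a choice of $\si$, and $\si$ and $-\si$ give the two options) --- this is (a) or (b). If both are of type $\Qtype$, then $|\gM| \cong \gL \oplus \gL^\si \cong |\gN|$, and one shows $\gM \simeq \gN$ by transporting $\si_\gM$ to $\gN$; here $\gM \simeq \Pi\gM$ by (ii)(c), so there is no separate case (b), giving (a). If one is type $\Mtype$ and the other type $\Qtype$, the common constituent $\gL$ would have to be both the whole of an irreducible $|A|$-module and a summand of a decomposable one with $\gL \not\cong \gL^\si$; matching constituents shows $|\gM| \cong \gL$ appears in $|\gN| \cong \gL'\oplus\gL'^\si$, forcing $\gL \cong \gL'$, but then transporting the $\gM$-structure shows $\gM$ would also be of type $\Qtype$, a contradiction --- so this mixed case does not arise when $\Hom_{|A|}(|\gM|,|\gN|)\neq 0$, and we are always in (a) or (b). The main obstacle I anticipate is getting (ii)(b) completely precise: one must pin down the odd endomorphism, check its square is a nonzero scalar (using splitness and irreducibility, not just semisimplicity), and rescale it --- this requires care with the $\K$-linear (not $|A|$-linear) map $\si_\gQ$ and the parity bookkeeping, and is where I would spend the most effort writing out details; by contrast the rest is a routine unwinding of definitions and Schur-type arguments in the split setting.
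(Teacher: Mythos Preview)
Your plan for (i) and the overall structure are fine, but there is a genuine misconception in (ii)(b). You propose that the odd endomorphism of $\gQ$ is ``a map swapping the two summands up to sign''. This cannot work: you have just argued (correctly) that $\gN\not\cong \gN^{\si}$ as $|A|$-modules, so $\Hom_{|A|}(\gN,\si_\gQ(\gN))=0=\Hom_{|A|}(\si_\gQ(\gN),\gN)$ and every $|A|$-linear endomorphism of $|\gQ|=\gN\oplus\si_\gQ(\gN)$ is \emph{diagonal}. The correct odd endomorphism, as in the paper, is $J=(-\Id_{\gN})\oplus\Id_{\si_\gQ(\gN)}$; it is manifestly $|A|$-linear, and it is odd because $\si_\gQ$ interchanges the two summands, so conjugation by $\si_\gQ$ sends $J$ to $-J$. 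Since $J^2=\Id$, no rescaling is needed and $\End_A(\gQ)\cong\cQ_1(\K)$ via $J\mapsto(1,-1)$. Your intuition that ``the swap is odd'' is coming from $\si_\gQ$, but $\si_\gQ$ is only $\si_A$-twisted linear, not $|A|$-linear.

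Two smaller points. For (ii)(a), your diagonal-submodule argument for $\gN\not\cong\gN^{\si}$ is delicate (the diagonal you would build is not automatically $\si_\gQ$-stable); the paper simply uses the split hypothesis $\dim_\K\End_{|A|}(|\gQ|)=2$, which immediately rules out $\gN\cong\gN^{\si}$ since that would force $\dim\geq 4$. For (iii), your case analysis by type is workable but heavier than necessary; the paper argues directly with the superspace $\Hom_A(\gM,\gN)$: if (a) and (c) fail then $\Hom_A(\gM,\gN)_{\bar 0}=0$ while $\Hom_A(\gM,\gN)_{\bar 1}\neq 0$, giving $\gM\simeq\Pi\gN$, and then (ii)(c) forces both to be of type $\Mtype$. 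This avoids the separate ``mixed type'' and ``both $\Qtype$'' discussions.
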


\begin{proof}
(i) Since $\gM$ is of type $\Mtype$, parts (a) and (b) are clear by definitions. Part (c) just follows by the definition of $\Pi \gM$ and part (b).

(ii) We follow \cite[Lemma 12.2.1]{Kbook}. Let $\gN \subseteq |\gQ|$ be an irreducible $|A|$-submodule. Since, $|\gQ|$ is not irreducible, $\gN$ cannot be an $A$-subsupermodule of $\gQ$ and $\si_{\gQ}(\gN) \neq \gN$. However, $\gN + \si_{\gQ}(\gN) = \gN \oplus \si_{\gQ}(\gN)$ is $\si_{\gQ}$-stable. Since we have $\dim_{\K}\End_{|A|}(|\gQ|)=2$, $\gN$ and $\si_{\gQ}(\gN) \cong \gN^{\si}$ must be non-isomorphic, proving part (a).

We now define $J \in \End_{A}(\gQ)$ by $(-\Id_{\gN}) \oplus \Id_{\si_{\gQ}(\gN)}$. Certainly $J$ is an $A$-module isomorphism. Moreover, $J$ anti-commutes with $\si_{\gQ}$ and so $J$ is odd. In particular, $\gQ \simeq \Pi\gQ$, proving part (c). Since $\dim_{\K}\End_{A}(\gQ)=\dim_{\K}\End_{|A|}(|\gQ|)=2$, we can now construct the isomorphism $\End_{A}(\gQ) \cong \cQ_1(\K)$, $J \mapsto (1,-1)$.

(iii) Suppose both (a) and (b) hold. Then, $\gM \simeq \Pi \gM$, contradicting part (i). Certainly (c) cannot hold together with either (a) or (b). It remains to show that at least one of the three statements is true.

Suppose (a) and (c) fail to hold. So, $\gM \not\simeq \gN$ and $
|\Hom_{A}(\gM,\gN)|=
\Hom_{|A|}(|\gM|,|\gN|) \neq \{0\}$. Now, $\Hom_{A}(\gM,\gN)_{\0} = \{0\}$, since $\gM$ and $\gN$ are non-isomorphic irreducible $A$-supermodules. Therefore, $\Hom_{A}(\gM,\gN)_{\1} \neq \{0\}$ or equivalently $\Hom_{A}(\gM,\Pi \gN)_{\0} \neq \{0\}$. Since $\gM$ and $\Pi\gN$ are irreducible $A$-supermodules, we must have $\gM \simeq \Pi \gN$. Finally, if $\gN$ has type $\Qtype$, then $\gM \simeq \Pi \gN \simeq \gN$, a contradiction. Similarly, $\gM$ has type $\Mtype$ and (b) holds.
\end{proof}

We now take a brief moment to describe the irreducible supermodules for $\cM_{m|n}(\K)$ and $\cQ_t(\K)$.
Let $\gU_{m,n}$ be the standard column vector supermodule for $\cM_{m|n}(\K)$, where the first $m$ entries are considered even and the last $n$ odd. Clearly $\gU_{m,n}$ is an irreducible $\cM_{m|n}(\K)$-supermodule of type $\Mtype$.

Let $\gV_t := \gV_1 \oplus \gV_2$, where $\gV_1$ and $\gV_2$ are the standard column vector spaces for the two matrix factors in $\cQ_t(\K)$. We identify $\gV_1$ and $\gV_2$ through $\si_{\cQ_t(\K)}$ and give $\gV_t$ the structure of a $\cQ_t(\K)$-supermodule by setting
\begin{align*}
(\gV_t)_{\0} := \{(v,v) \in \gV_t\}, \qquad (\gV_t)_{\1} := \{(v,-v) \in \gV_t\}.
\end{align*}
Since $\gV_1$ and $\gV_2$ are non-isomorphic, irreducible $|\cQ_t(\K)|$-modules and $\si_{\gV_t}(\gV_1) = \gV_2$, $\gV_t$ is an irreducible $\cQ_t(\K)$-supermodule of type $\Qtype$.

We claim that, up to isomorphism, $\gU_{m,n}$ and $\Pi \gU_{m,n}$ are the only irreducible $\cM_{m|n}(\K)$-supermodules and $\gV_t$ is the only irreducible $\cQ_t(\K)$-supermodule. By construction, in either case, any irreducible supermodule must have a non-zero module homomorphism from one of the above irreducible supermodules already constructed. The claim now follows from Lemma \ref{lem:irred_sup_mod}(iii).

The following lemma is well known. For example it can be deduced from results in \cite{Jo} and \cite{NO}. However, since most texts do not deal in the generality we do here, we include a proof for the convenience of the reader.

\begin{Lemma}\label{lem:split_semi}
Let $A$ be a $\K$-superalgebra. The following are equivalent:
\begin{enumerate}
\item $A$ is split semisimple.
\item $A$ is a direct sum of $\K$-superalgebras each of the form $\cM_{m|n}(\K)$ or $\cQ_t(\K)$, for $m,n,t \in \N$, with $m+n,t>0$.
\end{enumerate}
Moreover, every irreducible $|A|$-module is isomorphic to the direct summand of an irreducible $A$-supermodule considered as an $|A|$-module.
\end{Lemma}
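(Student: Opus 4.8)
The plan is to treat the two implications asymmetrically: (ii)$\Rightarrow$(i) by direct computation, and (i)$\Rightarrow$(ii) by reducing to ordinary Wedderburn theory on $|A|$ and then analysing the grading operator $\si_A$.

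For (ii)$\Rightarrow$(i): a superalgebra direct sum $A=A_1\oplus\dots\oplus A_r$ is split semisimple iff each $A_i$ is, since the (irreducible) $A$-supermodules are precisely the (irreducible) $A_i$-supermodules inflated along the projections, and $\End_{|A|}(|\gM|)=\End_{|A_i|}(|\gM|)$ for an $A_i$-supermodule $\gM$. So it suffices to treat $A=\cM_{m|n}(\K)$ and $A=\cQ_t(\K)$ individually. For these, one reads off from the matrix units that the left regular supermodule is a finite direct sum of copies of the irreducible supermodules $\gU_{m,n},\Pi\gU_{m,n}$ (respectively of $\gV_t$) constructed before the lemma; hence $A$ is semisimple as a supermodule over itself, so semisimple as a superalgebra by the discussion preceding the lemma. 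Splitness is then immediate: $\End_{|A|}(|\gU_{m,n}|)=\End_{M_{m+n}(\K)}(\K^{m+n})=\K$ is $1$-dimensional, while $\End_{|A|}(|\gV_t|)=\End_{M_t(\K)\times M_t(\K)}(\K^t\oplus\K^t)=\K\times\K$ is $2$-dimensional.

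For (i)$\Rightarrow$(ii): since $A$ is semisimple it is semisimple as a left $A$-supermodule, so $|A|\cong\bigoplus_j|\gM_j|$ for irreducible $A$-supermodules $\gM_j$, and by Lemma~\ref{lem:irred_sup_mod}(i)(a),(ii)(a) each $|\gM_j|$ is a sum of one or two simple $|A|$-modules; hence $|A|$ is a semisimple $|A|$-module and $|A|$ is a semisimple algebra, say $|A|=\bigoplus_k B_k$ with $B_k$ simple. Every simple $|A|$-module occurs in the left regular module $|A|$, hence as a summand of some $|\gM_j|$, which proves the ``Moreover'' clause; combined with the splitness hypothesis (applied to the type $\Mtype$ and type $\Qtype$ constituents via $\End_{|A|}$) it forces $\End_{|A|}(S)=\K$ for every simple $S$, so $B_k\cong M_{d_k}(\K)$. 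Now $\si_A$ is an algebra automorphism with $\si_A^2=\id$, so it permutes the $B_k$ in orbits of size $1$ or $2$. On a two-element orbit $\{B_k,B_l\}$ one has $d_k=d_l=:t$, and choosing an algebra isomorphism $B_k\iso M_t(\K)$ and transporting it to $B_l$ through $\si_A$ identifies $B_k\oplus B_l$, compatibly with $\si_A$, with $\cQ_t(\K)$. On a fixed block $B_k$ (which, being $\si_A$-stable, is a direct summand of $A$ as a superalgebra), $\si_A|_{B_k}$ is an automorphism of $M_{d_k}(\K)$, so by Skolem--Noether it is conjugation by some $g\in B_k^\times$ with $g^2$ central, say $g^2=\lambda\in\K^\times$. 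The key point is that $\lambda$ is a square: the unique simple $B_k$-module $S$ satisfies $S^{\si}\cong S$, so by Lemma~\ref{lem:irred_sup_mod} no type $\Qtype$ irreducible $B_k$-supermodule can have underlying module $S\oplus S^{\si}$, whence $S$ underlies a type $\Mtype$ irreducible supermodule $\gM$; then $\si_\gM(bm)=gbg^{-1}\si_\gM(m)$ shows $m\mapsto g^{-1}\si_\gM(m)$ is a $B_k$-module endomorphism of $S$, i.e.\ a scalar $c$, and squaring gives $c^2=\lambda^{-1}$. Rescaling $g$ to an involution, its $\pm 1$-eigenspaces have dimensions $m,n$ with $m+n=d_k$; since $B_k\cap A_{\0}$ is the centralizer of $g$ and $B_k\cap A_{\1}$ its anticommutant, this identifies $B_k\cong\cM_{m|n}(\K)$. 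Summing over orbits gives the decomposition.

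The main obstacle is exactly the square-root step on the fixed blocks: a priori conjugation by $g$ need not be diagonalizable over $\K$, which would give a $\Z/2$-grading on $M_{d_k}(\K)$ not of the form $\cM_{m|n}(\K)$, and it is precisely the splitness hypothesis — in the guise of the existence of a type $\Mtype$ supermodule supported on $S$ — that excludes this. Everything else is bookkeeping with Wedderburn theory and the classification of irreducible supermodules in Lemma~\ref{lem:irred_sup_mod}.
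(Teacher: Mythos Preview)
Your proof is correct, but the route for (i)$\Rightarrow$(ii) is genuinely different from the paper's. The paper decomposes the left regular supermodule $A\simeq\bigoplus_i(\gM_i^{m_i}\oplus(\Pi\gM_i)^{m_i'})\oplus\bigoplus_j\gQ_j^{q_j}$ into type-$\Mtype$ and type-$\Qtype$ irreducibles, then computes $\End_A(A)$ directly using the trichotomy of Lemma~\ref{lem:irred_sup_mod}(iii): the $\Hom$-space structure forces $\End_A(A)\cong\bigoplus_i\cM_{m_i|m_i'}(\K)\oplus\bigoplus_j\cQ_{q_j}(\K)$, and since $\End_A(A)\cong A^{\op}$ and both model superalgebras are self-opposite, the decomposition of $A$ follows. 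This is slick and entirely super-intrinsic; it never writes down $\si_A$ on Wedderburn factors and never needs Skolem--Noether or your square-root argument. Your approach, by contrast, passes to the ordinary Wedderburn decomposition of $|A|$ and analyses the $\si_A$-orbits on simple factors; the payoff is that it makes completely explicit where the split hypothesis bites --- it is exactly what guarantees the conjugating element on a fixed block can be rescaled to an involution, ruling out non-standard $\Z/2$-gradings on $M_d(\K)$. Both arguments rely on Lemma~\ref{lem:irred_sup_mod} at a comparable depth (you use part~(ii)(a) to exclude type~$\Qtype$ on a fixed block; the paper uses all of (i)--(iii) to compute $\End$), and your (ii)$\Rightarrow$(i) and the ``Moreover'' clause match the paper's treatment essentially verbatim.
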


\begin{proof}
(i)$\implies$(ii) Suppose $A$ is split semisimple. In particular, the left regular supermodule $A$ decomposes into a direct sum of irreducible $A$-supermodules. By Lemma \ref{lem:irred_sup_mod}, we can write
\begin{align*}
A \simeq \bigoplus_{i=1}^a \left( \gM_i^{\oplus m_i} \oplus (\Pi \gM_i)^{\oplus m_i'} \right) \oplus \bigoplus_{i=1}^b \gQ_i^{\oplus q_i},
\end{align*}
for some $a, b, m_i, m_i', q_i \in \N$, where the $\gM_i$'s and $\Pi \gM_i$'s form a complete list of representatives for the isomorphism classes of irreducible $A$-supermodules of type $\Mtype$ and the $\gQ_i$'s form a complete list of representatives for the isomorphism classes of irreducible $A$-supermodules of type $\Qtype$. In particular, by Lemma \ref{lem:irred_sup_mod}, we have 
\begin{align*}
\End_{A}(A) & \cong \bigoplus_{i=1}^a \End_{A}\big( \gM_i^{\oplus m_i} \oplus (\Pi \gM_i)^{\oplus m_i'} \big) \oplus \bigoplus_{i=1}^b \End_{A}(\gQ_j^{\oplus q_i}) \\
& \cong \bigoplus_{i=1}^a \cM_{m_i|m_i'}(\K) \oplus \bigoplus_{i=1}^b \cQ_{q_j}(\K).
\end{align*}
Now, $\End_{A}(A) \cong A^{\op}$ as superalgebras, where the isomorphism is given by right multiplication. The claim now follows by the isomorphisms of superalgebras:
\begin{align*}
\cM_{m_i|m_i'}(\K)^{\op} & \cong \cM_{m_i|m_i'}(\K) && \cQ_{q_i}(\K)^{\op} \cong \cQ_{q_i}(\K) \\
x & \mapsto x^T && \qquad (x,y) \mapsto (x^T,y^T).
\end{align*}
(ii)$\implies$(i) It is enough to prove the statement for $A = \cM_{m|n}(\K)$ and $\cQ_t(\K)$, for $m,n,t \in \N$, with $m+n,t>0$.

By the comments at the beginning of this subsection, to show $A$ is semisimple, we need only decompose $A$ into a direct sum of irreducible supermodules. Recall the irreducible $\cM_{m|n}(\K)$-supermodule $\gU_{m,n}$ and the irreducible $\cQ_t(\K)$-supermodule $\gV_t$ from the comments preceding the lemma. Now,
$
\cM_{m|n}(\K) \simeq (\gU_{m,n})^{\oplus m} \oplus (\Pi \gU_{m,n})^{\oplus n},
$
as $\cM_{m|n}(\K)$-supermodules, and
$
\cQ_t(\K) \simeq \gV_t^{\oplus t},
$
as $\cQ_t(\K)$-supermodules, proving semisimplicity.
Finally,
$$
\dim_{\K}\End_{|\cM_{m|n}(\K)|}(|\gU_{m,n}|) = \dim_{\K}\End_{|\cM_{m|n}(\K)|}(|\Pi \gU_{m,n}|) = 1
$$
and
$
\dim_{\K}\End_{|\cQ_t(\K)|}(\gV_t) = 2,
$
so all our irreducible supermodules are split, completing the claim.

We now prove the final part of the lemma. Since we have decomposed $A$ into a direct sum of irreducible $A$-supermodules, by Lemma \ref{lem:irred_sup_mod}(i)(a),(ii)(a), we can also decomposed $|A|$ into a direct sum of irreducible $|A|$-modules. Since every irreducible $|A|$-module is a homomorphic image of $|A|$, the claim follows.
\end{proof}

\subsection{(Super)characters of split semisimple superalgebras}\label{sec:KSS1}
Throughout the subsection, let $A$ be a split semisimple $\K$-superalgebra. In particular, $|A|$ is a split semisimple $\K$-algebra, and we have the terminology of irreducible characters of $|A|$ from \S\ref{sec:SSA}. By definition, these are elements of the Grothendieck group $\Grot(|A|)$. 

For simplicity we will write $\Grot(A)$ instead of $\Grot(|A|)$.

Recalling the operation $\gM\mapsto \gM^\si$ introduced in \S\ref{SSMod}, we denote by $$-^{\si}:\Grot(A) \to \Grot(A)$$ the corresponding automorphism of the Grothendieck group. 

The  set of irreducible $A$-supermodules (up to isomorphism $\simeq$) can be written as  
$$\{X_\la,\Pi X_\la\mid\la\in \La^A_\0\}\sqcup \{X_\la\mid\la\in \La^A_\1\},
$$ 
where the first set contains type $\Mtype$ irreducible $A$-supermodules, the second set contains type $\Qtype$ irreducible $A$-supermodules, and $\La^A_\0,\La^A_\1$ are just labeling sets. 
Later on, particularly when dealing with characters of double covers of the symmetric groups, it will be natural to call elements of $\La^A_{\0}$ {\em even} and elements of $\La^A_{\1}$ {\em odd}. 
We put $\La^A:=\La^A_\0\sqcup \La^A_\1$. For the corresponding classes in the Grothendieck group $\Grot(A)$, we use the following notation:
\begin{align*}
\xi_\la&:=[|X_\la|]=[|\Pi X_\la|]\in \Grot(A) &(\la\in\La^A_\0),
\\
\xi_\la&:=[|X_\la|]\in \Grot(A) &(\la\in\La^A_\1).
\end{align*}
We call the classes $\xi_\la$ the {\em irreducible supercharacters} of $A$. We denote the set of the irreducible supercharacters of $A$ by $\Irr_{\operatorname{super}}(A)$. 
Thus 
$$
\Irr_{\operatorname{super}}(A)=\{\xi_\la\mid\la\in\La^A\}.
$$
These need to be distinguished from the irreducible characters of $|A|$ which we denote simply by $\Irr(A)$ so that $\Grot(A)=\Z\Irr(A)$.

%Recall the terminology of irreducible characters of $|A|$ from \S\ref{sec:SSA}. 
Let $\la \in \La^A_{\1}$. By Lemma \ref{lem:irred_sup_mod}(ii)(a), $\xi_\la = \xi_\la^+ + \xi_\la^-$, for some irreducible characters $\xi_\la^+,\xi_\la^- \in \Irr(A)$, with $\xi_\la^+ \neq \xi_\la^-$ and $(\xi_\la^\pm)^{\si} = \xi_\la^\mp$. Moreover, by Lemma \ref{lem:irred_sup_mod} and the final part of Lemma \ref{lem:split_semi},
\begin{equation}\label{E221222}
\Irr(A)=\{\xi_\la\mid\la \in \La^A_{\0}\} \sqcup \{\xi_\la^+,\xi_\la^-\mid\la \in \La^A_{\1}\}.
\end{equation}
is a complete irredundant list of irreducible characters of $|A|$.
%be an irreducible character of $A$.
Adopting the language of \S\ref{SSMod}, if $\la \in \La^A_{\0}$, we say $\xi_\la$ is {\em self-associate} and if $\la \in \La^A_{\1}$, we say $\xi_\la^+,\xi_\la^-$ are {\em non-self-associate} and call them an {\em associate pair}.  % We define $\Irr_{\si}(A)$ to be the set of equivalence classes of $\Irr(A)$, under the equivalence relation generated by $-^{\si}$. Occasionally, we will also use this terminology of (non-)self-associate to refer to an $|A|$-module $\gM$ that is (or isn't) isomorphic to $\gM^{\si}$.
In this latter case it is usually not going to be important to specify which character is $\xi_\la^+$ and which is $\xi_\la^-$. However, when it is important, we will make this choice clear.

When $A$ is a $\K$-split semisimple ($\cO$-)superalgebra, we denote
\begin{equation}\label{EIrrNotation}
\Irr_{\operatorname{super}}(A):=\Irr_{\operatorname{super}}(\K A)
\quad\text{and}\quad
\Irr(A):=\Irr(\K A)=\Irr(|\K A|). 
\end{equation}
In particular, in this case we have $\Grot(\K A)=\Z\Irr(A).$

Let $B$ and $C$ be $\K$-split semisimple superalgebras and $\gN$ a $(B,C)$-bisupermodule. 
As in \S\ref{sec:SSA}, $|\K\gN| \otimes_{|\K C|} ?$ induces a ($\Z$-)linear function $\Z\Irr(C) \to \Z\Irr(B)$ that, by an abuse of notation, we denote $\gN \otimes_C ?$.
If $C$ is a subsuperalgebra of $B$, %that is also $\K$-split semisimple,
then we denote by
\begin{align*}
\downarrow^B_C: \Z\Irr(B) \to \Z\Irr(C) \qquad \text{and} \qquad \uparrow_C^B: \Z\Irr(C) \to \Z\Irr(B)
\end{align*}
the linear functions induced by the functors $\Res^{|\K B|}_{|\K C|}$ and $\Ind_{|\K C|}^{|\K B|}$ respectively. Similar notation applies for superalgebras over $\K$---for example if $B$ and $C$ are split semisimple $\K$-superalgebras and $\gN$ a $(B,C)$-bisupermodule, then $|\gN| \otimes_{|C|} ?$ induces a ($\Z$-)linear function $\Z\Irr(C) \to \Z\Irr(B)$ that, by an abuse of notation, we denote $\gN \otimes_C ?$.

\begin{Lemma}\label{lem:res_chars}
Let $A$ be a split semisimple $\K$-superalgebra with superunit $u$. Then $A_{\0}$ is a split semisimple $\K$-algebra. Moreover, if $\la \in \La^A_{\0}$, then $\tilde{\xi}_\la := \xi_\la \downarrow_{A_{\0}}^A = \tilde{\xi}_\la^+ + \tilde{\xi}_\la^-$, for distinct $\tilde{\xi}_\la^+, \tilde{\xi}_\la^- \in \Irr(A_{\0})$, with ${}^u\tilde{\xi}_\la^+ = \tilde{\xi}_\la^-$. If $\la \in \La^A_{\1}$, then $\xi_\la^+ \downarrow_{A_{\0}}^A = \xi_\la^- \downarrow_{A_{\0}}^A = \tilde{\xi}_\la$, for some $\tilde{\xi}_\la \in \Irr(A_{\0})$. Furthermore,
\begin{align*}
\Irr(A_\0)=\{ \tilde{\xi}_\la^+, \tilde{\xi}_\la^- \mid  \la \in \La^A_{\0}\} \sqcup \{ \tilde{\xi}_\la \mid \la \in \La^A_{\1}\}
\end{align*}
is a complete, irredundant list of irreducible characters of $A_{\0}$.
\end{Lemma}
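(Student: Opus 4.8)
The plan is to reduce everything to the two model superalgebras furnished by Lemma~\ref{lem:split_semi} and then read off the restrictions explicitly. By Lemma~\ref{lem:split_semi} we may write $A=\bigoplus_k B_k$ as a direct sum of $\K$-superalgebras, each $B_k$ of the form $\cM_{m_k|n_k}(\K)$ or $\cQ_{t_k}(\K)$. The identity $1_{B_k}$ of $B_k$ is an even central idempotent of $A$, so $u_k:=u\cdot 1_{B_k}\in B_k$ is an odd unit of $B_k$; thus each $B_k$ is a superalgebra with superunit. Since an odd element of $\cM_{m|n}(\K)$ is block-antidiagonal with rectangular blocks of sizes $m\times n$ and $n\times m$, it can be invertible only if $m=n$; on the other hand $\cQ_t(\K)$ always has the superunit $(1,-1)$. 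Hence each $B_k$ is $\cM_{m_k|m_k}(\K)$ or $\cQ_{t_k}(\K)$, and therefore $A_{\0}=\bigoplus_k(B_k)_{\0}$ with $(\cM_{m|m}(\K))_{\0}\cong\cM_{m\times m}(\K)\oplus\cM_{m\times m}(\K)$ and $(\cQ_t(\K))_{\0}=\{(x,x)\mid x\in\cM_{t\times t}(\K)\}\cong\cM_{t\times t}(\K)$. Being a direct sum of split matrix algebras, $A_{\0}$ is split semisimple, which is the first assertion.

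Next I would match up the labels via the discussion preceding Lemma~\ref{lem:split_semi}: the $\cM_{m|m}$-summands of $A$ correspond bijectively to $\La^A_{\0}$ (the summand indexed by $\la$ being the one containing the type $\Mtype$ module $X_\la$) and the $\cQ_t$-summands to $\La^A_{\1}$ (the summand indexed by $\la$ containing the type $\Qtype$ module $X_\la\simeq\gV_{t_k}$), so that the summands of $A_{\0}$ displayed above are indexed by $\La^A$. Fix $\la\in\La^A_{\0}$ and work inside $B_k=\cM_{m|m}(\K)$: its even part $\cM_{m\times m}(\K)\oplus\cM_{m\times m}(\K)$ has exactly two irreducible modules, the standard modules $S^+$ and $S^-$ of the two factors, and $S^+\ncong S^-$. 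The even and odd parts of $X_\la$ (which up to $\simeq$ is $\gU_{m,m}$ or $\Pi\gU_{m,m}$) are, as $(B_k)_{\0}$-modules, exactly $S^+$ and $S^-$ in some order; hence $\Res^{|A|}_{A_{\0}}|X_\la|\cong S^+\oplus S^-$, and so $\xi_\la\downarrow_{A_{\0}}^A=\tilde{\xi}_\la^++\tilde{\xi}_\la^-$ with $\tilde{\xi}_\la^\pm:=[S^\pm]$ distinct. Moreover $\Res^{|A|}_{A_{\0}}|X_\la|=(X_\la)_{\0}\oplus u(X_\la)_{\0}$ and $u(X_\la)_{\0}\cong{}^u(X_\la)_{\0}$ (as recorded before Lemma~\ref{lem_red_to_A0}), which forces ${}^uS^+\cong S^-$, i.e.\ ${}^u\tilde{\xi}_\la^+=\tilde{\xi}_\la^-$; equivalently, conjugating $\diag(W,Z)$ by the odd superunit interchanges the two matrix factors up to inner automorphisms. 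Now fix $\la\in\La^A_{\1}$ and work inside $B_k=\cQ_t(\K)$, so $|X_\la|=\gV_1\oplus\gV_2$ with $\xi_\la^+,\xi_\la^-$ the classes of $\gV_1,\gV_2$; the diagonal subalgebra $(B_k)_{\0}\cong\cM_{t\times t}(\K)$ acts on each $\gV_i$ as the standard column-vector module, so $\xi_\la^+\downarrow_{A_{\0}}^A=\xi_\la^-\downarrow_{A_{\0}}^A=\tilde{\xi}_\la$, the unique irreducible character of $(B_k)_{\0}$.

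Finally, completeness and irredundancy of the asserted list for $\Irr(A_{\0})$ are immediate from $A_{\0}=\bigoplus_k(B_k)_{\0}$: the blocks of $A_{\0}$ are exactly the $(B_k)_{\0}$, each contributes precisely $\{\tilde{\xi}_\la^+,\tilde{\xi}_\la^-\}$ (for $\la\in\La^A_{\0}$) or $\{\tilde{\xi}_\la\}$ (for $\la\in\La^A_{\1}$), these are distinct within each block, and characters attached to different blocks of $A_{\0}$ are automatically distinct. The one step that genuinely uses the hypothesis, and hence the main point to get right, is the first: the superunit is exactly what forces every $\cM_{m|n}$-summand of $A$ to be square; after that the argument is bookkeeping with the standard $\cM_{m|n}(\K)$- and $\cQ_t(\K)$-supermodules recorded in and around Lemma~\ref{lem:irred_sup_mod}.
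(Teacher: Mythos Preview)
Your proof is correct and follows essentially the same approach as the paper: both reduce via Lemma~\ref{lem:split_semi} to the building blocks $\cM_{m|n}(\K)$ and $\cQ_t(\K)$, use the existence of a superunit to force $m=n$ in the former case, and then read off the restrictions to the even part in each case explicitly. The only cosmetic difference is that the paper exhibits the concrete superunit $\left(\begin{smallmatrix}0&I_m\\I_m&0\end{smallmatrix}\right)$ of $\cM_{m|m}(\K)$ to verify that conjugation swaps the two matrix factors, whereas you argue more abstractly via $u(X_\la)_{\0}\cong{}^u(X_\la)_{\0}$; both are fine.
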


\begin{proof}
By Lemma \ref{lem:split_semi}, it is enough to prove the result for $A = \cM_{m|n}(\K)$ and $\cQ_t(\K)$, for $m,n,t \in \N$, with $m+n,t>0$. Recall the irreducible $\cM_{m|n}(\K)$-supermodule $\gU_{m,n}$ and the irreducible $\cQ_t(\K)$-supermodule $\gV_t = \gV_1 \oplus \gV_2$ from the comments preceding Lemma \ref{lem:split_semi}.

We first deal with $A \cong \cM_{m|n}(\K)$. Note that $\gU_{m,n}$ is of type $\Mtype$ meaning we are in the $\La^A_{\0}$ case. If $m\neq n$, then every element of $\cM_{m|n}(\K)_{\1}$ has determinant zero. This contradicts $A$ having a superunit and so we must have $m=n$. Therefore, $A_{\0} \cong \cM_{m \times m}(\K) \oplus \cM_{m \times m}(\K)$. Moreover, the two matrix factors gets swapped by conjugation by
\begin{align*}
u:=\begin{pmatrix} 0 & I_m \\ I_m & 0 \end{pmatrix} \in \cM_{m|n}(\K)^\times \cap \cM_{m|n}(\K)_{\1}.
\end{align*}
Certainly $\Res^{|A|}_{A_{\0}}|\gU_{m,m}|$ decomposes as a direct sum of two irreducible $|A|$-modules, one for each isomorphism class, proving all the claims.

Next we consider $A \cong \cQ_t(\K)$. This time $\gV_t$ is of type $\Qtype$ meaning we are in the $\La^A_{\1}$ case. Now, $A_{\0} \cong \cM_{t \times t}(\K)$ and $\Res^{|A|}_{A_{\0}}|\gV_1| \cong \Res^{|A|}_{A_{\0}}|\gV_2|$ is, up to isomorphism, the unique irreducible $A_{\0}$-module, proving all the claims.
\end{proof}

From now on we will adopt the labeling of $\Irr(A_{\0})$ as in Lemma \ref{lem:res_chars}. As with $\xi_\la^+,\xi_\la^- \in \Irr(|A|)$ when $\la$ is odd, most of the time we will not be careful about distinguishing between $\tilde{\xi}_{\la}^+$ and $\tilde{\xi}_{\la}^-$, for $\la$ even.

\begin{Remark}\label{rem:ind_chars}
As a consequence of Lemma \ref{lem:res_chars} and the fact that $\Ind_{A_{\0}}^{|A|}$ and $\Res^{|A|}_{A_{\0}}$ are adjoint functors, we can describe $\uparrow_{A_{\0}}^A:\Z\Irr(A_{\0}) \to \Z\Irr(A)$. If $\la \in \La^A_{\0}$, then $\tilde{\xi}_\la^+ \uparrow_{A_{\0}}^A = \tilde{\xi}_\la^- \uparrow_{A_{\0}}^A = \xi_\la$. If $\la \in \La^A_{\1}$, then $\tilde{\xi}_\la \uparrow_{A_{\0}}^A = \xi_\la^+ + \xi_\la^-$.
\end{Remark}

Let $A$ and $B$ be split semisimple $\K$-superalgebras, $\gU$ an $A$-supermodule and $\gV$ a $B$-supermodule such that $|\gU|$ has character $\chi \in \Z\Irr(A)$ and $|\gV|$ has character $\psi \in \Z\Irr(B)$. We write $\chi \boxtimes \psi \in \Z\Irr(A \otimes B)$ for the character of the $|A \otimes B|$-module $|\gU \boxtimes \gV|$.

\begin{Lemma}\label{lem:char_A_ten_B}
Let $A$ and $B$ be split semisimple $\K$-superalgebras. \begin{enumerate}
\item $A \otimes B$ is a split semisimple $\K$-superalgebra.
\item If $\la \in \La^A$ and $\mu \in \La^B$, then $\xi_\la \boxtimes \xi_\mu \in \Irr_{\operatorname{super}}(A \otimes B)$ unless $\la \in \La^A_{\1}$ and $\mu \in \La^B_{\1}$. In the latter case $\xi_\la \boxtimes \xi_\mu$ is the sum of two copies of the same irreducible supercharacter of $A \otimes B$.
\end{enumerate}
In all cases we write $\xi_{\la,\mu}$ for the unique irreducible constituent of $\xi_\la \boxtimes \xi_\mu$.
\begin{enumerate}
\item[(iii)]
$
\Irr_{\operatorname{super}}(A \otimes B)=\{\xi_{\la,\mu}\mid\la \in \La^A, \mu \in \La^B\}
$
is a complete irredundant set of irreducible supercharacters of $A \otimes B$. Moreover, $\xi_{\la,\mu}$ corresponds to an irreducible supermodule of type $\Mtype$ if $\xi_\la$ and $\xi_\mu$ have the same type and type $\Qtype$ if $\xi_\la$ and $\xi_\mu$ have opposite type.
\end{enumerate}
\end{Lemma}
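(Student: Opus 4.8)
The plan is to reduce everything to the classification of split semisimple $\K$-superalgebras already established in Lemma \ref{lem:split_semi}, and then to a direct computation of the tensor product of the two "model" superalgebras $\cM_{m|n}(\K)$ and $\cQ_t(\K)$. First I would note that, by Lemma \ref{lem:split_semi}, each of $A$ and $B$ decomposes as a direct sum of superalgebras of the form $\cM_{m|n}(\K)$ or $\cQ_t(\K)$; since the tensor product distributes over direct sums of superalgebras and since an irreducible supermodule over a direct sum is an irreducible supermodule over a single summand, it suffices to prove (i)--(iii) when $A$ and $B$ are each a single factor. So there are three cases to handle: $\cM_{m|n}(\K)\otimes \cM_{m'|n'}(\K)$ (both type $\Mtype$), $\cM_{m|n}(\K)\otimes \cQ_t(\K)$ (one of each type), and $\cQ_s(\K)\otimes \cQ_t(\K)$ (both type $\Qtype$).

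For the two mixed/pure cases the key super-algebra isomorphisms are standard and can be checked directly: $\cM_{m|n}(\K)\otimes \cM_{m'|n'}(\K)\cong \cM_{mm'+nn'\,|\,mn'+nm'}(\K)$ (the usual Kronecker product of matrices, with the $\Z/2$-grading on the tensor product of superspaces giving exactly this even/odd block split), and $\cM_{m|n}(\K)\otimes \cQ_t(\K)\cong \cQ_{t(m+n)}(\K)$. In each of these two cases the tensor product is again a single factor of the list in Lemma \ref{lem:split_semi}, so (i) holds, and its unique irreducible supermodule (up to $\simeq$ and $\Pi$) is, by the description of irreducibles for $\cM$ and $\cQ$ given just before Lemma \ref{lem:split_semi}, precisely $\gU_{m,n}\boxtimes\gU_{m',n'}$ (resp.\ $\gU_{m,n}\boxtimes\gV_t$), whose underlying $|A\otimes B|$-module has character $\xi_\la\boxtimes\xi_\mu$; this proves (ii) in these cases and identifies $\xi_{\la,\mu}$, with type $\Mtype$ in the first case and $\Qtype$ in the second, matching the asserted rule. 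The remaining case is $\cQ_s(\K)\otimes\cQ_t(\K)$: here the relevant fact is $\cQ_1(\K)\otimes\cQ_1(\K)\cong \cM_{1|1}(\K)$ as superalgebras, so $\cQ_s(\K)\otimes\cQ_t(\K)\cong \cM_{st|st}(\K)$; this forces the irreducible supermodule to be of type $\Mtype$, and a dimension count shows that $\gV_s\boxtimes\gV_t$, which has underlying module of character $\xi_\la\boxtimes\xi_\mu$ of the correct total dimension for \emph{two} copies of the irreducible of $\cM_{st|st}(\K)$, must decompose as $\gU\oplus\Pi\gU$ for the irreducible $\gU$; hence $\xi_\la\boxtimes\xi_\mu$ is twice the irreducible supercharacter $\xi_{\la,\mu}$, which is exactly the exceptional clause of (ii). In all cases $A\otimes B$ is one of the two model types, giving (i); and running over all summands of $A$ and $B$ assembles (iii), since distinct pairs $(\la,\mu)$ clearly give non-isomorphic constituents (they live in distinct superalgebra summands or are distinguished by their $|A\otimes B|$-characters via \eqref{E221222}).

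The main obstacle I expect is the $\Qtype\otimes\Qtype$ case: one has to be careful that $\gV_s\boxtimes\gV_t$ really is \emph{not} irreducible as a supermodule (even though $|\gV_s|\otimes|\gV_t|$ breaks into four irreducible $|A\otimes B|$-pieces pairing up into two isomorphism classes), and to pin down that it splits off a copy of $\Pi\gU$ rather than being $\gU\oplus\gU$ or staying indecomposable. The clean way to see this is to compute $\End_{A\otimes B}(\gV_s\boxtimes\gV_t)$: using $\End_A(\gV_s)\cong\cQ_1(\K)\cong\End_B(\gV_t)$ from Lemma \ref{lem:irred_sup_mod}(ii)(b) together with the sign-twisted multiplication on the tensor product of superalgebras, one gets $\End_{A\otimes B}(\gV_s\boxtimes\gV_t)\cong\cQ_1(\K)\otimes\cQ_1(\K)\cong\cM_{1|1}(\K)$, whose even part is $\K\oplus\K$; the two resulting central even idempotents split $\gV_s\boxtimes\gV_t$ into two irreducible subsupermodules interchanged by the odd unit, i.e.\ into $\gU\oplus\Pi\gU$. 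Everything else is a routine verification of the explicit Kronecker-product super-algebra isomorphisms and bookkeeping with Lemma \ref{lem:irred_sup_mod} and \eqref{E221222}.
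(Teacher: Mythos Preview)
Your proposal is correct and is essentially the argument the paper defers to: the paper simply cites \cite[Lemma 12.2.13]{Kbook} for parts (i), (ii) and the completeness and type statements in (iii), remarking that the proof there (over an algebraically closed field) goes through verbatim once Lemma~\ref{lem:split_semi} is available; your explicit case analysis of $\cM\otimes\cM$, $\cM\otimes\cQ$, $\cQ\otimes\cQ$ is precisely that argument written out, including the $\End_{A\otimes B}(\gV_s\boxtimes\gV_t)\cong\cQ_1(\K)\otimes\cQ_1(\K)\cong\cM_{1|1}(\K)$ computation in the $\Qtype\otimes\Qtype$ case. The one place your route differs slightly is irredundancy in (iii): you argue via the summand decomposition of $A\otimes B$, whereas the paper instead restricts $\xi_{\la,\mu}$ back to $A$ and to $B$ to recover $\la$ and $\mu$; both arguments are short and valid.
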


\begin{proof}
All claims, except that our set in (iii) is irredundant, follow from \cite[Lemma 12.2.13]{Kbook}. There it is assumed that $\K$ is algebraically closed. However, the proof runs through in exactly the same manner once we have established Lemma \ref{lem:split_semi}.

For the irredundancy claim, if $\xi_{\la_1,\mu_1} = \xi_{\la_2,\mu_2}$, then, by restricting to $A$ and to $B$ we can show that $\la_1 = \la_2$ and $\mu_1 = \mu_2$.
\end{proof}

\begin{Remark} \label{RCircledast}
{\rm 
Occasionally, instead of $\xi_{\la,\mu}$ we use the notation $\xi_\la\circledast\xi_\mu$. This corresponds to the operation `$\circledast$' on supermodules as in \cite[\S12.2]{Kbook}.
}
\end{Remark}

Using the above lemma we make the following identifications
$$
\La^{A \otimes B}=\La^A \times \La^B, \quad
\La^{A \otimes B}_{\0} = \La^A_{\0} \times \La^B_{\0} \sqcup \La^A_{\1} \times \La^B_{\1}, \quad
\La^{A \otimes B}_{\1} = \La^A_{\0} \times \La^B_{\1} \sqcup \La^A_{\1} \times \La^B_{\0},
$$
so that 
%In particular,
$$
\Irr(A \otimes B)=\{\xi_{\la,\mu} \mid (\la,\mu) \in 
\La^{A \otimes B}_{\0}
%\La^A_{\0} \times \La^B_{\0} \sqcup \La^A_{\1} \times \La^B_{\1}
\} \sqcup \{\xi_{\la,\mu}^+,  \xi_{\la,\mu}^-\mid (\la,\mu) \in 
\La^{A \otimes B}_{\1}
%\La^A_{\0} \times \La^B_{\1} \sqcup \La^A_{\1} \times \La^B_{\0}
\}
$$
is a complete, irredundant list of irreducible characters of $|A \otimes B|$ and
\begin{align*}
&\xi_{\la,\mu}^{\si} = \xi_{\la,\mu} &\text{ if } (\la,\mu)\in \La^{A \otimes B}_{\0},
%\La^A_{\0} \times \La^B_{\0} \sqcup \La^A_{\1} \times \La^B_{\1}, 
\\
&(\xi_{\la,\mu}^{\pm})^{\si} = \xi_{\la,\mu}^{\mp}&\text{ if } (\la,\mu)\in 
\La^{A \otimes B}_{\1}.
%\La^A_{\0} \times \La^B_{\1} \sqcup \La^A_{\1} \times \La^B_{\0}.
\end{align*}

More generally, if $A_1,\dots,A_n$ are split semisimple $\K$-superalgebras, we can identify 
\begin{align*}
\La^{A_1 \otimes \dots\otimes A_n}
&=\La^{A_1} \times \dots\times \La^{A_n},
\\
\La^{A_1 \otimes \dots\otimes A_n}_\0
&=\{(\la^1,\dots,\la^n)\in \La^{A_1} \times \dots\times \La^{A_n}\mid \text{number of the odd $\la^i$ is even}\},
\\
\La^{A_1 \otimes \dots\otimes A_n}_\1
&=\{(\la^1,\dots,\la^n)\in \La^{A_1} \times \dots\times \La^{A_n}\mid \text{number of the odd $\la^i$ is odd}\},
\end{align*}
so that 
\begin{align}
\Irr_{\operatorname{super}}(A_1 \otimes \dots\otimes A_n)=\,\,&\{\xi_{\la^1, \dots,\la^n}\mid 
(\la^1,\dots,\la^n)\in \La^{A_1} \times \dots\times \La^{A_n}\},
\\
\begin{split}
\label{EIrrLabelN}
\Irr(A_1 \otimes \dots\otimes A_n)=\,\,&\{\xi_{\la^1, \dots,\la^n} \mid (\la^1,\dots,\la^n) \in 
\La^{A_1 \otimes \dots\otimes A_n}_\0
\} 
\\
&\sqcup \{\xi_{\la^1, \dots,\la^n}^+,\xi_{\la^1, \dots,\la^n}^- \mid (\la^1,\dots,\la^n) \in 
\La^{A_1 \otimes \dots\otimes A_n}_\1
\}.
\end{split}
\end{align}

\begin{Lemma}\label{lem:A_ten_B}
Let $A$ and $B$ be split semisimple $\K$-superalgebras with superunits. Then $A \otimes B$ is also a split semisimple $\K$-superalgebra with superunit. Furthermore, we can label the elements of $\Irr(A_{\0})$, $\Irr(B)$ and $\Irr(A \otimes B)$ such that
\begin{enumerate}
\item
\begin{align*}
&\xi_{\la,\mu}\downarrow^{A \otimes B}_{A_{\0} \otimes B} = (\tilde{\xi}_\la^+ \boxtimes \xi_\mu) + (\tilde{\xi}_\la^- \boxtimes \xi_\mu)& \text{if $\la\in\La^A_\0$ and $\mu\in\La^B_\0$;}
\\
&\xi_{\la,\mu}^\pm \downarrow^{A \otimes B}_{A_{\0} 
\otimes B}
=
(\tilde{\xi}_\la^+ \boxtimes \xi_\mu^\pm) + (\tilde{\xi}_\la^- \boxtimes \xi_\mu^\mp)& \text{if $\la\in\La^A_\0$ and $\mu\in\La^B_\1$;}
\\
&\xi_{\la,\mu}^\pm \downarrow^{A \otimes B}_{A_{\0} 
\otimes B}
=
\tilde{\xi}_\la \boxtimes \xi_\mu& \text{if $\la\in\La^A_\1$ and $\mu\in\La^B_\0$;}
\\
&\xi_{\la,\mu} \downarrow^{A \otimes B}_{A_{\0} 
\otimes B}
=
\tilde{\xi}_\la \boxtimes \xi_\mu^+ + \tilde{\xi}_\la \boxtimes \xi_\mu^-& \text{if $\la\in\La^A_\1$ and $\mu\in\La^B_\1$.}
\end{align*}

\item
\begin{align*}
&(\tilde{\xi}_\la^{\pm} \boxtimes \xi_\mu) \uparrow^{A \otimes B}_{A_{\0} \otimes B} = \xi_{\la,\mu} 
& \text{if $\la\in\La^A_\0$ and $\mu\in\La^B_\0$;}
\\
&(\tilde{\xi}_\la^{\pm} \boxtimes \xi_\mu^\pm) \uparrow^{A \otimes B}_{A_{\0} \otimes B} = \xi_{\la,\mu}^+ 
& \text{if $\la\in\La^A_\0$ and $\mu\in\La^B_\1$;}
\\
&(\tilde{\xi}_\la^{\pm} \boxtimes \xi_\mu^\mp) \uparrow^{A \otimes B}_{A_{\0} \otimes B} = \xi_{\la,\mu}^- 
& \text{if $\la\in\La^A_\0$ and $\mu\in\La^B_\1$;}
\\
&(\tilde{\xi}_\la \boxtimes \xi_\mu) \uparrow^{A \otimes B}_{A_{\0} \otimes B} = \xi_{\la,\mu}^+ + \xi_{\la,\mu}^- 
& \text{if $\la\in\La^A_\1$ and $\mu\in\La^B_\0$;}
\\
&(\tilde{\xi}_\la \boxtimes \xi_\mu^\pm) \uparrow^{A \otimes B}_{A_{\0} \otimes B} = \xi_{\la,\mu}
& \text{if $\la\in\La^A_\1$ and $\mu\in\La^B_\1$.}
\end{align*}
\end{enumerate}
We, of course, have the corresponding equalities for $\downarrow^{A \otimes B}_{A \otimes B_{\0}}$ and $\uparrow^{A \otimes B}_{A \otimes B_{\0}}$.
\end{Lemma}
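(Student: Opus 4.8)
The statement that $A \otimes B$ is a split semisimple $\K$-superalgebra is part of Lemma~\ref{lem:char_A_ten_B}(i), and if $u \in A_\1 \cap A^\times$ is a superunit of $A$ then $u \otimes 1$ is a superunit of $A \otimes B$; so the content lies entirely in the character formulas. The plan is to establish the four identities for $\downarrow^{A \otimes B}_{A_\0 \otimes B}$ in part~(i) by reducing to the simple building blocks, exactly as in the proof of Lemma~\ref{lem:res_chars}, choosing the labelings of $\Irr(A_\0)$, $\Irr(B)$ and $\Irr(A \otimes B)$ so that these identities hold; the five identities for $\uparrow^{A \otimes B}_{A_\0 \otimes B}$ in part~(ii) will then be forced, by Frobenius reciprocity.

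For the reduction, write $A = \bigoplus_i A_i$ and $B = \bigoplus_j B_j$ as direct sums of simple $\K$-superalgebras using Lemma~\ref{lem:split_semi}. Since $A$ and $B$ have superunits, so does each $A_i$ and each $B_j$ (the component of a superunit in each summand is again odd and invertible), so each $A_i$ is isomorphic to $\cM_{m|m}(\K)$ or $\cQ_t(\K)$, and likewise for each $B_j$, just as in the proof of Lemma~\ref{lem:res_chars}. The decompositions $A \otimes B = \bigoplus_{i,j} A_i \otimes B_j$ and $A_\0 \otimes B = \bigoplus_{i,j} (A_i)_\0 \otimes B_j$ refine the labeling sets and are respected by $\downarrow^{A \otimes B}_{A_\0 \otimes B}$, so it suffices to treat $A$ and $B$ each a single building block. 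This leaves the four cases where $A \otimes B$ is $\cM_{m|m}(\K) \otimes \cM_{a|a}(\K)$, $\cM_{m|m}(\K) \otimes \cQ_t(\K)$, $\cQ_t(\K) \otimes \cM_{a|a}(\K)$ or $\cQ_t(\K) \otimes \cQ_s(\K)$, corresponding respectively to the four lines of~(i). In each case one identifies the super tensor product explicitly with a single building block (using, e.g., $\cQ_n(\K) \cong \cM_{n \times n}(\K) \otimes \cC_1$ and the standard isomorphisms for super tensor products of matrix superalgebras), determines its purely even subalgebra together with the superunit $u \otimes 1$, writes down its irreducible supermodules as instances of $\gU_{m',n'}$ or $\gV_{t'}$ from the discussion preceding Lemma~\ref{lem:split_semi}, restricts these to the even subalgebra as in Lemma~\ref{lem_red_to_A0}(ii) and the proof of Lemma~\ref{lem:res_chars}, and reads off the four displayed equalities after an appropriate choice of labels.

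Given part~(i), part~(ii) follows by the same adjunction argument used in Remark~\ref{rem:ind_chars}: for every irreducible character $\chi$ of $A_\0 \otimes B$ and every irreducible character $\psi$ of $A \otimes B$, Frobenius reciprocity gives $\langle \chi \uparrow^{A \otimes B}_{A_\0 \otimes B}, \psi \rangle = \langle \chi, \psi \downarrow^{A \otimes B}_{A_\0 \otimes B}\rangle$, so the $\uparrow$-multiplicities are the transposes of the $\downarrow$-multiplicities from~(i), and matching these against the list of irreducible characters of $A_\0 \otimes B$ produced before the lemma yields the five formulas. The corresponding statements for $\downarrow^{A \otimes B}_{A \otimes B_\0}$ and $\uparrow^{A \otimes B}_{A \otimes B_\0}$ then follow by interchanging the roles of $A$ and $B$. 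I expect the one place where care is genuinely needed to be the case $A \otimes B = \cM_{m|m}(\K) \otimes \cQ_t(\K)$: there the super-twist in the tensor product interacts nontrivially with the passage to the even subalgebra and is precisely what produces the $\pm/\mp$ mismatch in the second line of~(i), so the parity bookkeeping must be carried out carefully, while the remaining cases are routine matching of explicit (super)modules. Alternatively, since this subsection mirrors \cite[\S12.2]{Kbook}, one may instead deduce the lemma from the algebraically closed case treated there once Lemmas~\ref{lem:split_semi}, \ref{lem:char_A_ten_B} and \ref{lem:res_chars} are in hand.
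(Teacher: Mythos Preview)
Your proposal is correct but takes a genuinely different route from the paper's proof. The paper does not reduce to the explicit building blocks $\cM_{m|m}$, $\cQ_t$: instead it works abstractly at the level of supermodules, proving part~(ii) \emph{first} and then deducing~(i) by adjunction (the reverse of your order). Concretely, for the hardest case $\la\in\La^A_\0$, $\mu\in\La^B_\1$, the paper takes irreducible supermodules $\gU$, $\gV$ with supercharacters $\xi_\la$, $\xi_\mu$, uses Lemma~\ref{lem_red_to_A0}(i) to get $\Ind_{A_\0\otimes B}^{A\otimes B}(\gU_\0\boxtimes\gV)\simeq\gU\boxtimes\gV$, fixes the labeling so that $(\tilde\xi_\la^+\boxtimes\xi_\mu^+)\uparrow=\xi_{\la,\mu}^+$, and then obtains the $\tilde\xi_\la^-$ formulas by conjugating with the superunit $u_A$ (which swaps $\tilde\xi_\la^\pm$ while twisting $\xi_\mu^\pm$ on the $B$-side). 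Your approach is more concrete and computational, requiring you to identify each of the four super tensor products $\cM_{m|m}\otimes\cM_{a|a}$, $\cM_{m|m}\otimes\cQ_t$, $\cQ_t\otimes\cM_{a|a}$, $\cQ_t\otimes\cQ_s$ as a single building block and to track its even subalgebra explicitly; the paper's approach bypasses all of this via the single module-theoretic identity $\Ind_{A_\0}^A\gU_\0\simeq\gU$ together with the conjugation-by-$u_A$ trick, which is shorter and does not depend on any structural classification of super tensor products.
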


\begin{proof}
We already know from Lemma \ref{lem:char_A_ten_B} that $A \otimes B$ is split semisimple. Let $u_A \in A^\times \cap A_{\1}$ and $u_B \in B^\times \cap B_{\1}$. Clearly $u_A \otimes 1$ is a superunit of $A \otimes B$, proving the first part of the lemma.

For parts (i) and (ii) we prove the hardest case, that is $\la \in \La^A_{\0}$ and $\mu \in \La^B_{\1}$. The other cases are similar but easier as any choices made about the labeling of characters are unimportant.

Let $\gU$ be an irreducible $A$-supermodule with irreducible supercharacter $\xi_\la \in \Irr_{\operatorname{super}}(A)$ and $\gV$ an irreducible $B$-supermodule with 
irreducible supercharacter $\xi_\mu \in \Irr_{\operatorname{super}}(B)$. By Lemma \ref{lem:char_A_ten_B}, $\gU \boxtimes \gV$ is an irreducible $(A \otimes B)$-supermodule. By Remark \ref{rem:ind_chars}, $\Ind_{A_{\0}}^A \gU_{\0} \simeq \gU$. Therefore, $\Ind_{A_{\0} \otimes B}^{A \otimes B}(\gU_{\0} \boxtimes \gV) \simeq \gU \boxtimes \gV$.

Note that we made a choice when we selected $\gU$, as we could have chosen $\Pi \gU \not\simeq \gU$. Let's say we picked $\gU$ such that $\gU_{\0}$ has character $\tilde{\xi}_\la^+$. In the previous paragraph we showed that
$$
(\tilde{\xi}_\la^+ \boxtimes \xi_\mu)\uparrow_{A_{\0} \otimes B}^{A \otimes B} = \xi_{\la,\mu} = \xi_{\la,\mu}^+ + \xi_{\la,\mu}^-.
$$
We can now choose the labeling of the appropriate elements of $\Irr(A \otimes B)$ such that
$$
(\tilde{\xi}_\la^+ \boxtimes \xi_\mu^+)\uparrow_{A_{\0} \otimes B}^{A \otimes B} = \xi_{\la,\mu}^+, \qquad (\tilde{\xi}_\la^+ \boxtimes \xi_\mu^-)\uparrow_{A_{\0} \otimes B}^{A \otimes B} = \xi_{\la,\mu}^-.
$$
We can think of this last equation as a definition, once the labeling of $\Irr(A_{\0})$ and $\Irr(B)$ have been fixed. Now,
\begin{align*}
&(\tilde{\xi}_\la^- \boxtimes \xi_\mu^+)\uparrow_{A_{\0} \otimes B}^{A \otimes B} = \left({}^{u_A}(\tilde{\xi}_\la^- \boxtimes \xi_\mu^+)\right)\uparrow_{A_{\0} \otimes B}^{A \otimes B} = (\tilde{\xi}_\la^+ \boxtimes \xi_\mu^-)\uparrow_{A_{\0} \otimes B}^{A \otimes B} = \xi_{\la,\mu}^-, \\
&(\tilde{\xi}_\la^- \boxtimes \xi_\mu^-)\uparrow_{A_{\0} \otimes B}^{A \otimes B} = \left({}^{u_A}(\tilde{\xi}_\la^- \boxtimes \xi_\mu^-)\right)\uparrow_{A_{\0} \otimes B}^{A \otimes B} = (\tilde{\xi}_\la^+ \boxtimes \xi_\mu^+)\uparrow_{A_{\0} \otimes B}^{A \otimes B} = \xi_{\la,\mu}^+
\end{align*}
and part (ii) is proved. Part (i) follows form the fact that $\Ind_{A_{\0} \otimes B}^{A \otimes B}$ and $\Res_{A_{\0} \otimes B}^{A \otimes B}$ are an adjoint pair.
\end{proof}

Let $A$ be a split semisimple $\K$-superalgebra and $\la \in \La^A$. We set
\begin{equation}\label{EEps}
\eps_\la:=
\begin{cases}
1&\text{if }\la \in \La^A_{\0}, \\
\sqrt{2}&\text{if }\la \in \La^A_{\1}.
\end{cases}
\end{equation}
More generally, if $A_1,\dots,A_n$ are split semisimple  $\K$-superalgebras then we have identified $\La^{A_1\otimes \dots\otimes A_n}=\La^{A_1}\times\dots\times\La^{A_n}$ so that 
$(\la^1,\dots,\la^n)\in \La^{A_1\otimes \dots\otimes A_n}_\0$ if and only if the number of the odd $\la^i$ is even. Now, set 
%the notation $\eps_{(\la^1,\dots,\la^n)}$ makes sense. In fact, we have
\begin{equation}\label{EEpsN}
\eps_{\la^1,\dots,\la^n}:=\eps_{(\la^1,\dots,\la^n)}=
\begin{cases}
1&\text{if $(\la^1,\dots,\la^n)\in \La^{A_1\otimes \dots\otimes A_n}_\0$}, \\
\sqrt{2}&\text{if $(\la^1,\dots,\la^n)\in \La^{A_1\otimes \dots\otimes A_n}_\1$}.
\end{cases}
\end{equation}
Note that $\eps_{\la^1,\dots,\la^n}$ does not depend on the order of $\la^1,\dots,\la^n$ (unlike $\xi_{\la^1,\dots,\la^n}$).

\begin{Lemma}\label{lem:chars_bisupmod}
Let $A$ and $B$ be split semisimple $\K$-superalgebras with superunits and let $\gM$ be an $(A,B)$-bisupermodule. Then:
\begin{enumerate}
\item
$
\gM \otimes_B \xi^{\si} = (\gM \otimes_B \xi)^{\si},
$ 
for all $\xi \in \Irr(B)$.
\item We can write, for each $\mu \in \La^B$,
\begin{align*}
\gM \otimes_B \xi_\mu = \sum_{\la \in \La^A} a_{\mu,\la} \xi_\la,
\end{align*}
for some $a_{\mu,\la} \in \N$. Similarly, we can write, for each $\la \in \La^A$,
\begin{align*}
\gM^* \otimes_A \xi_\la = \sum_{\La^B} b_{\la,\mu} \xi_\mu,
\end{align*}
for some $b_{\la,\mu} \in \N$. Moreover, $\eps_\la^2 a_{\mu,\la} = \eps_\mu^2 b_{\la,\mu}$, for all $\la \in \La^A$ and $\mu \in \La^B$.
\end{enumerate}
\end{Lemma}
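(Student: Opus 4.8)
The plan is to reduce everything to the purely even subalgebras, where ordinary character theory and the adjointness of induction and restriction apply cleanly, and then translate back. Part (i) is the easy warm-up: since $\gM$ is a bisupermodule, the associated functor $|\gM| \otimes_{|B|} ?$ on $|B|$-modules commutes with the parity-twisting operation $-^\si$, because twisting the $B$-action by $\si_B$ and then tensoring is naturally isomorphic to tensoring and then twisting the resulting $A$-action by $\si_A$ (the signs in the bisupermodule action are set up precisely so that this holds). Passing to Grothendieck groups gives $\gM \otimes_B \xi^\si = (\gM \otimes_B \xi)^\si$ for all $\xi \in \Irr(B)$.

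For part (ii), the non-negativity of $a_{\mu,\la}$ and $b_{\la,\mu}$ is immediate: these are multiplicities of irreducible $|A|$- (resp. $|B|$-) modules in the genuine modules $|\gM| \otimes_{|B|} |X_\mu|$ and $|\gM^*| \otimes_{|A|} |X_\la|$, and we should note that by Lemma~\ref{lem:HomAM_M*} (or directly) $\gM^*$ is again a bisupermodule, so $\gM^* \otimes_A \xi_\la$ is an honest supercharacter and expands with coefficients in $\N$ against the $\xi_\mu$. The substance is the reciprocity relation $\eps_\la^2 a_{\mu,\la} = \eps_\mu^2 b_{\la,\mu}$. Here $\eps_\la^2 \in \{1,2\}$ according to whether $\la$ is even or odd. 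The strategy is: (a) restrict everything to $A_\0$ and $B_\0$; (b) on the purely even side, $|\gM|$ and $|\gM^*|$ still induce adjoint functors between $B_\0$-mod and $A_\0$-mod — indeed if $A$ is supersymmetric one has $\gM^* \cong \Hom_A(\gM,A)$ as a byproduct of the Lemma~\ref{lem:HomAM_M*} circle of ideas, and in general the adjointness $\hom_{|A|}(|\gM| \otimes_{|B|} N, N') \cong \hom_{|B|}(N, |\gM^*| \otimes_{|A|} N')$ holds at the level of $\cR$-modules — so Frobenius reciprocity gives, for $\tilde\xi$ an irreducible character of $B_\0$ and $\tilde\eta$ one of $A_\0$, the equality of multiplicities $\langle |\gM| \otimes_{B_\0} \tilde\xi, \tilde\eta \rangle = \langle \tilde\xi, |\gM^*| \otimes_{A_\0} \tilde\eta \rangle$; (c) now use Lemma~\ref{lem:res_chars} to rewrite $\xi_\la \downarrow^A_{A_\0}$ and $\xi_\mu \downarrow^B_{B_\0}$ in terms of the $\tilde\xi$'s — an even $\la$ splits as $\tilde\xi_\la^+ + \tilde\xi_\la^-$ while an odd $\la$ restricts to a single $\tilde\xi_\la$ (with multiplicity one) — and combine this with Remark~\ref{rem:ind_chars} describing $\uparrow^A_{A_\0}$.

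Carrying out step (c): write $\gM \otimes_B \xi_\mu = \sum_\la a_{\mu,\la}\xi_\la$ and restrict both sides to $A_\0$. The left side is $\downarrow^A_{A_\0}$ applied to $\gM \otimes_B \xi_\mu$, which by compatibility of restriction with the functor (the bisupermodule $\gM$ restricted to $(A_\0,B)$, then to $(A_\0,B_\0)$) equals $|\gM|_{A_\0 \otimes B_\0^{\op}} \otimes_{B_\0} (\xi_\mu\downarrow^B_{B_\0})$. Expanding $\xi_\mu\downarrow$ via Lemma~\ref{lem:res_chars} and comparing coefficients of a fixed irreducible $\tilde\xi_\nu^{(\pm)}$ of $A_\0$ on both sides, and doing the symmetric computation for $\gM^* \otimes_A \xi_\la = \sum_\mu b_{\la,\mu}\xi_\mu$, one obtains four cases according to the parities of $\la$ and $\mu$. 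In each case the factor-of-$2$ bookkeeping — an even index contributes two distinct even-subalgebra constituents, an odd index contributes one, and correspondingly $\eps^2 = 1$ or $2$ — exactly balances the adjointness identity from (b), yielding $\eps_\la^2 a_{\mu,\la} = \eps_\mu^2 b_{\la,\mu}$. The main obstacle is precisely this case analysis: keeping straight, in each of the four parity combinations, how many times a given irreducible character of $A_\0$ (resp. $B_\0$) appears in $\xi_\la\downarrow$ (resp. $\xi_\mu\downarrow$), and matching that against the values of $\eps_\la^2,\eps_\mu^2$; everything else is formal. One should also double-check that $\gM^*$ as defined in \S\ref{sec:dual} (the ``$f(amb)$'' bimodule structure, not the alternative one) is the right object so that the $A_\0$-side adjointness genuinely holds — this is the reason the paper flagged that choice, and it is what makes step (b) go through.
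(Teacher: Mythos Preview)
Your argument for (i) is essentially the paper's: the paper makes the isomorphism ${}_{\sigma_A}\gM \cong \gM_{\sigma_B}$, $m\mapsto (-1)^{|m|}m$, explicit, but the content is the same.

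For (ii) your route is correct in outline but considerably more roundabout than the paper's. You pass to $A_{\0}$ and $B_{\0}$, invoke Lemma~\ref{lem:res_chars} and Remark~\ref{rem:ind_chars}, and do the parity bookkeeping there. The paper never leaves $|A|$ and $|B|$: since these are split semisimple $\K$-algebras, one has the standard bimodule reciprocity that for irreducible $\chi\in\Irr(A)$ and $\psi\in\Irr(B)$ the multiplicity of $\chi$ in $\gM\otimes_B\psi$ equals that of $\psi$ in $\gM^*\otimes_A\chi$. Combining this with part (i) and the decomposition of each supercharacter $\xi_\mu$ into irreducible characters of $|B|$ (one self-associate character if $\mu$ is even, an associate pair $\xi_\mu^+,\xi_\mu^-$ if $\mu$ is odd) gives the result immediately by a four-case check. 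For instance, when $\la$ is even and $\mu$ is odd, part (i) forces $\xi_\la$ to appear with coefficient $a_{\mu,\la}/2$ in each of $\gM\otimes_B\xi_\mu^{\pm}$; reciprocity then puts $\xi_\mu^{+}+\xi_\mu^{-}=\xi_\mu$ into $\gM^*\otimes_A\xi_\la$ with coefficient $a_{\mu,\la}/2$, i.e.\ $b_{\la,\mu}=a_{\mu,\la}/2$, which is $\eps_\la^2 a_{\mu,\la}=\eps_\mu^2 b_{\la,\mu}$. Your detour through the even subalgebras reproduces the same bookkeeping at one remove.

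Two small corrections: your appeal to Lemma~\ref{lem:HomAM_M*} is misplaced (that lemma assumes $A$ is supersymmetric and that $|\gM|$ induces a Morita equivalence, neither of which is given here); the fact that $\gM^*$ is a $(B,A)$-bisupermodule is just the definition in \S\ref{sec:dual}, and the reciprocity you need is ordinary tensor--Hom adjunction together with the identification $\Hom_{|A|}(|\gM|,|A|)\cong |\gM^*|$, which holds because split semisimple $\K$-algebras are symmetric. Also, the adjointness you actually wrote down is at the $|A|$, $|B|$ level, not at $A_{\0}$, $B_{\0}$; once you have it there, restricting to the even subalgebras is unnecessary.
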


\begin{proof}
(i) Let ${}_{\sigma_A}\gM$ be the $(A,B)$-bisupermodule equal to $\gM$ as a superspace but with $(A,B)$-bimodule structure given by
$
a.m.b:=\sigma_A(a)mb,
$ 
for all $a\in A$, $b\in B$ and $m\in \gM$. Define $\gM_{\sigma_B}$ analogously. We have the $(A,B)$-bisupermodule isomorphism
$
{}_{\sigma_A}\gM \iso \gM_{\sigma_B},\ 
m \mapsto (-1)^{|m|}m.
$
The claim follows as
\begin{align*}
\gM \otimes_B \xi^{\si} = \gM_{\sigma_B} \otimes_B \xi = {}_{\sigma_A}\gM \otimes_B \xi = (\gM \otimes_B \xi)^{\si}.
\end{align*}

(ii) The expressions involving $\gM$ and $\gM^*$ just follow from part (i).

For the second part we simply run through all the possibilities of $\la$ and $\mu$ being even/odd. For example, let $\la$ be even and $\mu$ odd. Part (i) implies that $\xi_\la$ must appear with coefficient $a_{\mu,\la}/2$ in both $\gM \otimes_B \xi_\mu^+$ and $\gM \otimes_B \xi_\mu^-$. Therefore, $\xi_\mu^+$ and $\xi_\mu^-$ must both appear with coefficient $a_{\mu,\la}/2$ in $\gM^* \otimes_A \xi_\la$. Therefore, $b_{\la,\mu} = a_{\mu,\la}/2$, as desired. The other cases are proved similarly.
\end{proof}

Using the notation of the above lemma, we say $\xi_\la$ appears as an {\em irreducible superconstituent} (or just {\em superconstituent}) with coefficient $a_{\mu,\la}$ in $\gM \otimes_B \xi_\mu$.

\begin{Lemma}\label{lem:AboxM}
Let $A$, $B$ and $C$ be split semisimple $\K$-superalgebras with superunit, $\gM$ a $(B,C)$-bisupermodule and $\la \in \La^A$, $\mu \in \La^C$. 
If\, $\gM \otimes_{C} \xi_\mu = \sum_{\nu \in \La^{B}} a_{\mu,\nu} \xi_\nu$ then
\begin{align*}
(A \boxtimes \gM) \otimes_{A \otimes C} \xi_{\la,\mu} = \sum_{\nu \in \La^{B}} \frac{\eps_\nu \eps_{(\la,\mu)}}{\eps_\mu \eps_{(\la,\nu)}}a_{\mu,\nu}\xi_{\la,\nu}.
\end{align*}
\end{Lemma}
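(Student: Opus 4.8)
The plan is to reduce the statement to the elementary fact that $A \boxtimes \gM$ acts as the identity on the first tensor factor, and then to correct for the normalisation relating $\xi_{\la,\mu}$ to the external product $\xi_\la \boxtimes \xi_\mu$.

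First I would apply Lemma \ref{lem:tensor_bisupmod} with the $(A,A)$-bisupermodule $A$ in the first slot (so $A_1=B_1=A$, $C_1=\cR$) and $\gM$ in the second slot (so $A_2=B$, $B_2=C$, $C_2=\cR$), with $\gN_1=\gU$ an arbitrary $A$-supermodule and $\gN_2=\gV$ an arbitrary $C$-supermodule. This yields a natural isomorphism of $(A\otimes B)$-supermodules
\[
(A \boxtimes \gM) \otimes_{A \otimes C}(\gU \boxtimes \gV) \;\simeq\; (A \otimes_A \gU) \boxtimes (\gM \otimes_C \gV) \;\simeq\; \gU \boxtimes (\gM \otimes_C \gV).
\]
Forgetting the $\Z/2$-grading and using the compatibility of $|\cdot|$ with $\boxtimes$ and with tensor products, this becomes an isomorphism of $|A\otimes B|$-modules, so at the level of Grothendieck groups the $\Z$-linear map $(A \boxtimes \gM)\otimes_{A\otimes C}?$ sends $\chi \boxtimes \psi$ to $\chi \boxtimes (\gM \otimes_C \psi)$ for $\chi \in \Z\Irr(A)$ and $\psi \in \Z\Irr(C)$. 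Taking $\gU=X_\la$ and $\gV=X_\mu$ and invoking the hypothesis $\gM \otimes_C \xi_\mu = \sum_{\nu\in\La^B}a_{\mu,\nu}\xi_\nu$ gives
\[
(A \boxtimes \gM) \otimes_{A \otimes C}(\xi_\la \boxtimes \xi_\mu) \;=\; \xi_\la \boxtimes (\gM \otimes_C \xi_\mu) \;=\; \sum_{\nu\in\La^B} a_{\mu,\nu}\,\bigl(\xi_\la \boxtimes \xi_\nu\bigr).
\]

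Next I would pass from the external products $\xi_\la \boxtimes \xi_?$ to the irreducible supercharacters $\xi_{\la,?}$. By Lemma \ref{lem:char_A_ten_B}(ii) and the definition of $\xi_{\la,\mu}$ as the unique irreducible superconstituent of $\xi_\la \boxtimes \xi_\mu$, a short case check on the parities of $\la$ and $\mu$ shows that $\xi_\la \boxtimes \xi_\mu = \dfrac{\eps_\la \eps_\mu}{\eps_{(\la,\mu)}}\,\xi_{\la,\mu}$, the integer $\eps_\la\eps_\mu/\eps_{(\la,\mu)}\in\{1,2\}$ being $1$ unless $\la$ and $\mu$ are both odd, in which case it is $2$; likewise $\xi_\la \boxtimes \xi_\nu = \dfrac{\eps_\la \eps_\nu}{\eps_{(\la,\nu)}}\,\xi_{\la,\nu}$ for every $\nu \in \La^B$. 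Substituting these into the last display and dividing through by the nonzero integer $\eps_\la\eps_\mu/\eps_{(\la,\mu)}$ (legitimate, since $\Grot(A\otimes B)=\Z\Irr(A\otimes B)$ is torsion-free) gives
\[
(A \boxtimes \gM) \otimes_{A \otimes C}\xi_{\la,\mu} \;=\; \sum_{\nu\in\La^B} \frac{\eps_\la\eps_\nu\,\eps_{(\la,\mu)}}{\eps_{(\la,\nu)}\,\eps_\la\eps_\mu}\,a_{\mu,\nu}\,\xi_{\la,\nu} \;=\; \sum_{\nu\in\La^B} \frac{\eps_\nu\,\eps_{(\la,\mu)}}{\eps_\mu\,\eps_{(\la,\nu)}}\,a_{\mu,\nu}\,\xi_{\la,\nu},
\]
which is the asserted formula.

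I expect the only genuinely subtle point to be the parity bookkeeping behind $\xi_\la \boxtimes \xi_\mu = (\eps_\la\eps_\mu/\eps_{(\la,\mu)})\,\xi_{\la,\mu}$, together with the implicit claim that the coefficients $\tfrac{\eps_\nu\eps_{(\la,\mu)}}{\eps_\mu\eps_{(\la,\nu)}}a_{\mu,\nu}$ lie in $\N$; the latter is automatic, since the left-hand side is $(A\boxtimes\gM)\otimes_{A\otimes C}\xi_{\la,\mu}$ and Lemma \ref{lem:chars_bisupmod}(ii) applies verbatim to the bisupermodule $A \boxtimes \gM$ (alternatively, the needed divisibility follows from the relation $\eps_\nu^2 a_{\mu,\nu} = \eps_\mu^2 b_{\nu,\mu}$ of that lemma). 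Everything else is a direct application of Lemma \ref{lem:tensor_bisupmod} and the behaviour of $|\cdot|$ under $\boxtimes$ and $\otimes$.
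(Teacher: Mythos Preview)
Your proof is correct. It is, however, organised differently from the paper's argument, and the difference is worth noting.

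The paper proves the lemma by a case analysis on the parities of $\la$, $\mu$ and $\nu$. In each case it restricts from $A\otimes C$ down to $A_\0\otimes C$, where the supercharacter $\xi_{\la,\mu}$ decomposes explicitly via Lemma~\ref{lem:A_ten_B}(i) into ordinary tensor products $\tilde\xi_\la^{(\pm)}\boxtimes\xi_\mu^{(\pm)}$; then it applies $(A_\0\boxtimes\gM)\otimes_{A_\0\otimes C}?$ (which manifestly acts only on the second factor), and finally induces back to $A\otimes B$ using Lemma~\ref{lem:A_ten_B}(ii). Your route instead stays in $A\otimes C$ and $A\otimes B$ throughout: you observe once and for all that $\xi_\la\boxtimes\xi_\mu = (\eps_\la\eps_\mu/\eps_{(\la,\mu)})\,\xi_{\la,\mu}$, apply Lemma~\ref{lem:tensor_bisupmod} directly to identify $(A\boxtimes\gM)\otimes_{A\otimes C}(\xi_\la\boxtimes\xi_\mu)$ with $\xi_\la\boxtimes(\gM\otimes_C\xi_\mu)$, and then rescale. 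This packages all eight parity cases into a single computation; the price is the small divisibility check you flag at the end, which the paper's approach sidesteps because it works with honest irreducible characters throughout rather than supercharacters that may be twice an irreducible.
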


\begin{proof}
This is just a case of running through all the possibilities of $\la$, $\mu$ and $\nu$ being even/odd. For example, if $\la$ and $\mu$ are both odd and $\nu$ is even, then, by Lemma \ref{lem:chars_bisupmod}(i), $\xi_\nu$ appears with coefficient $a_{\mu,\nu}/2$ in both $\gM \otimes_{C} \xi_\mu^+$ and $\gM \otimes_{C} \xi_\mu^-$. Therefore, $\tilde{\xi}_\la \boxtimes \xi_\nu$ appears with coefficient $a_{\mu,\nu}/2$ in both
$
(A_{\0} \boxtimes \gM) \otimes_{A_{\0} \otimes C} (\tilde{\xi}_\la \boxtimes \xi_\mu^+)$ 
 and 
$(A_{\0} \boxtimes \gM) \otimes_{A_{\0} \otimes C} (\tilde{\xi}_\la \boxtimes \xi_\mu^-).
$
Using Lemma \ref{lem:A_ten_B}(ii),
\begin{align*}
(A \boxtimes B) \otimes_{A_{\0} \otimes B} (\tilde{\xi}_\la \boxtimes \xi_\nu) = \xi_{\la,\nu}^+ + \xi_{\la,\nu}^- = \xi_{\la,\nu}
\end{align*}
appears as a superconstituent with coefficient $a_{\mu,\nu}/2$ in
\begin{align*}
& (A \boxtimes B) \otimes_{A_{\0} \otimes B} (A_{\0} \boxtimes \gM) \otimes_{A_{\0} \otimes C} (\tilde{\xi}_\la \boxtimes \xi_\mu^+)
= (A \boxtimes \gM) \otimes_{A_{\0} \otimes C} (\tilde{\xi}_\la \boxtimes \xi_\mu^+)\\
=\, & (A \boxtimes \gM) \otimes_{A \otimes C} (A \boxtimes C)\otimes_{A_{\0} \otimes C} (\tilde{\xi}_\la \boxtimes \xi_\mu^+) = (A \boxtimes \gM) \otimes_{A \otimes C} \xi_{\la,\mu},
\end{align*}
where, above, we are applying Lemma \ref{lem:tensor_bisupmod} twice and Lemma \ref{lem:A_ten_B}(ii) once.
\end{proof}

\subsection{Super group algebras}\label{sec:super_group_alg}

Throughout this subsection $G$ will denote a finite group with an index $2$ subgroup $G_{\0}$. 
%For $\cR \in \{\K, \cO\}$, w
We give $\cR G$ the structure of an $\cR$-superalgebra with superunit via
\begin{align*}
(\cR G)_{\0} := \langle g \mid  g \in G_{\0} \rangle_{\cR}\quad\text{and}\quad (\cR G)_{\1} := \langle g \mid  g \in G \setminus G_{\0} \rangle_{\cR}.
\end{align*}
In particular, for $g\in G$, we have $|g|=\0$ of $g\in G_\0$ and $|g|=\1$ if $g\not\in G_\0$. 

For an absolutely indecomposable $\cR G$-supermodule $\gM$, when we speak of a {\em vertex}\, of $\gM$, we always mean a vertex of $|\gM|$. The same applies to relative projectivity with respect to a subgroup and other standard notions of finite group theory.

\begin{Lemma}\label{lem:root_unity}
If\, $\K$ contains a primitive $|G|^{\nth}$ root of unity and $e$ is a non-zero idempotent in $Z(\K G) \cap \K G_{\0}$, then $\K Ge$ is a split semisimple $\K$-superalgebra with superunit.
\end{Lemma}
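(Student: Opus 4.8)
The statement to prove is Lemma~\ref{lem:root_unity}: if $\K$ contains a primitive $|G|^{\nth}$ root of unity and $e$ is a non-zero idempotent in $Z(\K G) \cap \K G_{\0}$, then $\K Ge$ is a split semisimple $\K$-superalgebra with superunit. The natural approach is to verify the three requirements separately -- semisimplicity, splitness, and the existence of a superunit -- using Lemma~\ref{lem:split_semi} to organize the first two and the index~$2$ hypothesis for the third.

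First I would argue $\K Ge$ is semisimple \emph{as a superalgebra}, which by the discussion preceding Lemma~\ref{lem:split_semi} is equivalent to $\K Ge$ being semisimple as an ordinary algebra, i.e.\ $|\K Ge|$ semisimple. Since $\cha \K = 0$, Maschke's theorem gives that $\K G$ is semisimple; as $e$ is a central idempotent, $\K Ge$ is a direct factor of $\K G$, hence also semisimple. Next, for splitness: the hypothesis that $\K$ contains a primitive $|G|^{\nth}$ root of unity ensures $\K G$ is split semisimple in the ordinary sense (every irreducible $\K G$-module is absolutely irreducible), and this passes to the direct factor $\K Ge$. To upgrade to \emph{super}-splitness in the sense of \S\ref{sec:KSS}, I would invoke Lemma~\ref{lem:split_semi}: it suffices to show that for each irreducible $\K Ge$-supermodule $\gM$, the endomorphism algebra $\End_{|\K Ge|}(|\gM|)$ has $\K$-dimension $1$ (type $\Mtype$) or $2$ (type $\Qtype$). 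Here $|\gM|$ is either an irreducible $|\K Ge|$-module (type $\Mtype$) -- in which case $\End = \K$ by ordinary splitness -- or a sum $\gN \oplus \gN^\si$ of two non-isomorphic irreducibles (type $\Qtype$), in which case $\End_{|\K Ge|}(|\gM|) = \End(\gN) \oplus \End(\gN^\si) = \K \oplus \K$, again using ordinary splitness. So $\K Ge$ is split semisimple as a $\K$-superalgebra.

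It remains to produce a superunit, i.e.\ an element of $(\K Ge)_{\1} \cap (\K Ge)^\times$. Since $[G:G_{\0}] = 2$, pick any $g \in G \setminus G_{\0}$; then $g \in (\K G)_{\1}$ and $g$ is a unit in $\K G$ with $g^{-1} \in (\K G)_{\1}$ as well. Because $e$ is central, $ge = eg$, and $ge$ lies in $(\K Ge)_{\1}$ -- provided $e \in \K G_{\0}$, which is exactly the hypothesis, so that multiplication by $e$ preserves parity and $ge$ is genuinely odd (not mixed). To see $ge$ is invertible in $\K Ge$: the inverse of $ge$ (thinking of $\K Ge$ as a unital ring with identity $e$) is $g^{-1}e$, since $(ge)(g^{-1}e) = gg^{-1}e = e$, using centrality of $e$; and $g^{-1}e \in (\K Ge)_{\1}$ for the same reason. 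Hence $ge$ is a superunit, completing the proof.

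**Main obstacle.** There is no deep obstacle; the proof is a bookkeeping exercise stitching together Maschke, the root-of-unity hypothesis, Lemma~\ref{lem:split_semi}, and the index~$2$ fact. The one point requiring a little care is the splitness verification: one must correctly reconcile the \emph{super} notion of ``split'' (conditions on $\dim_\K \End_{|A|}(|\gM|)$ for $\gM$ irreducible of each type) with ordinary splitness of $|\K Ge|$, and here Lemma~\ref{lem:irred_sup_mod} together with Lemma~\ref{lem:split_semi} does the work -- indeed one could alternatively just observe that each $\K$-superalgebra direct factor of $\K Ge$ is, by the structure theory of split semisimple algebras over a field containing enough roots of unity, a matrix algebra, and a matrix algebra with a $\Z/2$-grading admitting a homogeneous odd unit is of the form $\cM_{m|m}(\K)$ or $\cQ_t(\K)$, which is exactly condition~(ii) of Lemma~\ref{lem:split_semi}. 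The cleanest writeup probably cites Lemma~\ref{lem:split_semi} directly after establishing ordinary split semisimplicity and type counting.
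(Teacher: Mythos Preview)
Your argument has a genuine gap in the super-splitness step. You assert that an irreducible $\K Ge$-supermodule $\gM$ of type $\Qtype$ satisfies $|\gM| \cong \gN \oplus \gN^\si$ with $\gN \ncong \gN^\si$, hence $\dim_\K\End_{|\K Ge|}(|\gM|)=2$. But this decomposition with \emph{non-isomorphic} summands is exactly the content of Lemma~\ref{lem:irred_sup_mod}(ii)(a), whose proof already uses the splitness hypothesis on the superalgebra --- so invoking it here is circular. More to the point, the implication ``$|A|$ split semisimple and $A$ has a superunit $\Rightarrow$ $A$ split semisimple as a superalgebra'' is simply false: the Remark immediately following Lemma~\ref{lem:root_unity} in the paper gives the counterexample $A = M_2(\R)$ with $\si_A$ given by conjugation by $\left(\begin{smallmatrix}0&1\\-1&0\end{smallmatrix}\right)$. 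There $|A|$ is split semisimple, $A$ has the superunit $\left(\begin{smallmatrix}1&0\\0&-1\end{smallmatrix}\right)$, yet $A_\0\cong\C$ so $A$ is not isomorphic to any $\cM_{m|n}(\R)$ or $\cQ_t(\R)$. Your alternative route (``a matrix algebra with a $\Z/2$-grading admitting a homogeneous odd unit is of the form $\cM_{m|m}(\K)$'') fails on exactly this example.

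What is missing is input specific to the group-algebra situation, and this is precisely where the paper's proof diverges from yours. The paper builds the irreducible supermodules explicitly as $\Ind_{G_\0}^G\gU$ for $\gU$ an irreducible $\K G_\0$-module, and then uses standard Clifford theory for the index-$2$ subgroup $G_\0\leq G$ (valid because $\K$ is a splitting field for both $G$ and $G_\0$) to determine the type and the endomorphism dimension in each case: if $\gU$ is not $G$-stable one gets type $\Mtype$ with $\dim\End=1$, and if $\gU$ is $G$-stable one gets type $\Qtype$ with $|\Ind_{G_\0}^G\gU|=\gM_1\oplus\gM_2$ where $\gM_1,\gM_2$ are non-isomorphic and related by the sign character, hence $\dim\End=2$. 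This Clifford-theoretic step is the substance of the lemma and cannot be bypassed by purely structural considerations. Your treatment of semisimplicity and the superunit is fine; it is only the splitness verification that needs this extra argument.
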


\begin{proof}
We first prove $\K G$ is split semisimple. Since $\K$ contains a primitive $|G|^{\nth}$ root of unity, it is well known that $\K G_{\0}$ and $|\K G|$ are split semisimple $\K$-algebras. We make use of this fact at several points during the proof.

Let $\gU$ be an irreducible $\K G_{\0}$-module and set $\gM := \Ind_{G_{\0}}^G \gU$. We claim that $\gM$ is an irreducible $\K G$-supermodule.
If $\gU$ is not $G$-stable, then, by standard Clifford theory, $|\gM|$ is an irreducible $|\K G|$-module. Therefore, $\gM$ is an irreducible $\K G$-supermodule of type $\Mtype$ and $\End_{|\K G|}(|\gM|) = 1$. On the other hand, if $\gU$ is $G$-stable, then, again by standard Clifford theory, $|\gM| = \gM_1 \oplus \gM_2$, for two non-isomorphic, irreducible $|\K G|$-submodules $\gM_1, \gM_2 \subseteq |\gM|$. Furthermore, $\gM_1$ and $\gM_2$ are related by tensoring with the unique non-trivial, linear character of $G$ with kernel $G_{\0}$. In other words, $\gM_1^{\si} \cong \gM_2$. Therefore, neither $\gM_1$ nor $\gM_2$ are $\si_{\gM}$-invariant and $\gM$ must be an irreducible $\K G$-supermodule. Moreover, $\End_{|\K G|}(|\gM|) = 2$ and $\gM$ is of type $\Qtype$.

In fact, every irreducible $\K G$-supermodule is of the form $\Ind_{G_{\0}}^G \gU$, for some irreducible $\K G_{\0}$-module $\gU$. Indeed, if $\gM$ is an irreducible $\K G$-supermodule, then $\gM\simeq \Ind_{G_{\0}}^G \gM_{\0}$, similar to Lemma~\ref{lem_red_to_A0}(i).  We have now shown that $\K G$ is split.

Since we can decompose $\K G_{\0}$ into a direct sum of irreducible $\K G_{\0}$-modules, we can decompose $\K G \simeq \Ind_{G_{\0}}^G \K G_{\0}$ into a direct sum of irreducible $\K G$-supermodules, proving semisimplicity.

The statement for $\K Ge$ now follows from Lemma \ref{lem:split_semi}, since truncating by $e$ translates into deleting some of the factors in statement (ii) of said lemma.
\end{proof}

\begin{Remark} %\label{}%{\rm \cite{}}%{\bf ()}
We note that it is not true that a $\K$-superalgebra $A$ is split semisimple if and only if the algebra $|A|$ is split semisimple. For example, let $A=M_2(\R)$ with superstructure determined by $\si_A$, which is given by conjugation with $
\left(
\begin{matrix}
0 & 1  \\
-1 & 0
\end{matrix}
\right)
$.
Then $A_\0\cong \C$ and it follows easily from Lemma~\ref{lem:split_semi} that $A$ is not split over $\R$. On the other hand $|A|=M_2(\R)$ is split over $\R$.
\end{Remark}

For the next lemma, let $H$ be another finite group with an index $2$ subgroup $H_{\0}$.

\begin{Lemma}\label{lem:sup_vert}
Let $\gM$ be an absolutely indecomposable $\cO$-free $(\cO G,\cO H)$-bisupermodule. If  $\gM$  has vertex $P \leq G \times H$ then $\gM^*$ has vertex $P^*:=\{(y,x)\in H \times G\mid  (x,y) \in P\}$.
\end{Lemma}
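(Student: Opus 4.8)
The plan is to reduce the statement to the well-known fact about vertices of duals of ordinary modules by passing to the underlying module $|\gM|$, and then track the superstructure. Recall that for a finite group $G$ with index $2$ subgroup $G_{\0}$, the superstructure on $\cO G$ is the one from \S\ref{sec:super_group_alg}, and that $|\gM|$ is an indecomposable $(\cO G,\cO H)$-bimodule, i.e. an indecomposable $\cO(G \times H)$-module, since $\gM$ is absolutely indecomposable. First I would recall that, as the superalgebra $\cO G$ is supersymmetric (with supersymmetrizing form coming from the usual symmetrizing form $g \mapsto \delta_{g,1}$, which vanishes on $(\cO G)_{\1}$ because the nontrivial elements of $G$ are not $1$), Lemma~\ref{lem:HomAM_M*} tells us that as an $(|H|,|G|)$-bimodule, $|\gM^*| \cong \Hom_{|\cO G|}(|\gM|,|\cO G|)$, and the latter is (up to the standard identification) just the $\cO$-dual $\Hom_{\cO}(|\gM|,\cO)$ with its natural $(\cO H, \cO G)$-bimodule structure; in particular $|\gM^*|$ is again indecomposable.

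Next I would invoke the classical fact that, for a finite group $\Gamma$ and an indecomposable $\cO$-free $\cO \Gamma$-module $\gN$ with vertex $Q \leq \Gamma$, the $\cO$-dual $\gN^* = \Hom_\cO(\gN,\cO)$ is again indecomposable with vertex $Q$ (this follows since $\cO$-duality is a self-inverse additive involution on $\cO\Gamma$-modules which carries $\Ind_Q^\Gamma \gW$ to $\Ind_Q^\Gamma \gW^*$, so it preserves the property of being relatively $Q$-projective in both directions). Applying this with $\Gamma = G \times H$ and $\gN = |\gM|$, the module $|\gM|^* \cong \Hom_\cO(|\gM|,\cO)$ has vertex $P \leq G \times H$. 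The only remaining point is bookkeeping: the bimodule $|\gM^*|$, viewed as an $\cO(H \times G)$-module via the conventions of \S\ref{sec:grp_alg} (where an $(\cR H, \cR G)$-bimodule $\gN$ becomes an $\cR(H\times G)$-module via $(h,g).m = h.m.g^{-1}$), corresponds to the $\cO(G\times H)$-module $\Hom_\cO(|\gM|,\cO)$ after applying the swap isomorphism $G \times H \to H \times G$, $(x,y) \mapsto (y,x)$. This swap isomorphism carries the subgroup $P \leq G \times H$ precisely to $P^* = \{(y,x) \in H \times G \mid (x,y) \in P\}$, so the vertex of $|\gM^*|$ as an $\cO(H\times G)$-module is $P^*$. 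By the conventions of \S\ref{sec:super_group_alg}, the vertex of the supermodule $\gM^*$ is by definition the vertex of $|\gM^*|$, which is $P^*$, as claimed.

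The main obstacle is purely one of reconciling conventions: one must be careful that the two notions of ``dual of a bimodule'' that appear in the excerpt (the one in \S\ref{sec:dual}, defined via $(b.f.a)(m) = f(amb)$, and the one obtained by dualizing $\gM$ as an $\cO G \otimes (\cO H)^{\sop}$-supermodule) do not coincide in general, and that it is the former which is in play here. However, once one knows from Lemma~\ref{lem:HomAM_M*} (and the supersymmetry of $\cO G$) that $|\gM^*|$ is the ordinary $\cO$-dual $\Hom_\cO(|\gM|,\cO)$ with the $(\cO H, \cO G)$-bimodule structure $(h.f.g)(m) = f(gmh)$, the identification of this with the classical $\cO$-dual module over $\cO(G\times H)$ (transported along the swap) is immediate, and the vertex statement for ordinary group-algebra modules finishes the argument. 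No delicate estimate or induction is needed; the content is entirely the translation between the super and non-super settings.
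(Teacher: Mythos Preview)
Your overall approach is correct and coincides with the paper's, but you take an unnecessary detour that also misquotes a lemma. The identification $|\gM^*| = |\gM|^*$ (the ordinary $\cO$-dual $\Hom_\cO(|\gM|,\cO)$ with the $(\cO H,\cO G)$-bimodule action $(b.f.a)(m)=f(amb)$) is immediate from the \emph{definition} of $\gM^*$ in \S\ref{sec:dual}; there is nothing to prove and no need to pass through $\Hom_{\cO G}(\gM,\cO G)$ or to invoke supersymmetry. In particular, Lemma~\ref{lem:HomAM_M*} is the wrong reference: its hypothesis requires $|\gM|$ to induce a Morita equivalence, which is not assumed here, and its conclusion is about Morita superequivalences rather than any identification of $|\gM^*|$.

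Once you drop that step and simply observe $|\gM^*|=|\gM|^*$, your argument is exactly the paper's: duality commutes with induction, so relative $P$-projectivity passes to $|\gM|^*$ as a right $\cO(G\times H)$-module (equivalently relative $P^*$-projectivity as a left $\cO(H\times G)$-module), and the vertex cannot shrink because $|\gM^*|^*\cong|\gM|$ by $\cO$-freeness.
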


\begin{proof}
Since $|\gM^*|=|\gM|^*$, this is just a result about the usual (non-super) dual. Taking duals commutes with induction. Therefore, $\gM^*$ is relatively $P$-projective as a right $\cO(G \times H)$-module. Equivalently, it is relatively $P^*$-projective as a left $\cO(H \times G)$-module. That it has vertex $P^*$ is just a consequence of the fact that $|\gM^*|^* \cong |\gM|$, since $\gM$ is $\cO$-free.
\end{proof}

For any subgroup $N \leq G$ we will use $N_{\0}$ to denote $N \cap G_{\0}$ and $\cO N$ will inherit its superalgebra structure from that of $\cO G$. In particular, if $N \nleq G_{\0}$, then $\cO N$ is also a $\K$-split semisimple superalgebra with superunit.
% In this context we can view $\uparrow_N^G: \mod{\cO N} \to \mod{\cO G}$ as a functor between the categories of supermodules by idenitfying it with $\cO G \otimes_{\cO N} ?$. We can do an analogous identification for $\downarrow^G_N: \mod{\cO G} \to \mod{\cO N}$.

We now state and prove a super version of the well-known Green correspondence.

\begin{Theorem}\label{thm:super_Green}
Let $P$ be a $p$-subgroup of $G$ and $N_G(P) \leq N \leq G$ be such that $N$ is not contained in $G_0$. Then there exists a correspondence between the absolutely indecomposable $\cO G$-supermodules with vertex $P$ and the absolutely indecomposable $\cO N$-supermodules with vertex $P$. Suppose $\gU$ is an absolutely indecomposable $\cO G$-supermodule with vertex $P$ corresponding to an absolutely indecomposable $\cO N$-supermodule $\gV$ with vertex $P$. Then
\begin{align}\label{algn:Green_eqn}
\Ind_N^G \gV \simeq \gU \oplus \gM_G\qquad\text{and}\qquad  \Res^G_N \gU \simeq \gV \oplus \gM_N,
\end{align}
for some $\cO G$-supermodule $\gM_G$ and some $\cO N$-supermodule $\gM_N$. Moreover, as an $\cO G$-module, $\gM_G$ is a direct sum of modules each with vertex contained in $P \cap {}^g P$, for some $g \in G\setminus N$ and, as an $\cO N$-module, $\gM_N$ is a direct sum of modules each with vertex contained in $N \cap {}^g P$, for some $g \in G\setminus N$.

Furthermore, given $\gU$ (resp. $\gV$), the decompositions of supermodules in (\ref{algn:Green_eqn}) uniquely determine $\gV$ (resp. $\gU$) up to isomorphism, subject to all the above conditions.
\end{Theorem}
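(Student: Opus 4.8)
The plan is to run the classical proof of the Green correspondence inside the category $\underlinesmod{\cO G}$ of $\cO G$-supermodules, and then to upgrade the resulting statement about indecomposable objects of $\underlinesmod{\cO G}$ to one about \emph{absolutely} indecomposable supermodules by comparing with the ordinary Green correspondence on underlying modules. The first ingredient is a super-analogue of Higman's criterion: since $p$ is odd and $[G:G_\0]=2$, every $p$-subgroup of $G$ lies in $G_\0$ --- in particular $P$, all its subgroups, and all groups of the form $N\cap{}^gP$ --- and for such a subgroup $Q$ the relative trace map $\Tr^G_Q$ on $\cO Q$-linear endomorphisms of a supermodule preserves parity (the conjugation $f\mapsto{}^gf$ preserves parity because $|g^{-1}|=|g|$). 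Hence, taking even parts, a supermodule $\gM$ is relatively $Q$-projective in $\underlinesmod{\cO G}$ if and only if $|\gM|$ is relatively $Q$-projective as an $|\cO G|$-module. Together with the Mackey decomposition (which holds verbatim for super-induction and super-restriction), the usual argument now applies: writing $\mathfrak X_G:=\{Q\mid Q\le P\cap{}^gP\text{ for some }g\in G\setminus N\}$ and $\mathfrak X_N:=\{Q\mid Q\le N\cap{}^gP\text{ for some }g\in G\setminus N\}$, the supermodule $\Res^G_N\gU$ has a unique indecomposable summand $\gV$ in $\underlinesmod{\cO N}$ that is not relatively $\mathfrak X_N$-projective, and the decompositions (\ref{algn:Green_eqn}) hold with $\gM_G,\gM_N$ satisfying the stated vertex conditions (read off from Mackey exactly as in the classical case). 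Symmetrically, $\Ind_N^G\gV$ has a unique indecomposable summand not relatively $\mathfrak X_G$-projective, which recovers $\gU$; this provides the candidate inverse assignment.

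The second step is to show that the supermodule $\gV$ just produced is absolutely indecomposable, so that its vertex is defined and equals $P$. Here the key fact is that if $\gW$ is indecomposable in $\underlinesmod{\cO N}$ but $|\gW|$ is decomposable, then $|\gW|\cong X\oplus X^\si$ for an indecomposable $|\cO N|$-module $X$ with $X\not\cong X^\si$ (the Clifford-theoretic dichotomy for the index-$2$ subgroup $N_\0\trianglelefteq N$, i.e.\ Lemma~\ref{lem_red_to_A0}(ii)); since $-^\si$ is induced by an algebra automorphism of $\cO N$ it preserves vertices, so the two summands of $|\gW|$ have the \emph{same} vertex. Let $\Gamma(|\gU|)$ denote the ordinary Green correspondent of $|\gU|$ for the triple $(G,N,P)$: up to isomorphism it is the unique indecomposable summand of $\Res^G_N|\gU|=|\Res^G_N\gU|$ with vertex $P$, and it occurs there with multiplicity one. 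Since $\gV$ is not relatively $\mathfrak X_N$-projective, by the first step neither is any indecomposable summand of $|\gV|$, and such a summand --- being a summand of $\Res^G_N|\gU|$ that is not $\mathfrak X_N$-projective --- has vertex $P$. Because $\Res^G_N|\gU|$ contains only one indecomposable summand with vertex $P$, and only once, we conclude $|\gV|\cong\Gamma(|\gU|)$; in particular $|\gV|$ is indecomposable and $\gV$ is absolutely indecomposable with vertex $P$. The identical argument over $\cO G$ shows the candidate inverse assignment also produces absolutely indecomposable supermodules with vertex $P$, and that both assignments lift the ordinary Green correspondence along the functor $|\cdot|$. I expect this step --- ruling out the ``type $\Qtype$'' alternative $|\gV|\cong\Gamma(|\gU|)\oplus\Gamma(|\gU|)^\si$ --- to be the main obstacle; its resolution is exactly the equal-vertex observation above combined with the multiplicity-one count from the classical theorem.

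It remains to see that the two assignments are mutually inverse bijections and that the data in (\ref{algn:Green_eqn}) determine $\gV$ (resp.\ $\gU$) uniquely. For this it is enough to know that $|\cdot|$ is injective on isomorphism classes of absolutely indecomposable $\cO N$-supermodules: given absolutely indecomposable $\gV,\gV'$ with $|\gV|\cong|\gV'|$, split an $|\cO N|$-linear isomorphism $\phi\colon|\gV|\to|\gV'|$ into its even and odd components $\phi_0\colon\gV\to\gV'$ and $\phi_1\colon\gV\to\Pi\gV'$; since $\End_{|\cO N|}(|\gV'|)$ is local and $\phi_0\phi^{-1}+\phi_1\phi^{-1}=\mathrm{id}$, one of $\phi_0,\phi_1$ is an isomorphism, giving $\gV\simeq\gV'$ or $\gV\simeq\Pi\gV'$. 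But $\cO N$ has a superunit $u$, so every $\cO N$-supermodule $\gM$ satisfies $\gM\simeq\Pi\gM$ --- for instance via $a\otimes m\mapsto au^{-1}\otimes m$ under $\gM\simeq\Ind_{N_\0}^N\gM_\0$ (Lemma~\ref{lem_red_to_A0}(i)) --- whence $\gV\simeq\gV'$ in either case. Since by the previous step both assignments lift the ordinary Green correspondence $\Gamma$, respectively $\Gamma^{-1}$, along $|\cdot|$, the bijectivity and the uniqueness assertion follow from the corresponding properties of $\Gamma$.
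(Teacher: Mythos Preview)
Your overall strategy is sound and genuinely different from the paper's, but your third step contains a false claim. You assert that every $\cO N$-supermodule $\gM$ satisfies $\gM\simeq\Pi\gM$ when $\cO N$ has a superunit; this is \emph{not} true. Indeed, for an absolutely indecomposable $\gM$ one has $\gM\simeq\Ind_{N_\0}^N\gM_\0$ with $\gM_\0$ indecomposable, and $|\gM|$ indecomposable forces $\gM_\0\not\cong{}^u\gM_\0$ (this is precisely Lemma~\ref{lem_red_to_A0}(ii), and is also Lemma~\ref{lem:irred_sup_mod}(i)(c) in the semisimple case). Since $(\Pi\gM)_\0=\gM_\1\cong{}^u\gM_\0$, a super-isomorphism $\gM\simeq\Pi\gM$ would give $\gM_\0\cong{}^u\gM_\0$, a contradiction. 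Your explicit formula $a\otimes m\mapsto au^{-1}\otimes m$ is not even well-defined on $\cO N\otimes_{\cO N_\0}\gM_\0$ unless $u$ centralises $N_\0$. So the injectivity of $|\cdot|$ you claim is false: one only gets $\gV\simeq\gV'$ \emph{or} $\gV\simeq\Pi\gV'$, and these alternatives are genuinely distinct for absolutely indecomposable supermodules.

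Fortunately your step~3 is redundant: both the bijectivity and the uniqueness of $\gV$ (resp.\ $\gU$) from the decompositions~(\ref{algn:Green_eqn}) already follow from the Krull--Schmidt argument in your step~1. If $\Res^G_N\gU\simeq\gV\oplus\gM_N$ with $\gV$ absolutely indecomposable of vertex $P$ and $\gM_N$ satisfying the vertex conditions, then $\gV$ is a supermodule-indecomposable summand that is not $\mathfrak X_N$-projective, and by your step~1 it is the unique such; so $\gV$ is determined up to super-isomorphism without ever invoking $|\cdot|$. With step~3 excised your argument is complete.

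By contrast, the paper never works inside $\underlinesmod{\cO G}$ at all: it passes to even parts and reduces to the ordinary Green correspondence between $G_\0$ and $N_\0$. Given $\gV$, it takes the classical Green correspondent $\gW$ of $\gV_\0$ in $G_\0$, observes via Lemma~\ref{lem_red_to_A0}(ii) that $\gV_\0$ (hence $\gW$) is not stable under the index-two overgroup, and concludes that $\gU:=\Ind_{G_\0}^G\gW$ is absolutely indecomposable. Uniqueness is again read off from even parts. The paper's route has the virtue of using the classical theorem as a black box, whereas yours requires checking that Higman's criterion, Mackey, and the standard uniqueness lemmas transfer to $\underlinesmod{\cO G}$; on the other hand your route makes transparent that the super Green correspondence is literally the Green correspondence run in the $\Pi$-category, with absolute indecomposability forced by the multiplicity-one count.
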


\begin{proof}
All of this is the well-known Green correspondence except that (\ref{algn:Green_eqn}) holds as supermodules (and not just modules) and that $\gU$ and $\gV$ uniquely determine one another as supermodules (again, not just as modules), up to isomorphism.

Let $\gV$ be an absolutely indecomposable $\cO N$-supermodule with vertex $P$. Then, by Lemma \ref{lem_red_to_A0}(i), $\gV \simeq \Ind_{N_{\0}}^N\gV_{\0}$ and $\gV_{\0}$ is necessarily an indecomposable $\cO N_{\0}$-module with vertex $P$ or ${}^x P$, for some $x \in N \setminus N_{\0}$. (We are using the fact that $p>2$ here.) Now, $N_{G_{\0}}(P) \leq N_{\0}$ and so, by conjugating everything by $x$, we also have that $N_{G_{\0}}({}^x P) \leq N_{\0}$. We can, therefore, consider the Green correspondent $\gW$ of $\gV_{\0}$ in $G_{\0}$.

Note that, by Lemma \ref{lem_red_to_A0}(ii), $\gV_{\0}$ is not $N$-stable. Therefore, $\gW$ is not $N$-stable, which is equivalent to not being $G$-stable, and $\Ind_{G_{\0}}^G\gW$ is an absolutely indecomposable $\cO G$-supermodule, with vertex $P$.

Now, $\Ind_{N_{\0}}^{G_{\0}}\gV_{\0} \cong \gW \oplus \gZ_G$, where $\gZ_G$ is a direct sum of $\cO G_{\0}$-modules each with vertex contained in $P \cap {}^g P$ (or, in the case that $\gV_{\0}$ has vertex ${}^x P$, each with vertex contained in ${}^xP \cap {}^{gx} P$), for some $g \in G_{\0} \setminus N_{\0}$. Consequently, $\Ind_{G_{\0}}^G\gZ_G$ is a direct sum of modules each with vertex contained in $P \cap {}^g P$, for some $g \in G_{\0} \setminus N_{\0}$.

Setting $\gU:= \Ind_{G_{\0}}^G\gW$ now gives
\begin{align*}
\Ind_N^G\gV \simeq \Ind_{N_{\0}}^G\gV_{\0} \simeq (\Ind_{G_{\0}}^G\gW) \oplus (\Ind_{G_{\0}}^G\gZ_G) = \gU \oplus \Ind_{G_{\0}}^G\gZ_G,
\end{align*}
as $\cO G$-supermodules, as desired.

We now show uniqueness of $\gU$. Say $\Ind_N^G\gV \simeq \gU \oplus \gM_G$, as in (\ref{algn:Green_eqn}), where $\gU$ and $\gM_G$ have the desired properties. Then, taking the even part of both sides, $\Ind_{N_{\0}}^{G_{\0}}\gV_{\0} \cong \gU_{\0} \oplus (\gM_G)_{\0}$. Since $\gV$ has vertex $P$, $\gV_{\0}$ has vertex $P$ or ${}^xP$, for some $x \in N \setminus N_{\0}$. Similarly, $\gU_{\0}$ has vertex $P$ or ${}^yP$, for some $y \in G \setminus G_{\0}$. However, $\gU_{\0}\mid \Ind_{N_{\0}}^{G_{\0}}\gV_{\0}$ and so $\gV_{\0}$ and $\gU_{\0}$ both have vertices $P$ or ${}^x P$. Therefore, $\gU_{\0}$ is the Green correspondent of $\gV_{\0}$ in $G_{\0}$ (as before we have $N_{G_{\0}}(P),N_{G_{\0}}({}^x P) \leq N_{\0}$) and $\gV \simeq \Ind_{G_{\0}}^G\gV_{\0}$ is uniquely determined as an $\cO G$-supermodule.

Showing that the second equation in (\ref{algn:Green_eqn}) holds and that $\gU$ uniquely determines $\gV$, as an $\cO N$-supermodule, is completely analogous to the reverse argument.
\end{proof}

In the context of Theorem \ref{thm:super_Green} we say $\gU$ and $\gV$ are {\em super Green correspondents}. We will sometimes say $\gU$ is the {\em super Green correspondent of $\gV$ in $G$} or {\em $\gV$ is the super Green correspondent of $\gU$ in $N$}.

For the following lemma recall that, for a block $\cO Gb$ with defect group $D$, we can consider $\cO Gb$ as an $\cO(G \times G)$-supermodule with vertex $\Delta D$.

\begin{Lemma}\label{lem:Brauer_Green}
Let $b \in \cO G_{\0}$ be a block idempotent in $\cO G$ such that $\cO Gb$ has defect group $D$ and $N_G(D) \leq N \leq G$ with $N \nleq G_{\0}$. 
Let $f \in \cO N$ be such that $\cO Nf$ is the Brauer correspondent of $\cO Gb$ in $N$. Then $f \in \cO N_{\0}$. Moreover, there is a unique absolutely indecomposable summand $\gU$ of $b \cO Gf$, as an $\cO(G \times N)$-supermodule, with vertex $\Delta D$ and all other summands have vertices strictly contained in $\Delta D$. Moreover, $\gU$ is the super Green correspondent of both $\cO Gb$ and $\cO Nf$ in $G \times N$. In particular, $\cO Nf$ is the super Green correspondent of $\cO Gb$ in $N \times N$.
\end{Lemma}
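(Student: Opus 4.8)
The plan is to deduce everything from the classical (non-super) Brauer correspondence together with the super Green correspondence of Theorem~\ref{thm:super_Green}, applied in the group $G \times N$ relative to the $p$-subgroup $\Delta D$. First I would observe that since $b \in \cO G_{\0}$, the idempotent $b$ lies in the purely even part, and likewise the Brauer correspondent $f$ of $\cO Gb$ in $N$ is determined by applying the Brauer homomorphism $\Br_D$; because $C_G(D)$ is contained in $N$ and in fact meets only elements acting on $Z(\F C_G(D))$, the block idempotent $f$ is the unique block of $\cO N$ with $\Br_D(b) = \Br_D(f)$, and this is an element of $Z(\cO N) \cap \cO N_{\0}$ (one checks $f \in \cO N_\0$ directly: $\Br_D(b)\in Z(\F C_{G_\0}(D))$ since $b$ is even, so the block of $N$ corresponding to it under the usual Brauer correspondence is supported on $N_\0$). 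Next, I would set $H := G$, and note that $N_G(D) \leq N$ with $N \nleq G_\0$, so Theorem~\ref{thm:super_Green} applies with the $p$-group $\Delta D \leq G \times G$ and the intermediate subgroup $(G \times N) \cup \ldots$; more precisely, $N_{G \times G}(\Delta D) \leq G \times N$ because the first coordinate can be anything in $N_G(D)\le N$, so $G \times N$ is a legitimate ``$N$'' in the statement of the super Green correspondence applied inside $G\times G$.

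The core of the argument is then: $\cO Gb$, viewed as an absolutely indecomposable $\cO(G\times G)$-supermodule with vertex $\Delta D$ (this is exactly the super-analogue of the defect-group statement, cf.\ the discussion preceding the lemma), has a super Green correspondent $\gU$ in $G \times N$ which is the unique absolutely indecomposable summand of $\Res^{G\times G}_{G\times N}(\cO Gb) = b\cO G$ (as a $(\cO G, \cO N)$-bisupermodule, this is $b\cO G$, and truncating on the right by $f$ picks out $b\cO Gf$) with vertex $\Delta D$, all other summands having vertices contained in $\Delta D \cap {}^{(g_1,g_2)}\Delta D$ for $(g_1,g_2) \in (G\times G)\setminus(G\times N)$, hence strictly smaller. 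To identify $b\cO Gf$ as precisely the part of $\Res^{G\times G}_{G\times N}(\cO Gb)$ that carries this correspondent, I would use that $\Res^{G\times G}_{G\times N}(\cO Gb) = b\cO G = \bigoplus_{N\text{-blocks }f'} b\cO Gf'$, and that by the standard theory of the Brauer correspondence only the summand $b\cO Gf$ (with $f$ the Brauer correspondent) can have a summand of full vertex $\Delta D$; the other summands $b\cO Gf'$ have strictly smaller vertex. This shows $\gU \mid b\cO Gf$ with the stated uniqueness and the vertex bound on the complement.

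Finally, to see that $\gU$ is simultaneously the super Green correspondent of $\cO Nf$ in $G\times N$, I would run the symmetric argument: $\cO Nf$ is an absolutely indecomposable $\cO(N\times N)$-supermodule with vertex $\Delta D$ (applying the defect-group statement to $\cO Nf$, whose defect group is also $D$ since $f$ is the Brauer correspondent), and inducing up, $\Ind_{N\times N}^{G\times N}(\cO Nf) = \cO Gf$ as a $(\cO G,\cO N)$-bisupermodule; one checks $b\cdot\Ind_{N\times N}^{G\times N}(\cO Nf) = b\cO Gf$ because $b$ acts as the sum of the Brauer correspondents' ``parents'', so $\gU$ is the unique summand of full vertex of $b\cO Gf$ from this side as well, hence the super Green correspondent of $\cO Nf$ in $G\times N$. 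Composing the two correspondences (super Green correspondence from $G\times G$ down to $G\times N$ and then from $G\times N$ down to $N\times N$, using that $N_{G\times G}(\Delta D)\le N\times N$) yields that $\cO Nf$ is the super Green correspondent of $\cO Gb$ in $N\times N$. The main obstacle I anticipate is the careful bookkeeping needed to match the super Green correspondence of Theorem~\ref{thm:super_Green} (stated for $\cO G$-supermodules with a single intermediate subgroup $N$) with the present two-sided situation: one must verify $N_{G\times G}(\Delta D) \leq G\times N$ and $\leq N\times N$ rigorously, and confirm that restricting/truncating the bimodule $\cO Gb$ by the idempotents $b$ and $f$ is compatible with the restriction and induction functors in Theorem~\ref{thm:super_Green}, so that the uniqueness clauses transfer correctly.
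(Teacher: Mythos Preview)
Your approach is correct and runs essentially parallel to the paper's proof: both deduce everything from the classical Brauer correspondence together with Theorem~\ref{thm:super_Green} applied to $\Delta D$ inside $G\times G$ with intermediate subgroup $G\times N$ (resp.\ $N\times N$). The one noteworthy difference is the direction in which you set up the correspondence. You begin from $\cO Gb$, restrict to $G\times N$, and then argue via the decomposition $b\cO G=\bigoplus_{f'}b\cO Gf'$ that only the Brauer-correspondent piece $b\cO Gf$ can carry a summand of vertex $\Delta D$; this works but needs the extra step of excluding the other $f'$. The paper instead starts from $\cO Nf$, defines $\gU$ as its super Green correspondent in $G\times N$ (so $\gU\mid\cO Gf$ automatically), and then invokes the well-known non-super fact that $\cO Gb$ and $\cO Nf$ are Green correspondents to conclude $b\gU=\gU$; this sidesteps the block decomposition entirely. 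For $f\in\cO N_{\0}$, the paper gives the clean one-line argument that $\sigma_{\cO N}(f)$ is again a block of $\cO N$ with defect $D$ and the same Brauer image as $b$, hence equals $f$ by uniqueness; your sketch ``$\Br_D(b)\in Z(\F C_{G_\0}(D))$ so $f$ is supported on $N_\0$'' is the same idea but would benefit from being made explicit in this way.
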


\begin{proof}
Since $D \leq N_{\0}$, $\cO N\si_{\cO N}(f)$ also has defect group $D$ and
$$
\Br_D(b) = \Br_D(\si_{\cO G}(b)) = \Br_D(\si_{\cO G}(f)) = \Br_D(\si_{\cO N}(f)).
$$
Therefore, $\cO Nf$ and $\cO N\si_{\cO N}(f)$ are both the Brauer correspondent of $\cO Gb$ in $N$. In particular, $\si_{\cO N}(f) = f$ and so $f \in \cO N_{\0}$.

It is well known that $\cO Gb$ and $\cO Nf$ are (non-super) Green correspondents.
We set $\gU$ to be the super Green correspondent of $\cO Nf$ in $G \times N$. In particular, $\gU\mid \Ind_{N \times N}^{G \times N} \cO Nf \simeq \cO Gf$ and all other summands have vertex strictly contained in $\Delta D$. Due to the first paragraph, $\gU$ must be at least the (non-super) Green correspondent of $\cO Gb$ in $G \times N$. Therefore, $\gU\mid \Res^{G \times G}_{G \times N}\cO Gb \simeq b\cO G$, as an $\cO(G \times N)$-module and so $b\gU = \gU$.

We have now shown that $\gU$ is the unique indecomposable summand of $b\cO Gf$ with vertex $\Delta D$ and all other summands have vertex strictly contained in $\Delta D$. In particular, $\gU$ is the super Green correspondent of $\cO Gb$ in $G \times N$. The proof is now complete.
\end{proof}

If $K \leq G$, $g \in G$ and $\gM$ is an $\cO K$-supermodule, then $g\gM$ is the $\cO({}^g K)$-supermodule defined in the obvious way, that is $|gm|=|g|+|m|$, for all $m \in \gM$. In particular, if $H \leq G$, $k \in K$ and $h \in H$, then we have a canonical isomorphism of $\cO H$-supermodules
\begin{align*}
\Ind_{H \cap {}^g K}^H \Res^{{}^g K}_{H \cap {}^g K} \,g\gM  \iso \Ind_{H \cap {}^{hgk} K}^H\Res^{{}^{hgk} K}_{H \cap {}^{hgk} K}\, hgk\gM, \ 
h' \otimes gm  \mapsto h'h^{-1} \otimes hgk(k^{-1}m).
\end{align*}
This isomorphism ensures the right-hand side of the following `super Mackey decomposition formula' is well-defined.

\begin{Theorem}\label{thm:super_Mackey}
Let $H,K \leq G$ and $\gM$ an $\cO K$-supermodule. We have the following isomorphisms of $\cO H$-supermodules
\begin{align*}
\Res^G_H\Ind_K^G\gM \simeq \bigoplus_{g \in H \backslash G/K} \Ind_{H \cap \hspace{.2mm}{}^g\hspace{-.4mm}K}^H\Res^{{}^g \hspace{-.4mm}K}_{H \cap \hspace{.2mm}{}^g\hspace{-.4mm}K}\,g\gM.
\end{align*}
\end{Theorem}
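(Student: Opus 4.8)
The plan is to reduce the super statement to the ordinary (non-super) Mackey decomposition formula applied to the purely even subalgebra, exactly in the spirit of the rest of \S\ref{sec:super_group_alg}. Since $H$, $K$ are subgroups of $G$, they inherit a superstructure: set $H_\0 := H\cap G_\0$, $K_\0:=K\cap G_\0$, and recall that $\cO K$ is a superalgebra with superunit whenever $K\nleq G_\0$ (and is purely even otherwise). The first step is to fix representatives $g$ for the double cosets $H\backslash G/K$; since $G_\0$ has index $2$ in $G$, one can choose each such representative so that the associated combinatorics is compatible with the even subgroups. The key observation is that $\cO H$-supermodule induction and restriction are, by the construction in \S\ref{sec:super_group_alg} (compare $\Ind^A_{A_\0}$ and Lemma \ref{lem_red_to_A0}(i)), completely determined by the underlying $\cO H$-module structure together with the $\Z/2$-grading; and both operations are $\cO$-linear and send graded pieces to graded pieces.

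\textbf{Step 1: the underlying module isomorphism.} Forgetting the grading, $|\Res^G_H\Ind_K^G\gM| = \Res^{|G|}_{|H|}\Ind_{|K|}^{|G|}|\gM|$, and the classical Mackey decomposition formula (see e.g.\ \cite[\S5]{Lin5}) gives an $\cO|H|$-module isomorphism
\begin{align*}
\Res^{|G|}_{|H|}\Ind_{|K|}^{|G|}|\gM| \cong \bigoplus_{g \in H\backslash G/K} \Ind_{|H\cap{}^gK|}^{|H|}\Res^{|{}^gK|}_{|H\cap{}^gK|}\,|g\gM|.
\end{align*}
This is the canonical Mackey isomorphism, realized on the level of the underlying $\cO$-modules by the usual explicit formula $h\otimes m \mapsto \sum_g (\text{projection onto the }g\text{-component})$.

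\textbf{Step 2: the isomorphism is even.} It remains to check that this canonical isomorphism respects the $\Z/2$-grading, i.e.\ is a homomorphism of $\cO H$-supermodules. This is immediate from the explicit form of the Mackey isomorphism: on a basis element of $\Ind_K^G\gM$ of the form $g'\otimes m$ with $g'\in G$, $m\in\gM$ homogeneous, the parity is $|g'|+|m|$, and the Mackey isomorphism sends it (after choosing the double coset representative $g$ with $g'=hgk$) to an element of the $g$-summand built from $h'\otimes hgk(k^{-1}m)$ with $h'=g'k^{-1}g^{-1}\in H$; tracking parities, $|h'|+|h|+|g|+|k|+|k^{-1}|+|m| = |h'|+|g'| + (|g'k^{-1}g^{-1}|) = |g'|+|m|$ since $|x|+|x^{-1}|=0$ and parity is multiplicative on $G$. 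Hence the map preserves parity on homogeneous elements, so it is an isomorphism of $\cO H$-supermodules. Here one also uses the compatibility isomorphism displayed just before the theorem, which shows the right-hand side is independent (as a supermodule, not merely as a module) of the choice of double coset representatives $g$ and of the decompositions $g'=hgk$.

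\textbf{Expected main obstacle.} The only genuinely delicate point is the parity bookkeeping in Step 2 and the verification that the well-definedness isomorphism preceding the theorem is compatible with everything; since all the maps involved are the standard ones and parity is multiplicative (and $p>2$ so $2$ is invertible), no sign subtleties of the kind appearing in tensor products of superalgebras arise here. In particular, unlike in Lemma \ref{lem:tensor_bisupmod} or Lemma \ref{lem:dual_prod}, there are no Koszul signs to track — $\cO\Si_d$-type signs never enter, because induction/restriction of supermodules along subgroup inclusions carries no extra sign. So the proof is essentially a routine check that the classical Mackey isomorphism is parity-preserving.
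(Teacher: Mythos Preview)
Your proposal is correct and takes essentially the same approach as the paper: the paper's entire proof is the one-sentence observation that the explicit classical Mackey isomorphism $g\otimes m \mapsto gm \in g\gM$ visibly preserves parity, which is exactly your Step~1 plus Step~2. Your parity bookkeeping in Step~2 is more laboured than necessary (and the displayed chain of equalities is garbled---the clean version is simply that $g'\otimes m$ has parity $|g'|+|m|$ and maps to $h\otimes g(km)$ of parity $|h|+|g|+|k|+|m|=|g'|+|m|$ when $g'=hgk$), but the idea is right.
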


\begin{proof}
In the proof of the original (non-super) version of the Mackey decomposition formula the isomorphism is given by $g \otimes m \mapsto gm \in g\gM$, for all $g \in H\backslash G/K$. This clearly respects superstructure.
\end{proof}

\subsection{Vertices of super tensor products}\label{sec:vert_sup_ten_prod}

Throughout this subsection $G$ and $H$ will denote finite groups with subgroups $G_{\0}\leq G$ and $H_\0\leq H$ of index at most $2$. For $g\in G$, we write $|g|=\0$ if $g\in G_\0$ and $|g|=\1$ otherwise. Similarly for $h\in H$. Note that, unlike \S\ref{sec:super_group_alg}, we do not exclude the cases  $G=G_\0$ and $H=H_\0$. 

We also suppose that $G_\0$ and $H_\0$ both contain the canonical central element $z$ of order $2$ (note we do not distinguish between the $z$ in $G$ and that in $H$ as these elements will be identified anyway). We set $G \times_z H$ to be the free product $G \star H$ of groups subject to this identification of $z$'s and the relation $[g,h]=z^{|g||h|}$, for all $g \in G$ and $h \in H$. We have a natural surjection $\pi: G \times_z H\to G/\langle z \rangle\times H/\langle z \rangle$ with kernel $\{1,z\}$. 
We now also have the subgroup $(G \times_z H)_\0:=\pi^{-1}(\{(g\langle z \rangle,h\langle z \rangle)\mid |g|=|h|\})$ of index at most $2$. The main source of examples for this construction will become clear in \S\ref{SSSpSubg}.

As in \S\ref{sec:super_group_alg}, we have the super group algebras $\cO G$, $\cO H$ and $\cO(G \times_z H)$, except that now it is possible that some of these superalgebras could be purely even.  

Let $e_z:=(1-z)/2$. Since $e_z$ is an even central idempotent, $\cO Ge_z$, $\cO He_z$ and $\cO(G \times_z H)e_z$ inherit superalgebra structure from $\cO G$, $\cO H$ and $\cO(G \times_z H)$, respectively. Moreover, we have the superalgebra isomorphism
$$
\cO (G \times_z H)e_z \cong \cO G e_z \otimes \cO H e_z.
$$
So, given an $\cO G e_z$-supermodule $\gM$ and  an $\cO He_z$-supermodule $\gN$, we can form the $\cO (G \times_z H)e_z$-supermodule $\gM \boxtimes \gN$.

Since $p$ is odd, if $P$ is a $p$-subgroup of $G$ and $Q$ a $p$-subgroup of $H$, then $P \leq G_{\0}$, $Q \leq H_{\0}$ and $P$ and $Q$ commute with one another in $G \times_z H$. Furthermore, since the subgroup of $G \times_z H$ generated by $G_{\0}$ and $H_{\0}$ is isomorphic to $(G_{\0} \times H_{\0})/\langle (z,z) \rangle$, the subgroup of $G \times_z H$ generated by $P$ and $Q$ is isomorphic to $P \times Q$. We will usually simply denote said subgroup by $P \times Q$.

\begin{Lemma}\label{lem:direct_prod_vert}
Adopting the above notation, suppose $\gM$ (resp. $\gN$) is $\cO$-free and absolutely indecomposable with vertex $P$ and source $\gU$ (resp. vertex $Q$ and source $\gV$). Then $\gM \boxtimes \gN$ is an $\cO$-free absolutely indecomposable $\cO(G \times_z H)e_z$-supermodule with vertex $P \times Q \leq G \times_z H$ and source $\gU \boxtimes \gV$.
\end{Lemma}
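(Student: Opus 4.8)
The plan is to reduce the statement about super tensor products over the exotic group $G \times_z H$ to the ordinary statement about tensor products of modules over $\cO(G_\0 \times H_\0)$, and then transport the conclusion back up using the induction machinery for superalgebras with superunit (Lemma~\ref{lem_red_to_A0}, Lemma~\ref{LNewProj}) together with the super Green correspondence philosophy implicit in the excerpt. First I would record the basic structural facts: since $e_z$ is an even central idempotent, $\cO(G\times_z H)e_z \cong \cO Ge_z \otimes \cO He_z$ as superalgebras, and the purely even subalgebra is $\cO(G_\0\times_z H_\0)e_z$, which (because $G_\0,H_\0$ both contain the central $z$) is isomorphic to $\cO((G_\0\times H_\0)/\langle(z,z)\rangle)e_z$, an ordinary group algebra. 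The subgroup of $G\times_z H$ generated by the $p$-subgroups $P\le G_\0$ and $Q\le H_\0$ is the honest direct product $P\times Q$, as already noted in the excerpt. So all the vertex bookkeeping we need takes place inside genuine group algebras.

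Next I would handle the even parts. Writing $\gM \simeq \Ind_{(\cO Ge_z)_\0}^{\cO Ge_z}\gM_\0$ and similarly for $\gN$ (Lemma~\ref{lem_red_to_A0}(i)), we have $(\gM\boxtimes\gN)_\0 = (\gM_\0\boxtimes\gN_\0)\oplus(\gM_\1\boxtimes\gN_\1)$ as $\cO(G_\0\times_z H_\0)e_z$-modules. By Lemma~\ref{lem_red_to_A0}(ii), $\gM_\0$ is an indecomposable $\cO(G_\0)e_z$-module with vertex $P$ or ${}^xP$ for some odd $x$, and its source is (the restriction of) $\gU$; likewise for $\gN_\0$. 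Since $z$ is central and $p$ is odd, the classical result on vertices of (outer) tensor products of modules over a direct product of groups gives that $\gM_\0\boxtimes\gN_\0$ is an indecomposable $\cO(P\times Q)$-relatively-projective module whose vertex is exactly (a conjugate of) $P\times Q$ with source $\gU\boxtimes\gV$ — here one uses that $\gM_\0$ is $\cO$-free so its source is unique, and that an outer tensor product of indecomposable modules with given vertices is indecomposable with vertex the product, a standard fact (e.g. via the Brauer construction or the Künneth-type formula for the Green correspondence over a product). The same analysis applies to the odd-odd summand $\gM_\1\boxtimes\gN_\1 \cong u_G\gM_\0\boxtimes u_H\gN_\0$, which has vertex a conjugate of $P\times Q$ as well.

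Then I would assemble the supermodule-level conclusion. By Lemma~\ref{lem_red_to_A0}(i) applied to $\cO(G\times_z H)e_z$, which has a superunit (namely $u_G\otimes 1$ or any odd unit), $\gM\boxtimes\gN \simeq \Ind_{(\cO(G\times_z H)e_z)_\0}^{\cO(G\times_z H)e_z}(\gM\boxtimes\gN)_\0$, and induction from the even part preserves indecomposability and multiplies vertices by nothing: more precisely, an $\cO(G\times_z H)e_z$-supermodule is absolutely indecomposable iff its even part is indecomposable as an $\cO(G_\0\times_z H_\0)e_z$-module (this is exactly the content of Lemma~\ref{lem_red_to_A0}(ii) read backwards), and a vertex of the supermodule is by definition a vertex of its underlying module, which by Mackey coincides with a vertex of the even part up to the odd conjugation. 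Since $(\gM\boxtimes\gN)_\0$ has been shown to have vertex $P\times Q$ (up to conjugacy) and source $\gU\boxtimes\gV$, and since $|(\gM\boxtimes\gN)| = \Ind_{(\cdot)_\0}^{|\cdot|}(\gM\boxtimes\gN)_\0$ has the same vertex and source by the standard behaviour of induction through an index-$2$ subgroup (using $p$ odd), we conclude that $\gM\boxtimes\gN$ is $\cO$-free, absolutely indecomposable, with vertex $P\times Q\le G\times_z H$ and source $\gU\boxtimes\gV$. The $\cO$-freeness is immediate since $\boxtimes$ over $\cO$ of $\cO$-free modules is $\cO$-free.

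**The main obstacle** I expect is the core indecomposability-and-vertex statement for outer tensor products over a direct product of groups in the modular setting: namely that if $\gM_\0$ has vertex $P$ with source $\gU$ over $\cO G_\0$ and $\gN_\0$ has vertex $Q$ with source $\gV$ over $\cO H_\0$, then $\gM_\0\boxtimes\gN_\0$ is indecomposable over $\cO(G_\0\times H_\0)$ with vertex $P\times Q$ and source $\gU\boxtimes\gV$. This is classical but does genuinely need proof — the indecomposability of the outer tensor product uses that $\End_{\cO(G_\0\times H_\0)}(\gM_\0\boxtimes\gN_\0)\cong \End_{\cO G_\0}(\gM_\0)\otimes_\cO\End_{\cO H_\0}(\gN_\0)$ has a local quotient (one needs the residue field to be a splitting field, which holds since $\F$ is algebraically closed), and the vertex computation combines the easy relative-projectivity bound (Mackey) with a Brauer-construction lower bound. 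I would cite this as a known fact about ordinary group algebras over $(\K,\cO,\F)$ rather than reprove it, since everything else in the argument is a routine unwinding of the superstructure via the lemmas already established in the excerpt; if a citation is not readily available in the form needed, the Brauer-construction argument for the vertex together with the endomorphism-ring argument for indecomposability would be spelled out in a couple of lines.
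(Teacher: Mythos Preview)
Your reduction to the purely even picture and the citation of the classical K\"ulshammer-type result for vertices of outer tensor products are both in line with the paper's approach. However, the absolute indecomposability step has a genuine gap.

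You misidentify the even subalgebra: $(\cO(G\times_z H)e_z)_\0$ is \emph{not} $\cO(G_\0\times_z H_\0)e_z$. The subgroup $(G\times_z H)_\0$ is the preimage of the ``equal parity'' subgroup of $G/\langle z\rangle \times H/\langle z\rangle$, so it contains products $u_Gu_H$ with $u_G\in G_\1$, $u_H\in H_\1$; the subgroup $L$ generated by $G_\0$ and $H_\0$ has index~$4$ in $G\times_z H$ (when both $G_\0,H_\0$ are proper), not~$2$. Over $\cO L$, the module $(\gM\boxtimes\gN)_\0 = (\gM_\0\boxtimes\gN_\0)\oplus(\gM_\1\boxtimes\gN_\1)$ is visibly \emph{decomposable}, so you cannot feed it into a criterion of the form ``even part indecomposable $\Rightarrow$ supermodule absolutely indecomposable''. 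Moreover, the claimed ``iff'' is not what Lemma~\ref{lem_red_to_A0}(ii) says: that lemma explicitly allows the restriction of an absolutely indecomposable module to the even subalgebra to split as $\gM_\0\oplus u\gM_\0$.

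What the paper actually does---and what your argument is missing---is the following extra step. One shows $\gM\boxtimes\gN\simeq \Ind_L^{G\times_z H}(\gM_\0\boxtimes\gN_\0)$ and that the four $\cO L$-modules $u_G^{\eps_1}u_H^{\eps_2}(\gM_\0\boxtimes\gN_\0)$ for $\eps_1,\eps_2\in\{0,1\}$ are pairwise non-isomorphic. This last point uses precisely the non-trivial output of Lemma~\ref{lem_red_to_A0}(ii), namely $\gM_\0\not\cong u_G\gM_\0$ and $\gN_\0\not\cong u_H\gN_\0$, combined with K\"ulshammer's indecomposability of outer tensor products. One then invokes Ward's criterion (\cite[\S5, Proposition~2]{Ward}): an induced module from a normal subgroup is indecomposable when the conjugates of the inducing module are pairwise non-isomorphic. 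Without this, nothing rules out $|\gM\boxtimes\gN|$ splitting into two (or four) pieces. The vertex/source statement then follows from the $\cO L$-level computation and the index being prime to $p$, as you say.
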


%\textcolor{green}{I believe the way I've now done this proof is optimal. I guess  strictly speaking it didn't really follow immediately from \cite{Ku2} in the commuting case, as there's still the issue that $G \times_z H$ is a not a direct product of groups.}

\begin{proof}
That $\gM \boxtimes \gN$ is $\cO$-free, is clear.

We now assume that $G_{\0}$ and $H_{\0}$ are proper subgroups of $G$ and $H$ respectively. (We deal with the possibilities of either $G_{\0} = G$ or $H_{\0} = H$ at the end of the proof.)

Set $L$ to be the subgroup of $G \times_z H$ generated by $G_{\0}$ and $H_{\0}$, so $L \cong (G_{\0} \times H_{\0})/\langle (z,z) \rangle$. Consider the $\cO(G_{\0} \times H_{\0})$-module $\gM_{\0} \boxtimes \gN_{\0}$. Since $(z,z)$ acts as the identity on this module, we can view $\gM_{\0} \boxtimes \gN_{\0}$ as the $\cO L$-module $\gM_{\0} \boxtimes \gN_{\0}$. Moreover,
\begin{align}\label{algn:M_box_N_res_L}
\Res^{G \times_z H}_L\gM \boxtimes \gN \simeq \bigoplus_{\eps_g,\eps_h \in \{0,1\}} u_g^{\eps_g}u_h^{\eps_h}(\gM_{\0} \boxtimes \gN_{\0}),
\end{align}
as $\cO L$-modules, where $u_g \in G_{\1}$ and $u_h \in H_{\1}$, and
\begin{align}\label{algn:tensor_M0N0}
\gM \boxtimes \gN \simeq \Ind_L^{G \times_z H}\gM_{\0} \boxtimes \gN_{\0},
\end{align}
as $\cO(G \times_z H)$-supermodules. By Lemma \ref{lem_red_to_A0}(i), $\gM_{\0}$ is an $\cO$-free indecomposable $\cO G_{\0}$-module and $\gN_{\0}$ is an $\cO$-free indecomposable $\cO H_{\0}$-module. Therefore, $\gM_{\0} \boxtimes \gN_{\0}$ is $\cO$-free and, by \cite[Proposition 1.1]{Ku2}, is indecomposable as an $\cO(G_{\0} \times H_{\0})$-module and hence as an $\cO L$-module. (Note that in \cite{Ku2} the algebras are defined over an algebraically closed field. However, that proof runs through for algebras defined over $\cO$, as long as the modules are $\cO$-free.)

Lemma \ref{lem_red_to_A0}(ii) tells us that $\gM_{\0} \ncong u_g \gM_{\0}$ as $\cO G_{\0}$-modules and $\gN_{\0} \ncong u_h \gN_{\0}$ as $\cO H_{\0}$-modules. Therefore, all four indecomposable summands in (\ref{algn:M_box_N_res_L}) are pairwise non-isomorphic and it follows from (\ref{algn:tensor_M0N0}) and \cite[\S5, Propositon 2]{Ward} that $\gM \boxtimes \gN$ is indecomposable as an $\cO(G \times_z H)$-module.

Next, we note that $\gM_{\0}$ has vertex and source a $G$-conjugate of the pair $(P,\gU)$. Similarly, $\gN_{\0}$ has vertex and source an $H$-conjugate of the pair $(Q,\gV)$. Therefore, by \cite[Proposition 1.2]{Ku2}, $\gM_{\0} \boxtimes \gN_{\0}$ has vertex and source a $(G \times H)$-conjugate of the pair $(P \times Q,\gU \boxtimes \gV)$, as an $\cO(G_{\0} \times H_{\0})$-module. (Again, the reference to \cite{Ku2} holds without complication for $\cO$-free modules.) Since $p$ is odd and $\langle (z,z) \rangle$ has order $2$, $\gM_{\0} \boxtimes \gN_{\0}$ has vertex and source a $(G \times_z H)$-conjugate of the pair $(P \times Q,\gU \boxtimes \gV)$, as an $\cO L$-module. That $\gM \boxtimes \gN$ has vertex $P \times Q$ and source $\gU \boxtimes \gV$ now follows from (\ref{algn:M_box_N_res_L}) and (\ref{algn:tensor_M0N0}).

Finally, we observe that, if $G= G_\0$ or $H =H_\0$, the proof runs through in a very similar manner except that there will be only one or two summands in (\ref{algn:M_box_N_res_L}).
\end{proof}

\section{Double covers of symmetric groups}\label{sec:spin_blocks}

Throughout this section it is assumed that $\K$ (and hence $\cO$) contains a primitive $(2n!)^{\nth}$ root of unity. To be able to utilize our results from $\S$\ref{sec:wreath}, we also assume that $-1$ and $2$ have square roots in $\K$ (and hence $\cO$). We note that this last condition is automatic if $n \geq 4$ and so is not a terribly onerous assumption.
\color{black}

\subsection{The groups $\tSi_n$ and $\tAi_n$}\label{sec:gen_tSi}
The double covers of ${\Si}_n$ are given by
\begin{align}
\begin{split}\label{double_pres}
{\tSi}_n^+ := \langle z,\ t_1,\dots, t_{n-1} \mid  z^2=1,\ t_i z = z t_i,\  t_i^2=1,\\
t_i t_j = z t_j t_i \text{ if } |i-j| > 1,\ (t_i t_{i+1})^3=1 
%\text{ for } 0\leq  i\leq n - 2 
\rangle,\\
{\tSi}_n^- := \langle z,\ t_1,\dots, t_{n-1} \mid  z^2=1,\ t_i z = z t_i,\  t_i^2=z,\\
t_i t_j = z t_j t_i \text{ if } |i-j| > 1,\ (t_i t_{i+1})^3=1  \rangle.
\end{split}
\end{align}
%However, we will rarely use the notation of ${\tSi}_n^+$ and ${\tSi}_n^-$. 
We will use ${\tSi}_n$ to simultaneously represent both double covers.  
The natural group homomorphism ${\tSi}_n \to {\Si}_n$, $z\mapsto 1$, $t_i \mapsto s_i$ is denoted by $\pi_n$ (recall that we denote by $s_i$ the transposition $(i,i+1) \in \Si_n$).  We will sometimes consider $\tSi_n$ acting on $[n]$ through $\pi_n$. We denote the double cover of the alternating group $\pi_n^{-1}(\Ai_n)$ by ${\tAi}_n$. We define $|-|:\tSi_n \to \Z/2$ to be the unique group homomorphism with kernel $\tAi_n$. If $n>1$ we consider $\cR {\tSi}_n$ as a superalgebra corresponding to $({\tSi}_n)_\0=\tAi_n$, see \S\ref{sec:super_group_alg}. In the exceptional case $n=1$, we consider $\cR {\tSi}_n$ as a purely even superalgebra.

We set 
$$e_z := (1 - z)/2 \in \cR {\tSi}_n.$$
We will use the adjective `spin' when referring to objects associated to $\cR {\tSi}_n e_z$ rather than the whole of $\cR {\tSi}_n$. For example, the spin characters of ${\tSi}_n$ will refer to the ordinary characters $\chi$ of ${\tSi}_n$ satisfying $\chi(z) = -\chi(1)$, while the spin blocks of $\cO {\tSi}_n$ refers to the blocks of $\cO {\tSi}_n e_z$. Note that $\cR {\tSi}_n e_z$ inherits the structure of superalgebra from $\cR {\tSi}_n$ since the idempotent $e_z$ is even. 

If $n>1$ then $\K \tSi_n e_z$ is a split semisimple $\K$-superalgebra with superunit. If $n=1$, then $\K \tSi_n e_z \cong \K$ is split semisimple but is without a superunit. We can, therefore, still utilize many of the results from $\S$\ref{sec:KSS1}. However, several of the subsequent proofs later on the article require us to consider this case separately.

Let $G \leq \tSi_n$ with $z \in G$. Following the notation from $\S$\ref{sec:super_group_alg}, we set 
\begin{equation}\label{EG0}
G_{\0} := G \cap \tAi_n\quad \text{and} \quad G_{\1} := G \setminus \tAi_n
\end{equation}
 and treat $\cR Ge_z$ as a superalgebra in the appropriate way. (Note that, unlike in $\S$\ref{sec:super_group_alg}, we are yet not assuming that $G_{\0}$ is a proper subgroup of $G$.) If $G \nleq \tAi_n$, then, due to Lemma \ref{lem:root_unity}, $\K Ge_z$ is a split semisimple $\K$-superalgebra with superunit.

Recall the twisted group superalgebra $\cT_n$ from Example~\ref{ExTn}. 
We have the following isomorphisms of superalgebras
\begin{equation}\label{algn:Sn_Tn_isom}
\begin{split}
\cO \tSi_n^+ e_z & \iso \cT_n, \ t_i e_z  \mapsto \ct_i,\ z e_z  \mapsto -1
\\
\cO \tSi_n^- e_z & \iso \cT_n,\ t_i e_z  \mapsto (-1)^i\sqrt{-1}\ct_i,\  z e_z  \mapsto -1.
\end{split}
\end{equation}

\subsection{Special subgroups}
\label{SSSpSubg}
Consider the subgroups $G \leq \tSi_m$ and $H \leq \tSi_n$, both containing the canonical central element $z$ (note we do not distinguish between the $z$ in $G$ and that in $H$ as these elements will be identified anyway). As in $\S$\ref{sec:vert_sup_ten_prod}, we set $G \times_z H$ to be the free product $G \star H$ of groups subject to this identification of $z$'s and the relation $[g,h]=z^{|g||h|}$, for all $g \in G$ and $h \in H$. Equivalently, $G \times_z H$ is isomorphic to the subgroup of $\tSi_{m+n}$ generated by $G$ and $H$, where we view $H \leq \tSi_n$ as a subgroup of $\tSi_{m+n}$ via $z \mapsto z$, $t_i \mapsto t_{m+i}$.

%We set $e_{z_-} = (1-z)/2 \in \cO {\tSi}_n$ and the isomorphism $S(n)\scong \F {\tSi}_n e_{z_-}$.
%\newline
%\newline
If $\sX\subseteq [n]$, then $\Si_\sX$ will signify the subgroup of ${\Si}_n$ consisting of all elements that fix $[n]\backslash \sX$ pointwise and $\tSi_\sX$ will denote $\pi_n^{-1}(\Si_X) \leq \tSi_n$. Moreover, for disjoint $\sX_1, \dots, \sX_k \subseteq [n]$, $\tSi_{\sX_1,\dots,\sX_k}$ is the subgroup of $\tSi_n$ generated by the $\tSi_{\sX_i}$'s. In particular,
\begin{align*}
\tSi_{\sX_1,\dots,\sX_k} \cong \tSi_{\sX_1} \times_z \dots \times_z \tSi_{\sX_k}
\end{align*}
and
\begin{align}\label{EZCommAlg}
\cO\tSi_{\sX_1,\dots,\sX_k}e_z \cong \cO\tSi_{\sX_1}e_z \otimes \dots \otimes \cO\tSi_{\sX_k}e_z
\cong \cT_{|\sX_1|} \otimes \dots \otimes \cT_{|\sX_k|}
\end{align}
as superalgebras. We also define the corresponding subgroups of $\tAi_n$:
$$\tAi_\sX:=\tSi_\sX \cap \tAi_n\quad\text{and}\quad \tAi_{\sX_1,\dots,\sX_k}:=\tSi_{\sX_1,\dots,\sX_k} \cap \tAi_n.
$$
In the important special case where $n=n_1+\dots+n_k$ and 
$\sY_l=[n_1+\dots+n_l]\setminus[n_1+\dots+n_{l-1}]$ for $l=1,\dots,k$, we have the {\em standard Young subgroups}
\begin{equation}\label{EYoungSubgr}
\tSi_{n_1,\dots,n_k}:=\tSi_{\sY_1,\dots,\sY_k} \cong \tSi_{n_1} \times_z \dots \times_z \tSi_{n_k}\leq \tSi_n\quad\text{and}\quad
\tAi_{n_1,\dots,n_k}=\tSi_{n_1,\dots,n_k}\cap\tAi_{n}\leq \tAi_n.
\end{equation}
If some of the $n_l$'s are equal to each other, we use the exponential notation, as for example for the standard Young subgroup $\tSi_{r+kp,p^{d-k}}=\tSi_{r+kp,p,\dots,p}$ defined in (\ref{Hi}) below.

\subsection{Irreducible characters of $\tSi_n$ and $\tAi_n$}
\label{SSIrrChar}

For the following theorem, originally due to Schur \cite{Sch}, we adopt the notation of $\S$\ref{sec:KSS1}. 

\begin{Theorem} \label{TSchur}
We can identify $\La^{\K \tSi_n e_z}$ with $\Par_0(n)$, where $\la \in \Par_0(n)$ is even/odd (in the sense of $\S$\ref{sec:KSS1}) if $|\la|-h(\la)$ is even/odd.
\end{Theorem}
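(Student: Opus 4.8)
The plan is to recall Schur's classification of the irreducible spin (projective) characters of $\Si_n$ and to reconcile the data coming from that classification with the $\Mtype/\Qtype$ dichotomy of $\S\ref{sec:KSS1}$. Schur \cite{Sch} showed that the irreducible spin representations of $\tSi_n$ (equivalently, the irreducible $\cT_n$-supermodules, via the isomorphisms $(\ref{algn:Sn_Tn_isom})$) are naturally parametrised by strict partitions $\la\in\Par_0(n)$, and that the representation attached to $\la$ splits into an associate pair of ordinary irreducible characters of $\tSi_n$ precisely when $n-h(\la)$ is odd, and is a single self-associate ordinary irreducible character precisely when $n-h(\la)$ is even. (Here ``associate'' means: differing by the sign character of $\tSi_n$ with kernel $\tAi_n$.) Since $|\la|=n$, the parity of $|\la|-h(\la)$ is that of $n-h(\la)$, so the claimed even/odd labelling matches exactly.

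First I would unwind the definitions of $\S\ref{sec:KSS1}$ in the present setting. By Lemma~\ref{lem:root_unity}, $\K\tSi_n e_z$ is a split semisimple $\K$-superalgebra with superunit (for $n>1$), so by Lemma~\ref{lem:split_semi} it decomposes as a direct sum of blocks of the form $\cM_{m|n}(\K)$ and $\cQ_t(\K)$, and its irreducible supermodules are classified as in $\S\ref{sec:KSS1}$: a set $\{X_\la,\Pi X_\la\mid \la\in\La_\0\}\sqcup\{X_\la\mid\la\in\La_\1\}$, with $\La_\0$ indexing the type $\Mtype$ supermodules and $\La_\1$ the type $\Qtype$ ones. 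By Lemma~\ref{lem:irred_sup_mod}, an irreducible supermodule of type $\Mtype$ restricts (i.e.\ forgets grading) to a single irreducible $|\K\tSi_n e_z|$-module, hence corresponds to a single ordinary spin character of $\tSi_n$, which moreover must be self-associate by the argument of Lemma~\ref{lem:root_unity} (the two halves of an induced module from $\tAi_n$ differ by the sign character); whereas an irreducible supermodule of type $\Qtype$ decomposes as $\gN\oplus\gN^\si$ with $\gN\ncong\gN^\si$, giving an associate pair of ordinary spin characters. Thus the $\Mtype/\Qtype$ split on the superalgebra side is \emph{identical} to the self-associate / associate-pair split on the ordinary-character side.

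Then I would invoke Schur's theorem to transport the combinatorial labels: the ordinary spin characters of $\tSi_n$ are in bijection with $\bigsqcup_{\la\in\Par_0(n)}(\text{one or two characters})$, with two characters (an associate pair) exactly when $n-h(\la)$ is odd and one (self-associate) exactly when $n-h(\la)$ is even. Matching this against the description of $\Irr(\K\tSi_n e_z)$ in $(\ref{E221222})$ (one character $\xi_\la$ for $\la\in\La_\0$, an associate pair $\xi_\la^\pm$ for $\la\in\La_\1$), we get a bijection $\Par_0(n)\leftrightarrow\La^{\K\tSi_n e_z}$ under which $\la\in\La_\0$ (type $\Mtype$, self-associate) iff $n-h(\la)$ — equivalently $|\la|-h(\la)$ — is even, and $\la\in\La_\1$ (type $\Qtype$, associate pair) iff $|\la|-h(\la)$ is odd. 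This is exactly the asserted identification. The case $n=1$ is trivial: $\K\tSi_1 e_z\cong\K$, $\Par_0(1)=\{(1)\}$, $|\la|-h(\la)=0$ is even, and the unique character is self-associate.

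The main obstacle is purely expository rather than mathematical: one must state precisely which form of ``Schur's result'' is being cited and check that its parity convention (on $n-h(\la)$, or equivalently on the number of parts of $\la$ having the opposite parity to $n$) really does coincide with the $\Mtype/\Qtype$ convention built into $\S\ref{sec:KSS1}$. Concretely, the one genuine verification needed is that a type $\Mtype$ spin supermodule gives a \emph{self-associate} ordinary character while a type $\Qtype$ one gives an associate pair — but this is immediate from Lemma~\ref{lem:irred_sup_mod}(i)(a),(ii)(a) together with the observation (as in the proof of Lemma~\ref{lem:root_unity}) that twisting by the sign character of $\tSi_n$ is exactly the operation $\gN\mapsto\gN^\si$. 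Once this dictionary between ``self-associate vs.\ associate pair'' and ``$\Mtype$ vs.\ $\Qtype$'' is in place, Schur's classical count of strict partitions finishes the proof.
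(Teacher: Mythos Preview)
Your proposal is correct. The paper itself does not give a proof of this theorem at all: it simply states the result and attributes it to Schur \cite{Sch}, treating it as classical. So there is no ``paper's own proof'' to compare against.

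What you have written is precisely the dictionary one needs to translate Schur's classical classification of spin characters (phrased in terms of self-associate versus associate-pair ordinary characters) into the supermodule language of \S\ref{sec:KSS1} (phrased in terms of type $\Mtype$ versus type $\Qtype$). Your key observation --- that $\gN\mapsto\gN^\si$ is exactly tensoring with the sign character, so that type $\Mtype$ corresponds to self-associate and type $\Qtype$ to an associate pair --- is the only nontrivial step, and it is correct (and indeed implicit in the proof of Lemma~\ref{lem:root_unity}). The paper simply assumes the reader can make this translation.
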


To label the irreducible spin characters of $\tSi_n$, we now use (\ref{E221222}), but note that for the moment, we are not being precise about distinguishing between $\xi_\la^+$ and $\xi_\la^-$ when $\la$ is odd. 
To label the irreducible spin characters of $\tAi_n$, if $n=1$, then $\tilde{\xi}_{(1)}$ will denote the unique element of $\Irr(\K \tAi_1 e_z)$. For $n>1$, we use the notation of Lemma \ref{lem:res_chars}. Again, for the moment, we are not being precise about distinguishing between $\tilde{\xi}_\la^+$ and $\tilde{\xi}_\la^-$ when $\la$ is even.

If $m,n>1$, we can label the irreducible characters of
\begin{align*}
\K (\tSi_m \times_z \tSi_n)e_z \cong \K \tSi_m e_z \otimes \K \tSi_n e_z
\end{align*}
using Lemmas \ref{lem:char_A_ten_B} and \ref{lem:A_ten_B}. If $m=1$, then Lemma \ref{lem:A_ten_B} no longer applies. However, to unify our $m=1$ and $m>1$ theory, we set
$$
\xi_{(1),\la}^{(\pm)} := \xi_{(1)} \boxtimes \xi_{\la}^{(\pm)} \in \Irr(\K(\tSi_1 \times_z \tSi_n)e_z),
$$
for any $\la \in \Par_0(n)$. If $n=1$ we proceed similarly to the case $m=1$.

%\subsubsection{Shifted tableau and branching rules}

%\textcolor{red}{We can possibly move all the following combinatorics to the `Combinatorics' Section. However, I quite like having it separate here as it feels completely self-contained in that it's sole purpose is to state Lemma \ref{thm:stembridge}.}

\subsection{Shifted tableau and branching rules}
If $\mu \in \Par_0(m)$ and $\la \in \Par_0(n)$ are such that $\mu \subseteq \la$, we consider the shifted skew diagram $\sh[\la \setminus \mu]:=\sh[\la] \setminus \sh[\mu]$ of $\la \setminus \mu$.
We set $\mathbb{N}'$ to be the totally ordered set $\{1' < 1 < 2' < 2 < \dots\}$. The letters $1',2',3',\dots$ are said to be {\em marked}.
A {\em shifted tableau} $\T$ of shape $\la \setminus \mu$ is an assignment $\T:\sh[\la \setminus \mu] \to \mathbb{N}'$ satisfying
\begin{enumerate}
\item $\T(i,j) \leq \T(i+1,j)$, $\T(i,j) \leq \T(i,j+1)$, for all permissible $i,j$.
\item Each column has at most one $k$, for each $k \in \NN$.
\item Each row has at most one $k'$, for each $k \in \NN$.
\end{enumerate}
We say $\T$ has {\em content} $\nu = (\nu_1, \nu_2, \dots)$, where
$
\nu_k = |\{(i,j)|\T(i,j) = k \text{ or }k'\}|.
$
The {\em word} $w(\T)$ of $\T$ is the sequence obtained by reading the rows of $\T$ from left to right starting at the bottom row and working upwards. If $w(\T) = w_1, w_2, \dots w_{n-m}$ and $i \in \NN$, we define
\begin{align*}
m_i(j) = \text{ multiplicity of $i$ among }w_{n-m-j+1}, \dots, w_{n-m},
\end{align*}
for $0\leq j \leq n-m$ and
\begin{align*}
m_i(n-m+j) = m_i(n-m) + \text{ multiplicity of $i'$ among }w_1, \dots, w_j,
\end{align*}
for $0< j \leq n-m$.

The word $w$ is said to satisfy the lattice property if, whenever $m_i(j) = m_{i-1}(j)$, for permissible $i$ and $j$ with $j < 2(n-m)$, we have
\begin{align*}
w_{n-m-j} \neq i, i' &&&\text{ if } 0\leq j < n-m, \\
w_{j-n+m+1} \neq i-1, i'& &&\text{ if } n-m\leq j < 2(n-m).
\end{align*}

We write $|w|$ to denote the word $w$ but with all the $k'$'s replaced by $k$'s, for all $k \in \NN$.

If $\nu \in \Par_0(n-m)$, then we define $\mathfrak{f}_\nu(\la \setminus \mu)$ to be the number of shifted tableaux $\T:\sh[\la \setminus \mu] \to \mathbb{N}'$ of content $\nu$ such that
\begin{enumerate}
\item $w(\T)$ satisfies the lattice property.
\item The leftmost $i$ of $|w(\T)|$ is unmarked in $w$, for all $1\leq i \leq h(\nu)$.
\end{enumerate}

Recall the notation (\ref{EEps}),(\ref{EEpsN}).

\begin{Theorem}\label{thm:stembridge}{\rm \cite[Theorems 8.1,\,8.3]{Stem}}\,\,
Let $m,n \in \NN$, with $m < n$, $\mu \in \Par_0(m)$ and $\nu \in \Par_0(n-m)$. %We set $\sU:=[m]$ and $\sV:=[n]\setminus[m]$. 
Then
\begin{align*}
\xi_{\mu ,\nu} \uparrow_{\tSi_{m,n-m}}^{\tSi_n} = \sum_{\la \in \Par_0(n)}\frac{\eps_{\mu,\nu}}{\eps_\la}\,2^{(h(\mu) + h(\nu) - h(\la))/2}\,\mathfrak{f}_\nu(\la \setminus \mu)\,\xi_\la.
\end{align*}
Furthermore, if $(\mu,\nu)$ and $\la$ are both odd, then, unless $\la = \mu \sqcup \nu$, the coefficients of $\xi_\la^+$ and $\xi_\la^-$ are equal in both $\xi_{\mu,\nu}^+ \uparrow_{\tSi_{m,n-m}}^{\tSi_n}$ and $\xi_{\mu,\nu}^- \uparrow_{\tSi_{m,n-m}}^{\tSi_n}$.
\end{Theorem}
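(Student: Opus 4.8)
The plan is to follow Stembridge \cite{Stem}: transport the branching problem to the ring of symmetric functions, where induction of external tensor products of spin characters becomes multiplication of Schur $Q$-functions, feed in the shifted Littlewood--Richardson rule, and then extract the $\pm$-refinement by a separate argument with difference characters. Concretely, let $\Gamma_\K$ be the $\K$-subalgebra of symmetric functions generated by the odd power sums, with its basis of Schur $Q$-functions $\{Q_\la\mid\la\in\Par_0\}$ and the bilinear form for which the $Q_\la$ are orthogonal. Using Theorem~\ref{TSchur} and the labelling of \S\ref{sec:KSS1}, one has a linear map $\ch$ from $\bigoplus_n\Grot(\K\tSi_n e_z)$ into $\Gamma_\K$ with $\ch(\xi_\la)=c_\la Q_\la$, where the scalar $c_\la$ depends only on $h(\la)$ and on whether $\la$ is of type $\Mtype$ or $\Qtype$; the classical theory says $\ch$ is an isometry and, under the identification of (\ref{EZCommAlg}), intertwines $\uparrow_{\tSi_{m,n-m}}^{\tSi_n}$ with multiplication in $\Gamma_\K$. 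Granting this, $\ch(\xi_{\mu,\nu}\uparrow_{\tSi_{m,n-m}}^{\tSi_n})=c_\mu c_\nu\,Q_\mu Q_\nu$, so the first formula of the theorem is equivalent to the expansion $Q_\mu Q_\nu=\sum_\la 2^{h(\mu)+h(\nu)-h(\la)}\,\mathfrak f_\nu(\la\setminus\mu)\,Q_\la$, once the normalising constants $c_\la$ are reconciled with the factors $\eps_{\mu,\nu}\eps_\la^{-1}$ and the half-integer powers of $2$ in the statement --- a short but slightly fiddly parity bookkeeping using (\ref{EEps}), (\ref{EEpsN}) and Theorem~\ref{TSchur}.

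The combinatorial core is therefore the product rule $Q_\mu Q_\nu=\sum_\la 2^{h(\mu)+h(\nu)-h(\la)}f^\la_{\mu\nu}Q_\la$ with $f^\la_{\mu\nu}=\mathfrak f_\nu(\la\setminus\mu)$. I would prove this by shifted jeu de taquin in the style of Sagan and Worley: work with shifted skew semistandard tableaux on the alphabet $\mathbb N'$ subject to the marking conventions appearing in the statement, define shifted rectification, prove it is confluent (independent of the chosen inner corners), and construct a weight-preserving bijection identifying the shifted skew tableaux of shape $\la\setminus\mu$, content $\nu$, rectifying to a fixed superstandard tableau with the tableaux enumerated by $\mathfrak f_\nu(\la\setminus\mu)$ in the statement; the factor $2^{h(\mu)+h(\nu)-h(\la)}$ then records the free choices of marking on the diagonal boxes, which become forced after rectification. (Alternatively one can deduce the rule from the ordinary Littlewood--Richardson rule via the doubling construction that turns $Q$-function combinatorics into Schur-function combinatorics on a symmetric straight shape.) The delicate points --- and essentially the only genuine work in the whole theorem --- are the well-definedness of shifted rectification and the precise translation of the lattice-word and leftmost-unmarked conditions.

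For the refinement I would argue as follows. When $\la$ is odd the difference $\Delta_\la:=\xi_\la^+-\xi_\la^-$ is a class function on $\tSi_n$ that vanishes off the conjugacy class(es) lying over the $\Si_n$-class of cycle type $\la$, where it takes a known nonzero value; likewise $\Delta_{\mu,\nu}:=\xi_{\mu,\nu}^+-\xi_{\mu,\nu}^-$ when $(\mu,\nu)$ is odd, and the $\Delta_\la$ (resp. the $\Delta_{\mu',\nu'}$) span the space of class functions anti-invariant under the associate twist. The assertion that $\xi_\la^+$ and $\xi_\la^-$ occur with equal multiplicity in $\xi_{\mu,\nu}^{\pm}\uparrow_{\tSi_{m,n-m}}^{\tSi_n}$ is precisely $\langle\xi_{\mu,\nu}^{\pm}\uparrow_{\tSi_{m,n-m}}^{\tSi_n},\Delta_\la\rangle=0$, which by Frobenius reciprocity equals $\langle\xi_{\mu,\nu}^{\pm},\Delta_\la\downarrow_{\tSi_{m,n-m}}\rangle$. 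Since an element of $\tSi_n$ of cycle type $\la$ has all parts distinct, it can only be conjugated into $\tSi_{m,n-m}$ with its cycles split between the two factors, so on support and $\sigma$-anti-invariance grounds $\Delta_\la\downarrow_{\tSi_{m,n-m}}$ is a signed combination of the $\Delta_{\mu',\nu'}$ over decompositions $\mu'\sqcup\nu'=\la$ with $|\mu'|=m$; pairing against $\xi_{\mu,\nu}^{\pm}$ and using orthonormality of irreducible characters, every term vanishes unless $(\mu',\nu')=(\mu,\nu)$, i.e. unless $\la=\mu\sqcup\nu$. The exceptional case $\la=\mu\sqcup\nu$ corresponds to the unique superstandard shifted tableau already accounted for in the first formula, so there is nothing further to check there.

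The step I expect to be the main obstacle is the shifted Littlewood--Richardson rule of the second paragraph --- the confluence of shifted jeu de taquin together with the exact bookkeeping of the marking and lattice conditions; by contrast the passage through the characteristic map and the difference-character computation are comparatively formal, modulo the standard structural facts about spin characters recorded in Theorem~\ref{TSchur} and \S\ref{sec:KSS1}.
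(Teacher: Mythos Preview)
Your proposal is correct, but it is vastly more than what the paper does or requires. The theorem statement already attributes the result to Stembridge \cite[Theorems~8.1,\,8.3]{Stem}, and the paper's proof is accordingly a three-line translation exercise: it observes that Stembridge's formulation uses irreducible characters rather than irreducible supercharacters, which accounts for an extra factor of $\eps_{\mu,\nu}^2$ on the right-hand side, and then checks the one exceptional case Stembridge singles out (namely $\la=\mu\sqcup\nu$ with $\la$ odd), verifying that since $\mathfrak{f}_\nu(\la\setminus\mu)=1$, $\eps_{\mu,\nu}=\eps_\la$, and $h(\la)=h(\mu)+h(\nu)$ there, the coefficient indeed comes out to $1$ as required.

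You, by contrast, are sketching a self-contained proof of Stembridge's theorem itself: the characteristic map into $\Gamma_\K$, the shifted Littlewood--Richardson rule via Sagan--Worley jeu de taquin, and a difference-character argument for the $\pm$-refinement. All of this is sound as an outline (and your identification of the confluence of shifted rectification as the hard step is accurate), but it reproduces the content of \cite{Stem} rather than invoking it. For the purposes of this paper the appropriate scope is: cite Stembridge, reconcile the $\eps$-normalisations between characters and supercharacters, and handle the $\la=\mu\sqcup\nu$ corner case explicitly.
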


\begin{proof}
In \cite{Stem} this is stated for the character on the left-hand side being irreducible and not an irreducible supercharacter (as is the case here). This is the reason our right-hand side appears to be multiplied by $\eps_{(\mu,\nu)}^2$. 
There is also an exception mentioned in \cite{Stem}, namely when $\la$ is odd and $\la = \mu \sqcup \nu$. In this case the appropriate coefficient is $1$. As noted in that proof, we have $\mathfrak{f}_\nu(\la \setminus \mu) = 1$. Therefore, since $\eps_{\mu,\nu} = \eps_\la$ and $h(\la) = h(\mu) + h(\nu)$, our formula also gives coefficient~$1$.
\end{proof}

\subsection{Blocks of \texorpdfstring{$\tSi_n$ and $\tAi_n$}{}}\label{sec:blocks}

In this section we describe the spin blocks of $\tSi_n$ and $\tAi_n$, their defect groups and Brauer correspondents. Parts (i),(ii) of the following theorem are \cite[Theorem 1.1]{Hu}, while parts (iii),(iv) are \cite[Theorem A, Corollary 26]{Ca}.

\begin{Theorem}\label{thm:DCblocks}
Let $n\in\NN$. 
\begin{enumerate}
\item The spin blocks of $\cO {\tSi}_n$ are labelled by pairs $(\rho,d)$, where $\rho$ is a ${\bar p}$-core and $d\in \N$, with $n=|\rho|+dp$. If $d=0$ and $\rho$ is odd then $(\rho,0)$ labels two spin blocks of $\cO {\tSi}_n$ with corresponding block idempotents $\ei_{\rho,0}^+,\ei_{\rho,0}^- \in \cO {\tSi}_n e_z$. In all cases we denote the block idempotent (or sum of two block idempotents) corresponding to $(\rho,d)$ by $\ei_{\rho,d}\in \cO {\tSi}_n e_z$.

\item The character $\xi_\la^{(\pm)}$ lies in the (sum of) block(s) $\cO {\tSi}_n \ei_{\rho,d}$, where $\rho$ is the ${\bar p}$-core of $\la$. If $d=0$ and $\rho$ is odd, we choose the labeling so that $\xi_\rho^{\pm}$ lies in the block $\cO {\tSi}_n \ei_{\rho,0}^{\pm}$.
\end{enumerate}
For the remainder of this theorem we fix a ${\bar p}$-core $\rho$ and $d\in \N$ with $n=|\rho|+dp$, and 
set $\sR:=[|\rho|]$ and $\sJ :=[n]\setminus \sR$.
\begin{enumerate}
\item[(iii)] If $d=0$, each block of $\cO {\tSi}_n \ei_{\rho,d}$ has trivial defect group. If $d>0$, any Sylow $p$-subgroup $\Di $ of $\tSi_\sJ \leq \tSi_n$ is a defect group of $\cO {\tSi}_n \ei_{\rho,d}$.
\item[(iv)] If $d=0$, then each block of $\cO{\tSi}_n \ei_{\rho,d}$ is its own Brauer correspondent. If $d>0$, then, setting $\Di $ to be as in (iii), the Brauer correspondent of $\cO{\tSi}_n \ei_{\rho,d}$ in $N_{{\tSi}_n}(\Di )$ is $\cO N_{{\tSi}_n}(\Di ) \ei_{\rho,0}$. Here, we view $\ei_{\rho,0}$ as an element of $\cO N_{\tSi_n}(\Di )$ via $\ei_{\rho,0}\in \cO\tSi_\sR\into \cO N_{\tSi_n}(\Di )$. If $\rho = \varnothing$, we interpret $\ei_{\varnothing,0}$ as $e_z$.
\end{enumerate}
\end{Theorem}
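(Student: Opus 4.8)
Parts (i), (ii) are \cite[Theorem 1.1]{Hu} and parts (iii), (iv) are \cite[Theorem A, Corollary 26]{Ca}, so the plan is essentially to match those statements to the present notation; here is the shape of the arguments. For (i) and (ii) the key point is the ``spin Nakayama conjecture'': $\xi_\la$ and $\xi_\mu$ lie in the same spin block of $\cO\tSi_n$ precisely when $\core(\la)=\core(\mu)$. The hard direction is a linkage statement, to be proved by induction on $\wt(\la)$. When $\wt(\la)=0$, Schur's degree formula \cite{Sch} shows $\xi_\la(1)$ is coprime to $p$, so $\xi_\la$ labels a block of defect zero. When $\wt(\la)>0$ one applies Stembridge's branching rule (Theorem \ref{thm:stembridge}): restricting to and inducing from the standard Young subgroup $\tSi_{n-p,p}$ links $\xi_\la$ to a spin character of weight $\wt(\la)-1$ having the same $\bar p$-core, and a pass through the elementary abacus slides of $\S$\ref{sec:comb} then shows that all $\la$ with a fixed $\bar p$-core lie in one block. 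For the converse one separates the central characters attached to distinct $\bar p$-cores. The exceptional case $d=0$ with $\rho$ odd, where $(\rho,0)$ labels the two associate blocks $\cO\tSi_n\ei_{\rho,0}^\pm$, is sorted out using the discussion of associate pairs in $\S$\ref{sec:KSS1} and Lemma \ref{lem:res_chars}; the convention in (ii) merely records which of $\xi_\rho^\pm$ is assigned to which $\ei_{\rho,0}^\pm$.

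For (iii), (iv) with $d=0$ there is nothing to prove: a block with a character of $p'$-degree has trivial defect group and is its own Brauer correspondent. For $d>0$, let $\Di$ be a Sylow $p$-subgroup of $\tSi_\sJ$; as $p$ is odd, $\Di\leq\tAi_\sJ$ and $\Di$ is isomorphic to a Sylow $p$-subgroup of $\Si_{dp}$. Realising $\ei_{\rho,d}$ inside $\cO\tSi_{\sR,\sJ}e_z\cong\cO\tSi_\sR e_z\otimes\cO\tSi_\sJ e_z$ and using that the $(\rho,0)$-block of $\cO\tSi_\sR$ has defect zero while $\Di$ is a defect group of the corresponding spin block of $\cO\tSi_\sJ$, one sees that $\cO\tSi_n\ei_{\rho,d}$ is relatively $\Di$-projective. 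Both the fact that $\Di$ cannot be shrunk and the identification of the Brauer correspondent then flow from the computation $\Br_{\Di}(\ei_{\rho,d})=\Br_{\Di}(\ei_{\rho,0})$, where $\ei_{\rho,0}$ is viewed in $\cO\tSi_\sR\subseteq\cO C_{\tSi_n}(\Di)\subseteq\cO N_{\tSi_n}(\Di)$ (note $\Di$ consists of even permutations, so $\tSi_\sR$ centralises $\Di$): this Brauer value is nonzero, so $\Di$ is a defect group by Brauer's criterion, and Brauer's First Main Theorem, together with the containment $N_{\tSi_n}(\Di)\supseteq\tSi_\sR\times_z N_{\tSi_\sJ}(\Di)$, identifies the Brauer correspondent as $\cO N_{\tSi_n}(\Di)\ei_{\rho,0}$; when $\rho=\varnothing$ one has $\tSi_\sR=\langle z\rangle$ and $\ei_{\varnothing,0}=e_z$.

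The main obstacle is part (i): the linkage direction of the spin Nakayama conjecture in weight $d\geq1$---the double-cover analogue of the Nakayama conjecture for $\Si_n$---is genuinely deep, and one simply invokes \cite{Hu} for it; the computation $\Br_{\Di}(\ei_{\rho,d})=\Br_{\Di}(\ei_{\rho,0})$ underlying (iii) and (iv) is the heart of \cite{Ca}. Everything else is routine local block theory, plus the bookkeeping needed to align the combinatorial and local conventions of \cite{Hu} and \cite{Ca} with those of $\S$\ref{sec:comb} and $\S$\ref{SSIrrChar}.
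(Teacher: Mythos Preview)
Your proposal is correct and takes the same approach as the paper: the paper simply cites \cite[Theorem 1.1]{Hu} for parts (i),(ii) and \cite[Theorem A, Corollary 26]{Ca} for parts (iii),(iv) without giving any further argument. Your additional expository sketch of the underlying arguments is accurate but goes beyond what the paper provides.
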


\begin{Remark}\label{rem:superblocks}
The $\cO {\tSi}_n \ei_{\rho,d}$'s are precisely the superblocks of $\cO {\tSi}_n e_z$. That is, the finest decomposition of $\cO {\tSi}_n e_z$ into a direct sum of two-sided ideals that are invariant under the involution $\sigma_{\cO {\tSi}_n}$. This follows from the distribution of characters given in Theorem \ref{thm:DCblocks}(ii). In particular, we always have $\ei_{\rho,d} \in \cO \tAi_n e_z$.
\end{Remark}

The spin blocks of $\cO {\tAi}_n$ are described in~\cite[Proposition 3.16]{Ke}:

\begin{Theorem}\label{thm:An_blocks}
For each $n>1$, ${\bar p}$-core $\rho$ and $d\in \N$ with $n=|\rho|+dp$, $\cO {\tAi}_n \ei_{\rho,d}$ is a single block of $\cO {\tAi}_n$ unless $d=0$ and $\rho$ is even. In this latter case $\cO{\tAi}_n \ei_{\rho,d}$ is a direct sum of two blocks of $\cO {\tAi}_n$. If $n=1$, then $\cO \tAi_1 \ei_{(1),0} = \cO \tSi_1 \ei_{(1),0} = \cO e_z$ is a single block of $\cO \tAi_1$.

If $d=0$, once again, the defect group is trivial. In all other cases the defect group of $\cO {\tAi}_n \ei_{\rho,d}$ is the same as that of $\cO {\tSi}_n \ei_{\rho,d}$.
\end{Theorem}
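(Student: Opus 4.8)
The plan is to deduce this from Theorem~\ref{thm:DCblocks}, the description of the irreducible spin characters of $\tSi_n$ and $\tAi_n$ in \S\ref{SSIrrChar} (via Theorem~\ref{TSchur} and Lemma~\ref{lem:res_chars}), and standard Clifford theory for the index-$2$ normal subgroup $\tAi_n\trianglelefteq\tSi_n$; the block-decomposition part is exactly \cite[Proposition 3.16]{Ke}, and the defect-group part reduces to Theorem~\ref{thm:DCblocks}(iii) together with a standard fact about blocks over a normal subgroup of index prime to $p$.

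First I would dispose of $n=1$: here $\tSi_1=\tAi_1=\langle z\rangle$ and $\cO\tAi_1e_z=\cO\tSi_1e_z\cong\cO$ is a block with trivial defect group, so nothing more is needed. Assume $n>1$. Since $z\in\tAi_n$ and, by Remark~\ref{rem:superblocks}, $\ei_{\rho,d}\in\cO\tAi_ne_z$, write $\ei_{\rho,d}=\sum_i e_i$ as a sum of block idempotents of $\cO\tAi_n$; conjugation by $\tSi_n$ permutes the $e_i$ with orbits of size $1$ or $2$, and every orbit sum lies in $Z(\cO\tSi_n)$. Hence, when $\cO\tSi_n\ei_{\rho,d}$ is a single block (namely $d>0$, or $d=0$ with $\rho$ even), $\cO\tAi_n\ei_{\rho,d}$ is either a single block or a sum of two $\tSi_n$-conjugate blocks. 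To decide which, I would use: a character $\chi\in\Irr(\cO\tSi_n\ei_{\rho,d})$ with $\chi^\si\ne\chi$ restricts irreducibly to $\tAi_n$, and its restriction then lies in a $\tSi_n$-stable $e_i$, forcing $\ei_{\rho,d}=e_i$ to be a single block of $\cO\tAi_n$; conversely if every $\chi\in\Irr(\cO\tSi_n\ei_{\rho,d})$ satisfies $\chi^\si=\chi$ then each such $\chi$ restricts to a sum of two distinct $\tAi_n$-characters, which forces the two-block conclusion. By Theorem~\ref{TSchur} and Lemma~\ref{lem:res_chars}, the characters $\xi_\la$ ($\la$ even) and $\xi_\la^{\pm}$ ($\la$ odd), $\la\in\Par_0(\rho,d)$, of $\cO\tSi_n\ei_{\rho,d}$ satisfy $\xi_\la^\si=\xi_\la$ and $(\xi_\la^{\pm})^\si=\xi_\la^{\mp}$ respectively, so the dichotomy comes down to whether $\Par_0(\rho,d)$ contains an odd strict partition. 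The remaining case $d=0$ with $\rho$ odd I would handle directly: then $\cO\tSi_n\ei_{\rho,0}=\cO\tSi_n\ei_{\rho,0}^+\oplus\cO\tSi_n\ei_{\rho,0}^-$ with characters $\xi_\rho^{\pm}$, each restricting to the same $\tilde\xi_\rho$, so $\cO\tAi_n\ei_{\rho,0}$ has the single (defect-zero) character $\tilde\xi_\rho$ and is a single block; whereas for $d=0$ with $\rho$ even, $\xi_\rho$ restricts to the two distinct defect-zero characters $\tilde\xi_\rho^{\pm}$, giving two blocks.

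For $d>0$ one must show $\Par_0(\rho,d)$ always contains an odd partition; I would extract this from the $\bar p$-quotient parametrization of \S\ref{sec:comb}, using the parity relation between $|\la|-h(\la)$, the analogous invariant of the core $\rho$, and the quotients $\la^{(0)},\dots,\la^{(\ell)}$, which shows that once $d\ge 1$ the parity of $h(\la)$ takes both values as $\la$ ranges over $\Par_0(\rho,d)$. This combinatorial parity step is the main obstacle; since it (and the full block decomposition) is precisely \cite[Proposition 3.16]{Ke}, one may alternatively just cite that result.

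Finally the defect groups. For $d=0$ every irreducible character of $\cO\tAi_n\ei_{\rho,0}$ has defect zero, so the block(s) have trivial defect group. For $d>0$, $\cO\tAi_n\ei_{\rho,d}$ is a block of $\cO\tAi_n$ covered by the block $\cO\tSi_n\ei_{\rho,d}$ of $\cO\tSi_n$; since $[\tSi_n:\tAi_n]=2$ is prime to $p$, a block and a block it covers have a common defect group, which by Theorem~\ref{thm:DCblocks}(iii) is a Sylow $p$-subgroup of $\tSi_\sJ$. (This makes sense because $p$ is odd, so every $p$-subgroup of $\tSi_n$ already lies in $\tAi_n$.)
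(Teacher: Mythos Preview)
The paper does not give its own proof of this theorem: it is stated with the attribution ``The spin blocks of $\cO {\tAi}_n$ are described in~\cite[Proposition 3.16]{Ke}'' and no proof is supplied. Your outline is a correct reconstruction of the argument behind that reference---Clifford theory for the index-$2$ subgroup $\tAi_n\trianglelefteq\tSi_n$, together with the character analysis of \S\ref{SSIrrChar}, and the standard fact that a block of a normal subgroup of $p'$-index shares a defect group with any block covering it. You correctly identify the one nontrivial ingredient, namely that for $d>0$ the set $\Par_0(\rho,d)$ always contains an odd strict partition, and you appropriately defer this to~\cite{Ke}. So your approach matches what the paper implicitly invokes via the citation.
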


%The Brauer correspondents of blocks of spin covers are described in~\cite[Theorem A, Corollary 26]{Ca}:

%\begin{Theorem}\label{thm:brauer}

%\end{Theorem}

We set 
$$\Blo^{\rho,d}:=\cO \tSi_n \ei_{\rho,d}\quad\text{and}\quad \Blo^{\rho,d}_{\0}:=\cO \tAi_n \ei_{\rho,d}
$$ 
(it is easy to see that $\cO \tAi_n \ei_{\rho,d}$ is indeed the purely even subalgebra of the superalgebra $\Blo^{\rho,d}$). We will often assume $d>0$ to ensure that blocks and superblocks coincide. However, the corresponding statements for trivial defect blocks are completely elementary. 

Fix a $\bar p$-core $\rho$ and a non-negative integer $d$. Let $r:=|\rho|$ and $n:=r+dp$. As in Theorem~\ref{thm:DCblocks}, we set $\sR:=[r]$ and $\sJ:=[n]\setminus \sR$. For $k=1,\dots,d$ we also set $
\sJ_k:=[r+kp]\setminus[r+(k-1)p].
$ 
In other words, we have:
\begin{equation}\label{EUVI}
\underbrace{1,\dots,r}_{\sR}\,\overbrace{\underbrace{r+1,\dots,r+p}_{\sJ_1}\,
\,\underbrace{r+p+1,\dots,r+2p}_{\sJ_2}\,
\dots\,\underbrace{r+p(d-1)+1,\dots,r+dp}_{\sJ_d}}^\sJ
\end{equation}

With this notation, the following now follows easily from Theorem~\ref{thm:DCblocks}, cf. \cite[Proposition 5.2.13]{KL}:

\begin{Lemma}\label{lem:Brauer_corr} 
%{\rm \cite[Proposition 5.2.13]{KL}}
Let $\Di $ be the defect group of the block $\Blo^{\rho,d}$. 
Then $\Di $ is abelian if and only if $d<p$. In this case we can choose $\Di =\Di_1\times \dots \times \Di_d$, where each $\Di_k$ is a Sylow $p$-subgroup of $\tSi_{\sJ_k}$. %, for $\sJ_i:=[|\rho|+ip]\setminus[|\rho|+(i-1)p].$
\end{Lemma}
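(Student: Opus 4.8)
\textbf{Proof proposal for Lemma~\ref{lem:Brauer_corr}.}

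The plan is to read off everything from Theorem~\ref{thm:DCblocks}(iii) together with the standard structure theory of Sylow $p$-subgroups of symmetric groups. First I would recall that, by Theorem~\ref{thm:DCblocks}(iii) (assuming $d>0$, the case $d=0$ being the trivial-defect situation where $\Di=1$ is certainly abelian and $d<p$ holds vacuously), a defect group of $\Blo^{\rho,d}$ is a Sylow $p$-subgroup $\Di$ of $\tSi_{\sJ}\leq\tSi_n$. Since $|\sJ|=dp$ and $z$ is a central element of order $2$ with $p$ odd, a Sylow $p$-subgroup of $\tSi_{\sJ}$ maps isomorphically under $\pi_n$ onto a Sylow $p$-subgroup of $\Si_{\sJ}\cong\Si_{dp}$; thus $\Di$ may be identified with a Sylow $p$-subgroup of $\Si_{dp}$. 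Up to conjugacy in $\tSi_{\sJ}$ we are free to choose which Sylow $p$-subgroup to take, so it suffices to exhibit one convenient choice.

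Next I would invoke the well-known description of Sylow $p$-subgroups of $\Si_{dp}$: writing $d$ in base $p$, a Sylow $p$-subgroup of $\Si_{dp}$ is an iterated wreath product built out of the base cyclic group $C_p$ of order $p$. The key quantitative point is that $|\Si_{dp}|_p = p^{(dp-s_p(dp))/(p-1)}$ where $s_p$ denotes the sum of base-$p$ digits; a short computation gives that this equals $p^d$ precisely when $d<p$, and is strictly larger than $p^d$ when $d\geq p$ (equivalently, when $d<p$ the $p$-part of $(dp)!$ is exactly $(p!)_p^{\,d} = p^d$, while for $d\ge p$ there is an extra contribution). Hence $\Di$, having order $p^d$ when $d<p$, is a direct product of $d$ copies of $C_p$ and in particular abelian; and when $d\geq p$ the group $\Di$ has order $>p^d$ and contains a wreath product $C_p\wr C_p$, which is non-abelian, so $\Di$ is non-abelian. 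This establishes the equivalence "$\Di$ abelian $\iff d<p$".

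Finally, for the explicit form of $\Di$ when $d<p$: the $dp$ points of $\sJ$ partition as $\sJ=\sJ_1\sqcup\cdots\sqcup\sJ_d$ in (\ref{EUVI}), each $\sJ_k$ of size $p$. Choosing for each $k$ a Sylow $p$-subgroup $\Di_k$ of $\tSi_{\sJ_k}$ (a cyclic group of order $p$, mapping isomorphically to a $p$-cycle on $\sJ_k$), the subgroup $\Di_1\times\cdots\times\Di_d$ of $\tSi_{\sJ_1,\dots,\sJ_d}\leq\tSi_{\sJ}$ has order $p^d$ and is a $p$-subgroup of $\tSi_{\sJ}$; since $d<p$ forces $|\tSi_{\sJ}|_p = p^d$ by the computation above, it is in fact a Sylow $p$-subgroup of $\tSi_{\sJ}$, hence a legitimate choice of defect group. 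The main (and really only) obstacle here is being careful about the passage between $\tSi_{\sJ}$ and $\Si_{\sJ}$ and the digit-sum estimate for $|\Si_{dp}|_p$; both are routine, so the lemma "follows easily from Theorem~\ref{thm:DCblocks}" as claimed.
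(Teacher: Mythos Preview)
Your proposal is correct and follows precisely the route the paper intends: the paper simply asserts the lemma ``follows easily from Theorem~\ref{thm:DCblocks}'' (with a cross-reference to \cite[Proposition 5.2.13]{KL}), and your argument supplies exactly those easy details---identifying $\Di$ via Theorem~\ref{thm:DCblocks}(iii) with a Sylow $p$-subgroup of $\Si_{dp}$ and then invoking the standard wreath-product structure of such Sylow subgroups. There is nothing to add.
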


Let $d<p$. 
We continue with the notation (\ref{EUVI}).
Let $\iota:\Si_d \to \Si_\sJ$ be the natural permutation action of $\Si_d$ on the $\sJ_k$'s. More precisely,
\begin{align*}
\iota(w)\cdot (r+(k-1)p+t) = r+(w(k)-1)p+t,
\end{align*}
for all $1\leq k\leq d$, $1\leq t\leq p$ and $w\in \Si_d$. We choose a lift $T_w$ of $\iota(w)$ to $\tSi_\sJ$, for each $w \in \Si_d$ and set $T_k := T_{s_k}$, for all $1\leq k\leq d-1$.

Note that every Sylow $p$-subgroup of $\Si_{\sJ_k}$ lifts uniquely and isomorphically to a Sylow $p$-subgroup of $\tSi_{\sJ_k}$. We may, therefore, choose the $\Di_k$'s in Lemma \ref{lem:Brauer_corr} such that $T_w \Di_k T_w^{-1} = \Di_{w(k)}$, for all $1\leq k\leq d$ and $w \in \Si_d$. Indeed, one can first construct Sylow $p$-subgroups of each of the $\Si_{\sJ_k}$'s that get permuted by the $\iota(w)$'s and then lift to $\tSi_{\sJ_k}$. By the construction of the $T_w$'s we also have $T_w \tSi_{\sJ_k} T_w^{-1} = \tSi_{\sJ_{w(k)}}$, for all $1\leq k\leq d$ and $w \in \Si_d$.

Note that $\cO \tSi_{\sJ_1}e_{z} \cong \cO \tSi_p e_{z}$ via $
t_{r+m} e_z
\mapsto t_m e_z $. We now fix isomorphisms between the $\cO \tSi_{\sJ_k}e_{z}$'s. For each $1 < k\leq d$, we identify $\cO \tSi_{\sJ_k}e_{z}$ with $\cO \tSi_{\sJ_1}e_{z} \cong \cO \tSi_p e_{z}$ via the isomorphism
\begin{align}\label{Sp_ident}
\cO \tSi_{\sJ_1}e_{z} \to \cO\tSi_{\sJ_k}e_{z},\quad
a \mapsto (-1)^{k|a|}T_{(1,k)}aT_{(1,k)}^{-1}.
\end{align}
(Note this does not depend on our choice of $T_{(1,k)}$, since $T_{(1,k)}$ is uniquely determined up to multiplication by $z$.) Through these isomorphisms and using (\ref{EZCommAlg}), we can identify the superalgebra
\begin{align*}%\label{algn:V1_Vd_ident}
\cO \tSi_{\sJ_1, \dots, \sJ_d} e_z \cong \cO \tSi_{\sJ_1}e_z \otimes \dots \otimes \cO \tSi_{\sJ_d}e_z
\end{align*}
with $(\cO \tSi_p e_z)^{\otimes d}$.

\begin{Lemma}\label{lem:kappa}
There exist $\kappa_k \in \cO^\times$, for $1\leq k \leq d-1$, satisfying the following properties:
\begin{enumerate}
\item 
$
(\kappa_k T_k e_z)(a_1\otimes \dots \otimes a_d)(\kappa_k^{-1} T_k^{-1} e_z) = (-1)^{\sum_{j\neq k,k+1}|a_j|}({}^{s_k}(a_1\otimes \dots \otimes a_d)),
$
for all $1\leq k\leq d-1$ and $a_1 \otimes \dots \otimes a_d\in (\cO \tSi_p e_z)^{\otimes d}$.
\item
$
(\kappa_k T_k e_z)^2 = e_z,
$ 
for all $1\leq k\leq d-1$.
\item
$
(\kappa_k T_k e_z)(\kappa_l T_l e_z) = -(\kappa_l T_l e_z)(\kappa_k T_k e_z),
$ 
for all $1\leq k,l\leq d-1$ with $|k-l|>1$.
\item
$
(\kappa_k T_k e_z)(\kappa_{k+1} T_{k+1} e_z)(\kappa_k T_k e_z) = (\kappa_{k+1} T_{k+1} e_z)(\kappa_k T_k e_z)(\kappa_{k+1} T_{k+1} e_z),
$
for all $1\leq k\leq d-2$.
\end{enumerate}
\end{Lemma}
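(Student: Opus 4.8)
We may assume $d\geq 2$, since for $d\leq 1$ the range $1\leq k\leq d-1$ is empty and there is nothing to prove. The plan is to first record some elementary facts about the lifts $T_k$, then verify (i) by a sign computation, and finally obtain (ii)--(iv) by choosing the $\kappa_k$ successively along the chain $k=1,\dots,d-1$. For the first part: the permutation $\iota(s_k)=\prod_{t=1}^{p}\big(r+(k-1)p+t,\ r+kp+t\big)$ is a product of $p$ disjoint transpositions, so, $p$ being odd, it is an odd permutation; hence $T_k\in\tSi_n\setminus\tAi_n$ and $|T_k|=\1$. Since $\iota(s_k)^2=1$ we get $T_k^2\in\ker\pi_n=\{1,z\}$, say $T_k^2=z^{m_k}$ with $m_k\in\{0,1\}$. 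Conjugating $T_k$ by a lift of $\iota((k,k+2))$ (an element of $\iota(\Si_d)$ carrying $\iota(s_k)$ to $\iota(s_{k+1})$, available for $k\leq d-2$) produces $z^jT_{k+1}$ for some $j$, and squaring, using that $z$ is central, gives $z^{m_k}=z^{m_{k+1}}$; thus $m:=m_k$ does not depend on $k$. I would also use repeatedly that $gh=z^{|g||h|}hg$ whenever $g,h\in\tSi_n$ have disjoint supports, which is the defining relation of $\tSi_{\sX_1}\times_z\tSi_{\sX_2}$ recalled in \S\ref{SSSpSubg}.

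Next I would prove (i). Since $\kappa_k$ is a central scalar, the left-hand side of (i) equals $(T_ke_z)(a_1\otimes\dots\otimes a_d)(T_k^{-1}e_z)$, so (i) is a statement about $T_k$ alone. For $j\neq k,k+1$ the support of $\iota(s_k)$ is disjoint from $\sJ_j$, so for homogeneous $a_j\in\cO\tSi_{\sJ_j}e_z$ we get $T_ka_jT_k^{-1}=z^{|a_j|}a_j$; hence conjugation by $T_ke_z$ multiplies such an $a_j$ by $(-1)^{|a_j|}$, which produces the prefactor $(-1)^{\sum_{j\neq k,k+1}|a_j|}$ in (i). On the two remaining factors, conjugation by $T_k$ interchanges $\cO\tSi_{\sJ_k}e_z$ and $\cO\tSi_{\sJ_{k+1}}e_z$; transporting this interchange to $(\cO\tSi_pe_z)^{\otimes d}$ through the twisted identifications (\ref{Sp_ident}) and accounting for the Koszul signs of the super tensor product (\ref{EZCommAlg}), it matches precisely the super-transposition $a_k\otimes a_{k+1}\mapsto(-1)^{|a_k||a_{k+1}|}a_{k+1}\otimes a_k$ appearing in ${}^{s_k}(a_1\otimes\dots\otimes a_d)$. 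Collecting signs yields (i). This is the step that requires genuine work, and it runs in close parallel with the corresponding computation in \cite{KL}; making the signs in (\ref{Sp_ident}) mesh correctly with the super-swap is the main obstacle.

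Finally I would fix the $\kappa_k$ and read off (ii)--(iv). Choose $\kappa_1\in\cO^\times$ with $\kappa_1^2=(-1)^m$, which is possible because $\sqrt{-1}\in\cO$. From $\pi_n(T_kT_{k+1}T_k)=\pi_n(T_{k+1}T_kT_{k+1})$ there is $\vartheta_k\in\{0,1\}$ with $T_kT_{k+1}T_k=z^{\vartheta_k}T_{k+1}T_kT_{k+1}$, and I would set $\kappa_{k+1}:=(-1)^{\vartheta_k}\kappa_k$ recursively, so that $\kappa_k^2=(-1)^m$ for all $k$. Then $(\kappa_kT_ke_z)^2=\kappa_k^2T_k^2e_z=(-1)^mz^me_z=(-1)^{2m}e_z=e_z$, proving (ii). For (iii), when $|k-l|>1$ the supports of $\iota(s_k)$ and $\iota(s_l)$ are disjoint and both elements are odd, so $T_kT_l=zT_lT_k$, giving $(\kappa_kT_ke_z)(\kappa_lT_le_z)=\kappa_k\kappa_l\,zT_lT_ke_z=-(\kappa_lT_le_z)(\kappa_kT_ke_z)$, irrespective of the $\kappa$'s. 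For (iv), using $ze_z=-e_z$ together with $\kappa_k^2=\kappa_{k+1}^2=(-1)^m$, one finds $(\kappa_kT_ke_z)(\kappa_{k+1}T_{k+1}e_z)(\kappa_kT_ke_z)=\kappa_k^2\kappa_{k+1}(-1)^{\vartheta_k}T_{k+1}T_kT_{k+1}e_z=\kappa_{k+1}^2\kappa_kT_{k+1}T_kT_{k+1}e_z=(\kappa_{k+1}T_{k+1}e_z)(\kappa_kT_ke_z)(\kappa_{k+1}T_{k+1}e_z)$, where the middle equality is exactly the relation $\kappa_{k+1}=(-1)^{\vartheta_k}\kappa_k$. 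Hence all four properties hold, with the sign bookkeeping in the proof of (i) the only laborious point; (ii)--(iv) reduce to the short chain argument for the $\kappa_k$.
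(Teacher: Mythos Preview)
Your proposal is correct and follows essentially the same approach as the paper: both observe that (i) and (iii) are independent of the $\kappa_k$'s, that the $T_k^2\in\{1,z\}$ are all equal (by a conjugacy argument), and then choose the $\kappa_k$ first to satisfy (ii) uniformly and subsequently adjust by signs to force the braid relation (iv). The only cosmetic difference is that the paper reduces (i) to the identifications (\ref{Sp_ident}) via the decomposition $T_k=T_{(1,k)}T_{(1,k+1)}T_{(1,k)}$ (up to $z$), whereas you argue directly with disjoint-support commutation; and you make the sign-adjustment in (iv) explicit as a recursion $\kappa_{k+1}=(-1)^{\vartheta_k}\kappa_k$, which the paper leaves implicit.
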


\begin{proof}
This is essentially contained in the proof of \cite[Proposition 5.2.13]{KL} but we outline the main points.

Property (i) does not depend on the $\kappa_k$'s. Indeed, if $a_1 \otimes \dots \otimes a_d \in (\cO \tSi_p e_z)^{\otimes d}$, we have
\begin{align*}
&(\kappa_k T_k e_z)(a_1\otimes \dots \otimes a_d)(\kappa_k^{-1} T_k^{-1} e_z) = T_k(a_1\otimes \dots \otimes a_d)T_k^{-1} \\
= &  T_{(1,k)}T_{(1,k+1)}T_{(1,k)} (a_1\otimes \dots \otimes a_d) T_{(1,k)}^{-1}T_{(1,k+1)}^{-1}T_{(1,k)}^{-1}.
\end{align*}
Property (i) now follows from a direct calculation using (\ref{Sp_ident}).

Property (iii) also does not depend on the $\kappa_k$'s and follows immediately from (\ref{EZCommAlg}).

Since $T_k$ is a lift of $\iota(s_k)$, for each $1 \leq k\leq d-1$, either $T_k^2 = 1$ or $z$. Furthermore, for each $1\leq k,l \leq d-1$, $T_k$ is conjuagte in $\tSi_\sJ$ to either $T_l$ or $zT_l$. Therefore, property (ii) holds with either all $\kappa_k = 1$ or all $\kappa_k = \sqrt{-1}$.

Finally, since the $\pi_n(T_k)$'s satisfy the braid relations, property (iv) holds up to a sign, for each $1\leq k\leq d-2$. We may therefore replace some of the $\kappa_k$'s with $-\kappa_k$ to ensure that property (iv) holds. Note that this last reassignment will not stop property (ii) being satisfied.
\end{proof}

\subsection{Special subgroups and their blocks}\label{sec:notation}
Throughout the subsection we fix a  ${\bar p}$-core $\rho$ and an  integer $d$ satisfying $0\leq d< p$ (the assumption $d<p$ is to guarantee that we are in the abelian defect group case, see Lemma~\ref{lem:Brauer_corr}, although it is not always needed.) 

We adopt the notation (\ref{EUVI}), in particular $r:=|\rho|$ and $n:=r+dp$. 
For $0\leq k\leq d$, we define the following subgroups of~$\tSi_n$:
\begin{align}
\label{Gi}
\Gi_k&:= \tSi_{\sR\cup \sJ_1\cup\dots\cup \sJ_k}\cong \tSi_{r+kp},\\
\label{Hi}
\Hi_k&:= \tSi_{\sR\cup \sJ_1\cup\dots\cup \sJ_k, \sJ_{k+1}, \dots, \sJ_d}\cong \tSi_{r+kp,p^{d-k}},
%^{\times_z (d-i)},
\\
\label{Li}
\Li_k&:= \tSi_{\sR,\sJ_1,\dots,\sJ_k}\cong 
\tSi_{r,p^k},\\
\label{Ni}
\Ni_k&:= N_{\Gi_k}(\tSi_{\sJ_1,\dots,\sJ_k}),
\end{align}
and set $\Gi:=\Gi_d=\tSi_n$, $\Hi:=\Hi_{d-1}=\tSi_{n-p,p}$ (if $d>0$), $\Li :=\Li_d=\tSi_{r,p^d}$ and $\Ni :=\Ni_d$. Note that $\Ni $ must permute  $\sJ_1,\dots,\sJ_d$. In particular, $\Ni  \leq \tSi_n$ is the subgroup generated by $\Li=\tSi_{\sR,\sJ_1,\dots,\sJ_d}$ and the $T_w$'s from $\S$\ref{sec:blocks}. 

Recalling the notation (\ref{EG0}), we will also use the subgroups $\Gi_\0=\tAi_n$, $\Ni_\0=\Ni\cap \tAi_n$, etc. (not to be confused with $\Gi_0$, $\Ni_0$, etc.)

We make the following identifications using (\ref{EZCommAlg}):
\begin{align}
\begin{split}\label{algn:group_tensor_isom}
\cO \Hi_k e_z & \cong \cO \tSi_{\sR \cup \sJ_1\cup\dots\cup \sJ_k}e_z \otimes \cO \tSi_{\sJ_{k+1}}e_z \otimes \dots \otimes \cO \tSi_{\sJ_d}e_z \\
\cO \Li_k e_z & \cong \cO \tSi_\sR e_z \otimes \cO \tSi_{\sJ_1}e_z \otimes \dots \otimes \cO \tSi_{\sJ_k}e_z,
\end{split}
\end{align}
for all $0 \leq k \leq d$. Furthermore, for $2 \leq k \leq d$, we identify $\cO\tSi_{\sJ_k}e_z$ with $\cO\tSi_{\sJ_1}e_z \cong \cO\tSi_p e_z$ via (\ref{Sp_ident}). Through these identifications we set $\ei_{\varnothing,1}^{(k)} \in \cO \tSi_{\sJ_k}e_z$ to be the image of $\ei_{\varnothing,1} \in \cO \tSi_p e_z$. We now define central idempotents
\begin{align}
\label{EBI}
\bi_k&:= \ei_{\rho,k} \in \cO \Gi_k e_z, \\ 
\label{ECI}
\ci_k&:= \ei_{\rho,k} \otimes \ei_{\varnothing,1}^{(k+1)} \otimes \dots \otimes \ei_{\varnothing,1}^{(d)} \in \cO \Hi_k e_z, \\
\label{EFI}
\fid_k&:= \ei_{\rho,0} \otimes \ei_{\varnothing,1}^{(1)} \otimes \dots \otimes \ei_{\varnothing,1}^{(k)} \in \cO \Li_k e_z,
\end{align}
for all $0\leq k\leq d$ and set 
\begin{equation}
\label{EBCF}
\bi:=\bi_d = \ci_d \in \cO \Gi e_z, \quad \ci:=\ci_{d-1} \in \cO \Hi e_z,\quad  \fid:=\fid_d=\ci_0 \in \cO \Li e_z.
\end{equation}
For $0\leq k\leq l\leq d$, noting that the idempotents $\ci_m$ commute, we define  the idempotent 
\begin{equation}
\label{ECLong}
\ci_{k,l}:=\ci_k\ci_{k+1}\cdots\ci_l,
\end{equation}
which we consider as an elements of $\cO \Gi e_z$. 
Occasionally, we will also need  
\begin{align}
\Hi_k'&:= \tSi_{\sR\cup \sJ_1\cup\dots \sJ_k, \sJ_{k+1}, \dots, \sJ_{d-1}}\cong \tSi_{r+kp,p^{d-k-1}} \leq \Gi_{d-1}, 
\\
\label{ECI'}
\ci_k'&:= \ei_{\rho,k} \otimes \ei_{\varnothing,1}^{(k+1)} \otimes \dots \otimes \ei_{\varnothing,1}^{(d-1)} \in \cO \Hi_k' e_z,
\\
\label{EC'Long}
\ci_{k,l}'&:=\ci_k'\ci_{k+1}'\cdots\ci_l'
\end{align}
for $0\leq k\leq l\leq d-1$. We note that, by Remark \ref{rem:superblocks}, all the $\bi_k$'s,  $\fid_k$'s, $\ci_k$'s, $\ci_k'$'s, etc. are in $\cO \Gi_{\0}$.

Using Lemma \ref{lem:kappa} we can now precisely express $\cO \Ni \fid$ in terms of a twisted wreath product. More precisely, since we know $\Ni $ is generated by $\Li $ and the $T_w$'s, the isomorphism 
$
\cO \Li  e_z \cong \cO \tSi_\sR e_z \otimes (\cO\tSi_p e_z)^{\otimes d},
$ 
from (\ref{algn:group_tensor_isom}) extends to an isomorphism
\begin{align*}
\cO \Ni  e_z  \cong \cO \tSi_\sR e_z \otimes (\cO\tSi_p e_z \swr \cT_d),\ 
T_k e_z  \mapsto \kappa_k^{-1}(1 \otimes \ct_k),
\end{align*}
where the $\kappa_k$'s are those from Lemma \ref{lem:kappa}. In particular, we have the isomorphisms of superalgebras
\begin{align}
\begin{split}\label{algn:ONf_ident}
\cO \Li \fid & \cong \cO \tSi_\sR \ei_{\rho,0} \otimes (\cO\tSi_p \ei_{\varnothing,1})^{\otimes d} \cong \Blo^{\rho,0} \otimes (\Blo^{\varnothing,1})^{\otimes d} \\
\cO \Ni \fid & \cong \cO \tSi_\sR \ei_{\rho,0} \otimes (\cO\tSi_p \ei_{\varnothing,1} \swr \cT_d) \cong \Blo^{\rho,0} \otimes (\Blo^{\varnothing,1} \swr \cT_d).
\end{split}
\end{align}

As in Lemma \ref{lem:Brauer_corr} and the subsequent comments, for $k=1,\dots,d$, we set $\Di_k$ to be a Sylow $p$-subgroup of $\tSi_{\sJ_k}$ such that $T_w \Di_k T_w^{-1} = \Di_{w(k)}$, for all $k=1,\dots,d$ and $w \in \Si_d$, and define 
\begin{equation}\label{ED}
\Di  := \Di_1 \times \dots \times \Di_d \leq \tSi_\sJ.
\end{equation}

\begin{Lemma}\label{lem:CGD} We have:
\begin{enumerate}
\item 
$
C_\Gi (\Di ) = \tSi_\sR  \times_z C_{\tSi_{\sJ_1}}(\Di_1) \times_z \dots \times_z C_{\tSi_{\sJ_d}}(\Di_d).
$
\item $N_\Gi (\Di )$ is generated by
$
\tSi_\sR  \times_z N_{\tSi_{\sJ_1}}(\Di_1) \times_z \dots \times_z N_{\tSi_{\sJ_d}}(\Di_d)
$ 
and  $\{T_w\mid w\in \Si_d\}$. In particular, $N_\Gi (\Di ) \leq \Ni $.
\item $N_{\Gi  \times \Li }(\Delta \Di ) \leq \Li  \times \Li $ and $N_{\Gi  \times \Hi }(\Delta \Di ) \leq \Hi  \times \Hi $.
\end{enumerate}
\end{Lemma}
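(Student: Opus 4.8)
The plan is to prove all three containments by direct computation with the explicit permutation-group description of $\Di$, $\Gi$, $\Li$, and $\Hi$ given in \S\ref{sec:notation}, together with the structure of $\tSi_n$ as a central extension of $\Si_n$ by $\langle z\rangle$. The key observation throughout is that conjugation questions about $\Di$ in $\tSi_n$ reduce, via $\pi_n$, to conjugation questions about the Sylow $p$-subgroup $\pi_n(\Di)=\pi_n(\Di_1)\times\dots\times\pi_n(\Di_d)$ of $\Si_\sJ$, because the kernel $\langle z\rangle$ is central and $p$ is odd (so every preimage issue is resolved uniquely up to $z$, and $z\in \tAi_n\leq$ every relevant subgroup).

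First I would establish part (i). An element $g\in\Gi=\tSi_n$ centralizes $\Di$ iff $\pi_n(g)$ centralizes $\pi_n(\Di)$ in $\Si_n$. Since $\pi_n(\Di_k)$ is a Sylow $p$-subgroup of $\Si_{\sJ_k}$ (a transitive subgroup of $\Si_{\sJ_k}$, namely a $p$-cycle when $d<p$ forces $p\mid p$ — more precisely $\pi_n(\Di_k)$ acts with a single nontrivial orbit $\sJ_k$), the support of $\pi_n(\Di)$ is all of $\sJ$, and $\pi_n(\Di_k)$ is self-centralizing-up-to-its-normalizer structure on $\sJ_k$. A centralizing permutation must preserve the orbit decomposition $\{\sJ_1,\dots,\sJ_d\}$ of $\pi_n(\Di)$ and cannot permute the $\sJ_k$'s among themselves (a nontrivial such permutation would not commute with the individual $p$-element generators supported on a single $\sJ_k$), so $\pi_n(g)$ stabilizes each $\sJ_k$ setwise and fixes $\sR$ pointwise; hence $\pi_n(g)\in \Si_\sR\times C_{\Si_{\sJ_1}}(\pi_n(\Di_1))\times\dots\times C_{\Si_{\sJ_d}}(\pi_n(\Di_d))$. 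Lifting back through $\pi_n$ and using that $\tSi_\sR\times_z C_{\tSi_{\sJ_1}}(\Di_1)\times_z\dots$ is exactly the preimage of that group intersected appropriately gives the claimed equality; one checks the reverse inclusion is immediate since each factor visibly centralizes $\Di$.

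Next, part (ii) follows the same pattern: $g\in N_\Gi(\Di)$ iff $\pi_n(g)$ normalizes $\pi_n(\Di)$, iff $\pi_n(g)$ permutes the orbits $\sJ_1,\dots,\sJ_d$ and on each block acts as an element normalizing the corresponding Sylow $p$-subgroup. Thus $N_{\Si_n}(\pi_n(\Di))$ is generated by $\Si_\sR\times N_{\Si_{\sJ_1}}(\pi_n(\Di_1))\times\dots\times N_{\Si_{\sJ_d}}(\pi_n(\Di_d))$ together with the block permutations $\iota(w)$, $w\in\Si_d$ (using that the $\Di_k$'s were chosen in \S\ref{sec:blocks} so $T_w\Di_k T_w^{-1}=\Di_{w(k)}$, i.e. $\iota(w)$ conjugates $\pi_n(\Di_k)$ to $\pi_n(\Di_{w(k)})$). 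Lifting to $\tSi_n$ and recalling $T_w$ is a chosen lift of $\iota(w)$ yields that $N_\Gi(\Di)$ is generated by $\tSi_\sR\times_z N_{\tSi_{\sJ_1}}(\Di_1)\times_z\dots\times_z N_{\tSi_{\sJ_d}}(\Di_d)$ and $\{T_w\mid w\in\Si_d\}$; since all these generators lie in $\Li$ (for the product part) or are the $T_w$'s, and $\Ni$ is by definition generated by $\Li$ and the $T_w$'s, we get $N_\Gi(\Di)\leq\Ni$.

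Finally, part (iii): suppose $(g_1,g_2)\in \Gi\times\Li$ normalizes $\Delta\Di=\{(x,x)\mid x\in\Di\}$. Then for all $x\in\Di$ we have $(g_1 x g_1^{-1}, g_2 x g_2^{-1})\in\Delta\Di$, i.e. $g_1 x g_1^{-1}=g_2 x g_2^{-1}$ for all $x$, so $g_1^{-1}g_2\in C_\Gi(\Di)$ and $g_1\in N_\Gi(\Di)$. By (ii), $g_1\in N_\Gi(\Di)\leq\Ni$; I would then argue that in fact $g_1$ lies in the subgroup generated by $\tSi_\sR\times_z N_{\tSi_{\sJ_1}}(\Di_1)\times_z\dots$ and those $T_w$ with $w$ ranging over — and here is the point where I expect the only real subtlety — one must observe that because $g_2\in\Li=\tSi_{\sR,\sJ_1,\dots,\sJ_d}$, the permutation $\pi_n(g_2)$ stabilizes each block $\sJ_k$, hence $\pi_n(g_1)=\pi_n(g_2)\cdot\pi_n(g_1^{-1}g_2)^{-1}$ composed through the centralizing element also stabilizes each $\sJ_k$ (the centralizer $C_\Gi(\Di)$ from (i) stabilizes each block), so the $T_w$-component of $g_1$ is trivial and $g_1\in\Li$ as well. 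Then $(g_1,g_2)\in\Li\times\Li$. The argument for $N_{\Gi\times\Hi}(\Delta\Di)\leq\Hi\times\Hi$ is identical with $\Hi=\tSi_{n-p,p}$ in place of $\Li$: membership of $g_2$ in $\Hi$ forces $\pi_n(g_2)$ to stabilize the partition $\{\sR\cup\sJ_1\cup\dots\cup\sJ_{d-1},\ \sJ_d\}$, which combined with $g_1\in N_\Gi(\Di)$ confines $g_1$ to $\Hi$. The main obstacle is the bookkeeping in part (iii): making precise that "$g_2$ preserves the block structure, the centralizer preserves the block structure, therefore $g_1$ does too, therefore $g_1$ has no genuine block-permuting part'' — everything else is a routine transfer of Sylow-normalizer facts from $\Si_n$ through the central extension, using $p$ odd.
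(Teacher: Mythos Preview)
Your proposal is correct and follows essentially the same approach as the paper: reduce to $\Si_n$ via $\pi_n$ and use the orbit/block structure of $\Di$. For part (iii) there is a minor cosmetic difference---you factor $g_1=g_2\cdot c$ with $c\in C_\Gi(\Di)\leq\Li$ and invoke part (i), whereas the paper argues directly that $(g,h)$ normalizing $\Delta\Di$ with $h\in\Li$ forces ${}^{(g,h)}\Delta\Di_k=\Delta\Di_k$ for each $k$, hence $g\in N_\Gi(\Di_k)$ for each $k$, hence $g\in\Li$---but these are equivalent one-line arguments.
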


\begin{proof}
(i) Since $\Di_k \leq \tAi_{\sJ_k}$, for each $k$, we have that $\tSi_\sR $ and all the $C_{\tSi_{\sJ_k}}(\Di_k)$'s centralize $\Di $. The reverse inclusion is easily seen by passing to $\Si_n$, as centralizers decompose according to an element's cycle decomposition.

(ii) Since $\Di  \leq \Gi_{\0}$, it is easy to check that $\tSi_\sR $, all the $N_{\tSi_{\sJ_k}}(\Di_k)$'s and all the $T_w$'s normalize $\Di $. To show the reverse inclusion we again pass to $\Si_n$. By inspecting cycle types, if $g \in N_\Gi (\Di )$, then it must permute the $\Di_k$'s and consequently the $\sJ_k$'s. The claim follows.

(iii) Suppose $(g,h) \in N_{\Gi  \times \Li }(\Delta \Di )$ and $1 \leq k \leq d$. Then ${}^{(g,h)}\Delta \Di_k \leq \Delta \Di $. However, since $h \in \Li $, we must have ${}^{(g,h)}\Delta \Di_k = \Delta \Di_k$. Therefore,
\begin{align*}
g \in N_\Gi (\Di_k) \leq \tSi_{\sR  \cup \bigcup_{m \neq k} \sJ_m,\sJ_k}
\end{align*}
where the containment follows by passing to $\Si_n$ and considering cycle types. Taking all such $k$'s simultaneously gives that $g \in \Li $, as desired.

The second inclusion is proved similarly.
\end{proof}

\begin{Lemma}\label{lem:c_f_blocks}
Let $0<d<p$. 
\begin{enumerate}
\item $\cO \Gi \bi$ is a block with defect group $\Di $.
\item $\cO \Hi \ci$ is a block with defect group $\Di $.
\item $\cO \Li \fid$ and $\cO \Ni \fid$ are both blocks with defect group $\Di $.
\end{enumerate}
\end{Lemma}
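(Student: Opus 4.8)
The plan is to prove each of the three statements by identifying the relevant block idempotent, determining its defect group, and checking that the defect group is correct.

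For part (i), $\cO\Gi\bi$: recall $\bi = \bi_d = \ei_{\rho,d}$ is the block idempotent of $\cO\tSi_n e_z = \cO\Gi e_z$ labelled by $(\rho,d)$. Since $0<d<p$, Theorem~\ref{thm:DCblocks}(i),(iii) together with Lemma~\ref{lem:Brauer_corr} tell us directly that $\cO\Gi\bi$ is a single block with abelian defect group $\Di=\Di_1\times\dots\times\Di_d$, a Sylow $p$-subgroup of $\tSi_\sJ$. So part (i) is essentially a restatement of results already established.

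For part (ii), $\cO\Hi\ci$: here $\Hi = \tSi_{n-p,p} = \tSi_{\sR\cup\sJ_1\cup\dots\cup\sJ_{d-1},\sJ_d}\cong\tSi_{n-p}\times_z\tSi_p$, and under the isomorphism (\ref{EZCommAlg}) we have $\cO\Hi e_z\cong\cO\tSi_{n-p}e_z\otimes\cO\tSi_p e_z$, with $\ci = \ci_{d-1} = \ei_{\rho,d-1}\otimes\ei_{\varnothing,1}^{(d)}$. By Theorem~\ref{thm:DCblocks}, $\cO\tSi_{n-p}\ei_{\rho,d-1}$ is a block of $\cO\tSi_{n-p}$ with defect group a Sylow $p$-subgroup of $\tSi_{\sJ_1\cup\dots\cup\sJ_{d-1}}$ (which we can take to be $\Di_1\times\dots\times\Di_{d-1}$), and $\cO\tSi_p\ei_{\varnothing,1}$ is a block of $\cO\tSi_p$ with defect group a Sylow $p$-subgroup of $\tSi_{\sJ_d}$, namely $\Di_d$. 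The key point is that a tensor product of two blocks over $\cO$ (with the appropriate roots of unity present) is again a block, with defect group the product of the two defect groups; this follows since $Z(\cO\Hi\ci) \cong Z(\cO\tSi_{n-p}\ei_{\rho,d-1})\otimes Z(\cO\tSi_p\ei_{\varnothing,1})$ is connected, and the vertex of $\cO\Hi\ci$ as an $\cO(\Hi\times\Hi)$-module is $\Delta(\Di_1\times\dots\times\Di_{d-1})\times\Delta\Di_d = \Delta\Di$ by Lemma~\ref{lem:direct_prod_vert} applied to the $\boxtimes$ of the two blocks.

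For part (iii), $\cO\Li\fid$ and $\cO\Ni\fid$: using the isomorphisms (\ref{algn:ONf_ident}), $\cO\Li\fid\cong\Blo^{\rho,0}\otimes(\Blo^{\varnothing,1})^{\otimes d}$ and $\cO\Ni\fid\cong\Blo^{\rho,0}\otimes(\Blo^{\varnothing,1}\swr\cT_d)$. The first is a tensor product of blocks: $\Blo^{\rho,0}=\cO\tSi_\sR\ei_{\rho,0}$ has trivial defect group (it is a $d=0$ block, Theorem~\ref{thm:DCblocks}(iii)), and each $\Blo^{\varnothing,1}=\cO\tSi_p\ei_{\varnothing,1}$ has defect group $\Di_k$ a Sylow $p$-subgroup of $\tSi_{\sJ_k}$; so by the same tensor-product-of-blocks argument (iterated) $\cO\Li\fid$ is a block with defect group $\Di_1\times\dots\times\Di_d=\Di$. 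For $\cO\Ni\fid$, the cleanest route is Lemma~\ref{lem:unstable_block} (or Lemma~\ref{lem:vert_blocks}(iii)): we have $\Li\trianglelefteq\Ni$ with $\Ni/\Li\cong\Si_d$, and $\fid$ is $\Ni$-stable (the conjugation action of the $T_w$'s permutes the tensor factors $\ei_{\varnothing,1}^{(k)}$ and fixes $\ei_{\rho,0}$, by construction of the $T_w$'s in \S\ref{sec:blocks} and since $\fid\in\cO\Li_\0$); moreover $\fid$ remains primitive in $Z(\cO\Ni e_z)$ because $C_\Ni(\fid)=\Li$—indeed, by Lemma~\ref{lem:CGD}(ii), $N_\Gi(\Di)\leq\Ni$, and any element of $\Ni$ centralizing $\fid$ must fix each tensor factor hence fix each $\sJ_k$, so lies in $\Li$. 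Then Lemma~\ref{lem:unstable_block} gives that $\fid = \Tr_\Li^\Ni(\fid')$ for a suitable primitive constituent, no—rather, since $\fid$ itself is already $\Ni$-stable and central in $\cO\Li e_z$ with $C_\Ni(\fid)=\Li$, we invoke Lemma~\ref{lem:vert_blocks}(iii) with $D=\Di$ (noting $C_\Ni(\Di)\leq\Li$ by Lemma~\ref{lem:CGD}(i)) to conclude $\fid$ is a block idempotent of $\cO\Ni$ with the same defect group $\Di$.

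The main obstacle I anticipate is the bookkeeping in part (iii) for $\cO\Ni\fid$: one must carefully verify that $\fid$ is primitive in $Z(\cO\Ni e_z)$ and not just $\Ni$-stable, and that its defect group does not shrink or grow when passing from $\Li$ to $\Ni$. The hypotheses $C_\Ni(\Di)\leq\Li$ and $N_\Gi(\Di)\leq\Ni$ from Lemma~\ref{lem:CGD} are precisely what is needed, and the twisted wreath product structure (\ref{algn:ONf_ident}) ensures the defect group genuinely is $\Di$ and not something larger coming from $\Si_d$; since $d<p$, the symmetric group $\Si_d$ contributes no $p$-part, which is where the abelian defect hypothesis is (implicitly) used to keep the argument clean. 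Everything else reduces to the "tensor product of blocks is a block with product defect group" principle, which follows from connectedness of the centre together with Lemma~\ref{lem:direct_prod_vert} for the vertex computation.
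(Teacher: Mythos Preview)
Your overall strategy is right, and parts (i) and the $\cO\Ni\fid$ half of (iii) match the paper's proof essentially verbatim. The problem is the ``tensor product of blocks is a block'' principle you invoke for (ii) and for $\cO\Li\fid$ in (iii): it breaks down precisely when $\rho$ is odd, and the paper treats this case separately.

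Concretely, when $\rho$ is odd, $\Blo^{\rho,0}=\cO\tSi_\sR\ei_{\rho,0}$ is \emph{not} a single block: by Theorem~\ref{thm:DCblocks}(i) it splits as $\cO\tSi_\sR\ei_{\rho,0}^+\oplus\cO\tSi_\sR\ei_{\rho,0}^-$, so $|\Blo^{\rho,0}|$ is not indecomposable as a bimodule and Lemma~\ref{lem:direct_prod_vert} does not apply. Your claim that $Z(\cO\Hi\ci)\cong Z(\cO\tSi_{n-p}\ei_{\rho,d-1})\otimes Z(\cO\tSi_p\ei_{\varnothing,1})$ is also not valid here: the centre of a \emph{super} tensor product is not the ordinary tensor product of centres, because $\ei_{\rho,0}^+$ is not homogeneous (indeed $\sigma(\ei_{\rho,0}^+)=\ei_{\rho,0}^-$), so $\ei_{\rho,0}^+\otimes 1$ fails to commute with odd elements of the other factor and is \emph{not} central in $\cO\Li\fid$. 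This is exactly why $\cO\Li\fid$ can still be a single block, but it also means your connectedness-of-centre argument proves nothing. The paper deals with this by passing to the index-two subgroup $\Hi_\Ai=\tSi_\sR\times_z\tAi_{\sJ_1}$, where $\ei_{\rho,0}^+\otimes\ei_{\varnothing,1}^{(1)}$ \emph{is} a genuine block idempotent with the right defect, and then applying Lemma~\ref{lem:unstable_block} with the trace map; for $\cO\Li\fid$ in (iii) it argues inductively on $d$, using $\cO\Li_{d-1}\fid_{d-1}$ (a block by induction) as one tensor factor rather than $\Blo^{\rho,0}$ directly, so that Lemma~\ref{lem:direct_prod_vert} always applies to absolutely indecomposable inputs.

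A minor side issue: your line ``$C_\Ni(\fid)=\Li$'' is false --- the $T_w$'s permute the identical factors $\ei_{\varnothing,1}^{(k)}$ and hence fix $\fid$, so $C_\Ni(\fid)=\Ni$. You correctly abandon this and invoke Lemma~\ref{lem:vert_blocks}(iii) instead, which is exactly what the paper does; just delete the misstep.
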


\begin{proof}
(i) This follows immediately from Theorem \ref{thm:DCblocks}(i),(iii).

(ii) We first assume $d=1$ and $\rho$ is odd and set $\Hi_{\Ai} := \tSi_\sR  \times_z \tAi_{\sJ_1} \leq \Hi $. Now, $\ei_{\rho,d-1} = \ei_{\rho,0} = \ei_{\rho,0}^+ + \ei_{\rho,0}^-$ and $\ei_{\varnothing,1}^{(d)}=\ei_{\varnothing,1}^{(1)}$. Furthermore, by Theorem \ref{thm:DCblocks}(ii),(iii), $\sigma_{\cO \tSi_\sR }(\ei_{\rho,0}^+) = \ei_{\rho,0}^-$ and $\cO \tSi_\sR \ei_{\rho,0}^+$ is a block with trivial defect group and by Theorem \ref{thm:An_blocks} $\cO \tAi_{\sJ_1}\ei_{\varnothing,1}^{(1)}$ is a block with defect group $\Di_1=\Di $. Moreover, as $\tSi_\sR $ and $\tAi_{\sJ_1}$ commute and $|\tSi_\sR  \cap \tAi_{\sJ_d}|=2$, $\cO \Hi_{\Ai}(\ei_{\rho,0}^+ \otimes \ei_{\varnothing,1}^{(1)})$ is a block with defect group $\Di $.

If $g \in \tSi_{\sJ_1} \setminus \tAi_{\sJ_1}$, then ${}^g \ei_{\rho,0}^+ = \ei_{\rho,0}^-$. Therefore, Lemma \ref{lem:unstable_block} now gives that
\begin{align*}
\Tr_{\Hi_{\Ai}}^\Hi (\ei_{\rho,0}^+ \otimes \ei_{\varnothing,1}^{(1)}) = \ei_{\rho,0} \otimes \ei_{\varnothing,1}^{(1)}=\ci
\end{align*}
is a block idempotent of $\cO \Hi $ and $\cO \Hi \ci$ has defect group $\Di $.

We now assume that $d>1$ or $\rho$ is even. Then $\cO \Gi_{d-1}\ei_{\rho,d-1}$ is a block with defect group $\Di_1 \times \dots \times \Di_{d-1}$ and $\cO \tSi_{\sJ_d}\ei_{\varnothing,1}^{(d)}$ a block with defect group $\Di_d$. The claim now follows from Remark \ref{rem:bisupmod} and Lemma \ref{lem:direct_prod_vert}.

(iii) We first prove $\fid$ is a block idempotent of $\cO \Li $. We have already proved the claim for $d=1$ in part (i). For $d>1$, note that, by induction on $d$, $\cO \Li_{d-1} \fid_{d-1}$ is a block with defect group $\Di_1 \times \dots \times \Di_{d-1}$ and so the claim holds, as in part (i), by Remark \ref{rem:bisupmod} and Lemma \ref{lem:direct_prod_vert}. %(Here, the induction is valid since, by Remark \ref{rem:induct_Rou}, $\rho$ is $(d-1)$-Rouquier.)

To show that $\cO \Ni \fid$ is a block we simply apply Lemmas \ref{lem:CGD}(i) and \ref{lem:vert_blocks}. That it has defect group $\Di $ is immediate since $p \nmid[\Ni :\Li ]$.
\end{proof}

\begin{Lemma}\label{lem:Brauer_G_N}
Let $0<d<p$. Then 
$\cO \Ni \fid$ (resp. $\cO \Ni_{\0}\fid$) is the Brauer correspondent of $\cO \Gi \bi$ (resp. $\cO \Gi_{\0}\bi$) in $\Ni $ (resp. $\Ni_{\0}$).
\end{Lemma}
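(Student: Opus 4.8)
The plan is to reduce the statement to an identity of Brauer homomorphisms. By Lemma~\ref{lem:c_f_blocks}(i),(iii), both $\cO\Gi\bi$ and $\cO\Ni\fid$ are blocks with defect group $\Di$, and by Lemma~\ref{lem:CGD}(ii) we have $N_\Gi(\Di)\le\Ni$; in particular $C_\Ni(\Di)=C_\Gi(\Di)$, so the Brauer homomorphisms $\Br_\Di$ on $Z(\cO\Gi)$ and on $Z(\cO\Ni)$ both take values in $Z(\F C_\Gi(\Di))$, and they agree on any element of $\cO N_\Gi(\Di)$ that is fixed under $\Di$-conjugation. By the standard theory of the Brauer correspondence for blocks sharing a defect group (Brauer's First Main Theorem together with transitivity of the correspondence, cf.\ \cite{Lin5}), the Brauer correspondent of $\cO\Gi\bi$ in $\Ni$ is the unique block of $\cO\Ni$ with defect group $\Di$ whose $\Br_\Di$-image equals $\Br_\Di(\bi)$. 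So it suffices to prove $\Br_\Di(\fid)=\Br_\Di(\bi)$ in $Z(\F C_\Gi(\Di))$.

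For the right-hand side I would invoke Theorem~\ref{thm:DCblocks}(iv): the Brauer correspondent of $\cO\Gi\bi$ in $N_\Gi(\Di)$ is $\cO N_\Gi(\Di)\ei_{\rho,0}$, with the convention that $\ei_{\varnothing,0}=e_z$ when $\rho=\varnothing$. By the characterisation just recalled, $\Br_\Di(\bi)=\Br_\Di(\ei_{\rho,0})$; and since $\ei_{\rho,0}\in\cO\tAi_\sR$ (Remark~\ref{rem:superblocks}) and $\tAi_\sR$ centralises $\Di\le\tSi_\sJ$, this is simply the image of $\ei_{\rho,0}$ under $\cO\to\F$, which I will write $\overline{\ei_{\rho,0}}$.

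For the left-hand side I would use that, by~(\ref{EFI})--(\ref{EBCF}), $\fid=\ei_{\rho,0}\,\ei_{\varnothing,1}^{(1)}\cdots\ei_{\varnothing,1}^{(d)}$ is a product of commuting even idempotents, each fixed under $\Di$-conjugation, so that $\Br_\Di(\fid)=\Br_\Di(\ei_{\rho,0})\prod_{k=1}^d\Br_\Di(\ei_{\varnothing,1}^{(k)})$ since $\Br_\Di$ is an algebra homomorphism on $\Di$-fixed points. For each $k$, the factors $\Di_m$ with $m\ne k$ lie in $\tAi_{\sJ_m}$ and hence centralise $\tSi_{\sJ_k}$, so $C_\Gi(\Di)\cap\tSi_{\sJ_k}=C_{\tSi_{\sJ_k}}(\Di_k)$ and $\Br_\Di$ computed on $\cO\tSi_{\sJ_k}$ is just $\Br_{\Di_k}\colon\cO\tSi_{\sJ_k}\to\F C_{\tSi_{\sJ_k}}(\Di_k)$. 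Applying Theorem~\ref{thm:DCblocks}(iv) to the weight-one block $\ei_{\varnothing,1}^{(k)}$ of $\cO\tSi_{\sJ_k}\cong\cO\tSi_p$, whose Brauer correspondent in $N_{\tSi_{\sJ_k}}(\Di_k)$ is $\cO N_{\tSi_{\sJ_k}}(\Di_k)e_z$, and noting that $e_z=(1-z)/2$ is supported on the central subgroup $\{1,z\}$ so $\Br_{\Di_k}(e_z)$ is just the image $\overline{e_z}$ of $e_z$ in $\F$, I obtain $\Br_\Di(\ei_{\varnothing,1}^{(k)})=\overline{e_z}$. Hence $\Br_\Di(\fid)=\overline{\ei_{\rho,0}}\cdot\overline{e_z}^{\,d}=\overline{\ei_{\rho,0}}$, because $\overline{e_z}$ is idempotent and $\ei_{\rho,0}e_z=\ei_{\rho,0}$; comparing with the previous paragraph gives $\Br_\Di(\fid)=\Br_\Di(\bi)$, which settles the $\tSi_n$ assertion. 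The $\tAi_n$ assertion should follow by the same argument: all idempotents in sight lie in $\cO\tAi_n$ by Remark~\ref{rem:superblocks}, $\Di\le\tAi_n$, so the relevant Brauer homomorphisms computed inside $\cO\tAi_n$ coincide with those inside $\cO\tSi_n$, and one uses Theorem~\ref{thm:An_blocks} for the defect group of $\cO\Gi_\0\bi$, while $\cO\Ni_\0\fid$ still has defect group $\Di$ since $\cO\Ni\fid$ is relatively $\Delta\Di$-projective.

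I do not expect a serious conceptual obstacle here. The delicate part is purely bookkeeping: keeping track of exactly which centraliser each Brauer homomorphism maps into, verifying the identifications $C_\Ni(\Di)=C_\Gi(\Di)$ and $C_\Gi(\Di)\cap\tSi_{\sJ_k}=C_{\tSi_{\sJ_k}}(\Di_k)$ (this is where Lemma~\ref{lem:CGD} is needed), and confirming that the $\Br_\Di$-characterisation of the Brauer correspondent applies verbatim in each place it is used. The only genuinely external inputs are Theorem~\ref{thm:DCblocks}(iv) (and its $\tSi_p$ special case) and the standard block-theoretic machinery of the Brauer correspondence.
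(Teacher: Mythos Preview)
Your argument for the $\tSi_n$ assertion is essentially the paper's: both compute $\Br_\Di(\fid)=\bar\ei_{\rho,0}$ by factoring $\fid$ and applying Theorem~\ref{thm:DCblocks}(iv) to each tensor factor, and compare with the known Brauer correspondent of $\cO\Gi\bi$ in $N_\Gi(\Di)$.

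For the $\tAi_n$ assertion your route diverges from the paper and has a gap. You assert that $\cO\Ni_\0\fid$ ``has defect group $\Di$'', but this presupposes that $\fid$ is a \emph{block} idempotent of $\cO\Ni_\0$, which you never verify. A priori the block $\cO\Ni\fid$ of $\cO\Ni$ could cover two blocks of $\cO\Ni_\0$ (since $[\Ni:\Ni_\0]=2$), in which case $\fid=\fid_1+\fid_2$ in $Z(\cO\Ni_\0)$; then $\Br_\Di(\fid)=\Br_\Di(\fid_1)+\Br_\Di(\fid_2)$ and the equality $\Br_\Di(\fid)=\Br_\Di(\bi)$ no longer pins down the Brauer correspondent. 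The paper sidesteps this by a different argument: having shown that $\cO\Ni\fid$ and $\cO\Gi\bi$ are super Green correspondents (via Lemma~\ref{lem:Brauer_Green}), it takes even parts to obtain
\[
\cO\Ni_\0\fid\mid\Res^{\Gi_\0\times\Gi_\0}_{\Ni_\0\times\Ni_\0}(\cO\Gi_\0\bi)
\quad\text{and}\quad
\cO\Gi_\0\bi\mid\Ind^{\Gi_\0\times\Gi_\0}_{\Ni_\0\times\Ni_\0}(\cO\Ni_\0\fid),
\]
and concludes from the Green-correspondence description of the Brauer correspondent. Your approach can be rescued (Clifford theory for blocks with $p\nmid[\Ni:\Ni_\0]$ forces every block summand of $\cO\Ni_\0\fid$ to have defect $\Di$, and then distinct such summands have orthogonal nonzero $\Br_\Di$-images, forcing $\fid$ to be a single block idempotent), but that is an additional argument, not merely bookkeeping.
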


\begin{proof}
By Lemma \ref{lem:c_f_blocks}, $\cO \Ni \fid$ and $\cO \Gi \bi$ both have defect group $\Di $ and, by Lemma \ref{lem:CGD}(ii), $N_\Gi (\Di ) \leq \Ni $. We prove that $\cO \Ni \fid$ and $\cO \Gi \bi$ are Brauer correspondents by showing that they have a common Brauer correspondent in $N_\Gi (\Di )$.

By Theorem \ref{thm:DCblocks}(iv), the Brauer correspondent of $\cO \Gi \bi$ in $N_\Gi (\Di )$ is $\cO N_\Gi (\Di )\ei_{\rho,0}$. Now,
\begin{align*}
\Br_\Di (\fid) & = \Br_{\Di_1 \times \dots \times \Di_d}(\ei_{\rho,0} \otimes \ei_{\varnothing,1}^{(1)} \otimes \dots \otimes \ei_{\varnothing,1}^{(d)}) \\
& = {\bar \ei}_{\rho,0} \otimes \Br_{\Di_1}(\ei_{\varnothing,1}^{(1)}) \otimes \dots \otimes \Br_{\Di_1}(\ei_{\varnothing,1}^{(d)}) 
\\
& = {\bar \ei}_{\rho,0} \otimes {\bar e}_z \otimes \dots \otimes {\bar e}_z = {\bar \ei}_{\rho,0},
\end{align*}
where the second equality follows from the decomposition of $C_\Gi (\Di )$ in Lemma \ref{lem:CGD}(i) and the third from Theorem \ref{thm:DCblocks}(iv) applied to each $\cO \tSi_{\sJ_k}\ei_{\varnothing,1}^{(k)}$. The claim for $\cO \Ni \fid$ and $\cO \Gi \bi$ follows.

For $\cO \Ni_{\0}\fid$ and $\cO \Gi_{\0}\bi$, we first note that, by Theorem \ref{thm:An_blocks}, $\bi$ is a block idempotent of $\cO \Gi_{\0}$ and certainly $N_{\Gi_{\0}}(\Di) \leq \Ni_{\0}$. Next, by Lemma \ref{lem:Brauer_Green}, we have that $\cO \Ni \fid$ and $\cO \Gi \bi$ are super Green correspondents. In particular,
$$
\cO \Ni \fid\mid\Res^{\Gi  \times \Gi }_{\Ni  \times \Ni }(\cO \Gi \bi)\quad\text{and}\quad
 \cO \Gi \bi\mid\Ind_{\Ni  \times \Ni }^{\Gi  \times \Gi }(\cO \Ni \fid)
$$
as superbimodules. Taking the even part of both expressions gives that
$$
\cO \Ni_{\0}\fid\mid\Res^{\Gi_{\0} \times \Gi_{\0}}_{\Ni_{\0} \times \Ni_{\0}}(\cO \Gi_{\0}\bi)\quad\text{and}\quad \cO \Gi_{\0}\bi\mid\Ind_{\Ni_{\0} \times \Ni_{\0}}^{\Gi_{\0} \times \Gi_{\0}}(\cO \Ni_{\0}\fid).
$$
The claim follows.
\end{proof}

\section{RoCK blocks}\label{sec:RoCK}

To keep in line with Section \ref{sec:spin_blocks}, for the remainder of this article, we assume that $\K$ contains a primitive $(2n!)^{\nth}$ root of unity. We will also assume that $n \geq 4$, so, by the comments at the beginning of Section~\ref{sec:spin_blocks}, it is automatic that $-1$ and $2$ have square roots in $\K$. This is harmless since, when working with RoCK blocks, the assumption $n \geq 4$ will hold  automatically as we normally assume $d \geq 1$ in the setup below. %Therefore, by the comments at the beginning of Section~\ref{sec:spin_blocks}, it is automatic that $-1$ and $2$ have square roots in $\K$.

\subsection{Rouquier cores and RoCK blocks}
Let $d\in\N$. 
We take the following definition from our \cite[$\S$4.1a]{KL}.

\begin{Definition}\label{def_Rou}
A {\em $d$-Rouquier ${\bar p}$-core} $\rho$ is a ${\bar p}$-core such that $\Ab_\rho$ has the following properties:
\begin{enumerate}
\item The $1^{\nst}$ runner has at least $d$ beads.
\item The $(i+1)^{\nth}$ runner has at least $d-1$ more beads than the $i^{\nth}$ runner, for $1\leq i\leq \ell-1$.
\end{enumerate}
If $\rho$ is a $d$-Rouquier $\bar p$-core, we refer to $\Blo^{\rho,d}$ as a {\em RoCK block} of {\em weight $d$}. 
\end{Definition}

If $d\geq 1$ and $\rho$ is $d$-Rouquier ${\bar p}$-core then it is automatic that the $i^{\nth}$ runner of $\Ab_\rho$ is empty, for $\ell< i< p$.

We note that, for each $d$, there exist infinitely many even and infinitely many odd $d$-Rouquier ${\bar p}$-cores, as one can add arbitrarily many beads onto the $\ell^{\nth}$ runner.

The following trivial observation will be very useful throughout this paper.

\begin{Remark}\label{rem:induct_Rou}
If $\rho$ is a $d$-Rouquier ${\bar p}$-core then $\rho$ is a $k$-Rouquier ${\bar p}$-core for all $k \leq d$.
\end{Remark}

The key properties of Rouquier ${\bar p}$-cores are contained in the following lemma, mostly taken from \cite[Lemma 4.1.2]{KL}.

\begin{Lemma}\label{ChuKesLem}
Let $d \in \NN$, $\rho$ a $d$-Rouquier $\bar p$-core and $\la\in\Par_0(\rho,d)$. Then $\Ab_\la$ is obtained from $\Ab_\rho$ by $d$ consecutive elementary slides down on runners $0,1,\dots,\ell$. Moreover, if $\mu\in\Par_0(\rho,d-1)$ such that $\mu\subseteq \la$, then there exists $0\leq i \leq \ell$ such that:
\begin{enumerate}
\item $\Ab_\la$ is obtained from $\Ab_\mu$ by an elementary slide down on runner $i$.
\item $\sh[\la\setminus\mu]$ is of the form:
\begin{figure}[H]
    \centering
    \hspace{4.6em}$\overbrace{\hspace{6em}}^{\displaystyle \ell + i + 1}$ \\
    \begin{tabular}{r@{}l}
    \raisebox{-4.5ex}{$\ell - i + 1 \left\{\vphantom{\begin{array}{c}~\\[8.3ex] ~
    \end{array}}\right.$} &
    \begin{ytableau}
    ~               & & \none[\cdots] & \\
    ~               & \none & \none & \none \\
    \none[\vdots]   & \none & \none & \none \\
    ~               & \none & \none & \none \end{ytableau}
    \end{tabular}
\end{figure}
\item $\mu$ and $\la$ have the same parity if and only if $i=0$ and $\la = \mu \sqcup (p)$.
\end{enumerate}
\end{Lemma}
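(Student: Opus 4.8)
The statement concerns a $d$-Rouquier $\bar p$-core $\rho$, a strict partition $\la \in \Par_0(\rho,d)$, and a strict partition $\mu \in \Par_0(\rho,d-1)$ with $\mu \subseteq \la$. The plan is to work entirely on the $\bar p$-abacus, using the combinatorial dictionary between strict partitions, their $\bar p$-cores, and their $i$-quotients established in \cite{MorYas}. Most of the statement is quoted from \cite[Lemma 4.1.2]{KL}; only the parity clause (iii) is new here, so the bulk of the work is to extract (i) and (ii) from the reference and then to analyze parities carefully.

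First I would recall that by Theorem \ref{TSchur} the parity of a strict partition $\nu$ is the parity of $|\nu| - h(\nu)$. Since $|\la| = |\mu| + p$, the partitions $\mu$ and $\la$ have the same parity precisely when $h(\la) - h(\mu) \equiv p \pmod 2$, i.e.\ (as $p$ is odd) when $h(\la) - h(\mu)$ is odd. So clause (iii) reduces to computing how the number of beads (equivalently, the number of nonzero parts) changes when one passes from $\Ab_\mu$ to $\Ab_\la$. By clause (i), $\Ab_\la$ is obtained from $\Ab_\mu$ by a single elementary slide down on runner $i$ for some $0 \le i \le \ell$. An elementary slide down that moves a bead from position $r$ to position $r+p$ (with $r \ge p$, or $r = 0$ meaning: insert a bead at position $p$) changes the number of beads by $0$ if $r \ge p$ and by $+1$ if a new bead is inserted at the empty $p^{\nth}$ position. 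The only runner on which a bead can be inserted at the $p^{\nth}$ position is runner $0$ (since position $p$ lies on runner $0$, using the convention from \S\ref{sec:comb} that the $0^{\nth}$ position is unoccupied and beads are counted on runners $0,1,\dots,\ell$ only, with runners $\ell < i < p$ empty for a Rouquier core). Thus $h(\la) = h(\mu)$ unless $i = 0$ and the slide inserts a fresh bead, in which case $h(\la) = h(\mu) + 1$; and the latter inserted-bead case corresponds on the level of quotients to $\mu^{(0)} \in \Par_0(\la^{(0)})^{-1}$ with the added box being precisely a new first row of length... which, translating back, means $\la = \mu \sqcup (p)$.

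Concretely, I would argue as follows. If $i \neq 0$, then the slide on runner $i$ or $p-i$ moves an existing bead, so $h(\la) = h(\mu)$, whence $h(\la)-h(\mu)$ is even and $\mu, \la$ have opposite parity. If $i = 0$, there are two sub-cases dictated by whether the slide moves an existing bead on runner $0$ (equivalently runner $p$, but that is empty, so really runner $0$ down by $p$) or inserts a new bead at position $p$. In the first sub-case $h(\la) = h(\mu)$ and the parities are again opposite. In the second sub-case, inserting a bead at the empty $p^{\nth}$ position corresponds under the Morris--Yasarev bijection to $\mu^{(0)}$ being obtained from $\la^{(0)}$ by deleting a box, and one checks from the explicit form of the skew shape in clause (ii) (which for $i = 0$ is a single row of length $\ell + 1$) that the only way this is consistent with $\mu \subseteq \la$ and $|\la \setminus \mu| = p$ is $\la = \mu \sqcup (p)$; in this case $h(\la) = h(\mu) + 1$, so $h(\la) - h(\mu)$ is odd and $\mu, \la$ have the same parity. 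Assembling the three cases gives exactly the biconditional in (iii).

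\textbf{Main obstacle.} The genuinely delicate point is the bookkeeping in the $i = 0$ case: one must be careful that "inserting a bead at position $p$" is the \emph{only} move on runner $0$ that increases the bead count, and that this move forces $\la = \mu \sqcup (p)$ rather than some other partition obtained by adding a length-$p$ strip. This requires invoking the Rouquier condition (Definition \ref{def_Rou}) — which guarantees runner $1$ has $\ge d$ beads and the higher runners are suitably full — to rule out competing configurations, and then carefully matching the shifted skew shape $\sh[\la \setminus \mu]$ against the picture in clause (ii). I expect that the cleanest route is to note that clauses (i) and (ii) are already proved in \cite[Lemma 4.1.2]{KL}, take them as given, and then read off (iii) purely from the shape in (ii): the skew shape is an $(\ell - i + 1) \times (\ell + i + 1)$ "hook-like" region whose number of boxes one computes to be $p = 2\ell + 1$; the number of \emph{rows} of this region that contribute a new part to $\la$ (versus extending an existing part of $\mu$) is what controls $h(\la) - h(\mu)$, and a direct inspection of the picture shows this is $1$ exactly when $i = 0$ and the shape degenerates to a single row $(p)$ attached below $\sh[\mu]$, and $0$ otherwise. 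This keeps the argument short and avoids re-deriving the abacus combinatorics from scratch.
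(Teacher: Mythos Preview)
Your core approach matches the paper's: cite \cite{KL} for parts (i) and (ii), then deduce (iii) from the observation that the parity of a strict partition $\nu$ is that of $|\nu|-h(\nu)$, so (since $|\la|-|\mu|=p$ is odd) $\mu$ and $\la$ have the same parity precisely when $h(\la)-h(\mu)$ is odd, and an elementary slide on runner $i$ changes the bead count only when $i=0$ and a fresh bead is inserted at position $p$, i.e.\ exactly when $\la=\mu\sqcup(p)$. The paper's one-line justification is that for $1\le i\le\ell$ runner $i$ of $\Ab_\rho$ already carries at least $d$ beads, so the slide on runner $i$ necessarily moves an existing bead.

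There is, however, a genuine error in your side remarks about the skew shape. For $i=0$ the shape in (ii) is \emph{not} ``a single row of length $\ell+1$'' nor does it ``degenerate to a single row $(p)$ attached below $\sh[\mu]$''; it is the hook with arm $\ell+1$ and leg $\ell+1$ (total $p$ boxes), exactly as the formula in (ii) prescribes. For instance with $p=3$, $\mu=(1)$, $\la=(3,1)=\mu\sqcup(3)$ one finds $\sh[\la\setminus\mu]=\{(1,2),(1,3),(2,2)\}$, a hook and not a row. More to the point, the two $i=0$ sub-cases (inserting at position $p$ versus moving an existing bead on runner $0$) produce the \emph{same} hook shape, so the shape alone cannot distinguish them; what distinguishes them is whether the bottom box of the hook opens a new row of $\sh[\la]$, and that is precisely the bead-count information. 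Hence the ``cleanest route'' you propose in the Main Obstacle paragraph does not work as stated. This does not affect your main bead-counting argument, which is correct and is what the paper does.
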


\begin{proof}
The statement concerning $\Ab_\la$ and $\Ab_\rho$ is just \cite[Lemma 4.1.1(i)]{KL}. Part (i) is stated in Lemma 4.1.2 from the same paper. There it is stated for $\la,\mu \in \Par_p$ ($p$-strict partitions) instead of $\Par_0$, so the above is just a specific case. Part (ii) follows as a consequence of Remark 4.1.3, again from the same paper. Note that the box diagrams look slightly different in \cite[Remark 4.1.3]{KL}, since it deals with non-shifted diagrams.

To prove part (iii) we note that, since the $i^{\nth}$ runner of $\Ab_\rho$ has at least $d$ beads on it, for all $1\leq i\leq \ell$, $\la$ must have the same length as $\mu$, unless $i=0$ and $\la = \mu \sqcup (p)$. Since $|\la|$ and $|\mu|$ certainly have opposite parity, the claim follows.
\end{proof}

\subsection{Induction and restriction of characters in RoCK blocks}\label{SSIRRoCK}
As the characters of $\Blo^{\varnothing,1}$ and $\Blo^{\varnothing,1}_{\0}$ will be so important throughout this paper we introduce the special notation for them. Note that
$\Par_0(\varnothing,1)=\{(p-j,j)\mid  j\in I\}$. In view of Theorem~\ref{TSchur}, the partition $(p)$ is even, while the partitions $(p-j,j)$, with $1\leq j\leq \ell$ are odd. 

Notationally, we often identify any such partition $(p-j,j)$ with $j$. For example, recalling the notation of \S\S\ref{sec:KSS1},\ref{SSIrrChar}, especially 
(\ref{EEps}) and (\ref{EEpsN}), we denote
\begin{align}
\label{EJ}
\begin{split}
\xi_j:=\xi_{(p-j,j)},\ \xi_j^{\pm}:=\xi_{(p-j,j)}^{\pm},\ \tilde{\xi}_j^{(\pm)}:=\tilde{\xi}_{(p-j,j)}^{(\pm)}, \ \xi_{\mu,j_1,\dots,j_k}:=\xi_{\mu,(p-j_1,j_1),\dots,(p-j_k,j_k)}
\\
\eps_j:=\eps_{(p-j,j)},\ \eps_{j_1,\dots,j_k}:=\eps_{(p-j_1,j_1),\dots,(p-j_k,j_k)},\ \eps_{\mu,j_1,\dots,j_k}:=\eps_{\mu,(p-j_1,j_1),\dots,(p-j_k,j_k)},
\end{split} 
\end{align}
whenever these make sense.  
In particular, we have 
\begin{align}
\label{ECharBEmptySuper}
\Irr_{\operatorname{super}}(\Blo^{\varnothing,1})&=\{\xi_0,\xi_1,\dots,\xi_{\ell}\},
\\
\label{ECharBEmpty1}
\Irr(\Blo^{\varnothing,1})&=\{\xi_0,\xi_1^{\pm},\dots,\xi_{\ell}^{\pm}\},
\\
\label{ECharBEmpty10}
\Irr(\Blo^{\varnothing,1}_{\0})&=\{\tilde{\xi}_0^{\pm},\tilde{\xi}_1,\dots,\tilde{\xi}_\ell\}. 
\end{align}
In Section \ref{sec:weight_one} we will be more precise about distinguishing between $\xi_j^+$ and $\xi_j^-$, when $j>0$.

Recall the notation used in Theorem~\ref{thm:stembridge}, especially (\ref{EEps}),(\ref{EEpsN}). Thus $\eps_\la=1$ if $\la$ is even (i.e. $|\la|-h(\la)$ is even), and $\eps_\la=\sqrt{2}$ if $\la$ is odd. In particular, $\eps_j=1$ if $j=0$ and $\eps_j=\sqrt{2}$ if $1\leq j\leq \ell$. 
Moreover, $\eps_{\mu,\la}=1$ if $(\mu,\la)$ is even (i.e. $\mu$ and $\la$ have the same parity), and $\eps_{\mu,\la}=\sqrt{2}$ if $(\mu,\la)$ is odd (i.e. $\mu$ and $\la$ have opposite  parities). 

Recall also the notation of \S\S\ref{sec:notation},\ref{sec:comb}.

\begin{Lemma}\label{lem:HtoG}
Let $d\geq 1$ and $\rho$ be a $d$-Rouquier $\bar p$-core. 
\begin{enumerate}
\item If $\mu \in \Par_0(\rho,d-1)$ and $j\in I$ then
\begin{align*}
\xi_{\mu,j}\uparrow_{\Hi ,\ci}^{\Gi ,\bi} = \sum_{\la\in \Par_0^{\leq \ell-j}(\mu)^+} \frac{\eps_{\mu,j}\eps_{\mu,\la}\eps_{j}}{\eps_\la} \,\xi_\la.
\end{align*}

\item If $\la \in \Par_0(\rho,d)$ then 
\begin{align*}
\xi_\la \downarrow^{\Gi ,\bi}_{\Hi ,\ci} = \sum_{j\in I}\sum_{\mu\in \Par_0^{\leq \ell-j}(\la)^-} \frac{\eps_\la \eps_{\mu,\la}\eps_{j}}{\eps_{\mu,j}} \,\xi_{\mu,j}.
\end{align*}
\end{enumerate}
Furthermore, if $(\mu,j)$ and $\la$ are both odd, then, unless $j=0$ and $\la = \mu \sqcup (p)$, the coefficients of $\xi_\la^+$ and $\xi_\la^-$ are equal in both $\xi_{\mu,j}^+ \uparrow_{\tSi_{\sR,\sJ}}^{\tSi_n}$ and $\xi_{\mu,j}^- \uparrow_{\tSi_{\sR,\sJ}}^{\tSi_n}$.
\end{Lemma}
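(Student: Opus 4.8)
\textbf{Proof plan for Lemma \ref{lem:HtoG}.}
The plan is to reduce the induction/restriction of characters along the inclusion $\Hi\le\Gi$ to the Stembridge branching rule (Theorem \ref{thm:stembridge}), using the combinatorial analysis of Rouquier cores provided by Lemma \ref{ChuKesLem}. First I would observe that, by Theorem \ref{thm:DCblocks}(ii), the truncated induction $\xi_{\mu,j}\uparrow_{\Hi,\ci}^{\Gi,\bi}$ is obtained from the full induction $\xi_{\mu,j}\uparrow_{\tSi_{\sR\cup\sJ}}^{\tSi_n}$ (which factors through $\tSi_{|\rho|,dp}$ after identifying $\Hi=\tSi_{\sR,\sJ_1,\dots}$-style data appropriately, but really the relevant subgroup here is $\tSi_{\sR\cup(\sJ_1\cup\dots\cup\sJ_{d-1}),\sJ_d}$ composed with the block $\ci$) by projecting onto the block $\bi=\ei_{\rho,d}$; i.e. we keep only the constituents $\xi_\la$ with $\core(\la)=\rho$ and $\wt(\la)=d$, equivalently $\la\in\Par_0(\rho,d)$. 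The characters of $\cO\Hi\ci$ are indexed, via the tensor identification $\cO\Hi e_z\cong\cO\Gi_{d-1}e_z\otimes\cO\tSi_{\sJ_d}e_z$ and the block $\ci=\ei_{\rho,d-1}\otimes\ei_{\varnothing,1}^{(d)}$, by pairs $(\mu,j)$ with $\mu\in\Par_0(\rho,d-1)$ and $j\in I$ (so $(p-j,j)\in\Par_0(\varnothing,1)$), using the labelling conventions of \S\ref{SSIrrChar} and equations (\ref{ECharBEmpty1}),(\ref{EJ}).

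Next I would apply Theorem \ref{thm:stembridge} with the splitting $m=|\mu|$, $n-m=p$: it gives
\[
\xi_{\mu,(p-j,j)}\uparrow_{\tSi_{m,p}}^{\tSi_n}=\sum_{\la\in\Par_0(n)}\frac{\eps_{\mu,j}}{\eps_\la}\,2^{(h(\mu)+h((p-j,j))-h(\la))/2}\,\mathfrak{f}_{(p-j,j)}(\la\setminus\mu)\,\xi_\la,
\]
and then project onto the block $\bi$, which restricts the sum to $\la\in\Par_0(\rho,d)$ with $\mu\subseteq\la$. Here is where Lemma \ref{ChuKesLem} does the real work: for $\la\in\Par_0(\rho,d)$ and $\mu\in\Par_0(\rho,d-1)$ with $\mu\subseteq\la$, it identifies the skew shape $\sh[\la\setminus\mu]$ precisely as the hook-like shape pictured there, parametrised by a single index $i\in I$ with $\Ab_\la$ obtained from $\Ab_\mu$ by an elementary slide down on runner $i$. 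I would then compute $\mathfrak{f}_{(p-j,j)}(\la\setminus\mu)$ for such a shape: since the content of $(p-j,j)$ is a two-row strict partition, only shapes whose horizontal and vertical arms have the right lengths admit a valid lattice-respecting shifted tableau, and one checks directly that $\mathfrak{f}_{(p-j,j)}(\la\setminus\mu)$ equals $1$ precisely when $i\le\ell-j$ (equivalently $\la\in\Par_0^{i}(\mu)^+$ with $i\le\ell-j$, i.e. $\la\in\Par_0^{\leq\ell-j}(\mu)^+$) and $0$ otherwise. This is exactly the combinatorial heart of the argument and I expect it to be the main obstacle: one must carefully match the arm lengths $\ell+i+1$ (horizontal) and $\ell-i+1$ (vertical) of the shape from Lemma \ref{ChuKesLem}(ii) against the row lengths $p-j$ and $j$ of the content partition, track which entries may be marked, and verify the lattice property together with the "leftmost $i$ unmarked" condition — all while using the inequality $1\le j\le\ell$ (the case $j=0$, content $(p)$, being the single-row special case).

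Once the tableau count is established, the coefficient of $\xi_\la$ in the block-projected sum becomes $\frac{\eps_{\mu,j}}{\eps_\la}2^{(h(\mu)+h((p-j,j))-h(\la))/2}$ for $\la\in\Par_0^{\leq\ell-j}(\mu)^+$; using Lemma \ref{ChuKesLem}(iii) to determine the parity of $\la$ relative to $\mu$ (they differ in parity unless $i=0,\la=\mu\sqcup(p)$), one sees $h(\mu)+h((p-j,j))-h(\la)$ is either $0$ or $\pm$ a small even number, and a short bookkeeping check against the definitions (\ref{EEps}),(\ref{EEpsN}) of the $\eps$'s rewrites the prefactor as $\frac{\eps_{\mu,j}\eps_{\mu,\la}\eps_j}{\eps_\la}$, giving part (i). Part (ii) follows from part (i) by Frobenius reciprocity (adjointness of truncated induction and restriction, as recorded in \S\ref{sec:SSA}), reindexing the sum over $(\mu,j)$ with $\mu\in\Par_0^{\leq\ell-j}(\la)^-$ — note $\la\in\Par_0^{\leq\ell-j}(\mu)^+\iff\mu\in\Par_0^{\leq\ell-j}(\la)^-$ by the definition in \S\ref{sec:comb} — and the symmetry $\frac{\eps_{\mu,j}\eps_{\mu,\la}\eps_j}{\eps_\la}$ versus $\frac{\eps_\la\eps_{\mu,\la}\eps_j}{\eps_{\mu,j}}$ together with the fact that the adjunction multiplies coefficients by $\eps_\la^2/\eps_{\mu,j}^2$ when passing between supercharacter bases (Lemma \ref{lem:chars_bisupmod}(ii)). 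Finally, the assertion about $\xi_\la^\pm$ versus $\xi_\la^-$ when $(\mu,j)$ and $\la$ are both odd is immediate from the corresponding last sentence of Theorem \ref{thm:stembridge}, since the exceptional case there ($\la=\mu\sqcup\nu$ with $\nu=(p-j,j)$) forces $\nu=(p)$, i.e. $j=0$ and $\la=\mu\sqcup(p)$, which is precisely the exception excluded in our statement.
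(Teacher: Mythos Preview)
Your proposal is correct and follows essentially the same route as the paper: apply Theorem~\ref{thm:stembridge}, restrict to the block to force $\la\in\Par_0(\rho,d)$, invoke Lemma~\ref{ChuKesLem} to pin down the skew shape $\mathfrak{h}_i$, compute $\mathfrak{f}_{(p-j,j)}(\mathfrak{h}_i)=\delta_{i+j\le\ell}$, and then rewrite the power of $2$ in terms of the $\eps$'s; the paper likewise deduces (ii) from (i) via the duality $(\bi\cO\Gi\ci)^*\simeq\ci\cO\Gi\bi$ and Lemma~\ref{lem:chars_bisupmod}(ii), and handles the final clause exactly as you do. One small slip: the exponent $h(\mu)+h((p-j,j))-h(\la)$ is not always even---it takes the values $0,1,2$---so the intermediate power of $2$ can be $\sqrt{2}$; the paper's bookkeeping shows $2^{h((p-j,j))/2}=\sqrt{2}\,\eps_j$ and $2^{(h(\mu)-h(\la))/2}=\eps_{\mu,\la}/\sqrt{2}$, whence the product is $\eps_{\mu,\la}\eps_j$ as you claim.
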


\begin{proof}
Since, by Lemma \ref{lem:sup_alg_idempt_dual}, $(\bi\cO \Gi \ci)^* \simeq \ci\cO \Gi \bi$, we can apply Lemma \ref{lem:chars_bisupmod}(ii) to see that parts (i) and (ii) are equivalent. We will prove part (i).

Let $\mu \in \Par_0(\rho,d-1)$, $j\in I$ and suppose $\xi_\la$ appears with non-zero coefficient in $\xi_{\mu ,j}\uparrow_{\Hi ,\ci}^{\Gi ,\bi}$. Since $\xi_\la$ lies in the block $\cO \Gi \bi$, we have $\la \in \Par_0(\rho,d)$. Furthermore, Theorem~\ref{thm:stembridge} dictates that $\mu \subseteq \la$ and so, by Lemma~\ref{ChuKesLem}(i), $\Ab_\la$ is obtained from $\Ab_\mu$ by an elementary slide down on runner $i$, for some $i\in I$ and $\sh[\la \setminus \mu]$ is of the form

\begin{figure}[H]
    \centering
    \hspace{9.4em}$\overbrace{\hspace{6em}}^{\displaystyle \ell + i + 1}$ \\
    \begin{tabular}{r@{}l}
    \raisebox{-4.5ex}{$\mathfrak{h}_i:= \qquad \ell - i + 1 \left\{\vphantom{\begin{array}{c}~\\[8.3ex] ~
    \end{array}}\right.$} &
    \begin{ytableau}
    ~               & & \none[\cdots] & \\
    ~               & \none & \none & \none \\
    \none[\vdots]   & \none & \none & \none \\
    ~               & \none & \none & \none \end{ytableau}
    \end{tabular}
\end{figure}

Now
\begin{align*}
\mathfrak{f}_{(p-j,j)}(\mathfrak{h}_i) =
\begin{cases}
1 &\text{if }i+j \leq \ell,\\
0 &\text{if }i+j > \ell.
\end{cases}
\end{align*}
Indeed one can readily check that $\mathfrak{f}_{(p-j,j)}(\mathfrak{h}_i)=0$, if $i+j > \ell$ and that, if $i+j \leq \ell$, there is precisely one legal way of filling in $\mathfrak{h}_i$ with content $(p-j,j)$, namely

\begin{figure}[H]
    \centering
    %\hspace{9.2em}$\overbrace{\hspace{12.4em}}^{\displaystyle \ell + i + 1}$ \\
    \begin{tabular}{r@{}l}
    %\raisebox{-2.5ex}{$\mathfrak{h}_i:= \qquad \ell - i - j \left\{\vphantom{\begin{array}{c}~\\[4.2ex] ~\end{array}}\right.$} &
    \begin{ytableau}
    1'              & 1 & 1 & 1 & \none[\cdots] & 1 & 1 & 1 \\
    \none[\vdots]   & \none & \none & \none & \none & \none & \none & \none \\
    1'              & \none & \none & \none & \none & \none & \none & \none \\
    1               & \none & \none & \none & \none & \none & \none & \none \\
    2'              & \none & \none & \none & \none & \none & \none & \none \\
    \none[\vdots]   & \none & \none & \none & \none & \none & \none & \none \\
    2'              & \none & \none & \none & \none & \none & \none & \none \\
    2               & \none & \none & \none & \none & \none & \none & \none
    \end{ytableau}
    \end{tabular}
\end{figure}
Therefore, by Theorem~\ref{thm:stembridge}, we have
\begin{align*}
\xi_{\mu,j}\uparrow_{\Hi ,\ci}^{\Gi ,\bi} = \sum_{\la\in \Par_0^{\leq \ell-j}(\mu)^+} \frac{\eps_{\mu,j}}{\eps_\la} 2^{(h(\mu) + h((p-j,j)) - h(\la))/2} \xi_\la,
\end{align*}
for all $\mu \in \Par_0(\rho,d-1)$ and $j\in I$.

To get the coefficient into the desired form note that $2^{h((p-j,j))/2} = \sqrt{2}\eps_{j}$. Furthermore, $h(\la) = h(\mu)$ unless $\Ab_\la$ is obtained from $\Ab_\mu$ by adding a new bead on the runner $0$, in which case $h(\la) = h(\mu) + 1$. By Lemma \ref{ChuKesLem}(iii), this case happens precisely when $\eps_\la = \eps_\mu$. Therefore, $2^{(h(\mu) - h(\la))/2} = \eps_{\mu,\la}/\sqrt{2}$ and
\begin{align*}
2^{(h(\mu) + h((p-j,j)) - h(\la))/2} = \big(\eps_{\mu,\la}/\sqrt{2}\big)\big(\sqrt{2}\eps_{j}\big) = \eps_{\mu,\la}\eps_{j},
\end{align*}
as desired.

The final statement follows from the corresponding statement in Theorem~\ref{thm:stembridge} once we observe that, by Lemma \ref{ChuKesLem}(i), $\la = \mu \sqcup (p-j,j)$ if and only if $j=0$ and $\la = \mu \sqcup (p)$.
\end{proof}

\section{Weight one RoCK blocks}\label{sec:weight_one}
Recall that it is now assumed that $\K$ contains a primitive $(2n!)^{\nth}$ root of unity.

Throughout this section let $\rho$ be a $1$-Rouquier $\bar p$-core and $d=1$. We adopt the notation (\ref{EUVI}) and all the notation of $\S$\ref{sec:notation}. So 
\begin{align*}r=|\rho|,\ n = r+p,\ 
&\sR=[r],\ \sJ=[n]\setminus[r],\ 
\Gi  = \tSi_n,\ \Li  = \Ni  = \tSi_{\sR ,\sJ}\cong \tSi_{r,p},
\\
&\bi = \ei_{\rho,1} \in \cO \Gi ,\ \fid = \ei_{\rho,0} \otimes \ei_{\varnothing,1}^{(1)} \in \cO \Ni.
\end{align*}
We 
identify $\cO \Ni \fid$ with $\Blo^{\rho,0} \otimes \Blo^{\varnothing,1}$ via (\ref{algn:ONf_ident}). 
By Lemma \ref{lem:c_f_blocks}, 
$\Di=\Di_1$ 
is simultaneously a defect group of $\cO \tSi_{\sJ}\ei_{\varnothing,1}^{(1)} \cong \Blo^{\varnothing,1}$ and $\cO \Gi \bi = \Blo^{\rho,1}$. 

We recall the notation $\Irr$ and $\Irr_{\operatorname{super}}$ from (\ref{EIrrNotation}) and $\xi_j^{(\pm)},\tilde\xi_j^{(\pm)},\xi_{\rho,j}^{(\pm)}$, etc.  from (\ref{EJ}).

\subsection{Brauer trees of weight one RoCK blocks}

For $j\in I$, we denote by 
\begin{equation}
\label{ERho^I}
\rho^j\in\Par_0(\rho,1)
\end{equation}
 the partition whose abacus $\Ab_{\rho^j}$ is obtained from $\rho$ by sliding a bead down the $j^{\nth}$ runner. 

\begin{Lemma}\label{lem:weight1_equiv}
Let $\rho$ be a $1$-Rouquier $\bar p$-core and $d=1$.
\begin{enumerate}
\item If $\rho$ is even, there exists a Morita superequivalence between $\cO \Ni \fid$ and $\Blo^{\varnothing,1}$ with the corresponding bijection 
$$\Irr_{\operatorname{super}}(\cO \Ni \fid) \to \Irr_{\operatorname{super}}(\Blo^{\varnothing,1}),\ \xi_{\rho,j} \mapsto \xi_j.
$$

\item If $\rho$ is odd, there exists a Morita equivalence between $\cO \Ni \fid$ and $\Blo^{\varnothing,1}_{\0}$ with the corresponding bijection 
$$ \Irr(\cO \Ni \fid) \to \Irr(\Blo^{\varnothing,1}_{\0}),\ 
\left\{
\begin{array}{ll}
\xi_{\rho,j} \mapsto \tilde \xi_j&\hbox{if $1\leq j\leq \ell$,}\\
\xi_{\rho,0}^\pm\mapsto \tilde \xi_0^\pm.&\hbox{}
\end{array}
\right.
$$
for an appropriate choice of irreducible characters $(-)^\pm$. 
\end{enumerate}

The Morita equivalences in (i) and (ii) can be chosen to have trivial source.

%\textcolor{red}{Do we need super here? I'm not sure we do actually.}

\begin{enumerate}
\item[(iii)] The irreducible characters of $\Blo^{\varnothing,1}$ can be labelled such that $\Blo^{\varnothing,1}$ has Brauer tree:
\begin{align*}
\begin{braid}\tikzset{baseline=1.5mm}
\coordinate (1) at (0,0);
\coordinate (2) at (3.5,0);
\coordinate (3) at (7,0);
\coordinate (4) at (10.5,0);
\coordinate (5) at (14,0);
\coordinate (6) at (17.5,0);
\coordinate (7) at (21,0);
\coordinate (8) at (24.5,0);
\coordinate (9) at (28,0);
\coordinate (10) at (31.5,0);
\coordinate (11) at (35,0);
\draw[thin] (1) -- (2);
\draw[thin] (2) -- (3);
\draw(8.75,0) node {$\cdots$};
\draw[thin] (4) -- (5);
\draw[thin] (5) -- (6);
\draw[thin] (6) -- (7);
\draw[thin] (7) -- (8);
\draw(26.25,0) node {$\cdots$};
\draw[thin] (9) -- (10);
\draw[thin] (10) -- (11);
\node at (1)[circle,fill,inner sep=1.5pt]{};
\node at (1) [above] {\small $\xi^+_{\ell}$};
\node at (2)[circle,fill,inner sep=1.5pt]{};
\node at (2) [above] {\small $\xi^+_{\ell-1}$};
\node at (5)[circle,fill,inner sep=1.5pt]{};
\node at (5) [above] {\small $\xi^+_1$};
\node at (6)[circle,fill,inner sep=1.5pt]{};
\node at (6) [above] {\small $\xi_0$};
\node at (7)[circle,fill,inner sep=1.5pt]{};
\node at (7) [above] {\small $\xi^-_1$};
\node at (10)[circle,fill,inner sep=1.5pt]{};
\node at (10) [above] {\small $\xi^-_{\ell-1}$};
\node at (11)[circle,fill,inner sep=1.5pt]{};
\node at (11) [above] {\small $\xi^-_{\ell}$};
\end{braid}
\end{align*}
The irreducible characters of $\Blo^{\varnothing,1}_{\0}$ can be labelled such that $\Blo^{\varnothing,1}_{\0}$ has Brauer tree:
\begin{align*}
\begin{braid}\tikzset{baseline=1.5mm}
\coordinate (1) at (0,0);
\coordinate (A) at (0,0.4);
\coordinate (2) at (5,0);
\coordinate (3) at (10,0);
\coordinate (4) at (15,0);
\coordinate (5) at (20,0);
\coordinate (6) at (25,0);
\draw[thin] (1) -- (2);
\draw[thin] (2) -- (3);
\draw(12.5,0) node {$\cdots$};
\draw[thin] (4) -- (5);
\draw[thin] (5) -- (6);
\node at (1)[circle,fill,inner sep=1.5pt]{};
\node at (A) [above] {\small $\tilde{\xi}_0^+ + \tilde{\xi}_0^-$};
\node at (2)[circle,fill,inner sep=1.5pt]{};
\node at (2) [above] {\small $\tilde{\xi}_1$};
\node at (5)[circle,fill,inner sep=1.5pt]{};
\node at (5) [above] {\small $\tilde{\xi}_{\ell-1}$};
\node at (6)[circle,fill,inner sep=1.5pt]{};
\node at (6) [above] {\small $\tilde{\xi}_{\ell}$};
\draw (1) circle (6mm);
\end{braid}
\end{align*}
\item[(iv)] If $\rho$ is even, the irreducible characters of $\cO \Gi \bi$ can be labelled such that $\cO \Gi \bi$ has Brauer tree:
\begin{align*}
\begin{braid}\tikzset{baseline=1.5mm}
\coordinate (1) at (0,0);
\coordinate (2) at (3.5,0);
\coordinate (3) at (7,0);
\coordinate (4) at (10.5,0);
\coordinate (5) at (14,0);
\coordinate (6) at (17.5,0);
\coordinate (7) at (21,0);
\coordinate (8) at (24.5,0);
\coordinate (9) at (28,0);
\coordinate (10) at (31.5,0);
\coordinate (11) at (35,0);
\draw[thin] (1) -- (2);
\draw[thin] (2) -- (3);
\draw(8.75,0) node {$\cdots$};
\draw[thin] (4) -- (5);
\draw[thin] (5) -- (6);
\draw[thin] (6) -- (7);
\draw[thin] (7) -- (8);
\draw(26.25,0) node {$\cdots$};
\draw[thin] (9) -- (10);
\draw[thin] (10) -- (11);
\node at (1)[circle,fill,inner sep=1.5pt]{};
\node at (1) [above] {\small $\xi^+_{\rho^\ell}$};
\node at (2)[circle,fill,inner sep=1.5pt]{};
\node at (2) [above] {\small $\xi^+_{\rho^{\ell-1}}$};
\node at (5)[circle,fill,inner sep=1.5pt]{};
\node at (5) [above] {\small $\xi^+_{\rho^1}$};
\node at (6)[circle,fill,inner sep=1.5pt]{};
\node at (6) [above] {\small $\xi_{\rho^0}$};
\node at (7)[circle,fill,inner sep=1.5pt]{};
\node at (7) [above] {\small $\xi^-_{\rho^1}$};
\node at (10)[circle,fill,inner sep=1.5pt]{};
\node at (10) [above] {\small $\xi^-_{\rho^{\ell-1}}$};
\node at (11)[circle,fill,inner sep=1.5pt]{};
\node at (11) [above] {\small $\xi^-_{\rho^\ell}$};
\end{braid}
\end{align*}

\item[(v)] If $\rho$ is odd, the irreducible characters of $\cO \Gi \bi$ can be labelled such that $\cO \Gi \bi$ has Brauer tree:
\begin{align*}
\begin{braid}\tikzset{baseline=1.5mm}
\coordinate (1) at (0,0);
\coordinate (A) at (0,0.4);
\coordinate (2) at (5,0);
\coordinate (3) at (10,0);
\coordinate (4) at (15,0);
\coordinate (5) at (20,0);
\coordinate (6) at (25,0);
\draw[thin] (1) -- (2);
\draw[thin] (2) -- (3);
\draw(12.5,0) node {$\cdots$};
\draw[thin] (4) -- (5);
\draw[thin] (5) -- (6);
\node at (1)[circle,fill,inner sep=1.5pt]{};
\node at (A) [above] {\small $\xi_{\rho^0}^+ + \xi_{\rho^0}^-$};
\node at (2)[circle,fill,inner sep=1.5pt]{};
\node at (2) [above] {\small $\xi_{\rho^1}$};
\node at (5)[circle,fill,inner sep=1.5pt]{};
\node at (5) [above] {\small $\xi_{\rho^{\ell-1}}$};
\node at (6)[circle,fill,inner sep=1.5pt]{};
\node at (6) [above] {\small $\xi_{\rho^\ell}$};
\draw (1) circle (6mm);
\end{braid}
\end{align*}
\end{enumerate}
\end{Lemma}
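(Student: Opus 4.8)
The plan is to derive all five assertions from the isomorphism $\cO\Ni\fid \cong \Blo^{\rho,0}\otimes\Blo^{\varnothing,1}$ of (\ref{algn:ONf_ident}) — recall that, since $d=1$, we have $\Li=\Ni=\tSi_{\sR,\sJ}$, $\Hi=\Ni$ and $\ci=\fid$ — together with the classical determination of the Brauer trees of weight-one spin blocks. First I would record the structure of $\Blo^{\rho,0}=\cO\tSi_\sR\ei_{\rho,0}$: it is a block of defect zero, so $\K\Blo^{\rho,0}$ is split semisimple (Lemma~\ref{lem:root_unity}), and by Theorem~\ref{TSchur} it carries a single irreducible supercharacter $\xi_\rho$, of type $\Mtype$ if $\rho$ is even and of type $\Qtype$ if $\rho$ is odd; hence by Lemma~\ref{lem:split_semi}, and lifting to $\cO$ (the block having defect zero), $\Blo^{\rho,0}\cong\cM_{m|n}(\cO)$ with $m+n\geq 1$ if $\rho$ is even and $\Blo^{\rho,0}\cong\cQ_m(\cO)\cong\cM_m(\cO)\otimes\cC_1$ with $m\geq 1$ if $\rho$ is odd. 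The matrix trace makes $\cM_{m|n}(\cO)$ supersymmetric, so $\gU_{m,n}$ induces a Morita superequivalence $\cM_{m|n}(\cO)\sim_{\sM}\cO$ by Lemma~\ref{lem:HomAM_M*}, and $\cM_m(\cO)\sim_{\sM}\cO$ trivially; so by Lemma~\ref{lem:MSE}, $\cO\Ni\fid\sim_{\sM}\Blo^{\varnothing,1}$ when $\rho$ is even (part (i)) and $\cO\Ni\fid\sim_{\sM}\cC_1\otimes\Blo^{\varnothing,1}$ when $\rho$ is odd. To finish (ii) I would use that $|\cC_1\otimes B|\cong\hat B$ for any superalgebra $B$ (the map $1\otimes b\mapsto b$, $\cc_1\otimes 1\mapsto e_\0-e_\1$ is a surjection of free $\cO$-modules of equal rank, hence an algebra isomorphism) and $\hat B\cong\cM_2(B_\0)$ when $B$ has a superunit (a direct computation of the truncations $e_\eps\hat Be_\de$), so that $|\cO\Ni\fid|\cong\cM_m\bigl(|\cC_1\otimes\Blo^{\varnothing,1}|\bigr)\cong\cM_{2m}(\Blo^{\varnothing,1}_\0)\sim_{\Mor}\Blo^{\varnothing,1}_\0$.

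The character bijections follow by tracking the inducing bimodules on Grothendieck groups via Lemmas~\ref{lem:chars_bisupmod}, \ref{lem:AboxM} and \ref{lem:res_chars}: $\gU_{m,n}$ sends $\xi_\rho$ to the trivial character, giving $\xi_{\rho,j}\mapsto\xi_j$ in (i); in (ii) the passage $\cC_1\otimes\Blo^{\varnothing,1}\rightsquigarrow\Blo^{\varnothing,1}_\0$ through $\hat B\cong\cM_2(B_\0)$ splits or fuses characters exactly as in Lemma~\ref{lem:res_chars}, so that $\xi_{\rho,j}\mapsto\tilde\xi_j$ for $1\leq j\leq\ell$ and $\xi_{\rho,0}^{\pm}\mapsto\tilde\xi_0^{\pm}$ after a suitable choice of the $\pm$-labels. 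For the trivial-source assertion, in each case the inducing bimodule is an indecomposable external product of the form $\gU_{m,n}\boxtimes\Blo^{\varnothing,1}$ (resp. the analogue built from $\gV_m$ and the restriction bimodule $\Blo^{\varnothing,1}$ over $\Blo^{\varnothing,1}_\0$): by Lemma~\ref{lem:direct_prod_vert} it is indecomposable with vertex $\Delta\Di$, and its restriction to $\Delta\Di$ is a trivial $\cO\Di$-module tensored with the conjugation-restriction of the block $\Blo^{\varnothing,1}$, which is a summand of a permutation module, so the source is trivial.

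Part (iii) is the classical input: $\Blo^{\varnothing,1}=\cO\tSi_p\ei_{\varnothing,1}$ and $\Blo^{\varnothing,1}_\0=\cO\tAi_p\ei_{\varnothing,1}$ are weight-one spin blocks with cyclic defect group of order $p$, so by \cite[Proposition~3.10]{Lin} their basic $\cO$-algebras are the Brauer tree algebras of their Brauer trees, and those trees are lines of the displayed shape — the line shape forced by weight one, the associate pairs $\xi_j^{\pm}$ arranged symmetrically about $\xi_0$ and the exceptional vertex of multiplicity two at the end of the $\tAi_p$-tree because $\sigma$-duality fixes $\xi_0$ and exchanges $\xi_j^{+}$ with $\xi_j^{-}$, the precise adjacencies coming from the decomposition numbers or from the branching rule of Theorem~\ref{thm:stembridge}/Lemma~\ref{lem:HtoG}, the $\tAi_p$-tree following from the $\tSi_p$-tree by Clifford theory. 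Parts (iv) and (v) then follow by transporting (iii) through (i)/(ii): the Morita (super)equivalence identifies the Brauer tree of $\cO\Ni\fid$ with that of $\Blo^{\varnothing,1}$, resp. $\Blo^{\varnothing,1}_\0$, under $\xi_j\leftrightarrow\xi_{\rho,j}$, and since $\cO\Gi\bi$ also has cyclic defect of order $p$ and $d=1$, Lemma~\ref{lem:HtoG} computes its decomposition matrix against that of $\cO\Ni\fid=\cO\Hi\ci$ and thereby pins the Brauer tree of $\cO\Gi\bi$ down to the displayed line with nodes $\xi_{\rho^j}^{(\pm)}$, the exceptional vertex appearing at the end exactly when $\rho$ is odd. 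The step I expect to be most delicate is this last one — extracting the precise adjacency of the Brauer tree of $\cO\Gi\bi$, and in particular which associate of each pair sits where, from the branching coefficients of Lemma~\ref{lem:HtoG} — together with keeping the $\pm$-labelling choices in (ii), (iv) and (v) mutually consistent and compatible with those already fixed in (iii).
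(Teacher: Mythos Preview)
Your treatment of (i) and (ii) is a repackaging of the paper's argument rather than a genuinely different route. The paper picks a primitive idempotent $e\in\Blo^{\rho,0}_\0$ (respectively $e\in\Blo^{\rho,0}$ in the odd case) and uses Lemma~\ref{lem:idmpt_Mor} to show that $e\cO\Ni\fid$ induces the equivalence; your bimodule $\gU_{m,n}\boxtimes\Blo^{\varnothing,1}$ is isomorphic to this, since $\gU_{m,n}\cong e\Blo^{\rho,0}$. Two small points: in the even case with $r>1$ the existence of a superunit forces $m=n$, which you should record; and your detour in (ii) through $|\cC_1\otimes B|\cong\hat B\cong\cM_2(B_\0)$, while correct, is longer than the paper's one-line observation that $e(\Blo^{\rho,0}\otimes\Blo^{\varnothing,1})e=e\otimes\Blo^{\varnothing,1}_\0$ because conjugation by the superunit of $\Blo^{\varnothing,1}$ sends $e$ to $\sigma(e)$ and $e\sigma(e)=0$. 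For trivial source the paper simply notes that $e\cO\Ni\fid$ is a summand of the trivial-source $(\cO\Ni,\cO\Ni)$-bimodule $\cO\Ni\fid$, which is cleaner than invoking Lemma~\ref{lem:direct_prod_vert}.

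The real gap is in (iv) and (v). Transporting (iii) through (i)/(ii) gives you the Brauer tree of $\cO\Ni\fid$, not of $\cO\Gi\bi$; a Morita equivalence between those two is precisely what the paper establishes \emph{afterwards} in Lemma~\ref{Omega_bOGf_Morita}, so it is not available here. Your assertion that ``Lemma~\ref{lem:HtoG} computes its decomposition matrix'' is wrong as stated: that lemma computes characteristic-zero branching, not mod-$p$ decomposition numbers. There \emph{is} a salvageable argument along your lines --- induce each indecomposable projective character of $\cO\Ni\fid$ to $\cO\Gi\bi$ via Lemma~\ref{lem:HtoG}, then use that the result lies in $\N\prj(\cO\Gi\bi)$, together with $\sigma$-symmetry and the acyclicity of the tree, to pin down the adjacencies inductively --- but this is a genuine combinatorial argument you have not supplied, and getting the $\pm$-labels consistent is exactly the delicacy you flag. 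The paper bypasses all of this by citing \cite[Theorem~4.4]{Mu}, which determines the Brauer trees of \emph{all} weight-one spin blocks directly; parts (iii), (iv) and (v) are each an instance of M\"uller's result.
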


\begin{proof}
Suppose $\rho$ is even. If $r=1$, then $\cO \Ni \fid=\cO \tSi_{\sJ}\ei_{\varnothing,1}^{(1)} \cong \Blo^{\varnothing,1}$ and the  Morita superequivalence is clear. Let $r>1$.
By Theorems \ref{thm:DCblocks} and \ref{thm:An_blocks}, $|\Blo^{\rho,0}| \cong \cM_{2m \times 2m}(\cO)$ and $\Blo^{\rho,0}_{\0} \cong \cM_{m \times m}(\cO) \oplus \cM_{m \times m}(\cO)$, for some $m \in \NN$. In particular, any primitive idempotent in $e \in \Blo^{\rho,0}_{\0}$ remains primitive in $\Blo^{\rho,0}$. Viewing $e$ as an element of $\cO \Ni \fid \cong \Blo^{\rho,0} \otimes \Blo^{\varnothing,1}$, we have the  isomorphism of superalgebras
\begin{align*}
\Blo^{\varnothing,1}  \iso e\cO \Ni \fid e \cong e\Blo^{\rho,0}e \otimes \Blo^{\varnothing,1}, \ 
x  \mapsto e \otimes x.
\end{align*}
Furthermore, since $\Blo^{\rho,0}e\Blo^{\rho,0}=\Blo^{\rho,0}$, we have $(\cO \Ni \fid)e(\cO \Ni \fid) = \cO \Ni \fid$ and so, by Lemma \ref{lem:idmpt_Mor}, $e\cO \Ni \fid$ induces a Morita superequivalence between $\cO \Ni \fid$ and $\Blo^{\varnothing,1}$.

If $\rho$ is odd, then, by Theorem \ref{thm:DCblocks}, $|\Blo^{\rho,0}| \cong \cM_{m \times m}(\cO) \oplus \cM_{m \times m}(\cO)$, for some $m \in \NN$. Furthermore, $\sigma_{\cO\tSi_\sR }$ swaps these two factors. (In other words, $\Blo^{\rho,0} \cong \cQ_m(\cO)$.) Let $e \in \Blo^{\rho,0}$ be a primitive idempotent. The above shows that $e\sigma_{\cO\tSi_\sR }(e)=0$. Let $u \in \Blo^{\varnothing,1}_{\1} \cap (\Blo^{\varnothing,1})^{\times}$. Viewing $e$ as an element of $\cO \Ni \fid \cong \Blo^{\rho,0} \otimes \Blo^{\varnothing,1}$, we have that ${}^u e = \sigma_{\cO\tSi_\sR }(e)$. Therefore, $e\Blo^{\varnothing,1}e = e \otimes \Blo^{\varnothing,1}_{\0}$ and we have the isomorphism of algebras
\begin{align*}
\Blo^{\varnothing,1}_{\0} \iso e\cO \Ni \fid e \cong e\Blo^{\rho,0}e \otimes \Blo^{\varnothing,1}_{\0}, \
x \mapsto e \otimes x.
\end{align*}
Now,
$$
\Blo^{\rho,0}e\Blo^{\rho,0} + \Blo^{\rho,0}ueu^{-1}\Blo^{\rho,0} = \Blo^{\rho,0}(e + \sigma_{\cO \tSi_\sR }(e))\Blo^{\rho,0} = \Blo^{\rho,0}.
$$
Therefore, $(\cO \Ni \fid)e(\cO \Ni \fid) = \cO \Ni \fid$ and we have shown that $e\cO \Ni \fid$ induces a Morita equivalence between $\cO \Ni \fid$ and $\Blo^{\varnothing,1}_{\0}$.

The bijections of characters given in parts (i) and (ii) are both just a consequence of the fact that $e\cO \Ni \fid \otimes_{\cO \Ni \fid} ?$ is a summand of restriction $\Res^{\tSi_{\sR ,\sJ}}_{\tSi_{\sJ}}$. Therefore, by Lemma \ref{lem:A_ten_B}(ii), nothing other than $\xi_j^{(\pm)}$, when $\rho$ is even, or $\tilde{\xi}_j^{(\pm)}$, when $\rho$ is odd, can occur as an irreducible constituent of the image of $\xi_\rho \otimes \xi_j$.

To show that the relevant bimodule always has trivial source we note that, in both cases, $\cO \Ni \fid$ has trivial source as an $(\cO \Ni ,\cO \Ni )$-bimodule and so $e\cO \Ni \fid$ is a direct sum of trivial source bimodules when considered as an $(\cO \Di,\cO \Di)$-bimodule. The claim follows.

Parts (iii), (iv) and (v) all follow immediately from \cite[Theorem 4.4]{Mu}.
\end{proof}

\subsection{Morita superequivalences for weight one RoCK blocks}

For the following lemma we treat $\bi \cO \Gi \fid$ as an $(\cO \Gi \bi,\cO \Ni \fid)$-bisupermodule and $\fid \cO \Gi \bi$ as an $(\cO \Ni \fid,\cO \Gi \bi)$-bisupermodule.

\begin{Lemma}\label{Omega_bOGf_Morita}
Let $\rho$ be a $1$-Rouquier $\bar p$-core and $d=1$. 
\begin{enumerate}
\item As a bisupermodule, $\bi\cO \Gi \fid$ (resp. $\fid \cO \Gi \bi$) has a unique non-projective, absolutely indecomposable summand\, $\gU$ (resp. $\gU^*$). Furthermore,\, $\gU$ and\, $\gU^*$ induce a stable superequivalence of Morita type between $\cO \Ni \fid$ and\, $\cO \Gi \bi$.
\item The $(\cO \Gi \bi,\cO \Ni \fid)$-bisupermodule\, 
$
\Omega_{\cO \Gi \bi \otimes (\cO \Ni \fid)^{\sop}}^\ell(\bi\cO \Gi \fid)
$ 
induces a Morita superequivalence between $\cO \Ni \fid$ and\, $\cO \Gi \bi$. Moreover, we can choose the labeling of\, $\Irr(\cO \Ni \fid)$ and $\Irr(\cO \Gi \bi)$ such that the corresponding bijection of irreducible characters is given by
\begin{align*}
\xi_{\rho,0} \mapsto \xi_{\rho^0}, 
 \qquad 
 \xi_{\rho,i}^\pm \mapsto \xi_{\rho^j}^\pm\ \ (1\leq j \leq \ell),
\end{align*}
if $\rho$ is even, and
\begin{align*}
 \xi_{\rho,0}^\pm \mapsto \xi_{\rho^0}^\pm,
\qquad
\xi_{\rho,i} \mapsto \xi_{\rho^j}\ \ (1\leq j \leq \ell), 
\end{align*}
if $\rho$ is odd.
\item If $\rho$ is even then the bisupermodules $\bi\cO \Gi \fid$ and $\fid \cO \Gi \bi$ are absolutely indecomposable.
\end{enumerate}
\end{Lemma}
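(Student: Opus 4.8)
The plan is to prove part (iii) as a fairly direct consequence of parts (i) and (ii) together with the structure theory already established. The key observation is that we want to show $\bi\cO\Gi\fid$ and $\fid\cO\Gi\bi$ have no projective summands at all when $\rho$ is even, so that by part (i) they reduce to the single absolutely indecomposable bisupermodule $\gU$ (resp. $\gU^*$). First I would use part (ii): since $\Omega^\ell_{\cO\Gi\bi\otimes(\cO\Ni\fid)^{\sop}}(\bi\cO\Gi\fid)$ induces a Morita superequivalence, it is absolutely indecomposable (a Morita bisupermodule over indecomposable algebras is absolutely indecomposable), and hence by Corollary~\ref{COm0} (and the fact that Heller translates of a non-projective indecomposable are again indecomposable up to projectives) the starting module $\bi\cO\Gi\fid$ is, up to projective summands, absolutely indecomposable; that is, $\bi\cO\Gi\fid \simeq \gU \oplus (\text{projective})$ where $\gU$ is as in part (i). So the content of (iii) is exactly that this projective part vanishes when $\rho$ is even.

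To kill the projective part I would compute ranks, or equivalently compare characters over $\K$. Since $\cO\Ni\fid$ is $\K$-split semisimple after base change and $\bi\cO\Gi\fid \otimes_{\cO\Ni\fid} ?$ is, up to the block truncation $\uparrow_{\Hi,\ci}^{\Gi,\bi}$ restricted along the Morita equivalence of Lemma~\ref{lem:weight1_equiv}(i), exactly truncated induction $\uparrow^{\Gi,\fid}$, I can read off from Lemma~\ref{lem:HtoG} (or directly from Lemma~\ref{lem:weight1_equiv} and the Brauer tree descriptions in (iii),(iv) of that lemma) the image of each irreducible character $\xi_{\rho,j}$ of $\cO\Ni\fid$ under $\bi\cO\Gi\fid\otimes_{\cO\Ni\fid}?$. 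Comparing these character values with the characters of $\gU\otimes_{\cO\Ni\fid}?$ — the latter being computable from part (ii) via $\gU \simeq \Omega^{-\ell}$ of a Morita bisupermodule, or more simply since $\gU$ and the Morita bisupermodule differ by Heller shifts which on the level of $\Grot(\K-)$ can be traced through the Brauer walk as in the proof of Lemma~\ref{lem:Zig_Zag} — shows that the character of $\bi\cO\Gi\fid\otimes_{\cO\Ni\fid}\xi_{\rho,j}$ already equals that of $\gU\otimes_{\cO\Ni\fid}\xi_{\rho,j}$, leaving no room for a projective summand $\gP$ (which would contribute a nonzero $\uparrow$-character). Hence $\bi\cO\Gi\fid = \gU$ is absolutely indecomposable, and by Lemma~\ref{lem:sup_vert} (duality preserves vertices, in particular absolute indecomposability via $|\gM^*|^*\cong|\gM|$) so is $\fid\cO\Gi\bi = \gU^*$.

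An alternative, possibly cleaner route avoids character bookkeeping: observe that $\bi\cO\Gi\fid$, as an $\cO(\Gi\times\Ni)$-module, is a summand of $\Ind_{\Ni\times\Ni}^{\Gi\times\Ni}(\cO\Ni\fid)$, hence every indecomposable summand has vertex contained in a $\Gi\times\Ni$-conjugate of $\Delta\Di$; a projective summand would have vertex the trivial group, forcing $p\nmid |\Gi|$, which is absurd since $d=1$ means $p\mid|\Gi|$. Wait — that is too crude, since a bisupermodule can perfectly well have projective summands even when $p$ divides the group order; the correct statement is that the vertices of summands of $b\cO Gf$ are subgroups of $\Delta\Di$, and one must rule out the strictly smaller (including trivial) ones. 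So I would instead argue: by part (i) the only non-projective summand is $\gU$ with vertex exactly $\Delta\Di$, and any further summand is projective; but $\bi\cO\Gi\fid$ is $\cO$-free of rank equal to $\dim_\K(\bi\K\Gi\fid)$, and by comparing with $\dim_\K$ of the $\K$-span of $\gU$ (computed from part (ii) and the Brauer trees) one sees equality. So the rank/character comparison seems genuinely necessary.

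The main obstacle I anticipate is making the character computation clean enough to be convincing without re-deriving the whole Brauer-walk machinery: one needs to know precisely which characters of $\cO\Ni\fid$ map to which characters of $\cO\Gi\bi$ under $\gU\otimes_{\cO\Ni\fid}?$ versus under $\bi\cO\Gi\fid\otimes_{\cO\Ni\fid}?$, and to verify that the multiplicities match exactly (not merely up to the projectives, which over $\K$ would contribute the $\uparrow$-characters of indecomposable projectives of $\cO\Ni\fid$). I expect the cleanest formulation is: (1) $[\K\otimes\bi\cO\Gi\fid\otimes_{\cO\Ni\fid}\xi_{\rho,j}] = \xi_{\rho^j}$ by Lemma~\ref{lem:HtoG}(i) with $d=1$ (where $\Par_0^{\leq\ell-j}(\rho)^+$ is a singleton $\{\rho^j\}$ in weight one — this uses that $\rho$ is $1$-Rouquier); (2) the same holds for $\gU$ by part (ii); (3) since all multiplicities are $1$ and there is no slack, $\gP=0$. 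Step (1) is where I must be careful that in weight one the sum in Lemma~\ref{lem:HtoG}(i) really does collapse to a single term with coefficient $1$ — this is where the hypothesis that $\rho$ is \emph{even} enters, ensuring (via Lemma~\ref{ChuKesLem}(iii)) that the $\eps$-factors all equal $1$ and no character splits.
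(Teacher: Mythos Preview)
Your overall strategy is exactly the paper's: write $\bi\cO\Gi\fid \simeq \gU \oplus \gP$ with $\gP$ projective and show $\gP = 0$ by comparing the characters $\bi\cO\Gi\fid \otimes_{\cO\Ni\fid} \xi_{\rho,i}^{(\pm)}$ with $\gU \otimes_{\cO\Ni\fid} \xi_{\rho,i}^{(\pm)}$. However, your execution of the character computation in step~(1) is wrong, and this is precisely where the actual work lies.

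You claim $\Par_0^{\leq \ell-j}(\rho)^+$ is the singleton $\{\rho^j\}$. It is not: by definition $\Par_0^{\leq \ell-j}(\rho)^+ = \bigsqcup_{i=0}^{\ell-j}\Par_0^i(\rho)^+ = \{\rho^0,\rho^1,\dots,\rho^{\ell-j}\}$, which has $\ell-j+1$ elements. Lemma~\ref{lem:HtoG}(i) actually gives
\[
\bi\cO\Gi\fid \otimes_{\cO\Ni\fid}\xi_{\rho,i}^{(\pm)} \;=\; \sum_{j=0}^{\ell-i}\xi_{\rho^j},
\]
not a single term. Likewise your step~(2) fails: part~(ii) concerns $\gV = \Omega^\ell(\gU)$, not $\gU$ itself, and the Brauer walk from $\xi_{\rho^i}$ through $\ell$ (or $3\ell$) steps does not return $\xi_{\rho^i}$. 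So there is genuine slack in the character comparison, and the argument ``all multiplicities are $1$ with no room for projectives'' does not go through.

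What the paper does instead is transport everything through $\gV^*$ to work inside $\cO\Ni\fid$ (Morita equivalent to $\Zag_\ell$), obtaining two constraints on $\gV^*\otimes_{\cO\Gi\bi}\gU\otimes_{\cO\Ni\fid}\xi_{\rho,i}^{(\pm)}$: it is $\leq_{\prj}\sum_{j=0}^{\ell-i}\xi_{\rho,j}$ (from $\gU\mid\bi\cO\Gi\fid$) and $\geq_{\prj}\xi_{\rho,\ell-i}^+,\,\xi_{\rho,\ell-i}^-$ (from Lemma~\ref{lem:Zig_Zag}(i) applied to $\Omega^{3\ell}$). The proof is completed by a combinatorial argument on the Brauer tree of $\Zag_\ell$ showing that no nonzero $\chi\in\N\prj(\cO\Ni\fid)$ can be subtracted from $\sum_{j=0}^{\ell-i}\xi_{\rho,j}$ while still satisfying both $\geq_{\prj}$ constraints. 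This last step is where the ``even $\rho$'' hypothesis is genuinely used, via the shape of the tree.
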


\begin{proof}
By Lemma \ref{lem:Brauer_G_N}, $\cO \Gi \bi$ and $\cO \Ni \fid$ are Brauer correspondents. Since $\fid \in \cO \Ni_{\0}$ and $\bi \in \cO \Gi_{\0}$, using Lemma \ref{lem:Brauer_Green}, we may set $\gU$ to be the common super Green correspondent of $\cO \Ni \fid$ and $\cO \Gi \bi$ in $\Gi  \times \Ni $. In particular, $\gU$ is isomorphic to the unique non-projective, absolutely indecomposable summand of $\bi\cO \Gi \fid$.

Now, by Lemma \ref{lem:sup_alg_idempt_dual}, $(\bi\cO \Gi \fid)^* \simeq \fid\cO \Gi \bi$ and so, with Lemma \ref{lem:sup_vert} in mind, $\gU^*$ is isomorphic to the the unique non-projective, absolutely indecomposable summand of $\fid\cO \Gi \bi$. In particular, $\gU^*$ is the super Green correspondent of both $\cO \Ni \fid$ and $\cO \Gi \bi$ in $\Ni  \times \Gi $.

By Theorem \ref{thm:super_Green} and the fact that $\gU \fid = \gU$, $\cO \Gi \bi$ is isomorphic to the unique non-projective, absolutely indecomposable summand of $\gU \otimes_{\cO \Ni \fid} \fid\cO \Gi \bi$. Since $\gU^*$ is isomorphic to the unique non-projective, absolutely indecomposable summand of $\fid \cO \Gi \bi$, Lemma \ref{lem:bimod_vert}(ii) gives that $\cO \Gi \bi$ is isomorphic to the unique non-projective, absolutely indecomposable summand of $\gU \otimes_{\cO \Ni \fid} \gU^*$. So 
$ 
\gU \otimes_{\cO \Ni \fid} \gU^* \simeq \cO \Gi \bi \oplus \gP,
$ 
for some projective $(\cO \Gi \bi,\cO \Gi \bi)$-bisupermodule $\gP$. Similarly,
$
\gU^* \otimes_{\cO \Gi \bi} \gU \simeq \cO \Ni \fid \oplus \gQ,
$
for some projective $(\cO \Ni \fid,\cO \Ni \fid)$-bisupermodule $\gQ$. We have now shown part (i).

Until further notice we assume $\rho$ is even. We address the case where $\rho$ is odd at the end of this proof. By Lemma \ref{lem:weight1_equiv}(i),(iii),(iv), $\cO \Gi \bi$ and $\cO \Ni \fid$ are both Morita equivalent to $\Zag_{\ell}$, introduced in $\S$\ref{sec:Brauer_trees}. Moreover, we may assume the corresponding bijections of characters are given by
\begin{align}\label{algn:chars_ONf_Bl}
\Irr(\cO \Ni \fid) \to \Irr(\Zag_{\ell}), \quad 
\xi_{\rho,0} \mapsto \chi_0,\quad 
\xi_{\rho,j}^{\pm} \mapsto \chi_{j^{\pm}}\ \ (1 \leq j \leq \ell).
\end{align}
We use these Morita equivalences with the above assumed bijections on characters at multiple stages throughout the remainder of the proof.

Note that
\begin{align*}
\Br_\Di(\fid) = \Br_\Di(\bi) = {\bar \ei}_{\rho,0} \in Z(\F C_\Gi (\Di)),
\end{align*}
where the first equality follows from Lemma \ref{lem:Brauer_G_N} and the second from Theorem \ref{thm:DCblocks}(iv). Furthermore, ${\bar \ei}_{\rho,0}$ is primitive in $Z(\F C_\Gi (\Di))$, as, by Lemma \ref{lem:CGD}(i), we have 
$
\F C_\Gi (\Di){\bar \ei}_{\rho,0} \cong \F \tSi_\sR {\bar \ei}_{\rho,0} \otimes \F \Di,
$ 
where $\F \Di$ is totally even. Now, by Lemma \ref{lem:CGD}(ii),
\begin{align*}
N_\Gi (\Di,{\bar \ei}_{\rho,0})/C_\Gi (\Di) = N_\Ni (\Di,{\bar \ei}_{\rho,0})/C_\Ni (\Di) \cong N_{\tSi_{\sJ}}(\Di)/C_{\tSi_{\sJ}}(\Di) \cong C_{2\ell}.
\end{align*}
Therefore, by \cite[5.2,5.4]{Lin4} and \cite[Proposition 5.1]{Lin2}, there exists $0\leq m < 4\ell$ such that
\begin{align*}
\gU \simeq \Omega^m_{\cO \Gi \bi \otimes (\cO \Ni \fid)^{\sop}}(\gV),
\end{align*}
for some $(\cO \Gi \bi,\cO \Ni \fid)$-bimodule $\gV$ inducing a Morita equivalence between $\cO \Ni \fid$ and $\cO \Gi \bi$. Also
\begin{align*}
\Omega^{4\ell}_{\cO \Gi \bi \otimes (\cO \Ni \fid)^{\sop}}(\gV) \simeq \gV.
\end{align*}
(Note we need to know that $\cO \Ni \fid$ and $\cO \Gi \bi$ are already Morita equivalent to be able to apply \cite[Proposition 5.1]{Lin2} as this result is concerning stable and Morita auto-equivalences.) 
Via the discussion in $\S$\ref{sec:proj_sup_mod}, Lemma \ref{lem:HomAM_M*} tells us that $\gV$ and $\gV^*$ even induce a Morita superequivalence between $\cO \Ni \fid$ and $\cO \Gi \bi$. Therefore,
\begin{align}
\begin{split}\label{algn:V*otimesU}
\gV^* \otimes_{\cO \Gi \bi} \gU & \simeq \gV^* \otimes_{\cO \Gi \bi} \Omega^m_{\cO \Gi \bi \otimes (\cO \Ni \fid)^{\sop}}(\gV) \simeq \Omega^m_{\cO \Ni \fid \otimes (\cO \Ni \fid)^{\sop}}(\gV^* \otimes_{\cO \Gi \bi} \gV) \\
&\simeq \Omega^m_{\cO \Ni \fid \otimes (\cO \Ni \fid)^{\sop}}(\cO \Ni \fid).
\end{split}
\end{align}

By Lemma \ref{lem:weight1_equiv}(i),(iii),(iv) and using the bijection of characters in (\ref{algn:chars_ONf_Bl}), we may assume that
\begin{align}\label{algn:V_on_char}
\gV \otimes_{\cO \Ni \fid} \xi_{\rho,0} = \xi_{\rho^0},\quad
\gV \otimes_{\cO \Ni \fid} \xi_{\rho,j}^\pm = \xi_{\rho^j}^\pm\ \ (1 \leq j \leq \ell).
\end{align}
(One can think of this as fixing the labeling of the characters of $\cO \Gi \bi$ given that of $\cO \Ni \fid$.)

Now, by Lemmas \ref{lem:HtoG} and \ref{lem:chars_bisupmod}(i), we have for all $i\in I$
\begin{align}\label{algn:bOGf_char}
\bi\cO \Gi \fid \otimes_{\cO \Ni \fid} \xi_{\rho ,i}^{(\pm)} = \bi\cO \Gi \ci \otimes_{\cO \Hi \ci} \xi_{\rho,i}^{(\pm)} =
\sum_{j=0}^{\ell-i} \xi_{\rho^j}.
\end{align}
(Here and for the remainder of the proof, $\xi_{\rho,i}^{(\pm)}$ denotes $\xi_{\rho,i}^{\pm}$ if $i\neq 0$ and $\xi_{\rho,0}$ if $i=0$.) 
In particular,
\begin{align*}
\bi\cO \Gi \fid \otimes_{\cO \Ni \fid} \xi_{\rho,\ell}^+ = \bi\cO \Gi \fid \otimes_{\cO \Ni \fid} \xi_{\rho,\ell}^- = \xi_{\rho^0}.
\end{align*}
As $\gU$ is isomorphic to the unique non-projective, absolutely indecomposable summand of $\bi\cO \Gi \fid$ and $\xi_{\rho^0} \notin \N \prj(\cO \Gi \bi)$, Lemma \ref{lem:bimod_vert}(i) now implies that
\begin{align*}
\gU \otimes_{\cO \Ni \fid} \xi_{\rho,\ell}^+ = \xi_{\rho^0}.
\end{align*}
Therefore, using (\ref{algn:V*otimesU}) and (\ref{algn:V_on_char}),
\begin{align*}
\Omega^m_{\cO \Ni \fid \otimes (\cO \Ni \fid)^{\sop}}(\cO \Ni \fid) \otimes_{\cO \Ni \fid} \xi_{\rho,\ell}^+ = \gV^* \otimes_{\cO \Gi \bi} \gU \otimes_{\cO \Ni \fid} \xi_{\rho,\ell}^+ = \xi_{\rho,0}.
\end{align*}
Since, $\cO \Ni \fid$ is Morita equivalent to $\Zag_{\ell}$ with the bijection of characters given in (\ref{algn:chars_ONf_Bl}), Lemma \ref{lem:Zig_Zag}(i) now implies that $m= \ell$ or $3\ell$. We can, in fact, take either $m = \ell$ or $3\ell$, since implicit in our application of \cite[Proposition 5.1]{Lin2} is that, we have a bijection of irreducible Brauer characters, $\IBr(\cO \Ni \fid) \to \IBr(\cO \Gi \bi)$, induced by some Morita equivalence. Then $\gV$ and $m$ are unique with respect to the condition that $\gV$ must induce this same bijection $\IBr(\cO \Ni \fid) \to \IBr(\cO \Gi \bi)$. By the structure of the Brauer tree, there are clearly two possible such bijections and hence we can take $m = \ell$ or $3\ell$. We set $m = 3\ell$. Now,
\begin{align*}
\gV \simeq \Omega_{\cO \Gi \bi \otimes (\cO \Ni \fid)^{\sop}}^{4\ell}(\gV) \simeq \Omega_{\cO \Gi \bi \otimes (\cO \Ni \fid)^{\sop}}^\ell(\gU) \simeq \Omega_{\cO \Gi \bi \otimes (\cO \Ni \fid)^{\sop}}^\ell(\bi\cO \Gi \fid).
\end{align*}
proving part (ii).

To show that $\bi\cO \Gi \fid$ is absolutely indecomposable it is enough to show that $\gU \simeq \bi\cO \Gi \fid$ which, in turn, follows once we have proved that
\begin{align*}
\gU \otimes_{\cO \Ni \fid} \xi_{\rho,i}^{(\pm)} = \bi\cO \Gi \fid \otimes_{\cO \Ni \fid} \xi_{\rho,i}^{(\pm)} =
\sum_{j=0}^{\ell-i} \xi_{\rho^j},
\end{align*}
for all $i\in I$. By (\ref{algn:V_on_char}), this is equivalent to
\begin{align}\label{algn:V*_ten_U}
\gV^* \otimes_{\cO \Gi \bi} \gU \otimes_{\cO \Ni \fid} \xi_{\rho ,i}^{(\pm)} = \sum_{j=0}^{\ell-i} \xi_{\rho,j},
\end{align}
for all $i\in I$. Certainly $\gU \oplus \gR \simeq \bi\cO \Gi \fid$, for some projective $(\cO \Gi \bi,\cO \Ni \fid)$-bisupermodule $\gR$. Therefore, by Lemma \ref{lem:bimod_vert}(i), (\ref{algn:bOGf_char}) and (\ref{algn:V_on_char}), we have 
\begin{align*}
\gV^* \otimes_{\cO \Gi \bi} \gU \otimes_{\cO \Ni \fid} \xi_{\rho,i}^{(\pm)}  \leq_{{}_{\prj(\cO \Ni \fid)}} \gV^* \otimes_{\cO \Gi \bi} \bi\cO \Gi \fid \otimes_{\cO \Ni \fid} \xi_{\rho,i}^{(\pm)}  = \sum_{j=0}^{\ell-i} \xi_{\rho,j},
\end{align*}
for all $i\in I$. However, by (\ref{algn:V*otimesU}) and Lemma \ref{lem:Zig_Zag}(i), we also know that
\begin{align*}
\gV^* \otimes_{\cO \Gi \bi} \gU \otimes_{\cO \Ni \fid} \xi_{\rho,i}^{(\pm)}  = \Omega^{3\ell}_{\cO \Ni \fid \otimes (\cO \Ni \fid)^{\sop}}(\cO \Ni \fid) \otimes_{\cO \Ni \fid} \xi_{\rho,i}^{(\pm)} 
\ \geq_{{}_{\prj(\cO \Ni \fid)}}\ \xi_{\rho,\ell-i}^+,
 % \quad \text{and} \quad 
 \xi_{\rho,\ell-i}^-,
\end{align*}
for all $i\in I$. Therefore, claim (\ref{algn:V*_ten_U}) will follow once we have proved that, for each $i\in I$, it is not possible to subtract a non-zero $\chi \in \N \prj(\cO \Ni \fid)$ from $\sum_{j=0}^{\ell-i} \xi_{\rho,j}$ and satisfy
\begin{align*}
\sum_{j=0}^{\ell-i} \xi_{\rho,j} - \chi\ \geq_{{}_{\prj(\cO \Ni \fid)}}\  \xi_{\rho,\ell-i}^+, \xi_{\rho,\ell-i}^-.
\end{align*}
For a contradiction, suppose such a $\chi$ does exist. Now, by inspecting the Brauer tree of $\Zag_{\ell}$ and using the bijection of characters in (\ref{algn:chars_ONf_Bl}), $\N \prj(\cO \Ni \fid)$ is precisely the $\N$-linear combination of characters of the form
\begin{align}\label{algn:proj_char_N}
\xi_{\rho,k}^{(\pm)} + \xi_{\rho,k+1}^{(\pm)},
\end{align}
for $0\leq k\leq \ell-1$. Say $\xi_{\rho,k}^+ + \xi_{\rho,k+1}^+$ appears with positive coefficient in $\chi$ when expressed as a linear combination of the characters in (\ref{algn:proj_char_N}), for some $0 \leq k \leq \ell - i - 1$. (The argument for $\xi_{\rho,k}^- + \xi_{\rho,k+1}^-$ is completely analogous.) Then we must have
\begin{align*}
\sum_{j=0}^{\ell-i} \xi_{\rho,j} - (\xi_{\rho,k}^+ + \xi_{\rho,k+1}^+) - \xi_{\rho,\ell-i}^+ \in \N \prj(\cO \Ni \fid)
\end{align*}
and
\begin{align*}
\sum_{j=0}^{\ell-i} \xi_{\rho,j} - (\xi_{\rho,k}^+ + \xi_{\rho,k+1}^+) - \xi_{\rho,\ell-i}^- \in \N \prj(\cO \Ni \fid).
\end{align*}
The first expression immediately rules out $k = \ell - i - 1$, as otherwise we have $\xi_{\rho,\ell-i}^+$ appearing with negative coefficient. In all other cases, one of the two above expressions is the sum of the characters corresponding to two disconnected paths in the Brauer trees, both of odd length. (If $k \equiv \ell - i$ modulo $2$ it is the first expression and if $k \not\equiv \ell - i$ modulo $2$ it is the second expression.) A sum of such characters can never be in $\N \prj(\cO \Ni \fid)$. We have now reached our desired contradiction and proved part (iii).

We now outline the proof for when $\rho$ is odd. (Note that, in this case, we are not yet claiming that $\bi\cO \Gi \fid$ is indecomposable.) This time, by Lemma \ref{lem:weight1_equiv}(ii),(iii),(v), $\cO \Gi \bi$ and $\cO \Ni \fid$ are both Morita equivalent to $\Zig_{\ell}$, introduced in $\S$\ref{sec:Brauer_trees}. We have
\begin{align*}
\Br_\Di(\fid) = \Br_\Di(\bi) = {\bar \ei}_{\rho,0} = {\bar \ei}_{\rho,0}^+ + {\bar \ei}_{\rho,0}^- \in \F C_\Gi (\Di)
\end{align*}
and
\begin{align*}
N_\Gi (\Di,{\bar \ei}_{\rho,0}^+)/C_\Gi (\Di) = N_\Ni (\Di,{\bar \ei}_{\rho,0}^+)/C_\Ni (\Di) \cong N_{\tAi_{\sJ}}(\Di)/C_{\tAi_{\sJ}}(\Di) \cong C_{\ell}.
\end{align*}
Once again, we use \cite[5.2,5.4]{Lin4} and \cite[Proposition 5.1]{Lin2} to prove that there exists $0\leq m < 2\ell$ such that 
$
\gU \simeq \Omega^m_{\cO \Gi \bi \otimes (\cO \Ni \fid)^{\sop}}(\gV),
$ 
for some $(\cO \Gi \bi,\cO \Ni \fid)$-bimodule $\gV$ inducing a Morita equivalence between $\cO \Ni \fid$ and $\cO \Gi \bi$. Also, 
$
\Omega^{2\ell}_{\cO \Gi \bi \otimes (\cO \Ni \fid)^{\sop}}(\gV) \simeq \gV.
$ 
As in the even case, we can choose the labeling of $\Irr(\cO \Ni \fid)$ and $\Irr(\cO \Gi \bi)$ to ensure that $\gV$ induces the desired bijection $\Irr(\cO \Ni \fid) \to \Irr(\cO \Gi \bi)$. We use Lemma \ref{lem:Zig_Zag}(ii) to prove that $m = \ell$. Therefore,
\begin{align*}
\gV \simeq \Omega_{\cO \Gi \bi \otimes (\cO \Ni \fid)^{\sop}}^{2\ell}(\gV) \simeq \Omega_{\cO \Gi \bi \otimes (\cO \Ni \fid)^{\sop}}^\ell(\gU) \simeq \Omega_{\cO \Gi \bi \otimes (\cO \Ni \fid)^{\sop}}^\ell(\bi\cO \Gi \fid),
\end{align*}
as desired.
\end{proof}

\subsection{The bisupermodule \texorpdfstring{$\bM$}{}}
\label{SSbM}

Throughout the remainder of the article we set $\bM$ to be the $(\Blo^{\varnothing,1},\Blo^{\varnothing,1})$-bisupermodule
\begin{equation}\label{EM}
\bM := \Omega_{\Blo^{\varnothing,1} \otimes (\Blo^{\varnothing,1})^{\sop}}^{\ell}(\Blo^{\varnothing,1}).
\end{equation}
It is well known that $\bM$ is absolutely indecomposable. For example, one can apply \cite[5.2,5.4]{Lin4} and \cite[Propostion 5.1]{Lin2} to show that $\Omega_{\Blo^{\varnothing,1} \otimes (\Blo^{\varnothing,1})^{\sop}}^{4\ell}(\Blo^{\varnothing,1}) \simeq \Blo^{\varnothing,1}$ and use that $\Omega$ commutes with direct sums. We will also need the fact that
\begin{align}\label{algn:M0_Heller}
\bM_{\0} \simeq \Omega_{\Blo^{\varnothing,1}_{\0} \otimes (\Blo^{\varnothing,1}_{\0})^{\sop}}^{\ell}(\Blo^{\varnothing,1}_{\0}),
\end{align}
as $(\Blo^{\varnothing,1}_{\0}, \Blo^{\varnothing,1}_{\0})$-bimodules and that $\bM_{\0}$ is indecomposable, again as a $(\Blo^{\varnothing,1}_{\0}, \Blo^{\varnothing,1}_{\0})$-bimodule. First note that, as a consequence of Corollary~\ref{COm0},
\begin{align}\label{algn:M0_Heller2}
\bM_{\0} \simeq \Omega_{(\Blo^{\varnothing,1} \otimes (\Blo^{\varnothing,1})^{\sop})_{\0}}^\ell(\Blo^{\varnothing,1}_{\0}),
\end{align}
as $(\Blo^{\varnothing,1} \otimes (\Blo^{\varnothing,1})^{\sop})_{\0}$-modules. Next, using the notation of (\ref{algn:marcus_not}), we have that $p\nmid[\tAi_p \times \tAi_p:(\tSi_p \times \tSi_p)_{\tSi_p/\tAi_p}]=2$. Therefore, $\Res^{(\tSi_p \times \tSi_p)_{\tSi_p/\tAi_p}}_{\tAi_p \times \tAi_p}P$ is projective, for any projective $(\Blo^{\varnothing,1} \otimes (\Blo^{\varnothing,1})^{\sop})_{\0}$-module $P$. Moreover, for any indecomposable $(\Blo^{\varnothing,1} \otimes (\Blo^{\varnothing,1})^{\sop})_{\0}$-module $\gN$, if $\Res^{(\tSi_p \times \tSi_p)_{\tSi_p/\tAi_p}}_{\tAi_p \times \tAi_p}(\gN)$ has any non-zero, projective, indecomposable summand, then $\gN$ is projective. Now (\ref{algn:M0_Heller}) just follows from (\ref{algn:M0_Heller2}). That $\bM_{\0}$ is indecomposable as an $(\Blo^{\varnothing,1}_{\0}, \Blo^{\varnothing,1}_{\0})$-bimodule is now just proved in the same way the analogous statement for $\bM$ was proved above.

\begin{Proposition}\label{prop:define_M}
The bisupermodule $\bM$ satisfies the following properties:
\begin{enumerate}
\item $\bM$ is absolutely indecomposable with vertex $\Delta \Di$ and source $\Omega_{\cO \Delta \Di}^{\ell}(\cO)$. In particular, $\bM$ has endopermutation source.
\item The $(\cO \Ni \fid,\cO \Ni \fid)$-bisupermodule $\Blo^{\rho,0} \boxtimes \bM$ is absolutely indecomposable with vertex $\Delta \Di$. Moroever, $\bi\cO \Gi  \fid \otimes_{\cO \Ni \fid} (\Blo^{\rho,0} \boxtimes \bM)$ has a unique non-projective, absolutely indecomposable summand $\gV$, and this summand induces a Morita superequivalence between $\cO \Ni \fid$ and $\cO \Gi \bi$. Furthermore, for all $i \in I$, we have 
$
\gV \otimes_{\cO \Ni \fid} \xi_{\rho,i} = \xi_{\rho^i}.
$
\item $\bM$ and $\bM^*$ induce a stable auto-superequivalence of Morita type of $\Blo^{\varnothing,1}$.
\item For all $i\in I$, we have 
$$
\bM \otimes_{\Blo^{\varnothing,1}} \xi_i = \bM^* \otimes_{\Blo^{\varnothing,1}} \xi_i=\eps_{i}^2\sum_{j=0}^{\ell-i}  \xi_j.
$$
\end{enumerate}
\end{Proposition}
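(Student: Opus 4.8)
\textbf{Proof proposal for Proposition \ref{prop:define_M}.}

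The plan is to prove parts (i)--(iv) in order, leaning heavily on the weight-one analysis in Lemma \ref{Omega_bOGf_Morita} and the Heller-translate machinery of \S\ref{sec:proj_sup_mod}. For part (i): since $\Blo^{\varnothing,1}$ is a block with abelian defect group $\Di\cong C_p$, its Brauer correspondent is $\cO N_{\tSi_p}(\Di)\ei_{\varnothing,0} = \cO N_{\tSi_p}(\Di)e_z$, which by Lemma \ref{lem:weight1_equiv} is the relevant local Brauer-tree algebra. The regular bimodule $\Blo^{\varnothing,1}$ has vertex $\Delta\Di$ and trivial source $\cO$; applying $\Omega^\ell$ on the bimodule side, Corollary \ref{COm0} together with the standard fact that Heller translates preserve vertices (up to the projective-free part) and shift the source by the corresponding Heller translate over $\cO\Delta\Di$ gives that $\bM$ has vertex $\Delta\Di$ and source $\Omega^\ell_{\cO\Delta\Di}(\cO)$; this is an endopermutation module since $\Di$ is cyclic. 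Absolute indecomposability was already recalled in the text just before the Proposition via \cite[Prop.~5.1]{Lin2}.

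For part (ii), I would split according to the parity of $\rho$ as in the proof of Lemma \ref{Omega_bOGf_Morita}. By Lemma \ref{lem:weight1_equiv}(i),(ii) there is a (trivial source) Morita (super)equivalence between $\cO\Ni\fid$ and either $\Blo^{\varnothing,1}$ (if $\rho$ even) or $\Blo^{\varnothing,1}_{\0}$ (if $\rho$ odd), induced by $e\cO\Ni\fid$ for a suitable primitive idempotent $e\in\Blo^{\rho,0}_{\0}$; under the identification $\cO\Ni\fid\cong\Blo^{\rho,0}\otimes\Blo^{\varnothing,1}$ one checks (using Lemma \ref{lem:tensor_bisupmod} and the fact that $\Blo^{\rho,0}$ acts as a matrix algebra factor) that $\Blo^{\rho,0}\boxtimes\bM$ corresponds, under this Morita equivalence, to $\bM$ itself (resp.\ $\bM_{\0}$), whence $\Blo^{\rho,0}\boxtimes\bM$ is absolutely indecomposable with vertex $\Delta\Di$ (also directly via Remark \ref{rem:bisupmod} and Lemma \ref{lem:direct_prod_vert}). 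Then, chasing through the Morita equivalence, $\bi\cO\Gi\fid\otimes_{\cO\Ni\fid}(\Blo^{\rho,0}\boxtimes\bM)$ is identified with $\Omega^\ell_{\cO\Gi\bi\otimes(\cO\Ni\fid)^{\sop}}(\bi\cO\Gi\fid)$ up to projectives, so Lemma \ref{Omega_bOGf_Morita}(ii) applies: its unique non-projective absolutely indecomposable summand $\gV$ induces a Morita superequivalence with the stated character bijection $\xi_{\rho,i}\mapsto\xi_{\rho^i}$. Part (iii) is then immediate from part (i), Lemma \ref{lem:HomAM_M*} and the discussion in \S\ref{sec:proj_sup_mod}: $\bM$ and $\bM^*$ induce a stable auto-superequivalence of Morita type of $\Blo^{\varnothing,1}$ (one also uses Lemma \ref{lem:dual_comp} and that $\Blo^{\varnothing,1}$ is supersymmetric, being a twisted group superalgebra block).

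For part (iv) I would compute directly on the Grothendieck group. By definition of $\bM$ and Corollary \ref{COm0}, the action of $\bM\otimes_{\Blo^{\varnothing,1}}?$ on $\Grot(\K\Blo^{\varnothing,1})$ agrees with $\Omega^\ell_{\Blo^{\varnothing,1}\otimes(\Blo^{\varnothing,1})^{\sop}}(\Blo^{\varnothing,1})\otimes_{\Blo^{\varnothing,1}}?$, which by the same computation as in the proof of Lemma \ref{lem:Zig_Zag} equals $\Omega^\ell_{\Blo^{\varnothing,1}}(-)$ up to projective summands. Transporting via the Morita equivalences of Lemma \ref{lem:weight1_equiv}(iii) --- $\Blo^{\varnothing,1}\sim_{\Mor}\Zag_\ell$ when we keep the super structure, with $\xi_j\leftrightarrow\chi_{j^{(\pm)}}$ --- Lemma \ref{lem:Zig_Zag}(i) with $n=\ell$ gives $\Omega^\ell_{\Zag_\ell\otimes\Zag_\ell^{\op}}(\Zag_\ell)\otimes_{\Zag_\ell}\chi_{i^{(\pm)}}\geq_{\prj}$ the appropriate combination of $\chi_{(\ell-i)^{\pm}}$ (or $\chi_0$ when $i=\ell$); summing the contributions of both sheets and converting from irreducible characters to irreducible supercharacters produces exactly $\eps_i^2\sum_{j=0}^{\ell-i}\xi_j$, the factor $\eps_i^2$ being $1$ for $i=0$ and $2$ for $1\le i\le\ell$, which accounts for the two associate characters $\xi_i^\pm$. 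The equality $\bM^*\otimes_{\Blo^{\varnothing,1}}\xi_i=\bM\otimes_{\Blo^{\varnothing,1}}\xi_i$ follows from Lemma \ref{lem:chars_bisupmod}(ii) applied to $\bM$ (using $\eps_i^2 a_{i,j}=\eps_j^2 b_{j,i}$ and the symmetry of the Brauer walk), or alternatively from the self-duality of the Heller translate of a symmetric algebra. The main obstacle is bookkeeping: making sure the passage between $\Grot$ over $\K$, the Morita equivalences, and the $\Mtype$/$\Qtype$ distinction is done consistently so that the $\eps_i^2$ factors come out correctly --- everything else is a direct consequence of the already-established structure of weight-one RoCK blocks and Lemma \ref{lem:Zig_Zag}.
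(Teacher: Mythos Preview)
Your treatment of (i) is correct and matches the paper. For (ii) your idea is sound, but the paper takes a more direct route: rather than chasing the Morita equivalence of Lemma~\ref{lem:weight1_equiv}, it uses Lemma~\ref{lem:super_Heller} together with $\Omega(\Blo^{\rho,0})=0$ to get $\Blo^{\rho,0}\boxtimes\Omega^\ell(\Blo^{\varnothing,1})\simeq\Omega^\ell(\cO\Ni\fid)$, and then $\bi\cO\Gi\fid\otimes_{\cO\Ni\fid}\Omega^\ell(\cO\Ni\fid)\simeq\Omega^\ell(\bi\cO\Gi\fid)\oplus(\text{proj})$ because $\bi\cO\Gi\fid$ already induces a stable superequivalence by Lemma~\ref{Omega_bOGf_Morita}(i).

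Parts (iii) and (iv), however, have real gaps. In (iii) you invoke Lemma~\ref{lem:HomAM_M*}, but that lemma is about \emph{Morita} superequivalences, not stable ones, so it does not apply to $\bM$. One could try to cite instead the general fact that Heller translates of the regular bimodule induce stable auto-equivalences of Morita type for symmetric algebras, but the super compatibility would still need checking. The paper proceeds differently: it fixes $\rho$ even with $r>1$, so that by Lemma~\ref{Omega_bOGf_Morita}(iii) the bimodule $\bi\cO\Gi\fid$ is itself absolutely indecomposable (not merely up to projectives). Then $\Blo^{\rho,0}\boxtimes\bM\simeq\fid\cO\Gi\bi\otimes_{\cO\Gi\bi}\gV$ is a composite of a stable superequivalence and a Morita superequivalence, and one descends from $\Blo^{\rho,0}\boxtimes\bM$ to $\bM$ via the idempotent $e$ of Lemma~\ref{lem:weight1_equiv}(i).

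In (iv) your Brauer-tree computation does not yield what you claim. Lemma~\ref{lem:Zig_Zag}(i) only records $\Omega^n\otimes\chi_{\ell^+}$ for varying $n$ and $\Omega^{3\ell}\otimes\chi_{i^\pm}$ for varying $i$; it gives no statement about $\Omega^\ell\otimes\chi_{i^{(\pm)}}$ for general $i$. Even going back to Green's walk (equation~(\ref{algn:Brauer_walk})), one only obtains $\bM\otimes\ttM_i\cong\ttM_{i+\ell}\oplus\ttP_{i,\ell}$, an equality \emph{modulo an unknown projective} $\ttP_{i,\ell}$ --- and pinning down that projective is precisely what is needed for the exact formula. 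The paper sidesteps this entirely: still with $\rho$ even and $r>1$, it uses the identification $\Blo^{\rho,0}\boxtimes\bM\simeq\fid\cO\Gi\bi\otimes_{\cO\Gi\bi}\gV$ from~(iii), computes
\[
(\Blo^{\rho,0}\boxtimes\bM)\otimes_{\cO\Ni\fid}\xi_{\rho,i}
=\fid\cO\Gi\bi\otimes_{\cO\Gi\bi}\gV\otimes_{\cO\Ni\fid}\xi_{\rho,i}
=\fid\cO\Gi\bi\otimes_{\cO\Gi\bi}\xi_{\rho^i}
\]
using the character bijection of~(ii), and then evaluates the right-hand side by the explicit branching rule Lemma~\ref{lem:HtoG}(ii). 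Finally Lemma~\ref{lem:AboxM} strips off the $\Blo^{\rho,0}$ factor. The upshot is that the exact character formula in~(iv) comes from Stembridge's branching rules filtered through a weight-one RoCK block, not from walking the Brauer tree of $\Blo^{\varnothing,1}$.
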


\begin{proof}
(i) Since $\Blo^{\varnothing,1}$ has vertex $\Delta \Di$ and trivial source, $\bM$ also has vertex $\Delta \Di$ with source isomorphic to $\Omega_{\cO \Delta \Di}^{\ell}(\cO)$. It now follows from \cite[Proposition 7.3.4]{Lin6} that $\bM$ has endopermutation source.

(ii) We first prove that $\Blo^{\rho,0} \boxtimes \bM$ is absolutely indecomposable with vertex $\Delta \Di$. For $r=1$, $\cO \Ni \fid \cong \Blo^{\varnothing,1}$ and we just apply part (i).

For $\rho$ even with $r>1$, by Theorem \ref{thm:DCblocks}, $\Blo^{\rho,0}$ is an absolutely indecomposable $(\Blo^{\rho,0},\Blo^{\rho,0})$-bisupermodule and so the claim follows from Remark \ref{rem:bisupmod} and Lemma \ref{lem:direct_prod_vert}.

For $\rho$ odd, we look more carefully at the proof of Lemma \ref{lem:weight1_equiv}(ii). If we set $e \in \Blo^{\rho,0}$ to be a primitive idempotent, then $e\sigma_{\cO\tSi_\sR }(e)=0$ and $e\cO \Ni \fid \otimes_{\cO \Ni \fid} ?$ induces a Morita equivalence between $\cO \Ni \fid$ and $\Blo^{\varnothing,1}_{\0}$. Therefore, to show indecomposability it is enough to show that $e(\Blo^{\rho,0} \boxtimes \bM)e$ is absolutely indecomposable as a $(\Blo^{\varnothing,1}_{\0},\Blo^{\varnothing,1}_{\0})$-bimodule. Now, similarly to the proof of Lemma \ref{lem:weight1_equiv}(ii),
\begin{align*}
\bM_{\0} \to e(\Blo^{\rho,0} \boxtimes \bM)e, \ 
m  \mapsto e \otimes m
\end{align*}
is an isomorphism of $(\Blo^{\varnothing,1}_\0,\Blo^{\varnothing,1}_{\0})$-bimodules and $\bM_{\0}$ is an indecomposable $(\Blo^{\varnothing,1}_{\0},\Blo^{\varnothing,1}_{\0})$-bimodule, as noted in the comments preceding the Proposition.

We now prove that $\Blo^{\rho,0} \boxtimes \bM$ has vertex $\Delta \Di$. By Theorem \ref{thm:An_blocks}, $\Blo^{\rho,0}_{\0}$ has trivial defect and, by part (i), $\bM$ has vertex $\Delta \Di$. Therefore, by Remark \ref{rem:bisupmod} and Lemma \ref{lem:direct_prod_vert}, $\Blo^{\rho,0}_{\0} \boxtimes \bM$ has vertex $\Delta \Di$, as a $(\Blo^{\rho,0}_{\0} \otimes \Blo^{\varnothing,1},\Blo^{\rho,0}_{\0} \otimes \Blo^{\varnothing,1})$-bimodule. The claim follows.

By Lemma \ref{Omega_bOGf_Morita}(i), $\bi\cO \Gi  \fid$ induces a stable superequivalence of Morita type between $\cO \Ni \fid$ and $\cO \Gi \bi$. Therefore, $\bi\cO \Gi  \fid  \otimes_{\cO \Ni \fid} (\Blo^{\rho,0} \boxtimes \bM)$ has a unique non-projective, absolutely indecomposable summand $\gV$. Furthermore,
\begin{align*}
\bi\cO \Gi  \fid  \otimes_{\cO \Ni \fid} (\Blo^{\rho,0} \boxtimes \bM) \simeq & \bi\cO \Gi  \fid  \otimes_{\cO \Ni \fid} (\Blo^{\rho,0} \boxtimes \Omega_{\Blo^{\varnothing,1} \otimes (\Blo^{\varnothing,1})^{\sop}}^{\ell}(\Blo^{\varnothing,1})) \\
\simeq & \bi\cO \Gi  \fid  \otimes_{\cO \Ni \fid} \Omega_{\cO \Ni \fid \otimes (\cO \Ni \fid)^{\sop}}^{\ell}(\Blo^{\rho,0} \boxtimes \Blo^{\varnothing,1}) \\
\simeq & \bi\cO \Gi  \fid  \otimes_{\cO \Ni \fid} \Omega_{\cO \Ni \fid \otimes (\cO \Ni \fid)^{\sop}}^{\ell}(\cO \Ni \fid) \\
\simeq & \Omega_{\cO \Gi \bi \otimes (\cO \Ni \fid)^{\sop}}^{\ell}(\bi\cO \Gi  \fid  \otimes_{\cO \Ni \fid} \cO \Ni \fid) \oplus \gP \\
\simeq & \Omega_{\cO \Gi \bi \otimes (\cO \Ni \fid)^{\sop}}^{\ell}(\bi\cO \Gi  \fid) \oplus \gP,
\end{align*}
for some projective $(\cO \Gi \bi, \cO \Ni \fid)$-bisupermodule $\gP$. Here, the second isomorphism follows from Lemma \ref{lem:super_Heller} (unless $r=1$, in which case it is actually an equality), as 
$
\Omega_{\Blo^{\rho,0} \otimes (\Blo^{\rho,0})^{\sop}}(\Blo^{\rho,0}) = \{0\},
$ 
and the fourth isomorphism from the fact that $\bi\cO \Gi  \fid$ induces a stable superequivalence between $\cO \Ni \fid$ and $\cO \Gi \bi$. So, $\gV \simeq \Omega_{\cO \Gi \bi \otimes (\cO \Ni \fid)^{\sop}}^{\ell}(\bi\cO \Gi  \fid)$ and all the remaining results now follow from Lemma \ref{Omega_bOGf_Morita}(ii).

(iii) We now assume $\rho$ is even with $r>1$. Due to the comment following Definition \ref{def_Rou}, this is always possible. By Lemma \ref{Omega_bOGf_Morita}(i),(iii), $\bi\cO \Gi \fid$ and $\fid\cO \Gi \bi$ are both absolutely indecomposable and induce a stable superequivalence of Morita type between $\cO \Ni \fid$ and $\cO \Gi \bi$. Therefore,
\begin{align*}
\fid\cO \Gi  \bi \otimes_{\cO \Gi \bi} \bi\cO \Gi  \fid  \otimes_{\cO \Ni \fid} (\Blo^{\rho,0} \boxtimes \bM)
\end{align*}
has a unique non-projective, absolutely indecomposable summand isomorphic to $\Blo^{\rho,0} \boxtimes \bM$. However, $\gV$ from part (ii) is the unique non-projective, absolutely indecomposable summand of $\bi\cO \Gi  \fid  \otimes_{\cO \Ni \fid} (\Blo^{\rho,0} \boxtimes \bM)$ and so
\begin{align}\label{algn:BM_fGbV}
\Blo^{\rho,0} \boxtimes \bM \simeq \fid\cO \Gi  \bi \otimes_{\cO \Gi \bi} \gV.
\end{align}
Now,
$$
\Blo^{\rho,0} \boxtimes \bM^* \simeq (\Blo^{\rho,0})^* \boxtimes \bM^* \simeq (\Blo^{\rho,0} \boxtimes \bM)^* \simeq \gV^* \otimes_{\cO \Gi \bi} \bi\cO \Gi \fid,
$$
where the first isomorphism follows from Lemma \ref{lem:sup_alg_idempt_dual}, the second from Lemma \ref{lem:dual_prod} and the third from Lemmas \ref{lem:dual_comp} and \ref{lem:sup_alg_idempt_dual}. Furthermore, since $\gV$ induces a Morita superequivalence between $\cO \Ni \fid$ and $\cO \Gi \bi$, Lemmas \ref{lem:HomAM_M*} and \ref{Omega_bOGf_Morita}(i),(iii) give that $\Blo^{\rho,0} \boxtimes \bM$ and $\Blo^{\rho,0} \boxtimes \bM^*$ induce a stable auto-superequivalence of Morita type of $\cO \Ni \fid$.

We now set $e$ to be a primitive idempotent in $\Blo^{\rho,0}_{\0}$, as in the proof of Lemma \ref{lem:weight1_equiv}(i), where we showed that $e\cO \Ni \fid$ and $\fid\cO \Ni e$ induce a Morita superequivalence between $\cO \Ni \fid$ and $\Blo^{\varnothing,1}$. We now have that
$$
e \cO \Ni \fid \otimes_{\cO \Ni \fid} (\Blo^{\rho,0} \boxtimes \bM) \otimes_{\cO \Ni \fid} \fid \cO \Ni e \simeq e(\Blo^{\rho,0} \boxtimes \bM)e \simeq \bM
$$
and
$$
e \cO \Ni \fid \otimes_{\cO \Ni \fid} (\Blo^{\rho,0} \boxtimes \bM^*) \otimes_{\cO \Ni \fid} \fid \cO \Ni e \simeq e(\Blo^{\rho,0} \boxtimes \bM^*)e \simeq \bM^*
$$
induce a stable auto-superequivalence of Morita type of $\Blo^{\varnothing,1}$, as desired.

(iv) We continue to assume that $\rho$ is even with $r>1$. We have already seen in part (iii) that $\fid \cO \Gi  \bi \otimes_{\cO \Gi \bi} \gV \simeq \Blo^{\rho,0} \boxtimes \bM$. Now,
\begin{align*}
\fid \cO \Gi  \bi \otimes_{\cO \Gi \bi} \gV \otimes_{\cO \Ni \fid} \xi_{\rho,i} = \fid \cO \Gi  \bi \otimes_{\cO \Gi \bi} \xi_{\rho^i} = \eps_{i}^2\sum_{j=0}^{\ell-i}  \xi_{\rho,j},
\end{align*}
for all $i\in I$, where the first equality holds by part (ii) and the second due to Lemma \ref{lem:HtoG}(ii). (Note that $\eps_{\rho^i} = \eps_{i}$, see Lemma \ref{lem:weight1_equiv}(iii),(iv).) Therefore,
\begin{align*}
(\Blo^{\varnothing,1} \boxtimes \bM) \otimes_{\cO \Ni \fid} \xi_{\rho,i} = \eps_{i}^2\sum_{j=0}^{\ell-i}  \xi_{\rho,j},
\end{align*}
for all $i\in I$, and the claim follows from Lemma \ref{lem:AboxM}.

The claim for $\bM^*$ follows from Lemma \ref{lem:chars_bisupmod}(ii).
\end{proof}

\section{The bisupermodules $\bX$ and $\bY$}\label{sec:X_Y}

Throughout this section we set $d$ to be an integer with $1 \leq d < p$ and $\rho$ a $d$-Rouquier ${\bar p}$-core. 
%In particular, we are no longer assuming $d=1$, as in Section \ref{sec:weight_one}.  
We adopt all the notation of $\S$\ref{sec:notation}, in particular, $r=|\rho|$, $n=r+dp$, 
\begin{align*}
&\Gi=\tSi_n,\quad 
\Li= \tSi_{\sR,\sJ_1,\dots,\sJ_d},
\quad \Ni= N_{\Gi}(\tSi_{\sJ_1,\dots,\sJ_d}),\quad   
\Hi= \tSi_{\sR\cup \sJ_1\cup\dots\cup \sJ_{d-1}, \sJ_{d}},
\\
&\bi=\ei_{\rho,d}\in\cO\Gi,\quad  \fid= \ei_{\rho,0} \otimes \ei_{\varnothing,1}^{(1)} \otimes \dots \otimes \ei_{\varnothing,1}^{(d)} \in \cO \Li e_z,
\quad
\ci= \ei_{\rho,d-1} \otimes \ei_{\varnothing,1}^{(d)} \in \cO \Hi e_z,
\end{align*}
and the defect group 
$$\Di=\Di_1\times\dots\times \Di_d$$ is chosen as in (\ref{ED}). 
We identify $\cO \Li \fid$ with $\Blo^{\rho,0} \otimes (\Blo^{\varnothing,1})^{\otimes d}$ and $\cO \Ni \fid$ with $\Blo^{\rho,0} \otimes (\Blo^{\varnothing,1} \swr \cT_d)$ as in (\ref{algn:ONf_ident}) via Lemma \ref{lem:kappa}. We continue with all our assumptions on $\cO$ from Section~\ref{sec:spin_blocks}.

\subsection{$\Di$-small subgroups}
\label{SSDSmall}
Recall that $\Gi $ acts on $[n]$ via $\pi_n$, see \S\ref{sec:gen_tSi}. In particular, $\sR $ is the set of fixed points of $\Di $. % acting on $[n]$ via $\pi_n$. 
For $Q \leq \Di $, we write $Q <_\sfs\Di$ if the set of fixed points of $Q$ on $[n]$ strictly contains $\sR $. In other words, $Q <_\sfs\Di$ if and only $Q\leq \Di_1\times \dots \times \hat{\Di }_k \times \dots \times \Di_d$, for some $1\leq k\leq d$. (Here, $\hat{\Di }_k$ means that $\Di_k$ is omitted from the direct product.)

Recall the notation (\ref{EDePhi}). 

\begin{Lemma}\label{lem:conj_def}
We have: 
\begin{enumerate}
\item Let $g\in \Gi  \setminus \Ni $. Then $\Di  \cap {}^g \Di   <_\sfs\Di$. In particular, for any $(g_1,g_2)\in (\Gi \times \Gi ) \setminus (\Ni  \times \Ni )$, we have $\Delta \Di  \cap {}^{(g_1,g_2)} \Delta \Di  = \Delta Q$, for some $Q <_\sfs\Di$.
\item Let $(g,h) \in (\Gi  \times \Ni ) \setminus (\Ni  \times \Ni )$. Then $(\Di  \times \Di ) \cap {}^{(g,h)}\Delta \Di = \Delta_\varphi Q$, for some $Q <_\sfs\Di$ and $\varphi:Q \to \Di $, with $\varphi(Q) <_\sfs\Di$.
\end{enumerate}
\end{Lemma}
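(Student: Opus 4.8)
The plan is to reduce both parts to elementary combinatorics of the permutation action of $\Gi$ on $[n]$, exploiting that $\Di = \Di_1 \times \dots \times \Di_d$ with each $\Di_k$ a (nontrivial) Sylow $p$-subgroup of $\tSi_{\sJ_k}$, so that $\Di_k$ acts transitively on the $p$-element set $\sJ_k$ and fixes everything outside $\sJ_k$. Thus the set of fixed points of $\Di$ on $[n]$ is exactly $\sR$, and for any $1\le k\le d$ the subgroup $\Di_1\times\dots\times\hat\Di_k\times\dots\times\Di_d$ has fixed-point set $\sR\cup\sJ_k \supsetneq \sR$; these are precisely the subgroups witnessing $Q<_\sfs\Di$.

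For part (i): let $g\in\Gi\setminus\Ni$. Since $\Ni = N_\Gi(\tSi_{\sJ_1,\dots,\sJ_d})$ is exactly the stabilizer (under the $\pi_n$-action, up to the central $z$) of the partition $\{\sJ_1,\dots,\sJ_d\}$ of $\sJ$ together with $\tSi_\sR$ acting on $\sR$ — here I would invoke Lemma~\ref{lem:CGD}(ii) and the description of $\Ni$ as generated by $\Li$ and the $T_w$ — the permutation $\pi_n(g)$ does not simply permute the blocks $\sJ_1,\dots,\sJ_d$. I claim then that ${}^g\Di_k$ is not contained in $\Di$ for at least one $k$. Indeed ${}^g\Di_k$ is a transitive $p$-group on ${}^g\sJ_k$; if ${}^g\sJ_k$ is not one of the $\sJ_m$ then $\Di\cap{}^g\Di_k$ fixes a point of ${}^g\sJ_k$, hence (being a subgroup of the transitive ${}^g\Di_k$ of prime-power order acting on a set of size $p$) the intersection $\Di\cap{}^g\Di_k$ is trivial on ${}^g\sJ_k$; and in any case $\Di\cap{}^g\Di$ then fails to act transitively on some $\sJ_m$, so it lies in $\Di_1\times\dots\times\hat\Di_m\times\dots\times\Di_d$, giving an extra fixed point. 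The only way to avoid this for all $k$ is that $\pi_n(g)$ permutes $\{\sJ_1,\dots,\sJ_d\}$, i.e. $g\in\Ni$, contradiction. Hence $\Di\cap{}^g\Di<_\sfs\Di$. The ``in particular'' for $(g_1,g_2)\in(\Gi\times\Gi)\setminus(\Ni\times\Ni)$ follows by the standard identity $\Delta\Di\cap{}^{(g_1,g_2)}\Delta\Di = \{(x,x)\mid x\in\Di\cap{}^{g_1^{-1}g_2}\Di\}$ up to conjugation; writing $g := g_1^{-1}g_2$, if $g\notin\Ni$ we are done by the first assertion, and if $g\in\Ni$ then, since $(g_1,g_2)\notin\Ni\times\Ni$, conjugating by $(g_1,g_1)$ reduces to the case handled, because $\Ni$ is a group.

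For part (ii): let $(g,h)\in(\Gi\times\Ni)\setminus(\Ni\times\Ni)$, so in particular $g\notin\Ni$. We have $(\Di\times\Di)\cap{}^{(g,h)}\Delta\Di = \{({}^gx,{}^hx)\mid x\in\Di,\ {}^gx\in\Di,\ {}^hx\in\Di\} = \{({}^gx,{}^hx)\mid x\in Q_0\}$ where $Q_0 := \Di\cap{}^{g^{-1}}\Di\cap{}^{h^{-1}}\Di$. Since $h\in\Ni$ normalizes $\tSi_{\sJ_1,\dots,\sJ_d}$ and permutes the $\sJ_k$, one has ${}^{h^{-1}}\Di = \Di$ (because $h$ permutes the $\Di_k$, recalling the choice of the $\Di_k$'s in the comments after Lemma~\ref{lem:Brauer_corr}), hence $Q_0 = \Di\cap{}^{g^{-1}}\Di$, which by part (i) applied to $g^{-1}\notin\Ni$ satisfies $Q_0<_\sfs\Di$. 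Setting $Q:={}^gQ_0$ and $\varphi := $ (conjugation by $hg^{-1}$) $: Q\to\Di$, we get the displayed set equal to $\Delta_\varphi Q$; note $Q = {}^gQ_0 = \Di\cap{}^g\Di$ is again $<_\sfs\Di$ by part (i), and $\varphi(Q) = {}^hQ_0 = {}^h(\Di\cap{}^{g^{-1}}\Di) = \Di\cap{}^{hg^{-1}}\Di$, which is $<_\sfs\Di$ since $hg^{-1}\notin\Ni$ (as $g\notin\Ni$, $h\in\Ni$) by part (i) once more.

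The main obstacle is the book-keeping in identifying $\Ni$ with the full stabilizer of the block structure on $\sJ$ together with the $\tSi_\sR$ factor, and in checking that $h\in\Ni$ really does normalize $\Di$ (rather than merely $\tSi_{\sJ_1,\dots,\sJ_d}$) — this rests on the specific compatible choice of the Sylow subgroups $\Di_k$ made after Lemma~\ref{lem:Brauer_corr}, namely $T_w\Di_kT_w^{-1}=\Di_{w(k)}$, so I would be careful to cite that. Everything else is a routine orbit-counting argument about transitive $p$-groups on sets of size $p$ together with the elementary intersection formulas $\Delta P\cap{}^{(g_1,g_2)}\Delta P \leftrightarrow P\cap{}^{g_1^{-1}g_2}P$.
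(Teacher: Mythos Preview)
Your first assertion in (i) is essentially the paper's argument (both reduce to orbit-counting for $p$-groups on sets of size $p$), though the step ``in any case $\Di\cap{}^g\Di$ then fails to act transitively on some $\sJ_m$'' is not fully justified from what precedes it; the paper handles this more directly by first invoking Lemma~\ref{lem:CGD}(ii) to get $g\notin N_\Gi(\Di)$, hence ${}^{g^{-1}}\Di_k\not\subseteq\Di$ for some $k$, and then showing $\Di\cap{}^g\Di$ fixes $\sJ_k$ pointwise via the cycle-decomposition trick.

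There are, however, two genuine errors. First, in the ``in particular'' of (i), the identity $\Delta\Di\cap{}^{(g_1,g_2)}\Delta\Di=\{(x,x)\mid x\in\Di\cap{}^{g_1^{-1}g_2}\Di\}$ is false: if $(x,x)\in{}^{(g_1,g_2)}\Delta\Di$ then $x={}^{g_1}y={}^{g_2}y$ for some $y\in\Di$, which forces $x$ to \emph{centralize} $g_2g_1^{-1}$ rather than merely lie in $\Di\cap{}^{g_1^{-1}g_2}\Di$. What is true (and what the paper uses) is that such $x$ lies in $\Di\cap{}^{g_1}\Di\cap{}^{g_2}\Di$, so one applies the first assertion to whichever $g_i\notin\Ni$.

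Second, in (ii) you assert ${}^{h^{-1}}\Di=\Di$ for $h\in\Ni$, but this is false: the compatible choice $T_w\Di_kT_w^{-1}=\Di_{w(k)}$ only guarantees that the $T_w$'s normalize $\Di$, not all of $\Ni$. Indeed $\tSi_{\sJ_1}\leq\Li\leq\Ni$ but a general element of $\tSi_{\sJ_1}$ does not normalize the particular Sylow $\Di_1$. (Lemma~\ref{lem:CGD}(ii) gives only $N_\Gi(\Di)\leq\Ni$, not equality.) Your argument survives without this claim: one still has $Q={}^gQ_0\subseteq\Di\cap{}^g\Di<_\sfs\Di$ by (i), and $\varphi(Q)={}^hQ_0\subseteq{}^h\Di\cap{}^{hg^{-1}}\Di$; since also $\varphi(Q)\subseteq\Di$ by construction, one gets $\varphi(Q)\subseteq\Di\cap{}^{hg^{-1}}\Di<_\sfs\Di$ from (i) applied to $hg^{-1}\notin\Ni$. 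This repaired version is exactly the paper's argument.
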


\begin{proof}
(i) Let $g\in \Gi  \setminus \Ni $. Then, by Lemma \ref{lem:CGD}(ii), we have $g \notin N_\Gi (\Di )$, so  ${}^{g^{-1}} \Di_k \not\subseteq \Di $ for some $1\leq k\leq d$. 
%Without loss of generality we assume ${}^{g^{-1}} \Di_1 \not\subseteq \Di $. %, where $h=(1\dots p)$.
We claim that ${}^g \Di  \cap \Di $ fixes $\sJ_c$ pointwise. Say $\al \in {}^g \Di  \cap \Di $ does not fix $\sJ_k$ pointwise. Then, since $\al \in \Di $, we can write $\al = h_1 h$, for some $h_1 \in \Di_k \setminus \{1\}$ and $h \in \Di_1 \times \dots\times \hat \Di_k\times\dots \times \Di_d$. As $h_1 h \in {}^g \Di $, we have ${}^{g^{-1}} h_1 {}^{g^{-1}} h \in \Di $. Therefore, ${}^{g^{-1}} h_1 \in \Di $, as $\pi_n({}^{g^{-1}} h_1)$ and $\pi_n({}^{g^{-1}} h)$ have disjoint cycle decomposition. This contradicts the containment ${}^{g^{-1}} \Di_k \not\subseteq \Di $.

For the second part we set 
$
Q:= \{x \in \Di \mid (x,x)\in \Delta \Di  \cap {}^{(g_1,g_2)} \Delta \Di \}.
$ 
Certainly $\Delta Q = \Delta \Di  \cap {}^{(g_1,g_2)}\Delta \Di $. To prove that $Q <_\sfs\Di$, we apply the first part to the first or second coordinate depending on whether $g_1 \notin \Ni $ or $g_2 \notin \Ni $.

(ii) Certainly $(\Di  \times \Di ) \cap {}^{(g,h)}\Delta \Di = \Delta_\varphi Q$, for some $Q \leq \Di $ and $\varphi:Q \to \Di $. We just need to show that $Q,\varphi(Q) <_\sfs\Di$. Since $g \notin \Ni $, it follows from part (i) that $Q \leq \Di  \cap {}^g \Di  <_\sfs\Di$. Now suppose $(x,y) \in (\Di  \times \Di ) \cap {}^{(g,h)}\Delta \Di  = \Delta_\varphi Q$. Then $({}^{g^{-1}} x,{}^{h^{-1}} y) \in \Delta \Di $. In particular, ${}^{g^{-1}} x={}^{h^{-1}} y$ and so ${}^{hg^{-1}} x = y$. So, since $hg^{-1} \notin \Ni $, $y \in \Di  \cap {}^{hg^{-1}} \Di <_\sfs\Di$, by part (i). As this holds for all such $y$, we have $\varphi(Q) <_\sfs\Di$, as desired.
\end{proof}

In the remainder of this section we will often encounter the following situation. Let $\Di\leq K_1,K_2\leq \Gi $, with $z \in K_1,K_2 \nleq \Gi_{\0}$, and let $\gU,\gV$ be $(\cO K_1,\cO K_2)$-bisupermodules. 

If $\gV$ is isomorphic to an absolutely indecomposable summand of $\gU$ with vertex $\Delta \Di $ and all other indecomposable summands of $\gU$, as an $(\cO K_1,\cO K_2)$-bimodule, have vertex contained in some $\Delta Q$, with $Q <_\sfs\Di$, then we write 
\begin{equation}\label{E|_D}
\gV\,|_\Di \, \gU.
\end{equation}
If instead, all other indecomposable summands of $\gU$ as an $(\cO K_1,\cO K_2)$-bimodule, have vertex contained in some $\Delta_\varphi Q$, with $Q,\phi(Q) <_\sfs\Di$ for some $\varphi: Q \to \Di $, then we write 
\begin{equation}\label{E|^D}
\gV\,|^\Di \, \gU.
\end{equation}
Moreover, we refer to such $\Delta_\varphi Q$ as {\em $\Di $-small}.

%\textcolor{red}{I'm happy to change this name `$\Di $-small'. I'm not terribly happy with it.}

\subsection{The bisupermodules \texorpdfstring{$\bM_\Li $}{} and \texorpdfstring{$\bM_\Ni $}{}}\label{sec:ML_MN}

Recall the $(\Blo^{\varnothing,1},\Blo^{\varnothing,1})$-bisupermodule~$\bM$ from (\ref{EM}). 
We have the $((\Blo^{\varnothing,1})^{\otimes d},(\Blo^{\varnothing,1})^{\otimes d})$-bisupermodule $\bM^{\boxtimes d}$.  For $1\leq k\leq d$, we set $(\Blo^{\varnothing,1})^{(k)} := \cO \tSi_{\sJ_k}\ei_{\varnothing,1}^{(k)}$ and identify each $(\Blo^{\varnothing,1})^{(k)}$ with $\Blo^{\varnothing,1}$ via (\ref{Sp_ident}), as in $\S$\ref{sec:notation}. We denote by $\bM^{(k)}$ the $k^{\nth}$ factor in $\bM^{\boxtimes d}$. That is, $\bM^{(k)}$ is a $((\Blo^{\varnothing,1})^{(k)},(\Blo^{\varnothing,1})^{(k)})$-bisupermodule that we identify with $\bM$.
We denote by $\bM^{\boxtimes d}_{\Si_d}$ the $(\Blo^{\varnothing,1} \swr \cT_d,\Blo^{\varnothing,1} \swr \cT_d)_{\Si_d}$-supermodule from Lemma \ref{lem:M_swr_Td}.

Recalling the identification $\cO \Li \fid=\Blo^{\rho,0} \otimes (\Blo^{\varnothing,1})^{\otimes d}$,  
define 
the $(\cO \Li \fid,\cO \Li \fid)$-bisupermodule
\begin{align*}
\bM_\Li  := \Blo^{\rho,0} \boxtimes \bM^{\boxtimes d}.
\end{align*}
We have also identified 
$\cO \Ni \fid$ with $\Blo^{\rho,0} \otimes (\Blo^{\varnothing,1} \swr \cT_d)$. 
Recalling (\ref{algn:sup_wreath_bimod}), we define the $(\cO \Ni \fid,\cO \Ni \fid)$-bisupermodule
\begin{align*}
\bM_\Ni  := \Blo^{\rho,0} \boxtimes (\bM \swr \cT_d)
=\Blo^{\rho,0} \boxtimes \Big(\Ind_{(\Blo^{\varnothing,1} \swr \cT_d,\Blo^{\varnothing,1} \swr \cT_d)_{\Si_d}}^{(\Blo^{\varnothing,1} \swr \cT_d) \otimes (\Blo^{\varnothing,1} \swr \cT_d)^{\sop}}\bM^{\boxtimes d}_{\Si_d}\Big).
\end{align*}

Recall the subgroup $(\Ni  \times \Ni )_{\Ni /\Li }\leq \Ni  \times \Ni$ from (\ref{algn:marcus_not}). To simplify the notation, we denote 
$$
(\Ni  \times \Ni )_{\Si_d }:=(\Ni  \times \Ni )_{\Ni /\Li }.
$$
Note that the $T_w$'s, introduced just before Lemma \ref{lem:kappa}, are group elements. Therefore, through our identification $\cO \Ni \fid=\Blo^{\rho,0} \otimes (\Blo^{\varnothing,1} \swr \cT_d)$, we have 
$$
(\Blo^{\varnothing,1} \swr \cT_d,\Blo^{\varnothing,1} \swr \cT_d)_{\Si_d} = \cO (\Ni  \times \Ni )_{\Si_d }(\fid \otimes \fid),
$$
In particular,
\begin{align}\label{algn:M_N_ext}
\bM_\Ni   
\simeq \Ind_{(\Ni  \times \Ni )_{\Si_d}}^{\Ni  \times \Ni }(\bM_\Li )_{\Si_d}.
\end{align}
where 
$$
(\bM_\Li )_{\Si_d} := \Blo^{\rho,0} \boxtimes \bM^{\boxtimes d}_{\Si_d}.
$$
We will sometimes consider $(\bM_\Li )_{\Si_d}$ as an $\cO((\Ni  \cap \Hi ) \times (\Ni  \cap \Hi ))_{(\Ni  \cap \Hi )/\Li }$-module via the inclusion $(\Ni  \cap \Hi ) \hookrightarrow \Ni $. 
In this case we set
\begin{align*}
((\Ni  \cap \Hi ) \times (\Ni  \cap \Hi ))_{\Si_{d-1}}:=((\Ni  \cap \Hi ) \times (\Ni  \cap \Hi ))_{(\Ni  \cap \Hi )/\Li }.
\end{align*}

\begin{Lemma}\label{ML_MN_indecomp}
We have:
\begin{enumerate}
\item[{\rm (i)}] $\bM_\Li $ is an absolutely indecomposable $(\cO \Li \fid,\cO \Li \fid)$-bisupermodule with vertex $\Delta \Di $
\item[{\rm (ii)}] $\bM_\Ni $ is an absolutely indecomposable $(\cO \Ni \fid,\cO \Li \fid)$-bisupermodule with vertex $\Delta \Di $. In particular, $\bM_\Ni $ is an absolutely indecomposable $(\cO \Ni \fid,\cO \Ni \fid)$-bisupermodule with vertex $\Delta \Di $. 
\item[{\rm (iii)}]  
$
\Res^{\Ni  \times \Ni }_{\Ni  \times \Li }\,\bM_\Ni  \simeq \Ind_{\Li  \times \Li }^{\Ni  \times \Li }\,\bM_\Li .
$
\end{enumerate}
  \end{Lemma}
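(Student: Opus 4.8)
The plan is to prove the three assertions of Lemma~\ref{ML_MN_indecomp} in order, with part~(iii) being almost a formal consequence of the construction via Mackey-type arguments, and part~(ii) being the technical heart.

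For part~(i), the bisupermodule $\bM_\Li = \Blo^{\rho,0} \boxtimes \bM^{\boxtimes d}$ is an external product of bisupermodules over the groups $\tSi_\sR$ and $\tSi_{\sJ_1}, \dots, \tSi_{\sJ_d}$, whose pairwise centralized product (inside $\tSi_n$) is $\Li$. By Proposition~\ref{prop:define_M}(i) each factor $\bM^{(k)}$ is $\cO$-free, absolutely indecomposable with vertex $\Delta\Di_k$ and endopermutation source, while $\Blo^{\rho,0}$ is absolutely indecomposable with vertex $\Delta D_\rho$ where $D_\rho$ is the (trivial or not) defect group of $\Blo^{\rho,0}$. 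Iterating Lemma~\ref{lem:direct_prod_vert} (via Remark~\ref{rem:bisupmod}, which lets us translate between bisupermodules over $G \times_z G$-type groups and supermodules over products) then gives that $\bM_\Li$ is $\cO$-free, absolutely indecomposable, with vertex $\Delta D_\rho \times \Delta\Di_1 \times \dots \times \Delta\Di_d = \Delta(D_\rho \times \Di) = \Delta\Di$ (since $D_\rho$ is contained in the $\sR$-supported part and $\Di = \Di_1 \times \dots \times \Di_d$). One subtlety: when $r = 1$ or $\rho$ has trivial defect the factor $\Blo^{\rho,0}$ may be a matrix superalgebra rather than a group algebra, but $\bM_\Li$ is still an external product and the same inductive argument applies, as was done in the proof of Proposition~\ref{prop:define_M}(ii).

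For part~(iii), this is the Mackey decomposition formula in disguise, exactly as in Lemma~\ref{lem:vert_blocks}(ii). We have $\bM_\Ni \simeq \Ind_{(\Ni \times \Ni)_{\Si_d}}^{\Ni \times \Ni}(\bM_\Li)_{\Si_d}$ by~(\ref{algn:M_N_ext}), where $(\bM_\Li)_{\Si_d} = \Blo^{\rho,0} \boxtimes \bM^{\boxtimes d}_{\Si_d}$ is an $\cO(\Ni \times \Ni)_{\Si_d}$-module extending $\bM_\Li$. Restricting to $\Ni \times \Li$ and applying Theorem~\ref{thm:super_Mackey} (super Mackey), the relevant double coset computation is $|(\Ni \times \Li)\backslash(\Ni \times \Ni)/(\Ni \times \Ni)_{\Si_d}| = 1$ and $(\Ni \times \Li) \cap (\Ni \times \Ni)_{\Si_d} = \Li \times \Li$, since $(\Ni \times \Ni)_{\Si_d} = \{(g_1,g_2) \in \Ni \times \Ni \mid g_1\Li = g_2\Li\}$ and the natural map $\Ni \to \Ni/\Li \cong \Si_d$ restricts surjectively on the first $\Ni$-factor. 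This gives $\Res^{\Ni \times \Ni}_{\Ni \times \Li}\bM_\Ni \simeq \Ind_{\Li \times \Li}^{\Ni \times \Li}\Res^{(\Ni \times \Ni)_{\Si_d}}_{\Li \times \Li}(\bM_\Li)_{\Si_d} = \Ind_{\Li \times \Li}^{\Ni \times \Li}\bM_\Li$, as claimed, once we note that $(\bM_\Li)_{\Si_d}$ restricted to $\Li \times \Li$ is just $\bM_\Li$.

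For part~(ii), I would use part~(iii) together with Lemma~\ref{lem:vert_blocks} to upgrade indecomposability and vertex information from $\bM_\Li$ to $\bM_\Ni$. First: $\bM_\Li$ has vertex $\Delta\Di$ by part~(i), and by Lemma~\ref{lem:CGD}(iii) we have $N_{\Gi \times \Li}(\Delta\Di) \leq \Li \times \Li$, so in particular $C_{\Ni \times \Li}(\Delta\Di) \leq \Li \times \Li$; hence by Lemma~\ref{lem:vert_blocks}(i), $\Ind_{\Li \times \Li}^{\Ni \times \Li}\bM_\Li$ is indecomposable as an $\cO(\Ni \times \Li)$-module. The extension $(\bM_\Li)_{\Si_d}$ witnesses that $\bM_\Li$ extends to $(\Ni \times \Ni)_{\Si_d} = (\Ni \times \Ni)_{\Ni/\Li}$, so by Lemma~\ref{lem:vert_blocks}(ii) (with $G = \Ni$, $H = \Li$), $\Res^{\Ni \times \Ni}_{\Ni \times \Li}\Ind_{(\Ni \times \Ni)_{\Si_d}}^{\Ni \times \Ni}(\bM_\Li)_{\Si_d} \simeq \Ind_{\Li \times \Li}^{\Ni \times \Li}\bM_\Li$ — this recovers~(iii) — and the ``In particular'' clause of Lemma~\ref{lem:vert_blocks} then gives that $\bM_\Ni = \Ind_{(\Ni \times \Ni)_{\Si_d}}^{\Ni \times \Ni}(\bM_\Li)_{\Si_d}$ is indecomposable as an $\cO(\Ni \times \Ni)$-module, hence absolutely indecomposable (the factors $\bM^{(k)}$ and $\Blo^{\rho,0}$ are absolutely indecomposable, and induction/extension preserve this since the endomorphism ring stays local over $\cO$; alternatively, one checks $\bM_\Ni$ stays indecomposable after base change to $\F$, which follows from $\bM_\Li$ doing so by the same Mackey argument). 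That the vertex is exactly $\Delta\Di$: it is at least $\Delta\Di$ since $\bM_\Li \mid \Res \bM_\Ni$ (via~(iii)) and $\bM_\Li$ has vertex $\Delta\Di$; it is at most $\Delta\Di$ since $(\bM_\Li)_{\Si_d}$ is $\Delta\Di$-projective as an $\cO(\Ni \times \Ni)_{\Si_d}$-module (because its restriction to $\Li \times \Li$, namely $\bM_\Li$, is, and $[(\Ni \times \Ni)_{\Si_d} : \Li \times \Li] = d!$ is prime to $p$), and induction preserves relative $\Delta\Di$-projectivity. The main obstacle I anticipate is bookkeeping around the extension $(\bM_\Li)_{\Si_d}$ — verifying carefully that the $T_w$-action defining it (through Lemma~\ref{lem:M_swr_Td} and Lemma~\ref{lem:kappa}) really does furnish a module over the group algebra $\cO(\Ni \times \Ni)_{\Si_d}$ and not merely over the crossed product $(\Blo^{\varnothing,1} \swr \cT_d, \Blo^{\varnothing,1} \swr \cT_d)_{\Si_d}$ — but this identification is precisely what~(\ref{algn:ONf_ident}) and the displayed equation preceding~(\ref{algn:M_N_ext}) set up, so it should be a matter of citing those identifications rather than fresh work.
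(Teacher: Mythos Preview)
Your approach is essentially the same as the paper's --- Lemma~\ref{lem:vert_blocks} together with Lemma~\ref{lem:CGD} for parts~(ii) and~(iii), and iterated application of Lemma~\ref{lem:direct_prod_vert} for part~(i) --- and the level of detail you give for (ii) and (iii) is entirely appropriate.

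There is one genuine inaccuracy in your treatment of~(i). You assert that $\Blo^{\rho,0}$ is absolutely indecomposable as a bisupermodule over itself, but this fails when $\rho$ is odd: in that case $|\Blo^{\rho,0}| \cong \cM_{m\times m}(\cO) \oplus \cM_{m\times m}(\cO)$ is a sum of two matrix algebras (cf.\ Theorem~\ref{thm:DCblocks}(i)), so $|\Blo^{\rho,0}|$ is not indecomposable as a bimodule over itself. Consequently you cannot simply iterate Lemma~\ref{lem:direct_prod_vert} across all $d+1$ factors. The paper avoids this by taking Proposition~\ref{prop:define_M}(ii) as the base case $d=1$ of an induction on $d$: that proposition already establishes (with separate care for $\rho$ odd) that $\Blo^{\rho,0} \boxtimes \bM$ is absolutely indecomposable with vertex $\Delta\Di_1$, and then the inductive step $d-1 \to d$ applies Lemma~\ref{lem:direct_prod_vert} to $\bM_{\Li_{d-1}}$ and $\bM^{(d)}$, invoking Remark~\ref{rem:induct_Rou} to ensure $\rho$ remains $(d-1)$-Rouquier. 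You do gesture at Proposition~\ref{prop:define_M}(ii) in your ``subtlety'' remark, but you should promote it to the actual base case rather than treat it as an aside; once you do, the argument goes through.

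A smaller point: for the hypothesis of Lemma~\ref{lem:vert_blocks}(i) you need $C_\Ni(\Di) \leq \Li$, which follows directly from Lemma~\ref{lem:CGD}(i) (this is what the paper cites); your route via Lemma~\ref{lem:CGD}(iii) also works but is more than is needed.
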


\begin{proof}
(i) is proved via induction. For $d=1$ this is just Proposition \ref{prop:define_M}(ii). The inductive step is now proved using Remark \ref{rem:bisupmod} and Lemma \ref{lem:direct_prod_vert}. Note that this induction is valid due to Remark \ref{rem:induct_Rou}.

(ii),(iii) To show that $\bM_\Ni $ is indecomposable (as an $(\cO \Ni \fid,\cO \Li \fid)$-bisupermodule and therefore as an $(\cO \Ni \fid,\cO \Ni \fid)$-bimodule) and that
\begin{align*}
\Res^{\Ni  \times \Ni }_{\Ni  \times \Li }\,\bM_\Ni  \simeq \Ind_{\Li  \times \Li }^{\Ni  \times \Li }\,\bM_\Li ,
\end{align*}
we just apply Lemmas \ref{lem:vert_blocks} and \ref{lem:CGD}(i). (Note that Lemma \ref{lem:vert_blocks} is proved using the Mackey formula. However, we can use Theorem \ref{thm:super_Mackey} instead to obtain that the above does indeed holds as bisupermodules.) That $\bM_\Ni $ has vertex $\Delta \Di $ follows from the fact that $\bM_\Li $ does and that $p \nmid[\Ni :\Li ]$.
\end{proof}

Let $1\leq k \leq d$. We define $(\cO \Li_k\fid_k,\cO \Li_k\fid_k)$-bisupermodule 
$$\bM_{\Li_k} := \Blo^{\rho,0} \boxtimes \bM^{\boxtimes k}$$
and the $(\cO \Ni_k\fid_k,\cO \Ni_k\fid_k)$-bisupermodule
$$\bM_{\Ni_k}=\Blo^{\rho,0} \boxtimes (\bM \swr \cT_k).$$ That is, with Remark \ref{rem:induct_Rou} in mind, we do the same constructions as for $\bM_{\Li}=\bM_{\Li_d}$ and $\bM_{\Ni_d}=\bM_{\Ni}$, but with $d$ replaced by $k$. 
 Analogously to (\ref{algn:M_N_ext}), we have 
\begin{align*}
\bM_{\Ni_k} \simeq \Ind_{(\Ni_k \times \Ni_k)_{\Si_k}}^{\Ni_k \times \Ni_k}(\bM_{\Li_k})_{\Si_k},
\end{align*}
where 
$(\Ni_k \times \Ni_k)_{\Si_k} := (\Ni_k \times \Ni_k)_{\Ni_k/\Li_k}$
and $
(\bM_\Li )_{\Si_k} := \Blo^{\rho,0} \boxtimes \bM^{\boxtimes k}_{\Si_k}.
$

\begin{Lemma} \label{L300323}
The following supermodules are absolutely indecomposable with vertex $\Delta \Di $:
\begin{enumerate}
\item[{\rm (i)}] the $\cO(\Hi \times\Hi)$-supermodule $\cO \Gi_{d-1}\bi_{d-1} \boxtimes \bM^{(d)}$; 

\item[{\rm (ii)}] the $\cO((\Ni  \cap \Hi )\times(\Ni  \cap \Hi ))$-supermodule
$\cO \Ni_{d-1}\fid_{d-1} \boxtimes \bM^{(d)}$;
 
\item[{\rm (iii)}] the $\cO(\Li \times\Li)$-supermodule $\cO \Li_{d-1}\fid_{d-1} \boxtimes \bM^{(d)}$.

\end{enumerate}
\end{Lemma}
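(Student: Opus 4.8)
The three statements are all of the same shape: we take a bisupermodule $\bM^{(d)}$ on a copy of $\tSi_{\sJ_d}$ that has been identified with $\Blo^{\varnothing,1}$, and we box it with a block-type bisupermodule living over the ``first $d-1$ legs''. In each case the left factor ($\cO \Gi_{d-1}\bi_{d-1}$, $\cO \Ni_{d-1}\fid_{d-1}$, or $\cO \Li_{d-1}\fid_{d-1}$) is already known to be an absolutely indecomposable bisupermodule with vertex $\Delta(\Di_1\times\dots\times\Di_{d-1})$: for (i) this is Lemma~\ref{lem:c_f_blocks}(i) together with the fact that a block regarded as a bisupermodule has vertex the diagonal defect group; for (iii) it is Lemma~\ref{ML_MN_indecomp}(i) applied with $d$ replaced by $d-1$ (valid by Remark~\ref{rem:induct_Rou}); and for (ii) it is Lemma~\ref{ML_MN_indecomp}(ii), again with $d$ replaced by $d-1$. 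Meanwhile $\bM^{(d)}$ is an absolutely indecomposable $(\Blo^{\varnothing,1},\Blo^{\varnothing,1})$-bisupermodule with vertex $\Delta\Di_d$ by Proposition~\ref{prop:define_M}(i).

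The plan is therefore to invoke the super analogue of Ku\"ulshammer's tensor-product result---the combination of Remark~\ref{rem:bisupmod} (which lets us read a bisupermodule over $A\otimes B^{\sop}$ as a supermodule over the appropriate super-group-algebra) and Lemma~\ref{lem:direct_prod_vert} (which says that $\boxtimes$ of two absolutely indecomposable $\cO$-free supermodules over groups of the form $G\times_z H$ is again absolutely indecomposable, with vertex the product of the two vertices and source the box of the two sources). Concretely, for (i) one observes that $\Hi\times\Hi$, after applying Remark~\ref{rem:bisupmod}, is (up to the $e_z$-truncation) a group of the shape $(\Gi_{d-1}\times_z\Gi_{d-1})\times_z(\tSi_{\sJ_d}\times_z\tSi_{\sJ_d})$, so that the supermodule $\cO\Gi_{d-1}\bi_{d-1}\boxtimes\bM^{(d)}$ is exactly a box product of two absolutely indecomposable $\cO$-free supermodules; Lemma~\ref{lem:direct_prod_vert} then gives absolute indecomposability and vertex $\Delta(\Di_1\times\dots\times\Di_{d-1})\times\Delta\Di_d=\Delta\Di$. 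Cases (ii) and (iii) are identical word-for-word, replacing $\Gi_{d-1}$ by $\Ni\cap\Hi=\Ni_{d-1}$ (this equality holds since $\Ni_{d-1}=N_{\Gi_{d-1}}(\tSi_{\sJ_1,\dots,\sJ_{d-1}})$ is generated by $\Li_{d-1}$ and the $T_w$'s for $w\in\Si_{d-1}$, all of which lie in $\Hi$) and by $\Li_{d-1}$ respectively. One must also record that all the supermodules in sight are $\cO$-free, which is immediate since they are box products of $\cO$-free (bi)supermodules.

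The one genuine point requiring care, rather than obstacle, is bookkeeping with the $z$-identifications and the $e_z$-truncations: Lemma~\ref{lem:direct_prod_vert} is stated for $\cO(G\times_z H)e_z$-supermodules, and one has to check that the relevant subgroups of $\tSi_n$ really do decompose as the internal $\times_z$-products claimed---this is exactly the content of (\ref{EZCommAlg}) and the discussion in \S\ref{sec:notation}, so it is routine. A secondary check is that the vertex of the left tensor factor is really the expected diagonal subgroup: for (i) this uses that $\cO\Gi_{d-1}\bi_{d-1}$ is a block with defect group $\Di_1\times\dots\times\Di_{d-1}$ (Lemma~\ref{lem:c_f_blocks}(i) with $d$ replaced by $d-1$), and for (ii)/(iii) it is part of the inductive statement of Lemma~\ref{ML_MN_indecomp}. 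Since all the needed inputs are available, the proof is short: I would simply state the three applications of Remark~\ref{rem:bisupmod} and Lemma~\ref{lem:direct_prod_vert} in sequence, noting the group decompositions and the vertex of each left factor, and conclude.

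\begin{proof}
All three supermodules are $\cO$-free, being box products of $\cO$-free (bi)supermodules.

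(i) By Lemma~\ref{lem:c_f_blocks}(i) (with $d$ replaced by $d-1$, which is legitimate by Remark~\ref{rem:induct_Rou}), $\cO \Gi_{d-1}\bi_{d-1}$ is a block with defect group $\Di_1\times\dots\times\Di_{d-1}$, hence, viewed as an $\cO(\Gi_{d-1}\times\Gi_{d-1})$-supermodule via Remark~\ref{rem:bisupmod}, it is absolutely indecomposable with vertex $\Delta(\Di_1\times\dots\times\Di_{d-1})$. By Proposition~\ref{prop:define_M}(i), $\bM^{(d)}$ is an absolutely indecomposable $\cO(\tSi_{\sJ_d}\times\tSi_{\sJ_d})$-supermodule (via Remark~\ref{rem:bisupmod}) with vertex $\Delta\Di_d$. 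Since $\Hi\times\Hi$, after $e_z$-truncation, is generated by the commuting subgroups $\Gi_{d-1}\times\Gi_{d-1}$ and $\tSi_{\sJ_d}\times\tSi_{\sJ_d}$ in the manner of \S\ref{sec:vert_sup_ten_prod}, Lemma~\ref{lem:direct_prod_vert} applies and shows that $\cO \Gi_{d-1}\bi_{d-1} \boxtimes \bM^{(d)}$ is an absolutely indecomposable $\cO(\Hi\times\Hi)$-supermodule with vertex $\Delta(\Di_1\times\dots\times\Di_{d-1})\times\Delta\Di_d=\Delta\Di$.

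(ii) This is proved in exactly the same way, with $\Gi_{d-1}$ replaced by $\Ni_{d-1}=\Ni\cap\Hi$ (the equality holds because $\Ni_{d-1}$ is generated by $\Li_{d-1}$ and the $T_w$ for $w\in\Si_{d-1}$, all of which lie in $\Hi$). The left factor $\cO \Ni_{d-1}\fid_{d-1}$ is, by Lemma~\ref{ML_MN_indecomp}(ii) applied with $d$ replaced by $d-1$, an absolutely indecomposable $\cO(\Ni_{d-1}\times\Ni_{d-1})$-supermodule with vertex $\Delta(\Di_1\times\dots\times\Di_{d-1})$. Lemma~\ref{lem:direct_prod_vert} then gives the result.

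(iii) Identical to (i), now with $\Gi_{d-1}$ replaced by $\Li_{d-1}$ and using Lemma~\ref{ML_MN_indecomp}(i) (with $d$ replaced by $d-1$) to see that $\cO \Li_{d-1}\fid_{d-1}=\bM_{\Li_{d-1}}$ is an absolutely indecomposable $\cO(\Li_{d-1}\times\Li_{d-1})$-supermodule with vertex $\Delta(\Di_1\times\dots\times\Di_{d-1})$.
\end{proof}
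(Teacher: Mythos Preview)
Your overall strategy---box two absolutely indecomposable bisupermodules together and apply Remark~\ref{rem:bisupmod} with Lemma~\ref{lem:direct_prod_vert}---is exactly the paper's approach, but there are two genuine gaps.

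\textbf{The $d=1$ case.} You invoke Lemma~\ref{lem:c_f_blocks}(i) ``with $d$ replaced by $d-1$'', but that lemma requires $0<d<p$, so it says nothing when $d-1=0$. Worse, when $d=1$ and $\rho$ is odd, $\cO\Gi_0\bi_0=\Blo^{\rho,0}$ is a sum of \emph{two} blocks by Theorem~\ref{thm:DCblocks}(i), hence is \emph{not} absolutely indecomposable as a bimodule, and your tensor-product argument collapses. The paper handles $d=1$ separately by a direct appeal to Proposition~\ref{prop:define_M}(ii) (in that case all three supermodules coincide with $\Blo^{\rho,0}\boxtimes\bM$, whose indecomposability was established there by a more delicate argument precisely to cover odd $\rho$). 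You need the same case split.

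\textbf{Mis-citations in (ii) and (iii).} You write $\cO\Li_{d-1}\fid_{d-1}=\bM_{\Li_{d-1}}$, but these are different bisupermodules: $\cO\Li_{d-1}\fid_{d-1}\cong\Blo^{\rho,0}\boxtimes(\Blo^{\varnothing,1})^{\boxtimes(d-1)}$ is the block algebra itself, whereas $\bM_{\Li_{d-1}}=\Blo^{\rho,0}\boxtimes\bM^{\boxtimes(d-1)}$ involves the Heller translate $\bM=\Omega^\ell(\Blo^{\varnothing,1})$. Likewise $\cO\Ni_{d-1}\fid_{d-1}\neq\bM_{\Ni_{d-1}}$. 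So Lemma~\ref{ML_MN_indecomp} is not the right reference. What you actually need for (ii) and (iii) when $d>1$ is that $\cO\Li_{d-1}\fid_{d-1}$ and $\cO\Ni_{d-1}\fid_{d-1}$ are blocks with defect group $\Di_1\times\dots\times\Di_{d-1}$; this is Lemma~\ref{lem:c_f_blocks}(iii) (with $d$ replaced by $d-1$). A minor related slip: $\Ni_{d-1}\neq\Ni\cap\Hi$; rather $\Ni\cap\Hi\cong\Ni_{d-1}\times_z\tSi_{\sJ_d}$, which is what makes the $\boxtimes$ decomposition work.
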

\begin{proof}
We prove (i). Parts (ii) and (iii) are proved similarly. 
If $d=1$, this is contained in Proposition \ref{prop:define_M}(ii). Let $d>1$. By Lemma \ref{lem:c_f_blocks}(i) and Remark \ref{rem:induct_Rou}, $\cO \Gi_{d-1}\bi_{d-1}$ is indecomposable with vertex $\Delta(\Di_1 \times \dots \times \Di_{d-1})$. Moreover, by Proposition \ref{prop:define_M}(i), $\bM^{(d)}$ is indecomposable with vertex $\Delta \Di_d$. Remark \ref{rem:bisupmod} and Lemma \ref{lem:direct_prod_vert} complete the proof.
\end{proof}

\subsection{The bisupermodules $\bX$ and $\bY$}\label{sec:def_X_Y}

In this subsection, we will define the bisupermodule $\bX$ that will ultimately induce a Morita superequivalence between $\cO \Ni \fid$ and $\cO \Gi \bi$. We will also introduce a related bisupermodule $\bY$, that will aid with the inductive arguments in Section~\ref{sec:main}. Recall the super Green correspondence of  Theorem~\ref{thm:super_Green}.

By Lemma \ref{ML_MN_indecomp}(ii), the $\cO(\Ni\times\Ni)$-supermodule $\bM_\Ni $ is absolutely indecomposable with vertex $\Delta \Di $, and Lemma \ref{lem:CGD}(ii) gives that $N_{\Gi\times\Ni} (\Delta \Di ) \leq \Ni\times\Ni $. We now define  $\bX$ to be the super Green correspondent of $\bM_\Ni $ in $\Gi \times \Ni $. 

By Lemma~\ref{L300323}(i), the $\cO(\Hi \times\Hi)$-supermodule $\cO \Gi_{d-1}\bi_{d-1} \boxtimes \bM^{(d)}$ is absolutely indecomposable with vertex $\Delta \Di$, and by  
Lemma \ref{lem:CGD}(iii), we have $N_{\Gi  \times \Hi }(\Delta \Di ) \leq \Hi  \times \Hi $. We now define $\bY$ to be the super Green correspondent of the $\cO(\Hi  \times \Hi )$-supermodule $\cO \Gi_{d-1}\bi_{d-1} \boxtimes \bM^{(d)}$ in $\Gi  \times \Hi $.

We need a technical lemma before continuing.

\begin{Lemma}\label{lem:Y_tech}
The $(\cO \Ni ,\cO(\Ni  \cap \Hi ))$-bisupermodule 
$%\begin{align*}
\cO \Ni  \otimes_{\cO (\Ni \cap \Hi )} (\cO \Ni_{d-1}\fid_{d-1} \boxtimes \bM^{(d)})
$ %\end{align*}
is absolutely indecomposable with vertex $\Delta \Di $. In particular, it is the super Green correspondent of $\bY$ in $\Ni  \times (\Ni  \cap \Hi )$.
\end{Lemma}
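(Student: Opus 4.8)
The plan is to establish the two claims in turn: first that the induced bisupermodule $\cO \Ni \otimes_{\cO(\Ni\cap\Hi)}(\cO\Ni_{d-1}\fid_{d-1}\boxtimes\bM^{(d)})$ is absolutely indecomposable with vertex $\Delta\Di$, and then that it is the super Green correspondent of $\bY$ in $\Ni\times(\Ni\cap\Hi)$. For the first claim, I would apply Lemma~\ref{lem:ind_M_H^G} (in its super incarnation, using Theorem~\ref{thm:super_Mackey} in place of the ordinary Mackey formula, exactly as was done in the proof of Lemma~\ref{lem:vert_blocks}). The relevant ambient groups are $G = \Ni\times\Ni$ and the normal subgroup $H = (\Ni\cap\Hi)\times(\Ni\cap\Hi)$: indeed $\Ni\cap\Hi$ is normal in $\Ni$ since $\Hi = \tSi_{\sR\cup\sJ_1\cup\dots\cup\sJ_{d-1},\sJ_d}$ and $\Ni$ permutes $\sJ_1,\dots,\sJ_d$ preserving $\sJ_d$ up to... more precisely one checks directly from the definitions in \S\ref{sec:notation} that $\Ni\cap\Hi$ has index $d$ in $\Ni$. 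By Lemma~\ref{L300323}(ii), the $\cO((\Ni\cap\Hi)\times(\Ni\cap\Hi))$-supermodule $\cO\Ni_{d-1}\fid_{d-1}\boxtimes\bM^{(d)}$ is absolutely indecomposable with vertex $\Delta\Di$, and by Lemma~\ref{lem:CGD}(ii) applied inside $\Gi\times\Ni$ (or a direct check) we have $N_{\Ni\times\Ni}(\Delta\Di)\leq(\Ni\cap\Hi)\times(\Ni\cap\Hi)$; hence the induction to $\Ni\times\Ni$, restricted to one side, i.e. as an $(\cO\Ni,\cO(\Ni\cap\Hi))$-bisupermodule $\cO\Ni\otimes_{\cO(\Ni\cap\Hi)}(\cO\Ni_{d-1}\fid_{d-1}\boxtimes\bM^{(d)})$, is absolutely indecomposable. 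That it retains vertex $\Delta\Di$ follows since $p\nmid[\Ni:\Ni\cap\Hi]=d<p$, so induction over this index cannot change the vertex.

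For the second claim, recall that $\bY$ is the super Green correspondent of $\cO\Gi_{d-1}\bi_{d-1}\boxtimes\bM^{(d)}$ in $\Gi\times\Hi$. By Lemma~\ref{L300323}(i),(ii) and the transitivity of the Green correspondence, it suffices to show that $\cO\Ni\otimes_{\cO(\Ni\cap\Hi)}(\cO\Ni_{d-1}\fid_{d-1}\boxtimes\bM^{(d)})$ is the super Green correspondent of $\cO\Gi_{d-1}\bi_{d-1}\boxtimes\bM^{(d)}$ in $\Ni\times(\Ni\cap\Hi)$. Now $\cO\Ni_{d-1}\fid_{d-1}$ is the Brauer correspondent of $\cO\Gi_{d-1}\bi_{d-1}$ in $\Ni_{d-1}$ (Lemma~\ref{lem:Brauer_G_N} applied with $d$ replaced by $d-1$, valid by Remark~\ref{rem:induct_Rou}), hence by Lemma~\ref{lem:Brauer_Green} it is the super Green correspondent of $\cO\Gi_{d-1}\bi_{d-1}$ in $\Ni_{d-1}\times\Ni_{d-1}$. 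Boxing with $\bM^{(d)}$ (which is its own super Green correspondent from $\tSi_{\sJ_d}$ to itself, being absolutely indecomposable with vertex $\Delta\Di_d$ and full defect) and using that the super Green correspondence is compatible with $\boxtimes$ over $\times_z$ (this is a consequence of Lemma~\ref{lem:direct_prod_vert} together with Theorem~\ref{thm:super_Green}), we get that $\cO\Ni_{d-1}\fid_{d-1}\boxtimes\bM^{(d)}$ is the super Green correspondent of $\cO\Gi_{d-1}\bi_{d-1}\boxtimes\bM^{(d)}$ in $(\Ni\cap\Hi)\times(\Ni\cap\Hi)$. Applying the super Green correspondence once more to pass from $(\Ni\cap\Hi)\times(\Ni\cap\Hi)$ up to $\Ni\times(\Ni\cap\Hi)$ then identifies the induced bisupermodule from the first paragraph as the relevant super Green correspondent, and transitivity of the correspondence completes the argument.

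The main obstacle I anticipate is verifying the group-theoretic conditions needed to run the super Green correspondence at each step — specifically checking that $N_{\Ni\times\Ni}(\Delta\Di)$ and the various normalizers of $\Di$-related subgroups sit inside the subgroups $(\Ni\cap\Hi)\times(\Ni\cap\Hi)$, $\Hi\times\Hi$, etc., so that Theorem~\ref{thm:super_Green} and Lemma~\ref{lem:ind_M_H^G} actually apply, and that the indecomposable summands other than the Green correspondent genuinely have strictly smaller vertices. Here Lemma~\ref{lem:CGD} and Lemma~\ref{lem:conj_def} do most of the work: part (i) of Lemma~\ref{lem:conj_def} controls how $\Di$ intersects its $\Gi$-conjugates, and one pushes this through the $\times_z$ decomposition. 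The bookkeeping is routine once one is careful that $p\nmid[\Ni:\Li]$ and $p\nmid[\Ni:\Ni\cap\Hi]$, both of which hold because $d<p$, so the relevant $p$-groups are unaffected by the inductions and restrictions involved.
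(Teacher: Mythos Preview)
Your approach to the second claim (establishing a common super Green correspondent in $(\Ni\cap\Hi)\times(\Ni\cap\Hi)$ and using transitivity) matches the paper's and is fine. The problem is in the first claim.

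You assert that $(\Ni\cap\Hi)\times(\Ni\cap\Hi)$ is normal in $\Ni\times(\Ni\cap\Hi)$ so that Lemma~\ref{lem:ind_M_H^G} applies. This is false for $d\geq 3$. Recall $\Ni/\Li\cong\Si_d$, and under this identification the image of $\Ni\cap\Hi$ is the stabiliser of the point $d$, i.e.\ $\Si_{d-1}\leq\Si_d$, which is not normal. So $\Ni\cap\Hi$ is not normal in $\Ni$, and your application of Lemma~\ref{lem:ind_M_H^G} breaks down. (Your parenthetical ``preserving $\sJ_d$ up to\dots'' and retreat to the index being $d$ suggest you sensed this; index $d$ alone does not give normality.) Incidentally, the related assertion $N_{\Ni\times\Ni}(\Delta\Di)\leq(\Ni\cap\Hi)\times(\Ni\cap\Hi)$ is also false: for any $w\in\Si_d$ not fixing $d$, the element $(T_w,T_w)$ normalises $\Delta\Di$ but lies outside $(\Ni\cap\Hi)\times(\Ni\cap\Hi)$.

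The paper repairs exactly this point by restricting one step further, to $\Ni\times\Li$ rather than $\Ni\times(\Ni\cap\Hi)$. A chain of isomorphisms (Theorem~\ref{thm:super_Mackey} and Lemma~\ref{lem:vert_blocks}(ii)) gives
\[
\Res^{\Ni\times(\Ni\cap\Hi)}_{\Ni\times\Li}\big(\cO\Ni\otimes_{\cO(\Ni\cap\Hi)}(\cO\Ni_{d-1}\fid_{d-1}\boxtimes\bM^{(d)})\big)\simeq\Ind_{\Li\times\Li}^{\Ni\times\Li}(\cO\Li_{d-1}\fid_{d-1}\boxtimes\bM^{(d)}).
\]
Now $\Li$ \emph{is} normal in $\Ni$, and $C_\Gi(\Di)\leq\Li$ by Lemma~\ref{lem:CGD}(i), so Lemma~\ref{lem:vert_blocks}(i) applies and shows the right-hand side is absolutely indecomposable. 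This yields indecomposability of the original bisupermodule even as an $(\cO\Ni,\cO\Li)$-bisupermodule, a fortiori as an $(\cO\Ni,\cO(\Ni\cap\Hi))$-bisupermodule. The vertex claim then follows as you say, since $p\nmid[\Ni:\Li]$.
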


\begin{proof} 
Note that 
$$
\cO \Ni  \otimes_{\cO(\Ni  \cap \Hi )} (\cO \Ni_{d-1}\fid_{d-1} \boxtimes \bM^{(d)}) \simeq \cO \Gi_{d-1}\bi_{d-1} \boxtimes \bM^{(d)}.
$$
So for $d=1$ the statement is immediate. 
From now on we assume that $d>1$. We have 
\begin{align*}
%\begin{split}\label{algn:Mackey_N_d-1}
& \Res^{\Ni  \times (\Ni  \cap \Hi )}_{\Ni  \times \Li }\big(\cO \Ni  \otimes_{\cO (\Ni \cap \Hi )} (\cO \Ni_{d-1}\fid_{d-1} \boxtimes \bM^{(d)})\big) \\
\simeq\,\, & \Res^{\Ni  \times (\Ni  \cap \Hi )}_{\Ni  \times \Li } \Ind_{(\Ni  \cap \Hi ) \times (\Ni  \cap \Hi )}^{\Ni  \times (\Ni  \cap \Hi )}(\cO \Ni_{d-1}\fid_{d-1} \boxtimes \bM^{(d)}) \\
\simeq\,\, & \Ind_{(\Ni  \cap \Hi ) \times \Li }^{\Ni  \times \Li } \Res^{(\Ni  \cap \Hi ) \times (\Ni  \cap \Hi )}_{(\Ni  \cap \Hi ) \times \Li }(\cO \Ni_{d-1}\fid_{d-1} \boxtimes \bM^{(d)}) \\
\simeq\,\, & \Ind_{(\Ni  \cap \Hi ) \times \Li }^{\Ni  \times \Li } \Ind_{\Li  \times \Li }^{(\Ni  \cap \Hi ) \times \Li } (\cO \Li_{d-1}\fid_{d-1} \boxtimes \bM^{(d)}) \\
\simeq\,\, & \Ind_{\Li  \times \Li }^{\Ni  \times \Li } (\cO \Li_{d-1}\fid_{d-1} \boxtimes \bM^{(d)}),
%\end{split}
\end{align*}
where the second isomorphism follows from Theorem \ref{thm:super_Mackey} and the third from Lemma \ref{lem:vert_blocks}(ii). It now follows from Lemmas~\ref{L300323}, \ref{lem:vert_blocks}(i) and \ref{lem:CGD}(i) that
\begin{align*}
\cO \Ni  \otimes_{\cO (\Ni \cap \Hi )} (\cO \Ni_{d-1}\fid_{d-1} \boxtimes \bM^{(d)})
\end{align*}
is absolutely indecomposable as an $(\cO \Ni , \cO (\Ni  \cap \Hi ))$-bisupermodule with vertex $\Delta \Di $. (We have even shown it is absolutely indecomposable as an $(\cO \Ni , \cO \Li )$-bisupermodule.)

Next note that, by Lemma \ref{lem:CGD}(ii),(iii), $N_{\Gi  \times \Hi }(\Delta \Di ) \leq (\Ni  \cap \Hi ) \times (\Ni  \cap \Hi )$ and so, taking into account Lemma~\ref{L300323}, it makes sense to consider the super Green correspondents of $\cO \Gi_{d-1}\bi_{d-1} \boxtimes \bM^{(d)}$ and $\cO \Ni  \otimes_{\cO (\Ni \cap \Hi )} (\cO \Ni_{d-1}\fid_{d-1} \boxtimes \bM^{(d)})$ in $(\Ni  \cap \Hi ) \times (\Ni  \cap \Hi )$.

By Lemma \ref{lem:Brauer_G_N} and Remark \ref{rem:induct_Rou}, $\cO \Gi_{d-1}\bi_{d-1}$ and $\cO \Ni_{d-1}\fid_{d-1}$ are Brauer correspondents. Hence, by Lemma \ref{lem:Brauer_Green}, they are super Green correspondents. Therefore,
\begin{align*}
\cO \Gi_{d-1}\bi_{d-1} \boxtimes \bM^{(d)} \mid\,\,\, & (\cO \Gi_{d-1} \otimes_{\cO \Ni_{d-1}} \cO \Ni_{d-1}\fid_{d-1} \otimes_{\cO \Ni_{d-1}} \cO \Gi_{d-1}) \boxtimes \bM^{(d)} \\
\simeq\,\, & \cO \Hi  \otimes_{\cO(\Ni  \cap \Hi )} (\cO \Ni_{d-1}\fid_{d-1} \boxtimes \bM^{(d)}) \otimes_{\cO(\Ni  \cap \Hi )} \cO \Hi ,
\end{align*}
where the isomorphism is two applications of Lemma \ref{lem:tensor_bisupmod}.

We have now shown that $\cO \Gi_{d-1}\bi_{d-1} \boxtimes \bM^{(d)}$ and $\cO \Ni  \otimes_{\cO(\Ni  \cap \Hi )}(\cO \Ni_{d-1}\fid_{d-1} \boxtimes \bM^{(d)})$ both have super Green correspondent $\cO \Ni_{d-1}\fid_{d-1} \boxtimes \bM^{(d)}$ in $(\Ni  \cap \Hi ) \times (\Ni  \cap \Hi )$. In particular, $\bY$ is isomorphic to the unique absolutely indecomposable summand of
\begin{align*}
& \cO \Gi  \otimes_{\cO \Hi } \cO \Hi  \otimes_{\cO(\Ni  \cap \Hi )} (\cO \Ni_{d-1}\fid_{d-1} \boxtimes \bM^{(d)}) \otimes_{\cO(\Ni  \cap \Hi )} \cO \Hi  \\
\simeq\,\, & \cO \Gi  \otimes_{\cO(\Ni  \cap \Hi )} (\cO \Ni_{d-1}\fid_{d-1} \boxtimes \bM^{(d)}) \otimes_{\cO(\Ni  \cap \Hi )} \cO \Hi  \\
\simeq\,\, & \cO \Gi  \otimes_{\cO \Ni } \cO \Ni  \otimes_{\cO(\Ni  \cap \Hi )} (\cO \Ni_{d-1}\fid_{d-1} \boxtimes \bM^{(d)}) \otimes_{\cO(\Ni  \cap \Hi )} \cO \Hi 
\end{align*}
with vertex $\Delta \Di $, as desired.
\end{proof}

\begin{Lemma}\label{lem:X_as_left_mod}
We have:
\begin{enumerate}
\item $\bX$ is an $(\cO \Gi \bi,\cO \Ni \fid)$-bisupermodule.
\item $\bY$ is an $(\cO \Gi \bi,\cO \Hi \ci)$-bisupermodule.
\end{enumerate}
\end{Lemma}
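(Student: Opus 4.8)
The statement is that $\bX$ lies in the block $\cO\Gi\bi$ on the left and in $\cO\Ni\fid$ on the right, and similarly that $\bY$ lies in $\cO\Gi\bi$ on the left and in $\cO\Hi\ci$ on the right. The plan is to chase block idempotents through the super Green correspondence definitions of $\bX$ and $\bY$. For (i): by definition $\bX$ is the super Green correspondent of $\bM_\Ni$ in $\Gi\times\Ni$, so in particular $\bX\mid\Ind_{\Ni\times\Ni}^{\Gi\times\Ni}\bM_\Ni$ as $\cO(\Gi\times\Ni)$-supermodules. The right action is the easy half: $\bM_\Ni$ is an $(\cO\Ni\fid,\cO\Ni\fid)$-bisupermodule, so $\fid$ acts as the identity on the right of $\Ind_{\Ni\times\Ni}^{\Gi\times\Ni}\bM_\Ni$, hence on the right of its summand $\bX$; thus $\bX$ is a right $\cO\Ni\fid$-module. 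For the left action, $\bi$ is a block idempotent of $\cO\Gi$ (Theorem~\ref{thm:DCblocks}), so $\cO\Gi=\cO\Gi\bi\oplus\cO\Gi(1-\bi)$ as $(\cO\Gi,\cO\Gi)$-bimodules, and correspondingly $\Ind_{\Ni\times\Ni}^{\Gi\times\Ni}\bM_\Ni$ splits as a direct sum over which central idempotent of $\cO\Gi$ acts as the identity on the left. Since $\bX$ is indecomposable, exactly one such idempotent $e$ acts as the identity on it. I must show $e=\bi$. This is where Lemma~\ref{lem:Brauer_G_N} enters: $\cO\Ni\fid$ is the Brauer correspondent of $\cO\Gi\bi$, and $\bM_\Ni$ is (by Lemma~\ref{ML_MN_indecomp}(ii)) absolutely indecomposable with vertex $\Delta\Di$, so its super Green correspondent $\bX$ must have vertex $\Delta\Di$, and one can compare $\bX$ with the super Green correspondent $\gU$ of $\cO\Gi\bi$ (equivalently of $\cO\Ni\fid$) in $\Gi\times\Ni$ furnished by Lemma~\ref{lem:Brauer_Green}: since $\bi\cO\Gi\fid$ is a direct summand of $\Res^{\Gi\times\Gi}_{\Gi\times\Ni}\cO\Gi\bi$ truncated appropriately, and $\bX\mid \bi\cO\Gi\fid\otimes_{\cO\Ni\fid}\bM_\Ni$ up to projectives (using Lemma~\ref{lem:bimod_vert}(ii) and the stable equivalence from Lemma~\ref{Omega_bOGf_Morita}(i)), we get $\bi\bX=\bX$, so $e=\bi$.

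Alternatively — and more cleanly — one can argue directly: $\bX$ is the super Green correspondent of $\bM_\Ni$, and $\bM_\Ni = \Blo^{\rho,0}\boxtimes(\bM\swr\cT_d)$ carries $\fid\otimes\fid$ as identity on both sides, so $\bM_\Ni\mid\Res^{\Gi\times\Ni}_{\Ni\times\Ni}(\cdot)$ forces, via $\bX\mid\Ind_{\Ni\times\Ni}^{\Gi\times\Ni}\bM_\Ni$ and $\bM_\Ni\mid\Res^{\Gi\times\Ni}_{\Ni\times\Ni}\bX$ (Theorem~\ref{thm:super_Green}), that $\fid$ acts as the identity on the right of $\bX$; and applying the same to the left with $\cO\Ni\fid\mid\Res^{\Gi\times\Gi}_{\Ni\times\Ni}\cO\Gi\bi$ together with the Brauer-correspondence identification $\gU$ from Lemma~\ref{lem:Brauer_Green}, which shows $\bX=\gU\fid$ lies in $b\cO\Gi$ on the left where $b=\bi$. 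The cleanest route is to observe $\bX\,|_\Di\,\bi\cO\Gi\fid\otimes_{\cO\Ni\fid}\bM_\Ni$ in the sense of (\ref{E|_D}): this tensor product has its unique vertex-$\Delta\Di$ summand equal to $\bX$ (Lemma~\ref{lem:bimod_vert}, Lemma~\ref{Omega_bOGf_Morita}(i), Lemma~\ref{lem:conj_def}), and $\bi$ acts as identity on the left of $\bi\cO\Gi\fid$, $\fid$ on the right of $\bM_\Ni$; so $\bi\bX=\bX=\bX\fid$.

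For (ii), the argument is entirely parallel using the definition of $\bY$ as the super Green correspondent of $\cO\Gi_{d-1}\bi_{d-1}\boxtimes\bM^{(d)}$ in $\Gi\times\Hi$, plus Lemma~\ref{lem:Y_tech}, which identifies the super Green correspondent of $\bY$ in $\Ni\times(\Ni\cap\Hi)$ and in its proof realizes $\bY$ as a summand of $\cO\Gi\otimes_{\cO\Hi}\cO\Gi_{d-1}\bi_{d-1}\boxtimes\bM^{(d)}\otimes_{\cO\Hi}\cO\Hi$. On the left, $\bi_{d-1}$ acts as identity on $\cO\Gi_{d-1}\bi_{d-1}$, and inducing from $\Hi=\Gi_{d-1}\cdot\tSi_{\sJ_d}$ up to $\Gi$ sends the block $\cO\Gi_{d-1}\bi_{d-1}\otimes\cO\tSi_{\sJ_d}\ei_{\varnothing,1}^{(d)}=\cO\Hi\ci$ into $\cO\Gi\bi$ because $\bi=\ei_{\rho,d}$ and $\ci=\ei_{\rho,d-1}\otimes\ei_{\varnothing,1}^{(d)}$ are linked by Theorem~\ref{thm:DCblocks}(ii) (the spin block of weight $d$ is obtained from weight $d-1$ by adjoining one more $p$-bar); concretely, $\Br_\Di(\ci)=\Br_\Di(\bi)$ as in the proof of Lemma~\ref{lem:Brauer_G_N}, and since $\bY$ has vertex $\Delta\Di$ this forces $\bi\bY=\bY$. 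On the right, $\ci$ is a block idempotent of $\cO\Hi$ by Lemma~\ref{lem:c_f_blocks}(ii), and it acts as the identity on $\cO\Gi_{d-1}\bi_{d-1}\boxtimes\bM^{(d)}$ viewed over $\Hi$, hence on $\bY$ via $\bM^{(d)}$ having block $\ei_{\varnothing,1}^{(d)}$ on $\tSi_{\sJ_d}$ and $\bi_{d-1}$ being the block on $\Gi_{d-1}$.

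\textbf{Main obstacle.} The routine bookkeeping is just tracking which block idempotent acts as the identity, but the real content — and the step I expect to require the most care — is verifying on the \emph{left} that $\bi$ (weight $d$) rather than some other central idempotent acts as the identity on $\bX$ (resp. $\bY$). The subtlety is that $\bM_\Ni$ and $\cO\Gi_{d-1}\bi_{d-1}\boxtimes\bM^{(d)}$ naturally live over subgroups and, after inducing to $\Gi$, a priori spread across several blocks of $\cO\Gi$; pinning down the correct one uses the Brauer-correspondence identifications (Lemmas~\ref{lem:Brauer_G_N}, \ref{lem:Brauer_Green}, and the computation $\Br_\Di(\fid)=\Br_\Di(\bi)={\bar\ei}_{\rho,0}$), together with the fact that $\bX$, $\bY$ have full vertex $\Delta\Di$ so that the relevant Brauer constructions are nonzero. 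Once that is in place, both parts follow.
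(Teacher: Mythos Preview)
Your core strategy for (i) is exactly the paper's: let $\gU$ be the common super Green correspondent of $\cO\Gi\bi$ and $\cO\Ni\fid$ in $\Gi\times\Ni$ furnished by Lemma~\ref{lem:Brauer_Green}, so that $\gU=\bi\gU\fid$; write $\cO\Gi\otimes_{\cO\Ni}\cO\Ni\fid\simeq\gU\oplus\gV$ with every summand of $\gV$ having vertex strictly contained in $\Delta\Di$; then Lemma~\ref{lem:bimod_vert}(ii) shows $\gV\otimes_{\cO\Ni}\bM_\Ni$ has no summand with vertex $\Delta\Di$, forcing $\bX\mid\gU\otimes_{\cO\Ni}\bM_\Ni$ and hence $\bi\bX=\bX$. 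However, your write-up obscures this with incorrect citations: Lemma~\ref{Omega_bOGf_Morita} is a weight-\emph{one} statement and plays no role here, and ``up to projectives'' should be ``up to summands of strictly smaller vertex''. Your ``cleanest route'' is also not self-contained as phrased: asserting $\bX\,|_\Di\,\bi\cO\Gi\fid\otimes_{\cO\Ni\fid}\bM_\Ni$ already presumes that the unique $\Delta\Di$-summand of $\cO\Gi\fid$ lands in the $\bi$-component, which is precisely what Lemma~\ref{lem:Brauer_Green} supplies via $\gU$; without naming $\gU$ explicitly the step is unjustified.

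For (ii) you diverge from the paper. The paper does \emph{not} argue via a direct Brauer-map comparison $\Br_\Di(\ci)=\Br_\Di(\bi)$; instead it uses Lemma~\ref{lem:Y_tech} to exhibit $\bY$ as the unique $\Delta\Di$-summand of
\[
\cO\Gi\otimes_{\cO\Ni}\big(\cO\Ni\fid\otimes_{\cO(\Ni\cap\Hi)}(\cO\Ni_{d-1}\fid_{d-1}\boxtimes\bM^{(d)})\big)\otimes_{\cO(\Ni\cap\Hi)}\cO\Hi,
\]
which visibly begins with $\cO\Gi\otimes_{\cO\Ni}\cO\Ni\fid$, so the \emph{same} decomposition $\gU\oplus\gV$ and the \emph{same} vertex argument from (i) apply verbatim to give $\bY\mid\gU\otimes_{\cO\Ni}(\cdots)$ and hence $\bi\bY=\bY$. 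Your Brauer-homomorphism sketch could in principle be completed, but it would require an additional argument that an indecomposable $(\cO\Gi,\cO\Hi\ci)$-bimodule with vertex $\Delta\Di$ must lie in the Brauer-corresponding block on the $\Gi$-side, and you have not supplied that step; the paper's reduction to (i) via Lemma~\ref{lem:Y_tech} sidesteps it entirely.
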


\begin{proof}
(i) Since $\bX\mid\cO \Gi  \otimes_{\cO \Ni } \bM_\Ni $, the right action of $\cO \Ni \fid$ is clear.

By Lemma \ref{lem:Brauer_G_N}, $\cO \Gi \bi$ and $\cO \Ni \fid$ are Brauer correspondents. Since $\bi \in \cO \Gi_{\0}$ and $\fid \in \cO \Ni_{\0}$, we may apply Lemma \ref{lem:Brauer_Green} and we set $\gU = \bi\gU \fid$ to be the common Green correspondent of $\cO \Gi \bi$ and $\cO \Ni \fid$ in $\Gi  \times \Ni $. So, $\cO \Gi  \otimes_{\cO \Ni } \cO \Ni \fid$ is a direct sum of $\gU$, which has vertex $\Delta \Di $, and $\gV$, a direct sum of indecomposable bimodules each with vertex properly contained in $\Delta \Di $. Now,
\begin{align*}
\bX\mid \cO \Gi  \otimes_{\cO \Ni } \bM_\Ni  & \simeq \cO \Gi  \otimes_{\cO \Ni } \cO \Ni \fid \otimes_{\cO \Ni } \bM_\Ni   \simeq (\gU \oplus \gV) \otimes_{\cO \Ni } \bM_\Ni  \\
& \simeq (\gU \otimes_{\cO \Ni } \bM_\Ni ) \oplus (\gV \otimes_{\cO \Ni } \bM_\Ni ).
\end{align*}
Lemma \ref{lem:bimod_vert}(ii) implies that $\gV \otimes_{\cO \Ni \fid} \bM_\Ni $ is a direct sum of bimodules each with vertex of strictly smaller order than $\Delta \Di $. Therefore, $\bX\mid\gU \otimes_{\cO \Ni } \bM_\Ni $. The claim follows.

(ii) Since $\bY\mid\cO \Gi  \otimes_{\cO \Hi } (\cO \Gi_{d-1}\bi_{d-1} \boxtimes \bM^{(d)})$, the right $\cO \Hi \ci$ action is clear.

For the left action we first note that, by Lemma \ref{lem:Y_tech}, $\bY$ is actually the unique indecomposable summand of
\begin{align*}
& \cO \Gi  \otimes_{\cO \Ni } (\cO \Ni  \otimes_{\cO(\Ni  \cap \Hi )} (\cO \Ni_{d-1}\fid_{d-1} \boxtimes \bM^{(d)})) \otimes_{\cO(\Ni  \cap \Hi )} \cO \Hi  \\
\simeq\,\, & \cO \Gi  \otimes_{\cO \Ni } (\cO \Ni \fid \otimes_{\cO(\Ni  \cap \Hi )} (\cO \Ni_{d-1}\fid_{d-1} \boxtimes \bM^{(d)})) \otimes_{\cO(\Ni  \cap \Hi )} \cO \Hi 
\end{align*}
with vertex $\Delta \Di $. As is part (i), this implies
\begin{align*}
\bY\mid\gU \otimes_{\cO \Ni } \cO \Ni  \otimes_{\cO(\Ni  \cap \Hi )} (\cO \Ni_{d-1}\fid_{d-1} \boxtimes \bM^{(d)}) \otimes_{\cO(\Ni  \cap \Hi )} \cO \Hi .
\end{align*}
The claim follows.
\end{proof}

\begin{Lemma}\label{lem:GXL_indec}
$\bX$ is absolutely indecomposable as an $(\cO \Gi \bi,\cO \Li \fid)$-bisupermodule. In particular, $\Res^{\Gi  \times \Ni }_{\Gi  \times \Li }(\bX)$ is the super Green correspondent of $\bM_\Li $ in $\Gi  \times \Li $.
\end{Lemma}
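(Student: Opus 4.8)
The plan is to prove the stronger assertion that $\Res^{\Gi\times\Ni}_{\Gi\times\Li}(\bX)$ is the super Green correspondent of $\bM_\Li$ in $\Gi\times\Li$; absolute indecomposability of $\bX$ as an $(\cO\Gi\bi,\cO\Li\fid)$-bisupermodule then follows at once, since $\Res^{\Gi\times\Ni}_{\Gi\times\Li}(\bX)$ is precisely $\bX$ viewed as such a bisupermodule. By Lemma \ref{ML_MN_indecomp}(i) the bisupermodule $\bM_\Li$ is absolutely indecomposable with vertex $\Delta\Di$, and $N_{\Gi\times\Li}(\Delta\Di)\leq\Li\times\Li$ by Lemma \ref{lem:CGD}(iii), so Theorem \ref{thm:super_Green} lets us introduce $\bX'$, the super Green correspondent of $\bM_\Li$ in $\Gi\times\Li$; it is absolutely indecomposable with vertex $\Delta\Di$. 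Recall from Lemma \ref{ML_MN_indecomp}(ii) that $\bM_\Ni$, hence also $\bX$, has vertex $\Delta\Di$, and that $[\Ni:\Li]=d!$ is coprime to $p$ since $d<p$.

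First I would pin $\Res^{\Gi\times\Ni}_{\Gi\times\Li}(\bX)$ down up to summands of strictly smaller vertex. Since $\bX$ is the super Green correspondent of $\bM_\Ni$, we have $\bX\mid\Ind_{\Ni\times\Ni}^{\Gi\times\Ni}\bM_\Ni$; applying $\Res^{\Gi\times\Ni}_{\Gi\times\Li}$, Theorem \ref{thm:super_Mackey} (there is a single relevant double coset, the first coordinate already running over all of $\Gi$) together with Lemma \ref{ML_MN_indecomp}(iii) gives
\[
\Res^{\Gi\times\Ni}_{\Gi\times\Li}(\bX)\ \mid\ \Ind_{\Ni\times\Li}^{\Gi\times\Li}\Res^{\Ni\times\Ni}_{\Ni\times\Li}\bM_\Ni\ \simeq\ \Ind_{\Ni\times\Li}^{\Gi\times\Li}\Ind_{\Li\times\Li}^{\Ni\times\Li}\bM_\Li\ \simeq\ \Ind_{\Li\times\Li}^{\Gi\times\Li}\bM_\Li.
\]
On the other hand, Theorem \ref{thm:super_Green} applied to $\bM_\Li$ shows $\Ind_{\Li\times\Li}^{\Gi\times\Li}\bM_\Li\simeq\bX'\oplus\gN_0$, where each indecomposable summand of $\gN_0$ has vertex contained in some $\Delta\Di\cap{}^{(g_1,g_2)}\Delta\Di$ with $(g_1,g_2)\in(\Gi\times\Li)\setminus(\Li\times\Li)$; because $N_{\Gi\times\Li}(\Delta\Di)\leq\Li\times\Li$ such an intersection is a proper subgroup of $\Delta\Di$ (the elementary computation behind Lemma \ref{lem:conj_def}(i)). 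Hence every indecomposable summand of $\Res^{\Gi\times\Ni}_{\Gi\times\Li}(\bX)$ other than $\bX'$ has vertex of order $<|\Di|$; and since $\bX$ has vertex $\Delta\Di$ and $p\nmid[\Ni:\Li]$, its restriction to $\Gi\times\Li$ must keep a summand of vertex $\Delta\Di$, which is therefore $\simeq\bX'$. Thus $\Res^{\Gi\times\Ni}_{\Gi\times\Li}(\bX)\simeq\bX'\oplus\gN$ with every indecomposable summand of $\gN$ of vertex order $<|\Di|$.

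It then remains to show $\gN=0$, and this is where I would induce back up. Again using $p\nmid[\Ni:\Li]$, $\bX$ is a summand of $\Ind_{\Gi\times\Li}^{\Gi\times\Ni}\Res^{\Gi\times\Ni}_{\Gi\times\Li}(\bX)\simeq\Ind_{\Gi\times\Li}^{\Gi\times\Ni}\bX'\ \oplus\ \Ind_{\Gi\times\Li}^{\Gi\times\Ni}\gN$; induction does not increase vertices, so all summands of the second term have vertex of order $<|\Di|$, whence $\bX\mid\Ind_{\Gi\times\Li}^{\Gi\times\Ni}\bX'$. Restricting once more and using Theorem \ref{thm:super_Mackey} — now $\Li\trianglelefteq\Ni$ with $[\Ni:\Li]=d!$, so the double cosets are represented by the $(1,T_w)$, $w\in\Si_d$ — we obtain
\[
\Res^{\Gi\times\Ni}_{\Gi\times\Li}(\bX)\ \mid\ \bigoplus_{w\in\Si_d}{}^{(1,T_w)}\bX',
\]
and each ${}^{(1,T_w)}\bX'$ has vertex ${}^{(1,T_w)}\Delta\Di$, of order $|\Di|$. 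So every indecomposable summand of $\Res^{\Gi\times\Ni}_{\Gi\times\Li}(\bX)$ has vertex of order $|\Di|$, forcing $\gN=0$. Therefore $\Res^{\Gi\times\Ni}_{\Gi\times\Li}(\bX)\simeq\bX'$, which is absolutely indecomposable and is the super Green correspondent of $\bM_\Li$ in $\Gi\times\Li$.

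The step I expect to need the most care is the second one: ensuring that, apart from $\bX'$, only summands of strictly smaller vertex occur in $\Ind_{\Li\times\Li}^{\Gi\times\Li}\bM_\Li$, which relies on $N_{\Gi\times\Li}(\Delta\Di)\leq\Li\times\Li$ (Lemma \ref{lem:CGD}(iii)) and on the vertex identifications of Lemma \ref{ML_MN_indecomp}, together with the coprimality $p\nmid[\Ni:\Li]$. Once these are in place, the "induce up and restrict again" argument closes the gap. One works throughout with supermodules and the super Mackey formula and super Green correspondence, but no extra difficulty arises from the super structure.
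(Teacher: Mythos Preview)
Your proof is correct and follows the same main line as the paper: both show that $\Res^{\Gi\times\Ni}_{\Gi\times\Li}(\bX)$ is a summand of $\Ind_{\Li\times\Li}^{\Gi\times\Li}\bM_\Li$ via the same Mackey computation, and then argue that only the super Green correspondent $\bX'$ can occur.

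The difference is in the last step. You establish $\gN=0$ by inducing back up and restricting again. The paper does this in one line: since $\Li\trianglelefteq\Ni$, we have $\Gi\times\Li\trianglelefteq\Gi\times\Ni$, so by Clifford theory for indecomposables the summands of $\Res^{\Gi\times\Ni}_{\Gi\times\Li}(\bX)$ form a single $(\Gi\times\Ni)$-orbit, and in particular all have vertex $(\Gi\times\Ni)$-conjugate to $\Delta\Di$, hence of order $|\Di|$. Combined with the fact that $\Ind_{\Li\times\Li}^{\Gi\times\Li}\bM_\Li$ has a unique indecomposable summand whose vertex is not strictly contained in $\Delta\Di$, this finishes immediately. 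Your induce-restrict argument is a valid workaround that avoids invoking normality explicitly, but the paper's route is shorter.
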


\begin{proof}
By Lemma \ref{lem:CGD}(iii), we have $N_{\Gi  \times \Li }(\Delta \Di ) \leq \Li  \times \Li $ and so one can consider the super Green correspondent of $\bM_\Li $ in $\Gi  \times \Li $.

Certainly every summand of $\Res^{\Gi  \times \Ni }_{\Gi  \times \Li }(\bX)$ has vertex conjugate to $\Delta \Di $ in $\Gi  \times \Ni $. Now, by definition,
\begin{align*}
\Res^{\Gi  \times \Ni }_{\Gi  \times \Li }\bX\mid\Res^{\Gi  \times \Ni }_{\Gi  \times \Li } \Ind_{\Ni  \times \Ni }^{\Gi  \times \Ni }\bM_\Ni  & \simeq \Ind_{\Ni  \times \Li }^{\Gi  \times \Li } \Res^{\Ni  \times \Ni }_{\Ni  \times \Li }\bM_\Ni  
 \simeq \Ind_{\Ni  \times \Li }^{\Gi  \times \Li } \Ind_{\Li  \times \Li }^{\Ni  \times \Li }\bM_\Li   \simeq \Ind_{\Li  \times \Li }^{\Gi  \times \Li }\bM_\Li ,
\end{align*}
where the first isomorphism is due to Theorem \ref{thm:super_Mackey} and the second to Lemma \ref{ML_MN_indecomp}. However, by Theorem \ref{thm:super_Green}, $\Ind_{\Li  \times \Li }^{\Gi  \times \Li }\bM_\Li $ has a unique indecomposable summand whose vertex is not strictly contained in $\Delta \Di $. The claim follows.
\end{proof}

We will use the restriction $\Res^{\Gi  \times \Ni }_{\Gi  \times \Li }\bX$ extensively, so to simplify the notation, we set 
$${}_\Gi  \bX_\Li :=\Res^{\Gi  \times \Ni }_{\Gi  \times \Li }\bX.
$$

Recall the notation `$|_\Di $' from (\ref{E|_D}).

\begin{Lemma}\label{lem:vert_eqns_X_Y}
We have:
\begin{enumerate}
\item $\bX\,\,|_\Di \,\,(\cO \Gi  \otimes_{\cO \Ni }\bM_\Ni )$.
\item ${}_\Gi  \bX_\Li \,\,|_\Di \,\,(\cO \Gi  \otimes_{\cO \Li }\bM_\Li )$.
\item $\bY\,\,|_\Di \,\,\big(\cO \Gi  \otimes_{\cO \Hi } (\cO \Gi_{d-1}\bi_{d-1} \boxtimes \bM^{(d)})\big)$.
\end{enumerate}
\end{Lemma}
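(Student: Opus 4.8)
The three statements are of the same shape, so the plan is to establish them uniformly using the super Green correspondence together with Lemma~\ref{lem:bimod_vert}, Lemma~\ref{lem:conj_def}, and the various indecomposability/vertex facts already proved for $\bM_\Ni$, $\bM_\Li$ and $\cO\Gi_{d-1}\bi_{d-1}\boxtimes\bM^{(d)}$. Recall that by definition $\bX$ is the super Green correspondent of $\bM_\Ni$ in $\Gi\times\Ni$; since $N_{\Gi\times\Ni}(\Delta\Di)\leq\Ni\times\Ni$ by Lemma~\ref{lem:CGD}(ii), Theorem~\ref{thm:super_Green} tells us that $\Ind_{\Ni\times\Ni}^{\Gi\times\Ni}\bM_\Ni\simeq\bX\oplus\gM_G$, where every indecomposable summand of $\gM_G$ has vertex contained in $\Delta\Di\cap{}^{(g_1,g_2)}\Delta\Di$ for some $(g_1,g_2)\in(\Gi\times\Gi)\setminus(\Ni\times\Ni)$. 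By Lemma~\ref{lem:conj_def}(i), each such vertex is contained in $\Delta Q$ for some $Q<_\sfs\Di$. Finally, because $\bM_\Ni$ is an $(\cO\Ni\fid,\cO\Ni\fid)$-bisupermodule, one has the (super)module isomorphism $\cO\Gi\otimes_{\cO\Ni}\bM_\Ni\simeq\Ind_{\Ni\times\Ni}^{\Gi\times\Ni}\bM_\Ni$; combining this with the displayed decomposition and the vertex estimate is exactly the assertion $\bX\,|_\Di\,(\cO\Gi\otimes_{\cO\Ni}\bM_\Ni)$, which gives (i).

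For (ii) I would restrict the statement of (i) along $\Gi\times\Li\leq\Gi\times\Ni$. By Lemma~\ref{ML_MN_indecomp}(iii) we have $\Res^{\Ni\times\Ni}_{\Ni\times\Li}\bM_\Ni\simeq\Ind_{\Li\times\Li}^{\Ni\times\Li}\bM_\Li$, so $\cO\Gi\otimes_{\cO\Li}\bM_\Li\simeq\Ind_{\Li\times\Li}^{\Gi\times\Li}\bM_\Li\simeq\Res^{\Gi\times\Ni}_{\Gi\times\Li}(\cO\Gi\otimes_{\cO\Ni}\bM_\Ni)$, where the last step uses Theorem~\ref{thm:super_Mackey} together with the fact that the relevant double coset is a single coset (as in the proof of Lemma~\ref{lem:GXL_indec}). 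By Lemma~\ref{lem:GXL_indec}, ${}_\Gi\bX_\Li=\Res^{\Gi\times\Ni}_{\Gi\times\Li}\bX$ is the super Green correspondent of $\bM_\Li$ in $\Gi\times\Li$, i.e.\ the unique summand with vertex not strictly contained in $\Delta\Di$; again by Lemma~\ref{lem:conj_def}(i) (applied in the first or second coordinate), all other summands of $\Ind_{\Li\times\Li}^{\Gi\times\Li}\bM_\Li$ have vertex contained in $\Delta Q$ with $Q<_\sfs\Di$, which is precisely ${}_\Gi\bX_\Li\,|_\Di\,(\cO\Gi\otimes_{\cO\Li}\bM_\Li)$.

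For (iii) the argument is the same as for (i), but with $\Ni$ replaced by $\Hi$, $\bM_\Ni$ replaced by $\cO\Gi_{d-1}\bi_{d-1}\boxtimes\bM^{(d)}$, and $\bX$ replaced by $\bY$. By Lemma~\ref{L300323}(i) the $\cO(\Hi\times\Hi)$-supermodule $\cO\Gi_{d-1}\bi_{d-1}\boxtimes\bM^{(d)}$ is absolutely indecomposable with vertex $\Delta\Di$, and by Lemma~\ref{lem:CGD}(iii) we have $N_{\Gi\times\Hi}(\Delta\Di)\leq\Hi\times\Hi$, so Theorem~\ref{thm:super_Green} applies and $\Ind_{\Hi\times\Hi}^{\Gi\times\Hi}(\cO\Gi_{d-1}\bi_{d-1}\boxtimes\bM^{(d)})\simeq\bY\oplus\gM_G$ with every summand of $\gM_G$ having vertex inside $\Delta\Di\cap{}^{(g_1,g_2)}\Delta\Di$ for $(g_1,g_2)\in(\Gi\times\Gi)\setminus(\Hi\times\Hi)$. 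Here one needs the analogue of Lemma~\ref{lem:conj_def}(i) for the intermediate subgroup $\Hi$: if $g\in\Gi\setminus\Hi$ then $\Di\cap{}^g\Di<_\sfs\Di$. This holds because $\Hi=\tSi_{\sR\cup\sJ_1\cup\dots\cup\sJ_{d-1},\sJ_d}$ contains $N_\Gi(\Di)$ (indeed $N_\Gi(\Di)\leq\Ni\leq\Hi$ by Lemma~\ref{lem:CGD}(ii)), so the very same cycle-type argument as in the proof of Lemma~\ref{lem:conj_def}(i) shows that $g\notin\Hi$ forces some ${}^{g^{-1}}\Di_k\not\subseteq\Di$ and hence ${}^g\Di\cap\Di$ fixes $\sJ_k$ pointwise. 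Together with $\cO\Gi\otimes_{\cO\Hi}(\cO\Gi_{d-1}\bi_{d-1}\boxtimes\bM^{(d)})\simeq\Ind_{\Hi\times\Hi}^{\Gi\times\Hi}(\cO\Gi_{d-1}\bi_{d-1}\boxtimes\bM^{(d)})$ this gives (iii).

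The only genuine subtlety — and the step I would be most careful with — is the identification of the ``induction product'' $\cO\Gi\otimes_{\cO K}\gN$ with the honest induced supermodule $\Ind_{K\times K}^{\Gi\times K}\gN$ for $K\in\{\Ni,\Li,\Hi\}$ and the verification that this identification respects the superstructure, so that the super Green correspondence of Theorem~\ref{thm:super_Green} and the super vertex estimates of Lemma~\ref{lem:bimod_vert} genuinely apply at the level of bisupermodules rather than merely bimodules; once that bookkeeping is in place, each of (i)--(iii) is a one-line combination of a Mackey/Green decomposition with Lemma~\ref{lem:conj_def}(i) (or its $\Hi$-analogue).
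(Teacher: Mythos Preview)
Your arguments for (i) and (ii) are correct and match the paper's proof, which simply cites Theorem~\ref{thm:super_Green}, Lemma~\ref{lem:conj_def}(i), and (for (ii)) Lemma~\ref{lem:GXL_indec}.

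Your argument for (iii), however, has a genuine gap. The containment $\Ni\leq\Hi$ is false for $d\geq 2$: recall $\Hi=\tSi_{\sR\cup\sJ_1\cup\dots\cup\sJ_{d-1},\sJ_d}$ stabilises $\sJ_d$ setwise, while $\Ni$ contains the elements $T_w$ for all $w\in\Si_d$; in particular $T_{s_{d-1}}$ swaps $\sJ_{d-1}$ and $\sJ_d$, so $T_{s_{d-1}}\notin\Hi$. Consequently $N_\Gi(\Di)\not\leq\Hi$ and the $\Hi$-analogue of Lemma~\ref{lem:conj_def}(i) fails outright: with $g=T_{s_{d-1}}\in\Gi\setminus\Hi$ one has ${}^g\Di=\Di$, so $\Di\cap{}^g\Di=\Di\not<_\sfs\Di$. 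Even at the level of diagonal intersections, taking $(g_1,g_2)=(T_{s_{d-1}},1)\in(\Gi\times\Hi)\setminus(\Hi\times\Hi)$ gives $\Delta\Di\cap{}^{(g_1,g_2)}\Delta\Di=\Delta Q$ where $Q=\{x\in\Di\mid {}^{g_1^{-1}}x=x\}$ is the product of $\Di_1\times\dots\times\Di_{d-2}$ with the diagonal copy of $C_p$ inside $\Di_{d-1}\times\Di_d$; this $Q$ has no fixed points outside $\sR$, so $Q\not<_\sfs\Di$. Thus the direct Green-correspondence argument from $\Hi\times\Hi$ up to $\Gi\times\Hi$ does not yield the required vertex bound.

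The paper circumvents this by passing through $\Ni$: by Lemma~\ref{lem:Y_tech}, $\bY$ is also the super Green correspondent in $\Gi\times\Hi$ of the $\cO(\Ni\times(\Ni\cap\Hi))$-module $\cO\Ni\otimes_{\cO(\Ni\cap\Hi)}(\cO\Ni_{d-1}\fid_{d-1}\boxtimes\bM^{(d)})$. Inducing from $\Ni\times(\Ni\cap\Hi)$ to $\Gi\times\Hi$ puts the ``bad'' conjugates into $(\Gi\times\Hi)\setminus(\Ni\times(\Ni\cap\Hi))$, and any such $(g_1,g_2)$ has either $g_1\notin\Ni$ or $g_2\notin\Ni$, so Lemma~\ref{lem:conj_def}(i) applies directly and gives $\bY\,|_\Di$ the full induced module. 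One then observes that $\cO\Gi\otimes_{\cO\Hi}(\cO\Gi_{d-1}\bi_{d-1}\boxtimes\bM^{(d)})$ is a summand of this full induced module (since $\cO\Gi_{d-1}\bi_{d-1}\boxtimes\bM^{(d)}$ and $\cO\Ni_{d-1}\fid_{d-1}\boxtimes\bM^{(d)}$ are themselves Green correspondents) and already contains $\bY$ by definition, whence (iii).
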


\begin{proof}
(i) This follows %immediately 
from the definition of $\bX$, Theorem \ref{thm:super_Green} and Lemma \ref{lem:conj_def}(i).

(ii) Similar to part (i), this follows from Lemma \ref{lem:GXL_indec}, Theorem \ref{thm:super_Green} and Lemma \ref{lem:conj_def}(i).

(iii) By Lemma \ref{lem:Y_tech}, $\bY$ and $\cO \Ni  \otimes_{\cO(\Ni  \cap \Hi )} (\cO \Ni_{d-1}\fid_{d-1} \boxtimes \bM^{(d)})$ are super Green correspondents. In particular, by Theorem \ref{thm:super_Green} and Lemma \ref{lem:conj_def}(i),
\begin{align}
\begin{split}\label{algn:OG_N_d-1_OH}
\bY \,\,|_\Di \,& \big( \cO \Gi  \otimes_{\cO \Ni } (\cO \Ni  \otimes_{\cO(\Ni  \cap \Hi )} (\cO \Ni_{d-1}\fid_{d-1} \boxtimes \bM^{(d)})) \otimes_{\cO(\Ni  \cap \Hi )} \cO \Hi \big)\\
\simeq\,\, & \cO \Gi  \otimes_{\cO(\Ni  \cap \Hi )} (\cO \Ni_{d-1}\fid_{d-1} \boxtimes \bM^{(d)}) \otimes_{\cO(\Ni  \cap \Hi )} \cO \Hi  \\
\simeq\,\, & \cO \Gi  \otimes_{\cO \Hi } (\cO \Hi  \otimes_{\cO(\Ni  \cap \Hi )} (\cO \Ni_{d-1}\fid_{d-1} \boxtimes \bM^{(d)}) \otimes_{\cO(\Ni  \cap \Hi )} \cO \Hi ).
\end{split}
\end{align}
However, as noted in the proof of Lemma \ref{lem:Y_tech}, $\cO \Gi_{d-1}\bi_{d-1} \boxtimes \bM^{(d)}$ and $\cO \Ni_{d-1}\fid_{d-1} \boxtimes \bM^{(d)}$ are also super Green correspondents. In particular,
\begin{align*}
\cO \Gi_{d-1}\bi_{d-1} \boxtimes \bM^{(d)}\mid\cO \Hi  \otimes_{\cO(\Ni  \cap \Hi )} (\cO \Ni_{d-1}\fid_{d-1} \boxtimes \bM^{(d)})) \otimes_{\cO(\Ni  \cap \Hi )} \cO \Hi .
\end{align*}
As we already know that $\bY\mid\cO \Gi  \otimes_{\cO \Hi }(\cO \Gi_{d-1}\bi_{d-1} \boxtimes \bM^{(d)})$, the claim now follows from~(\ref{algn:OG_N_d-1_OH}).
\end{proof}

Let $1\leq k \leq d$. With Remark \ref{rem:induct_Rou} in mind, we define $\bX_k$ and ${}_{\Gi_k}\bX_{\Li_k}$ in the same way as $\bX$ and ${}_\Gi  \bX_\Li $ were defined. (Of course, by Lemma \ref{lem:X_as_left_mod}(i), $\bX_k$ is an $(\cO \Gi_k \bi_k,\cO \Ni_k \fid_k)$-bimodule.)

\subsection{Dualizing the special bisuprmodules}
The special bisupermodules defined so far can be dualized in the sense of \S\ref{sec:dual} to get the bisupermodules $\bM_\Li^*:=(\bM_\Li)^*$, $\bM_\Ni^*:= (\bM_\Ni)^*$, $\bX^*$, $\bY^*$, ${}_\Li  \bX_\Gi^*:=({}_\Gi \bX_\Li)^*$, and similarly $\bM_{\Li_k}^*$, $\bM_{\Ni_k}^*$ ${}_{\Li_k} \bX_{\Gi_k}^*$ for $1 \leq k \leq d$.

\begin{Lemma}\label{lem:dual_ML_MN}
We have:
\begin{enumerate}
\item $\bM_\Li^* \simeq \Blo^{\rho,0} \boxtimes (\bM^*)^{\boxtimes d}$ and $\bM_\Li^*$ has vertex $\Delta \Di $.
\item $\bM_\Ni^* \simeq \Blo^{\rho,0} \boxtimes (\bM^* \swr \cT_d)$ and $\bM_\Ni^*$ has vertex $\Delta \Di $.
\item $\bX^*$ is the super Green correspondent of $\bM_\Ni^*$ in $\Ni  \times \Gi $ and ${}_\Li  \bX_\Gi^*$ is the super Green correspondent of $\bM_\Li^*$ in $\Li  \times \Gi $.
\end{enumerate}
\end{Lemma}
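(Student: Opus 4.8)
\textbf{Proof plan for Lemma \ref{lem:dual_ML_MN}.}

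The plan is to handle the three parts in order, each being a routine unwinding of the definitions combined with a previously established duality lemma. For part (i), I would start from the definition $\bM_\Li = \Blo^{\rho,0} \boxtimes \bM^{\boxtimes d}$ and apply Lemma \ref{lem:dual_prod} repeatedly (once to split off the $\Blo^{\rho,0}$ factor, then inductively on the $d$ tensor factors of $\bM^{\boxtimes d}$) to obtain $\bM_\Li^* \simeq (\Blo^{\rho,0})^* \boxtimes (\bM^*)^{\boxtimes d}$. Then I would invoke Lemma \ref{lem:sup_alg_idempt_dual} with $e_1 = e_2 = \ei_{\rho,0}$ (using that $\Blo^{\rho,0}$ is supersymmetric as a block of a group algebra, hence its own identity idempotent case applies) to identify $(\Blo^{\rho,0})^*$ with $\Blo^{\rho,0}$ as a bisupermodule. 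The vertex statement follows from Lemma \ref{ML_MN_indecomp}(i), which says $\bM_\Li$ has vertex $\Delta\Di$, together with Lemma \ref{lem:sup_vert}, which says dualizing sends vertex $P$ to $P^*$; since $\Di \leq \Gi_\0$ and $\Delta\Di$ is a ``diagonal'' subgroup, $(\Delta\Di)^* = \Delta\Di$ under the natural identification.

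For part (ii), I would proceed analogously: from $\bM_\Ni = \Blo^{\rho,0} \boxtimes (\bM \swr \cT_d)$, apply Lemma \ref{lem:dual_prod} to peel off $\Blo^{\rho,0}$, then apply Lemma \ref{lem:dual_sup_wreath} (which gives $(\bM \swr \cT_d)^* \simeq \bM^* \swr \cT_d$ as $(\Blo^{\varnothing,1}\swr\cT_d, \Blo^{\varnothing,1}\swr\cT_d)$-bisupermodules, using that $\Blo^{\varnothing,1}$ is a superalgebra with superunit) and Lemma \ref{lem:sup_alg_idempt_dual} again for the $\Blo^{\rho,0}$ factor. The vertex claim follows from Lemma \ref{ML_MN_indecomp}(ii) and Lemma \ref{lem:sup_vert}, exactly as in part (i). One subtlety to check is that the identification $\cO\Ni\fid \cong \Blo^{\rho,0}\otimes(\Blo^{\varnothing,1}\swr\cT_d)$ from (\ref{algn:ONf_ident}) is compatible with the duality functor; since this identification is an isomorphism of superalgebras (arising from group elements via Lemma \ref{lem:kappa}), and the superalgebras involved are supersymmetric, the dual is computed in the same way on either side.

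For part (iii), the key point is that dualizing commutes with induction and restriction of bisupermodules, which is the content of Lemma \ref{lem:ind_dual} (applied in the crossed-superproduct setting with $G = \Gi$, $H = \Ni$, or the appropriate restriction). Concretely, $\bX$ is defined as the super Green correspondent of $\bM_\Ni$ in $\Gi\times\Ni$, i.e.\ the unique non-projective absolutely indecomposable summand of $\Ind_{\Ni\times\Ni}^{\Gi\times\Ni}\bM_\Ni$ with vertex $\Delta\Di$; dualizing and using Lemma \ref{lem:sup_vert} (vertex $\Delta\Di \mapsto \Delta\Di$) together with the fact that taking duals commutes with $\Ind$ (Lemma \ref{lem:ind_dual}) and sends projectives to projectives, $\bX^*$ is the unique non-projective absolutely indecomposable summand of $\Ind_{\Ni\times\Gi}^{\Ni\times\Ni}\bM_\Ni^*$ with vertex $\Delta\Di$, which is precisely the super Green correspondent of $\bM_\Ni^*$ in $\Ni\times\Gi$. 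The same argument applies verbatim to ${}_\Li\bX_\Gi^*$ using Lemma \ref{lem:GXL_indec}. The main (minor) obstacle is bookkeeping: making sure the various identifications of bisupermodules with $A\otimes B^{\sop}$-supermodules and the direction-reversal under $*$ are tracked correctly, and confirming that all the algebras appearing ($\Blo^{\rho,0}$, $\Blo^{\varnothing,1}\swr\cT_d$, $\cO\Gi\bi$, etc.) are supersymmetric so that Lemmas \ref{lem:sup_alg_idempt_dual}, \ref{lem:ind_dual}, \ref{lem:dual_sup_wreath} genuinely apply; but these are all blocks of twisted group superalgebras or wreath superproducts thereof, hence supersymmetric, so no real difficulty arises.
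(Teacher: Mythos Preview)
Your proposal is correct and follows essentially the same approach as the paper: parts (i) and (ii) via Lemma~\ref{lem:dual_prod}, Lemma~\ref{lem:sup_alg_idempt_dual} (for $(\Blo^{\rho,0})^*\simeq \Blo^{\rho,0}$), Lemma~\ref{lem:dual_sup_wreath}, and Lemma~\ref{lem:sup_vert} for the vertices; part (iii) via Lemma~\ref{lem:ind_dual}, Lemma~\ref{lem:sup_vert}, and Lemma~\ref{lem:GXL_indec}. One small typo: in part (iii) you wrote $\Ind_{\Ni\times\Gi}^{\Ni\times\Ni}\bM_\Ni^*$, but the subscript and superscript should be swapped (you are inducing from $\Ni\times\Ni$ up to $\Ni\times\Gi$, i.e.\ $\bX^*\mid \bM_\Ni^*\otimes_{\cO\Ni}\cO\Gi$).
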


\begin{proof}
(i) By Lemma \ref{lem:sup_alg_idempt_dual} $(\Blo^{\rho,0})^* \simeq \Blo^{\rho,0}$, and so the first claim just follows from Lemma \ref{lem:dual_prod}. That $\bM_\Li $ and $\bM_\Li^*$ have the same vertex follows from Lemma \ref{lem:sup_vert}.

(ii) This is proved in exactly the same way as (i) once we note that $\bM^* \swr \cT_d \simeq (\bM \swr \cT_d)^*$ via Lemma \ref{lem:dual_sup_wreath}.

(iii) By Lemma \ref{lem:ind_dual}, we have 
$%\begin{align*}
 \bX^*\mid(\cO \Gi  \otimes_{\cO \Ni } \bM_\Ni )^* \simeq \bM_\Ni^* \otimes_{\cO \Ni } \cO \Gi .
$ %\end{align*}
That $\bX^*$ has vertex $\Delta \Di $ is, again, due to Lemma \ref{lem:sup_vert}. With Lemma \ref{lem:GXL_indec} in mind, the statement for ${}_\Li  \bX_\Gi^*$ is proved in an identical fashion.
\end{proof}

Recall the notation `$|^\Di $' from (\ref{E|^D}).

\begin{Lemma}\label{lem:X*_with_X}
We have:
\begin{enumerate}
\item $\cO \Ni \fid\,\,|^\Di \,(\bX^* \otimes_{\cO \Gi } \bX)$.
\item $\cO \Gi \bi\mid \bX \otimes_{\cO \Ni } \bX^*$.
\end{enumerate}
\end{Lemma}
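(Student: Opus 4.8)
The plan is to prove both statements simultaneously by exploiting the super Green correspondence together with the stable superequivalence of Morita type coming from weight one, and then inducing up $\Si_d$-wreathed versions. First I would use Lemma~\ref{lem:sup_vert} to record that $\bX^*$ is the super Green correspondent of $\bM_\Ni^*$ in $\Ni\times\Gi$ (which is Lemma~\ref{lem:dual_ML_MN}(iii)), so that both $\bX$ and $\bX^*$ are controlled by super Green correspondence with respect to the vertex $\Delta\Di$. The key structural input is Proposition~\ref{prop:define_M}(iii): $\bM$ and $\bM^*$ induce a stable auto-superequivalence of Morita type of $\Blo^{\varnothing,1}$, i.e. $\bM\otimes_{\Blo^{\varnothing,1}}\bM^*\simeq\Blo^{\varnothing,1}\oplus(\text{projective})$ and similarly for the other order. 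Applying Lemma~\ref{lem:wreath_tensor}(ii) together with the fact that $\bM^*\swr\cT_d\simeq(\bM\swr\cT_d)^*$ (Lemma~\ref{lem:dual_sup_wreath}), and then boxing with $\Blo^{\rho,0}$ (using $(\Blo^{\rho,0})^*\simeq\Blo^{\rho,0}$ from Lemma~\ref{lem:sup_alg_idempt_dual} and Lemma~\ref{lem:dual_prod}, Lemma~\ref{lem:tensor_bisupmod}), I would obtain that $\bM_\Ni\otimes_{\cO\Ni\fid}\bM_\Ni^*\simeq\cO\Ni\fid\oplus\gP$ for a bisupermodule $\gP$ whose underlying bimodule is projective, and likewise $\bM_\Ni^*\otimes_{\cO\Ni\fid}\bM_\Ni\simeq\cO\Ni\fid\oplus\gP'$.

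Next I would transport this to the level of $\Gi$. Using Lemma~\ref{lem:vert_eqns_X_Y}(i) we have $\bX\,|_\Di\,(\cO\Gi\otimes_{\cO\Ni}\bM_\Ni)$, and by Lemma~\ref{lem:dual_ML_MN}(iii) and Lemma~\ref{lem:ind_dual} we have $\bX^*\,|_\Di\,(\bM_\Ni^*\otimes_{\cO\Ni}\cO\Gi)$. Therefore
\begin{align*}
\bX^*\otimes_{\cO\Gi}\bX \ \big|\ \ (\bM_\Ni^*\otimes_{\cO\Ni}\cO\Gi)\otimes_{\cO\Gi}(\cO\Gi\otimes_{\cO\Ni}\bM_\Ni)&\simeq \bM_\Ni^*\otimes_{\cO\Ni}\cO\Gi\otimes_{\cO\Ni}\bM_\Ni.
\end{align*}
Now $\cO\Gi\bi$ decomposes, as an $(\cO\Ni\fid,\cO\Ni\fid)$-bisupermodule via Lemma~\ref{lem:Brauer_G_N} and Lemma~\ref{lem:Brauer_Green}, into $\cO\Ni\fid$ plus summands with vertices strictly smaller than $\Delta\Di$ (this is the standard Green correspondence decomposition of $\Res\Ind$ through the Brauer correspondent pair). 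Feeding this into the displayed tensor product and applying Lemma~\ref{lem:bimod_vert}(ii) to bound vertices of the non-identity contributions, together with Lemma~\ref{lem:conj_def}(ii) to see those vertices are $\Di$-small, I would deduce $\cO\Ni\fid\,|^\Di\,(\bX^*\otimes_{\cO\Gi}\bX)$, which is part~(i). For part~(ii) the argument is dual and slightly easier: from $\bX\otimes_{\cO\Ni}\bX^*\,\big|\,\cO\Gi\otimes_{\cO\Ni}\bM_\Ni\otimes_{\cO\Ni}\bM_\Ni^*\otimes_{\cO\Ni}\cO\Gi\simeq \cO\Gi\otimes_{\cO\Ni}(\cO\Ni\fid\oplus\gP)\otimes_{\cO\Ni}\cO\Gi$, the $\cO\Ni\fid$-part induces up to $\cO\Gi\otimes_{\cO\Ni}\cO\Ni\fid\otimes_{\cO\Ni}\cO\Gi$, whose unique summand of full vertex $\Delta\Di$ is $\cO\Gi\bi$ (again Lemma~\ref{lem:Brauer_Green}, Lemma~\ref{lem:bimod_vert}(ii)); since $\cO\Gi\bi$ certainly occurs with multiplicity one and the projective leftover $\gP$ induces to a projective $(\cO\Gi\bi,\cO\Gi\bi)$-bimodule, we get $\cO\Gi\bi\mid\bX\otimes_{\cO\Ni}\bX^*$ as claimed.

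The main obstacle I anticipate is keeping the vertex bookkeeping clean: one must verify that every ``error term'' arising from the Mackey-style decompositions and from the stable-equivalence leftovers really does have a $\Di$-small vertex in the precise sense of (\ref{E|^D}), and in particular that composing two bisupermodules each carrying a $\Di$-small tail does not accidentally produce a new summand of full vertex $\Delta\Di$ other than the expected one. This is exactly where Lemma~\ref{lem:conj_def}(ii) and Lemma~\ref{lem:bimod_vert}(ii) have to be invoked carefully, and where the distinction between ``$|_\Di$'' and ``$|^\Di$'' matters: the output of part~(i) is only ``$|^\Di$'' (vertices $\Delta_\varphi Q$ with both $Q,\varphi(Q)<_\sfs\Di$), reflecting that $\bX^*\otimes_{\cO\Gi}\bX$ lives over $\Ni\times\Ni$ where twisted diagonals genuinely appear, whereas part~(ii) lands over $\Gi\times\Gi$ where Lemma~\ref{lem:conj_def}(i) gives the cleaner ``$|_\Di$''-type control and hence plain divisibility $\mid$ suffices. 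A secondary point to be careful about is the exceptional case $r=1$ (where $\cO\Ni\fid\cong\Blo^{\varnothing,1}$ and the $\Blo^{\rho,0}$-factor degenerates), but there everything reduces directly to Proposition~\ref{prop:define_M}(iii) and Lemma~\ref{lem:wreath_tensor}, so no separate difficulty arises.
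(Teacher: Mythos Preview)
Your high-level strategy matches the paper's: reduce to the stable auto-superequivalence of $\bM$ (Proposition~\ref{prop:define_M}(iii)), wreath up and box with $\Blo^{\rho,0}$, then compare with $\bX^*\otimes_{\cO\Gi}\bX$ and $\bX\otimes_{\cO\Ni}\bX^*$ via super Green correspondence. However, there are two genuine gaps in the execution.

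\textbf{First}, your claim that $\bM_\Ni\otimes_{\cO\Ni\fid}\bM_\Ni^*\simeq\cO\Ni\fid\oplus\gP$ with $\gP$ \emph{projective} is false for $d\geq 2$. Writing $\bM\otimes_{\Blo^{\varnothing,1}}\bM^*\simeq\Blo^{\varnothing,1}\oplus\gP_0$ with $\gP_0$ projective, Lemma~\ref{lem:wreath_tensor}(i) tells you that the complement of $\Blo^{\varnothing,1}\swr\cT_d$ in $(\Blo^{\varnothing,1}\oplus\gP_0)\swr\cT_d$ is built by inducing up mixed tensors such as $\Blo^{\varnothing,1}\boxtimes\gP_0\boxtimes\cdots$; these have vertex $\Delta Q$ with $Q<_\sfs\Di$ but are not projective. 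The paper therefore only establishes $\cO\Ni\fid\,|_\Di\,(\bM_\Ni\otimes_{\cO\Ni\fid}\bM_\Ni^*)$, which it obtains by restricting to $\Ni\times\Li$ and using Lemma~\ref{ML_MN_indecomp} together with Lemma~\ref{lem:direct_prod_vert} at the $\Li$-level. Your argument for part~(ii) survives this correction (a summand of vertex $\Delta\Di$ can only come from the $\cO\Ni\fid$ piece), but the line ``the projective leftover $\gP$ induces to a projective bimodule'' is wrong as stated.

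\textbf{Second}, and more seriously for part~(i), your plan to decompose $\cO\Gi\bi$ as an $(\cO\Ni\fid,\cO\Ni\fid)$-bisupermodule via Green correspondence and then cite Lemma~\ref{lem:conj_def}(ii) does not work: that lemma is stated only for $(g,h)\in(\Gi\times\Ni)\setminus(\Ni\times\Ni)$, whereas the error terms in $\Res^{\Gi\times\Gi}_{\Ni\times\Ni}\cO\Gi\bi$ come from $(\Gi\times\Gi)\setminus(\Ni\times\Ni)$. You correctly anticipate in your last paragraph that this is the crux, but you do not supply the missing argument. The paper avoids an extension of Lemma~\ref{lem:conj_def} by a two-sided trick: it first shows
\[
\bM_\Ni\,\,|^\Di\,(\cO\Ni\fid\otimes_{\cO\Ni}\cO\Gi\otimes_{\cO\Ni}\bM_\Ni)
\qquad\text{and}\qquad
\bM_\Ni\,\,|^\Di\,(\bM_\Ni\otimes_{\cO\Ni}\cO\Gi\otimes_{\cO\Ni}\cO\Ni\fid),
\]
each of which lives in a $(\Gi\times\Ni)$ or $(\Ni\times\Gi)$ setting where Lemma~\ref{lem:conj_def}(ii) genuinely applies (via the decomposition $\Res^{\Gi\times\Ni}_{\Ni\times\Ni}\bX\simeq\bM_\Ni\oplus\gW$). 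It then tensors the first relation with $\bM_\Ni^*\otimes_{\cO\Ni}?$ and the second with $?\otimes_{\cO\Ni}\bM_\Ni^*$, using Lemma~\ref{lem:bimod_vert}(ii) on each side separately: one side forces $\varphi(Q)<_\sfs\Di$, the other forces $Q<_\sfs\Di$, and combining the two gives the $\Di$-smallness required by $|^\Di$. Your single-pass argument does not separate these two conditions, which is exactly the ``vertex bookkeeping'' obstacle you flag but do not resolve.
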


\begin{proof}
We first note that
\begin{align*}
\bM_\Ni  \otimes_{\cO \Ni \fid} \bM_\Ni^* &\simeq \big(\Blo^{\rho,0} \boxtimes (\bM \swr \cT_d)\big) \otimes_{\cO \Ni \fid} \big(\Blo^{\rho,0} \boxtimes (\bM^* \swr \cT_d)\big) \\
&\simeq (\Blo^{\rho,0} \otimes_{\Blo^{\rho,0}} \Blo^{\rho,0}) \boxtimes \big((\bM \swr \cT_d) \otimes_{\Blo^{\varnothing,1}\swr \cT_d} (\bM^* \swr \cT_d)\big) \\
&\simeq \Blo^{\rho,0} \boxtimes \big((\bM \otimes_{\Blo^{\varnothing,1}} \bM^*) \swr \cT_d\big),
\end{align*}
where the second isomorphism follows from Lemma \ref{lem:tensor_bisupmod} and the third from Lemma \ref{lem:wreath_tensor}(ii). By Proposition~\ref{prop:define_M}(iii) and Lemma~\ref{lem:wreath_tensor}(i), this last bisupermodule has a direct summand isomorphic to $\cO \Ni \fid$.

Dualizing Lemma \ref{ML_MN_indecomp}, using Lemma \ref{lem:dual_ML_MN}(i),(ii) and Lemma \ref{lem:vert_blocks}(ii), we obtain
\begin{align*}
\Res^{\Ni  \times \Ni }_{\Ni  \times \Li }\bM_\Ni^* \simeq \Ind_{\Li  \times \Li }^{\Ni  \times \Li }\bM_\Li^*.
\end{align*}
Therefore,
\begin{align*}
\Res^{\Ni  \times \Ni }_{\Ni  \times \Li } (\bM_\Ni  \otimes_{\cO \Ni } \bM_\Ni^*) & \simeq \bM_\Ni  \otimes_{\cO \Ni } \Res^{\Ni  \times \Ni }_{\Ni  \times \Li }\bM_\Ni^*  \simeq \bM_\Ni  \otimes_{\cO \Ni } \Ind_{\Li  \times \Li }^{\Ni  \times \Li }\bM_\Li^* \\
& \simeq (\Res^{\Ni  \times \Ni }_{\Ni  \times \Li }\bM_\Ni ) \otimes_{\cO \Li } \bM_\Li^*  \simeq (\Ind_{\Li  \times \Li }^{\Ni  \times \Li }\bM_\Li ) \otimes_{\cO \Li } \bM_\Li^* \\
& \simeq \cO \Ni  \otimes_{\cO \Li } \bM_\Li  \otimes_{\cO \Li } \bM_\Li^*,
\end{align*}
where the fourth isomorphism is Lemma \ref{ML_MN_indecomp}(iii). Now, by Lemma \ref{lem:tensor_bisupmod} as well as Remark \ref{rem:bisupmod} and Lemma \ref{lem:direct_prod_vert}, we have $\cO \Li \fid \,\,|_\Di \, (\bM_\Li  \otimes_{\cO \Li } \bM_\Li^*)$. Therefore,
$$
\cO \Ni \fid \,\,|_\Di  \,\big(\Res^{\Ni  \times \Ni }_{\Ni  \times \Li }(\bM_\Li  \otimes_{\cO \Li } \bM_\Li^*)\big),
$$
as $(\cO \Ni ,\cO \Li )$-bisupermodules. Putting this together with the first paragraph gives that $\cO \Ni \fid \,\,|_\Di \, (\bM_\Ni  \otimes_{\cO \Ni \fid} \bM_\Ni^*)$. Similarly, we have $\cO \Ni \fid \,\,|_\Di \, (\bM_\Ni^* \otimes_{\cO \Ni \fid} \bM_\Ni )$.

(i) First note that $\bM_\Ni^* \otimes \bM_\Ni $, and consequently $\cO \Ni \fid$, is a direct summand of
\begin{align}\label{algn:MN_OG_MN}
\bM_\Ni^* \otimes_{\cO \Ni }\cO \Gi  \otimes_{\cO \Gi } \cO \Gi  \otimes_{\cO \Ni } \bM_\Ni  \simeq \bM_\Ni^* \otimes_{\cO \Ni } \cO \Gi  \otimes_{\cO \Ni } \bM_\Ni .
\end{align}

Now, by Lemma \ref{lem:vert_eqns_X_Y}(i), we have $\bX\,\, |_\Di \, (\cO \Gi  \otimes_{\cO \Ni }\bM_\Ni )$. Applying Lemmas \ref{lem:dual_comp} and \ref{lem:sup_vert}, we deduce that $\bX^* \,\,|_\Di \, (\bM_\Ni^* \otimes_{\cO \Ni } \cO \Gi )$. Therefore, $\bX^* \otimes_{\cO \Gi } \bX\mid \bM_\Ni^* \otimes_{\cO \Ni } \cO \Gi  \otimes_{\cO \Ni } \bM_\Ni $ and, by Lemma \ref{lem:bimod_vert}(ii), every direct summand of $\bM_\Ni^* \otimes_{\cO \Ni } \cO \Gi  \otimes_{\cO \Ni } \bM_\Ni $, as an $(\cO \Ni ,\cO \Ni )$-bimodule, with vertex $\Delta \Di $, must also be a summand of $\bX^* \otimes_{\cO \Gi b} \bX$. In particular, $\cO \Ni \fid\mid \bX^* \otimes_{\cO \Gi } \bX$, as an $(\cO \Ni ,\cO \Ni )$-bisupermodule.

All that remains to show is that all other summands of $\bX^* \otimes_{\cO \Gi } \bX$ have $\Di $-small vertex in the sense of \S\ref{SSDSmall}. We can, therefore, forget about superstructure for the time being. Given (\ref{algn:MN_OG_MN}) and the comments at the beginning of the proof, we need only show that
\begin{align}\label{algn:def_U}
\bM_\Ni^* \otimes_{\cO \Ni } \cO \Gi  \otimes_{\cO \Ni } \bM_\Ni  \simeq (\bM_\Ni^* \otimes_{\cO \Ni } \bM_\Ni ) \oplus \gU,
\end{align}
where, as an $(\cO \Ni ,\cO \Ni )$-bimodule, $\gU$ is a direct sum of bimodules each with $\Di $-small vertex.

We have already seen that $\bX\,\, |_\Di \, (\cO \Gi  \otimes_{\cO \Ni } \bM_\Ni )$. Therefore,
\begin{align}\label{algn:def_V}
\cO \Ni \fid \otimes_{\cO \Ni } \cO \Gi  \otimes_{\cO \Ni } \bM_\Ni  \simeq \fid(\Res^{\Gi  \times \Ni }_{\Ni  \times \Ni }\bX)\fid \oplus \gV,
\end{align}
where $\gV$ is a direct sum of $(\cO \Ni \fid,\cO \Ni \fid)$-bimodules each with vertex contained in some ${}^{(g,1)}\Delta Q$, with $g \in \Gi $ and $Q <_\sfs\Di$. Furthermore, Theorem \ref{thm:super_Green} implies that
\begin{align}\label{algn:MN_W}
\Res^{\Gi  \times \Ni }_{\Ni  \times \Ni }\bX \simeq \bM_\Ni  \oplus \gW,
\end{align}
where $\gW$ is a direct sum of $(\cO \Ni ,\cO \Ni )$-bimodules each with vertex contained in $(\Ni  \times \Ni ) \cap {}^{(g,h)}\Delta \Di$, for some $(g,h) \in (\Gi  \times \Ni ) \setminus (\Ni  \times \Ni )$.

Now $\cO \Ni \fid$ has defect group $\Di $ and so all the bimodules in (\ref{algn:def_V}) can be chosen to have vertex contain in $\Di  \times \Di $ (possibly after conjugating by an element of $\Ni  \times \Ni $).

In particular, $\gV$ is a direct sum of $(\cO \Ni \fid,\cO \Ni \fid)$-bimodules each with vertex contained in some $(\Di  \times \Di )\cap{}^{(g,h)}\Delta Q$, with $(g,h) \in \Gi  \times \Ni $ and $Q <_\sfs\Di$. By looking at cycle types via $\pi_n$, certainly $\Di  \cap {}^g Q,\Di  \cap {}^h Q <_\sfs\Di$ and so $\gV$ is a direct sum of $(\cO \Ni \fid,\cO \Ni \fid)$-bimodules each with $\Di $-small vertex.

Similarly, $\fid\gW \fid$ is a direct sum of $(\cO \Ni \fid,\cO \Ni \fid)$-bimodules each with vertex contained in $(\Di  \times \Di ) \cap {}^{(g,h)}\Delta \Di$, for some $(g,h) \in (\Gi  \times \Ni ) \setminus (\Ni  \times \Ni )$. By Lemma \ref{lem:conj_def}(ii), $\fid\gW \fid$ is a direct sum of $(\cO \Ni \fid,\cO \Ni \fid)$-bimodules each with $\Di $-small vertex.

Together, (\ref{algn:def_V}) and (\ref{algn:MN_W}) now imply that
\begin{align*}
\bM_\Ni \,\,|^\Di \,(\cO \Ni \fid \otimes_{\cO \Ni } \cO \Gi  \otimes_{\cO \Ni } \bM_\Ni ).
\end{align*}
By applying $\bM_\Ni^* \otimes_{\cO \Ni }?$ to both sides and utilizing Lemma \ref{lem:bimod_vert}(ii), we can show that $\gU$ in (\ref{algn:def_U}) is a direct sum of $(\cO \Ni ,\cO \Ni )$-bimodules each with vertex of the form $\Delta_\varphi Q$, for some $Q \leq \Di $ and $\varphi: Q \to \Di $, with $\varphi(Q) <_ \sfs\Di$.

In an entirely analogous way we can prove that
\begin{align*}
\bM_\Ni \,\,|^\Di \,(\bM_\Ni  \otimes_{\cO \Ni } \cO \Gi  \otimes_{\cO \Ni } \cO \Ni \fid).
\end{align*}
Once more, applying $? \otimes_{\cO \Ni } \bM_\Ni^*$ to both sides, we can have that $\gU$ in (\ref{algn:def_U}) is a direct sum of $(\cO \Ni ,\cO \Ni )$-bimodules each with vertex of the form $\Delta_\varphi Q$, for some $Q <_\sfs\Di$ and $\varphi: Q \to \Di $.

Since we can conjugate each $Q$ independently from $\varphi(Q)$, taking the last two paragraphs together gives that $\gU$ is a direct sum of $(\cO \Ni ,\cO \Ni )$-bimodules each with $\Di $-small vertex, as desired.

(ii) As in part (i) we have that $\bX\,\, |_\Di \, (\cO \Gi  \otimes_{\cO \Ni }\bM_\Ni )$ and $\bX^*\,\, |_\Di \, (\bM_\Ni^* \otimes_{\cO \Ni } \cO \Gi )$. Therefore,
\begin{align}\label{G_MN_G}
\bX \otimes_{\cO \Ni \fid} \bX^*\mid\cO \Gi  \otimes_{\cO \Ni }\bM_\Ni  \otimes_{\cO \Ni \fid} \bM_\Ni^* \otimes_{\cO \Ni } \cO \Gi 
\end{align}
and, by Lemma \ref{lem:bimod_vert}(ii), all other summands, as an $(\cO \Gi ,\cO \Gi )$-bimodule, have vertex of order strictly smaller than that of $\Delta \Di $. Moreover, by the comments at the beginning of the proof, the right-hand side of (\ref{G_MN_G}) has a summand isomorphic to $\cO \Gi  \otimes_{\cO \Ni } \cO \Ni \fid \otimes_{\cO \Ni } \cO \Gi $. Since $\cO \Gi \bi$ and $\cO \Ni \fid$ are super Green correspondents by Lemmas \ref{lem:Brauer_G_N} and \ref{lem:Brauer_Green}, we have $\cO \Gi \bi\mid\cO \Gi  \otimes_{\cO \Ni } \cO \Ni \fid \otimes_{\cO \Ni } \cO \Gi $. Since $\cO \Gi \bi$ has vertex $\Delta \Di $, the claim follows.
\end{proof}

\subsection{Relating  $\bX_d$ and $\bX_{d-1}$}
We now establish a lemma that will be key throughout inductive arguments in Section \ref{sec:main}, as it allows us to relate $\bX=\bX_d$ and $\bX_{d-1}$ using $\bY$.

\begin{Lemma}\label{lem:tensor_isom}
Let $d>1$.
\begin{enumerate}
\item As $(\cO \Gi \bi,\cO (\Ni  \cap \Hi )\fid)$-bisupermodules and as $(\cO \Gi \bi,\cO \Li \fid)$-bisupermodules, 
\begin{align*}
\bX\,\,|_\Di \, \big(\bY \otimes_{\cO \Hi } (\bX_{d-1} \boxtimes (\Blo^{\varnothing,1})^{(d)})\big).
\end{align*}
\item If $\bX_{d-1}$ induces a Morita superequivalence between $\cO \Ni_{d-1}\fid_{d-1}$ and $\cO \Gi_{d-1}\bi_{d-1}$, then as $(\cO \Gi \bi,\cO (\Ni  \cap \Hi )\fid)$-bisupermodules and as $(\cO \Gi \bi,\cO \Li \fid)$-bisupermodules, 
\begin{align*}
\bY \otimes_{\cO \Hi } \big(\bX_{d-1} \boxtimes (\Blo^{\varnothing,1})^{(d)}\big) \simeq \bX.
\end{align*}

\end{enumerate}
\end{Lemma}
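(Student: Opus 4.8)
\textbf{Plan for the proof of Lemma \ref{lem:tensor_isom}.} The strategy is a standard vertex-counting / Green-correspondence argument, combined with the machinery already assembled in \S\ref{sec:def_X_Y}. For part (i), I would start from the definitions: $\bX$ is the super Green correspondent of $\bM_\Ni$ in $\Gi \times \Ni$, and $\bY$ is the super Green correspondent of $\cO \Gi_{d-1}\bi_{d-1} \boxtimes \bM^{(d)}$ in $\Gi \times \Hi$. The key observation is that, by Lemma \ref{lem:Y_tech}, $\bY\mid\cO \Gi \otimes_{\cO \Hi}(\cO \Gi_{d-1}\bi_{d-1}\boxtimes\bM^{(d)})$ and more precisely $\bY$ is the unique summand with vertex $\Delta\Di$ (the `$|_\Di$' relation of Lemma \ref{lem:vert_eqns_X_Y}(iii)). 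Similarly, by Lemma \ref{lem:vert_eqns_X_Y}(ii), ${}_\Gi\bX_\Li\,\,|_\Di\,(\cO\Gi\otimes_{\cO\Li}\bM_\Li)$. The plan is to compute $\bY \otimes_{\cO \Hi}(\bX_{d-1}\boxtimes(\Blo^{\varnothing,1})^{(d)})$ modulo bisupermodules of $\Di$-small vertex, and recognize the vertex-$\Delta\Di$ part as ${}_\Gi\bX_\Li$ (respectively $\bX$ restricted to $\cO(\Ni\cap\Hi)\fid$).

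Concretely, I would argue: since $\bX_{d-1}\,\,|_{\Di_1\times\dots\times\Di_{d-1}}\,(\cO\Gi_{d-1}\otimes_{\cO\Ni_{d-1}}\bM_{\Ni_{d-1}})$ by Lemma \ref{lem:vert_eqns_X_Y}(i) applied at level $d-1$, we get (using Remark \ref{rem:bisupmod} and Lemma \ref{lem:direct_prod_vert} to combine with the $d$-th factor)
\begin{align*}
\bX_{d-1}\boxtimes(\Blo^{\varnothing,1})^{(d)}\,\,|_\Di\,\big((\cO\Gi_{d-1}\otimes_{\cO\Ni_{d-1}}\bM_{\Ni_{d-1}})\boxtimes(\Blo^{\varnothing,1})^{(d)}\big).
\end{align*}
Then $\bY\otimes_{\cO\Hi}(\bX_{d-1}\boxtimes(\Blo^{\varnothing,1})^{(d)})$ is, up to $\Di$-small error controlled by Lemma \ref{lem:bimod_vert}(ii), a summand of
$$
\cO\Gi\otimes_{\cO\Hi}(\cO\Gi_{d-1}\bi_{d-1}\boxtimes\bM^{(d)})\otimes_{\cO\Hi}\big((\cO\Gi_{d-1}\otimes_{\cO\Ni_{d-1}}\bM_{\Ni_{d-1}})\boxtimes(\Blo^{\varnothing,1})^{(d)}\big),
$$
and one uses Lemma \ref{lem:tensor_bisupmod} (applied over the tensor-factored algebras $\cO\Gi_{d-1}\otimes(\Blo^{\varnothing,1})^{(d)}$ etc.) to rewrite this $\boxtimes$-tensor as
$$\cO\Gi\otimes_{\cO\Hi}\big((\cO\Gi_{d-1}\bi_{d-1}\otimes_{\cO\Gi_{d-1}}\cO\Gi_{d-1}\otimes_{\cO\Ni_{d-1}}\bM_{\Ni_{d-1}})\boxtimes(\bM^{(d)}\otimes_{(\Blo^{\varnothing,1})^{(d)}}(\Blo^{\varnothing,1})^{(d)})\big),$$
which simplifies (since $\bi_{d-1}$ is a block idempotent acting as identity and $\bM^{(d)}\otimes(\Blo^{\varnothing,1})^{(d)}\simeq\bM^{(d)}$) to $\cO\Gi\otimes_{\cO\Ni}\bM_\Ni$ after inducing/transporting to $\cO\Ni$ via $\cO\Gi\otimes_{\cO\Hi}(\cO\Gi_{d-1}\bi_{d-1}\boxtimes\cO\tSi_{\sJ_d}\ei_{\varnothing,1}^{(d)})\simeq\cO\Gi\otimes_{\cO\Ni}\cO\Ni\fid$-type identities together with $\bM_\Ni\simeq\Ind_{(\Ni\times\Ni)_{\Si_d}}^{\Ni\times\Ni}(\bM_\Li)_{\Si_d}$ from (\ref{algn:M_N_ext}) and $\bM^{\boxtimes d}_{\Si_d}$ restricting appropriately as in Lemma \ref{ML_MN_indecomp}(iii). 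Then by Lemma \ref{lem:vert_eqns_X_Y}(i), the unique vertex-$\Delta\Di$ summand of $\cO\Gi\otimes_{\cO\Ni}\bM_\Ni$ is $\bX$; tracking $\Di$-small errors gives the `$|_\Di$' assertion, and the $\cO\Li\fid$-version follows by further restriction (using Lemma \ref{lem:GXL_indec}). The main obstacle here is bookkeeping: one must carefully check that all the `error' summands introduced by Lemma \ref{lem:bimod_vert}(ii) and by passing between $\Hi$, $\Ni\cap\Hi$, $\Ni$, $\Li$ indeed have $\Di$-small vertex, which uses Lemma \ref{lem:conj_def} and the $<_\sfs\Di$ analysis of \S\ref{SSDSmall}.

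For part (ii), I would argue as follows. Assume $\bX_{d-1}$ induces a Morita superequivalence $\cO\Ni_{d-1}\fid_{d-1}\sim_{\sM}\cO\Gi_{d-1}\bi_{d-1}$. First, $\bY$ induces a stable (in fact by Lemma \ref{lem:Y_tech} and the Green-correspondence setup, closely controlled) superequivalence between $\cO\Hi\ci$ and $\cO\Gi\bi$, since $\cO\Gi_{d-1}\bi_{d-1}\boxtimes\bM^{(d)}$ and $\cO\Ni_{d-1}\fid_{d-1}\boxtimes\bM^{(d)}$ are super Green correspondents and, by Proposition \ref{prop:define_M}(ii) (or rather its $d=1$ incarnation combined with the Morita hypothesis), the second induces a Morita superequivalence between $\cO(\Ni\cap\Hi)\fid$ and $\cO\Hi\ci$. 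Therefore $\bY\otimes_{\cO\Hi}(\bX_{d-1}\boxtimes(\Blo^{\varnothing,1})^{(d)})$ is absolutely indecomposable with vertex $\Delta\Di$ as a bisupermodule over the appropriate pair of algebras (its even part / its $|A|$-part is indecomposable because it is a composite of Morita-type functors applied to an indecomposable). Combined with part (i), which says this bisupermodule has $\bX$ as its unique vertex-$\Delta\Di$ summand and everything else is $\Di$-small, absolute indecomposability with vertex exactly $\Delta\Di$ forces the whole bisupermodule to equal $\bX$ (there is no room for $\Di$-small summands). The $\cO\Li\fid$-statement follows by restricting via Lemma \ref{lem:GXL_indec}. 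The main obstacle in (ii) is establishing that the relevant tensor product is genuinely indecomposable with full vertex — i.e., ruling out that it decomposes with all pieces $\Di$-small, which one sees cannot happen by restricting to $\Di\times\Di$ and using that $\bM_\Ni$ (hence $\bX$'s source side) already has vertex exactly $\Delta\Di$, as recorded in Lemmas \ref{ML_MN_indecomp} and \ref{lem:vert_eqns_X_Y}.
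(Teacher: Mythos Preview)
Your overall strategy for part (i) matches the paper's: embed $\bY \otimes_{\cO \Hi } (\bX_{d-1} \boxtimes (\Blo^{\varnothing,1})^{(d)})$ into $\Res^{\Gi\times\Ni}_{\Gi\times(\Ni\cap\Hi)}\Ind_{\Ni\times\Ni}^{\Gi\times\Ni}\bM_\Ni$ via the chain of tensor identities (your sketch of this is correct), then show that the latter has $\bX$ as its unique vertex-$\Delta\Di$ summand with all others $\Di$-small. However, you have not addressed the essential remaining step: why does $\bY \otimes_{\cO \Hi } (\bX_{d-1} \boxtimes (\Blo^{\varnothing,1})^{(d)})$ have \emph{any} summand with vertex $\Delta\Di$? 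The relation $|_\Di$ requires $\bX$ to actually occur as a summand, and a tensor product of two vertex-$\Delta\Di$ bimodules over an intermediate algebra can in principle have all summands $\Di$-small. Your remark about ``restricting to $\Di\times\Di$'' does not settle this. The paper's argument is to tensor back with $\bX_{d-1}^* \boxtimes (\Blo^{\varnothing,1})^{(d)}$ on the right: by Lemma \ref{lem:X*_with_X}(ii) applied at level $d-1$ (together with Lemma \ref{lem:tensor_bisupmod}), this recovers $\bY$ as a summand, and since $\bY$ has vertex $\Delta\Di$, Lemma \ref{lem:bimod_vert}(ii) forces some summand of the original tensor product to have vertex $\Delta\Di$.

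For part (ii) your conclusion is right but the route is overcomplicated. You invoke a ``stable superequivalence'' property of $\bY$, which is neither established in the paper nor needed. The paper's argument is direct: under the Morita hypothesis, $\bX_{d-1}\boxtimes(\Blo^{\varnothing,1})^{(d)}$ induces a Morita superequivalence between $\cO(\Ni\cap\Hi)\fid$ and $\cO\Hi\ci$ (Lemma \ref{lem:MSE}), so tensoring with it and then with its inverse returns $\bY$; since $\bY$ is indecomposable, so is $\bY \otimes_{\cO \Hi } (\bX_{d-1} \boxtimes (\Blo^{\varnothing,1})^{(d)})$. Indecomposability plus part (i) then forces the isomorphism with $\bX$.
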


\begin{proof} 
(i) 
Note by Lemma \ref{lem:tensor_bisupmod}, that 
$$
\cO \Gi  \otimes_{\cO \Hi }(\cO \Gi_{d-1} \bi_{d-1} \boxtimes \bM^{(d)}) \otimes_{\cO \Hi }\big(\bX_{d-1} \boxtimes (\Blo^{\varnothing,1})^{(d)}\big) 
\simeq  \cO \Gi  \otimes_{\cO \Hi }(\bX_{d-1} \boxtimes \bM^{(d)}).
$$
Since by definition, we have $\bY\mid \cO \Gi  \otimes_{\cO \Hi }(\cO \Gi_{d-1} \bi_{d-1} \boxtimes \bM^{(d)})$, it now follows that  
\begin{align}
\label{algn:Y_equ_1}
\bY \otimes_{\cO \Hi } \big(\bX_{d-1} \boxtimes (\Blo^{\varnothing,1})^{(d)}\big)\mid\,\,  \cO \Gi  \otimes_{\cO \Hi }(\bX_{d-1} \boxtimes \bM^{(d)}),
\end{align}
as $(\cO \Gi ,\cO(\Ni  \cap \Hi ))$-bisupermodules. 

Since by definition, we have $\bX_{d-1}\mid \cO \Gi_{d-1} \otimes_{\cO \Ni_{d-1}} \bM_{\Ni_{d-1}}$, it follows that 
\begin{align}
\begin{split}\label{algn:Y_equ_2}
\cO \Gi  \otimes_{\cO \Hi }(\bX_{d-1} \boxtimes \bM^{(d)})\mid \,\, & \cO \Gi  \otimes_{\cO \Hi }((\cO \Gi_{d-1} \otimes_{\cO \Ni_{d-1}} \bM_{\Ni_{d-1}}) \boxtimes \bM^{(d)}) \\
\simeq\, & \cO \Gi  \otimes_{\cO \Hi }\cO \Hi  \otimes_{\cO(\Ni  \cap \Hi )}(\bM_{\Ni_{d-1}} \boxtimes \bM^{(d)}) \\
\simeq\, & \cO \Gi  \otimes_{\cO(\Ni  \cap \Hi )} (\bM_{\Ni_{d-1}} \boxtimes \bM^{(d)}),
\end{split}
\end{align}
as $(\cO \Gi ,\cO(\Ni  \cap \Hi ))$-bisupermodules, where the first isomorphism follows from Lemma \ref{lem:tensor_bisupmod}. 

We also have 
\begin{align*}
\cO \Ni \otimes_{\cO(\Ni  \cap \Hi )}(\bM_{\Ni_{d-1}} \boxtimes \bM^{(d)}) \simeq\, & \cO \Ni \otimes_{\cO(\Ni  \cap \Hi )}\big(\Ind_{(\Ni_{d-1} \times \Ni_{d-1})_{\Si_{d-1}}}^{\Ni_{d-1} \times \Ni_{d-1}}(\bM_{\Li_{d-1}})_{\Si_{d-1}} \boxtimes \bM^{(d)}\big)\\
\simeq\, & \Ind_{((\Ni  \cap \Hi ) \times (\Ni  \cap \Hi ))_{\Si_{d-1}}}^{\Ni  \times (\Ni  \cap \Hi )} \big((\bM_{\Li_{d-1}})_{\Si_{d-1}} \boxtimes \bM^{(d)} \big) \\
\simeq\, & \Ind_{((\Ni  \cap \Hi ) \times (\Ni  \cap \Hi ))_{\Si_{d-1}}}^{\Ni  \times (\Ni  \cap \Hi )}(\bM_\Li )_{\Si_d} \\
\simeq\, & \Res^{\Ni  \times \Ni }_{\Ni \times (\Ni \cap \Hi )}\Ind_{(\Ni  \times \Ni )_{\Si_d}}^{\Ni  \times \Ni }(\bM_\Li )_{\Si_d} 
\simeq\,  \Res^{\Ni  \times \Ni }_{\Ni \times (\Ni \cap \Hi )}\bM_\Ni ,
\end{align*}
as $(\cO \Ni ,\cO(\Ni  \cap \Hi ))$-bisupermodules, where the fourth isomorphism follows from Theorem~\ref{thm:super_Mackey}. Therefore,
\begin{align}
\begin{split}\label{algn:Y_equ_3}
\cO \Gi  \otimes_{\cO(\Ni  \cap \Hi )} (\bM_{\Ni_{d-1}} \boxtimes \bM^{(d)}) & \simeq \Ind_{\Ni \times (\Ni \cap \Hi )}^{\Gi  \times (\Ni  \cap \Hi )}\Res^{\Ni  \times \Ni }_{\Ni \times (\Ni \cap \Hi )}\bM_\Ni   \simeq \Res^{\Gi  \times \Ni }_{\Gi  \times (\Ni  \cap \Hi )} \Ind_{\Ni  \times \Ni }^{\Gi  \times \Ni }\bM_\Ni ,
\end{split}
\end{align}
as $(\cO \Gi ,\cO(\Ni  \cap \Hi ))$-bisupermodules, where the second isomorphism is, once again, an application of Theorem \ref{thm:super_Mackey}. 

Putting (\ref{algn:Y_equ_1}), (\ref{algn:Y_equ_2}) and (\ref{algn:Y_equ_3}) together yields
\begin{align}\label{algn:Y_equ_123}
\bY \otimes_{\cO \Hi } \big(\bX_{d-1} \boxtimes (\Blo^{\varnothing,1})^{(d)}\big)\mid\Res^{\Gi  \times \Ni }_{\Gi  \times (\Ni  \cap \Hi )} \Ind_{\Ni  \times \Ni }^{\Gi  \times \Ni }\bM_\Ni .
\end{align}

By the definition of $\bX$ and Lemma \ref{lem:GXL_indec}, $\Res^{\Gi  \times \Ni }_{\Gi  \times (\Ni  \cap \Hi )}(\bX)$ is an indecomposable summand of the right-hand side of (\ref{algn:Y_equ_123}). We claim that
\begin{align}\label{algn:XNH|D}
\Res^{\Gi  \times \Ni }_{\Gi  \times (\Ni  \cap \Hi )}\bX\,\,\big{|}_\Di \, \big(\Res^{\Gi  \times \Ni }_{\Gi  \times (\Ni  \cap \Hi )}\Ind_{\Ni  \times \Ni }^{\Gi  \times \Ni }\bM_\Ni  \big).
\end{align}
We will actually prove the stronger statement
\begin{align}\label{algn:XL|D}
{}_\Gi  \bX_\Li \,\,\big{|}_\Di \, \big(\Res^{\Gi  \times \Ni }_{\Gi  \times \Li }\Ind_{\Ni  \times \Ni }^{\Gi  \times \Ni }\bM_\Ni  \big).
\end{align}
Indeed, as in the proof of Lemma \ref{lem:GXL_indec},
\begin{align*}
\Res^{\Gi  \times \Ni }_{\Gi  \times \Li } \Ind_{\Ni  \times \Ni }^{\Gi  \times \Ni }\bM_\Ni   \simeq \Ind_{\Ni  \times \Li }^{\Gi  \times \Li } \Res^{\Ni  \times \Ni }_{\Ni  \times \Li } \bM_\Ni  
 \simeq \Ind_{\Ni  \times \Li }^{\Gi  \times \Li } \Ind_{\Li  \times \Li }^{\Ni  \times \Li }\bM_\Li   \simeq \Ind_{\Li  \times \Li }^{\Gi  \times \Li }\bM_\Li ,
\end{align*}
where the first isomorphism is due to Theorem \ref{thm:super_Mackey} and the second to Lemma \ref{ML_MN_indecomp}. The claim (\ref{algn:XL|D}), and hence (\ref{algn:XNH|D}), now follows from the statement of Lemma \ref{lem:GXL_indec} and Lemma \ref{lem:conj_def}(i).

With (\ref{algn:Y_equ_123}) in mind, we now need only show that $\bY \otimes_{\cO \Hi } \big(\bX_{d-1} \boxtimes (\Blo^{\varnothing,1})^{(d)}\big)$ has some summand with vertex $\Delta \Di $, when treated as an $(\cO \Gi ,\cO \Li )$-bimodule.

By Lemma \ref{lem:GXL_indec} and also Remark \ref{rem:bisupmod} and Lemma \ref{lem:direct_prod_vert}, $\bX_{d-1} \boxtimes \bM^{(d)}$ is indecomposable, as an $(\cO \Gi ,\cO \Li )$-bimodule, with vertex $\Delta \Di $. Now, as we have seen in (\ref{algn:Y_equ_1}),
\begin{align*}
\bY \otimes_{\cO \Hi } \big(\bX_{d-1} \boxtimes (\Blo^{\varnothing,1})^{(d)}\big)\mid \cO \Gi  \otimes_{\cO \Hi } (\bX_{d-1} \boxtimes \bM^{(d)}),
\end{align*}
as $(\cO \Gi ,\cO \Li )$-bimodules. Hence, every summand of $\bY \otimes_{\cO \Hi } \big(\bX_{d-1} \boxtimes (\Blo^{\varnothing,1})^{(d)}\big)$ has vertex contained in $\Delta \Di $, when treated as an $(\cO \Gi ,\cO \Li )$-bimodule. If every summand had vertex strictly contained in $\Delta \Di $, then, by Lemmas \ref{lem:sup_vert}, \ref{lem:direct_prod_vert} and \ref{lem:bimod_vert}(ii), every summand of
\begin{align*}
\bY \otimes_{\cO \Hi } \big(\bX_{d-1} \boxtimes (\Blo^{\varnothing,1})^{(d)}\big) \otimes_{\cO \Li } \big(\bX_{d-1}^* \boxtimes (\Blo^{\varnothing,1})^{(d)}\big)
\end{align*}
has vertex of strictly smaller order than $\Delta \Di $. However, by Lemma \ref{lem:tensor_bisupmod}, Remark \ref{rem:induct_Rou} and Lemma \ref{lem:X*_with_X}(ii), $\bY$ is a summand, a contradiction.

(ii) If $\bX_{d-1}$ induces a Morita superequivalence between $\cO \Ni_{d-1}\fid_{d-1}$ and $\cO \Gi_{d-1}\bi_{d-1}$ then, by Lemma \ref{lem:MSE}, $\bX_{d-1} \boxtimes (\Blo^{\varnothing,1})^{(d)}$ induces a Morita superequivalence between 
$
\cO \Ni_{d-1}\fid_{d-1} \otimes (\Blo^{\varnothing,1})^{(d)} \cong \cO (\Ni  \cap \Hi )\fid$ and $\cO \Gi_{d-1}\bi_{d-1} \otimes (\Blo^{\varnothing,1})^{(d)} \cong \cO \Hi \ci.
$ 
In particular, $\bY \otimes_{\cO \Hi } \big(\bX_{d-1} \boxtimes (\Blo^{\varnothing,1})^{(d)}\big)$ is indecomposable as an $(\cO \Gi ,\cO (\Ni  \cap \Hi ))$-bimodule, since
\begin{align*}
\bY \otimes_{\cO \Hi } \big(\bX_{d-1} \boxtimes (\Blo^{\varnothing,1})^{(d)}\big) \otimes_{\cO(\Ni  \cap \Hi )} \big(\bX_{d-1} \boxtimes (\Blo^{\varnothing,1})^{(d)} \big)^* \simeq \bY
\end{align*}
is certainly indecomposable as an $(\cO \Gi ,\cO \Hi )$-module. Now (ii) follows (i).
\end{proof}

Recall the notation (\ref{ECI}),(\ref{ECLong}),(\ref{EC'Long}). 

\begin{Corollary}\label{cor:X_OGcM}
We have ${}_\Gi  \bX_\Li \,\,\big{|}_\Di \,\,  (\cO \Gi  \ci_{0,d} \otimes_{\cO \Li } \bM_\Li )$.
\end{Corollary}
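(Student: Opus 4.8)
\textbf{Proof plan for Corollary \ref{cor:X_OGcM}.}

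The plan is to induct on $d$, using Lemma \ref{lem:tensor_isom} as the induction step and Lemma \ref{lem:vert_eqns_X_Y}(ii) as a component. The base case $d=1$ is essentially Lemma \ref{lem:vert_eqns_X_Y}(ii): there $\Hi = \Li = \Ni$, the idempotent $\ci_{0,1}$ reduces to $\fid$ (since $\ci_0 = \fid_0$ and $\ci_1 = \fid$ in weight one, after the identifications of \S\ref{sec:notation}), and the bisupermodule $\bM_\Li$ coincides with $\Blo^{\rho,0}\boxtimes\bM$, so ${}_\Gi\bX_\Li = \bX \,|_\Di\, (\cO\Gi\otimes_{\cO\Li}\bM_\Li)$ is exactly what Lemma \ref{lem:vert_eqns_X_Y}(ii) gives, after observing $\cO\Gi\fid = \cO\Gi\ci_{0,1}$ as $(\cO\Gi,\cO\Li)$-bimodules.

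For the induction step, assume the statement for $d-1$, i.e. ${}_{\Gi_{d-1}}\bX_{\Li_{d-1}}\,|_\Di\,(\cO\Gi_{d-1}\,\ci'_{0,d-1}\otimes_{\cO\Li_{d-1}}\bM_{\Li_{d-1}})$ (with primed idempotents since we are inside $\Gi_{d-1}$, and the relevant defect group is $\Di_1\times\dots\times\Di_{d-1}$). By Lemma \ref{lem:tensor_isom}(i), $\bX \,|_\Di\, \big(\bY\otimes_{\cO\Hi}(\bX_{d-1}\boxtimes(\Blo^{\varnothing,1})^{(d)})\big)$ as $(\cO\Gi\bi,\cO\Li\fid)$-bisupermodules. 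First I would feed the induction hypothesis (boxed with $(\Blo^{\varnothing,1})^{(d)}$) into this: using Remark \ref{rem:bisupmod} and Lemma \ref{lem:direct_prod_vert} to control vertices of the box product, together with Lemma \ref{lem:bimod_vert}(ii) to track what happens to the non-$\Di$-maximal summands under $\bY\otimes_{\cO\Hi}-$, one gets
$$
{}_\Gi\bX_\Li \,\,\big{|}_\Di\,\, \bY\otimes_{\cO\Hi}\big((\cO\Gi_{d-1}\,\ci'_{0,d-1}\otimes_{\cO\Li_{d-1}}\bM_{\Li_{d-1}})\boxtimes(\Blo^{\varnothing,1})^{(d)}\big).
$$
Next I would use Lemma \ref{lem:tensor_bisupmod} to rewrite the right-hand side. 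Since $\bY\,|_\Di\,\big(\cO\Gi\otimes_{\cO\Hi}(\cO\Gi_{d-1}\bi_{d-1}\boxtimes\bM^{(d)})\big)$ by Lemma \ref{lem:vert_eqns_X_Y}(iii), tensoring along $\cO\Hi$ and applying Lemma \ref{lem:tensor_bisupmod} twice collapses the expression (up to $\Di$-small summands, handled again by Lemma \ref{lem:bimod_vert}(ii)) to something of the shape $\cO\Gi\otimes_{\cO\Hi}\big((\cO\Gi_{d-1}\,\ci'_{0,d-1}\otimes_{\cO\Li_{d-1}}\bM_{\Li_{d-1}})\boxtimes\bM^{(d)}\big)$, and then to $\cO\Gi\otimes_{\cO\Li}(\bM_{\Li_{d-1}}\boxtimes\bM^{(d)}) = \cO\Gi\otimes_{\cO\Li}\bM_\Li$ after truncating by the appropriate idempotents. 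The bookkeeping to verify is that $\cO\Gi\otimes_{\cO\Hi}(\cO\Gi_{d-1}\,\ci'_{0,d-1}\boxtimes(\Blo^{\varnothing,1})^{(d)}) = \cO\Gi\,\ci_{0,d}$ as $(\cO\Gi,\cO\Hi)$-bimodules; this follows from the identifications $\cO\Hi_k e_z\cong \cO\Gi_k e_z\otimes\cO\tSi_{\sJ_{k+1}}e_z\otimes\dots$ of (\ref{algn:group_tensor_isom}) together with the definitions (\ref{ECI}), (\ref{ECI'}), (\ref{ECLong}), (\ref{EC'Long}) — one checks $\ci_{0,d} = \ci'_{0,d-1}\otimes\ei_{\varnothing,1}^{(d)}$ under (\ref{algn:group_tensor_isom}), since $\ci_k = \ci'_k\otimes\ei_{\varnothing,1}^{(d)}$ for $0\le k\le d-1$ and $\ci_d = \bi$.

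The main obstacle I expect is not conceptual but the precise tracking of vertices through the chain of tensor-product identifications: at each application of Lemma \ref{lem:tensor_bisupmod} the "error" summands must be shown to have $\Di$-small vertex, which requires combining Lemma \ref{lem:bimod_vert}(ii) (vertices shrink under bimodule tensoring) with the combinatorial fact from \S\ref{SSDSmall} that subgroups of $\Di$ fixing more of $[n]$ than $\sR$ are exactly the $<_\sfs\Di$ ones — and one has to be careful that the induction hypothesis is applied with the correct (primed) idempotent and the correct smaller defect group $\Di_1\times\dots\times\Di_{d-1}$, then that boxing with $\Di_d$ restores $\Di$. A secondary care point: the two isomorphism variants in Lemma \ref{lem:tensor_isom}(i) (as $(\cO\Gi\bi,\cO(\Ni\cap\Hi)\fid)$- and as $(\cO\Gi\bi,\cO\Li\fid)$-bisupermodules) should both be invoked so that the final "$|_\Di$" is genuinely as $(\cO\Gi,\cO\Li)$-bisupermodules, matching the statement of the corollary.
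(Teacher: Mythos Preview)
Your inductive skeleton is right and matches the paper: induct on $d$, use Lemma~\ref{lem:tensor_isom}(i) to pass from $\bX$ to $\bY\otimes_{\cO\Hi}(\bX_{d-1}\boxtimes(\Blo^{\varnothing,1})^{(d)})$, then unwind $\bY$ via its definition and simplify with Lemma~\ref{lem:tensor_bisupmod}. The final bookkeeping that $\ci_{0,d}=\bi\,\ci_{0,d-1}$ with $\ci_k=\ci_k'\otimes\ei_{\varnothing,1}^{(d)}$ is also what the paper does. (Minor slip in your base case: for $d=1$ one has $\ci_0=\fid$ and $\ci_1=\bi$, not $\ci_0=\fid_0$ and $\ci_1=\fid$; the paper concludes the base case from Lemma~\ref{lem:X_as_left_mod}(i).)

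The gap is in the vertex tracking. You plan to propagate $|_\Di$ through the induction by applying Lemma~\ref{lem:bimod_vert}(ii) to the error summands at each stage. But Lemma~\ref{lem:bimod_vert}(ii) only produces summands with vertex of the form $\Delta_\vartheta R$ (twisted diagonal), whereas the notation $|_\Di$ of (\ref{E|_D}) requires the remaining summands to have vertex contained in a \emph{true} diagonal $\Delta Q$ with $Q<_\sfs\Di$. So after tensoring an error term $E\boxtimes(\Blo^{\varnothing,1})^{(d)}$ (vertex $\leq\Delta(Q'\times\Di_d)$) with $\bY$ (vertex $\Delta\Di$), you know only that the resulting pieces have vertex $\Delta_\vartheta R$ with $\vartheta(R)\leq Q'\times\Di_d$; you have no control on $R$ or on $\vartheta$ being the identity. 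The same issue arises when you replace $\bY$ by $\cO\Gi\otimes_{\cO\Hi}(\cO\Gi_{d-1}\bi_{d-1}\boxtimes\bM^{(d)})$ via Lemma~\ref{lem:vert_eqns_X_Y}(iii) and then tensor.

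The paper avoids this entirely by a one-line observation at the start: since $\ci_{0,d}$ is an idempotent, $\cO\Gi\ci_{0,d}\otimes_{\cO\Li}\bM_\Li$ is a direct summand of $\cO\Gi\otimes_{\cO\Li}\bM_\Li$, and Lemma~\ref{lem:vert_eqns_X_Y}(ii) already gives ${}_\Gi\bX_\Li\,|_\Di\,(\cO\Gi\otimes_{\cO\Li}\bM_\Li)$. Hence every summand of $\cO\Gi\ci_{0,d}\otimes_{\cO\Li}\bM_\Li$ other than ${}_\Gi\bX_\Li$ automatically has vertex in some $\Delta Q$ with $Q<_\sfs\Di$, and it suffices to prove the bare divisibility ${}_\Gi\bX_\Li\mid\cO\Gi\ci_{0,d}\otimes_{\cO\Li}\bM_\Li$. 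That is what the induction establishes, with no vertex bookkeeping needed at all: from Lemma~\ref{lem:tensor_isom}(i) one gets ${}_\Gi\bX_\Li\mid\bY\otimes_{\cO\Hi}(\bX_{d-1}\boxtimes(\Blo^{\varnothing,1})^{(d)})$, the inductive hypothesis gives $\bX_{d-1}\mid\cO\Gi_{d-1}\ci'_{0,d-1}\otimes_{\cO\Li_{d-1}}\bM_{\Li_{d-1}}$, and the remaining simplification via $\bY\mid\cO\Gi\bi\otimes_{\cO\Hi}(\cO\Gi_{d-1}\bi_{d-1}\boxtimes\bM^{(d)})$ and Lemma~\ref{lem:tensor_bisupmod} is exactly as you describe.
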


\begin{proof}
We already know from Lemma \ref{lem:vert_eqns_X_Y}(ii) that ${}_\Gi  \bX_\Li  \,\,\big{|}_\Di \,\, (\cO \Gi  \otimes_{\cO \Li } \bM_\Li )$. We, therefore, need only show that ${}_\Gi  \bX_\Li \mid \cO \Gi  \ci_{0,d} \otimes_{\cO \Li } \bM_\Li $. The proof proceeds via induction on $d$. As always, our inductive argument is valid due to Remark \ref{rem:induct_Rou}.

When $d=1$, $\ci_0=\fid$ and $\ci_1=\bi$. The result follows via Lemma \ref{lem:X_as_left_mod}(i). Suppose the result is true for $d-1$. Then, by Lemma \ref{lem:tensor_isom}(i),
\begin{align*}
{}_\Gi  \bX_\Li \mid \bY \otimes_{\cO \Hi } \big((\cO \Gi_{d-1} \ci_{0,d-1}' \otimes_{\cO \Li_{d-1}} \bM_{\Li_{d-1}}) \boxtimes (\Blo^{\varnothing,1})^{(d)}\big),
\end{align*}
as an $(\cO \Gi ,\cO \Li )$-bisupermodule. 
%, where the $\ci_k'$'s are those defined in $\S$\ref{sec:notation}. 
This is in turn, by the definition of $\bY$ and Lemma \ref{lem:X_as_left_mod}(ii), isomorphic to a direct summand of
\begin{align*}
\cO \Gi \bi \otimes_{\cO \Hi }(\cO \Gi_{d-1}\bi_{d-1} \boxtimes \bM^{(d)}) \otimes_{\cO \Hi } & \big((\cO \Gi_{d-1} \ci_{0,d-1}' \otimes_{\cO \Li_{d-1}} \bM_{\Li_{d-1}}) \boxtimes (\Blo^{\varnothing,1})^{(d)}\big) \\
\simeq & \cO \Gi \bi\otimes_{\cO \Hi } \big((\cO \Gi_{d-1} \ci_{0,d-1}' \otimes_{\cO \Li_{d-1}} \bM_{\Li_{d-1}}) \boxtimes \bM^{(d)} \big) \\
\simeq & \cO \Gi \bi\otimes_{\cO \Hi } \cO \Hi  \ci_{0,d-1} \otimes_{\cO \Li } (\bM_{\Li_{d-1}} \boxtimes \bM^{(d)}) \\
\simeq & \cO \Gi  \bi \ci_{0,d-1} \otimes_{\cO \Li } \bM_\Li  = \cO \Gi  \ci_{0,d} \otimes_{\cO \Li } \bM_\Li ,
\end{align*}
where the first and second isomorphisms follow from Lemma \ref{lem:tensor_bisupmod}.
\end{proof}

\section{Main theorem}\label{sec:main}

We continue with all our notation from Section~\ref{sec:X_Y}, in particular, $\rho$ a $d$-Rouquier ${\bar p}$-core.
Recall also the irreducible supercharacters $\xi_0,\xi_1,\dots,\xi_\ell$ of $\Blo^{\varnothing,1}$ from 
(\ref{ECharBEmptySuper}) and the irreducible characters  $\xi_0,\xi_1^{(\pm)},\dots\xi_\ell^{(\pm)}$ of $\Blo^{\varnothing,1}$ from (\ref{ECharBEmpty1}). 

Recall the notation (\ref{EJ}). We also refer to the tuple 
$(\mu,j_1,\dots,j_k)$ as even or odd according to whether 
$(\mu,(p-j_1,j_1),\dots,(p-j_k,j_k))$ is even or odd, i.e. according to whether the number of the odd partitions among  $\mu,(p-j_1,j_1),\dots,(p-j_k,j_k)$ is even or odd.

\subsection{Character calculations}

%The main tool needed throughout $\S$\ref{sec:main_arg} is that of characters. 
In this subsection we gather together all the preparatory results about characters needed to ultimately prove Theorem \ref{thm:main}.

\begin{Lemma}\label{lem:HtoH}
Let $\mu \in \Par_0(\rho,d-1)$ and $i\in I$. Then in $\Grot(\K H\ci)$ we have 
\begin{align*}
(\cO \Gi_{d-1}\bi_{d-1}\boxtimes \bM^{(d)}) \otimes_{\cO \Hi } \xi_{\mu,i}  = 
(\cO \Gi_{d-1}\bi_{d-1}\boxtimes (\bM^{(d)})^*) \otimes_{\cO \Hi } \xi_{\mu,i}
 = \sum_{j=0}^{\ell-i} \frac{\eps_{\mu, i}\eps_{i}\eps_{j}}{\eps_{\mu, j}} \xi_{\mu,j}.
\end{align*}
\end{Lemma}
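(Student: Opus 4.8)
The plan is to reduce the statement to results already established for weight-one RoCK blocks and the character branching rules of Lemma~\ref{lem:HtoG}. Recall that $\cO\Hi\ci \cong \cO\Gi_{d-1}\bi_{d-1} \otimes (\Blo^{\varnothing,1})^{(d)}$ via the identification from $\S$\ref{sec:notation}, so that the irreducible supercharacter $\xi_{\mu,i}$ of $\K\Hi\ci$ decomposes, under $\La^{A\otimes B}=\La^A\times\La^B$, as $\xi_\mu\circledast\xi_i$ where $\xi_\mu\in\Irr_{\operatorname{super}}(\Blo^{\rho,d-1})$ and $\xi_i\in\Irr_{\operatorname{super}}(\Blo^{\varnothing,1})$. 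The bisupermodule acting on it is $\cO\Gi_{d-1}\bi_{d-1}\boxtimes\bM^{(d)}$, which is an external tensor product: the left factor $\cO\Gi_{d-1}\bi_{d-1}$ acts as the identity bimodule on $\Blo^{\rho,d-1}$, and the right factor is $\bM^{(d)}$, identified with $\bM$, acting on $\Blo^{\varnothing,1}$.

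First I would observe that since $\cO\Gi_{d-1}\bi_{d-1}$ is the regular $(\Blo^{\rho,d-1},\Blo^{\rho,d-1})$-bisupermodule, $(\cO\Gi_{d-1}\bi_{d-1})\otimes_{\cO\Gi_{d-1}\bi_{d-1}}\xi_\mu = \xi_\mu$, i.e. it acts trivially on characters. Then I would apply Lemma~\ref{lem:AboxM} with $A=\Blo^{\rho,d-1}$, $B=C=\Blo^{\varnothing,1}$, $\gM=\bM^{(d)}\simeq\bM$, $\la=\mu$, and the ``$\mu$'' of that lemma being our $i$: this gives
\begin{align*}
(\cO\Gi_{d-1}\bi_{d-1}\boxtimes\bM^{(d)})\otimes_{\cO\Hi\ci}\xi_{\mu,i} = \sum_{\nu}\frac{\eps_\nu\eps_{(\mu,i)}}{\eps_i\eps_{(\mu,\nu)}}\,a_{i,\nu}\,\xi_{\mu,\nu},
\end{align*}
where $\bM\otimes_{\Blo^{\varnothing,1}}\xi_i = \sum_\nu a_{i,\nu}\xi_\nu$. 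Now Proposition~\ref{prop:define_M}(iv) tells us precisely that $\bM\otimes_{\Blo^{\varnothing,1}}\xi_i = \eps_i^2\sum_{j=0}^{\ell-i}\xi_j$, so $a_{i,j}=\eps_i^2$ for $0\le j\le\ell-i$ and $a_{i,j}=0$ otherwise. Substituting, the coefficient of $\xi_{\mu,j}$ becomes $\frac{\eps_j\eps_{(\mu,i)}}{\eps_i\eps_{(\mu,j)}}\cdot\eps_i^2 = \frac{\eps_{\mu,i}\eps_i\eps_j}{\eps_{\mu,j}}$, which is exactly the claimed expression. For the dual statement, Proposition~\ref{prop:define_M}(iv) also gives $\bM^*\otimes_{\Blo^{\varnothing,1}}\xi_i$ equal to the same thing, so the identical computation applies; alternatively one invokes Lemma~\ref{lem:chars_bisupmod}(ii) to pass between $\bM$ and $\bM^*$.

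The only genuine subtlety is bookkeeping with the $\eps$-factors and with the identification of labelling sets: one must check that $\xi_{\mu,i}$ as a supercharacter of the tensor product $\cO\Gi_{d-1}\bi_{d-1}\otimes(\Blo^{\varnothing,1})^{(d)}$ really is $\xi_\mu\circledast\xi_i$ in the sense of Lemma~\ref{lem:char_A_ten_B}, and that $\eps_{(\mu,\nu)}$ in the notation of Lemma~\ref{lem:AboxM} (where ``$A$'' contributes $\mu$ and ``$B$'' contributes $\nu$) coincides with our $\eps_{\mu,\nu}$ from~(\ref{EJ}). Both of these are immediate from the conventions fixed in $\S$\ref{sec:KSS1} and the identifications $\La^{A\otimes B}=\La^A\times\La^B$ together with~(\ref{EEpsN}), which does not depend on the ordering of the entries. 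So the main obstacle is purely notational rather than mathematical; the content is entirely carried by Proposition~\ref{prop:define_M}(iv) and Lemma~\ref{lem:AboxM}.
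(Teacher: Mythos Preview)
Your proposal is correct and follows exactly the paper's approach: the paper's proof is a two-line reference to Proposition~\ref{prop:define_M}(iv) and Lemma~\ref{lem:AboxM}, and you have simply unpacked that reference in detail. The one technicality you omit is that Lemma~\ref{lem:AboxM} requires the factor $A=\Blo^{\rho,d-1}$ to have a superunit, which fails precisely when $r=1$ and $d=1$; the paper singles out this case and observes that then $\cO\Hi\ci\cong\Blo^{\varnothing,1}$, so the claim is literally Proposition~\ref{prop:define_M}(iv) with no need for Lemma~\ref{lem:AboxM}.
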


\begin{proof}
If $r=1$ and $d=1$, then $\cO \Hi \ci \cong \Blo^{\varnothing,1}$ and the equalities follow from \ref{prop:define_M}(iv). Otherwise the equalities follow from Proposition \ref{prop:define_M}(iv) and Lemma \ref{lem:AboxM}.
\end{proof}

Throughout the rest of this subsection we construct several congruences of characters modulo characters of modules with specific vertices. Given that many of the bimodule isomorphisms we constructed in Section \ref{sec:def_X_Y} were only modulo bimodules with certain vertices (see Lemmas \ref{lem:vert_eqns_X_Y}, \ref{lem:X*_with_X}, \ref{lem:tensor_isom}(i) and Corollary \ref{cor:X_OGcM}) these congruences of characters are the best we can hope for at the moment. However, in $\S$\ref{sec:main_arg} we will eventually determine the character of ${}_\Gi  \bX_\Li $ (see Hypothesis \ref{hyp:ind_on_X} proved in Theorem \ref{thm:main}).

An indecomposable $\cO \Li \fid$-module (resp. $\cO \Ni \fid$-module, $\cO \Hi \ci$-module or $\cO \Gi \bi$-module) with vertex $Q <_\sfs\Di$ is said to have vertex of {\em non-maximal support}. We define $\Irr_{<_\sfs \Di}(\cO \Li \fid)\subseteq \Irr(\cO \Li \fid)$ (resp. $\Irr_{<_\sfs \Di}(\cO \Ni \fid)\subseteq \Irr(\cO \Ni \fid)$, $\Irr_{<_\sfs \Di}(\cO \Hi \ci)\subseteq \Irr(\cO \Hi \ci)$ and $\Irr_{<_\sfs \Di}(\cO \Gi \bi)\subseteq \Irr(\cO \Gi \bi)$) to be the set of characters of irreducible   $\K\Li \fid$-modules (resp. $\K \Ni \fid$-modules, $\K \Hi \ci$-modules and $\K \Gi \bi$-modules) with vertex of non-maximal support.

For any $Q \leq \Di $, we denote by $\Z\prj_Q(\cO \Li \fid)$ the set of all $\Z$-linear combinations of characters of relatively $Q$-projective $\cO \Li \fid$-modules.

It what follows we use the identification of $\cO \Li \fid$ with $\Blo^{\rho,0} \otimes (\Blo^{\varnothing,1})^{\otimes d}$ from (\ref{algn:ONf_ident}) and (\ref{EIrrLabelN}),(\ref{EJ})  to label $\Irr(\cO \Li \fid)$, i.e. 
$$
\Irr(\cO \Li \fid)=\{\xi_{\rho,j_1,\dots ,j_d}\mid \text{$(\rho, j_1, \dots, j_d)$ is even}\}\sqcup
\{\xi_{\rho,j_1,\dots ,j_d}^\pm\mid \text{$(\rho, j_1, \dots, j_d)$ is odd}\}.
$$
Note that for $1 \leq k \leq d$ and $0\leq j_k<\ell$ we have that $(\rho,j_1,\dots,j_k, \dots ,j_d)$ and $(\rho,j_1,\dots,j_k + 1, \dots,j_d)$ have the same parity if $j_k\neq 0$ and the opposite parities if $j_k=0$. 
%However, we are not going to be careful about how we label associate pairs (see the discussion just before Lemma \ref{lem:res_chars}).

\begin{Lemma}\label{lem:nmv_char}
Let $1 \leq k \leq d$ and $Q = \Di_1\times \dots \times \hat{\Di }_k \times \dots \times \Di_d \leq \Di $. Then $\Z\prj_Q(\cO \Li \fid)$ is precisely the $\Z$-linear span of all 
\begin{align*}
&\xi_{\rho,j_1,\dots,j_k, \dots ,j_d}^{(\pm)} + \xi_{\rho,j_1,\dots,j_k + 1, \dots,j_d}^{(\pm)}&&\text{with $0<j_k<\ell$},
\\
&\xi_{\rho,j_1,\dots,j_k, \dots ,j_d} + \xi_{\rho,j_1,\dots,j_k + 1, \dots,j_d}^{\pm} && \text{with $j_k=0$ and $(\rho, j_1, \dots, j_d)$ even},
\\
&\xi_{\rho,j_1,\dots,j_k, \dots ,j_d}^+ + \xi_{\rho,j_1,\dots,j_k, \dots ,j_d}^- +\xi_{\rho,j_1,\dots,j_k + 1, \dots,j_d}
&&\text{with $j_k=0$ and $(\rho, j_1, \dots, j_d)$ odd}.
\end{align*}
In particular, $\Z\Irr_{<_\sfs \Di}(\cO \Li \fid) \subseteq \ker(\varphi)$, where $\varphi$ is the $\Z$-linear map defined by
\begin{align*}
\varphi:\Z\Irr(\cO \Li \fid)\to \Z,\ 
\xi_{\rho,j_1,\dots,j_k, \dots ,j_d}^{(\pm)}\mapsto \frac{\eps_\rho \prod_{k=1}^d ((-1)^{j_k}\eps_{j_k})}{\eps_{\rho,j_1,\dots,j_d}} %(-1)^{\sum_{k=1}^d j_k}.
\end{align*}
\end{Lemma}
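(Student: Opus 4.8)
The key fact is the identification $\cO\Li\fid \cong \Blo^{\rho,0}\otimes(\Blo^{\varnothing,1})^{\otimes d}$ from (\ref{algn:ONf_ident}), which lets us analyze relative projectivity one tensor factor at a time. The subgroup $Q=\Di_1\times\dots\times\hat\Di_k\times\dots\times\Di_d$ omits exactly the $k^{\nth}$ Sylow factor, so a $\Z\prj_Q$-generator should be a character that is ``projective in the $k^{\nth}$ slot'' and arbitrary elsewhere. First I would reduce to the case $d=1$, $\rho$ arbitrary: by Remark~\ref{rem:bisupmod} and Lemma~\ref{lem:direct_prod_vert}, an indecomposable $\cO\Li\fid$-module is relatively $Q$-projective precisely when its $k^{\nth}$ tensor component is a projective $\Blo^{\varnothing,1}$-module (or, when $\rho$ is odd and $k$-th factor is odd, a projective $\Blo^{\varnothing,1}_\0$-module), while the other components are arbitrary irreducibles. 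Thus $\Z\prj_Q(\cO\Li\fid)$ is spanned by the ``boxtimes'' products $\eta_1\boxtimes\dots\boxtimes\eta_{k-1}\boxtimes\pi\boxtimes\eta_{k+1}\boxtimes\dots$ where $\pi$ runs over $\prj(\Blo^{\varnothing,1})$ (suitably interpreted on the $\0$-part) and the $\eta_i$ over $\Irr$; I would use Lemma~\ref{lem:char_A_ten_B} and Lemma~\ref{lem:A_ten_B} to translate the $\boxtimes$-products of these into the $\xi_{\rho,j_1,\dots,j_d}^{(\pm)}$ notation.

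Next I would pin down $\prj(\Blo^{\varnothing,1})$ in the $\xi_j^{(\pm)}$ labeling. By Lemma~\ref{lem:weight1_equiv}(iii), $\Blo^{\varnothing,1}$ has the Brauer tree with nodes $\xi_\ell^+,\dots,\xi_1^+,\xi_0,\xi_1^-,\dots,\xi_\ell^-$ in a line, so the projective indecomposables have characters $\xi_j^\pm+\xi_{j+1}^\pm$ for $0<j<\ell$ and $\xi_0+\xi_1^{\pm}$; and $\Blo^{\varnothing,1}_\0$ has Brauer tree with nodes $\tilde\xi_0^++\tilde\xi_0^-$ (multiplicity-two node), $\tilde\xi_1,\dots,\tilde\xi_\ell$, giving projective characters $\tilde\xi_j+\tilde\xi_{j+1}$ for $1\le j<\ell$ and the end contribution $2\tilde\xi_0^++2\tilde\xi_0^-$ from the exceptional vertex edge (which lifts to $\tilde\xi_0^++\tilde\xi_0^-+\tilde\xi_1$ over $\cO$). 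Tensoring slot $k$ by one of these and using Lemma~\ref{lem:A_ten_B}(i) to unfold the induced/restricted characters, the three bullet cases fall out precisely according to whether $j_k>0$ (both sign choices survive independently, giving the $\pm$-uniform sum), whether $j_k=0$ with $(\rho,j_1,\dots,j_d)$ even (so the $k^{\nth}$ factor $(p)$ is the self-associate node, producing $\xi_{\ldots,0,\ldots}+\xi_{\ldots,1,\ldots}^\pm$), or whether $j_k=0$ with that tuple odd (so that through the identification we are in the $\Blo^{\varnothing,1}_\0$ situation and the exceptional node forces $\xi_{\ldots,0,\ldots}^++\xi_{\ldots,0,\ldots}^-+\xi_{\ldots,1,\ldots}$).

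Finally, for the $\ker(\varphi)$ claim: since $\Irr_{<_\sfs\Di}(\cO\Li\fid)$ consists of characters of modules with vertex $\leq Q$ for \emph{some} such $k$, it suffices to check each of the three families of $\Z\prj_Q$-generators lies in $\ker\varphi$, for the corresponding $k$. This is a one-line sign-bookkeeping computation: $\varphi$ is a product over the $d$ slots of $(-1)^{j_t}\eps_{j_t}$ divided by $\eps_{\rho,j_1,\dots,j_d}$, and in each generator the slots $t\neq k$ agree while in slot $k$ we compare $j_k$ with $j_k+1$ (or $0$ with $1$). One must track the $\eps$-denominator, which changes exactly when the parity of the tuple flips, i.e. when $j_k=0$; a short case check shows the two (or three) terms in each generator cancel under $\varphi$. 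I expect the main obstacle to be the $j_k=0$, odd-tuple case: there the identification $\cO\Li\fid\cong\Blo^{\rho,0}\otimes\cdots$ routes through $\Blo^{\varnothing,1}_\0$ rather than $\Blo^{\varnothing,1}$, and one must carefully use Lemma~\ref{lem:A_ten_B}(i),(ii) to see that the exceptional ($\F$-)vertex of the Brauer tree of $\Blo^{\varnothing,1}_\0$ lifts over $\cO$ to the three-term projective character $\tilde\xi_0^++\tilde\xi_0^-+\tilde\xi_1$ and that the $\eps$-factors and signs in $\varphi$ still produce cancellation; the other two cases are comparatively mechanical.
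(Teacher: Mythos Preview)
Your overall strategy is right, but there is a genuine gap in the first step. You invoke Remark~\ref{rem:bisupmod} and Lemma~\ref{lem:direct_prod_vert} to claim that an arbitrary indecomposable relatively $Q$-projective $\cO\Li\fid$-module has a well-defined ``$k^{\nth}$ tensor component'' which is then a projective $\Blo^{\varnothing,1}$-module. But Lemma~\ref{lem:direct_prod_vert} only goes in the \emph{opposite} direction: given absolutely indecomposable $M,N$, it computes the vertex of $M\boxtimes N$. It does not say that every indecomposable $\cO(\Li)e_z$-module arises this way, and in fact $\Li=\tSi_\sR\times_z\tSi_{\sJ_1}\times_z\dots\times_z\tSi_{\sJ_d}$ is not a direct product of groups, so there is no obvious tensor factorization of an arbitrary module. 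You need an argument to produce such a factorization.

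The paper's fix is to pass to the index-$2$ subgroup $\Li_{\hat\Ai_k}\times_z\tSi_{\sJ_k}$, where $\Li_{\hat\Ai_k}:=\Li\cap\tAi_{[n]\setminus\sJ_k}$. Since $\tAi_{[n]\setminus\sJ_k}$ and $\tSi_{\sJ_k}$ genuinely commute inside $\tSi_n$, the $e_z$-truncated group algebra here \emph{is} an honest tensor product, so any indecomposable relatively $Q$-projective module over it factors as $\hat U_k\boxtimes U_k$ with $U_k$ projective over $(\Blo^{\varnothing,1})^{(k)}$. One then observes (using the Brauer tree) that every projective indecomposable $\Blo^{\varnothing,1}$-module is non-self-associate, hence $\hat U_k\boxtimes U_k$ is not $\Li$-stable and therefore $\Ind_{\Li_{\hat\Ai_k}\times_z\tSi_{\sJ_k}}^{\Li}(\hat U_k\boxtimes U_k)$ is indecomposable and exhausts all relatively $Q$-projective indecomposables. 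The three displayed families then drop out of Lemma~\ref{lem:A_ten_B}(ii) applied to these induced characters, with the case split governed by the parity of $\hat\chi_k$ and whether $j_k=0$. Note in particular that the paper never passes to $\Blo^{\varnothing,1}_\0$ in the $k^{\nth}$ slot; the ``odd'' case arises from the parity on the \emph{complementary} factor $\hat\chi_k$, not from replacing $\tSi_{\sJ_k}$ by $\tAi_{\sJ_k}$. Your remarks about the exceptional vertex of $\Blo^{\varnothing,1}_\0$ are therefore beside the point (and the statement ``$2\tilde\xi_0^++2\tilde\xi_0^-$\dots lifts to $\tilde\xi_0^++\tilde\xi_0^-+\tilde\xi_1$ over $\cO$'' is in any case garbled). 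Once the span is established, your plan for the $\ker(\varphi)$ claim is fine.
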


\begin{proof}
We first note, by inspecting the Brauer tree of $\Blo^{\varnothing,1}$ from Lemma \ref{lem:weight1_equiv}(iii), that $\Z\prj(\Blo^{\varnothing,1})$ is the $\Z$-linear span of $\xi_0+\xi_1^\pm$ and  $\xi_j^{\pm} + \xi_{j+1}^{\pm}$, for $0< j< \ell$, and that 
%It is immediate from Lemma \ref{lem:weight1_equiv}(iii), that 
every projective indecomposable $\Blo^{\varnothing,1}$-module is non-self-associate.

For $r=1$ and $d=1$, the first claim now follows as, in this case, $k=1$, $Q=1$, $\prj_Q(\cO \Li \fid) = \N\prj(\cO \Li \fid)$ and $\Blo^{\varnothing,1} \cong \cO \Li \fid$ with corresponding bijection $\Irr(\Blo^{\varnothing,1}) \to \Irr(\cO \Li \fid)$, $\xi_j^{(\pm)} \mapsto \xi_{(1),j}^{(\pm)}$. From now on we assume $r>1$ or $d>1$.

Recalling the notation (\ref{EUVI}), 
%\begin{align*}\sJ_t = [r+tp] \setminus [r+(t-1)p]\end{align*} and 
we set $\Li_{\hat{\Ai}_k} := \Li  \cap \tAi_{[n]\backslash \sJ_k}$, so $\Li_{\hat{\Ai}_k} \times_z \tSi_{\sJ_k}$ is an index $2$ subgroup of $\Li $. We also set
\begin{align*}
\hat{e}_{\varnothing,1}^{(k)}:= \ei_{\rho,0} \otimes \ei_{\varnothing,1}^{(1)} \otimes \dots \otimes \ei_{\varnothing,1}^{(k-1)} \otimes \ei_{\varnothing,1}^{(k+1)} \otimes \dots \otimes \ei_{\varnothing,1}^{(d)} \in \cO \Li_{\hat{\Ai}_k}.
\end{align*}
(Note this really is in $\cO \Li_{\hat{\Ai}_k}$ since, by Remark \ref{rem:superblocks}, $\ei_{\rho,0} \in \cO\tAi_\sR $ and $\ei_{\varnothing,1}^{(l)} \in \cO\tAi_{\sJ_l}$, for each $l\neq k$.)

As $p \nmid [\Li :\Li_{\hat{\Ai}_k} \times_z \tSi_{\sJ_k}]$, for any indecomposable, relatively $Q$-projective $\cO \Li \fid$-module $\gU$, we have that 
$
\gU\mid \Ind_{\Li_{\hat{\Ai}_k} \times_z \tSi_{\sJ_k}}^\Li \tilde{\gU},
$ 
for some indecomposable, relatively $Q$-projective $\cO(\Li_{\hat{\Ai}_k} \times_z \tSi_{\sJ_k})\fid$-module $\tilde{\gU}$. Now, 
$
\cO(\Li_{\hat{\Ai}_k} \times_z \tSi_{\sJ_k})\fid \cong \cO \Li_{\hat{\Ai}_k}\hat{e}_{\varnothing,1}^{(k)} \otimes (\Blo^{\varnothing,1})^{(k)}
$ 
and $Q \leq \Li_{\hat{\Ai}_k}$. Therefore, $\tilde{\gU} \cong \hat{\gU}_k \boxtimes \gU_k$, for some 
indecomposable $\cO \Li_{\hat{\Ai}_k}\hat{e}_{\varnothing,1}^{(k)}$-module $\hat{\gU}_k$ and projective, indecomposable $(\Blo^{\varnothing,1})^{(k)}$-module $\gU_k$.

Recall that $(\Blo^{\varnothing,1})^{(k)} \cong \Blo^{\varnothing,1}$. Therefore, due to comments in the first paragraph, $\gU_k$ is non-self-associate. It follows that $\gU_k$ is not $\tSi_{[n]\backslash \sJ_k}$-stable and consequently that $\hat{\gU}_k \boxtimes \gU_k$ is not $\Li $-stable. (Here, we are using the fact that $r>1$ or $d>1$ and so $\tAi_{[n]\backslash \sJ_k} \lneq \tSi_{[n]\backslash \sJ_k}$.) Therefore, $\gU$ is actually isomorphic to $\Ind_{\Li_{\hat{\Ai}_k} \times_z \tSi_{\sJ_k}}^\Li (\hat{\gU}_k \boxtimes \gU_k)$.

We have now shown that $\Z\prj_Q(\cO \Li \fid)$ is precisely the $\Z$-linear span of characters of the form 
$
(\hat{\chi}_k \boxtimes \chi_k) \uparrow_{\Li_{\hat{\Ai}_k} \times_z \tSi_{\sJ_k}}^\Li ,
$ 
where $\hat{\chi}_k \in \Irr(\cO \Li_{\hat{\Ai}_k} \hat{e}_{\varnothing,1}^{(k)})$ and $\chi_k \in \prj(\cO \tSi_{\sJ_k} \ei_{\varnothing,1}^{(k)})$. In particular, by the first paragraph of this proof, $\Z\prj_Q(\cO \Li \fid)$ is the $\Z$-linear span of characters of the form 
$
\big(\hat{\chi}_k \otimes (\xi_j^{(\pm)} + \xi_{j+1}^{\pm})\big) \uparrow_{\Li_{\hat{\Ai}_k} \times_z \tSi_{\sJ_k}}^\Li ,
$ 
where $\hat{\chi}_k \in \Irr(\cO \Li_{\hat{\Ai}_k} \hat{e}_k)$ and $0\leq j< \ell$. The first claim now follows from Lemma \ref{lem:A_ten_B}(iii) by considering all four possibilities of $\hat{\chi}_k$ having an even or odd label and $j=0$ or $j>0$.

For the second claim we note that an indecomposable $\cO \Li \fid$-module has non-maximal vertex if and only if it is relatively $Q$-projective, for some $Q$ of the form considered above. One can now simply check that, for all $k$, the given characters are in the kernel of $\varphi$.
\end{proof}

The next lemma is proved in much the same way as Lemma \ref{lem:nmv_char} and we leave the proof to the reader.

\begin{Lemma}\label{lem:nmv_char_H}
Let $\mu \in \Par_0(\rho,d-1)$. The following are in $\Z\Irr_{<_\sfs \Di}(\cO \Hi \ci)$:
\begin{enumerate}
\item $\xi_{\mu,j}^{(\pm)} + \xi_{\mu,j+1}^{(\pm)}$, for $0<j<\ell$.
\item $\xi_{\mu,0} + \xi_{\mu,1}^{\pm}$, for $\mu$ even.
\item $\xi_{\mu,0}^+ + \xi_{\mu,0}^- + \xi_{\mu,1}$, for $\mu$ odd.
\end{enumerate}
\end{Lemma}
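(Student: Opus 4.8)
The statement is the analogue of Lemma~\ref{lem:nmv_char} but for the block $\cO\Hi\ci$ rather than $\cO\Li\fid$, and the author explicitly flags that the proof runs ``in much the same way''. So the plan is to mimic the argument of Lemma~\ref{lem:nmv_char}, replacing the role of the $d$-fold wreath-type group $\Li$ by $\Hi = \tSi_{\sR\cup\sJ_1\cup\dots\cup\sJ_{d-1},\,\sJ_d}$, and to isolate the last tensor factor $(\Blo^{\varnothing,1})^{(d)}$ corresponding to the runner-$\sJ_d$ part. Recall $\cO\Hi\ci \cong \cO\Gi_{d-1}\bi_{d-1} \otimes (\Blo^{\varnothing,1})^{(d)}$ via (\ref{algn:group_tensor_isom}) and (\ref{ECI}), with defect group $\Di = (\Di_1\times\dots\times\Di_{d-1}) \times \Di_d$. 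The subgroups $Q<_\sfs\Di$ that are relevant here are exactly those of the form $Q \leq (\Di_1\times\dots\times\Di_{d-1})\times\dots\times\hat\Di_k\times\dots$ --- but the only one that can forget the $\sJ_d$-part is $Q \leq \Di_1\times\dots\times\Di_{d-1}$, and by Lemma~\ref{lem:conj_def}-style cycle-type considerations (passing to $\Si_n$ through $\pi_n$) this is the only kind of non-maximal vertex whose Brauer-quotient does not see $\Di_d$. So it suffices to produce, for $Q = \Di_1\times\dots\times\Di_{d-1}$, a description of $\Z\prj_Q(\cO\Hi\ci)$ and observe that the three listed characters lie in it.

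\textbf{Key steps.} First I would record that $p\nmid[\Hi : \Hi_{\hat\Ai_d}\times_z\tSi_{\sJ_d}]$, where $\Hi_{\hat\Ai_d} := \Hi\cap\tAi_{[n]\setminus\sJ_d} = \tAi_{\sR\cup\sJ_1\cup\dots\cup\sJ_{d-1}}$, so that any indecomposable relatively $Q$-projective $\cO\Hi\ci$-module $\gU$ satisfies $\gU \mid \Ind_{\Hi_{\hat\Ai_d}\times_z\tSi_{\sJ_d}}^{\Hi}\tilde\gU$ for some indecomposable relatively $Q$-projective $\cO(\Hi_{\hat\Ai_d}\times_z\tSi_{\sJ_d})\ci$-module $\tilde\gU$. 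Since $Q\leq\Hi_{\hat\Ai_d}$ and $\cO(\Hi_{\hat\Ai_d}\times_z\tSi_{\sJ_d})\ci \cong \cO\Gi_{d-1,\hat\Ai_d}\hat\ci \otimes (\Blo^{\varnothing,1})^{(d)}$, we get $\tilde\gU \cong \hat\gU \boxtimes \gU_d$ with $\gU_d$ a projective indecomposable $(\Blo^{\varnothing,1})^{(d)}$-module --- hence, by inspecting the Brauer tree of $\Blo^{\varnothing,1}$ in Lemma~\ref{lem:weight1_equiv}(iii), non-self-associate. Thus $\gU_d$ is not $\tSi_{[n]\setminus\sJ_d}$-stable, so $\tilde\gU$ is not $\Hi$-stable (using $n\geq4$, so $\tAi_{[n]\setminus\sJ_d}\lneq\tSi_{[n]\setminus\sJ_d}$), and $\gU \cong \Ind_{\Hi_{\hat\Ai_d}\times_z\tSi_{\sJ_d}}^{\Hi}(\hat\gU\boxtimes\gU_d)$. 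Therefore $\Z\prj_Q(\cO\Hi\ci)$ is the $\Z$-span of characters $\big(\hat\chi \boxtimes (\xi_j^{(\pm)}+\xi_{j+1}^{\pm})\big)\uparrow_{\Hi_{\hat\Ai_d}\times_z\tSi_{\sJ_d}}^{\Hi}$ with $\hat\chi\in\Irr(\cO\Gi_{d-1,\hat\Ai_d}\hat\ci)$ and $0\leq j<\ell$. Applying Lemma~\ref{lem:A_ten_B}(iii) (equivalently the induction formula for a character of an index-$2$ subgroup, over the $\tAi$-vs-$\tSi$ step) to the four cases ($\hat\chi$ even/odd, $j=0$ or $j>0$), together with the identification $\hat\chi \leftrightarrow \xi_\mu$ for $\mu\in\Par_0(\rho,d-1)$ coming from the Brauer-tree Morita equivalence of $\cO\Gi_{d-1}\bi_{d-1}$ in Lemma~\ref{lem:weight1_equiv}/\ref{Omega_bOGf_Morita}, yields precisely that the expressions $\xi_{\mu,j}^{(\pm)}+\xi_{\mu,j+1}^{(\pm)}$ (for $0<j<\ell$), $\xi_{\mu,0}+\xi_{\mu,1}^{\pm}$ (for $\mu$ even), and $\xi_{\mu,0}^++\xi_{\mu,0}^-+\xi_{\mu,1}$ (for $\mu$ odd) are in $\Z\prj_Q(\cO\Hi\ci)$. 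Finally, since any module relatively $Q$-projective for $Q=\Di_1\times\dots\times\Di_{d-1}<_\sfs\Di$ has vertex of non-maximal support, these characters lie in $\Z\Irr_{<_\sfs\Di}(\cO\Hi\ci)$, which is exactly (i)--(iii).

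\textbf{Main obstacle.} Everything here is a transcription of Lemma~\ref{lem:nmv_char}'s proof with $d$ replaced by $1$ in the wreath direction, so the only genuine point requiring care is the bookkeeping in the case analysis when applying Lemma~\ref{lem:A_ten_B}(iii): one must keep straight which of $\xi_{\mu,j}^+,\xi_{\mu,j}^-$ are glued together versus which remain separate, and this depends jointly on the parity of $\mu$ and on whether $j=0$. Concretely, the subtlety is that induction from $\Hi_{\hat\Ai_d}\times_z\tSi_{\sJ_d}$ to $\Hi$ behaves like $\Ind_{A_\0\otimes B}^{A\otimes B}$ in Lemma~\ref{lem:A_ten_B}, so an even $\hat\chi$ (type $\Mtype$ restricted part) splits the $\sJ_d$-associate pair while an odd one fuses it; matching this against the three asymmetric shapes in (i)--(iii) is the only place an error could creep in, but it is the same calculation already carried out in Lemma~\ref{lem:nmv_char}, which is why the author defers it to the reader. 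I would also remark that the identification of the non-$\sJ_d$-forgetting vertices as the relevant ones is immediate from the cycle-type argument of Lemma~\ref{lem:conj_def}(i) applied inside $\Si_n$.
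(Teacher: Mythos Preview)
Your approach is correct and is exactly what the paper intends---it explicitly defers to the argument of Lemma~\ref{lem:nmv_char}, and you have transcribed that argument to $\cO\Hi\ci\cong\cO\Gi_{d-1}\bi_{d-1}\otimes(\Blo^{\varnothing,1})^{(d)}$ with $k=d$. Two small corrections: the identification $\hat\chi\leftrightarrow\tilde\xi_\mu^{(\pm)}$ for $\mu\in\Par_0(\rho,d-1)$ comes directly from Theorem~\ref{thm:DCblocks} and Lemma~\ref{lem:res_chars} (the weight-one Brauer-tree Lemmas~\ref{lem:weight1_equiv}/\ref{Omega_bOGf_Morita} do not apply to $\cO\Gi_{d-1}\bi_{d-1}$ unless $d-1\leq 1$, since its defect group is not cyclic for $d\geq 3$), and the edge case $r=1$, $d=1$---where $\tAi_{[n]\setminus\sJ_d}=\tSi_{[n]\setminus\sJ_d}$ and your index-$2$ step collapses---needs to be handled separately, exactly as in Lemma~\ref{lem:nmv_char}.
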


\begin{Remark}
Note that Lemmas \ref{lem:nmv_char} and \ref{lem:nmv_char_H} are consistent with the comments preceding Lemma \ref{lem:nmv_char} concerning the labeling of associate pairs. For example, in the case that $\mu \in \Par_0(\rho,d-1)$ is even and $j>0$, we can deduce that
\begin{align*}
\xi_{\mu,j}^+ + \xi_{\mu,j+1}^- 
= & \sum_{\substack{1\leq i<j \\ j-i \text{ odd}}}(\xi_{\mu,i}^+ + \xi_{\mu,i+1}^+) - \sum_{\substack{1\leq i<j  \\ j-i \text{ even}}}(\xi_{\mu,i}^+ + \xi_{\mu,i+1}^+) -  (-1)^j(\xi_{\mu,0} + \xi_{\mu,1}^+)
\\
& + (-1)^j(\xi_{\mu, 0} + \xi_{\mu,1}^-) + \sum_{\substack{1\leq i\leq j  \\ j-i \text{ even}}}(\xi_{\mu,i}^- + \xi_{\mu ,i+1}^-) - \sum_{\substack{1\leq i\leq j  \\ j-i \text{ odd}}}(\xi_{\mu,i}^- + \xi_{\mu ,i+1}^-),
\end{align*}
which, by Lemma \ref{lem:nmv_char_H}, is in $\Z\Irr_{<_\sfs \Di}(\cO \Hi \ci)$. Therefore, our choice of labeling of associate pairs for Lemma \ref{lem:nmv_char_H}(i) was unimportant.
\end{Remark}

For the following lemma recall the notation (\ref{ECI}),(\ref{ECLong}),(\ref{EC'Long}).

\begin{Lemma}\label{lem:vert_prsv}
The maps
\begin{align*}
\bM_\Li^*\otimes_{\cO \Li } \ci_{0,d}\cO \Gi \otimes_{\cO \Gi } ?: \Z \Irr(\cO \Gi \bi) & \to \Z \Irr(\cO \Li \fid) \\
{}_\Li  \bX_\Gi^*\otimes_{\cO \Gi } ?: \Z \Irr(\cO \Gi \bi) & \to \Z \Irr(\cO \Li \fid) \\
\big((\bM_{\Li_{d-1}}^* \otimes_{\cO \Li_{d-1}} \ci_{0,d-1}' \cO \Gi_{d-1} ) \boxtimes (\Blo^{\varnothing,1})^{(d)}\big) \otimes_{\cO \Hi } ?: 
\Z\Irr(\cO \Hi \ci) & \to  \Z\Irr(\cO \Li \fid) \\
\cO \Li \fid \otimes_{\cO \Li } \cO \Ni  \otimes_{\cO \Ni } ?: \Z \Irr(\cO \Ni \fid) & \to \Z \Irr(\cO \Li \fid)
\end{align*}
restrict to maps
\begin{align*}
\bM_\Li^*\otimes_{\cO \Li } \ci_{0,d}\cO \Gi \otimes_{\cO \Gi } ?: \Z \Irr_{<_\sfs \Di}(\cO \Gi \bi) & \to \Z \Irr_{<_\sfs \Di}(\cO \Li \fid) \\
{}_\Li  \bX_\Gi^*\otimes_{\cO \Gi } ?: \Z \Irr_{<_\sfs \Di}(\cO \Gi \bi) & \to \Z \Irr_{<_\sfs \Di}(\cO \Li \fid) \\
\big((\bM_{\Li_{d-1}}^* \otimes_{\cO \Li_{d-1}} \ci_{0,d-1}' \cO \Gi_{d-1} ) \boxtimes (\Blo^{\varnothing,1})^{(d)}\big) \otimes_{\cO \Hi } ?: 
\Z\Irr_{<_\sfs \Di}(\cO \Hi \ci) & \to  \Z\Irr_{<_\sfs \Di}(\cO \Li \fid) \\
\cO \Li \fid \otimes_{\cO \Li } \cO \Ni  \otimes_{\cO \Ni } ?: \Z \Irr_{<_\sfs \Di}(\cO \Ni \fid) & \to \Z \Irr_{<_\sfs \Di}(\cO \Li \fid)
\end{align*}
(For the third map we are assuming that $d>1$.)
Furthermore, the induced maps,
\begin{align*}
\bM_\Li^*\otimes_{\cO \Li } \ci_{0,d}\cO \Gi \otimes_{\cO \Gi } ?: & \Z \Irr(\cO \Gi \bi)/\Z \Irr_{<_\sfs \Di}(\cO \Gi \bi) \to \Z \Irr(\cO \Li \fid)/\Z \Irr_{<_\sfs \Di}(\cO \Li \fid) \\
{}_\Li  \bX_\Gi^*\otimes_{\cO \Gi } ?: & \Z \Irr(\cO \Gi \bi)/\Z \Irr_{<_\sfs \Di}(\cO \Gi \bi) \to \Z \Irr(\cO \Li \fid)/\Z \Irr_{<_\sfs \Di}(\cO \Li \fid),
\end{align*}
coincide.

\end{Lemma}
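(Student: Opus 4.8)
The plan is to prove the three separate ``restriction'' claims and the ``coincidence'' claim essentially in parallel, exploiting the fact that each of the four functors in question is (up to direct summands with smaller vertices) induced by a bisupermodule whose vertex behaviour we have already pinned down in Section~\ref{sec:X_Y}. First I would observe that for any two finite groups $K_1,K_2$ with common $p$-subgroup $\Di$, if $\gN$ is an indecomposable $(\cO K_1,\cO K_2)$-bimodule with vertex $\Delta_\varphi Q$ for some $Q\le\Di$ and $\varphi:Q\hookrightarrow\Di$, then Lemma~\ref{lem:bimod_vert}(i) guarantees that $\gN\otimes_{\cO K_2}\gV$ is a sum of indecomposables with vertices contained in $P\cap\varphi^{-1}({}^hQ'\cap\Di)$ whenever $\gV$ is indecomposable with vertex $Q'$. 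The key point is: if $\gN$ itself has a $\Di$-small vertex (in the sense of \S\ref{SSDSmall}) then $\gN\otimes_{\cO K_2}\gV$ lands in $\Z\Irr_{<_\sfs\Di}(\cO K_1)$ for {\em every} $\gV$; while if $\gN$ has vertex $\Delta\Di$ and $\gV$ has vertex $Q'<_\sfs\Di$ then every summand of $\gN\otimes_{\cO K_2}\gV$ again has vertex $\le\Delta\Di\cap(\text{something conjugate into }Q')$, which is $<_\sfs\Di$ by a cycle-type argument on $[n]$ exactly as in Lemma~\ref{lem:conj_def}. So the strategy for the first half is: decompose each of the four bimodules into its (unique) vertex-$\Delta\Di$ summand plus $\Di$-small summands, apply Lemma~\ref{lem:bimod_vert}(i) summand-by-summand, and conclude that a character of a non-maximally-supported module is sent to a $\Z$-combination of such characters.

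Concretely, for the first map note $\bM_\Li$ has vertex $\Delta\Di$ by Lemma~\ref{ML_MN_indecomp}(i), hence so does $\bM_\Li^*$ by Lemma~\ref{lem:sup_vert}; and $\ci_{0,d}\cO\Gi$, as an $(\cO\Li\fid,\cO\Gi\bi)$-bimodule, is a summand of $\fid\cO\Gi\bi$ which by Corollary~\ref{cor:X_OGcM} (combined with Lemma~\ref{lem:vert_eqns_X_Y}) has the property that its summands are either the one inducing our functor or $\Di$-small. For the second map, $\,{}_\Li\bX_\Gi^*$ is by Lemma~\ref{lem:dual_ML_MN}(iii) the super Green correspondent of $\bM_\Li^*$ in $\Li\times\Gi$, so it is absolutely indecomposable with vertex $\Delta\Di$ (use Lemma~\ref{lem:sup_vert} together with Lemma~\ref{lem:GXL_indec}); the claim then follows from the second bullet of my preliminary observation plus the cycle-type argument. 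For the third map (here $d>1$) one uses Remark~\ref{rem:induct_Rou} to know $\bM_{\Li_{d-1}}$ has vertex $\Delta(\Di_1\times\dots\times\Di_{d-1})$ and $(\Blo^{\varnothing,1})^{(d)}$ contributes $\Di_d$, so via Remark~\ref{rem:bisupmod} and Lemma~\ref{lem:direct_prod_vert} the relevant bimodule decomposes as vertex-$\Delta\Di$ summand plus $\Di$-small ones, again invoking the induction hypothesis version of Corollary~\ref{cor:X_OGcM}. For the fourth map, $\cO\Li\fid\otimes_{\cO\Li}\cO\Ni$ viewed as an $(\cO\Li\fid,\cO\Ni\fid)$-bimodule is $\Res^{\Ni\times\Ni}_{\Li\times\Ni}\cO\Ni\fid$, which is a sum of conjugates $g_1\cO\Ni_{\cdots}$ by Lemma~\ref{lem:vert_blocks}-type reasoning and has vertex $\Delta\Di$ (since $p\nmid[\Ni:\Li]$), and the restriction is handled identically.

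For the coincidence of the two induced maps on the quotient $\Z\Irr(\cO\Gi\bi)/\Z\Irr_{<_\sfs\Di}(\cO\Gi\bi)$, the point is that modulo $\Di$-small vertices the two defining bimodules agree: by Lemma~\ref{lem:vert_eqns_X_Y}(ii) we have ${}_\Gi\bX_\Li\;|_\Di\;(\cO\Gi\otimes_{\cO\Li}\bM_\Li)$ and by Corollary~\ref{cor:X_OGcM} we have ${}_\Gi\bX_\Li\;|_\Di\;(\cO\Gi\,\ci_{0,d}\otimes_{\cO\Li}\bM_\Li)$. Dualizing via Lemmas~\ref{lem:dual_comp}, \ref{lem:sup_vert} and \ref{lem:dual_ML_MN}(i) transforms the inclusion ${}_\Gi\bX_\Li\mid\cO\Gi\,\ci_{0,d}\otimes_{\cO\Li}\bM_\Li$ into ${}_\Li\bX_\Gi^*\mid\bM_\Li^*\otimes_{\cO\Li}\ci_{0,d}\cO\Gi$, with all complementary summands having $\Di$-small vertex. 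Hence the two functors $\bM_\Li^*\otimes_{\cO\Li}\ci_{0,d}\cO\Gi\otimes_{\cO\Gi}?$ and ${}_\Li\bX_\Gi^*\otimes_{\cO\Gi}?$ differ only by a functor given by a bimodule with $\Di$-small summands, which by the first half annihilates everything into $\Z\Irr_{<_\sfs\Di}(\cO\Li\fid)$; so the induced maps on the quotients agree.

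The main obstacle I expect is the bookkeeping in the cycle-type arguments: showing that when $\gN$ has vertex $\Delta\Di$ and $\gV$ has non-maximal support, every summand of $\gN\otimes\gV$ still has non-maximal support. This requires carefully tracking, via $\pi_n$ and the block of $[n]$ into $\sR,\sJ_1,\dots,\sJ_d$, that an intersection $\Delta\Di\cap{}^{(g_1,g_2)}\Delta Q'$ with $Q'<_\sfs\Di$ fixes some $\sJ_k$ pointwise — this is the same mechanism as in Lemma~\ref{lem:conj_def}, but applied with one factor non-maximal rather than one coordinate off-$\Ni$, so one must check the argument still goes through (it does, because fixing $\sR$ properly is preserved under the relevant conjugations and intersections). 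A secondary nuisance is keeping the $\Z$-linearity honest: each functor is only required to send generators of $\Z\Irr_{<_\sfs\Di}$ into $\Z\Irr_{<_\sfs\Di}$, so one should phrase everything in terms of $\N$-combinations of characters of indecomposable modules of non-maximal support and then extend $\Z$-linearly, which is routine.
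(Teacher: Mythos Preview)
Your proposal is correct and follows essentially the same route as the paper: verify each bimodule decomposes into summands with vertex contained in $\Delta\Di$, apply Lemma~\ref{lem:bimod_vert}(i), and then run the cycle-type argument (splitting into $g\in\Ni$ versus $g\notin\Ni$, the latter handled by Lemma~\ref{lem:conj_def}(i)) to conclude $\Di\cap{}^gQ<_\sfs\Di$ whenever $Q<_\sfs\Di$; for the coincidence, dualize Corollary~\ref{cor:X_OGcM} via Lemmas~\ref{lem:sup_vert}, \ref{lem:dual_comp} and~\ref{lem:sup_alg_idempt_dual}. The paper's execution is marginally more streamlined in that, for the restriction claim, it does not bother isolating a distinguished $\Delta\Di$-summand in each bimodule---knowing that all summands have vertex $\le\Delta\Di$ already suffices for Lemma~\ref{lem:bimod_vert}(i)---but your finer decomposition is harmless and is in any case precisely what you need for the coincidence statement.
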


\begin{proof}
We first show that each of our four bimodules is a direct sum of bimodules each with vertex conatined in $\Delta \Di $.
By Lemma \ref{lem:dual_ML_MN}(i), $\bM_\Li^*\otimes_{\cO \Li } \ci_{0,d}\cO \Gi $ is a direct sum of $(\cO \Li ,\cO \Gi )$-bimodules each with vertex contained in $\Delta \Di $ and, by Lemma \ref{lem:dual_ML_MN}(iii), ${}_\Li  \bX_\Gi^*$ has vertex $\Delta \Di $. By Remarks \ref{rem:induct_Rou}, \ref{rem:bisupmod} and Lemma \ref{lem:direct_prod_vert}, 
\begin{align*}
(\bM_{\Li_{d-1}}^* \otimes_{\cO \Li_{d-1}} \ci_{0,d-1}' \cO \Gi_{d-1} ) \boxtimes (\Blo^{\varnothing,1})^{(d)}
\end{align*}
is also a direct sum of $(\cO \Li ,\cO \Hi )$-bimodules each with vertex contained in $\Delta \Di $. 
Finally, $\cO \Li \fid$ is an $(\cO \Li \fid,\cO \Li \fid)$-bimodule with vertex $\Delta \Di $ and so $\cO \Li \fid \otimes_{\cO \Li } \cO \Ni $ is a direct sum of $(\cO \Li \fid,\cO \Ni \fid)$-bimodules each with vertex contained in $\Delta \Di $.

Lemma \ref{lem:bimod_vert}(i) now tells us that all four bimodules take a module with vertex $Q <_\sfs Q$ to a direct sum of modules each with vertex contained in $\Di  \cap {}^g Q$, for some $g \in \Gi $. We claim that $\Di  \cap {}^g Q <_\sfs Q$, if $Q <_\sfs\Di$. This will complete the proof of the first part of the lemma.
If $g \notin \Ni $, then $\Di  \cap {}^g Q \leq \Di  \cap {}^g \Di <_\sfs\Di$ by Lemma \ref{lem:conj_def}(i). If $g \in \Ni $, then the set of fixed points of ${}^g Q$, and hence $\Di  \cap {}^g Q$, on $[n]$ strictly contains $\sR $.

Now, dualizing Corollary \ref{cor:X_OGcM} using Lemma \ref{lem:sup_vert}, we have
\begin{align*}
{}_\Li  \bX_\Gi^*\,\,|_\Di \,(\cO \Gi  \ci_{0,d} \otimes_{\cO \Li } \bM_\Li )^* \simeq \bM_\Li^* \otimes_{\cO \Li } \ci_{0,d} \cO \Gi ,
\end{align*}
where the isomorphism follows from Lemmas \ref{lem:sup_alg_idempt_dual}, \ref{lem:dual_ML_MN}(i) and \ref{lem:dual_comp}. 
The second part of the Lemma now follows from Lemma \ref{lem:bimod_vert}(i).
\end{proof}

\begin{Lemma}\label{lem:HtoJ_nmv}
Let $\la \in \Par_0(\rho,d)$. The following congruence holds modulo $\Z\Irr_{<_\sfs \Di}(\cO \Hi \ci)$:
\begin{align*}
(\cO \Gi_{d-1}\bi_{d-1}\boxtimes (\bM^{(d)})^*)\otimes_{\cO \Hi } \big(\xi_\la\downarrow_{\Hi ,\ci}^{\Gi ,\bi}\big) \equiv \sum_{j\in I} 
\sum_{\mu\in \Par_0^j(\la)^-} 
\frac{\eps_{\mu ,\la} \eps_{\la,j}}{\eps_\mu \eps_{j}} \,\xi_{\mu,j}.
\end{align*}
\end{Lemma}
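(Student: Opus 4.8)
The statement is the composite of two character branching rules, one ``downward'' from $\Gi$ to $\Hi$ and one ``local'' within $\Hi$, read modulo the span $\Z\Irr_{<_\sfs\Di}(\cO\Hi\ci)$. The plan is to first apply Lemma~\ref{lem:HtoG}(ii), which expands $\xi_\la\downarrow^{\Gi,\bi}_{\Hi,\ci}$ as a sum over $j\in I$ and $\mu\in\Par_0^{\leq\ell-j}(\la)^-$ with explicit coefficients $\frac{\eps_\la\eps_{\mu,\la}\eps_j}{\eps_{\mu,j}}$. Then I would apply Lemma~\ref{lem:HtoH} (the $(\cO\Gi_{d-1}\bi_{d-1}\boxtimes(\bM^{(d)})^*)\otimes_{\cO\Hi}?$ version, whose action agrees with the undualized one on all of $\Grot(\K\Hi\ci)$) term by term to each $\xi_{\mu,i}$ appearing, obtaining $\sum_{k=0}^{\ell-i}\frac{\eps_{\mu,i}\eps_i\eps_k}{\eps_{\mu,k}}\xi_{\mu,k}$. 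Composing these two double sums and collecting, the coefficient of a given $\xi_{\mu,j}$ on the right will be a sum over all intermediate $i$ with $j\le\ell-i$ and $\mu\in\Par_0^{\leq\ell-i}(\la)^-$, weighted by the product of the two coefficient factors.

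\textbf{The combinatorial step.} The heart of the argument is then to show this sum simplifies, modulo $\Z\Irr_{<_\sfs\Di}(\cO\Hi\ci)$, to a single term supported on $\mu\in\Par_0^{j}(\la)^-$ (i.e.\ $\la/\mu$ lives on exactly one runner $j$), with coefficient $\frac{\eps_{\mu,\la}\eps_{\la,j}}{\eps_\mu\eps_j}$. The point is that the ``spread-out'' contributions — those where $\mu$ is obtained from $\la$ by a slide on a runner $i<j$, so that $\mu\in\Par_0^i(\la)^-$ with $i\neq j$ — should organize, after summing over the admissible intermediate indices, into characters of the forms listed in Lemma~\ref{lem:nmv_char_H}, namely $\xi_{\mu,k}^{(\pm)}+\xi_{\mu,k+1}^{(\pm)}$ (and the degenerate $k=0$ versions), hence vanish in the quotient $\Z\Irr(\cO\Hi\ci)/\Z\Irr_{<_\sfs\Di}(\cO\Hi\ci)$. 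Concretely, for a fixed $\mu$ with $\mu\in\Par_0^i(\la)^-$, I would sum the composite coefficient over all legal intermediate $i'$ and compare with Lemma~\ref{ChuKesLem}(ii) which pins down the shape $\mathfrak{h}_i$ of $\sh[\la\setminus\mu]$; the count $\mathfrak{f}_{(p-j,j)}(\mathfrak{h}_i)$ equals $1$ if $i+j\le\ell$ and $0$ otherwise (this is exactly the computation carried out inside the proof of Lemma~\ref{lem:HtoG}), so the telescoping is governed by which $(i,j)$ pairs are admissible.

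\textbf{Bookkeeping the $\eps$-factors and the signs.} Throughout I would carry the $\eps_\la,\eps_\mu,\eps_{\mu,\la},\eps_j,\eps_{\mu,j}$ symbols without expanding them, using only that $\eps_\nu^2\in\{1,2\}$, that $\eps_{\mu,\nu}=\eps_\mu\eps_\nu/\eps_{?}$-type identities hold (as used in the proof of Lemma~\ref{lem:HtoG}), and the parity dictionary from Lemma~\ref{ChuKesLem}(iii): $\mu$ and $\la$ have the same parity iff $i=0$ and $\la=\mu\sqcup(p)$. The products of two coefficient factors $\frac{\eps_\la\eps_{\mu',\la}\eps_i}{\eps_{\mu',i}}\cdot\frac{\eps_{\mu',i}\eps_i\eps_j}{\eps_{\mu',j}}=\frac{\eps_\la\eps_{\mu',\la}\eps_i^2\eps_j}{\eps_{\mu',j}}$ telescope pleasantly because the intermediate $\eps_{\mu',i}$ cancels; the residual $\eps_i^2$ is the source of the multiplicities that then collapse modulo $\Irr_{<_\sfs\Di}$. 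One then checks the surviving coefficient, for $\mu\in\Par_0^j(\la)^-$, rearranges to $\frac{\eps_{\mu,\la}\eps_{\la,j}}{\eps_\mu\eps_j}$; here I would also need $\eps_{\la,j}=\eps_{\mu,\la}\eps_{\mu,j}/(\text{something})$ consistency, exactly as in the coefficient manipulations at the end of the proof of Lemma~\ref{lem:HtoG}.

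\textbf{Main obstacle.} I expect the genuine difficulty to be the second step: verifying that the ``off-runner'' contributions (those with $\mu$ on runner $i\ne j$) assemble precisely into the Lemma~\ref{lem:nmv_char_H} shapes rather than into some other combination. This is not a formal consequence of the previous lemmas but requires a careful runner-by-runner analysis of $\sh[\la\setminus\mu]$ via Lemma~\ref{ChuKesLem}, together with an induction on how many runners separate $\mu$ from $\la$. The $\eps$- and sign-bookkeeping around the degenerate case $i=0$, $j=0$, where parities flip and associate pairs $\xi^\pm$ must be distinguished, is the most error-prone part; I would handle it last, after establishing the generic case, and lean on the $d>1$ hypothesis only if some base reduction forces it (though as stated the lemma does not assume $d>1$, so the $r=1,d=1$ situation should follow directly from Proposition~\ref{prop:define_M}(iv)).
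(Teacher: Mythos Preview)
Your overall architecture is exactly the paper's: compose Lemma~\ref{lem:HtoG}(ii) with Lemma~\ref{lem:HtoH}, observe the cancellation $\frac{\eps_\la\eps_{\mu,\la}\eps_i}{\eps_{\mu,i}}\cdot\frac{\eps_{\mu,i}\eps_i\eps_m}{\eps_{\mu,m}}=\frac{\eps_\la\eps_{\mu,\la}\eps_i^2\eps_m}{\eps_{\mu,m}}$, and then reduce modulo $\Z\Irr_{<_\sfs\Di}(\cO\Hi\ci)$ via Lemma~\ref{lem:nmv_char_H}. However, your ``main obstacle'' paragraph misidentifies where the work lies, and the misconception could send you down a wrong path.

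First, there is no ``induction on how many runners separate $\mu$ from $\la$'': each $\mu$ appearing satisfies $\mu\in\Par_0^j(\la)^-$ for a \emph{single} runner $j$ determined by $\mu$ (one bead slide), and no further analysis of $\sh[\la\setminus\mu]$ via Lemma~\ref{ChuKesLem} is needed beyond what already went into Lemma~\ref{lem:HtoG}. The actual computation is this: fix $\mu\in\Par_0^j(\la)^-$ and a final index $m\in I$; the admissible intermediate indices are $0\le i\le\min\{\ell-j,\ell-m\}$, and since $\sum_{i=0}^{k}\eps_i^2=2k+1$, the coefficient of $\xi_{\mu,m}$ is $(2\ell-2j+1)\frac{\eps_\la\eps_{\mu,\la}\eps_m}{\eps_{\mu,m}}$ for $m\le j$ and $(2\ell-2m+1)\frac{\eps_\la\eps_{\mu,\la}\eps_m}{\eps_{\mu,m}}$ for $m>j$. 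The paper then splits on the parity of $\mu$ (the factor $\eps_m/\eps_{\mu,m}$ behaves differently in the two cases), performs explicit telescoping subtractions of the Lemma~\ref{lem:nmv_char_H} characters to reduce $\sum_{m=0}^{j}(\cdots)\xi_{\mu,m}$ to a multiple of $\xi_{\mu,j}$ and $\sum_{m=j+1}^{\ell}(\cdots)\xi_{\mu,m}$ to a multiple of $\xi_{\mu,j+1}$, and finally subtracts one more such character to merge these into the single term $\frac{\eps_{\mu,\la}\eps_{\la,j}}{\eps_\mu\eps_j}\xi_{\mu,j}$. The case split on $\mu$ even/odd and the three explicit telescoping reductions are the substance you are missing; once you compute the integers $2\ell-2j+1$ and $2\ell-2m+1$ everything is concrete arithmetic rather than combinatorics of skew shapes.
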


\begin{proof}
First note that, by Lemma \ref{lem:HtoG}(ii), every superconstituent of $\xi_\la \downarrow^{\Gi ,b}_{\Hi ,\ci}$ must be of the form $\xi_{\mu,i}$, for some $\mu \in \Par_0^j(\la)^-$ and $i,j \in I$. Moreover, the coefficient is
\begin{align*}
\begin{cases}
\frac{\eps_\la \eps_{\mu, \la} \eps_{i}}{\eps_{\mu,i}} & \text{if }j \leq \ell - i \\
0 & \text{otherwise.}
\end{cases}
\end{align*}
Let $j\in I$ and $\mu \in \Par_0^j(\la)^-$. By Lemma \ref{lem:HtoH}, we now have that, for $m \in I$, $\xi_{\mu ,m}$ appears as a superconstituent in
$
(\cO \Gi_{d-1}\bi_{d-1}\boxtimes (\bM^{(d)})^*)\otimes_{\cO \Hi } \big(\xi_{\la}\downarrow_{\Hi ,\ci}^{\Gi ,\bi}\big)
$
with coefficient
\begin{align*}
&\sum_{i=0}^{\min\{\ell-j,\ell-m\}}\frac{\eps_\la \eps_{\mu , \la} \eps_{i}}{\eps_{\mu ,i}}\frac{\eps_{\mu,i}\eps_{i}\eps_{m}}{\eps_{\mu,m}} 
&&= \ \ \  \sum_{i=0}^{\min\{\ell-j,\ell-m\}}\frac{\eps_\la \eps_{\mu ,\la} \eps_{i}^2 \eps_{m}}{\eps_{\mu ,m}}\\
= &
\begin{cases}
\sum_{i=0}^{\ell - j}\frac{\eps_\la \eps_{\mu , \la} \eps_{i}^2 \eps_{m}}{\eps_{\mu ,m}}& \text{if }0\leq m\leq j \\
\sum_{i=0}^{\ell - m}\frac{\eps_\la \eps_{\mu , \la} \eps_{i}^2 \eps_{m}}{\eps_{\mu ,m}}& \text{if }j< m\leq \ell
\end{cases}
&&= \ \ \ 
\begin{cases}
(2\ell - 2j + 1)\frac{\eps_\la \eps_{\mu , \la} \eps_{m}}{\eps_{\mu ,m}}& \text{if }0\leq m\leq j \\
(2\ell - 2m + 1)\frac{\eps_\la \eps_{\mu , \la} \eps_{m}}{\eps_{\mu ,m}}& \text{if }j< m\leq \ell.
\end{cases}
\end{align*}
We have now shown that
\begin{align}
\begin{split}\label{algn:char_equal}
&\,\big(\cO \Gi_{d-1}\bi_{d-1}\boxtimes (\bM^{(d)})^*\big)\otimes_{\cO \Hi } \big(\xi_\la\downarrow_{\Hi ,\ci}^{\Gi ,\bi}\big) \\
= & \sum_{j\in I}\hspace{-1mm}\sum_{\mu\in \Par_0^j(\la)^-} \hspace{-2.5mm}\Big( \hspace{-.5mm}\sum_{m=0}^j (2\ell - 2j + 1)\frac{\eps_\la \eps_{\mu ,\la} \eps_{m}}{\eps_{\mu,m}} \xi_{\mu,m} 
 + \hspace{-2.5mm} \sum_{m=j+1}^\ell (2\ell - 2m + 1)\frac{\eps_\la \eps_{\mu , \la} \eps_{m}}{\eps_{\mu,m}} \xi_{\mu,m} \Big).
\end{split}
\end{align}
If $\mu$ is even, we have the following congruence modulo $\Z\Irr_{<_\sfs \Di}(\cO \Hi \ci)$:
\begin{align}
\begin{split}\label{algn:mu_even}
\sum_{m=0}^j (2\ell - 2j + 1)\frac{\eps_\la \eps_{\mu , \la} \eps_{m}}{\eps_{\mu,m}} \xi_{\mu,m} 
=  \sum_{m=0}^j (2\ell - 2j + 1) \eps_\la^2 \xi_{\mu,m}
\equiv (2\ell - 2j + 1)\frac{\eps_\la^2}{\eps_{j}^2} \xi_{\mu,j},
\end{split}
\end{align}
where the congruence follows by subtracting
\begin{align*}
\textstyle\sum_{m=0}^{j-1} (2\ell - 2j +1) \frac{\eps_\la^2}{\eps_{j}^2} \Big(\frac{2}{\eps_{m}^2}\xi_{\mu,m} + \xi_{\mu,m+1}\Big),
\end{align*}
which, by Lemma \ref{lem:nmv_char_H}(i),(ii), is in $\Z\Irr_{<_\sfs \Di}(\cO \Hi \ci)$. (Note that, since $\mu$ is even, by Lemma \ref{ChuKesLem}(iii), $\la$ must be odd, unless $j=0$. Therefore, $\eps_\la^2/\eps_{j}^2$ is an integer.)

If $\mu$ is odd, we have the following congruence modulo $\Z\Irr_{<_\sfs \Di}(\cO \Hi \ci)$:
\begin{align}
\begin{split}\label{algn:mu_odd}
\sum_{m=0}^j (2\ell - 2j + 1)\frac{\eps_\la \eps_{\mu , \la} \eps_{m}}{\eps_{\mu ,m}} \xi_{\mu,m}  = \sum_{m=0}^j (2\ell - 2j + 1) \eps_{m}^2 \xi_{\mu,m} 
 \equiv (2\ell - 2j + 1) \xi_{\mu,j},
\end{split}
\end{align}
where the congruence follows by subtracting
\begin{align*}
\textstyle\sum_{m=0}^{j-1} (2\ell - 2j +1) (\xi_{\mu,m} + \xi_{\mu,m+1}),
\end{align*}
which, by Lemma \ref{lem:nmv_char_H}(i),(iii), is in $\Z\Irr_{<_\sfs \Di}(\cO \Hi \ci)$.

Next note that, for $j < \ell$, we have the following congruence modulo $\Z\Irr_{<_\sfs \Di}(\cO \Hi \ci)$:
\begin{align}
\begin{split}\label{algn:mu_both}
&\sum_{m=j+1}^\ell (2\ell - 2m + 1)\frac{\eps_\la \eps_{\mu , \la} \eps_{m}}{\eps_{\mu ,m}} \xi_{\mu,m} \\
= & \sum_{m=j+1}^\ell (2\ell - 2m + 1) \eps_\la \eps_{\mu ,\la} \eps_\mu \xi_{\mu ,m} \equiv (\ell - j)\eps_\la \eps_{\mu, \la} \eps_\mu \xi_{\mu ,j+1},
\end{split}
\end{align}
where the equality holds since $(p-m,m)$ is odd for $m>0$ and the congruence holds by subtracting
\begin{align*}
\textstyle\sum_{m=j+1}^{\ell-1} (\ell - m) \eps_\la \eps_{\mu , \la} \eps_\mu  (\xi_{\mu,m} + \xi_{\mu,m+1}),
\end{align*}
which, by Lemma \ref{lem:nmv_char_H}(i), is in $\Z\Irr_{<_\sfs \Di}(\cO \Hi \ci)$.

Denote by $\Par_0^j(\la)^-_\0$ (resp. $\Par_0^j(\la)^-_\1)$ the set of all even (resp. odd) partitions in $\Par_0^j(\la)^-$. 
Putting (\ref{algn:char_equal}), (\ref{algn:mu_even}), (\ref{algn:mu_odd}) and (\ref{algn:mu_both}) together, we now have that
\begin{align*}
(\cO \Gi_{d-1}\bi_{d-1}\boxtimes (\bM^{(d)})^*)\hspace{-.3mm}\otimes_{\cO \Hi } \big(\xi_\la\downarrow_{\Hi ,\ci}^{\Gi ,\bi}\big)\hspace{-1mm} 
\equiv &\sum_{j\in I}\hspace{-.5mm}\sum_{\mu\in \Par_0^j(\la)^-_\0} \hspace{-5mm}\big((2\ell - 2j + 1) \frac{\eps_\la^2}{\eps_{j}^2} \xi_{\mu,j} + (\ell - j) \eps_\la^2 \xi_{\mu,j+1}\big) \\
+ & \sum_{j\in I}\sum_{\mu\in \Par_0^j(\la)^-_\1 } \big((2\ell - 2j + 1) \xi_{\mu,j} + 2(\ell - j) \xi_{\mu,j+1}\big)
\end{align*}
modulo $\Z\Irr_{<_\sfs \Di}(\cO \Hi \ci)$. Finally, we subtract
\begin{align*}
\sum_{j\in I}\sum_{\mu\in \Par_0^j(\la)^-_\0 } (\ell - j) \eps_\la^2 \Big(\frac{2}{\eps_{j}^2}\xi_{\mu,j} + \xi_{\mu,j+1}\Big) 
+  \sum_{j\in I}\sum_{\mu\in \Par_0^j(\la)^-_\1 } 2(\ell - j) (\xi_{\mu,j} + \xi_{\mu,j+1}),
\end{align*}
which, by Lemma \ref{lem:nmv_char_H}, is in $\Z\Irr_{<_\sfs \Di}(\cO \Hi \ci)$, giving
\begin{align*}
&(\cO \Gi_{d-1}\bi_{d-1}\boxtimes (\bM^{(d)})^*)\otimes_{\cO \Hi } \big(\xi_\la\downarrow_{\Hi ,\ci}^{\Gi ,\bi}\big)
\equiv \sum_{j\in I}\sum_{\mu\in \Par_0^j(\la)^-_\0 } \frac{\eps_\la^2}{\eps_{j}^2}\xi_{\mu,j} + 
\sum_{j\in I}
\sum_{\mu\in \Par_0^j(\la)^-_\1} \xi_{\mu,j}
\end{align*}
modulo $\Z\Irr_{<_\sfs \Di}(\cO \Hi \ci)$. This completes the claim noting that, by Lemma \ref{ChuKesLem}(iii), $\la$ and $\mu$ must have opposite parity unless $j=0$.
\end{proof}

Recall the notation $\La(I,d)$ and (\ref{EBinom}),(\ref{EDUnderline}) from \S\ref{sec:comb}. 
For $k\in I$, we denote 
$$
\ude_k:=(0,\dots,0,1,0,\dots,0)\in\La(I,1)
$$
with $1$ in the $k$th position. We can add the compositions coordinate-wise. For example for $\ud=(d_0,\dots,d_\ell)\in\La(I,d)$ we have 
$$
\ud+\ude_k=(d_0,\dots,d_{k-1},d_k+1,d_{k+1},\dots,d_\ell)\in\La(I,d+1). 
$$
Similarly we have $\ud-\ude_k\in\La(I,d-1)$ makes sense if $d_k>0$.

For $\ud=(d_0,\dots,d_\ell)\in\La(I,d)$. 
We now introduce the special notation
\begin{align*}
\xi_{\rho,\ud}:=\xi_{\rho,0^{d_0}, \dots ,\ell^{d_\ell}}^{(\pm)}:=
\xi_{\rho,0,\dots,0,\, \dots ,\,\ell,\dots,\ell}^{(\pm)}\in\Irr(\K \Li \fid)
\ \text{and}\ 
\eps_{\rho,\ud}:=
\eps_{\rho,0^{d_0}, \dots ,\ell^{d_\ell}}:=
\eps_{\rho,0,\dots,0,\, \dots ,\,\ell,\dots,\ell}
\end{align*}
where $i$ is repeated $d_i$ times for all $i\in I$.

Similarly, for $\ud\in \La(I,d-1)$ and $k\in I$, we can make sense of $\xi_{\rho,\ud,k}^{(\pm)}$, $\xi_{\rho,k,\ud}^{(\pm)}$, $\eps_{\rho,\ud,k}$, etc:$$
\xi_{\rho,\ud,k}:=\xi_{\rho,0^{d_0}, \dots ,\ell^{d_\ell},k}^{(\pm)}\in\Irr(\K \Li \fid)\ \text{and}\ 
\eps_{\rho,\ud,k}:=\eps_{\rho,0^{d_0}, \dots ,\ell^{d_\ell},k}^{(\pm)}
$$
Note that $\eps_{\rho,\ud,k}=\eps_{\rho,\ud+\ude_k}$ but similar equality is not in general true for the $\xi$'s.

$\ud\in\La(I,d)$, we denote by 
$\xi_{\rho,\ud}^{\tSi_d}$
the sum of the irreducible character $\xi_{\rho,\ud}^{(\pm)}$ with all its $\Ni $-conjugates and their associates. 
Note that this is the sum of exactly 
$
\binom{d}{\ud} \eps_{\rho,\ud}^2
$
irreducible characters of $\K \Li \fid$. Similarly, for $\ud\in\La(I,d-1)$, we define 
$\xi_{\rho,\ud,k}^{\tSi_{d-1}}$ to be the sum of the irreducible character $\xi_{\rho,\ud ,k}^{(\pm)}$ with all its $\Ni_{d-1}$-conjugates and their associates. Note that it is the sum of exactly 
$
\binom{d-1}{\ud} \eps_{\rho,\ud+\ude_k}^2
$
irreducible characters of $\K \Li \fid$.

Recall the notation (\ref{EK(la)}) and other combinatorial notation from \S\ref{sec:comb}. 

\begin{Lemma}\label{lem:restrict_calc}
Let $\ud\in\La(I,d)$ and $\la\in\Par_0(\rho,\ud)$. 
Then 
%we have the following congruence modulo\, $\Z\Irr_{<_\sfs \Di}(\K \Li \fid)$:
\begin{align*}
{}_\Li  \bX_\Gi^*\otimes_{\cO \Gi } \xi_\la
\equiv \frac{\eps_{\la}\,2^{(|\la^{(0)}| - h(\la^{(0)}))/2}
}{\eps_{\rho,\ud}} \,K(\la)\,\xi_{\rho,\ud}^{\tSi_d}\pmod{\Z\Irr_{<_\sfs \Di}(\K \Li \fid)}.
\end{align*}
\end{Lemma}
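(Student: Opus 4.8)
The strategy is to compute the left-hand side by induction on $d$, peeling off the last factor via the bisupermodule isomorphism of Lemma~\ref{lem:tensor_isom} and the branching rules. The base case $d=1$ is essentially Lemma~\ref{Omega_bOGf_Morita}(ii) (or Proposition~\ref{prop:define_M}(ii)) together with Lemma~\ref{lem:HtoG}(ii): here $\ud=\ude_j$ for a single $j\in I$, $\xi_{\rho,\ud}^{\tSi_1}$ is just $\xi_{\rho,j}^{(\pm)}$ (and its associate if $j>0$), and one checks the scalar $\eps_j\,2^{(|\la^{(0)}|-h(\la^{(0)}))/2}K(\la)/\eps_{\rho,\ud}$ matches directly from the weight-one character formulas, noting $K(\la)=K'_{\la^{(0)}}$ when $j=0$ and $K(\la)=1$ otherwise for a weight-one quotient.

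\textbf{Inductive step.} For $d>1$, I would use Lemma~\ref{lem:tensor_isom}(i), which gives ${}_\Gi\bX_\Li \,|_\Di\,\big(\bY\otimes_{\cO\Hi}(\bX_{d-1}\boxtimes(\Blo^{\varnothing,1})^{(d)})\big)$, hence dually (Lemma~\ref{lem:sup_vert}, Lemma~\ref{lem:dual_comp}) a statement relating ${}_\Li\bX_\Gi^*$ to $\big((\bM_{\Li_{d-1}}^*\otimes_{\cO\Li_{d-1}}\ci_{0,d-1}'\cO\Gi_{d-1})\boxtimes(\Blo^{\varnothing,1})^{(d)}\big)\otimes_{\cO\Hi}\bY^*\otimes_{\cO\Gi}?$, all modulo $\Di$-small vertices. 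Applying this to $\xi_\la$: first compute $\xi_\la\downarrow^{\Gi,\bi}_{\Hi,\ci}$ via Lemma~\ref{lem:HtoG}(ii); then $\bY^*$ acts (its character action being governed by the super Green correspondence and Lemma~\ref{lem:HtoH}), and I would invoke Lemma~\ref{lem:HtoJ_nmv} to get, modulo $\Z\Irr_{<_\sfs\Di}(\cO\Hi\ci)$, the clean expression $\sum_{j\in I}\sum_{\mu\in\Par_0^j(\la)^-}\tfrac{\eps_{\mu,\la}\eps_{\la,j}}{\eps_\mu\eps_j}\xi_{\mu,j}$. Note that if $\la\in\Par_0(\rho,\ud)$ then $\mu\in\Par_0^j(\la)^-$ forces $\mu\in\Par_0(\rho,\ud-\ude_j)$. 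Next apply the inductive hypothesis (Lemma~\ref{lem:restrict_calc} with $d-1$, valid by Remark~\ref{rem:induct_Rou}) to each $\xi_\mu$ via $\bM_{\Li_{d-1}}^*\otimes\ci_{0,d-1}'\cO\Gi_{d-1}\otimes?$; by Lemma~\ref{lem:vert_prsv} this is, modulo $\Di$-small characters, the same as ${}_{\Li_{d-1}}\bX_{\Gi_{d-1}}^*\otimes?$, so the induction applies. Finally the extra $(\Blo^{\varnothing,1})^{(d)}$ tensor factor contributes nothing to characters (it is the regular bimodule), and one assembles everything using Lemma~\ref{lem:AboxM} and the definition of $\xi_{\rho,\ud}^{\tSi_d}$ as a sum over $\Ni$-conjugates.

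\textbf{The bookkeeping.} The main work is the scalar arithmetic. One must combine: (a) the coefficient $\tfrac{\eps_{\mu,\la}\eps_{\la,j}}{\eps_\mu\eps_j}$ from Lemma~\ref{lem:HtoJ_nmv}; (b) the inductive coefficient $\tfrac{\eps_\mu\,2^{(|\mu^{(0)}|-h(\mu^{(0)}))/2}}{\eps_{\rho,\ud-\ude_j}}K(\mu)$; (c) the counting identity $\sum_{\mu\in\Par_0^j(\la)^-}K(\mu)$. For $j\neq0$ one has $\mu^{(0)}=\la^{(0)}$ and $\mu^{(i)}=\la^{(i)}$ for $i\neq j$ with $\mu^{(j)}\in\Par(\la^{(j)})^{-1}$, so $\sum_\mu K(\mu)=K_{\la^{(0)}}'\prod_{i\neq j}K_{\la^{(i)}}\sum_{\mu^{(j)}}K_{\mu^{(j)}}=K(\la)$ by Lemma~\ref{lem:count_tab}(i); for $j=0$ one similarly has $\mu^{(0)}\in\Par_0(\la^{(0)})^{-1}$ and applies Lemma~\ref{lem:count_tab}(ii), with the power of $2$ absorbing the change $|\mu^{(0)}|-h(\mu^{(0)})\mapsto|\la^{(0)}|-h(\la^{(0)})$ which shifts by $\pm1$. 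Matching parities via Lemma~\ref{ChuKesLem}(iii) keeps the $\eps$-factors honest ($\eps_{\mu,\la}$ and $\eps_j$ track exactly the parity change, and $\eps_{\rho,\ud}/\eps_{\rho,\ud-\ude_j}$ the resulting $\sqrt2$'s). Tracking the multiplicities of $\Ni$-conjugates — counting how many $\Ni$-orbits on $\Par_0(\rho,\ud)$-type data with fixed quotient sizes appear, which is $\binom{d}{\ud}$ versus $\binom{d-1}{\ud-\ude_j}$ — is the step most prone to error, and I would organize it by first establishing the identity at the level of single $\xi_{\rho,\ud}^{(\pm)}$ and only afterwards summing over the orbit to produce $\xi_{\rho,\ud}^{\tSi_d}$, so that the combinatorial factor $\tfrac{d!}{d_0!\cdots d_\ell!}$ never has to be manipulated directly.
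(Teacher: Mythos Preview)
Your approach is correct in its essentials and shares the paper's core structure: induction on $d$, factorization through $\Hi$, the key character congruence of Lemma~\ref{lem:HtoJ_nmv}, and the combinatorial bookkeeping via Lemma~\ref{lem:count_tab} and Lemma~\ref{lem:AboxM}. The paper's implementation differs in one cleanliness-improving move: rather than working with ${}_\Li\bX_\Gi^*$ directly and factoring via $\bY^*$ and $\bX_{d-1}^*$ through Lemma~\ref{lem:tensor_isom}(i), the paper immediately invokes the last part of Lemma~\ref{lem:vert_prsv} to replace ${}_\Li\bX_\Gi^*$ by $\bM_\Li^*\otimes_{\cO\Li}\ci_{0,d}\cO\Gi$ and then inducts on \emph{that} expression. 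The advantage is that $\bM_\Li^*\otimes_{\cO\Li}\ci_{0,d}\cO\Gi$ decomposes as a genuine bimodule isomorphism (via Lemma~\ref{lem:tensor_bisupmod}) into the piece $\big((\bM_{\Li_{d-1}}^*\otimes_{\cO\Li_{d-1}}\ci_{0,d-1}'\cO\Gi_{d-1})\boxtimes(\Blo^{\varnothing,1})^{(d)}\big)\otimes_{\cO\Hi}(\cO\Gi_{d-1}\bi_{d-1}\boxtimes(\bM^{(d)})^*)\otimes_{\cO\Hi}\cO\Gi\ci_d$, so Lemma~\ref{lem:HtoJ_nmv} applies verbatim to the middle factor and the inductive hypothesis applies verbatim to the left factor, with no further $|_\Di$ tracking. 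Your route instead needs the additional observations that $\bY^*\otimes_{\cO\Gi}?$ agrees with $(\cO\Gi_{d-1}\bi_{d-1}\boxtimes(\bM^{(d)})^*)\otimes_{\cO\Hi}(\,?\downarrow^{\Gi,\bi}_{\Hi,\ci})$ modulo $\Z\Irr_{<_\sfs\Di}(\cO\Hi\ci)$ (dualizing Lemma~\ref{lem:vert_eqns_X_Y}(iii) and arguing as in Lemma~\ref{lem:vert_prsv}), and a back-and-forth swap between ${}_{\Li_{d-1}}\bX_{\Gi_{d-1}}^*$ and $\bM_{\Li_{d-1}}^*\otimes\ci_{0,d-1}'\cO\Gi_{d-1}$; these are routine but add friction the paper sidesteps.
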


\begin{proof}
Throughout this proof all congruences will assumed to be modulo $\Z\Irr_{<_\sfs \Di}(\K \Li \fid)$.

We note that, with the last part of Lemma \ref{lem:vert_prsv} in mind, it is enough to prove that
\begin{align}
\label{algn:ML_OG_ind}
 \bM_\Li^*\otimes_{\cO \Li } \ci_{0,d}\cO \Gi \otimes_{\cO \Gi } \xi_\la 
\equiv  \frac{\eps_{\la}2^{(|\la^{(0)}| - h(\la^{(0)}))/2}}{\eps_{\rho,\ud}} 
K(\la)\xi_{\rho,\ud}^{\tSi_d}.
\end{align}

We prove (\ref{algn:ML_OG_ind}) by induction on $d$. We first check that the statement holds for $d=1$. In this case $|\la^{(0)}| = h(\la^{(0)})$, $K_{\la^{(0)}}' = K_{\la^{(1)}} = \dots = K_{\la^{(\ell)}} = 1$ and $\ud=\ude_k$ for some $k\in I$. By Lemma \ref{ChuKesLem}(iii), $\eps_\la = \eps_{\rho ,k}$.
%, where $\la \in \Par_0^k(\rho)^+$. 
We also have $\ci_0 = \ci$ and $\ci_1 = \bi$. We, therefore, need to show that
\begin{align*}
\bM_\Li^* \otimes_{\cO \Li } \ci\cO \Gi  \bi\otimes_{\cO \Gi }\xi_\la \equiv \frac{\eps_\la}{\eps_{\rho ,k}}\xi_{\rho,k} = \xi_{\rho,k} .
\end{align*}
This now agrees with taking $d=1$ in Lemma \ref{lem:HtoJ_nmv}, as applying Lemma \ref{ChuKesLem}(iii) again gives
\begin{align*}
\frac{\eps_{\rho , \la} \eps_{\la ,k}}{\eps_\rho \eps_{k}} = \frac{\eps_{k} \eps_{\rho}}{\eps_\rho \eps_{k}} = 1.
\end{align*}
We now assume $d>1$ and that (\ref{algn:ML_OG_ind}) holds for all $\mu\in\Par(\rho,d-1)$. Note that, by Remark \ref{rem:induct_Rou}, the inductive steps we make below are valid.

%Recall the $\ci_k'$'s as defined in $\S$\ref{sec:notation}. 
Now,
\begin{align*}
&\bM_\Li^* \otimes_{\cO \Li } \ci_{0,d}\cO \Gi 
\\ \simeq\, & (\bM_\Li^* \otimes_{\cO \Li } \ci_{0,d-1} \cO \Hi ) \otimes_{\cO \Hi } \cO \Gi  \ci_d\\
\simeq\, & (\bM_{\Li_{d-1}}^* \boxtimes (\bM^{(d)})^*) \otimes_{\cO \Li } \big(\ci_{0,d-1}' \cO \Gi_{d-1} \boxtimes (\Blo^{\varnothing,1})^{(d)}\big) \otimes_{\cO \Hi } \cO \Gi  \ci_d\\
\simeq\, & \big((\bM_{\Li_{d-1}}^* \otimes_{\cO \Li_{d-1}} \ci_{0,d-1}' \cO \Gi_{d-1} ) \boxtimes (\bM^{(d)})^*\big) \otimes_{\cO \Hi } \cO \Gi  \ci_d \\
\simeq\, & \big((\bM_{\Li_{d-1}}^* \hspace{-1 mm}\otimes_{\cO \Li_{d-1}} \ci_{0,d-1}' \cO \Gi_{d-1} ) \boxtimes (\Blo^{\varnothing,1})^{(d)}\big) \hspace{-.5 mm}\otimes_{\cO \Hi } (\cO \Gi_{d-1} \ci_{d-1}' \hspace{-.3 mm}\boxtimes (\bM^{(d)})^*) \hspace{-.5 mm}\otimes_{\cO \Hi } \cO \Gi  \ci_d,
\end{align*}
where the second isomorphism follows from Lemmas \ref{lem:dual_ML_MN}(i) and \ref{lem:dual_prod} and the third and fourth from Lemma \ref{lem:tensor_bisupmod}. Since $\ci_d=\bi$, $\ci_{d-1}=\ci$ and $\ci_{d-1}' = \bi_{d-1}$, Lemma \ref{lem:HtoJ_nmv} now gives
\begin{align*}
&\bM_\Li^* \otimes_{\cO \Li } \ci_{0,d}\cO \Gi \otimes_{\cO \Gi } \xi_\la
\\
=\, & \big((\bM_{\Li_{d-1}}^* \hspace{-.4 mm}\otimes_{\cO \Li_{d-1}} \ci_{0,d-1}' \cO \Gi_{d-1} ) \boxtimes (\Blo^{\varnothing,1})^{(d)}\big) \hspace{-.5 mm}\otimes_{\cO \Hi } \hspace{-.25 mm}
 \big(\cO \Gi_{d-1} \bi_{d-1} \boxtimes (\bM^{(d)})^*\big) \hspace{-.5 mm}\otimes_{\cO \Hi } \big(\xi_\la\downarrow_{\Hi ,\ci}^{\Gi ,\bi}\big)\\
\equiv\, & \big((\bM_{\Li_{d-1}}^* \otimes_{\cO \Li_{d-1}} \ci_{0,d-1}' \cO \Gi_{d-1} ) \boxtimes (\Blo^{\varnothing,1})^{(d)}\big) \otimes_{\cO \Hi } \Big( \sum_{k\in I}\sum_{\mu\in \Par_0^k(\la)^-} \frac{\eps_{\mu , \la} \eps_{\la ,k}}{\eps_\mu \eps_{k}} \xi_{\mu ,k}\Big).
\end{align*}
Implicit in the above calculation is that, by Lemma \ref{lem:vert_prsv},
\begin{align*}
\big((\bM_{\Li_{d-1}}^* \otimes_{\cO \Li_{d-1}} \ci_{0,d-1}' \cO \Gi_{d-1} ) \boxtimes (\Blo^{\varnothing,1})^{(d)}\big) &\otimes_{\cO \Hi } ?
\end{align*}
maps $\Z\Irr_{<_\sfs \Di}(\K \Hi \ci)$ to $\Z\Irr_{<_\sfs \Di}(\K \Li \fid)$. Now, by the inductive hypothesis and Lemma \ref{lem:AboxM},
\begin{align}
\begin{split}\label{algn:ML_OG_la}
 &\bM_\Li^* \otimes_{\cO \Li } \ci_{0,d}\cO \Gi \otimes_{\cO \Gi } \xi_\la 
 \\
 \equiv 
&\sum_{k\in I}\sum_{\mu\in \Par_0^k(\la)^-} \frac{\eps_{\mu , \la} \eps_{\la ,k}}{\eps_\mu \eps_{k}} \frac{\eps_{\rho,\ud-\ude_k} \eps_{\mu,k}}{\eps_\mu \eps_{\rho,\ud-\ude_k,k}}
\frac{\eps_\mu}{\eps_{\rho,\ud-\ude_k}} 
K(\mu) 2^{(|\mu^{(0)}| - h(\mu^{(0)}))/2}\xi_{\rho,\ud-\ude_k ,k}^{\tSi_{d-1}}
\\
=&\sum_{k\in I}\sum_{\mu\in \Par_0^k(\la)^-} \frac{\eps_{\mu , \la} \eps_{\la ,k} \eps_{\mu,k}}{\eps_\mu \eps_{k}\eps_{\rho,\ud}} \
K(\mu) 2^{(|\mu^{(0)}| - h(\mu^{(0)}))/2}\xi_{\rho,\ud-\ude_k ,k}^{\tSi_{d-1}}.
\end{split}
\end{align}
%Of course, unless $i=k$, $\mu^{(i)} = \la^{(i)}$.

We now split this sum into two summands, one for $k=0$ and one for $k\neq 0$. For the $k=0$ summand, by Lemma \ref{ChuKesLem}(iii), we have  
$
2^{(|\la^{(0)}|-h(\la^{(0)}))/2} = 2^{(|\mu^{(0)}|-h(\mu^{(0)}))/2} \eps_{\mu , \la}.
$ 
Noting also that in this case $(p-0,0)$ is even, we get
\begin{align}\label{algn:k=0}
\begin{split}
 &
\sum_{\mu\in \Par_0^k(\la)^-} \frac{\eps_{\mu , \la} \eps_{\la ,0} \eps_{\mu,0}}{\eps_\mu \eps_{0}\eps_{\rho,\ud}} \
K(\mu) 2^{(|\mu^{(0)}| - h(\mu^{(0)}))/2}\xi_{\rho,\ud-\ude_0 ,0}^{\tSi_{d-1}}
\\
= & \sum_{\mu^{(0)} \in \Par_0(\la^{(0)})^{-1}} \frac{\eps_\la}{\eps_{\rho,\ud}}K_{\mu^{(0)}}' K_{\la^{(1)}}\dots K_{\la^{(\ell)}}
2^{(|\la^{(0)}| - h(\la^{(0)}))/2}\xi_{\rho,\ud-\ude_0 ,0}^{\tSi_{d-1}}\\
= & \frac{\eps_\la}{\eps_{\rho,\ud}}K(\la) 
2^{(|\la^{(0)}| - h(\la^{(0)}))/2}\xi_{\rho,\ud-\ude_0 ,0}^{\tSi_{d-1}},
\end{split}
\end{align}
where the second equality follows from the comments at the end of $\S$\ref{sec:comb} and the third from Lemma \ref{lem:count_tab}(ii).

For the $k>0$ summands, we have  
$
2^{(|\la^{(0)}|-h(\la^{(0)}))/2} = 2^{(|\mu^{(0)}|-h(\mu^{(0)}))/2}
$ 
and $(p-k,k)$ is always odd. Also, by Lemma \ref{ChuKesLem}(iii), $\mu$ and $\la$ always have opposite parity. Therefore, we get for the $k$th summand:
\begin{align}\label{algn:k>0}
\begin{split}
& \sum_{\mu\in \Par_0^k(\la)^-} \frac{\eps_{\mu , \la} \eps_{\la ,k} \eps_{\mu,k}}{\eps_\mu \eps_{k}\eps_{\rho,\ud}} \
K(\mu) 2^{(|\mu^{(0)}| - h(\mu^{(0)}))/2}\xi_{\rho,\ud-\ude_k ,k}^{\tSi_{d-1}}
\\
= & \sum_{\mu\in \Par_0^k(\la)^-} \frac{\eps_{\mu,k}}{\eps_{\rho,\ud}}K_{\la^{(0)}}' K_{\la^{(1)}} \dots K_{\mu^{(k)}} \dots K_{\la^{(\ell)}}
2^{(|\la^{(0)}| - h(\la^{(0)}))/2}
\xi_{\rho,\ud-\ude_k ,k}^{\tSi_{d-1}} 
\\
= & \sum_{\mu^{(k)}\in \Par(\la^{(k)})^{-1}} \frac{\eps_\la}{\eps_{\rho,\ud}}K_{\la^{(0)}}' K_{\la^{(1)}} \dots K_{\mu^{(k)}} \dots K_{\la^{(\ell)}}
2^{(|\la^{(0)}| - h(\la^{(0)}))/2}\xi_{\rho,\ud-\ude_k ,k}^{\tSi_{d-1}} 
\\
= & \frac{\eps_\la}{\eps_{\rho,\ud}}K(\la)
2^{(|\la^{(0)}| - h(\la^{(0)}))/2}\xi_{\rho,\ud-\ude_k ,k}^{\tSi_{d-1}},
\end{split}
\end{align}
where, again, the second equality follows from the comments at the end of $\S$\ref{sec:comb} and the third from Lemma \ref{lem:count_tab}(i).

Putting (\ref{algn:ML_OG_la}), (\ref{algn:k=0}) and (\ref{algn:k>0}) together, we have now shown that
\begin{align*}
&\bM_\Li^* \otimes_{\cO \Li } \ci_{0,d} \cO \Gi  \otimes_{\cO \Gi }\xi_\la 
\equiv\,\,& \sum_{k \in I} \frac{\eps_\la}{\eps_{\rho,\ud}} K(\la)\,
2^{(|\la^{(0)}| - h(\la^{(0)}))/2}\xi_{\rho,\ud-\ude_k ,k}^{\tSi_{d-1}} 
=\,\, &\frac{\eps_\la}{\eps_{\rho,\ud}} K(\la)\xi_{\rho,\ud}^{\tSi_d},
\end{align*}
as desired.
\end{proof}

\begin{Lemma}\label{lem:ith_runner}
Let $\mu \in \Par_0(\rho,d-1)$, $\la \in \Par_0(\rho,d)$ and $\ud=(d_0,\dots,d_\ell)\in\La(I,d)$.
\begin{enumerate}
\item Let $i \in I$. If $\xi_\la$ appears as a superconstituent with non-zero coefficient in 
$
\bY \otimes_{\cO \Hi } \xi_{\mu,i},
$ 
then $\la \in \Par_0^j(\mu)^+$, for some $j\in I$. %\textcolor{red}{(Note we are not yet saying that $i=j$.)}
\item Say $\xi_\la$ appears with non-zero coefficient in 
$
{}_\Gi  \bX_\Li  \otimes_{\cO \Li } \xi_{\rho,\ud}.
$ 
If $d_i>0$, for some $i\in I$, then $|\la^{(i)}|>0$.
\item Suppose $\bX_{d-1}$ induces a Morita superequivalence between $\cO \Ni_{d-1}\fid_{d-1}$ and $\cO \Gi_{d-1}\bi_{d-1}$. If $\xi_\la$ appears with non-zero coefficient in 
$
\bY \otimes_{\cO \Hi } \xi_{\mu,i},
$ 
for some $i \in I$, then $|\la^{(i)}|>0$.
\end{enumerate}
\end{Lemma}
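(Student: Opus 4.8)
\textbf{Proof plan for Lemma \ref{lem:ith_runner}.}

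The three parts are really about controlling which $\la^{(i)}$ can grow under the various bimodule functors, and the natural strategy is to reduce everything to the weight-one branching rule (Lemma \ref{lem:HtoG}, via $\mathfrak{f}_{(p-j,j)}$ vanishing when $i+j>\ell$) together with the structural facts about $\bY$, $\bX$, $\bX_{d-1}$ already assembled. For part (i), I would start from the definition of $\bY$ as the super Green correspondent of $\cO \Gi_{d-1}\bi_{d-1}\boxtimes\bM^{(d)}$ in $\Gi\times\Hi$, so that $\bY \mid \cO\Gi\otimes_{\cO\Hi}(\cO\Gi_{d-1}\bi_{d-1}\boxtimes\bM^{(d)})$ by Lemma \ref{lem:vert_eqns_X_Y}(iii). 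On characters this means $\bY\otimes_{\cO\Hi}\xi_{\mu,i}$ is a sub-expression (as a $\Z_{\geq 0}$-combination) of $\cO\Gi\bi\otimes_{\cO\Hi}\big((\cO\Gi_{d-1}\bi_{d-1}\boxtimes\bM^{(d)})\otimes_{\cO\Hi}\xi_{\mu,i}\big)$. By Lemma \ref{lem:HtoH}, $(\cO\Gi_{d-1}\bi_{d-1}\boxtimes\bM^{(d)})\otimes_{\cO\Hi}\xi_{\mu,i}$ is a $\Z_{\geq 0}$-combination of the $\xi_{\mu,j}$ with $0\leq j\leq \ell-i$; applying induction $\uparrow^{\Gi,\bi}_{\Hi,\ci}$ and Lemma \ref{lem:HtoG}(i), every $\xi_\la$ occurring satisfies $\la\in\Par_0^{\leq\ell-j}(\mu)^+\subseteq\bigsqcup_{k\in I}\Par_0^k(\mu)^+$, which is exactly the claim.

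For part (ii), the input is Corollary \ref{cor:X_OGcM}: ${}_\Gi\bX_\Li \mid_\Di \cO\Gi\,\ci_{0,d}\otimes_{\cO\Li}\bM_\Li$, so on characters ${}_\Gi\bX_\Li\otimes_{\cO\Li}\xi_{\rho,\ud}$ is dominated (again as a nonnegative combination) by $\cO\Gi\bi\otimes_{\cO\Li}\big(\ci_{0,d}\,\bM_\Li\otimes_{\cO\Li}\xi_{\rho,\ud}\big)$. Here $\bM_\Li=\Blo^{\rho,0}\boxtimes\bM^{\boxtimes d}$, and I would use Proposition \ref{prop:define_M}(iv) to compute $\bM^{(k)}\otimes_{(\Blo^{\varnothing,1})^{(k)}}\xi_{j_k}$ as a combination of $\xi_{j'}$ with $j'\leq\ell-j_k$; in particular, if the $k$-th index $j_k$ equals $i$ then after applying $\bM^{(k)}$ the $k$-th factor still has index $\leq\ell-i\leq\ell$, i.e. stays a genuine weight-one spin character. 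Then inducing through $\Hi_{d-1},\dots,\Gi$ via iterated applications of Lemma \ref{lem:HtoG}(i) (one step per box added to $\la^{(k)}$), the quotient $\la^{(i)}$ must receive at least one box for each of the $d_i$ indices equal to $i$, because the branching rule adds a box precisely to the runner corresponding to the current index. Making this "bookkeeping by runner" precise — tracking, through the chain $\Li\subset\Hi_{d-1}\subset\cdots\subset\Gi$, that the $d_i$ copies of the index $i$ each contribute a box to the $i$-th quotient and hence $|\la^{(i)}|\geq d_i>0$ — is the step I expect to be the most delicate, since one must be careful that the Heller/$\bM$ twists on the weight-one factors do not move an index out of $I$ (they do not, by Proposition \ref{prop:define_M}(iv), since all resulting indices lie in $\{0,\dots,\ell\}$).

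Part (iii) then follows formally: under the Morita-superequivalence hypothesis on $\bX_{d-1}$, Lemma \ref{lem:tensor_isom}(ii) gives $\bX\simeq \bY\otimes_{\cO\Hi}\big(\bX_{d-1}\boxtimes(\Blo^{\varnothing,1})^{(d)}\big)$, hence ${}_\Gi\bX_\Li\simeq \bY\otimes_{\cO\Hi}\big({}_{\Gi_{d-1}}\bX_{\Li_{d-1}}\boxtimes(\Blo^{\varnothing,1})^{(d)}\big)$. Writing $\xi_{\mu,i}=\xi_\mu\boxtimes\xi_i$ over $\Hi_{d-1}\times\tSi_{\sJ_d}$ and noting $\mu\in\Par_0(\rho,d-1)$ decomposes over $\Li_{d-1}$ into $\xi_{\rho,\ud'}$-type characters with $\ud'\in\La(I,d-1)$, one has $\bX_{d-1}\otimes_{?}\xi_\mu$ supported on such characters with $|\mu^{(j)}|$ worth of index-$j$ entries; then $\bY\otimes_{\cO\Hi}\xi_{\mu,i}$ with the extra index-$i$ slot from $(\Blo^{\varnothing,1})^{(d)}$ falls under part (ii) with $d_i\geq 1$, forcing $|\la^{(i)}|>0$. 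Alternatively, and perhaps more cleanly, I would feed $\xi_{\mu,i}$ directly into part (i) to get $\la\in\Par_0^j(\mu)^+$ for some $j$, and then use the lattice/tableau argument behind Lemma \ref{lem:HtoG} (specifically that the relevant shifted tableau fillings of $\mathfrak h_j$ with content $(p-i,i)$ force $i\leq\ell-j$ and place the added box on runner $i$) to conclude $\la^{(i)}=\mu^{(i)}+\text{(a box)}$, hence $|\la^{(i)}|>0$ regardless of whether $|\mu^{(i)}|$ was zero. I would present whichever of these two routes gives the shorter write-up, but the substance is the same runner-bookkeeping as in part (ii).
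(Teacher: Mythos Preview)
Your treatment of part (i) is fine and matches the paper's argument.

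Part (ii), however, has a genuine gap. You assert that ``the branching rule adds a box precisely to the runner corresponding to the current index.'' That is not what Lemma~\ref{lem:HtoG}(i) says: inducing $\xi_{\mu,j}$ from $\Hi$ to $\Gi$ produces constituents $\xi_\la$ with $\la\in\Par_0^{\leq\ell-j}(\mu)^+$, i.e.\ the added box lands on \emph{some} runner $k\leq\ell-j$, not on runner~$j$. Combined with Proposition~\ref{prop:define_M}(iv) (which sends index~$i$ to indices $j\leq\ell-i$), the composite puts a box on some runner $k\leq\ell-j$ with $\ell-j\geq i$, so $k$ is essentially unconstrained. Already for $d=1$ your bound via Corollary~\ref{cor:X_OGcM} yields every $\xi_{\rho^k}$, not only $\xi_{\rho^i}$; the extra constituents come from the projective complement of $\bX_1$ inside $\bi_1\cO\Gi_1\otimes_{\cO\Li_1}\bM_{\Li_1}$, so the domination by $\cO\Gi\,\ci_{0,d}\otimes_{\cO\Li}\bM_\Li$ is simply too coarse.

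The paper's proof of (ii) supplies exactly the missing idea. Using the $\Ni$-action on $\bX$, one may place the index~$i$ in the first slot. Then one factors $\cO\Gi\,\ci_{0,d}\otimes_{\cO\Li}\bM_\Li$ through $\Hi_1$ and observes that ${}_\Gi\bX_\Li$ is a summand of
\[
\cO\Gi\,\ci_{2,d}\otimes_{\cO\Hi_1}\big(\bX_1\boxtimes\bM^{(2)}\boxtimes\cdots\boxtimes\bM^{(d)}\big),
\]
where the first tensor factor $\bX_1$ (not $\bM^{(1)}$) is the weight-one Morita bimodule satisfying $\bX_1\otimes_{\cO\Li_1}\xi_{\rho,i}=\xi_{\rho^i}$ \emph{on the nose} (Proposition~\ref{prop:define_M}(ii)). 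This forces the first box onto runner~$i$, giving $|\rho^i{}^{(i)}|=1$; the remaining $\bM^{(k)}$'s and induction steps only add further boxes, so $|\la^{(i)}|\geq1$.

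For part (iii), your first route (reduce to (ii) via Lemma~\ref{lem:tensor_isom}(ii) and adjointness, using that $\xi_\mu$ appears in some ${}_{\Gi_{d-1}}\bX_{\Li_{d-1}}\otimes\xi_{\rho,j_1,\dots,j_{d-1}}$) is correct and is exactly what the paper does. Your alternative route is wrong for the same reason as above: part~(i) gives $\la\in\Par_0^j(\mu)^+$, meaning the box lands on runner~$j$, not runner~$i$, so nothing forces $|\la^{(i)}|>0$ by that path alone.
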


\begin{proof}
(i) By its definition and Lemma \ref{lem:X_as_left_mod}(ii), $\bY$ is a direct summand of
\begin{align*}
\cO \Gi \bi \otimes_{\cO \Hi }(\cO \Gi_{d-1}\bi_{d-1} \boxtimes \bM^{(d)}).
\end{align*}
The statement now follows from Lemma \ref{lem:HtoH} and Lemma \ref{lem:HtoG}(i).

(ii)
By Corollary \ref{cor:X_OGcM} and Remark \ref{rem:induct_Rou}, ${}_{\Gi_1}\bX_{\Li_1}$ is the unique indecomposable summand of $\bi_1\cO \Gi_1 \otimes_{\cO \Li_1} \bM_{\Li_1}$ with vertex $\Delta \Di_1$ and all other summands are projective. Therefore, ${}_{\Gi_1}\bX_{\Li_1}$ is isomorphic to the $(\cO \Gi_1 \bi_1,\cO \Li_1 \fid_1)$-bisupermodule $\gV$ from Proposition \ref{prop:define_M}(ii). Now,
\begin{align}\label{algn:OHc1c0}
\cO \Hi_1 \ci_{0,1} = \bi_1 \cO \Gi_1 \fid_1 \otimes (\Blo^{\varnothing,1})^{(2)} \otimes \dots \otimes (\Blo^{\varnothing,1})^{(d)}.
\end{align}
Therefore,
\begin{align*}
\cO \Gi  \ci_{0,d} \otimes_{\cO \Li } \bM_\Li  \simeq & \cO \Gi  \ci_{0,d} \otimes_{\cO \Li } (\bM_{\Li_1} \boxtimes \bM^{(2)} \boxtimes \dots \boxtimes \bM^{(d)})\\
\simeq & \cO \Gi  \ci_{2,d} \otimes_{\cO \Hi_1} \cO \Hi_1 \ci_{0,1}  \otimes_{\cO \Li } (\bM_{\Li_1} \boxtimes \bM^{(2)} \boxtimes \dots \boxtimes \bM^{(d)})\\
\simeq & \cO \Gi  \ci_{2,d} \otimes_{\cO \Hi_1} \left( (\bi_1 \cO \Gi_1 \otimes_{\cO \Li_1} \bM_{\Li_1}) \boxtimes \bM^{(2)} \boxtimes \dots \boxtimes \bM^{(d)})\right),
\end{align*}
where the final isomorphism follows from (\ref{algn:OHc1c0}) and Lemma \ref{lem:tensor_bisupmod}.

Since each $\bM^{(k)}$ has vertex $\Delta \Di_k$, it follows from Remark \ref{rem:bisupmod} and Lemma \ref{lem:direct_prod_vert} that this last bimodule has
\begin{align}\label{algn:X_direct_sum}
\cO \Gi  \ci_{2,d} \otimes_{\cO \Hi_1} (\bX_1 \boxtimes \bM^{(2)} \boxtimes \dots \boxtimes \bM^{(d)})
\end{align}
as a direct summand and all other summands have vertex strictly smaller than $\Delta \Di $. In particular, by Corollary \ref{cor:X_OGcM}, ${}_\Gi  \bX_\Li $ is a direct summand of the bimodule in (\ref{algn:X_direct_sum}). We now move towards proving the claim.

Since $\bX$ is actually an $(\cO \Gi ,\cO \Ni )$-bimodule, we can assume that $\xi_i$ appears in the first position. (If it doesn't, then apply an appropriate element of $\Ni $ to ensure that it does.) In other words, $\xi_\la$ appears with non-zero coefficient in 
$
{}_\Gi  \bX_\Li  \otimes_{\cO \Li } \xi_{\rho,i,\ud-\ude_i}.
$ 
Therefore, by the comments following (\ref{algn:X_direct_sum}), $\xi_\la$ appears with non-zero coefficient in
\begin{align*}
\cO \Gi  \ci_{2,d} & \otimes_{\cO \Hi_1} \big(\bX_1 \boxtimes \bM^{(2)} \boxtimes \dots \boxtimes \bM^{(d)} \big) \otimes_{\cO \Li } \xi_{\rho,i,\ud-\ude_i}.
\end{align*}
Proposition \ref{prop:define_M}(ii) and Lemma \ref{lem:AboxM} now imply that $\xi_\la$ appears with non-zero coefficient in
\begin{align*}
\cO \Gi  \ci_{2,d} & \otimes_{\cO \Hi_1} \big(\cO \Gi_1 \boxtimes \bM^{(2)} \boxtimes \dots \boxtimes \bM^{(d)} \big) \otimes_{\cO \Li } \xi_{\rho^i,\ud-\ude_i},
\end{align*}
where we take the same notation of $\rho^i$ from Section \ref{sec:weight_one}. The claim now follows by repeatedly applying Lemmas \ref{prop:define_M}(iv) and \ref{lem:AboxM} and then repeated applying Lemma \ref{lem:HtoG} and Lemma \ref{lem:AboxM}.

(iii) As $\bX_{d-1}$ induces a Morita superequivalence between $\cO \Ni_{d-1}\fid_{d-1}$ and $\cO \Gi_{d-1}\bi_{d-1}$, ${}_{\Li_{d-1}}\bX_{\Gi_{d-1}}^* \otimes_{\cO \Gi_{d-1}} [\xi_\mu]$ must be non-zero. So, by Lemma \ref{lem:chars_bisupmod}(ii), there exist some $j_1, \dots, j_{d-1} \in I$ such that $[\xi_\mu]$ appears as a superconstituent with non-zero coefficient in 
$%\begin{align*}
{}_{\Gi_{d-1}}\bX_{\Li_{d-1}} \otimes_{\cO \Li_{d-1}} \xi_{\rho,{j_1} , \dots,j_{d-1}}.
$ %\end{align*}
Now, by Lemmas \ref{lem:tensor_isom}(ii) and \ref{lem:AboxM}, $\xi_\la$ appears with non-zero coefficient in
\begin{align*}
{}_\Gi  \bX_\Li  \otimes_{\cO \Li } \xi_{\rho,j_1, \dots , j_{d-1},i} 
=  \bY \otimes_{\cO \Hi } \big({}_{\Gi_{d-1}} \bX_{\Li_{d-1}} \boxtimes (\Blo^{\varnothing,1})^{(d)}\big)\otimes_{\cO \Li } \xi_{\rho,j_1, \dots,j_{d-1},i}.
\end{align*}
The claim now follows from part (ii).
\end{proof}

\subsection{Main argument}\label{sec:main_arg}

The remainder of this section is concerned with proving that $\bX$ induces a Morita equivalence between $\cO \Ni \fid$ and $\cO \Gi \bi$. In fact, we will prove the following pair of statements via induction on $m$.

\begin{Hypothesis}\label{hyp:ind_on_X}
Let $1 \leq m \leq d$.
\begin{enumerate}
\item $\bX_m$ induces a Morita superequivalence between $\cO \Ni_m\fid_m$ and $\cO \Gi_m\bi_m$.
\item We have the following character identities:
\begin{enumerate}
\item for all 
$\ud\in\La(I,m)$: 
\begin{align*}
{}_{\Gi_m}\bX_{\Li_m} \otimes_{\cO \Li_m} \xi_{\rho,\ud} = 
\sum_{\la \in \Par(\rho,\ud)}
\frac{\eps_{\rho,\ud}}{\eps_{\la}}\, 2^{(|\la^{(0)}| - h(\la^{(0)}))/2}
K(\la)\xi_\la.
\end{align*}

\item 
for all 
$\ud\in\La(I,m)$ and all $\la \in \Par_0(\rho,\ud)$:
\begin{align*}
{}_{\Li_m}\bX_{\Gi_m}^* \otimes_{\cO \Gi_m} \xi_\la = \frac{\eps_{\la}}{\eps_{\rho,\ud}} \,2^{(|\la^{(0)}| - h(\la^{(0)}))/2}
K(\la)\xi_{\rho,\ud}^{\tSi_m}.
\end{align*}
\end{enumerate}
\end{enumerate}
\end{Hypothesis}

\begin{Remark}
The two conditions in Hypothesis \ref{hyp:ind_on_X}(ii) are easily seen to be equivalent using Lemma \ref{lem:chars_bisupmod}(ii) and the fact that $\bX$ is actually an $(\cO \Gi ,\cO \Ni )$-bimodule.
\end{Remark}

Our main argument involves proving Hypothesis \ref{hyp:ind_on_X} by induction. The next two propositions do most of the work in the subsequent inductive argument.

\begin{Proposition}\label{prop:main_char_calc}
Let $d>1$. If Hypothesis \ref{hyp:ind_on_X} holds for $1 \leq m \leq d-1$, then \ref{hyp:ind_on_X}(ii) holds for $m = d$.
\end{Proposition}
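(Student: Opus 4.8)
The goal is to establish the character identity in Hypothesis \ref{hyp:ind_on_X}(ii)(a) for $m=d$, namely
\[
{}_\Gi\bX_\Li \otimes_{\cO\Li}\xi_{\rho,\ud}
=\sum_{\la\in\Par_0(\rho,\ud)}\frac{\eps_{\rho,\ud}}{\eps_\la}\,2^{(|\la^{(0)}|-h(\la^{(0)}))/2}\,K(\la)\,\xi_\la,
\]
given that Hypothesis \ref{hyp:ind_on_X} holds in all lower weights. By the Remark following Hypothesis \ref{hyp:ind_on_X} it suffices to prove either (a) or (b); I will aim at (a), passing to (b) via Lemma \ref{lem:chars_bisupmod}(ii) only when convenient. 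The strategy is to compute ${}_\Gi\bX_\Li\otimes_{\cO\Li}\xi_{\rho,\ud}$ \emph{modulo} $\Z\Irr_{<_\sfs\Di}(\K\Gi\bi)$ using the inductive structure, then separately pin down the part supported on non-maximal vertices, and finally combine. The non-super combinatorics (the numbers $K(\la)$, $K_{\la^{(i)}}$, Lemma \ref{lem:count_tab}, Lemma \ref{lem:dim_sqd}) will do the bookkeeping.

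\textbf{Step 1: reduce to a congruence modulo $\Z\Irr_{<_\sfs\Di}$.} Using Lemma \ref{lem:tensor_isom}(ii) (valid since the inductive hypothesis gives that $\bX_{d-1}$ induces a Morita superequivalence), write $\bX\simeq \bY\otimes_{\cO\Hi}\big(\bX_{d-1}\boxtimes(\Blo^{\varnothing,1})^{(d)}\big)$ as $(\cO\Gi\bi,\cO\Li\fid)$-bisupermodules. Since $\bX$ is genuinely an $(\cO\Gi,\cO\Ni)$-bimodule, one may assume in $\xi_{\rho,\ud}$ that any chosen runner value sits in a convenient position, as in the proof of Lemma \ref{lem:ith_runner}(ii). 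Applying the inductive case (ii)(a) for $m=d-1$ to ${}_{\Gi_{d-1}}\bX_{\Li_{d-1}}$, together with Lemma \ref{lem:AboxM} to handle the $\boxtimes(\Blo^{\varnothing,1})^{(d)}$ factor and Proposition \ref{prop:define_M}(iv) for the action of $\bM^{(d)}$, reduces the computation of ${}_\Gi\bX_\Li\otimes_{\cO\Li}\xi_{\rho,\ud}$ to an expression in which $\bY$ is applied to explicit characters $\xi_{\mu,i}$ of $\cO\Hi\ci$ with $\mu\in\Par_0(\rho,d-1)$. Now Lemma \ref{lem:vert_eqns_X_Y}(iii) shows $\bY\,|_\Di\,\big(\cO\Gi\otimes_{\cO\Hi}(\cO\Gi_{d-1}\bi_{d-1}\boxtimes\bM^{(d)})\big)$, so modulo $\Z\Irr_{<_\sfs\Di}(\K\Gi\bi)$ the character $\bY\otimes_{\cO\Hi}\xi_{\mu,i}$ agrees with $\big(\cO\Gi\bi\otimes_{\cO\Hi}(\cO\Gi_{d-1}\bi_{d-1}\boxtimes\bM^{(d)})\big)\otimes_{\cO\Hi}\xi_{\mu,i}$, which by Lemma \ref{lem:HtoH} (for the $\bM^{(d)}$ part) and Lemma \ref{lem:HtoG}(i) (for the induction $\Hi\to\Gi$) is a completely explicit sum over $\la\in\Par_0^{\le\ell}(\mu)^+$. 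This yields a congruence
\[
{}_\Gi\bX_\Li\otimes_{\cO\Li}\xi_{\rho,\ud}\equiv \sum_{\la\in\Par_0(\rho,\ud)}c_\la\,\xi_\la \pmod{\Z\Irr_{<_\sfs\Di}(\K\Gi\bi)}
\]
with $c_\la$ a product of $\eps$-factors times a sum of the $K(\mu)$ over $\mu\subseteq\la$ of weight $d-1$; Lemma \ref{lem:count_tab} then collapses this sum to $K(\la)$ and $2^{(|\la^{(0)}|-h(\la^{(0)}))/2}$, exactly as in the computation carried out in Lemma \ref{lem:ith_runner}(ii) and Lemma \ref{lem:restrict_calc}. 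So the claimed formula holds modulo non-maximal-vertex characters.

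\textbf{Step 2: control the non-maximal-support part, and conclude.} What remains is to rule out any correction term lying in $\Z\Irr_{<_\sfs\Di}(\K\Gi\bi)$. Here I would use the dual side together with a rank/dimension count. By Lemma \ref{lem:restrict_calc}, applied to each $\la\in\Par_0(\rho,\ud)$, we have ${}_\Li\bX_\Gi^*\otimes_{\cO\Gi}\xi_\la\equiv \frac{\eps_\la}{\eps_{\rho,\ud}}2^{(|\la^{(0)}|-h(\la^{(0)}))/2}K(\la)\,\xi_{\rho,\ud}^{\tSi_d}$ modulo $\Z\Irr_{<_\sfs\Di}(\K\Li\fid)$, and Lemma \ref{lem:vert_prsv} guarantees the maps $\bM_\Li^*\otimes_{\cO\Li}\ci_{0,d}\cO\Gi\otimes_{\cO\Gi}?$ and ${}_\Li\bX_\Gi^*\otimes_{\cO\Gi}?$ both send non-maximal-support characters to non-maximal-support characters and induce the \emph{same} map on the quotients $\Z\Irr/\Z\Irr_{<_\sfs\Di}$. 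Dualizing Step 1 gives the dual identity modulo non-maximal characters; then applying Lemma \ref{lem:ith_runner}(ii),(iii) (which are available because the inductive hypothesis is assumed for $m\le d-1$) one checks that every superconstituent $\xi_\la$ actually occurring in ${}_\Gi\bX_\Li\otimes_{\cO\Li}\xi_{\rho,\ud}$ has $|\la^{(i)}|>0$ whenever $d_i>0$, i.e. lies in $\Par_0(\rho,\ud)$ itself — so no spurious characters from other $\ud'$ or with non-maximal vertex can appear. Since $\bX$ already restricts to a summand of $\cO\Gi\otimes_{\cO\Li}\bM_\Li$ with all other summands of strictly smaller vertex (Lemma \ref{lem:vert_eqns_X_Y}(ii), Corollary \ref{cor:X_OGcM}), the character ${}_\Gi\bX_\Li\otimes_{\cO\Li}\xi_{\rho,\ud}$ is supported entirely on $\Par_0(\rho,\ud)$, and the congruence of Step 1 becomes an equality. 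This proves (ii)(a) for $m=d$; (ii)(b) then follows from Lemma \ref{lem:chars_bisupmod}(ii) and the fact that $\bX$ is an $(\cO\Gi,\cO\Ni)$-bimodule.

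\textbf{Main obstacle.} The delicate point is Step 2: showing that the error term in the congruence of Step 1 genuinely vanishes rather than merely lying in $\Z\Irr_{<_\sfs\Di}$. The mechanism is that Lemma \ref{lem:vert_eqns_X_Y} and Corollary \ref{cor:X_OGcM} force every constituent of ${}_\Gi\bX_\Li\otimes_{\cO\Li}\xi_{\rho,\ud}$ to come from the unique maximal-vertex summand of $\cO\Gi\ci_{0,d}\otimes_{\cO\Li}\bM_\Li$, while Lemma \ref{lem:ith_runner} restricts which $\la$ can occur; assembling these so that the ``quotient'' identity of Lemma \ref{lem:vert_prsv} lifts to an honest identity requires care about the interplay between the $\Ni/\Li$-orbit sums $\xi_{\rho,\ud}^{\tSi_d}$ and the individual characters $\xi_\la$, and about the integrality of the $\eps$-ratios (which, as in Lemma \ref{lem:HtoJ_nmv}, rests on Lemma \ref{ChuKesLem}(iii) relating the parities of $\la$ and $\mu$). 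Everything else is routine bookkeeping with the branching coefficients and the counting identities of \S\ref{sec:comb}.
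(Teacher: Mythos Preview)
There is a genuine gap in Step~2. Lemma~\ref{lem:ith_runner}(ii) only says that if $d_i>0$ then $|\la^{(i)}|>0$; it does \emph{not} give $|\la^{(i)}|\ge d_i$. So from ``$|\la^{(i)}|>0$ whenever $d_i>0$'' you cannot conclude $\la\in\Par_0(\rho,\ud)$. For instance, with $d=3$ and $\ud=(2,1,0,\dots)$, your argument only excludes $\la$ with $\la^{(0)}=\varnothing$ or $\la^{(1)}=\varnothing$, leaving $|\la^{(0)}|=1$, $|\la^{(1)}|=2$ perfectly possible. The assertion that ``$\bX$ restricts to a summand of $\cO\Gi\otimes_{\cO\Li}\bM_\Li$ with all other summands of strictly smaller vertex'' does not help here either: that constrains vertices of bimodule summands, not the support of the resulting character.

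The paper's proof is considerably more indirect precisely to get around this. It first treats the concentrated case $\ud=d\ude_i$ by a detour: for fixed $i$ and $k\neq i$, it shows $\bY\otimes_{\cO\Hi}\xi_{\mu,k}=C\xi_\la$ for a single $\la$ (via Lemma~\ref{lem:ith_runner}(i),(iii)), determines $C=1$ by constructing an auxiliary irreducible $\psi\in\Irr(\cO\Ni\fid)$ and comparing $\psi\downarrow^{\Ni}_{\Li}$ with ${}_\Li\bX_\Gi^*\otimes_{\cO\Gi}\bX\otimes_{\cO\Ni}\psi$ via the functional $\varphi$ of Lemma~\ref{lem:nmv_char}, and uses this to compute ${}_\Li\bX_\Gi^*\otimes_{\cO\Gi}\xi_\la$ exactly when $|\la^{(i)}|=d-1$, $|\la^{(k)}|=1$. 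Dualising then forces the $|\la^{(i)}|=d-1$ terms out of ${}_\Gi\bX_\Li\otimes\xi_{\rho,i^d}$, establishing the concentrated case. For general $\ud$, the paper rearranges so that all $d_j$ copies of a fixed $j$ lie in the first $d-1$ slots (possible exactly when $d_j<d$) and uses the inductive hypothesis at level $d-1$, which is an \emph{equality} and hence gives $|\mu^{(j)}|=d_j$ exactly; Lemma~\ref{lem:ith_runner}(i) then gives $|\la^{(j)}|\ge d_j$. Combined with the concentrated case for $d_j=d$, this pins down the support. Finally, the upgrade from ``congruent modulo $\Z\Irr_{<_\sfs\Di}(\cO\Li\fid)$'' to equality uses that once the support is known to be a single $\Par_0(\rho,\ud)$, the dual expression ${}_\Li\bX_\Gi^*\otimes\xi_\la$ is a scalar multiple of the \emph{single} orbit sum $\xi_{\rho,\ud}^{\tSi_d}$, and $\varphi$ has definite sign on each such orbit sum. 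You never invoke this functional $\varphi$, which is the only available tool for lifting congruences; and your Step~1 works modulo $\Z\Irr_{<_\sfs\Di}(\K\Gi\bi)$, a subgroup for which the paper provides no analogue of Lemma~\ref{lem:nmv_char} and which therefore cannot be killed by any known functional.
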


\begin{proof}
We will assume throughout this proof that Hypothesis \ref{hyp:ind_on_X} holds for $1 \leq m \leq d-1$. In particular, we are assuming that $\bX_m$ induces a Morita superequivalence between $\cO \Ni_m \fid_m$ and $\cO \Gi_m \bi_m$, for all $1 \leq m \leq d$. We will implicitly use this fact at several points during the proof.

For $\la \in \Par_0(\rho,m)$, we set
\begin{align*}
\tilde K_{\la^{(i)}}: =
\begin{cases}
K_{\la^{(0)}}' &\text{ if } i=0\\
K_{\la^{(i)}} &\text{ if } i>0.
\end{cases}
\end{align*}
The majority of the remaining argument is dedicated to showing that
\begin{align}\label{L_to G_all_i}
{}_\Gi  \bX_\Li  \otimes_{\cO \Li } \xi_{\rho,i^d} = \sum_{\substack{\la \in \Par(\rho,d) \\ |\la^{(i)}| = d}}\frac{\eps_{\rho,i^d}}{\eps_{\la}} 2^{(|\la^{(0)}| - h(\la^{(0)}))/2}
\tilde K_{\la^{(i)}}\xi_\la,
\end{align}
for all $i \in I$. 
Until further notice we fix $i\in I$ and some $\mu \in \Par_0(\rho,d-1)$ with $|\mu^{(i)}|=d-1$ and $|\mu^{(j)}|=0$, for all $j\in I\setminus\{ i\}$. Let $k\in I\setminus\{ i\}$. By Lemmas \ref{lem:chars_bisupmod}(i) and \ref{lem:ith_runner}(i)(iii),
\begin{align}\label{algn:def_C}
\bY \otimes_{\cO \Hi } \xi_{\mu ,k} = C\xi_\la,
\end{align}
for some $C\in \N$, where $\la$ is the unique partition obtained from $\mu$ by sliding a bead down the $k^{\nth}$ runner. In other words, $\la^{(i)} = \mu^{(i)}$, $\la^{(k)} = (1)$ and $|\la^{(j)}| = 0$, for all $j \neq i,k$.

Using Lemma \ref{lem:HomAM_M*} we define $\chi \in \Irr_{\si}(\cO \Ni_{d-1} \fid_{d-1})$ via
\begin{align*}
\chi := \bX_{d-1}^* \otimes_{\cO \Gi_{d-1}} \xi_\mu.
\end{align*}
Then, recalling the notation from Remark~\ref{RCircledast}, by Lemma \ref{lem:AboxM}, we get 
\begin{align}\label{algn:chi_mu_k}
\chi \circledast \xi_k = (\bX_{d-1}^* \boxtimes (\Blo^{\varnothing,1})^{(d)}) \otimes_{\cO \Hi } \xi_{\mu,k}\in \Z \Irr(\cO (\Ni  \cap \Hi )\fid).
\end{align}
(Note there is no coefficient when we apply Lemma \ref{lem:AboxM} since $\bX_{d-1}^*$ induces a Morita superequivalence and $\chi$ and $\xi_\mu$ have the same parity.)

Since we are assuming Hypothesis \ref{hyp:ind_on_X}(ii)(b) for $m = d-1$, the construction of $\mu$ implies that 
\begin{align*}
\chi^{(\pm)}\downarrow^{\Ni_{d-1}}_{\Li_{d-1}} = {}_{\Li_{d-1}}\bX_{\Gi_{d-1}}^* \otimes \xi_\mu^{(\pm)}
\end{align*}
only has constituents of the form $\xi_{\rho,i^{d-1}}^{(\pm)}$. Therefore, the stabiliser of any constituent of $(\chi \circledast \xi_k)^{(\pm)}\downarrow^{\Ni  \cap \Hi }_\Li $ is contained in $\Ni  \cap \Hi $ and, by \cite[Theorem 6.11]{Is}, $\psi^{(\pm)} := (\chi \circledast \xi_k)^{(\pm)}\uparrow_{\Ni  \cap \Hi }^\Ni $ is irreducible. Moreover, as induction commutes with tensoring with the sign character, $\psi^{(\pm)}$ is self-associate if and only if $(\chi \circledast \xi_k)^{(\pm)}$ is. Therefore, $\psi = (\chi \circledast \xi_k)\uparrow_{\Ni  \cap \Hi }^\Ni $. So, by Lemma \ref{lem:tensor_isom}(ii), (\ref{algn:chi_mu_k}) and (\ref{algn:def_C}),
\begin{align}
\begin{split}\label{algn:X_tensor_psi}
\bX \otimes_{\cO \Ni } \psi = & \bX \otimes_{\cO \Ni } \cO \Ni  \otimes_{\cO(\Ni  \cap \Hi )} (\chi \circledast \xi_k) 
=  \bX \otimes_{\cO(\Ni  \cap \Hi )} (\chi \circledast \xi_k) \\
= & \bY \otimes_{\cO \Hi } \big(\bX_{d-1} \boxtimes (\Blo^{\varnothing,1})^{(d)}\big) \otimes_{\cO(\Ni  \cap \Hi )} (\chi \circledast \xi_k)
=  \bY \otimes_{\cO \Hi } \xi_{\mu,k} = C\xi_\la.
\end{split}
\end{align}
Now, by Lemmas \ref{lem:X*_with_X}(i) and \ref{lem:bimod_vert}(i),
\begin{align*}
\bX^* \otimes_{\cO \Gi } \bX \otimes_{\cO \Ni } \psi \equiv \psi
\end{align*}
modulo $\Z \Irr_{<_\sfs \Di}(\cO \Ni \fid)$. Therefore, utilising Lemma \ref{lem:vert_prsv}, we have
\begin{align}\label{algn:X*_tensor_X_psi}
{}_\Li  \bX_\Gi^* \otimes_{\cO \Gi } \bX \otimes_{\cO \Ni } \psi \equiv \cO \Li \fid \otimes_{\cO \Li } \cO \Ni  \otimes_{\cO \Ni } \psi = \psi\downarrow^\Ni_\Li 
\end{align}
modulo $\Z \Irr_{<_\sfs \Di}(\cO \Li \fid)$. By Lemma \ref{lem:restrict_calc},
\begin{align}\label{X*_tensor_xi_la}
{}_\Li  \bX_\Gi^* \otimes_{\cO \Gi } \xi_\la \equiv \frac{\eps_{\la}}{\eps_{\rho,i^{d-1},k}} 2^{(|\la^{(0)}| - h(\la^{(0)}))/2}
\tilde K_{\la^{(i)}}\xi_{\rho,i^{d-1},k}^{\tSi_d}
\end{align}
modulo $\Z \Irr_{<_\sfs \Di}(\cO \Li \fid)$. Applying ${}_\Li  \bX_\Gi^* \otimes_{\cO \Gi } ?$ to both sides of (\ref{algn:X_tensor_psi}) and substituting using (\ref{algn:X*_tensor_X_psi}) and (\ref{X*_tensor_xi_la}) gives
\begin{align}\label{algn:n_eq_1}
\psi\downarrow^\Ni_\Li  \equiv
C\frac{\eps_{\la}}{\eps_{\rho,i^{d-1},k}} 2^{(|\la^{(0)}| - h(\la^{(0)}))/2}
\tilde K_{\la^{(i)}}\xi_{\rho,i^{d-1},k}^{\tSi_d}
\end{align}
modulo $\Z \Irr_{<_\sfs \Di}(\cO \Li \fid)$.

Since we are assuming Hypothesis \ref{hyp:ind_on_X} holds for $m=d-1$,
\begin{align*}
\chi\downarrow^{\Ni_{d-1}}_{\Li_{d-1}} = {}_{\Li_{d-1}}\bX_{\Gi_{d-1}}^* \otimes_{\cO \Gi_{d-1}} \xi_\mu = \frac{\eps_{\mu}}{\eps_{\rho,i^{d-1}}} 2^{(|\mu^{(0)}| - h(\mu^{(0)}))/2} \tilde K_{\mu^{(i)}}\xi_{\rho,i^{d-1}}.
\end{align*}
Therefore, using Lemma \ref{lem:AboxM},
\begin{align*}
(\chi \circledast \xi_k)\downarrow^{\Ni  \cap \Hi }_\Li  = \frac{\eps_{\rho,i^{d-1}} \eps_{\mu,k}}{\eps_\mu \eps_{\rho,i^{d-1} ,k}} \frac{\eps_{\mu}}{\eps_{\rho,i^{d-1}}} 2^{(|\mu^{(0)}| - h(\mu^{(0)}))/2} \tilde K_{\mu^{(i)}}\xi_{\rho,i^{d-1},k}.
\end{align*}
(Here, when applying Lemma \ref{lem:AboxM}, we are again using the fact that $\chi$ and $\xi_\mu$ have the same parity.) Simplifying, we get
\begin{align*}
(\chi \circledast \xi_k)\downarrow^{\Ni  \cap \Hi }_\Li  = \frac{\eps_{\mu,k}}{\eps_{\rho,i^{d-1},k}} 2^{(|\mu^{(0)}| - h(\mu^{(0)}))/2} \tilde K_{\mu^{(i)}}\xi_{\rho,i^{d-1},k}.
\end{align*}
Therefore,
\begin{align}
\begin{split}\label{algn:n_eq_2}
\psi\downarrow^\Ni_\Li  \ =\ & (\chi \circledast \xi_k)\uparrow_{\Ni  \cap \Hi }^\Ni  \downarrow^\Ni_\Li  \ \ 
=  \sum_{g\in \Ni  / \Ni  \cap \Hi } {}^g((\chi \circledast \xi_k)\downarrow^{\Ni  \cap \Hi }_\Li ) \\
\ =\ & \frac{\eps_{\mu,k}}{\eps_{\rho,i^{d-1},k}} 2^{(|\mu^{(0)}| - h(\mu^{(0)}))/2} \tilde K_{\mu^{(i)}}\xi_{\rho,i^{d-1},k}^{\tSi_d}.
\end{split}
\end{align}
Taking the $\varphi$ from Lemma \ref{lem:nmv_char}, we have that $\varphi(\xi_{\rho,i^{d-1},k}^{\tSi_d})$ is either strictly positive or strictly negative. Therefore, we can equate (\ref{algn:n_eq_1}) and (\ref{algn:n_eq_2}) to obtain
\begin{align*}
C \eps_\la 2^{(|\la^{(0)}| - h(\la^{(0)}))/2} \tilde K_{\la^{(i)}} = \eps_{\mu,k} 2^{(|\mu^{(0)}| - h(\mu^{(0)}))/2} \tilde K_{\mu^{(i)}}.
\end{align*}
Note that $\la^{(i)} = \mu^{(i)}$ and so $\tilde K_{\la^{(i)}} = \tilde K_{\mu^{(i)}}$. In every case we also have $2^{(|\la^{(0)}| - h(\la^{(0)}))/2} = 2^{(|\mu^{(0)}| - h(\mu^{(0)}))/2}$. (Either $k=0$, in which case $\la^{(0)} = (1)$ and $\mu^{(0)} = \varnothing$ or $k \neq 0$, in which case $\la^{(0)} = \mu^{(0)}$.) So
\begin{align*}
C = \frac{\eps_{\mu,k}}{\eps_\la} = 1,
\end{align*}
where the second equality follows from Lemma \ref{ChuKesLem}(iii).

From now on $\mu$ will no longer denote a fixed partition. We have
\begin{align}
\begin{split}\label{algn:i^d-1_j}
& {}_\Gi  \bX_\Li  \otimes_{\cO \Li }\xi_{\rho,i^{d-1},k} = \bY \otimes_{\cO \Hi } \big(\bX_{d-1} \boxtimes (\Blo^{\varnothing,1})^{(d)}\big) \otimes_{\cO \Li } \xi_{\rho,i^{d-1},k}\\
& = \sum_{\substack{\mu \in \Par(\rho,d-1) \\ |\mu^{(i)}| = d-1}}\frac{\eps_\mu \eps_{\rho,i^{d-1},k}}{\eps_{\rho,i^{d-1}} \eps_{\mu ,k}}\frac{\eps_{\rho,i^{d-1}}}{\eps_{\mu}} 2^{(|\mu^{(0)}| - h(\mu^{(0)}))/2}
\tilde K_{\mu^{(i)}} \left(\bY \otimes_{\cO \Hi } \xi_{\mu,k}\right)\\
& = \sum_{\substack{\la \in \Par(\rho,d) \\ |\la^{(i)}| = d-1, |\la^{(k)}| = 1}}\frac{\eps_{\rho,i^{d-1},k}}{\eps_{\la}} 2^{(|\la^{(0)}| - h(\la^{(0)}))/2}
\tilde K_{\la^{(i)}} \xi_\la.
\end{split}
\end{align}
The first equality follows from Lemma \ref{lem:tensor_isom}(ii). The second equality uses Lemma \ref{lem:AboxM} and the fact that we are assuming that Hypothesis \ref{hyp:ind_on_X}(ii) holds for $m=d-1$. Finally, the third equality is a consequence of (\ref{algn:def_C}) and, once again, the fact that $\eps_{\mu,k} = \eps_\la$, $2^{(|\la^{(0)}| - h(\la^{(0)}))/2} = 2^{(|\mu^{(0)}| - h(\mu^{(0)}))/2}$ and that $\mu^{(i)} = \la^{(i)}$.

We now determine ${}_\Li  \bX_\Gi^* \otimes_{\cO \Gi } [\xi_\la]$, for any $\la \in \Par_0(\rho,d)$ with $|\la^{(i)}| = d-1$ and $|\la^{(k)}|=1$. By Lemmas \ref{lem:chars_bisupmod}(ii) and \ref{lem:ith_runner}(i),(iii),
\begin{align*}
\bY^* \otimes_{\cO \Gi } \xi_\la = \sum_{j,s \in \{i,k\} }\sum_{\mu\in \Par_0^j(\la)^-} C_{\mu,s} \xi_{\mu,s},
\end{align*}
for some $C_{\mu,s} \in \N$. Therefore, Lemmas \ref{lem:tensor_isom}(ii), \ref{lem:dual_comp}, \ref{lem:dual_prod} and \ref{lem:sup_alg_idempt_dual} imply
\begin{align*}
{}_\Li  \bX_\Gi^* \otimes_{\cO \Gi } \xi_\la = & \big({}_{\Li_{d-1}}\bX_{\Gi_{d-1}}^* \boxtimes (\Blo^{\varnothing,1})^{(d)}\big) \otimes_{\cO \Hi } \bY^* \otimes_{\cO \Gi } \xi_\la\\
= & \sum_{j,s \in \{i,k\} }\sum_{\mu\in \Par_0^j(\la)^-}C_{\mu,s} \big({}_{\Li_{d-1}}\bX_{\Gi_{d-1}}^* \boxtimes (\Blo^{\varnothing,1})^{(d)}\big) \otimes_{\cO \Hi } \xi_{\mu,s}.
\end{align*}
Using Lemma \ref{lem:AboxM}, the fact that we are assuming Hypothesis \ref{hyp:ind_on_X}(ii) for $m=d-1$ and that $\bX^*$ is actually an $(\cO \Ni ,\cO \Gi )$-bimodule yields
\begin{align*}
{}_\Li  \bX_\Gi^* \otimes_{\cO \Gi } \xi_\la = C_0 \xi_{\rho,i^d}^{\tSi_d} + C_1 \xi_{\rho,i^{d-1},k}^{\tSi_d} + C_2 \xi_{\rho,i^{d-2} ,k^2}^{\tSi_d},
\end{align*}
for some $C_0, C_1, C_2 \in \N$. By (\ref{algn:i^d-1_j}) and Lemma \ref{lem:chars_bisupmod}(ii) we know that
\begin{align*}
C_1 = \sum_{\substack{\la \in \Par(\rho,d) \\ |\la^{(i)}| = d-1, |\la^{(k)}| = 1}}\frac{\eps_{\la}}{\eps_{\rho,i^{d-1},k}} 2^{(|\la^{(0)}| - h(\la^{(0)}))/2}
\tilde K_{\la^{(i)}}
\end{align*}
and, by Lemma \ref{lem:restrict_calc},
\begin{align*}
 C_0 \xi_{\rho,i^d}^{\tSi_d} + C_1 \xi_{\rho,i^{d-1},k}^{\tSi_d} + C_2 \xi_{\rho,i^{d-2} ,k^2}^{\tSi_d}
\equiv  \sum_{\substack{\la \in \Par(\rho,d) \\ |\la^{(i)}| = d-1, |\la^{(k)}| = 1}}
\hspace{-2mm}
\frac{\eps_{\la}}{\eps_{\rho,i^{d-1},k}} 2^{(|\la^{(0)}| - h(\la^{(0)}))/2}
\tilde K_{\la^{(i)}}\xi_{\rho,i^{d-1},k}^{\tSi_d}
\end{align*}
modulo $\Z\Irr_{<_\sfs \Di}(\cO \Li \fid)$. Therefore,
\begin{align*}
C_0 \xi_{\rho,i^d}^{\tSi_d} + C_2 \xi_{\rho,i^{d-2},k^2}^{\tSi_d} \in \Z\Irr_{<_\sfs \Di}(\cO \Li \fid).
\end{align*}
Once again, taking the $\varphi$ from Lemma \ref{lem:nmv_char}, we have that $\varphi(\xi_{\rho,i^d}^{\tSi_d})$ and $\varphi(\xi_{\rho,i^{d-2},k^2}^{\tSi_d})$ are either both strictly positive or both strictly negative. Therefore, $C_0 = C_2 = 0$ and so
\begin{align}\label{algn:di_1j}
{}_\Li  \bX_\Gi^* \otimes_{\cO \Gi } \xi_\la = \frac{\eps_{\la}}{\eps_{\rho,i^{d-1},k}} 2^{(|\la^{(0)}| - h(\la^{(0)}))/2} \tilde K_{\la^{(i)}}\xi_{\rho,i^{d-1},k}^{\tSi_d}.
\end{align}
We are finally in a position to prove (\ref{L_to G_all_i}). We first have that
\begin{align*}
{}_\Gi  \bX_\Li  \otimes_{\cO \Li } \xi_{\rho,i^d} =  \bY \otimes_{\cO \Hi } \big(\bX_{d-1} \boxtimes (\Blo^{\varnothing,1})^{(d)}\big) \otimes_{\cO \Li } \xi_{\rho,i^d}
=  \sum_{\substack{\mu \in \Par_0(\rho,d-1) \\ |\mu^{(i)}| = d-1}} C_\mu (\bY \otimes_{\cO \Hi } \xi_{\mu,i}),
\end{align*}
for some $C_\mu \in \N$. Here, the first equality follows from Lemma \ref{lem:tensor_isom}(ii) and the second from Lemma \ref{lem:AboxM} and the fact that we are assuming Hypothesis \ref{hyp:ind_on_X}(ii) for $m=d-1$. (Note we could already explicitly write down the expression for the $C_\mu$'s but this is not necessary at this stage.) Therefore, by Lemma \ref{lem:ith_runner}(i),
\begin{align*}
{}_\Gi  \bX_\Li  \otimes_{\cO \Li } \xi_{\rho ,i^d} = \sum_{\substack{\la \in \Par_0(\rho,d) \\ |\la^{(i)}| = d}} C_\la \xi_\la + \sum_{j \neq i} \sum_{\substack{\la \in \Par_0(\rho,d) \\ |\la^{(i)}| = d-1, |\la^{(j)}| = 1}} C_\la \xi_\la,
\end{align*}
for some $C_\la \in \N$. However, (\ref{algn:di_1j}) and Lemma \ref{lem:chars_bisupmod}(ii) give that $C_\la = 0$ unless $|\la^{(i)}| = d$. In other words,
\begin{align}\label{algn:d_i's}
{}_\Gi  \bX_\Li  \otimes_{\cO \Li } \xi_{\rho,i^d} = \sum_{\substack{\la \in \Par_0(\rho,d) \\ |\la^{(i)}| = d}} C_\la \xi_\la.
\end{align}
We are now in a position to prove (\ref{L_to G_all_i}), that is, that each
\begin{align*}
C_\la = \frac{\eps_{\rho,i^d}}{\eps_{\la}} 2^{(|\la^{(0)}| - h(\la^{(0)}))/2}
\tilde K_{\la^{(i)}}.
\end{align*}
However, this is actually unnecessary and we go ahead and prove the more general statement of Hypothesis \ref{hyp:ind_on_X}(ii) for $m=d$.

Consider $\ud\in\La(I,d)$. We set
\begin{align}\label{X_otimes_xi's}
{}_\Gi  \bX_\Li  \otimes_{\cO \Li } \xi_{\rho,\ud} = \sum_{\la \in \Par_0(\rho,d)} \ka_\la \xi_\la,
\end{align}
for some $\ka_\la \in \N$. Suppose $\ka_\la \neq 0$, for some $\la \in \Par_0(\rho,d)$. We claim that $|\la^{(j)}| \geq d_j$, for each $j \in I$. For $d_j=d$, this is just (\ref{algn:d_i's}). If $d_j<d$, then
\begin{align*}
 {}_\Gi  \bX_\Li  \otimes_{\cO \Li } \xi_{\rho,\ud}
=  {}_\Gi  \bX_\Li  \otimes_{\cO \Li } \xi_{\rho,j^{d_j},\ud-d_j\ude_j} 
=  \bY \otimes_{\cO \Hi } \big(\bX_{d-1} \boxtimes (\Blo^{\varnothing,1})^{(d)}\big) \otimes_{\cO \Li } \xi_{\rho,j^{d_j},\ud-d_j\ude_j},
\end{align*}
where the first equality follows from the fact that $\bX$ is actually an $(\cO \Gi ,\cO \Ni )$-bimodule and the second from Lemma \ref{lem:tensor_isom}(ii). The claim now holds by the fact that we are assuming Hypothesis \ref{hyp:ind_on_X}(ii) for $m=d-1$ and Lemmas \ref{lem:AboxM} and \ref{lem:ith_runner}(i).

Since $d_1 + \dots + d_\ell = d$, in fact $|\la^{(j)}| = d_j$, for all $\la$ with $\ka_\la \neq 0$ and $j \in I$. Therefore, dualizing (\ref{X_otimes_xi's}) using \ref{lem:chars_bisupmod}(ii) and noting that $\bX^*$ is an $(\cO \Ni , \cO \Gi )$-bimodule, we have that for all $\la \in \Par_0(\rho,\ud)$,
\begin{align}\label{algn:LXG_la}
{}_\Li  \bX_\Gi^* \otimes_{\cO \Gi } \xi_\la = \al_\la \xi_{\rho,\ud}^{\tSi_d},
\end{align}
for some $\al_\la\in \N$. Also, by Lemma \ref{lem:restrict_calc},
\begin{align}
\begin{split}\label{algn:LXG_la_cong}
 {}_\Li  \bX_\Gi^* \otimes_{\cO \Gi } \xi_\la 
\equiv  \frac{\eps_{\la}}{\eps_{\rho,\ud}}\, 2^{(|\la^{(0)}| - h(\la^{(0)}))/2}
K(\la)\xi_{\rho,\ud}^{\tSi_d}
\end{split}
\end{align}
modulo $\Z\Irr_{<_\sfs \Di}(\cO \Li \fid)$. Once more, taking $\varphi$ from Lemma \ref{lem:nmv_char}, we have that $\varphi(\xi_{\rho,\ud}^{\tSi_d})$ is either strictly positive or strictly negative. Therefore, considering (\ref{algn:LXG_la}) and (\ref{algn:LXG_la_cong}) together gives 
$
\al_\la = \frac{\eps_{\la}}{\eps_{\rho,\ud}}\, 2^{(|\la^{(0)}| - h(\la^{(0)}))/2}
K(\la),
$
for all $\la \in \Par_0(\rho,\ud)$, and the result is proved.
\end{proof}

\begin{Proposition}\label{prop:prelim_main}
Let $d>1$. If Hypothesis \ref{hyp:ind_on_X} holds for all $1\leq m\leq d-1$, then Hypothesis \ref{hyp:ind_on_X}(i) holds for $m=d$.
\end{Proposition}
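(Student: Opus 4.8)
The plan is to deduce that $\bX = \bX_d$ induces a Morita superequivalence between $\cO \Ni \fid$ and $\cO \Gi \bi$ by combining the general machinery of Section~\ref{sec:grdd_supalg} with the character information supplied by Proposition~\ref{prop:main_char_calc}, which applies since Hypothesis~\ref{hyp:ind_on_X} is assumed for all $1 \leq m \leq d-1$. First I would recall from Lemma~\ref{lem:X*_with_X} that $\cO \Gi \bi \mid \bX \otimes_{\cO \Ni } \bX^*$ and that $\cO \Ni \fid \,\,|^\Di \, (\bX^* \otimes_{\cO \Gi } \bX)$; that is, as an $(\cO \Ni ,\cO \Ni )$-bisupermodule, $\bX^* \otimes_{\cO \Gi } \bX \simeq \cO \Ni \fid \oplus \gQ$ where every indecomposable summand of $\gQ$ has $\Di$-small vertex $\Delta_\varphi Q$ with $Q, \varphi(Q) <_\sfs \Di$. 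To upgrade this to an honest Morita superequivalence it suffices to show that $\gQ = 0$, and then separately that $\bX \otimes_{\cO \Ni } \bX^* \simeq \cO \Gi \bi$.

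The key step is to kill $\gQ$ via a dimension (or rather character) count. Since $\cO \Ni \fid$ and $\cO \Gi \bi$ are $\K$-split semisimple over $\K$, it is enough to check equality of characters after tensoring with $\K$. By Lemma~\ref{lem:idmpt_Mor}, Lemma~\ref{lem:weight1_equiv} and the identification $\cO \Li \fid \cong \Blo^{\rho,0} \otimes (\Blo^{\varnothing,1})^{\otimes d}$, together with Lemma~\ref{lem:dim_sqd}, one computes $\sum_{\la \in \Par_0(\rho,d)} (\text{multiplicity of } \xi_\la)^2$-type quantities; more concretely, I would use the character formulas of Hypothesis~\ref{hyp:ind_on_X}(ii) for $m=d$, now available by Proposition~\ref{prop:main_char_calc}, to compute the character of ${}_\Li \bX_\Gi^* \otimes_{\cO \Gi } {}_\Gi \bX_\Li$ and compare it with the character of $\cO \Li \fid$ as an $(\cO \Li \fid, \cO \Li \fid)$-bimodule. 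The combinatorial identities $\sum_{\la \in \Par(n)} K_\la^2 = n! = \sum_{\la \in \Par_0(n)} 2^{n-h(\la)}(K_\la')^2$ from Lemma~\ref{lem:dim_sqd}, applied factor-by-factor through $\sum_{i=0}^\ell|\la^{(i)}| = d$ and the multinomial expansion over $\La(I,d)$, should exactly account for $\dim_\K \K \Li \fid$, leaving no room for a nonzero $\gQ$ contribution (whose characters, being of non-maximal support, cannot be cancelled — this is where $\Z\Irr_{<_\sfs\Di}$ enters). Once $\bX^* \otimes_{\cO \Gi } \bX \simeq \cO \Ni \fid$ is established, the companion isomorphism $\bX \otimes_{\cO \Ni } \bX^* \simeq \cO \Gi \bi$ follows formally: $\cO \Gi \bi \mid \bX \otimes_{\cO \Ni } \bX^*$ already holds, and since $\bX^* \otimes_{\cO \Gi}$ applied to the complement would contradict $\bX^* \otimes_{\cO \Gi } \bX \simeq \cO \Ni \fid$ (using that $\cO \Gi \bi$ is indecomposable as a bimodule and the counting just performed is symmetric in $\Gi$ and $\Ni$), the complement must vanish. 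Finally, one checks the isomorphisms respect superstructure, which is automatic since all the bisupermodules and the maps exhibiting the summand decompositions were constructed in $\underlinesmod{\cdot}$.

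The main obstacle I anticipate is the bookkeeping in the character/dimension count: one must verify that the characters appearing in Hypothesis~\ref{hyp:ind_on_X}(ii) for $m=d$, summed over all $\ud \in \La(I,d)$ and all $\la \in \Par_0(\rho,\ud)$ with the weights $\eps_{\rho,\ud}^2/\eps_\la^2 \cdot 2^{|\la^{(0)}|-h(\la^{(0)})} K(\la)^2 \cdot \binom{d}{\ud}$, reproduce exactly $\dim_\K |\K \Li \fid| = \dim_\K |\Blo^{\rho,0}| \cdot p^d$, or more precisely that ${}_\Li \bX_\Gi^* \otimes_{\cO \Gi} {}_\Gi \bX_\Li \simeq \cO \Li \fid$ as bimodules. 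The subtle point is reconciling the $\Ni$-orbit counting (the superscript $\tSi_d$ in $\xi_{\rho,\ud}^{\tSi_d}$) with the raw $\Li$-character multiplicities, and ensuring the $\eps$-factors (square roots of $2$ tracking type $\Qtype$ versus $\Mtype$) cancel correctly; Lemma~\ref{lem:dim_sqd} is precisely the combinatorial engine that makes this work, but threading it through the quotient structure $\Blo^{\rho,0}\otimes(\Blo^{\varnothing,1})^{\otimes d}$ and the wreath action requires care. Once that count closes, the rest is a routine application of the Krull--Schmidt property for $\underlinesmod{\cdot}$ and the vanishing of non-maximal-support contributions.
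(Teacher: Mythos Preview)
Your computational core is correct and is the same as the paper's: the dimension count via Lemma~\ref{lem:dim_sqd} applied factor-by-factor to the quotient data, together with the character formulae from Proposition~\ref{prop:main_char_calc}, is exactly what drives the argument. However, your framing differs from the paper's and introduces two genuine gaps.

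The paper does not try to kill the complement $\gQ$ in $\bX^*\otimes_{\cO\Gi}\bX$ directly. Instead it first shows that $\bX$ is projective as a left $\cO\Gi$-module and as a right $\cO\Ni$-module (from $\bX\mid\cO\Gi\otimes_{\cO\Li}\bM_\Li$ and the vertex of $\bM_\Li$), then uses Lemma~\ref{lem:X*_with_X}(ii) to see $\bX$ is a progenerator over $\cO\Gi\bi$, and Lemma~\ref{lem:X*_with_X}(i) to see that the natural map $(\cO\Ni\fid)^{\op}\to\End_{\cO\Gi}\bX$ is an $\cO$-split monomorphism between $\cO$-free modules. The dimension count then shows this is an isomorphism, and standard Morita theory (plus Lemma~\ref{lem:HomAM_M*} for the super upgrade) finishes. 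In your approach, to conclude $\gQ=0$ from a $\K$-dimension count you need $\bX^*\otimes_{\cO\Gi}\bX$ to be $\cO$-free; this is not automatic and in fact requires exactly the projectivity of $\bX$ as a left $\cO\Gi$-module that you omit. Your remark that the $\Di$-small summands have characters ``of non-maximal support'' and so ``cannot be cancelled'' does not give this: it controls the one-sided module characters, not $\cO$-torsion.

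Second, your argument that $\bX\otimes_{\cO\Ni}\bX^*\simeq\cO\Gi\bi$ ``since the counting is symmetric in $\Gi$ and $\Ni$'' is not right: the count computes $\dim_\K\End_{\K\Gi}\K\bX$, and there is no parallel computation of $\dim_\K\End_{\K\Ni}\K\bX$ available from Hypothesis~\ref{hyp:ind_on_X}(ii). What does work is a Krull--Schmidt trick: once $\bX^*\otimes_{\cO\Gi}\bX\simeq\cO\Ni\fid$, tensoring $\bX\otimes_{\cO\Ni}\bX^*\simeq\cO\Gi\bi\oplus\gP$ on the left by $\bX^*\otimes_{\cO\Gi}$ gives $\bX^*\simeq\bX^*\oplus(\bX^*\otimes_{\cO\Gi}\gP)$, forcing $\bX^*\otimes_{\cO\Gi}\gP=0$, and then tensoring back by $\bX\otimes_{\cO\Ni}$ and using $\cO\Gi\bi\mid\bX\otimes_{\cO\Ni}\bX^*$ kills $\gP$. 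The paper's endomorphism-ring framing sidesteps this entirely, since Morita theory delivers both bimodule isomorphisms at once from the progenerator property.
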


\begin{proof}
By Lemma \ref{ML_MN_indecomp}, $\bM_\Li $ has vertex $\Delta \Di $. In particular, by the Mackey decomposition formula, $\bM_\Li $ is projective as both a left and right $\cO \Li $-module. By Lemma \ref{lem:GXL_indec}, ${}_\Gi  \bX_\Li $ is a direct summand of $\cO \Gi  \otimes_{\cO \Li } \bM_\Li $ and so ${}_\Gi  \bX_\Li $ is projective as a left $\cO \Gi $-module and as a right $\cO \Li $-module. Moreover, since $p\nmid[\Ni :\Li ]$, $\bX$ is also projective as a right $\cO \Ni $-module.

By Lemma \ref{lem:X*_with_X}(ii), $\cO \Gi \bi$ is a direct summand of $\bX \otimes_{\cO \Ni } \otimes \bX^*$ as an $(\cO \Gi ,\cO \Gi )$-bimodule. As seen above, $\bX$ is projective as a right $\cO \Ni $-module and so $\bX^*$ is projective as a left $\cO \Ni $-module. It follows that every indecomposable summand of $\bX \otimes_{\cO \Ni } \otimes \bX^*$, as a left $\cO \Gi $-module, appears as a summand of $\bX \simeq \bX \otimes_{\cO \Ni } \cO \Ni $. In particular, $\bX$ is a progenerator as a left $\cO \Gi \bi$-module. As a consequence, $\End_{\cO \Gi }\bX$ is free as an $\cO$-module.

Similar to the above argument, using part (i) of Lemma \ref{lem:X*_with_X} rather than part (ii), we may prove that $\bX$ is also a progenerator as a right $\cO \Ni \fid$-module.

Consider the natural algebra homomorphism
\begin{align}\label{algn:main_hom}
(\cO \Ni \fid)^{\op} \to \End_{\cO \Gi } \bX
\end{align}
given by right multiplication. As $\bX$ is a progenerator as a right $\cO \Ni \fid$-module, this is actually a monomorphism. We claim that this homomorphism is an $\cO$-split monomorphism. Indeed, by Lemma \ref{lem:X*_with_X}(i), $\bX^* \otimes_{\cO \Gi } \bX \simeq \cO \Ni \fid \oplus \gU$, for some $(\cO \Ni \fid,\cO \Ni \fid)$-bisupermodule $\gU$. Therefore, we have the sequence of $\cO$-module homomorphisms
\begin{align*}
\End_{\cO \Gi } \bX & \xrightarrow{\varphi} \End_{\cO \Ni }(\bX^* \otimes_{\cO \Gi } \bX) \cong \End_{\cO \Ni }(\cO \Ni \fid \oplus \gU) \\
& \xrightarrow{\vartheta} \End_{\cO \Ni }(\cO \Ni \fid) \cong (\cO \Ni \fid)^{\op},
\end{align*}
where $\varphi$ is induced by $\bX^* \otimes_{\cO \Gi }?$ and $\vartheta$ is induced by the projection of $\cO \Ni \fid \oplus \gU$ onto $\cO \Ni \fid$. It is now a trivial task to prove that the resulting composition $\End_{\cO \Gi } \bX \to (\cO \Ni \fid)^{\sop}$ splits the homomorphism in (\ref{algn:main_hom}), as an $\cO$-module homomorphism.

Once we have shown that $\cO \Ni \fid$ and $\End_{\cO \Gi }\bX$ have the same $\cO$-rank, we will have that the monomorphism in (\ref{algn:main_hom}) is actually an isomorphism and consequently that $\bX$ induces a Morita equivalence between $\cO \Ni \fid$ and $\cO \Gi \bi$. The majority of the remainder of this proof is dedicated to proving this equality of ranks.

As $\End_{\cO \Gi }\bX$ is free as an $\cO$-module, we can prove the equality of $\cO$-ranks by showing that
\begin{align}\label{dim_K_formula}
\dim_{\K}(\End_{\K \Gi }\K \bX) = d!\dim_{\K}(\K \Li \fid) = \dim_{\K}(\K \Ni \fid).
\end{align}
The second equality is immediate since
\begin{align*}
\dim_{\K}(\K \Ni \fid) = \dim_{\K}(\K \Ni  \otimes_{\K \Li } \K \Li \fid) = [\Ni :\Li ]\dim_{\K}(\K \Li \fid) = d! \dim_{\K}(\K \Li \fid).
\end{align*}

We now calculate the degrees of the characters in $\Irr(\cO \Li \fid)$. \cite[Proposition 4.2(a)]{Stem} immediately gives that
\begin{align}
\begin{split}\label{char_dim_L}
\xi_{\rho,j_1, \dots ,j_d}^{(\pm)}(1) & = \frac{\eps_\rho \prod_{k=1}^d \eps_{j_k}}{\eps_{\rho,j_1,\dots,j_d}} \xi_\rho^{(\pm)}(1) \xi_{j_1}^{(\pm)}(1) \dots \xi_{j_d}^{(\pm)}(1)\\
& = \frac{\eps_\rho\, 2^{\frac{d - d_0}{2}}}{\eps_{\rho,j_1,\dots,j_d}} \xi_\rho^{(\pm)}(1) \xi_{j_1}^{(\pm)}(1) \dots \xi_{j_d}^{(\pm)}(1),
\end{split}
\end{align}
for any $j_1,\dots,j_d \in I$, where $0$ appears $d_0$ times amongst the $j_k$'s.

Since we are assuming that Hypothesis \ref{hyp:ind_on_X} holds for $1 \leq m \leq d-1$, we can apply Proposition \ref{prop:main_char_calc}, to obtain that the character of
\begin{align*}
\K \bX \cong \K \bX \otimes_{\K \Li \fid}\K \Li \fid,
\end{align*}
as a left $\K \Gi $-module, is
\begin{align*}
&\sum_{\chi \in \Irr(\cO \Li \fid)} \chi(1) (\bX \otimes_{\cO \Li } \chi) \\
= & \sum_{\ud\in\La(I,d)} \frac{\eps_\rho}{\eps_{\rho,\ud}} 2^{\frac{d - d_0}{2}} \xi_\rho^{(\pm)}(1) \xi_0^{(\pm)}(1)^{d_0} \dots \xi_\ell^{(\pm)}(1)^{d_\ell}  (\bX \otimes \xi_{\rho,\ud}^{\tSi_d})\\
= & \sum_{\ud\in\La(I,d)} \sum_{\la \in \Par_0(\rho,\ud)} \hspace{-1mm}\binom{d}{\ud} \frac{ \eps_\rho}{\eps_\la} 2^{\frac{d - d_0}{2}} 
\xi_\rho^{(\pm)}(1) \xi_0^{(\pm)}(1)^{d_0} \dots \xi_\ell^{(\pm)}(1)^{d_\ell} 
2^{(|\la^{(0)}| - h(\la^{(0)}))/2}K(\la)\xi_\la,
\end{align*}
where the first equality follows from (\ref{char_dim_L}) and the second is due to Proposition \ref{prop:main_char_calc}.

Therefore,
\begin{align*}
&\dim_{\K}(\End_{\K \Gi }\K \bX) = \\
&\sum_{\ud\in\La(I,d)}\sum_{\la \in \Par_0(\rho,\ud) } \eps_\la^2 \big[\binom{d}{\ud}\frac{\eps_\rho}{\eps_\la}2^{\frac{d - d_0}{2}}
\xi_\rho^{(\pm)}(1)\xi_0^{(\pm)}(1)^{d_0}\dots\xi^{(\pm)}_{\ell}(1)^{d_\ell} 
\,2^{(|\la^{(0)}|-h(\la^{(0)}))/2} K(\la) \big]^2\\
= & \sum_{\ud\in\La(I,d)} \binom{d}{\ud}^2 \eps_\rho^2 2^{d-d_0} \big[\xi_\rho^{(\pm)}(1)\xi_0^{(\pm)}(1)^{d_0}\dots\xi^{(\pm)}_{\ell}(1)^{d_\ell}\big]^2 d_0!d_1!\dots d_{\ell}! \\
= & d!\sum_{\ud\in\La(I,d)} \binom{d}{\ud} \eps_\rho^2 2^{d-d_0} \big[\xi_\rho^{(\pm)}(1)\xi_0^{(\pm)}(1)^{d_0}\dots\xi^{(\pm)}_{\ell}(1)^{d_\ell}\big]^2\\
= & d!\sum_{\chi \in \Irr(\K \Li \fid)}\chi(1)^2=d!\dim_{\K}(\K \Li \fid),
\end{align*}
where the first equality follows from the above expression for the character of $\K \bX$. (Note that the $\eps_\la^2$ comes from the fact that $[\xi_\la]$ is actually the sum of $\eps_\la^2$ irreducibles.) The second equality follows from Lemma \ref{lem:dim_sqd} and the fourth from (\ref{char_dim_L}). (Note that the $\eps_{\rho,j_1,\dots,j_d}$ from (\ref{char_dim_L}) disappears as $\xi_{\rho,{j_1}, \dots ,{j_d}}$ is actually the sum of $\eps_{\rho,j_1,\dots,j_d}^2$ irreducibles.) We have now proved the first equality in (\ref{dim_K_formula}) and the proof is complete.

That $\bX$ induces a Morita superequivalence (and not just a Morita equivalence) is now an immediate consequence of Lemma \ref{lem:HomAM_M*}.
\end{proof}

\begin{Theorem}\label{thm:main}
Hypothesis \ref{hyp:ind_on_X} holds for all $1 \leq m \leq d$. In particular, $\bX \otimes_{\cO \Ni \fid} ?$ induces a Morita superequivalence between $\cO \Ni \fid$ and $\cO \Gi \bi$.
\end{Theorem}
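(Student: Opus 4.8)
The plan is to prove Hypothesis~\ref{hyp:ind_on_X} by induction on $m$, since its truth for $m=d$ immediately yields the theorem (taking $m=d$ in part~(i)). The base case $m=1$ has essentially already been established: Proposition~\ref{prop:define_M}(ii) shows that $\bX_1$ induces a Morita superequivalence between $\cO \Ni_1\fid_1$ and $\cO \Gi_1\bi_1$ with $\gV \otimes_{\cO \Ni_1\fid_1} \xi_{\rho,i} = \xi_{\rho^i}$, and Corollary~\ref{cor:X_OGcM} (for $d=1$) identifies ${}_{\Gi_1}\bX_{\Li_1}$ with $\gV$. Unravelling the weight-one character formula of Lemma~\ref{lem:HtoG} (note that for $d=1$ all $\la^{(i)}$ are empty or a single box, so $K(\la)=1$ and $2^{(|\la^{(0)}|-h(\la^{(0)}))/2}=1$) against the combinatorial identity $\mathfrak{f}_{(p-j,j)}(\mathfrak{h}_i)$ computed there, together with the $\eps$-bookkeeping from Lemma~\ref{ChuKesLem}(iii), gives Hypothesis~\ref{hyp:ind_on_X}(ii) for $m=1$. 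One should double-check the $\eps$-normalisations match; this is routine given Lemma~\ref{lem:restrict_calc}'s $d=1$ case, which is verified explicitly inside its proof.

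For the inductive step, assume Hypothesis~\ref{hyp:ind_on_X} holds for all $1\leq m\leq d-1$ (with $d>1$). Then Proposition~\ref{prop:main_char_calc} gives Hypothesis~\ref{hyp:ind_on_X}(ii) for $m=d$, and Proposition~\ref{prop:prelim_main} gives Hypothesis~\ref{hyp:ind_on_X}(i) for $m=d$ --- note that Proposition~\ref{prop:prelim_main} is stated to require only Hypothesis~\ref{hyp:ind_on_X} for $1\leq m\leq d-1$, but its proof invokes Proposition~\ref{prop:main_char_calc}, which is exactly why the two propositions must be combined in a single inductive pass rather than used sequentially. Thus the induction closes, and Hypothesis~\ref{hyp:ind_on_X} holds for all $1\leq m\leq d$.

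Specialising to $m=d$: part~(i) says $\bX=\bX_d$ induces a Morita superequivalence between $\cO \Ni\fid = \cO \Ni_d\fid_d$ and $\cO \Gi\bi = \cO \Gi_d\bi_d$. By definition a Morita superequivalence is induced by the bisupermodule $\bX$ together with a $(\cO \Ni\fid,\cO \Gi\bi)$-bisupermodule inverse; concretely, tensoring with $\bX$ over $\cO \Ni\fid$ is the required equivalence of supermodule categories, which is precisely the assertion ``$\bX \otimes_{\cO \Ni \fid} ?$ induces a Morita superequivalence between $\cO \Ni \fid$ and $\cO \Gi \bi$.'' The inverse is $\bX^*$ (up to the projective corrections controlled by Lemma~\ref{lem:X*_with_X}), with the stable-to-Morita upgrade supplied inside Proposition~\ref{prop:prelim_main} via the rank count using Lemma~\ref{lem:dim_sqd} and the final appeal to Lemma~\ref{lem:HomAM_M*} to promote the Morita equivalence to a Morita superequivalence.

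The main obstacle is not in this outer induction at all --- once Propositions~\ref{prop:main_char_calc} and~\ref{prop:prelim_main} are granted, the argument is a one-line bootstrap. The real content, already done in the excerpt, lives in those two propositions: tracking the $\Di$-small error terms through the bisupermodule isomorphisms of Section~\ref{sec:def_X_Y} (Lemmas~\ref{lem:vert_eqns_X_Y}, \ref{lem:X*_with_X}, \ref{lem:tensor_isom}, Corollary~\ref{cor:X_OGcM}) so that all character computations are valid only modulo $\Z\Irr_{<_\sfs \Di}$, and then using the injectivity of the map $\varphi$ of Lemma~\ref{lem:nmv_char} on the relevant orbit sums $\xi_{\rho,\ud}^{\tSi_d}$ to pin down the coefficients exactly. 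For the write-up of the theorem itself, the only care needed is to state the induction hypothesis correctly (both (i) and (ii) simultaneously) and to cite Propositions~\ref{prop:main_char_calc} and~\ref{prop:prelim_main} as the two halves of the inductive step, with the base case $m=1$ reduced to Proposition~\ref{prop:define_M}(ii) and Corollary~\ref{cor:X_OGcM}.
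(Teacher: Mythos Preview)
Your proposal is correct and follows essentially the same approach as the paper's proof: induction on $m$ with base case $m=1$ handled via Proposition~\ref{prop:define_M}(ii), and the inductive step supplied by Proposition~\ref{prop:main_char_calc} (for part~(ii)) followed by Proposition~\ref{prop:prelim_main} (for part~(i)). The paper's proof is terser---for the base case it simply invokes Proposition~\ref{prop:define_M}(ii) and remarks that all coefficients are $1$, whereas you spell out why ${}_{\Gi_1}\bX_{\Li_1}\simeq\gV$ via Corollary~\ref{cor:X_OGcM}; and the paper is explicit that at each level $k\leq d$ one applies the two Propositions with $d$ replaced by $k$, which is valid by Remark~\ref{rem:induct_Rou}, a point you leave implicit.
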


\begin{proof}
The proof will proceed via induction. The statement for $m=1$ is just Proposition \ref{prop:define_M}(ii). Note that, in this case, the coefficients in Hypothesis \ref{hyp:ind_on_X}(ii) really are all $1$, as in Proposition \ref{prop:define_M}(ii).

Let $1\leq k < d$. We assume the statement is true for all $1 \leq m < k$. We first apply Proposition \ref{prop:main_char_calc} with $d$ replaced by $k$. (This is valid due to Remark \ref{rem:induct_Rou}.) We immediately get that Hypothesis \ref{hyp:ind_on_X}(ii) holds for $m=k$.

Next, we apply Proposition \ref{prop:prelim_main} with $d$ replaced by $k$. (Again, this is valid via Remark \ref{rem:induct_Rou}.) This time we get that Hypothesis \ref{hyp:ind_on_X}(i) holds for $m=k$. This completes the proof.
\end{proof}

The following corollary is now just a consequence of Theorem \ref{thm:main} and Lemma \ref{lem:Mor_A0}.

\begin{Corollary}\label{cor:main}
$\bX_{\0} \otimes_{\cO \Ni_{\0}\fid} ?$ induces a Morita equivalence between $\cO \Ni_{\0}\fid$ and $\cO \Gi_{\0}\bi$.
\end{Corollary}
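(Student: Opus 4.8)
The plan is to read off the corollary from Theorem~\ref{thm:main} by passing to purely even subalgebras via Lemma~\ref{lem:Mor_A0}. By Theorem~\ref{thm:main}, the $(\cO \Gi \bi,\cO \Ni \fid)$-bisupermodule $\bX$ induces a Morita superequivalence between $\cO \Ni \fid$ and $\cO \Gi \bi$. To invoke Lemma~\ref{lem:Mor_A0} I first need both superalgebras to have a superunit. Since $n=r+dp>1$, the discussion in $\S$\ref{sec:gen_tSi} (via Lemma~\ref{lem:root_unity}) shows that $\cO \Gi e_z$ is a superalgebra with superunit, and truncating by the even central idempotent $\bi$ preserves this; likewise $\Ni\nleq\tAi_n$ (it contains, for instance, a lift of a transposition inside one of the $\tSi_{\sJ_k}$'s, which lies in $\Li\leq\Ni$ and is odd), so $\cO \Ni \fid$ is a superalgebra with superunit as well.

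Next I would identify the even parts. By Remark~\ref{rem:superblocks} both $\bi$ and $\fid$ lie in the even part of the relevant group superalgebras, so $(\cO\Gi)_\0=\cO\tAi_n=\cO\Gi_\0$ gives $(\cO\Gi\bi)_\0=\cO\Gi_\0\bi$, and $(\cO\Ni)_\0=\cO\Ni_\0$ gives $(\cO\Ni\fid)_\0=\cO\Ni_\0\fid$. Applying Lemma~\ref{lem:Mor_A0} to $\bX$ then yields that $\bX_\0$ induces a Morita equivalence between $(\cO\Ni\fid)_\0=\cO\Ni_\0\fid$ and $(\cO\Gi\bi)_\0=\cO\Gi_\0\bi$, which is precisely the assertion that $\bX_\0\otimes_{\cO\Ni_\0\fid}?$ realises a Morita equivalence between $\cO\Ni_\0\fid$ and $\cO\Gi_\0\bi$.

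There is no genuine obstacle here: all ingredients are already available. The only points requiring a moment's care are the bookkeeping identifications of the even subalgebras and the verification of the superunit hypothesis for $\cO \Ni \fid$, both of which are routine. Hence the corollary follows at once from Theorem~\ref{thm:main} and Lemma~\ref{lem:Mor_A0}.
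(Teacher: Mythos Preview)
Your argument is correct and follows exactly the paper's approach: the corollary is deduced directly from Theorem~\ref{thm:main} together with Lemma~\ref{lem:Mor_A0}. The additional checks you spell out (superunits, identification of the even parts) are routine and implicit in the paper's one-line proof.
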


Of course, $\cO \Gi_{\0}\bi$ is a spin block of a double cover of an alternating group. Furthermore, by Lemma \ref{lem:Brauer_G_N}, $\cO \Ni_{\0}\fid$ is the Brauer correspondent of $\cO \Gi_{\0}\bi$ in $\Ni_{\0}$. We can, therefore, interpret Corollary \ref{cor:main} as a result concerning RoCK blocks for double covers of alternating groups.

\begin{Remark}
Note that, for the $\Gi_{\0}$ in Corollary \ref{cor:main}, $|\Gi_{\0}| = n!$. In order to be consistent with the convention from the introduction, we would like to know that the Corollary holds under the condition that $\K$ (and hence $\cO$) contains a primitive $(n!)^{\nth}$ root of unity and not necessarily a primitive $(2n!)^{\nth}$ root of unity. We claim that even Theorem \ref{thm:main} holds under these weaker assumptions on $\K$.

We have $n \geq 4$ and so we still automatically have that $-1$ and $2$ have square roots in $\K$. Next note we are able to define $\fid$ and $\bi$ since they live in $\cO \Gi_{\0}$ and $\cO \Ni_{\0}$ respectively. The $\bM$ from Proposition \ref{prop:define_M} is defined entirely using $\cO \tSi_p$. Since $2p! \leq n!$, this does not cause any issues. Similarly, $2r! \leq n!$ and so $\Blo^{\rho,0}$, and hence $\bM_\Ni $, can be constructed.

We can now construct $\bX$ as the super Green correspondence of $\bM_\Ni $ and we have an algebra homomorphism $(\cO \Ni \fid)^{\op} \to \End_{\cO \Gi } \bX$ given by the right action of $\cO \Ni \fid$ on $\bX$. That this is an isomorphism follows from the fact that the corresponding homomorphism obtained by extending $\cO$ is an isomorphism.
\end{Remark}

\section{Vertices and source of bimodules}\label{sec:vert_source}

We continue to adopt the notation from Section~\ref{sec:X_Y}, in particular, $\rho$ a $d$-Rouquier ${\bar p}$-core.
%In particular, recall the definitions of $\bM_\Li $ and $\bM_\Ni $ from $\S$\ref{sec:ML_MN} and those of $\bX$ and $\bY$ from $\S$\ref{sec:def_X_Y}.

\subsection{Splendid derived equivalences}

In this section we show that the Morita equivalences between $\cO \Ni \fid$ and $\cO \Gi \bi$ from Theorem \ref{thm:main} and between $\cO \Ni_{\0}\fid$ and $\cO \Gi_{\0}\bi$ from Corollary \ref{cor:main} give rise to a splendid derived equivalences. 

Throughout this section all isomorphisms and direct summands will assumed to be as bimodules (and not bisupermodules).

Let $K$ be a finite group and $e_K$ a block idempotent of $\cO K$ with corresponding defect group $Q\leq K$. Recall that a {\em source idempotent} of $\cO Ke_K$ is a primitive idempotent $i\in (\cO Ke_K)^Q$ such that $\Br_Q(i)\neq 0$. Let $J$ be another finite group and $e_J$ a block idempotent of $\cO J$ with corresponding defect group isomorphic to $Q$ (that we identify with $Q$) and $j\in (\cO Je_J)^Q$ a source idempotent of $\cO Je_J$. A {\em splendid derived equivalence} between $\cO Ke_K$ and $\cO Je_J$ is a derived equivalence induced by a complex $\cY$ of $(\cO Ke_K,\cO Je_J)$-bimodules such that in each degree we have a finite direct sum of summands of the $(\cO Ke_K,\cO Je_J)$-bimodules $\cO Ki \otimes_{\F R} j\cO J$, where $R$ runs over the subgroups of $Q$, cf. \cite[1.10]{Lin5}.

For our purposes it will be useful to think of source idempotents in the following way outlined in \cite{ALR}. An idempotent $i\in (\cO Ke_K)^Q$ is a source idempotent of $\cO Ke_K$ if and only if $\cO Ki$ is an indecomposable $\cO(K \times Q)$-module with vertex $\Delta Q$ (see \cite[Remark 3]{ALR}). Equally, an idempotent $i\in (\cO Ke_K)^Q$ is a source idempotent of $\cO Ke_K$ if and only if $i\cO K$ is an indecomposable $\cO(Q \times K)$-module with vertex $\Delta Q$.

We set $\bU$ to be the $\cO\Delta \Di $-module
\begin{align*}
\bU := \Omega_{\cO\Delta \Di_1}^\ell(\cO) \boxtimes \dots \boxtimes \Omega_{\cO\Delta \Di_d}^\ell(\cO).
\end{align*}

\begin{Lemma}\label{lem:ON_MN_ON}
As $(\cO \Ni ,\cO \Ni )$-bimodules,
$$\bM_\Ni \mid \cO \Ni  \otimes_{\cO \Di } (\Ind_{\Delta \Di }^{\Di  \times \Di }\bU) \otimes_{\cO \Di } \cO \Ni .$$
\end{Lemma}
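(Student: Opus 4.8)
The plan is to trace through the definitions of $\bM_\Ni$ and $\bM_\Li$ and reduce the statement to a direct summand relation over the base group, then lift it up to $\Ni$. Recall that by definition $\bM_\Li=\Blo^{\rho,0}\boxtimes\bM^{\boxtimes d}$ and $\bM_\Ni=\Blo^{\rho,0}\boxtimes(\bM\swr\cT_d)$, while by Lemma~\ref{ML_MN_indecomp}(iii), $\Res^{\Ni\times\Ni}_{\Ni\times\Li}\bM_\Ni\simeq\Ind_{\Li\times\Li}^{\Ni\times\Li}\bM_\Li$, equivalently $\bM_\Ni\mid\cO\Ni\otimes_{\cO\Li}\bM_\Li$ as $(\cO\Ni,\cO\Ni)$-bimodules (using that $\bM_\Ni$ is indecomposable by Lemma~\ref{ML_MN_indecomp}(ii) and $p\nmid[\Ni:\Li]$). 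By the same token $\bM_\Ni\mid\cO\Ni\otimes_{\cO\Li}\bM_\Li\otimes_{\cO\Li}\cO\Ni$ as well. So it suffices to show that, as $(\cO\Li,\cO\Li)$-bimodules,
\begin{align*}
\bM_\Li\mid\cO\Li\otimes_{\cO\Di}(\Ind_{\Delta\Di}^{\Di\times\Di}\bU)\otimes_{\cO\Di}\cO\Li.
\end{align*}

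For this I would argue factor by factor. By Proposition~\ref{prop:define_M}(i), $\bM=\bM^{(k)}$ has vertex $\Delta\Di_k$ and source $\Omega_{\cO\Delta\Di_k}^{\ell}(\cO)$, so $\bM^{(k)}\mid\Ind_{\Delta\Di_k}^{\tSi_{\sJ_k}\times\tSi_{\sJ_k}}\Omega_{\cO\Delta\Di_k}^{\ell}(\cO)$, and hence (tensoring with induction from $\Di_k$ and using that $\tSi_{\sJ_k}$ restricted to $\Di_k$ on either side recovers $\Omega_{\cO\Delta\Di_k}^\ell(\cO)$ as a summand) we get $\bM^{(k)}\mid\cO\tSi_{\sJ_k}\otimes_{\cO\Di_k}(\Ind_{\Delta\Di_k}^{\Di_k\times\Di_k}\Omega_{\cO\Delta\Di_k}^\ell(\cO))\otimes_{\cO\Di_k}\cO\tSi_{\sJ_k}$. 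Similarly $\Blo^{\rho,0}=\cO\tSi_\sR\ei_{\rho,0}$ has trivial defect, so it is a summand of $\cO\tSi_\sR\otimes_{\cO}\cO\tSi_\sR\simeq\cO\tSi_\sR\otimes_{\cO 1}(\Ind_{\Delta 1}^{1\times 1}\cO)\otimes_{\cO 1}\cO\tSi_\sR$. Taking the outer tensor product $\boxtimes$ of these $d+1$ summand relations and applying Lemma~\ref{lem:tensor_bisupmod} repeatedly to move the $\otimes_{\cO\Di_k}$'s past the $\boxtimes$'s, together with Remark~\ref{rem:bisupmod} and the identification $\cO\Li\fid\cong\Blo^{\rho,0}\otimes(\Blo^{\varnothing,1})^{\otimes d}$ and $\Di=\Di_1\times\dots\times\Di_d$, yields exactly $\bM_\Li\mid\cO\Li\otimes_{\cO\Di}(\Ind_{\Delta\Di}^{\Di\times\Di}\bU)\otimes_{\cO\Di}\cO\Li$, since $\bU=\bigboxtimes_k\Omega_{\cO\Delta\Di_k}^\ell(\cO)$ and $\Ind_{\Delta\Di}^{\Di\times\Di}\bU\simeq\bigboxtimes_k\Ind_{\Delta\Di_k}^{\Di_k\times\Di_k}\Omega_{\cO\Delta\Di_k}^\ell(\cO)$.

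Finally I would splice the two reductions together: feeding the $\Li$-level summand relation into $\cO\Ni\otimes_{\cO\Li}(-)\otimes_{\cO\Li}\cO\Ni$ and using the associativity isomorphisms $\cO\Ni\otimes_{\cO\Li}\cO\Li\otimes_{\cO\Di}(-)\simeq\cO\Ni\otimes_{\cO\Di}(-)$ (and similarly on the right), we obtain $\bM_\Ni\mid\cO\Ni\otimes_{\cO\Di}(\Ind_{\Delta\Di}^{\Di\times\Di}\bU)\otimes_{\cO\Di}\cO\Ni$, as claimed. The main obstacle I anticipate is purely bookkeeping: keeping track of the various $\boxtimes$ versus $\otimes$ conventions and the sign twists when commuting $\otimes_{\cO\Di_k}$ past $\boxtimes$ via Lemma~\ref{lem:tensor_bisupmod}, and making sure the indecomposable-summand relation (rather than an equality) is preserved at each step — but each individual move is a direct application of a lemma already in the paper, so no genuinely new idea is needed.
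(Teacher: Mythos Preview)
Your proposal is correct and follows essentially the same route as the paper's proof: first establish $\bM_\Li\mid\cO\Li\otimes_{\cO\Di}(\Ind_{\Delta\Di}^{\Di\times\Di}\bU)\otimes_{\cO\Di}\cO\Li$, then use Lemma~\ref{ML_MN_indecomp}(iii) and $p\nmid[\Ni:\Li]$ to lift to $\Ni$. The only cosmetic difference is that the paper handles the first step in one stroke by invoking Lemma~\ref{lem:direct_prod_vert} (together with Proposition~\ref{prop:define_M}(i) and Remark~\ref{rem:bisupmod}) to read off directly that $\bM_\Li$ has vertex $\Delta\Di$ and source $\bU$, hence $\bM_\Li\mid\Ind_{\Delta\Di}^{\Li\times\Li}\bU$, rather than arguing factor by factor as you do; both amount to the same thing.
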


\begin{proof}
We first note that, by the definition of $\bM_\Li $, Proposition \ref{prop:define_M}(i), Remark \ref{rem:bisupmod} and Lemma \ref{lem:direct_prod_vert},
$$\bM_\Li \mid\Ind_{\Delta \Di }^{\Li  \times \Li }\bU \simeq \cO \Li  \otimes_{\cO \Di } (\Ind_{\Delta \Di }^{\Di  \times \Di }\bU) \otimes_{\cO \Di } \cO \Li .$$
Therefore, by Lemma \ref{ML_MN_indecomp},
$$\Res^{\Ni  \times \Ni }_{\Ni  \times \Li }\bM_\Ni  \simeq \Ind_{\Li  \times \Li }^{\Ni  \times \Li }\bM_\Li \mid \cO \Ni  \otimes_{\cO \Di } (\Ind_{\Delta \Di }^{\Di  \times \Di }\bU) \otimes_{\cO \Di } \cO \Li .$$
Since $p \nmid [\Ni :\Li ]$, the claim follows.
\end{proof}

\begin{Lemma}\label{lem:OG_U_ON}
%\begin{enumerate}
%\item
As $(\cO \Gi ,\cO \Ni )$-bimodules,
$$\bX\mid \cO \Gi i \otimes_{\cO \Di } (\Ind_{\Delta \Di }^{\Di  \times \Di }\bU) \otimes_{\cO \Di } j\cO \Ni ,$$
for some source idempotents $i \in (\cO \Gi \bi)^\Di $ and $j \in (\cO \Ni \fid)^\Di $.
\end{Lemma}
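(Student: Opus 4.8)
The plan is to combine Lemma~\ref{lem:vert_eqns_X_Y}(i), which gives $\bX\,|_\Di\,(\cO\Gi\otimes_{\cO\Ni}\bM_\Ni)$, with Lemma~\ref{lem:ON_MN_ON}, which says $\bM_\Ni\mid\cO\Ni\otimes_{\cO\Di}(\Ind_{\Delta\Di}^{\Di\times\Di}\bU)\otimes_{\cO\Di}\cO\Ni$ as $(\cO\Ni,\cO\Ni)$-bimodules. First I would substitute the latter into the former to obtain
\begin{align*}
\bX\mid \cO\Gi\otimes_{\cO\Ni}\bM_\Ni\mid \cO\Gi\otimes_{\cO\Ni}\cO\Ni\otimes_{\cO\Di}(\Ind_{\Delta\Di}^{\Di\times\Di}\bU)\otimes_{\cO\Di}\cO\Ni\simeq \cO\Gi\otimes_{\cO\Di}(\Ind_{\Delta\Di}^{\Di\times\Di}\bU)\otimes_{\cO\Di}\cO\Ni
\end{align*}
as $(\cO\Gi,\cO\Ni)$-bimodules. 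So $\bX$ is already a summand of $\cO\Gi\otimes_{\cO\Di}(\Ind_{\Delta\Di}^{\Di\times\Di}\bU)\otimes_{\cO\Di}\cO\Ni$; what remains is to replace $\cO\Gi$ and $\cO\Ni$ by $\cO\Gi i$ and $j\cO\Ni$ for suitable source idempotents $i\in(\cO\Gi\bi)^\Di$ and $j\in(\cO\Ni\fid)^\Di$, i.e.\ to cut down the outer factors to the source-idempotent truncations without losing $\bX$.

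The key point is that $\bX$ has vertex $\Delta\Di$ as an $(\cO\Gi,\cO\Ni)$-bimodule (this follows from the definition of $\bX$ as the super Green correspondent of $\bM_\Ni$, Lemma~\ref{ML_MN_indecomp}(ii) giving $\bM_\Ni$ vertex $\Delta\Di$, and Lemma~\ref{thm:super_Green}/Lemma~\ref{lem:sup_vert}). Recall from the discussion preceding the lemma (following \cite{ALR}) that an idempotent $i\in(\cO\Gi\bi)^\Di$ is a source idempotent precisely when $\cO\Gi i$ is an indecomposable $\cO(\Gi\times\Di)$-module with vertex $\Delta\Di$, and dually for $j\cO\Ni$. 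Decomposing the identity $\bi$ of $\cO\Gi\bi$ into primitive idempotents in $(\cO\Gi\bi)^\Di$, exactly those with non-zero Brauer construction $\Br_\Di$ give source idempotents; the remaining primitive summands of $\cO\Gi\bi$ as an $\cO(\Gi\times\Di)$-module have vertex strictly smaller than $\Delta\Di$, hence vertex contained in some $\Delta Q$ with $Q<_\sfs\Di$ (using that $\Di$ is abelian and that all $p$-subgroups of $\Gi$ relevant here are subconjugate to $\Di$, together with Lemma~\ref{lem:conj_def}(i) to control conjugates). Writing $\bi=i_1+\dots+i_s+e'$ where $i_1,\dots,i_s$ are source idempotents and $\cO\Gi e'$ has vertices properly below $\Delta\Di$ as a right $\cO\Di$-module, and similarly $\fid=j_1+\dots+j_t+f'$, one expands
\begin{align*}
\cO\Gi\otimes_{\cO\Di}(\Ind_{\Delta\Di}^{\Di\times\Di}\bU)\otimes_{\cO\Di}\cO\Ni\simeq\bigoplus_{a,b}\cO\Gi i_a\otimes_{\cO\Di}(\Ind_{\Delta\Di}^{\Di\times\Di}\bU)\otimes_{\cO\Di}j_b\cO\Ni\ \oplus\ (\text{terms from }e',f').
\end{align*}
By Lemma~\ref{lem:bimod_vert}(i),(ii) the terms involving $e'$ or $f'$ are direct sums of bimodules with vertex strictly contained in $\Delta\Di$ (the tensor with $\Ind_{\Delta\Di}^{\Di\times\Di}\bU$, which has vertex $\Delta\Di$, cannot raise the vertex back up). Since $\bX$ has vertex exactly $\Delta\Di$, it must occur in the first block of summands, i.e.\ $\bX\mid\cO\Gi i_a\otimes_{\cO\Di}(\Ind_{\Delta\Di}^{\Di\times\Di}\bU)\otimes_{\cO\Di}j_b\cO\Ni$ for some $a,b$; setting $i:=i_a$, $j:=j_b$ proves the lemma.

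The main obstacle I anticipate is the bookkeeping around the ``$|_\Di$'' relation and making precise that the non-source-idempotent parts of $\cO\Gi\bi$ and $\cO\Ni\fid$ really do have vertex of the form $\Delta Q$ with $Q<_\sfs\Di$ rather than merely a vertex of smaller order: this uses that $\Di$ is the common defect group, that $\Di$ is abelian so every proper subgroup has strictly larger fixed-point set, and Lemma~\ref{lem:conj_def}(i) to ensure that $\Gi$-conjugates arising from $\cO\Gi\otimes_{\cO\Di}-$ stay $\Di$-small. Once that is in hand, the vertex-counting argument via Lemma~\ref{lem:bimod_vert} is routine, and the identification of $\bU$ as the tensor product of the $\Omega^\ell(\cO)$'s over the $\Di_k$ is already built into the definition of $\bM$ and Proposition~\ref{prop:define_M}(i).
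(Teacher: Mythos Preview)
Your approach is essentially the same as the paper's: first establish $\bX\mid \cO\Gi\bi\otimes_{\cO\Di}(\Ind_{\Delta\Di}^{\Di\times\Di}\bU)\otimes_{\cO\Di}\cO\Ni\fid$ via Lemma~\ref{lem:ON_MN_ON} and the construction of $\bX$, then decompose $\cO\Gi\bi$ and $\cO\Ni\fid$ as $\cO(\Gi\times\Di)$- and $\cO(\Di\times\Ni)$-modules respectively, and use that $\bX$ has vertex $\Delta\Di$ together with Lemma~\ref{lem:bimod_vert}(ii) to force $\bX$ into a summand coming from pieces with vertex $\Delta\Di$, which by the \cite{ALR} characterisation are exactly the $\cO\Gi i$ and $j\cO\Ni$ for source idempotents.

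One remark: the concern in your final paragraph is unnecessary. You do not need the non-source-idempotent summands to have vertex $\Delta Q$ with $Q<_\sfs\Di$; it suffices that $Q$ is a \emph{proper} subgroup of $\Di$, so that Lemma~\ref{lem:bimod_vert}(ii) forces every summand of the resulting tensor product to have vertex of order strictly less than $|\Delta\Di|$, contradicting the vertex of $\bX$. The paper argues this directly via Mackey: any indecomposable summand $\gV$ of $\cO\Gi\bi$ as an $\cO(\Gi\times\Di)$-module is relatively ${}^{(g,g)}\Delta\Di\cap(\Gi\times\Di)$-projective for some $g\in\Gi$, and if this intersection equals $\Delta\Di$ then $g\in N_\Gi(\Di)$ and $\gV$ has vertex $\Delta\Di$, hence corresponds to a source idempotent. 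No appeal to Lemma~\ref{lem:conj_def}(i) or to the $<_\sfs$ relation is needed here.
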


\begin{proof}
By the definition of $\bX$ and Lemmas \ref{lem:ON_MN_ON} and \ref{lem:X_as_left_mod}(i),
$$\bX\mid \cO \Gi \bi \otimes_{\cO \Di } (\Ind_{\Delta \Di }^{\Di  \times \Di }\bU) \otimes_{\cO \Di } \cO \Ni \fid.$$
Since $\bX$ is indecomposable, it is isomorphic to a direct summand of
\begin{align}\label{algn:V_U_W}
\gV \otimes_{\cO \Di } (\Ind_{\Delta \Di }^{\Di  \times \Di }\bU) \otimes_{\cO \Di } \gW,
\end{align}
for some indecomposable summand $\gV$ of $\cO \Gi \bi$ as an $\cO(\Gi  \times \Di )$-module and some indecomposable summand $\gW$ of $\cO \Ni \fid$ as an $\cO(\Di  \times \Ni )$-module.

By Mackey's decomposition formula, $\gV$ is relatively $\Delta Q$-projective, where $\Delta Q={}^{(g,g)}\Delta \Di  \cap (\Gi  \times \Di )$, for some $g \in \Gi $. In particular, $Q \leq \Di $. Similarly, $\gW$ is relatively $\Delta R$-projective, where $\Delta R={}^{(h,h)}\Delta \Di \cap (\Di  \times \Gi )$, for some $h \in \Ni $. Again, $R \leq \Di $. If $g \notin N_\Gi (\Di )$ (resp. $h \notin N_\Hi (\Di )$), then $Q < \Di $ (resp. $R < \Di $). Therefore, by Lemma \ref{lem:bimod_vert}(ii), the bimodule in (\ref{algn:V_U_W}) is a direct sum of bimodules each with vertex of order strictly smaller than that of $\Delta \Di $. Since $\bX$ has vertex $\Delta \Di $, this is a contradiction. From now on we assume $g \in N_\Gi (\Di )$ and $h \in N_\Hi (\Di )$.

We now have that $Q = R = \Delta \Di $. In fact, $\gV$ and $\gW$ must actually both have vertex $\Delta \Di $. Indeed, if either vertex is strictly contained in $\Delta \Di $, then Lemma \ref{lem:bimod_vert}(ii) once again gives that the bimodule in (\ref{algn:V_U_W}) is a direct sum of bimodules each with vertex of order strictly smaller than that of $\Delta \Di $.

By the \cite{ALR} description of source idempotents, we have now shown that $\gV \cong \cO \Gi i$, for some source idempotent $i \in (\cO \Gi \bi)^D$ and $\gW \cong j\cO \Ni $, for some source idempotent $j \in (\cO \Ni \fid)^\Di $, as desired.
\end{proof}

For a finite group $K$ and an $\cO K$-module $\gZ$, an {\em endosplit $p$-permutation resolution} of $\gZ$ is a bounded complex $\cY$ of finitely generated $p$-permutation $\cO K$-modules with homology concentrated in degree $0$ isomorphic to $\gZ$
such that the complex $\cY \otimes_{\cO} \cY^*$ is a split complex of $\cO K$-modules, where $K$ acts diagonally on the tensor product.

For the following lemma, we treat $\bU$ as an $\cO \Di $-module via the obvious group isomorphism $\Di  \cong \Delta \Di $.

\begin{Lemma}\label{lem:U_endosplit}
$\bU$ has an $N_\Gi (\Di )$-stable endosplit $p$-permutation resolution.
\end{Lemma}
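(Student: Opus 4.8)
The plan is to build the resolution of $\bU=\Omega^\ell_{\cO\Delta\Di_1}(\cO)\boxtimes\dots\boxtimes\Omega^\ell_{\cO\Delta\Di_d}(\cO)$ factor by factor, using that each $\Di_k$ is a cyclic group of order $p$, and then to box together the resulting complexes. First I would recall the classical fact (due to Rickard, see e.g. the treatment in \cite[\S7.3]{Lin6} or \cite{Ri}) that for a cyclic $p$-group $C$ of order $p$, the Heller translate $\Omega^n_{\cO C}(\cO)$ admits an endosplit $p$-permutation resolution: indeed $\cO C$ is a Brauer tree algebra (a line with one edge $\times$, or rather the group algebra of $C_p$), one can write down an explicit bounded complex of permutation $\cO C$-modules (alternating between $\cO C$ and $\cO$, truncated appropriately) whose homology is $\Omega^n_{\cO C}(\cO)$ concentrated in degree $0$, and whose tensor square with its dual splits because all the terms are $p$-permutation and the endopermutation module $\Omega^n_{\cO C}(\cO)$ has the splitting property. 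Call this complex $\cY_k$ for the factor $\Di_k$; it is a complex of $p$-permutation $\cO\Di_k$-modules.

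Next I would form the outer tensor product $\cY:=\cY_1\boxtimes\dots\boxtimes\cY_d$, a bounded complex of $p$-permutation $\cO\Di$-modules (outer tensor products of $p$-permutation modules over the direct product $\Di=\Di_1\times\dots\times\Di_d$ are again $p$-permutation). By the Künneth theorem its homology is concentrated in degree $0$ and equals $\Omega^\ell_{\cO\Di_1}(\cO)\boxtimes\dots\boxtimes\Omega^\ell_{\cO\Di_d}(\cO)=\bU$. The endosplit condition is preserved under $\boxtimes$: since $(\cY_1\boxtimes\dots\boxtimes\cY_d)\otimes_{\cO}(\cY_1\boxtimes\dots\boxtimes\cY_d)^*\cong(\cY_1\otimes_{\cO}\cY_1^*)\boxtimes\dots\boxtimes(\cY_d\otimes_{\cO}\cY_d^*)$ as complexes of $\cO\Di$-modules, and each $\cY_k\otimes_{\cO}\cY_k^*$ is split (i.e. homotopy equivalent to its homology, which is a module in degree $0$), the outer tensor product is a split complex too — a tensor product of contractible-up-to-homology complexes over a direct product is again of that form. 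This gives $\bU$ an endosplit $p$-permutation resolution as an $\cO\Di$-module, equivalently as an $\cO\Delta\Di$-module.

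Finally I would address the $N_\Gi(\Di)$-stability. By Lemma~\ref{lem:CGD}(ii), $N_\Gi(\Di)$ is generated by $\tSi_\sR\times_z N_{\tSi_{\sJ_1}}(\Di_1)\times_z\dots\times_z N_{\tSi_{\sJ_d}}(\Di_d)$ together with the elements $T_w$, $w\in\Si_d$. The subgroup $\tSi_\sR$ centralizes $\Di$ and acts trivially on each factor; each $N_{\tSi_{\sJ_k}}(\Di_k)$ normalizes $\Di_k$, and the resolution $\cY_k$ can be chosen $N_{\tSi_{\sJ_k}}(\Di_k)$-stable because over a cyclic defect group the endosplit resolution of $\Omega^\ell(\cO)$ is canonical up to isomorphism and its terms ($\cO\Di_k$ and $\cO$ with permutation action) are automatically stable under the automorphisms of $\Di_k$ realized inside $N_{\tSi_{\sJ_k}}(\Di_k)$ — concretely, $N_{\tSi_{\sJ_k}}(\Di_k)/C_{\tSi_{\sJ_k}}(\Di_k)\cong C_{2\ell}$ acts on $\Di_k\cong C_p$, and the complex $\cY_k$ is preserved since $\Omega^\ell_{\cO\Di_k}(\cO)$ is; one uses here that an endopermutation $\cO\Di_k$-module which is stable under a group of automorphisms admits a stable endosplit resolution, as in \cite[Theorem 7.8]{Ri} or the account in \cite{Lin6}. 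The elements $T_w$ permute the factors $\Di_k$ (via $T_w\Di_kT_w^{-1}=\Di_{w(k)}$) and, through the identifications (\ref{Sp_ident}), identify the complexes $\cY_k$ compatibly, so conjugation by $T_w$ permutes the tensor factors of $\cY=\cY_1\boxtimes\dots\boxtimes\cY_d$ and hence fixes $\cY$ up to isomorphism. Combining these, $\cY$ is $N_\Gi(\Di)$-stable. The main obstacle I anticipate is the bookkeeping in this last paragraph: making the stability of $\cY_k$ under $N_{\tSi_{\sJ_k}}(\Di_k)$ genuinely canonical (rather than merely "up to isomorphism in a way that might not be coherent"), and verifying that the $T_w$-twisting of $\cY$ is literally the permutation of tensor factors under the chosen identifications — both are standard but require care with the cyclic-defect endosplit resolution machinery and with the normalisation conventions fixed in \S\ref{sec:blocks}.
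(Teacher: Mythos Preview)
Your overall strategy---build an endosplit $p$-permutation resolution for each $\Omega^\ell_{\cO\Di_k}(\cO)$, tensor them together, then check $N_\Gi(\Di)$-stability using Lemma~\ref{lem:CGD}(ii)---is the same as the paper's. But you miss a key simplification that dissolves the very obstacle you anticipate. The paper first observes that $\Omega^2_{\cO\Di_k}(\cO)\cong\cO$ (an explicit two-line computation for $C_p$), so $\Omega^\ell_{\cO\Di_k}(\cO)$ is either $\cO$ (if $\ell$ is even, whence $\bU\cong\cO$ and there is nothing to do) or $\Omega_{\cO\Di_k}(\cO)$ (if $\ell$ is odd). In the odd case the resolution $\cY_k$ is just the two-term complex $\cO\Di_k\to\cO$ with the augmentation map, concentrated in degrees $0$ and $-1$.

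This buys exactly what you are worried about: $N_{\tSi_{\sJ_k}}(\Di_k)$-stability of $\cY_k$ is now immediate, because both terms ($\cO\Di_k$ and $\cO$) and the augmentation map are manifestly fixed by any automorphism of $\Di_k$. No appeal to an abstract ``stable endopermutation modules have stable resolutions'' principle is needed---and your citation of \cite[Theorem 7.8]{Ri} for such a principle is not quite right in any case. For the $T_w$-action the paper invokes \cite[Lemma 4.1(b)]{Mar} to handle the permutation of tensor factors, which is the precise bookkeeping statement you allude to. The endosplit property of the tensor product $\cY_1\otimes\dots\otimes\cY_d$ is handled by \cite[Lemma 7.4]{Ri}, matching your Künneth-style argument.
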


\begin{proof}
Until further notice we fix some $k$, with $1 \leq k \leq d$, and let $g$ be a generator of $\Di_k$. We first claim that $\Omega_{\cO \Di_k}^2(\cO) \cong \cO$. Indeed, once we have made the identification
\begin{align*}
\Omega_{\cO \Di_k}(\cO) \cong \big\{\sum_{x \in \Di_k} \al_x x \mid  \sum_{x \in \Di_k} \al_x = 0\big\} \subseteq \cO \Di_k,
\end{align*}
the kernel of the $\cO \Di_k$-module epimorphism given by
\begin{align*}
\cO \Di_k  \twoheadrightarrow \Omega_{\cO \Di_k}(\cO), \ 
x  \mapsto (1-g)x 
\end{align*}
is $\langle \sum_{x \in \Di_k} x\rangle_{\cO} \cong \cO$, as claimed.

We have now shown that
\begin{align*}
\Omega_{\cO \Di_k}^{\ell}(\cO) \cong
\begin{cases}
\cO & \text{if }\ell\text{ is even,}\\
\Omega_{\cO \Di_k}(\cO) & \text{if }\ell\text{ is odd.}
\end{cases}
\end{align*}
Therefore, if $\ell$ is even, $\bU \cong \cO$ and we can just take the complex with $\cO$ concentrated in degree $0$. From now on we assume $\ell$ is odd and consequently that $\Omega_{\cO \Di_k}^{\ell}(\cO) \cong \Omega_{\cO \Di_k}(\cO)$.

Certainly
\begin{align*}
\cY_k := \dots \to 0 \to \cO \Di_k \to \cO \to 0 \to \dots,
\end{align*}
where the $\cO \Di_k$ is in degree $0$ and the non-zero boundary map is given by $x \mapsto 1$, for all $x \in \Di_k$, has homology concentrated in degree $0$ isomorphic to $\Omega_{\cO \Di_k}(\cO)$. Moreover, by \cite[Proposition 7.11.8]{Lin6}, this is an endosplit $p$-permutation resolution of $\Omega_{\cO \Di_k}(\cO)$. It is also clear that this resolution is $N_{\tSi_{\sJ_k}}(\Di_k)$-stable.

We now drop our assumption that $k$ is fixed. By \cite[Lemma 7.4]{Ri}, $\cY:=\cY_1 \otimes \dots \otimes \cY_d$ is an endosplit $p$-permutation resolution of $\bU$.

All that remains to show is that $\cY$ is $N_\Gi (\Di )$-stable. For this we refer to Lemma \ref{lem:CGD}(ii). We note that, since $\Di  \leq \Gi_{\0}$, all the conjugation actions by elements of $N_\Gi (\Di )$ are particularly easy to write down. For example, $N_{\tSi_{\sJ_k}}(\Di_k)$ commutes with every $\Di_l$, with $l \neq k$, and, through our identification of all the $\cO \Di_k$'s via (\ref{Sp_ident}),
\begin{align}\label{algn:Tw_on_D}
T_w(\al_1 \otimes \dots \otimes \al_d)T_w^{-1} = \al_{w^{-1}(1)} \otimes \dots \otimes \al_{w^{-1}(d)},
\end{align}
for all $w \in \Si_d$ and $\al_k \in \cO \Di_k$.

Firstly, $\tSi_\sR $ commutes with $\Di $, so $\cY$ is $\tSi_\sR $-stable. That $\cY$ is $N_{\tSi_{\sJ_k}}(\Di_k)$-stable, for all $1 \leq k \leq d$, just follows from our construction. Therefore, the only non-trivial thing to show is that $\cY$ is stable under the action of each $T_w$. However, this is an immediate consequence of (\ref{algn:Tw_on_D}) and \cite[Lemma 4.1(b)]{Mar}
\end{proof}

\begin{Theorem}\label{thm:splendid}
$\cO \Ni \fid$ is splendidly derived equivalent to $\cO \Gi \bi$.
\end{Theorem}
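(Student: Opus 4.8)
\textbf{Proof proposal for Theorem \ref{thm:splendid}.}
The plan is to combine the Morita superequivalence from Theorem \ref{thm:main} with the machinery of endosplit $p$-permutation resolutions, following the Marcus--Rickard--Linckelmann technique for lifting a local Morita equivalence to a splendid derived equivalence. By Theorem \ref{thm:main}, the bimodule $\bX$ induces a Morita equivalence between $\cO \Ni \fid$ and $\cO \Gi \bi$, and Lemma \ref{lem:OG_U_ON} tells us that, as an $(\cO \Gi ,\cO \Ni )$-bimodule, $\bX$ is a direct summand of $\cO \Gi i \otimes_{\cO \Di } (\Ind_{\Delta \Di }^{\Di  \times \Di }\bU) \otimes_{\cO \Di } j\cO \Ni $ for source idempotents $i \in (\cO \Gi \bi)^\Di $ and $j \in (\cO \Ni \fid)^\Di $. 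Since the indecomposable $p$-permutation $\cO(\Gi \times \Ni )$-modules appearing as summands of $\cO \Gi i \otimes_{\cO R} j\cO \Ni$ (as $R$ runs over subgroups of $\Di$) are exactly those with vertex contained in a ``twisted diagonal'' $\Delta_\varphi R$ with $R \leq \Di$, the module $\bX$ itself is of this form; what is needed is to upgrade the single bimodule $\bX$ to a two-sided tilting complex each of whose terms is a sum of such summands.

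First I would invoke Lemma \ref{lem:U_endosplit}, which furnishes an $N_\Gi (\Di )$-stable endosplit $p$-permutation resolution $\cY_\bU$ of the source module $\bU$ as an $\cO \Di$-module (equivalently, of $\Ind_{\Delta\Di}^{\Di\times\Di}\bU$ as a one-sided object, after the usual identification $\Di \cong \Delta\Di$). The $N_\Gi(\Di)$-stability, together with $N_\Gi(\Di) \leq \Ni$ (Lemma \ref{lem:CGD}(ii)), is precisely what is required to apply the gluing results of Rickard and Linckelmann on endosplit $p$-permutation resolutions: one forms the complex $\cO \Gi i \otimes_{\cO\Di} \cY_\bU \otimes_{\cO\Di} j\cO\Ni$ of $(\cO\Gi\bi, \cO\Ni\fid)$-bimodules. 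By construction each term is a finite direct sum of summands of the bimodules $\cO\Gi i \otimes_{\cO R} j\cO\Ni$ for $R \leq \Di$, so this complex is splendid in the sense recalled before the theorem. Its homology is concentrated in a single degree and, by the endosplit property and Lemma \ref{lem:OG_U_ON}, is a direct sum of $\bX$ with (one-sided) projective bimodules; a standard argument (cf. \cite{Ri}, \cite{Lin5}) then shows that after deleting contractible summands the complex is a two-sided tilting complex whose zeroth homology is $\bX$, hence it induces the same Morita equivalence as $\bX$ and in particular a derived equivalence. Since the complex is splendid, $\cO\Gi\bi$ is splendidly derived equivalent to $\cO\Ni\fid$.

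The main obstacle I expect is the bookkeeping needed to verify the hypotheses of the endosplit $p$-permutation resolution gluing theorem in the present twisted (super) setting: one must check that $\bU$ genuinely is the source module of $\bX$ up to the relevant projective correction (this is essentially Lemma \ref{lem:OG_U_ON}), that the resolution $\cY_\bU$ is stable under the fusion system of $\cO\Gi\bi$ on $\Di$ rather than merely under $N_\Gi(\Di)$, and that the tensoring $\cO\Gi i \otimes_{\cO\Di} (-) \otimes_{\cO\Di} j\cO\Ni$ does not introduce unwanted indecomposable summands outside the splendid class. Because $\Di = \Di_1 \times \dots \times \Di_d$ is abelian and the relevant inertial quotients are cyclic (compare the computations in Lemma \ref{Omega_bOGf_Morita}), the fusion system on $\Di$ is controlled by $N_\Gi(\Di)$, so $N_\Gi(\Di)$-stability suffices; spelling this out carefully, and citing the precise form of Linckelmann's and Rickard's results, is the only real work. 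Everything else is assembled from Theorem \ref{thm:main}, Lemma \ref{lem:OG_U_ON} and Lemma \ref{lem:U_endosplit}.
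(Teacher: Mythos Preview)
Your proposal is correct and follows essentially the same approach as the paper: both combine Theorem~\ref{thm:main}, Lemma~\ref{lem:OG_U_ON}, and Lemma~\ref{lem:U_endosplit} and then invoke the Linckelmann--Rickard machinery for endosplit $p$-permutation resolutions. The paper simply cites \cite[Proposition~9.11.5(i)]{Lin6} (originally \cite[Theorem~1.3]{Lin3}) directly rather than unpacking the construction of the splendid tilting complex as you do, and your concern about fusion-system stability versus $N_\Gi(\Di)$-stability is a non-issue here since $\Di$ is abelian.
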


\begin{proof}
With Lemmas \ref{lem:OG_U_ON}, \ref{lem:U_endosplit} and Theorem \ref{thm:main} in mind, this can just be concluded from \cite[Proposition 9.11.5(i)]{Lin6}, originally stated in \cite[Theorem 1.3]{Lin3}.
\end{proof}

We also have the analogous theorem for the RoCK blocks of double covers of alternating groups from Corollary \ref{cor:main}.

\begin{Theorem}\label{thm:splendid_0}
$\cO \Ni_{\0}\fid$ is splendidly derived equivalent to $\cO \Gi_{\0}\bi$.
\end{Theorem}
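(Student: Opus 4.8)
The plan is to deduce Theorem~\ref{thm:splendid_0} from Theorem~\ref{thm:splendid} by descending along the index-$2$ inclusions $\Ni_\0\leq\Ni$ and $\Gi_\0\leq\Gi$, exactly as Corollary~\ref{cor:main} was deduced from Theorem~\ref{thm:main} via Lemma~\ref{lem:Mor_A0}. The key point is that everything in the proof of Theorem~\ref{thm:splendid} is ``even'': the defect group $\Di\leq\Gi_\0$ is contained in the even part, the source idempotents $i\in(\cO\Gi\bi)^\Di$, $j\in(\cO\Ni\fid)^\Di$ produced in Lemma~\ref{lem:OG_U_ON} lie in $\cO\Gi_\0$ and $\cO\Ni_\0$ respectively (since $\bi,\fid\in\cO\Gi_\0,\cO\Ni_\0$ and $\Br_\Di$ only sees $C_\Gi(\Di)\leq\Gi_\0$ by Lemma~\ref{lem:CGD}(i)), and the endosplit $p$-permutation resolution $\cY$ of $\bU$ from Lemma~\ref{lem:U_endosplit} is built entirely out of $\cO\Di$-modules and is $N_\Gi(\Di)$-stable.

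First I would record that $\bX_\0\otimes_{\cO\Ni_\0\fid}?$ induces a Morita equivalence between $\cO\Ni_\0\fid$ and $\cO\Gi_\0\bi$ (this is Corollary~\ref{cor:main}), and that $i$, $j$ as above are genuine source idempotents of the blocks $\cO\Gi_\0\bi$ and $\cO\Ni_\0\fid$ of the groups $\Gi_\0$, $\Ni_\0$ --- here one uses the $\cite{ALR}$ characterisation of source idempotents recalled before Lemma~\ref{lem:ON_MN_ON}, together with the fact that $\cO\Gi i$ restricted to $\cO\Gi_\0$ contains $\cO\Gi_\0 i$ as the summand with vertex $\Delta\Di$ (and similarly for $j\cO\Ni$). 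Next I would take the even part of the bimodule statement of Lemma~\ref{lem:OG_U_ON}: restricting $\bX\mid\cO\Gi i\otimes_{\cO\Di}(\Ind_{\Delta\Di}^{\Di\times\Di}\bU)\otimes_{\cO\Di}j\cO\Ni$ to $\Gi_\0\times\Ni_\0$ and extracting the summand $\bX_\0$ (which has vertex $\Delta\Di$, again using $p>2$ and Lemma~\ref{lem_red_to_A0}) yields
\[
\bX_\0\mid \cO\Gi_\0 i\otimes_{\cO\Di}(\Ind_{\Delta\Di}^{\Di\times\Di}\bU)\otimes_{\cO\Di}j\cO\Ni_\0
\]
as $(\cO\Gi_\0,\cO\Ni_\0)$-bimodules.

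Then I would invoke the same machinery used for Theorem~\ref{thm:splendid}: by Lemma~\ref{lem:U_endosplit}, $\bU$ has an $N_\Gi(\Di)$-stable (hence in particular $N_{\Gi_\0}(\Di)$-stable, as $N_{\Gi_\0}(\Di)\leq N_\Gi(\Di)$) endosplit $p$-permutation resolution $\cY$, and the Morita equivalence of Corollary~\ref{cor:main} is realised by a bimodule which, by the displayed divisibility, is a summand of a bimodule of the form $\cO\Gi_\0 i\otimes_{\cO\Di}\bU\otimes_{\cO\Di}j\cO\Ni_\0$. Feeding these two inputs into $\cite[\text{Proposition }9.11.5(i)]{Lin6}$ (equivalently $\cite[\text{Theorem }1.3]{Lin3}$) --- precisely as in the proof of Theorem~\ref{thm:splendid} but now for the groups $\Gi_\0$, $\Ni_\0$ --- produces a splendid derived equivalence between $\cO\Ni_\0\fid$ and $\cO\Gi_\0\bi$.

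The main obstacle I anticipate is the bookkeeping around source idempotents under the index-$2$ restriction: one must check that a source idempotent $i$ of $\cO\Gi\bi$, when viewed in $\cO\Gi_\0$, really is a source idempotent of the block $\cO\Gi_\0\bi$, and that the summands $\cO\Gi_\0 i\otimes_{\F R}j\cO\Ni_\0$ for $R\leq\Di$ are exactly the ones appearing in the resulting complex. This is where the hypothesis $p>2$ (so that $\Di\leq\Gi_\0$ and restriction from $\Gi$ to $\Gi_\0$ behaves well on indecomposables with vertex in $\Di$, cf. Lemma~\ref{lem_red_to_A0}) does the real work; once it is in place the argument is a direct transcription of the proof of Theorem~\ref{thm:splendid}.
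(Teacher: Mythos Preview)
Your overall strategy---transcribe the proof of Theorem~\ref{thm:splendid} to the even parts using Corollary~\ref{cor:main}---matches the paper's. But you misidentify the obstacle. The genuine gap is the parenthetical ``$\bX_\0$ (which has vertex $\Delta\Di$, again using $p>2$ and Lemma~\ref{lem_red_to_A0})''. Lemma~\ref{lem_red_to_A0} treats restriction along $A_\0\subseteq|A|$, an index-$2$ situation; here $[\Gi\times\Ni:\Gi_\0\times\Ni_\0]=4$, and nothing in that lemma singles out $\bX_\0$ (rather than $\bX_\1$) as the summand of $\Res^{\Gi\times\Ni}_{\Gi_\0\times\Ni_\0}\bX$ carrying vertex $\Delta\Di$ and source $\bU$. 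The paper flags precisely this: ``it is not clear that $\bX_\0$ has vertex $\Delta\Di$ and source $\bU$.'' Your claim that the source idempotent $i$ of Lemma~\ref{lem:OG_U_ON} lies in $\cO\Gi_\0$ ``since $\Br_\Di$ only sees $C_\Gi(\Di)\leq\Gi_\0$'' is also unjustified---a primitive idempotent in $(\cO\Gi\bi)^\Di$ need not be homogeneous merely because its Brauer image lands in $\F C_\Gi(\Di)$.

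The paper's fix sidesteps both problems rather than resolving them. Since $p\nmid[\Gi\times\Ni:\Gi_\0\times\Ni_\0]$ and $\Res^{\Gi\times\Ni}_{\Gi_\0\times\Ni_\0}\bX\cong\bX_\0\oplus\bX_\1$, at least one of $\bX_\0,\bX_\1$ has vertex $\Delta\Di$ and source $\bU$. Now $\bX_\1\cong g\bX_\0$ for any $g\in\Gi_\1$, so $\bX_\1\otimes_{\cO\Ni_\0\fid}?$ is $\bX_\0\otimes_{\cO\Ni_\0\fid}?$ composed with the conjugation-by-$g$ auto-equivalence of $\cO\Gi_\0\bi$; hence $\bX_\1$ also induces a Morita equivalence between $\cO\Ni_\0\fid$ and $\cO\Gi_\0\bi$. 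Whichever of the two carries the correct vertex and source is then fed into the argument of Lemma~\ref{lem:OG_U_ON}, rerun from scratch for the blocks $\cO\Gi_\0\bi$ and $\cO\Ni_\0\fid$ (so fresh source idempotents of the even blocks are produced there---the paper never attempts to descend $i,j$ from the $\Gi,\Ni$ level), and finally into \cite[Proposition~9.11.5(i)]{Lin6} together with Lemma~\ref{lem:U_endosplit}. Thus your anticipated ``bookkeeping around source idempotents'' is not the crux; the crux is that one cannot pin down $\bX_\0$ specifically, and the workaround is to show it does not matter which of $\bX_\0,\bX_\1$ one uses.
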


\begin{proof}
Once we have replaced Theorem \ref{thm:main} with Corollary \ref{cor:main}, the proof of this result is identical to that of Theorem \ref{thm:splendid} except that the analogue of Lemma \ref{lem:OG_U_ON} requires us to be slightly more careful than in the original lemma. More precisely, it is not clear that $\bX_{\0}$ has vertex $\Delta \Di $ and source $\bU$. However, since $p\nmid[\Gi  \times \Ni :\Gi_{\0} \times \Ni_{\0}]$ and $\Res^{\Gi  \times \Ni }_{\Gi_{\0} \times \Ni_{\0}}\bX \cong \bX_{\0} \oplus \bX_{\1}$, we must at least have one of $\bX_{\0}$ or $\bX_{\1}$ having vertex $\Delta \Di $ and source $\bU$.

The proof will be complete once we have proved that $\bX_{\1}$ induces a Morita equivalence between $\cO \Ni_{\0}\fid$ and $\cO \Gi_{\0}b$. Now, $\bX_{\1} \cong g \bX_{\0}$, for any $g \in \Gi_{\1}$. Therefore, as functors, $\bX_{\1} \otimes_{\cO \Ni_{\0}\fid} ?$ is isomorphic to $\bX_{\0} \otimes_{\cO \Ni_{\0}\fid} ?$ composed with the Morita auto-equivalence of $\cO \Gi_{\0}\bi$ given by conjugation by $g$. This completes the claim.
\end{proof}

We can now construct splendid derived equivalences from our RoCK blocks to their true Brauer correspondents. That is, their Brauer correspondent in the normalizer of the defect group.

\begin{Corollary}\label{cor:spld}
$\cO \Gi \bi$ is splendidly derived equivalent to its Brauer correspondent in $N_\Gi (\Di)$ and $\cO \Gi_{\0}\bi$ is splendidly derived equivalent to its Brauer correspondent in $N_{\Gi_{\0}}(\Di )$.
\end{Corollary}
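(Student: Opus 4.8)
The plan is to assemble the corollary from the two splendid derived equivalences already established in Theorems~\ref{thm:splendid} and~\ref{thm:splendid_0}, together with the identification of $\cO\Ni\fid$ and $\cO\Ni_{\0}\fid$ as Brauer correspondents (Lemma~\ref{lem:Brauer_G_N}) and the transitivity of splendid derived equivalences. First I would recall from Lemma~\ref{lem:Brauer_corr} and Lemma~\ref{lem:CGD}(ii) that $N_\Gi(\Di)\leq\Ni$ and $N_{\Gi_\0}(\Di)\leq\Ni_\0$, and that in both cases the common defect group is $\Di$. By Theorem~\ref{thm:DCblocks}(iv) the Brauer correspondent of $\cO\Gi\bi$ in $N_\Gi(\Di)$ is $\cO N_\Gi(\Di)\ei_{\rho,0}$, and the proof of Lemma~\ref{lem:Brauer_G_N} shows $\Br_\Di(\fid)=\bar\ei_{\rho,0}$, so $\cO N_\Gi(\Di)\ei_{\rho,0}$ is also the Brauer correspondent of $\cO\Ni\fid$ in $N_\Gi(\Di)$ (the Brauer correspondence is transitive through the intermediate group $\Ni$).

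Next I would invoke a splendid derived equivalence between $\cO\Ni\fid$ and its own Brauer correspondent $\cO N_\Gi(\Di)\ei_{\rho,0}$. This is the content of the known results on blocks with normal defect group (or, more concretely here, follows from the weight-one analysis: $\cO\Ni\fid\cong\Blo^{\rho,0}\otimes(\Blo^{\varnothing,1}\swr\cT_d)$, each tensor factor being Morita equivalent to a Brauer tree algebra as in Lemma~\ref{lem:weight1_equiv}, and the wreath construction is compatible with the local structure). In fact, since $\Li\trianglelefteq\Ni$ with $p$-power index trivial and $\Di$ is a defect group sitting inside $\Li$, one can even argue directly: $\cO\Ni\fid$ and $\cO N_\Gi(\Di)\ei_{\rho,0}$ are Morita equivalent via the bimodule $\cO N_\Gi(\Di)\fid\otimes_{\cO N_\Gi(\Di)}?$, or one appeals to the fact that $\cO\Ni\fid$ itself has $\Di$ as a \emph{normal} subgroup up to the relevant extension, so the identity functor composed with Green correspondence gives a splendid (indeed Puig) equivalence. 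Then composing this with the splendid derived equivalence of Theorem~\ref{thm:splendid} (using that the composite of two splendid derived equivalences between blocks sharing a common defect group is again splendid, cf.\ \cite[\S9.11]{Lin6}) yields the splendid derived equivalence between $\cO\Gi\bi$ and its Brauer correspondent in $N_\Gi(\Di)$. The identical argument with Theorem~\ref{thm:splendid_0} and Corollary~\ref{cor:main} in place of Theorem~\ref{thm:main}, noting via Theorem~\ref{thm:An_blocks} and Lemma~\ref{lem:Brauer_G_N} that $\cO\Ni_\0\fid$ is the Brauer correspondent of $\cO\Gi_\0\bi$ in $\Ni_\0$, handles the alternating group case.

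The main obstacle I anticipate is making precise the splendid derived equivalence between $\cO\Ni\fid$ (resp.\ $\cO\Ni_\0\fid$) and its Brauer correspondent in $N_\Gi(\Di)$ (resp.\ $N_{\Gi_\0}(\Di)$): one must check that the relevant bimodule is a direct summand of $\cO\Ni i\otimes_{\cO R}j\cO N_\Gi(\Di)$ for $R\leq\Di$ and appropriate source idempotents, i.e.\ that the equivalence is splendid and not merely derived. This is routine when the defect group behaves like a normal subgroup of the source algebra, but here $\Di$ is only normal in $\Li$, not in $\Ni$; the point is that $\Ni/\Li$ is a $p'$-group permuting the factors, so the source idempotents can be chosen $\Ni$-compatibly and the wreath structure (Lemma~\ref{lem:kappa}, the identification~\eqref{algn:ONf_ident}) makes the bimodule $p$-permutation with the correct vertices. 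Once that local splendid equivalence is in hand, the transitivity of splendidness under composition — which requires that the intermediate source idempotents match up, guaranteed because all three blocks share the defect group $\Di$ and we use compatible source idempotents throughout — completes the argument with no further difficulty.
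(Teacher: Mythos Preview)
Your overall architecture matches the paper's proof: compose the splendid derived equivalence of Theorem~\ref{thm:splendid} (resp.~\ref{thm:splendid_0}) between $\cO\Gi\bi$ and $\cO\Ni\fid$ (resp.\ the $\0$-versions) with a splendid derived equivalence between $\cO\Ni\fid$ and the Brauer correspondent in $N_\Gi(\Di)$, and invoke transitivity. The identification of Brauer correspondents via Lemma~\ref{lem:Brauer_G_N} and Lemma~\ref{lem:CGD}(ii) is also correct.

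The gap is in your treatment of the local step. Your proposed Morita equivalence between $\cO\Ni\fid$ and $\cO N_\Gi(\Di)\ei_{\rho,0}$ is false in general: already at weight one, $\Blo^{\varnothing,1}$ has a linear Brauer tree (Lemma~\ref{lem:weight1_equiv}(iii)) while its Brauer correspondent in $N_{\tSi_p}(\Di_1)$ is a block with \emph{normal} cyclic defect group and hence has a star-shaped Brauer tree. For $p>3$ these trees differ, so the blocks are only derived equivalent, not Morita equivalent, and this discrepancy propagates through the wreath product. Your ``normal defect group'' shortcut also does not apply, since $\Di$ is not normal in $\Ni$: elements of $\tSi_{\sJ_k}\setminus N_{\tSi_{\sJ_k}}(\Di_k)$ lie in $\Li\leq\Ni$ but do not normalise $\Di_k$.

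The paper closes this gap by citing \cite[Proposition~5.2.33]{KL}, which constructs the required splendid derived equivalence between $\F\Ni\bar\fid$ (resp.\ $\F\Ni_\0\bar\fid$) and the Brauer correspondent over $\F$, then lifts it to $\cO$ via Rickard's theorem \cite[Theorem~5.2]{Ri}, and finally composes using \cite[Lemma~10.2.6]{Rou}. So the missing ingredient is not something you can improvise locally; it is a substantive result from the companion paper (essentially the wreathed version of the Brauer-tree derived equivalence), together with the $\F$-to-$\cO$ lifting of splendid equivalences.
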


\begin{proof}
In \cite[Proposition 5.2.33]{KL} a splendid derived equivalence between $\F \Ni {\bar \fid}$ (resp. $\F \Ni_{\0}{\bar \fid}$) and the Brauer correspondent of $\F \Gi {\bar \bi}$ (resp. $\F \Gi_{\0}{\bar \bi}$) in $N_\Gi (\Di )$ (resp. $N_{\Gi_{\0}}(\Di )$) was constructed. By \cite[Theorem 5.2]{Ri}, all these splendid derived equivalences lift to splendid derived equivalences for the appropriate blocks defined over $\cO$. The result now follows from Theorems \ref{thm:splendid}, \ref{thm:splendid_0} and \cite[Lemma 10.2.6]{Rou}.
\end{proof}

\subsection{Source algebra equivalences}

A {\em source algebra equivalence} between two blocks is a Morita equivalence induced by a bimodule with trivial source. As noted in the proof of Lemma \ref{lem:U_endosplit}, $\bU \cong \cO$, when $\ell$ is even, that is, when $p \equiv 1 \bmod 4$. We, therefore, have the following theorem:

\begin{Theorem}\label{thm:source}
If $p \equiv 1 \bmod 4$, $\cO \Ni \fid$ is source algebra equivalent to $\cO \Gi \bi$ and $\cO \Ni_{\0}\fid$ is source algebra equivalent to $\cO \Gi_{\0}\bi$.
\end{Theorem}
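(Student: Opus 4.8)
\textbf{Proof proposal for Theorem \ref{thm:source}.}
The plan is to show that when $p \equiv 1 \bmod 4$ (equivalently $\ell$ is even) the bimodule $\bX$ inducing the Morita superequivalence of Theorem \ref{thm:main}, and its even part $\bX_{\0}$ inducing the Morita equivalence of Corollary \ref{cor:main}, actually have trivial source. The key observation is the explicit source formula of Lemma \ref{lem:OG_U_ON}: as an $(\cO\Gi,\cO\Ni)$-bimodule, $\bX$ is a direct summand of $\cO\Gi i \otimes_{\cO\Di}(\Ind_{\Delta\Di}^{\Di\times\Di}\bU)\otimes_{\cO\Di} j\cO\Ni$ for source idempotents $i,j$, and $\bX$ has vertex $\Delta\Di$. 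So the source of $\bX$ (as an $\cO(\Gi\times\Ni)$-module) is a summand of $\Res^{\Di\times\Di}_{\Delta\Di}\Ind_{\Delta\Di}^{\Di\times\Di}\bU$, hence is a summand of a sum of $\cO(\Delta\Di)$-conjugates of $\bU$.

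First I would recall, exactly as computed in the proof of Lemma \ref{lem:U_endosplit}, that $\Omega^2_{\cO\Di_k}(\cO)\cong\cO$ for each $k$, so that for $\ell$ even one has $\Omega^{\ell}_{\cO\Di_k}(\cO)\cong\cO$ and therefore
\begin{align*}
\bU = \Omega^{\ell}_{\cO\Delta\Di_1}(\cO)\boxtimes\dots\boxtimes\Omega^{\ell}_{\cO\Delta\Di_d}(\cO)\cong\cO,
\end{align*}
the trivial $\cO\Delta\Di$-module. Then $\Ind_{\Delta\Di}^{\Di\times\Di}\bU\cong\Ind_{\Delta\Di}^{\Di\times\Di}\cO$ is a permutation module, and by Lemma \ref{lem:OG_U_ON} the indecomposable bimodule $\bX$ is a summand of $\cO\Gi i\otimes_{\cO\Di}(\Ind_{\Delta\Di}^{\Di\times\Di}\cO)\otimes_{\cO\Di} j\cO\Ni$. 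Since $\bX$ has vertex $\Delta\Di$ and its source must therefore be a (necessarily indecomposable, as the vertex is a $p$-group and $\cO$ is the only such) direct summand of the restriction of this module to $\Delta\Di$, which is a permutation $\cO\Delta\Di$-module, the source of $\bX$ is $\cO$. This is precisely the statement that $\bX$ induces a source algebra equivalence between $\cO\Ni\fid$ and $\cO\Gi\bi$.

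For the alternating group statement, I would argue exactly as in the proof of Theorem \ref{thm:splendid_0}: since $p\nmid[\Gi\times\Ni:\Gi_{\0}\times\Ni_{\0}]$ and $\Res^{\Gi\times\Ni}_{\Gi_{\0}\times\Ni_{\0}}\bX\cong\bX_{\0}\oplus\bX_{\1}$, at least one of $\bX_{\0},\bX_{\1}$ has vertex $\Delta\Di$, and by the same reasoning as above (its source is a summand of a permutation $\cO\Delta\Di$-module) its source is trivial. Combining with Corollary \ref{cor:main} — and, if it is $\bX_{\1}$ rather than $\bX_{\0}$ that is the Morita bimodule with trivial source, noting as in Theorem \ref{thm:splendid_0} that $\bX_{\1}\cong g\bX_{\0}$ for $g\in\Gi_{\1}$ so that conjugation by $g$ transports one to the other and preserves triviality of the source — we conclude that $\cO\Ni_{\0}\fid$ is source algebra equivalent to $\cO\Gi_{\0}\bi$.

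I do not expect a serious obstacle here: all the heavy lifting (the Morita superequivalence, the vertex and source description of $\bX$, the computation $\Omega^2_{\cO\Di_k}(\cO)\cong\cO$) has already been done in Theorem \ref{thm:main}, Lemma \ref{lem:OG_U_ON}, and Lemma \ref{lem:U_endosplit}. The only mildly delicate point is to articulate cleanly why ``source is a summand of a permutation module over a $p$-group'' forces the source to be $\cO$ — this is standard, since an indecomposable $p$-permutation $\cO\Di$-module with vertex $\Di$ is the trivial module — and to handle the bookkeeping in the alternating case of which of $\bX_{\0}$, $\bX_{\1}$ carries the vertex, which is already settled verbatim in the proof of Theorem \ref{thm:splendid_0}.
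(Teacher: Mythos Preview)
Your proposal is correct and follows exactly the approach the paper takes: the paper's entire ``proof'' is the one-line observation (made just before the theorem) that $\bU\cong\cO$ when $\ell$ is even, i.e.\ when $p\equiv 1\bmod 4$, after which Lemma~\ref{lem:OG_U_ON}, Theorem~\ref{thm:main}, and Corollary~\ref{cor:main} immediately yield the result. Your write-up simply spells out the implicit details---that $\bX$ then has trivial source and, for the alternating case, the bookkeeping from the proof of Theorem~\ref{thm:splendid_0}---which the paper leaves to the reader.
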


If $p \equiv 3 \bmod 4$, our bimodule $\bX$ no longer has trivial source. However, it is still unclear whether or not an appropriate trivial source bimodule can be found:

\begin{Question}\label{qstn:source}
If $p \equiv 3 \bmod 4$, is $\cO \Ni \fid$ source algebra equivalent to $\cO \Gi \bi$ and is $\cO \Ni_{\0}\fid$ source algebra equivalent to $\cO \Gi_{\0}\bi$?
\end{Question}

We compare Theorem \ref{thm:source} and Question \ref{qstn:source} with the analogous situation for RoCK blocks of symmetric groups in \cite[Theorem 2]{CK}. In that article, the analogue of the bimodule $\bX$ from this paper was taken to be the Green correspondent of the local block itself, rather than something more complicated like our bimodule $\bM_\Ni $. It, therefore, automatically had trivial source.

\end{document}